\documentclass[11pt,reqno]{amsart}
\usepackage[colorlinks=true,allcolors=blue,backref=page]{hyperref}
\usepackage{color}
\usepackage{booktabs}
\usepackage{longtable}
\usepackage{array}
\usepackage{amsmath, amssymb, amsthm}

\usepackage{mathrsfs}
\usepackage{mathtools}
\usepackage[noabbrev,capitalize,nameinlink]{cleveref}
\crefname{equation}{}{}
\usepackage{fullpage}
\usepackage[noadjust]{cite}
\usepackage{graphics}
\usepackage{pifont}
\usepackage{tikz}
\usepackage{tikz-cd}
\usepackage{bbm}
\usepackage[T1]{fontenc}

\DeclareMathOperator*{\argmin}{arg\,min}
\usetikzlibrary{arrows.meta}

\usepackage{environ}
\usepackage{framed}
\usepackage{url}
\usepackage[linesnumbered,ruled,vlined]{algorithm2e}
\usepackage[noend]{algpseudocode}
\usepackage[labelfont=bf]{caption}
\usepackage{cite}
\usepackage{framed}
\usepackage[framemethod=tikz]{mdframed}
\usepackage{appendix}
\usepackage{graphicx}
\usepackage[textsize=tiny]{todonotes}
\usepackage{tcolorbox}
\usepackage{enumerate}
\usepackage[shortlabels]{enumitem}
\allowdisplaybreaks[1]

\apptocmd{\sloppy}{\hbadness 10000\relax}{}{} 

\crefname{algocf}{Algorithm}{Algorithms}

\crefname{equation}{}{} 
\crefname{conjecture}{Conjecture}{Conjectures} 
\AtBeginEnvironment{appendices}{\crefalias{section}{appendix}} 

\crefformat{enumi}{#2#1#3}
\crefrangeformat{enumi}{#3#1#4 to~#5#2#6}
\crefmultiformat{enumi}{#2#1#3}%
{ and~#2#1#3}{, #2#1#3}{ and~#2#1#3}

\usepackage[color,final]{showkeys} 

\colorlet{refkey}{orange!20}
\colorlet{labelkey}{blue!30}

\crefname{algocf}{Algorithm}{Algorithms}

\numberwithin{equation}{section}
\newtheorem{theorem}{Theorem}[section]
\newtheorem{proposition}[theorem]{Proposition}
\newtheorem{lemma}[theorem]{Lemma}

\crefname{claim}{Claim}{Claims}

\newtheorem{corollary}[theorem]{Corollary}

\newtheorem*{question*}{Question}

\theoremstyle{definition}
\newtheorem{definition}[theorem]{Definition}

\newtheorem*{definition*}{Definition}

\theoremstyle{remark}
\newtheorem*{remark}{Remark}
\newtheorem*{remarks}{Remarks}

\newcommand\Av{\operatorname{Av}}

\newcommand{\mb}{\mathbb}
\newcommand{\mbf}{\mathbf}

\newcommand{\mf}{\mathfrak}

\newcommand{\on}{\operatorname}
\newcommand{\wh}{\widehat}

\newcommand\edge{\operatorname{edge}}
\newcommand\nonedge{\operatorname{nonedge}}
\newif\ifshowedits
\showeditstrue   

\newcommand{\eps}{\varepsilon}

\newcommand\Supp{\operatorname{Supp}}

\newcommand\Pois{\operatorname{Pois}}
\newcommand\rp{\operatorname{rp}}

\newcommand{\dist}[2]{{\operatorname{d}_{\operatorname{BL}}[#1;#2]}}
\def\BL{\operatorname{BL}}
\def\boldeps{\boldsymbol\eps}
\def\boldbeta{\boldsymbol\beta}
\def\boldgamma{\boldsymbol\gamma}
\def\boldxi{\boldsymbol\xi}

\renewcommand{\le}{\leqslant}
\renewcommand{\ge}{\geqslant}
\renewcommand{\Re}{\on{Re}}

\newcommand\Z{\mathbb{Z}}
\newcommand\Q{\mathbb{Q}}
\newcommand\C{\mathbb{C}}
\newcommand\R{\mathbb{R}}
\newcommand\N{\mathbb{N}}

\newcommand\jump{\operatorname{jump}}

\def\sml{\operatorname{small}}
\def\lrg{\operatorname{large}}
\def\med{\operatorname{med}}

\usepackage{bm}

\allowdisplaybreaks

\newcommand\samedist{\overset{d}{=}}

\newcommand\Lip{\operatorname{Lip}}

\title{The proportion of permutations fixing a $k$-set}
\author[A1]{Ben Green}
\address{Mathematical Institute, Andrew Wiles Building, Radcliffe Observatory Quarter, Woodstock Rd, Oxford OX2 6QW, UK}
\email{ben.green@maths.ox.ac.uk}

\author[A2]{Mehtaab Sawhney}
\address{Department of Mathematics, Columbia University and OpenAI}
\email{m.sawhney@columbia.edu}
\begin{document}

\begin{abstract} 
Denote by $p(k)$ the limit, as $n \rightarrow \infty$, of the probability that a random permutation on a set of size $n$ has an invariant set of size $k$. We give an asymptotic formula for $p(k)$, showing that it is asymptotically $f(\{\log_2 k\}) k^{-\delta} (\log k)^{-3/2}$ where $\delta = 1 - \frac{1 + \log \log 2}{\log 2} \approx 0.086$ and $f$ is a smooth, positive, function on $\R/\Z$, which we will describe explicitly. The function $f$ satisfies $\frac{\max f}{\min f} < 1 + 2 \times 10^{-7}$ and we conjecture that it is not constant.

Estimating $p(k)$ is a model for the more well-known question which asks for an estimation of $M(n)$, the number of distinct elements in the $n$-by-$n$ multiplication table. By elaborating on the techniques in this paper, we will give an asymptotic for $M(n)$ in forthcoming work.
\end{abstract}

\maketitle
\setcounter{tocdepth}{1}

\tableofcontents

\part{Introduction and setup}

\section{Introduction}

\subsection{Main result and history}
The well-known multiplication table problem of Erd\H{o}s \cite{erdos-mult-1,erdos-mult-2} asks for an asymptotic for $M(n)$, the number of elements in the $n$-by-$n$ multiplication table. In forthcoming work \cite{green-sawhney-forthcoming} we offer a solution to this problem.

It is well-known that (at least in certain regimes) both the logs of the prime factors of a random integer and the cycle lengths of a random permutation have Poisson behaviour. A consequence of this is an analogy between certain problems about the divisors of typical integers (which the estimation of $M(n)$ can easily be reduced to) and problems about invariant sets of random permutations.  In our forthcoming paper we will go into much greater detail on this connection; the introduction to \cite{EFG16} may also be consulted. The objective of the present work is to fully explore the (limiting version of) the permutation analogue of the multiplication table problem, which we now describe.

Denote by $\mf{S}_n$ the symmetric group of permutations on $n$ letters. Let $i(n,k)$ denote the probability that $\pi\in \mf{S}_n$ has an invariant set of size $k$. Let $p(k) = \lim_{n\to \infty} i(n,k)$; this limit exists, as we will recall below, and we have (for example) $p(1) = 1 - \frac{1}{e}$ and $p(2) = 1 - 2 e^{-3/2}$. 

Our main interest in this paper will be the estimation of $p(k)$. We believe this is of interest in its own right, and indeed several previous works such as \cite{dfg,EFG16,luzcak-pyber} are devoted to the problem. However, our main rationale is that the methods we develop are relatively clean when it comes to estimating $p(k)$, but will adapt to solve the more well-known (but more technical) question of estimating $M(n)$. Large parts of the present paper can be reused almost verbatim for the latter question.

The following is our main theorem. Let 
\begin{equation}\label{eft-def} \delta = 1 - \frac{1+\log\log 2}{\log 2}\end{equation} be the ``Erd\H{o}s--Ford--Tenenbaum constant''. In this theorem, and throughout the paper, we write $\log_2 k$ for log to base 2 (\emph{not} $\log \log k$ as one occasionally sees in analytic number theory). We also write $\{x\}$ for the fractional part of a real number $x$, that is to say $0 \le \{x \} < 1$ and $x - \{x \} \in \Z$; we have $\{x\} = x - \lfloor x \rfloor$ where the $\lfloor x \rfloor$ is the greatest integer less than or equal to $x$.
\begin{theorem}\label{thm:main}
There exists $f \in C^{\infty}(\R/\Z)$, nowhere zero, such that 
\[ p(k) =  (1 + o(1)) f(\{ \log_2 k \}) k^{-\delta} (\log k)^{-3/2}.\]

The function $f$ can be expressed as $c_0 g \ast \mu \ast \mu'$, where $g \in C^{\infty}(\R/\Z)$ is the function defined by 
\begin{equation}\label{g-funct-def} g(x) := \sum_{D \in \Z} (\log 2)^{D - x} (1 - e^{-2^{D - x}}),\end{equation} $c_0 = (\frac{\pi}{2})^{1/2} (\log 2)^{3/2} e^{\gamma(\frac{1}{\log 2} - 1)}$, and $\mu, \mu'$ are two nonzero, positive Borel measures\footnote{\emph{Positive} means that $\mu(E) \ge 0$ for any measurable set $E$.} on $\R/\Z$ which we will define in \cref{prop16.2,sec17-main} below. We have
\[ \frac{\max f}{\min f} \le \frac{\max g}{\min g} < 1 + 2\cdot 10^{-7}.\]
\end{theorem}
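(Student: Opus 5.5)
We begin with the standard reduction to a Poisson model. As $n \to \infty$ the cycle counts $(C_1, C_2, \dots)$ of a random permutation converge to independent Poisson variables $X_j \sim \Pois(1/j)$. Having an invariant $k$-set means that $k$ is a subset sum of the multiset of cycle lengths. Only cycles of length at most $k$ matter, so
\[ p(k) = \mathbb{P}\Bigl(k \in \Bigl\{ \textstyle\sum_{j \le k} j\, a_j : 0 \le a_j \le X_j \Bigr\}\Bigr). \]
Split the cycles into dyadic scales $(2^{i-1}, 2^i]$. Writing $k = 2^{m + \theta}$ with $\theta = \{\log_2 k\}$, the number of cycles in scale $i$ is approximately $\Pois(\log 2)$, and this is the origin of the powers of $\log 2$ in $g$. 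The heuristic is as follows. Let $N$ be the total number of cycles of length at most $k$, which is roughly $\Pois(\log k)$. The subset sums number about $2^N$ and are spread over $[0, k]$, so the chance of hitting $k$ is about $\min(1, 2^N/k)$.

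Optimizing $\mathbb{P}(N = \ell)\, 2^{\ell}/k$ over $\ell$ gives $\ell \approx \log k/\log 2$. This produces $k^{-\delta}$, with $\delta$ exactly the constant in \eqref{eft-def}. Under that constraint we need an extra ballot-type condition: the cycle counts in the dyadic scales must not run ahead of the linear profile, since otherwise the subset sums are too clustered to cover the right density. A random-walk / Brownian-bridge ballot estimate gives the additional factor $(\log k)^{-1}$, and the local Gaussian factor contributes $(\log k)^{-1/2}$. Together these give $(\log k)^{-3/2}$, and $\sqrt{\pi/2}$ enters $c_0$ here. The constant $e^{\gamma(1/\log 2 - 1)}$ comes from comparing $\sum_{j \le x} 1/j$ with $\log x$ after exponential tilting by the factor $2$.

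The rigorous plan has four steps.

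\begin{enumerate}[(i)]
\item Tilt the Poisson measure so that $X_j \sim \Pois(\lambda/j)$ with $\lambda = 1/\log 2$, making the critical behaviour typical. This converts $p(k)$ into $k^{-\delta}$ times an expectation under the tilted measure.

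\item Condition on the "profile" of cycle counts across dyadic scales. Show that the probability that $k$ is a subset sum is governed by a quantity like $\sum_D 2^{-\text{(excess at scale }D)}$, capped at $1$. The fine structure of the sums at the top few scales near $k$ produces the function $g$. In $g$, $1 - e^{-2^{D-x}}$ is the Poisson probability that at least one of roughly $2^{D-x}$ nearly independent subset sums lands on $k$.

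\item Treat the small scales (bottom of the walk) and the large scales (top, near $k$) as asymptotically independent boundary layers. Each converges in distribution to a limiting object depending only on the fractional offset. These limits define the positive measures $\mu$ (lower boundary, \cref{prop16.2}) and $\mu'$ (upper boundary, \cref{sec17-main}). The bulk of the walk contributes the ballot factor.

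\item Assemble the pieces to get $f = c_0\, g * \mu * \mu'$. Smoothness of $f$ follows from that of $g$. Positivity follows because $g > 0$ and the measures are nonzero and positive. Since $f$ is an average of translates of $g$, we get $\max f/\min f \le \max g/\min g$. The numerical bound on $g$ comes from Poisson summation: the Fourier coefficients of $g$ are $\Gamma$-function values at $2\pi i n/\log 2$, which decay like $e^{-\pi^2 n/\log 2}$, giving a ratio below $1 + 2\cdot 10^{-7}$.
\end{enumerate}

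The main obstacle is step (ii). We need to show that, conditioned on a good profile, the subset sums are equidistributed enough modulo small scales that the hitting probability really is $1 - e^{-(\text{expected count})}$. This requires a Poisson-approximation / second-moment argument for the number of representations of $k$, and control of the correlations created by shared small cycles. I would first try a Fourier-analytic second-moment bound, exploiting the randomness of the many small cycles to smooth the distribution of subset sums. I would then separately handle the rare profiles that violate the ballot condition, showing that their contribution is negligible.
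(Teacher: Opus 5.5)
Your outline (tilt, ballot factor, Poisson paradigm, two boundary layers, convolution with $g$) has the right overall shape. However, step (ii), which you correctly flag as the crux, is where the proof actually lives, and the route you sketch for it would not work, for three reasons.

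First, the number of representations of $k$ is not approximately Poisson. Representations cluster, because many small subsets share a sum and many patterns share the same large elements. You have to freeze both boundary layers, i.e.\ condition on the small elements $A_{\sml}$ and the large elements $A_{\lrg}$. You then consider the events $X_\varepsilon=\{k-\sum_{i,j}\varepsilon_{i,j}a_{i,j}\in\Sigma(\mathbf{A}_{\med})+\Sigma(A_{\sml})\}$, indexed by patterns $\varepsilon$ on the large elements, with only the middle elements random. Their total intensity is $2^{D-\xi}\varrho^*\tau^*$, which is random through the frozen layers; this is why the answer is $1-\mathbb{E}\exp(-2^{D-\xi}\varrho^*\tau^*)$ (\cref{sec6-main}). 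A Poisson law for the raw representation count would discard the factor $\tau^*$ (only distinct small sums matter) and the fluctuations of $\varrho^*$. That is exactly the information carried by $\mu'$ and $\mu$.

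Second, the intensity is of order $1$ in the critical range, so a second-moment bound cannot yield $1-e^{-S}$. You need $\mathbb{P}(X_{\varepsilon^{(1)}}\cap\cdots\cap X_{\varepsilon^{(\ell)}})=(1+o(1))\prod_u\mathbb{P}(X_{\varepsilon^{(u)}})$ for all $\ell$ up to a slowly growing $\ell_*$, followed by Bonferroni inequalities.

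Third, this approximate $\ell$-wise independence cannot come from the small cycles, which are common to every $X_\varepsilon$. It must come from the middle scales. Yet under the tilted measure each dyadic scale carries only $\Pois(1)$ elements, so no local CLT is available scale by scale. The missing idea is the jump step (\cref{jump-step-7}): a scale $t$ carrying $V$ elements, followed by controlled growth, which almost positive walks possess generically (\cref{lemma7.19}). The $V$ elements at that scale give an $\ell$-dimensional Gaussian local limit theorem. A rank-profile count for $\{0,1\}$-matrices (\cref{lem24}) then shows that tuples of patterns not already of rank $\ell$ on the jump block contribute negligibly (\cref{lem:Poisson}).

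The same misreading affects the rest of (ii)--(iii). The function $g$ is not produced by fine structure near $k$. It comes from summing the tilt weight $(\log 2)^{D-\xi}$, where $D=|\mathbf{A}|-n$ is the total excess, against $1-e^{-2^{D-\xi}\lambda}$ with $\lambda=\varrho^*\tau^*$. The identity $\sum_D(\log 2)^{D-\xi}(1-e^{-2^{D-\xi}\lambda})=\lambda^{c}g(\xi-\log_2\lambda)$, with $c=-\log\log 2/\log 2$ (\cref{g-lambda-trans}), is what produces the convolution $g\ast\mu\ast\mu'$. The measures $\mu,\mu'$ are universal: they do not depend on $\{\log_2 k\}$. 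Also, $\mu$ belongs to the large elements (the statistic $\varrho$ of the upper process) and $\mu'$ to the small ones (the statistic $\tau$); you have them reversed.

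Step (iii) also needs two inputs you do not supply.
\begin{itemize}
\item Stability of $\varrho^*_{\mathbf{u}\mid\beta}(\ell)$ and $\tau^*_{\mathbf{x}\mid\beta'}(\ell)$ as $\ell\to\infty$. This holds only because almost positive walks typically grow like $i^{1/2-\eta}$ (\cref{lem23}, \cref{lemma7point3}).
\item A local limit theorem for walks conditioned to stay above $-m$ (\cref{lem13.5}), used to decouple the two walks under the constraint $\beta(\lceil n/2\rceil)-\beta'(\lfloor n/2\rfloor)=D$ (\cref{decoupling}). The harmonic-function weights from that theorem are the source of the factors $(\ell-\mathbf{u}_\ell)^+$ and $(\log_2\mathbf{x}_\ell-\ell)^+$ in the definitions of $\mu,\mu'$.
\end{itemize}
Finally, you must prove that $\mu,\mu'\neq 0$, not assume it, before positivity of $f$ follows.
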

This may be compared with the main result of \cite{EFG16} where, using the methods of Ford \cite{For08}, it was shown that $k^{-\delta} (\log k)^{-3/2} \ll p(k) \ll k^{-\delta} (\log k)^{-3/2}$. It is a slightly odd historical quirk of the subject that this was only worked out \emph{after} Ford proved the strictly more difficult corresponding result for $M(n)$. (That this should be possible was anticipated by Diaconis and Soundararajan, as noted in \cite{sound-pk-remark}.)

We make some remarks on the theorem.

\begin{enumerate} \item
 We presume that $f$ is not the constant function, but we have not been able to prove this. We will show that $\wh{g}(m) \ne 0$ for all $m \in \Z \setminus \{0\}$, which means that $f$ is constant if and only if $\mu \ast \mu'$ is the uniform measure. Proving this, or obtaining any numerical simulations of $\mu, \mu'$, remains an interesting open problem.

\item  The most interesting feature of \cref{thm:main} is arguably the appearance of a periodic function of $\{\log_2 k\}$. This sort of behaviour has been seen in related problems before, notably in the work of Balazard, Nicolas, Pomerance and Tenenbaum \cite{bnpt} where they examine the number of $n \le X$ with $\tau(n) \ge \log X$, where $\tau$ is the divisor function. Citing that paper, Brent, Pomerance, Purdum and Webster \cite{bppw} speculate that the asymptotic for $M(n)$ may have an oscillatory term.
For squarefree $n$, the condition $\tau(n) \ge \log X$ is the same as $\omega(n) \ge \log \log X/\log 2$, where $\omega$ denotes the number of prime factors. The asymptotic turns out to be dominated by the contribution of $n$ for which $\omega(n) = \log \log X/\log 2 + O(1)$. Since $\omega(n)$ is constrained to be an integer, the fractional part of $\log \log X/\log 2$ starts to have a significant effect on the distribution. The same phenomenon is at play in the present paper, with $\omega(n)$ replaced by the number of cycles of the permutation $\pi$ of length $\le k$, and the important contribution being from $\pi$ with $\log_2 k + O(1)$ such cycles.

\item The almost constant nature of $g$ seemed extremely surprising and mysterious to us at first sight. One explanation for this, as we shall see below, is that the nonzero Fourier coefficients $\wh{g}(m)$ are given by values of $\Gamma$ with imaginary part $2 \pi m /\log 2$, and so are very small when $m \ne 0$. Moreover, it transpires that similar phenomena have been observed on a number of occasions before, with the earliest source we are aware of being formula (2.11.2) in Hardy's explanation \cite{hardy-ramanujan} of how Ramanujan failed to prove the prime number theorem. Several further related references are given on \cite[p 311]{flajolet-sedgewick}. As pointed out to us by Soundararajan, the exact same function $g$ appears as $\psi_1^*$ in the work of Granville, Sedunova and Sabuncu \cite{granville-sedunova-sabuncu}; although the contexts are somewhat related, this appears to be at least partially coincidental. Finally, we remark that Sean Eberhard pointed out to us that similar functions can arise in rather basic problems, such as the following question on ``binomial thinning''. Suppose we start on day 0 with a population of size $k$. On day $i$, each remaining member of the population is eliminated with probability $1/2$, these events being independent. What is the probability that there is some day on which precisely one person remains? It is not hard to see that the answer tends to $g_0(\{ \log_2 k\})$ where $g_0(t) := \sum_{D \in \Z} 2^{D + t} e^{-2^{D + t}}$ as $k$ grows. 
\end{enumerate}

An immediate corollary of \cref{thm:main} is the following.

\begin{corollary}\label{powers-two-cor}
   There is a positive constant $C$ such that we have the asymptotic $p(k) = (C + o(1)) k^{-\delta} (\log k)^{-3/2}$ as $k \rightarrow \infty$ along powers of two.
\end{corollary}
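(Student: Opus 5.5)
This follows directly from \cref{thm:main}. Take $k = 2^j$ with $j \in \N$. Then $\log_2 k = j$ is an integer, so $\{\log_2 k\} = 0$ for every $k$ along this sequence. \Cref{thm:main} therefore gives
\[ p(2^j) = (1 + o(1)) f(0)\, 2^{-j\delta} (\log 2^j)^{-3/2} \qquad (j \to \infty). \]
We set $C := f(0)$. Then $(1+o(1))f(0) = C + o(1)$, because $f(0)$ is a fixed finite constant: $f$ lies in $C^\infty(\R/\Z)$ and so is finite everywhere.

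It remains to check that $C > 0$. \Cref{thm:main} says $f$ is nowhere zero, so $f(0) \neq 0$. For the sign we use the representation $f = c_0\, g \ast \mu \ast \mu'$. Here $c_0 > 0$ by its explicit formula. The function $g$ is strictly positive, since each term $(\log 2)^{D-x}(1 - e^{-2^{D-x}})$ in \eqref{g-funct-def} is positive. The measures $\mu, \mu'$ are positive and nonzero. Convolving a strictly positive continuous function with a nonzero positive measure gives a strictly positive function, so $f > 0$ everywhere and in particular $C = f(0) > 0$.

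There is no real obstacle here. The only point needing care is the positivity of $C$, which the explicit convolution form of $f$ settles.
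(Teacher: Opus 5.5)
Your derivation is correct. Along $k=2^j$ one has $\{\log_2 k\}=0$, so \cref{thm:main} gives the asymptotic with $C=f(0)$. Your positivity argument also works: since $g$ is continuous and strictly positive on the compact space $\R/\Z$, one gets $g\ast\mu\ast\mu'\ge (\min g)\,\mu(\R/\Z)\,\mu'(\R/\Z)>0$. Even more cheaply, $p(k)\ge 0$ forces $f(0)\ge 0$, and ``nowhere zero'' does the rest.

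The paper acknowledges this ``immediate corollary'' route, but its actual proof, at the end of \cref{decouple-sec}, is deliberately lighter and uses only \cref{pk-walks-5}. Since $\xi=0$ for every $k$ in the sequence and $f_M(0)$ does not depend on $k$, comparing \cref{pk-walks-5} for $M$ and $M+1$ and letting $k\to\infty$ gives $|f_M(0)-f_{M+1}(0)|\ll e^{-\Omega(M)}$. Hence $f_M(0)\to c_1$ and $C=c_0c_1$. That argument avoids \cref{sec15-mainthm-proof} entirely, namely the approximation $\tilde f_M=g\ast\mu_M\ast\mu'_M$ and the bounded Lipschitz convergence of $\mu_M,\mu'_M$. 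However, it says nothing about the sign of $c_1$. The paper therefore imports $C>0$ from the lower bound in \cite{EFG16}, or from \cref{mu-not-zero,mu-prime-not-zero} after identifying $c_1$ with $(g\ast\mu\ast\mu')(0)$.

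Your route obtains existence and positivity in one step, at the cost of relying on the full strength of the main theorem.
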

Although \cref{powers-two-cor} is an immediate consequence of \cref{thm:main}, it is in fact a slightly easier result (we shall see the proof at the end of \cref{decouple-sec}).

\subsection{The $\varrho$ statistics and the measure $\mu$}\label{rho-stat-sec}
We turn now to a description of the measure $\mu$ in the statement of our main result.

Let $u = (u_i)_{i = 1}^{\infty}$ be a sequence of real numbers. In our applications, this will either be the arrival times of a rate 1 Poisson process on $[0,\infty)$ or a minor variant of this. Then for each positive integer $\ell$ we define
\begin{equation}\label{x-stat-def} \varrho_{u}(\ell) :=  2^{u_{\ell} - \ell}\sum_{\eps \in \{0,1\}^{\ell}} 1 \big(\sum_{i = 1}^{\ell} \eps_i 2^{-u_i} \in [1 - 2^{-u_{\ell}}, 1]\big),\end{equation} where $1(E)$ denotes the indicator of the event $E$.
Since we will typically have $u_i \approx i$, roughly speaking one expects $\varrho_{u}(\ell)$ to have size $\asymp 1$. One should think of the $\varrho_{u}(\ell)$ as describing some sort of ``density'' of the set of subset sums $\sum_{i \in I} 2^{-u_i}$ near $1$.

From now on we fix the \emph{upper process} $\mbf{u} = (\mbf{u}_i)_{i = 1}^{\infty}$ to be precisely the sequence of arrival times of a rate 1 Poisson process on $[0,\infty)$. The following is our main result concerning the measure $\mu$. Here, and throughout the paper, $x^+ := \max(x, 0)$.

\begin{proposition}\label{prop16.2}
Let $c := -\frac{\log \log 2}{\log 2}$. For every Lipschitz function $\psi \in \Lip(\R/\Z)$ the measure $\mu$ satisfies 
\begin{equation}\label{mu-formula} \int_{\R/\Z} \psi \, \mathrm{d}\mu := (2/\pi)^{1/2}\lim_{\ell \rightarrow \infty}\mb{E}(\ell - \mbf{u}_{\ell})^+ \varrho_{\mbf{u}}(\ell)^c \psi (\log_2 \varrho_{\mbf{u}}(\ell)) ,\end{equation} where the expectation $\mb{E}$ is over the random process $\mbf{u}$, and $x^+ := \max(x, 0)$. We adopt the convention that the integrand is zero when $\varrho_{\mbf{u}}(\ell) = 0$.

In particular, the Fourier coefficients of $\mu$ are given by
\[ \wh{\mu}(r) =  (2/\pi)^{1/2}\lim_{\ell \rightarrow \infty}\mb{E}(\ell - \mbf{u}_{\ell})^+\varrho_{\mbf{u}}(\ell)^{c - \frac{2\pi i r}{\log 2}}\] for $r \in \Z$.
The existence of these limits is part of the proposition. Moreover, the bounded Lipschitz norm $\Vert \mu \Vert_{\BL}$ is bounded, and $\mu$ is not the zero measure.
\end{proposition}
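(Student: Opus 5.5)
The plan is to show that the sequence of functionals $\Lambda_\ell(\psi) := (2/\pi)^{1/2}\mb{E}(\ell - \mbf{u}_\ell)^+ \varrho_{\mbf{u}}(\ell)^c\psi(\log_2\varrho_{\mbf{u}}(\ell))$ is Cauchy in $\ell$, uniformly over $\psi$ with $\Vert\psi\Vert_{\BL}\le 1$. The limit then defines a positive linear functional on $C(\R/\Z)$, i.e.\ a positive Borel measure $\mu$ with bounded BL norm. The Fourier formula follows by taking $\psi(x)=e^{-2\pi i r x}$, which is smooth, since $\varrho^{c}e^{-2\pi i r\log_2\varrho}=\varrho^{c-2\pi i r/\log 2}$.

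The key structural input is a one-step recursion for $\varrho$. Write $\mbf{u}_{\ell+1}=\mbf{u}_\ell+\mbf{e}$ with $\mbf{e}\sim\mathrm{Exp}(1)$ independent of $\mathcal F_\ell=\sigma(\mbf{u}_1,\dots,\mbf{u}_\ell)$. The subset sums using the first $\ell+1$ terms are those using the first $\ell$ terms together with their shifts by $2^{-\mbf{u}_{\ell+1}}$. Hence $\varrho(\ell+1)$ is, up to an edge error, $\tfrac12$ times a sum of two window counts at scale $2^{-\mbf{u}_{\ell+1}}$, both located inside the scale-$2^{-\mbf{u}_\ell}$ window. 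This makes $\varrho(\ell)$ approximately a multiplicative martingale-like chain. Conditionally on $\mathcal F_\ell$, the quantity $\varrho(\ell+1)/\varrho(\ell)$ has mean close to $1$, with an error governed by how equidistributed the subset sums are at finer scales.

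The exponent $c=-\log\log 2/\log 2$ and the weight $(\ell-\mbf{u}_\ell)^+$ should be exactly what makes $\mb{E}[(\ell-\mbf{u}_\ell)^+\varrho(\ell)^c]$ essentially stationary. The weight $(\ell - \mbf{u}_\ell)^+$ is the harmonic function for the random walk $\ell-\mbf{u}_\ell$ killed at $0$, and it accounts for the $(\log k)^{-3/2}$ ballot-type factor. I would therefore show that
\[ M_\ell := (\ell-\mbf{u}_\ell)^+\varrho(\ell)^c\psi(\log_2\varrho(\ell)) \]
satisfies $|\mb{E}M_{\ell+1}-\mb{E}M_\ell|\ll \ell^{-1-\kappa}\Vert\psi\Vert_{\BL}$ for some $\kappa>0$, which gives summability. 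To get there, I would split according to whether $\ell-\mbf{u}_\ell$ is small or large. Walks staying near the boundary contribute little, by reflection/ballot estimates for the killed walk. For walks far from the boundary, $\varrho(\ell)$ has typically concentrated. Here I would use that the contributions of the first $\ell/2$ terms to subset sums are equidistributed modulo the window $2^{-\mbf{u}_{\ell/2}}$, using a second-moment (Fourier) bound on counts in short intervals. This shows $\varrho(\ell')\approx\varrho(\ell)$ for $\ell'\ge\ell$ with error exponentially small in $\ell$ outside a rare event. On that rare event I would use crude bounds: a moment bound $\mb{E}\varrho(\ell)^{c+\eta}(\ell-\mbf{u}_\ell)^+\ll 1$ uniform in $\ell$, obtained via the same recursion. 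I would also need to control the region near $\varrho=0$, which the convention handles since $c>0$.

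Non-vanishing of $\mu$ then follows from the case $\psi\equiv1$. It suffices to show $\liminf_\ell \mb{E}(\ell-\mbf{u}_\ell)^+\varrho(\ell)^c>0$, which I would get from an event of probability $\gg\ell^{-3/2}$: the walk stays at height $\asymp\sqrt\ell$ with $\varrho\asymp1$, which a second-moment argument guarantees on a positive fraction of such paths.

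The main obstacle is the uniform-in-$\ell$ Cauchy estimate, which amounts to proving the near-invariance of $\mb{E}(\ell-\mbf{u}_\ell)^+\varrho^c$ with a summable error. This requires quantitative equidistribution of the random subset sums at all intermediate scales, uniformly along the conditioned random walk.
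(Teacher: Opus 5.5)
The gap is in the step you yourself single out as the main obstacle, and the mechanism you propose for it is not the one at work. The expectation is over the \emph{unconditioned} process $\mbf{u}$, whose embedded walk $\beta(j)=\#(\mbf{u}\cap[0,j])-j$ is recurrent. For fixed $\ell$ and $\ell'\to\infty$, this walk dips arbitrarily low between times $\ell$ and $\ell'$ with probability tending to $1$, after which $\varrho$ vanishes with high probability. So ``$\varrho(\ell')\approx\varrho(\ell)$ outside a rare event'' is false. Also, the walk inside $\mb{E}$ is never killed, so ballot estimates for a killed walk do not apply as they stand.

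What keeps $\mb{E}(\ell-\mbf{u}_\ell)^+\varrho_{\mbf{u}}(\ell)^c$ bounded and nearly constant is that the growth of the weight on surviving paths compensates for the loss of $\varrho^c$ on paths that dip; this is a Doob $h$-transform. The input that implements this killing is absent from your plan. If the walk reaches $-m$, then $\mb{P}(\varrho\neq 0)\ll \mathrm{poly}\cdot 2^{-m}$ (\cref{X-non-vanishing,rho-nonvanishing-bd}). H\"older converts this into $\mb{E}\varrho^c\le(\mb{E}\varrho)^c\,\mb{P}(\varrho\neq 0)^{1-c}\ll \mathrm{poly}\cdot 2^{-(1-c)m}$. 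This needs only $0<c<1$, and $c<1$ is essential, because $\mb{E}\varrho$ itself does not decay with the depth of the dip: it is essentially the density at $1$ of a random subset sum (cf.\ \cref{X-upper-simple}). The specific value $c=-\log\log 2/\log 2$ has nothing to do with stationarity; it comes from the scaling law \cref{g-lambda-trans}. Your multiplicative-martingale heuristic tracks precisely the mean of $\varrho$, which is the unpenalised quantity.

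The same killing input is needed for your claimed bound on $\mb{E}(\ell-\mbf{u}_\ell)^+\varrho^{c+\eta}$ and for summable boundary contributions. The fine-scale equidistribution you require is, in the paper, \cref{lemma7point3}. It holds only for $T$-positive walks, which are almost sure under the conditioned measures $\omega_m$ (\cref{lem13.10}) but have probability tending to $0$ under the unconditioned law.

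There is also an identification issue. In the paper, $\mu$ is already constructed in the proof of \cref{thm:main} as the BL-limit of the measures $\mu^{(\ell)}$ of \cref{mu-ell-def}. These integrate $\varrho^*_{\mbf{u}\mid\beta}(\ell)^c$ against the Doob measures $\omega_m$. Existence, $\Vert\mu\Vert_{\BL}<\infty$ and $\mu\neq 0$ are proved there (\cref{main-mu-conv-prop}, \cref{mu-not-zero}). The substance of the proposition is that the displayed limit equals $\int\psi\,\mathrm{d}\mu$ for \emph{this} $\mu$, the one with $f=c_0 g\ast\mu\ast\mu'$. A standalone Cauchy argument, even if completed, would produce a measure with no link to $f$.

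The paper proves $\dist{\mu^{(\ell)}}{\mu^{(\ell)}_*}\to 0$ as follows. It restricts to $m\le\ell^{\eta_0}$ and to $\ell^{\eta_0}$-bounded, $\ell^{1/8}$-positive walks, and replaces $\varrho^*$ by $\varrho$ via \cref{lemma7point3}. It then drops the restrictions again (keeping $\beta(\ell)\ge 0$) and rewrites the $\omega_m$-integral as $\mb{E}F_m(\mbf{u})h(m+\ell-\mbf{u}_\ell)$ using the continuous Doob measure $\nu_m$ (\cref{lem125}). Next it replaces $h(m+\ell-\mbf{u}_\ell)$ by $(2/\pi)^{1/2}(\ell-\mbf{u}_\ell)^+$ at total cost $\ll\ell^{-1/2+O(\eta_0)}$. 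Finally it sums over all $m$ by telescoping, controlling $m>\ell^{\eta_0}$ by the $2^{-m}$ non-vanishing bound.

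Lastly, in your non-vanishing sketch, the good event (the walk stays positive and ends at height $\asymp\sqrt\ell$) has probability $\asymp\ell^{-1/2}$, not $\ell^{-3/2}$. A lower bound of $\ell^{-3/2}$ against weight $\sqrt\ell$ gives only $\ell^{-1}\to 0$.
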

\begin{remarks} For the definition of the bounded Lipschitz norm $\Vert \cdot \Vert_{\BL}$ of a Borel measure on $\R/\Z$, see \cref{bl-app}.

At first sight, it might appear that the limit as $\ell \rightarrow \infty$ in \cref{mu-formula} diverges. However, this is not the case, essentially since if $\mbf{u}_{\ell}$ is significantly smaller than $\ell$ then $\varrho_{\mbf{u}}(\ell)$ is only nonzero with rather small probability; for rigorous instances of this assertion, see \cref{X-upper-gen}.

A remark of a different nature is that, despite the existence of the above formul{\ae} for $\mu$, obtaining any interesting numerical or computational data about $\mu$ seems a very difficult task. This is because for any sequence $u$ the determination of $\varrho_{u}(\ell)$ is closely related to the subset sum problem and all known algorithms for this have complexity exponential in $\ell$, whilst we expect the rate of convergence in \cref{mu-formula} to be only polynomial.
\end{remarks}

\subsection{The $\tau$ statistics and the measure $\mu'$}\label{tau-stat-intro}

Now let $x = (x_i)_{i = 1}^{\infty}$ be a sequence of positive integers. Then for each positive integer $\ell$ we define
\begin{equation}\label{tau-stat-def} \tau_{x}(\ell) := 2^{-\ell} \# \{ \sum_{i = 1}^{\ell} \eps_i x_i : \eps \in \{0,1\}^{\ell}\}.\end{equation} That is, the number of \emph{distinct} subset sums $\sum_{i = 1}^{\ell} \eps_i x_i$, normalised by the number of such sums. Note in particular that $0 \le \tau_{x}(\ell) \le 1$ for all $\ell$. It is also not hard to see that $\tau_{x}(\ell)$ is a non-increasing function of $\ell$. 

Typically we will have $x_i \asymp 2^i$ and so roughly speaking one expects $\tau_{x}(\ell)$ to have size $\asymp 1$ (that is, the subset sums $\sum_{i = 1}^{\ell} \eps_i x_i$ have a good chance of being mostly distinct; for comparison, when $x_i = 2^i$ exactly, these sums are genuinely distinct).

Define the \emph{lower process} $\mbf{x} = (\mbf{x}_i)_{i = 1}^{\infty}$ by ordering the elements of a random (multi-)set of positive integers, where the multiplicity of $i$ is a $\Pois(1/(i\log 2))$ random variable, with these variables being independent. Here is our main result concerning the measure $\mu'$.

\begin{proposition}\label{sec17-main}
Let $c = -\frac{\log \log 2}{\log 2}$. For every Lipschitz function $\psi \in \Lip(\R/\Z)$, the measure $\mu'$ satisfies
\[ \int_{\R/\Z} \psi \, \mathrm{d}\mu' := (2/\pi)^{1/2}\lim_{\ell \rightarrow \infty}\mb{E} (\log_2 \mbf{x}_{\ell} - \ell)^+ \tau_{\mbf{x}} (\ell)^c \psi(\log_2 \tau_{\mbf{x}}(\ell)).\] In particular the Fourier coefficients of $\mu'$ are given by
\[  \wh{\mu'}(r) =  (2/\pi)^{1/2}\lim_{\ell \rightarrow \infty}\mb{E} (\log_2 \mbf{x}_{\ell} - \ell)^+ \tau_{\mbf{x}}(\ell)^{c - \frac{2\pi i r}{\log 2}}.\] The existence of these limits is part of the proposition. Moreover, the bounded Lipschitz norm $\Vert \mu' \Vert_{\BL}$ is bounded, and $\mu'$ is not the zero measure.
\end{proposition}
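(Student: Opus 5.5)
The plan is to run the same argument as for \cref{prop16.2}, with the lower process $\mbf{x}$ and the $\tau$ statistics playing the roles of $\mbf{u}$ and $\varrho$. The key structural fact is monotonicity: $\tau_{\mbf{x}}(\ell)$ is non-increasing in $\ell$. Moreover, once $\mbf{x}_{\ell+1}$ exceeds the largest subset sum of $\mbf{x}_1,\dots,\mbf{x}_\ell$, the new element creates no collisions. Since $\mbf{x}_{\ell+1}\approx 2^{\ell+1}$ in typical situations while that largest sum is $\approx 2^{\ell+1}$ as well, collisions happen only through near-coincidences. Two heuristics drive the argument:
\begin{itemize}
\item[(i)] $\tau_{\mbf{x}}(\ell)$ stabilises quickly on the event that $\log_2\mbf{x}_\ell-\ell$ stays large;
\item[(ii)] the weight $(\log_2 \mbf{x}_\ell-\ell)^+$ behaves like a random walk conditioned (in a harmonic-function sense) to stay positive.
\end{itemize}

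First, I would set $S_\ell:=\log_2\mbf{x}_\ell-\ell$. The gaps $\log_2\mbf{x}_{\ell+1}-\log_2\mbf{x}_\ell$ are asymptotically i.i.d.\ exponential with mean $1$, because the points $\log_2 \mbf{x}_i$ form approximately a rate $1$ Poisson process. So $S_\ell$ is approximately a mean-zero random walk. The factor $\tau^c$ with $c=-\log\log 2/\log 2$ should correspond to a change of measure that keeps this walk centred. With this I would prove an a priori bound
\[ \mb{E}\,(S_\ell)^+\tau_{\mbf{x}}(\ell)^c 1_{\tau_{\mbf{x}}(\ell)>0}\ll 1 \]
uniformly in $\ell$. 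The point is that if $S_j$ is very negative for some $j\le \ell$, then the $\mbf{x}_i$ are crowded and $\tau_{\mbf{x}}(\ell)$ is forced to be tiny. Here $\tau\ge 2^{-\ell}$ always, since at least one sum is distinct. I would make this quantitative via the trivial bound
\[ \tau_{\mbf{x}}(\ell)\le 2^{-\ell}(1+\sum_i \mbf{x}_i)\ll 2^{S_j - (\ell-j)}\tau_{\mbf{x}}(j)\cdots, \]
which is the analogue of \cref{X-upper-gen}. Combining it with ballot-type estimates (probability $\asymp h/\sqrt{\ell}$ of staying above $-h$) gives boundedness of the BL norm. The constant $(2/\pi)^{1/2}$ is the usual renewal normalisation for $\mb{E}(S_\ell)^+$ conditioned on survival.

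Second, I would establish existence of the limit by a Cauchy argument: compare the quantity at $\ell$ and at $\ell'>\ell$. Between $\ell$ and $\ell'$, the increments $\tau(\ell')/\tau(\ell)$ differ from $1$ only if some new $\mbf{x}_{i}$ with $i>\ell$ lands within distance $\ll \sum_{j<i}\mbf{x}_j$ of a difference of existing subset sums. Conditional on $S_\ell=s$ large, this has probability $\ll 2^{-s}$ roughly. Writing $(S_{\ell'})^+=(S_\ell)^++\text{martingale increment}+\text{boundary correction}$ and using optional stopping, the expectations differ by $o(1)$ as $\ell\to\infty$. I expect this to be the main obstacle. The difficulty is controlling the joint behaviour of the walk near $0$ and the decay of $\tau$, i.e.\ showing the contribution from paths where $S$ returns near $0$ after time $\ell$ is negligible uniformly. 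I would try first to truncate at $S_\ell\ge \log\ell$, which holds for all but a $O(\log \ell/\sqrt{\ell})$-weighted fraction of surviving paths, and then show $\tau$ barely changes afterwards.

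Third, I would obtain the Fourier formula by taking $\psi(x)=e(rx)$, which is Lipschitz; the integrand then becomes $\tau^{c-2\pi i r/\log 2}$. For nontriviality I would take $\psi\equiv 1$ and exhibit a positive-probability event on which $\tau_{\mbf{x}}(\ell)\ge\tfrac12$ and $S_\ell\ge 1$ for all large $\ell$. For instance, require the first few $\mbf{x}_i$ to be distinct powers-of-two-like values and $S$ to stay above $1$. On this event all subset sums are distinct, since each $\mbf{x}_{i+1}$ exceeds the sum of its predecessors. This gives a lower bound $\gg 1$ via the ballot theorem.
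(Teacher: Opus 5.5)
Your route differs from the paper's. The paper never proves convergence of $\mb{E}(\log_2\mbf{x}_\ell-\ell)^+\tau_{\mbf{x}}(\ell)^c\psi(\log_2\tau_{\mbf{x}}(\ell))$ from scratch. It starts from the measure $\mu'$ obtained in \cref{main-mu-prime-conv-prop} as the BL-limit of the measures $\mu^{\prime(\ell)}$ built from the Doob-transformed path measures $\omega'_m$. It then shows in five steps that $\mu^{\prime(\ell)}_*$ is BL-close to $\mu^{\prime(\ell)}$, and imports non-vanishing from \cref{mu-prime-not-zero}.

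A direct Cauchy argument is conceivable, but your heuristic (ii) is never implemented, and the mechanism you describe fails. Under $\mb{P}$ the weight $(S_\ell)^+$ is a submartingale, and its compensator (essentially the time $S$ spends near $0$ during $(\ell,\ell']$) is not negligible. Weighted by $\tau_{\mbf{x}}(\ell)^c\approx 2^{-cm_\ell}$, where $-m_\ell$ is the running minimum, it is already of order $1$ for $\ell'=2\ell$: $\mb{P}(m_\ell\le m)\asymp(1+m)\ell^{-1/2}$, while the local time is of order $\sqrt{\ell}$. Likewise, after truncating to $S_\ell\ge\log\ell$ it is false under $\mb{P}$ that $\tau$ ``barely changes afterwards''. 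From any height the centred walk eventually returns below its earlier minimum, and $\tau$ then drops by the corresponding power of $2$.

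These two order-one effects cancel. The missing idea is the one that makes this cancellation visible:
\begin{itemize}
\item Decompose by the running minimum.
\item On $\{\min_{j\le\ell}S_j=-m\}$, replace $(S_\ell)^+$ by the harmonic function of the walk killed below $-m$. In the paper's parametrisation by $\boldbeta'$ this is $h'(m+\cdot)$, with $h'(r)=(2/\pi)^{1/2}r+O(1)$ by \cref{first-rand-walk-pos}. The cost is $O((1+m)^2\ell^{-1/2})$, which is summable against $2^{-cm}$.
\end{itemize}
Only after this reweighting is the family consistent in $\ell$ (it is $\omega'_m$ of \cref{sec7.1}). Only under it do you get the two facts that close the argument:
\begin{itemize}
\item Non-$T$-positive walks have mass $\ll(1+m)^{O(1)}T^{-\eta/2}$ (\cref{lem13.10}), so \cref{lem23} makes $\tau$ stable.
\item Walks whose minimum is attained after time $\ell$ have mass $\ll(1+m)^2\ell^{-1/2}$ (\cref{argmin-omega-lem}).
\end{itemize}
This is \cref{main-mu-prime-conv-prop} plus Steps 3--5 of \cref{sec17}. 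It is also what identifies your limit with the $\mu'$ of \cref{thm:main}, which your plan never addresses. The constant $(2/\pi)^{1/2}$ is the slope of $h'$, not a renewal normalisation. The exponent $c$ comes from \cref{g-lambda-trans}, not from a centring change of measure.

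There is also a smaller problem in your a priori bound. The estimate $\tau_{\mbf{x}}(\ell)\le\tau_{\mbf{x}}(q)\le 2^{-q}+q2^{S_q}$ loses a factor $q^c$. Since $c>1/2$, this is not summable against the $q^{-3/2}$ law of the position $q$ of the minimum. You need the dyadic block control of \cref{Y-upper}, which loses only $q^{\kappa}$.

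The non-vanishing argument is wrong as stated. Staying above $1$ gives only lower bounds $\mbf{x}_i\ge 2^{i+O(1)}$; it does not make the sequence superincreasing. The gaps $\log_2\mbf{x}_{i+1}-\log_2\mbf{x}_i$ remain roughly Exp$(1)$, so $\mbf{x}_{i+1}<\mbf{x}_i+\mbf{x}_{i-1}$ occurs at a positive proportion of indices. The event that $\mbf{x}_1,\dots,\mbf{x}_\ell$ is superincreasing has probability exponentially small in $\ell$, which the weight $(S_\ell)^+$ cannot compensate. Moreover, ``$S_\ell\ge 1$ for all large $\ell$'' is a null event for a centred walk.

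Subset sums along a positive walk are only \emph{mostly} distinct, and this must be proved by counting coincidences, as in \cref{mu-prime-not-zero}. Given the embedded walk, the expected number of pairs of equal subset sums is $\ll 2^{\ell-\beta'(\ell)}\sum_{r\le\ell}2^{-\beta'(r)}$. So for $T$-positive $\beta'$ one has $\tau^*_{\mbf{x}\mid\beta'}(\ell)\gg 2^{-T}$ with probability at least $\frac12$. Finally, $T$-positive walks carry $\omega'_0$-mass bounded below, by \cref{lem13.10}.
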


\subsection{Acknowledgments}
MS thanks Ivan Corwin, Milind Hegde, Yang Liu, Ashwin Sah, Dominik Schmid and Mark Sellke for helpful and motivating conversations. BG thanks Sean Eberhard, Kevin Ford and Dimitris Koukoulopoulos for discussions on related matters over a number of years prior to this work.
Both authors thank Kannan Soundararajan for his immediate and somewhat remarkable observation that the function $g$ appears in \cite{granville-sedunova-sabuncu}. Finally, we are both very grateful to Kevin Ford for numerous comments on a draft of this paper.

BG is supported by Simons Investigator Award 376201. For the purpose of Open Access, the first-named author has applied a CC BY public copyright licence to any Author Accepted Manuscript (AAM) version arising from this submission. This research was conducted during the period MS served as a Clay Research Fellow. 

\subsection{Notation} 

The following pieces of notation will be in force throughout the paper. The letter $k$ will always be as in \cref{thm:main} (that is, we want to compute $p(k)$). We always write
\begin{equation}\label{k-integer-decomp} k = 2^{n + \xi}
\end{equation} where $n \in \Z$ and $\xi \in [0,1)$. That is, $\xi = \{ \log_2 k\}$. \vspace*{8pt}

\emph{Bold font.} There are many random variables in the paper. We have tried to consistently use bold font to denote these, which hopefully helps avoid some confusion, particularly when various conditionings are in place. To avoid conflict we have used blackboard bold for $\N, \Z, \R$ (the naturals, integers and reals) and for $\mb{P}, \mb{E}$ (probability and expectation).\vspace*{8pt}

\emph{Asymptotic notation.} We will use the asymptotic notations $\gg, \ll$ and $O(\cdot)$ in the standard manner for analytic number theorists. That is, $X = O(Y)$ and $X \ll Y$ both mean that $|X| \le C Y$ for some absolute constant $C$ (which may change from line to line). If $X, Y$ are positive quantities then we write $X \asymp Y$ to mean $Y \ll X \ll Y$. We will sometimes use $\Omega(X)$ to mean a quantity which is bounded below by $cX$ for some absolute constant $c > 0$ (which may change from line to line). We will use the $o(\cdot)$ notation only sparingly; $o(1)$ means a quantity which tends to $0$ as $k \rightarrow \infty$. The notations $\ggg$ and $\lll$ are informal and mean `much greater than' and `much less than' respectively.\vspace*{8pt}

\emph{Further notation.} We will use the following conventions and further pieces of notation.
\begin{itemize}
\item Write $H_X = \sum_{j \le X} 1/j$; here $X$ need not necessarily be an integer.
\item $1_S(x)$ and $1_{x \in S}$ both mean $1$ if $x \in S$ and $0$ otherwise.
\item If $N$ is a positive integer, $[N] = \{1,\dots, N\}$.
\item If $f : [N] \rightarrow \R$ is a function (for some $N$) then $\argmin(f)$ denotes the first $q$ for which $f(q) = \min_{1 \le i \le N} f(i)$. 
\item If $E_1, E_2$ are two events, $\mb{P}(E_1, E_2)$ denotes the probability that both $E_1$ and $E_2$ occur. Occasionally in more complicated expressions we will write $\mb{P}(E_1 \cap E_2)$ for the same thing. We sometimes write $\neg E$ for the complement of the event $E$.
\item $|S|$ denotes the cardinality of the set $S$. This notation will also be used for multisets, in which case elements are counted with multiplicity. The notation $\# \{x : \text{conditions on $x$}\}$ denotes the cardinality of the set of $x$ satisfying the specified conditions. 
\item $\Supp(f)$ denotes the support of the function $f$, that is to say $\{ x : f(x) \ne 0\}$.
\item $x^+$ denotes $\max(x, 0)$.
\item $\mbf{X} \samedist \Pois(\lambda)$ means that $\mbf{X}$ is a random variable with the $\Pois(\lambda)$ distribution, that is to say $\mb{P}(X = i) = e^{-\lambda}\lambda^i/i!$ for nonnegative integers $i$. Occasionally we will abuse notation by writing $\Pois(\lambda)$ to mean a random variable with the $\Pois(\lambda)$ distribution. Most particularly, we will talk about random walks with $\Pois(1)-1$ steps, which means the increments $\boldxi$ are distributed as $X - 1$ with $X \samedist \Pois(1)$. Similarly a walk with $1 - \Pois(1)$ steps has increments $\boldxi'$ distributed as $1 - X$ with $X \samedist \Pois(1)$.
\end{itemize}

\subsection*{Index of Key Notation}
For convenience we collect an index of the most frequently used notation in the paper.

\renewcommand{\arraystretch}{1.2}

\begin{longtable}{>{$}c<{$} >{\raggedright\arraybackslash}p{0.45\textwidth} >{\raggedright\arraybackslash}p{0.2\textwidth}}
\toprule
\textbf{Notation} & \textbf{Brief Description} & \textbf{Defined in} \\
\midrule
\endfirsthead

\toprule
\textbf{Notation} & \textbf{Brief Description} & \textbf{Defined in} \\
\midrule
\endhead

\bottomrule
\endfoot

k,n,\xi & global parameters $k = 2^{n + \xi}$ &  \cref{k-integer-decomp}\\
\kappa & small parameter ($= \frac{1}{100}$ say) in definition of bounded random walk & \cref{bounded-walk-def}\\
\eta & small parameter ($= \frac{1}{100}$ say) in definition of positive random walk & \cref{good-walk-def} \\
\eps_0,\eps_1 & small positive constants, power savings in random walk estimates  &  \cref{lem13.5}, \cref{decoupling} \\
W 
& $W(x) = x e^{-x^2/2} 1_{x \ge 0}$ (density of the Rayleigh distribution)
& \cref{raleigh-intro} \\

\delta
& Erd\H{o}s--Ford--Tenenbaum constant 
& \cref{eft-def} \\
\gamma & Euler's constant & \\

g
& (except \cref{sec3,sec4}) specific function in $C^{\infty}(\R/\Z)$ & \cref{g-funct-def}\\
g & (\cref{sec3,sec4}) Gaussian density function & \cref{gaussian-def}\\
\phi & specific map from $[0,\infty) \rightarrow \N$ with $\phi(x) \asymp 2^x$ & \cref{phis-def} \\
\tilde\phi & technical variant of $\phi$ & \cref{phis-def} \\
\Pois(\lambda) & Poisson random variable parameter $\lambda$ & \cref{tau-stat-intro} \\

\boldbeta
& random walk with $\Pois(1) - 1$ steps $\boldxi_i$
&  \\
\boldbeta'
& random walk with $1 - \Pois(1)$ steps $\boldxi'_i$
&  \\

\mbf{A}
& random multiset with dyadic rate & \cref{a-law} \\

\mbf{u} & upper process (rate 1 Poisson process) & \cref{upper-lower-process-def} \\
\mbf{x} & lower process (random lacunary sequence of integers) & \cref{upper-lower-process-def} \\
\mbf{s} & auxiliary rate 1 Poisson process & \cref{section5} \\
\tau_{\mbf{x} \mid \beta'},\tau^*_{\mbf{x} \mid \beta'} & variants of $\tau$ functional & \cref{tau-sec} \\
\varrho_{\mbf{u} \mid \beta}, \varrho^*_{\mbf{u} \mid \beta} & variants of $\varrho$ functional &\cref{rho-sec} \\
h(m), h'(m) & almost positivity probabilities for random walks & \cref{hm-def,hm-def-2} \\ \Sigma(S) & set of subset sums of $S$ & \cref{sig-tau-def}
\end{longtable}

\section{Heuristic explanation} \label{heuristic-sec}

The form of our main result is complicated. We offer here a heuristic explanation for the result, which also serves as a rather rough overview of the actual argument. This heuristic makes certain simplifications and should not be taken too literally; a more technical overview may be found in \cref{outline-sec4} below.

Let us first recall from \cite{EFG16} that we can immediately pass from the language of permutations to a formulation in terms of a so-called Poisson random multiset. A multiset $S \subset \N$ has the usual definition; $r_S(i)$ denotes the multiplicity of $i$ in $S$.

\begin{definition}\label{sig-tau-def} If $S$ is a multiset (finite or infinite), let $s_1, s_2,\dots$ be some listing of its elements (with multiplicity). We define the set $\Sigma(S)$ of subset sums to consist of all sums $\sum_{i} \eps_i s_i$ with $\eps_i \in \{0,1\}$ and all but finitely many $\eps_i$ equal to $0$.
\end{definition}
\begin{remark} Note carefully that we always regard $\Sigma(S)$ as a \emph{set} and not a multiset. \end{remark}
Now let $\mbf{A}_0 \subset \N$ denote the random multiset with $r_{\mbf{A}_0}(i) \samedist \Pois(1/i)$ for all $i \in \N$, with these variables being independent. We then have
\begin{equation}\label{pk-sumset}  p(k) = \mb{P} (k \in \Sigma(\mbf{A}_0)).\end{equation}
This follows from the fact that a permutation $\pi$ has an invariant set of size $k$ if and only if it has disjoint cycles of lengths summing to $k$, together with the theorem \cite{AT} that the distribution of the number of cycles of length $i$, $i = 1,\dots, k$, of a random $\pi \in \mf{S}_N$ tends to that of independent $\Pois(1/i)$ random variables in the large $n$ limit. We will not mention permutations again, and from now on our focus will be on \cref{pk-sumset}. 

An observation which in some form goes back to Erd\H{o}s \cite{erdos-mult-1} is that the most important contribution to $p(k)$ comes from instances of $\mbf{A}_0$ with $|\mbf{A}_0 \cap [k]| \approx \log_2 k$, that is to say from sets which are bigger by a factor $\frac{1}{\log 2}$ than the expected size. We will explain why this is so in \cref{new-sec-3} below.

It is tempting to model the relevant sets $\mbf{A}_0$ by `changing measure' and using $\Pois(1/(i \log 2))$ variables to define a random multiset $\mbf{A}_1$. However, to better capture the large-scale behaviour we use the following variant of this. First we recall the definitions of the upper and lower processes $\mbf{u},\mbf{x}$ from the introduction.

\begin{definition}\label{upper-lower-process-def}
Define the \emph{upper process} $\mbf{u} = (\mbf{u}_i)_{i = 1}^{\infty}$ to be simply the sequence of arrival times of a rate 1 Poisson process on $[0,\infty)$. Define the \emph{lower process} $\mbf{x} = (\mbf{x}_i)_{i = 1}^{\infty}$ by ordering the elements of a random (multi-)set of positive integers, where the multiplicity of $i$ is a $\Pois(1/(i\log 2))$ random variable, with these variables being independent. 
\end{definition}

Now we define our random multiset $\mbf{A}_1$. Pick some $k_0$ with $1 \lll k_0 \lll k$. For definiteness one could take $k_0 := \sqrt{k}$, although the precise choice is unimportant ($k_0$ does not need to be an integer). The `small' elements of $\mbf{A}_1$ are simply the elements $\mbf{x}_i$ of the lower process which are less than or equal to $k_0$, which we write $\mbf{x}_1 \le \mbf{x}_2 \le \cdots \le \mbf{x}_{L'}$ in order (where $L'$ is the number of such elements, which is itself a random variable). The `large' elements of $\mbf{A}_1$ are $\lfloor k 2^{-\mbf{u}_i}\rfloor$, where $\mbf{u}_1 \le \mbf{u}_2 \le \dots \le \mbf{u}_L$ are the elements of the upper process which are less than $\log_2(k/k_0)$. Again, $L$ is a random variable. We remark that the probability that some $i \ge k_0$ lies in this upper portion of $\mbf{A}_1$ equals the probability that some element of $\mbf{u}$ lies in the interval $(\log 2)^{-1}\big( \log (k/i) - \log(1 + \frac{1}{i}) , \log (k/i)\big]$, which is $\frac{1}{i \log 2} + O(i^{-2})$.

One may compute that this change of measure affects probabilities as follows. If $|A| = n + D$ then we have
\begin{equation}\label{change-measure-result} \mb{P}(\mbf{A}_0 \cap [k] = A) \approx e^{\gamma(\frac{1}{\log 2} - 1)} k^{-\delta} (\log 2)^{D - \xi} \mb{P}(\mbf{A}_1  = A),\end{equation} where $\gamma$ is Euler's constant. (An essentially identical computation to this one is done carefully at the end of \cref{new-sec-3}.) Thus we see that the desired quantity $p(k)$ can be understood with a sufficiently accurate understanding of the probabilities $\mb{P}(k \in \Sigma(\mbf{A}_1), \; |\mbf{A}_1| = n + D)$.

The next observation is (in slightly different language) the key advance of Kevin Ford \cite{For08}, which is to establish that the main contribution is from sets $\mbf{A}_1$ for which roughly speaking we have
\begin{equation}\label{xi-rough-pos} \log_2 \mbf{x}_i \ge i - O(1)\end{equation}
and
\begin{equation}\label{yi-rough-pos} \mbf{u}_i \le i + O(1)\end{equation} for all $i$. Let us briefly explain in rather heuristic terms where this comes from. In what follows we condition to $|\mbf{A}_1| = n + D$ with $D = O(1)$; thus \[ L + L' = n + D.\] For the discussion, we will otherwise ignore the effect of this conditioning on the distribution of the $\mbf{x}_i$ and $\mbf{u}_i$ variables.

Suppose first that \cref{xi-rough-pos} is violated, say $\log_2 \mbf{x}_q \le q - m$ for some $m$ (which we think of as ``not $O(1)$'') and some $q \le L'$. Then, since the $\mbf{x}_i$ grow roughly dyadically, one expects $\Sigma(\mbf{x}_1,\dots, \mbf{x}_q)$ to be contained in $[0, O(2^{q - m})]$, and hence to have size $\lessapprox 2^{q - m}$. It then follows that 
\[ |\Sigma(\mbf{A}_1)| \le |\Sigma(\mbf{x}_1,\dots, \mbf{x}_q)| \cdot |\Sigma(\mbf{x}_{q+1},\dots, \mbf{x}_{L'})| \cdot |\Sigma(\mbf{u}_1,\dots, \mbf{u}_L)|\lessapprox  2^{q - m} \cdot 2^{n + D - q} \sim k 2^{ - m}.\] This leads us to expect that \begin{equation}\label{x-2m-heur} \mb{P}(k \in \Sigma(\mbf{A}_1)) \lessapprox 2^{-m},\end{equation} or in other words the contribution from those $\mbf{x}$ for which \cref{xi-rough-pos} is violated is quite small.

Now suppose that \cref{yi-rough-pos} is violated in the sense that  $\mbf{u}_q \ge q + m$ for some $q \le L$. First observe that $k \in \Sigma(\mbf{A}_1)$ requires that \begin{equation}\label{member-a1} \big| k - \sum_{i = 1}^L \eps_i \lfloor k 2^{-\mbf{u}_i}\rfloor\big| =  \sum_{i = 1}^{L'} \eps'_i \mbf{x}_i \lessapprox k_0\end{equation} for some $\eps_i, \eps'_i \in \{0,1\}$, where the second inequality is what we typically expect since the $\mbf{x}_i$ grow dyadically and the largest is $\mbf{x}_{L'} \le k_0$.
Now one expects $\sum_{i \ge q} \lfloor k2^{-\mbf{u}_i}\rfloor \lessapprox k2^{-q - m}$, and so \cref{member-a1} requires \begin{equation}\label{conc-heur} \big|1 - \sum_{i = 1}^{q - 1} \eps_i 2^{-\mbf{u}_i}\big| \lessapprox 2^{-q - m}\end{equation} for some $\eps_1,\dots, \eps_{q-1} \in \{0,1\}$. There are $2^{q - 1}$ choices for $\eps = (\eps_i)_{i = 1}^{q - 1}$, and for each the inequality \cref{conc-heur} can typically only happen if $\eps_i \ne 0$ for some $i = O(1)$, since $\mbf{u}_i \approx i$. For any such $\eps$, by revealing all variables except $\mbf{u}_i$ one sees that the probability of \cref{conc-heur} is $\lessapprox 2^{-q - m}$. Summing over $\eps$ again yields a bound like \cref{x-2m-heur}. 

This concludes our discussion of why the important contributions come from \cref{xi-rough-pos,yi-rough-pos}; as previously remarked, a rigorous version of this analysis and its implications was one of the key advances of Ford \cite{For08}.

Now we come to the new part of the analysis. The point of departure is that random $\mbf{A}_1$ satisfying the properties \cref{xi-rough-pos,yi-rough-pos} will in fact (with high probability) robustly satisfy them in the sense that
\begin{equation}\label{xi-rough-pos-robust} \log_2 \mbf{x}_i \ge i + i^{1/2 - \eta}\end{equation}
and
\begin{equation}\label{yi-rough-pos-robust} \mbf{u}_i \le i - i^{1/2 - \eta}\end{equation} for large $i$. (Here $\eta$ is a small constant, say $\eta = \frac{1}{100}$.)
These statements are closely analogous to the fact that (under mild hypotheses on the increments) a mean zero random walk $\boldbeta(1),\boldbeta(2),\dots, \boldbeta(N)$ of length $N$, conditioned to stay positive, will almost surely have $\boldbeta(i) \ge i^{1/2 - \eta}$ for all large $i$. (In fact we will make extensive use of the detailed theory of random walks conditioned to stay almost positive in our arguments.)

The conditions \cref{xi-rough-pos-robust,yi-rough-pos-robust} (or closely related ones) have two important ramifications for us. First, they imply a stability property for the $\varrho$ and $\tau$ statistics defined in \cref{x-stat-def,tau-stat-def}; roughly speaking we have
\begin{equation}\label{tau-stability} \tau_{\mbf{x}}(\ell) \approx \tau_{\mbf{x}}(\ell')\end{equation} for $\ell, \ell' \ggg 1$ and similarly
\begin{equation}\label{rho-stability} \varrho_{\mbf{u}}(\ell) \approx \varrho_{\mbf{u}}(\ell').\end{equation}
Heuristically, the explanation for \cref{tau-stability} is that the slightly faster-than-$2^i$ growth of the $\mbf{x}_i$ means that new coincidences $\sum_{i = 1}^{\ell} \eps_i \mbf{x}_i = \sum_{i = 1}^{\ell}\eps'_i \mbf{x}_i$ become unlikely as $\ell$ increases. A similar heuristic can be provided for \cref{rho-stability}.
We caution that \cref{tau-stability,rho-stability} should be considered only as heuristics; the actual statements, established in \cref{tau-sec,rho-sec}, are more technical.

The second important ramification of \cref{xi-rough-pos-robust,yi-rough-pos-robust}, and in some sense the heart of the paper, is that with conditions such as these in place we are able to make the following heuristic rigorous.

Consider the portion of $\Sigma(\mbf{A}_1)$ coming from the $\mbf{x}_i$, that is to say 
\[ \Sigma_{\sml}(\mbf{A}_1) := \{ \sum_{i = 1}^{L'} \eps'_i \mbf{x}_i : \eps'_i \in \{0,1\}\}.\]
Observe that by definition \cref{tau-stat-def} we have
\begin{equation}\label{sig-s-small} |\Sigma_{\sml}(\mbf{A}_1)| = 2^{L'} \tau_{\mbf{x}}(L').\end{equation}
The event $k \in \Sigma(\mbf{A}_1)$ is then precisely the event that there is some $\eps = (\eps_i)_{i = 1}^L$ such that 
\begin{equation}\label{unrescale}  k - \sum_{i = 1}^L \eps_i \lfloor k 2^{-\mbf{u}_i}\rfloor \in \Sigma_{\sml}(\mbf{A}_1).\end{equation} 

Let $\psi(k)$ be some very slowly-growing function. Then almost surely we have $\max (\Sigma_{\sml}(\mbf{A}_1)) \le \psi(k) k_0 \le \psi(k) k 2^{-\mbf{u}_L}$, since $\mbf{x}_{L'} \le k_0 \le k 2^{-\mbf{u}_L}$ and the $\mbf{x}_i$ grow roughly as powers of two. That is, the condition \cref{unrescale} is contained in the condition
 \begin{equation}\label{rescale-2} k - \sum_{i = 1}^L \eps_i \lfloor k 2^{-\mbf{u}_i}\rfloor  \in [0, \psi(k) k 2^{-\mbf{u}_L}].\end{equation} 
By the definition \cref{x-stat-def} of $\varrho$, we expect the number of $\eps$ satisfying this weaker condition to be roughly $r:=  \psi(k) 2^{L - \mbf{u}_L} \varrho_{\mbf{u}}(L)$. (Here we are assuming, for the purpose of the heuristic, that \cref{x-stat-def} does not change much if we replace the interval $[1 - 2^{-\mbf{u}_{\ell}}, 1]$ with $[1 - (j+1) 2^{-\mbf{u}_{\ell}}, 1 - j 2^{-\mbf{u}_{\ell}}]$ for $j = 1,\dots, \psi(k) - 1$.)

We now adopt the most na\"{\i}ve heuristic, namely that the probability that a given $\eps$ satisfying \cref{rescale-2} also satisfies \cref{unrescale} should be
\[ \approx \lambda := \frac{ |\Sigma_{\sml}(\mbf{A}_1)|}{\psi(k) k 2^{-\mbf{u}_L}} .\]
Assuming suitable independence, the natural guess for the probability that \cref{unrescale} holds for at least one of the $r$ choices of $\eps$ for which \cref{rescale-2} holds is $1 - (1 - \lambda)^r \approx 1 - e^{-\lambda r}$, that is to say
\begin{equation}\label{prob-k-s} \mb{P}(k \in \Sigma(\mbf{A}_1)) \approx 1 - e^{-\lambda r}.\end{equation}
Note that using \cref{sig-s-small} we have
\[ \lambda r = \frac{ |\Sigma_{\sml}(\mbf{A}_1)|}{\psi(k) k 2^{-\mbf{u}_L}}   \psi(k) 2^{L - \mbf{u}_L} \varrho_{\mbf{u}}(L) = \frac{1}{k}2^{L + L'} \tau_{\mbf{x}}(L') \varrho_{\mbf{u}}(L). \]
Since $L + L' = n + D$ and $k = 2^{n + \xi}$, it follows that
\[ \lambda r = 2^{D - \xi}\tau_{\mbf{x}}(L') \varrho_{\mbf{u}}(L). \] Finally, using the invariance properties \cref{tau-stability,rho-stability}, it follows from the above that (heuristically!)
\begin{equation}\label{s-cond} \mb{P}\big(k \in \Sigma(\mbf{A}_1) \mid |\mbf{A}_1| = n + D \big) \approx 1 - \exp\big({-}2^{D - \xi}\tau_{\mbf{x}}(\ell)\varrho_{\mbf{u}}(\ell)\big), \end{equation} where here $1 \lll \ell \lll L, L'$ (one should think of $\ell$ as being `fixed but large' while if $k_0 = \sqrt{k}$ then we expect $L, L' \sim \frac{1}{2}\log_2 k$ to grow with $k$). 

Now that we have written the key relation \cref{s-cond}, for the rest of this heuristic discussion we are no longer conditioning on $|\mbf{A}_1| = n + D$.

Now condition to $\mbf{x}_1 = x_1, \dots, \mbf{x}_{\ell} = x_{\ell}$ and $\mbf{u}_1 = u_1,\dots, \mbf{u}_{\ell} = u_{\ell}$ for fixed choices of $x_1 \le \dots \le x_{\ell}$ and $u_1 \le\dots \le u_{\ell}$. We wish to calculate the probability that $|\mbf{A}_1| = n + D$, or in other words that $L + L' = n + D$. In order to do this we will use the following two facts. Here, $W(t) := t e^{-t^2/2} 1_{t \ge 0}$ denotes the density function of the Rayleigh distribution.\vspace*{5pt}

\emph{Fact 1.} Conditioned on $\mbf{u}_1 = u_1,\dots, \mbf{u}_{\ell} = u_{\ell}$ with $0 \le u_1 \le \cdots \le u_{\ell} < \ell$, the probability that $\mbf{u}_{\ell'} \le \ell'$ for all $\ell'$ with $\ell \le \ell' \le L$ and a given value of $L$ is $\approx (2/\pi)^{1/2} (\ell - u_\ell) N^{-1} W(\frac{L - N}{\sqrt{N}})$, where $N = \lfloor \log_2 (k/k_0)\rfloor$.\vspace*{5pt}

\emph{Fact 2.} Conditioned on $\mbf{x}_1 = x_1, \dots, \mbf{x}_{\ell} = x_{\ell}$ with $0 \le x_1 \le \cdots \le x_{\ell}$ and $\log_2 x_{\ell} > \ell$, the probability that $\log_2 \mbf{x}_{\ell'} \ge \ell'$ for all $\ell'$ with $\ell \le \ell' \le L'$ and a given $L'$ is $\approx (2/\pi)^{1/2} (\log_2 x_{\ell} - \ell) (N')^{-1} W(\frac{N' - L'}{\sqrt{N'}})$ where $N' = \lfloor \log_2 k_0\rfloor$.\vspace*{5pt}

Both of these facts come from the same source, which is the behaviour of random walks conditioned to stay (almost) positive. For Fact 1, there is a natural random walk whose $i$th step is the number of elements of $\mbf{u}$ in the interval $[u_{\ell} + i - 1, u_{\ell} + i]$, minus 1; the increments of this walk are i.i.d. $\Pois(1) - 1$ variables, and its length $T$ is $\approx \log_2(k/k_0) - u_{\ell}$. The conditions $u_{\ell'} \le \ell'$ are then essentially that this random walk stays above $-(\ell - u_\ell)$, and the event that we are interested in is essentially that the walk ends at $L - \ell - T$. We use the approximations $T \sim N$ and $L - \ell - T \sim L - N$. Fact 1 is then a consequence of known results about the distribution of the endpoint of random walk of length $N$, conditioned to stay above $-m$. (For precise statements of the relevant statements, which are consequences of work of Denisov, Tarasov and Wachtel \cite{DTW24b}, see \cref{random-walk-sec} and in particular \cref{first-rand-walk-pos,lem13.5}). The explanation of Fact 2 is analogous and we omit a detailed discussion.

We wish to sum the probabilities in Facts 1 and 2 over $L, L'$ such that $L + L' = n + D$ (where here $D = O(1)$). Since the $W( \cdot)$ terms vary on a scale of $\sqrt{N}$ or $\sqrt{N'}$, which we assume $\ggg 1$, this sum does not essentially depend on $D$; ignoring minor issues of integer parts, we may thus assume $L = N + t$ and $L' = N' - t$ (so $L + L' = N + N' = \log_2 k = n + O(1)$). The sum is then 
\[ \frac{2}{\pi} (\ell - u_\ell) (\log_2 x_{\ell} - \ell) \sum_t N^{-1} (N')^{-1} W\big(\frac{t}{\sqrt{N}}\big) W\big(\frac{t}{\sqrt{N'}}\big).\]
Writing $M := \big(\frac{N N'}{N + N'}\big)^{1/2}$ and approximating the sum by an integral, the sum over $t$ is
\begin{align}\nonumber (NN')^{-3/2} \sum_{t \in \Z_{\ge 0}} t^2 e^{-\frac{1}{2} t^2 (\frac{1}{N} + \frac{1}{N'})} & = \frac{M^2}{(NN')^{3/2}} \sum_{t \in \Z_{\ge 0}} \big(\frac{t}{M}\big)^2 e^{-\frac{1}{2}(t/M)^2} \approx  \big(\frac{M^2}{NN'}\big)^{3/2} \int^{\infty}_0 x^2 e^{-x^2/2} dx \\ & =  (\pi/2)^{1/2} (N + N')^{-3/2} \approx (\pi/2)^{1/2} (\log_2 k)^{-3/2}. \label{coupling-heuristic}\end{align}
In summary, conditioned on $\mbf{x}_1 = x_1,\dots, \mbf{x}_{\ell} = x_{\ell}$ and $\mbf{u}_1 = u_1,\dots, \mbf{u}_{\ell} = u_{\ell}$ we have
\[ \mb{P}\big(|\mbf{A}_1| = n + D\big) \approx (2/\pi)^{1/2}(\log_2 k)^{-3/2} (\ell - u_\ell) (\log_2 x_{\ell} - \ell) .\]
We now combine this with \cref{s-cond} and undo the conditioning to $\mbf{x}_1 = x_1,\dots, \mbf{x}_{\ell} = x_{\ell}$ and $\mbf{u}_1 = u_1,\dots, \mbf{u}_{\ell} = u_{\ell}$. This gives that $\mb{P}\big(k \in \Sigma(\mbf{A}_1) , \,|\mbf{A}_1| = n + D\big)$ is approximately
\[  \approx\big( \frac{2}{\pi}\big)^{1/2}(\log_2 k)^{-3/2} \mb{E}_{\mbf{x}, \mbf{u}} (\log_2 \mbf{x}_{\ell} - \ell)^+ (\ell - \mbf{u}_{\ell})^+ \big(1 - \exp\big({-}2^{D - \xi} \tau_{\mbf{x}}(\ell) \varrho_{\mbf{u}}(\ell)\big)\big) .\]
Finally, summing over $D$ using \cref{change-measure-result} gives
\[ p(k) = \mb{P}(k \in \Sigma(\mbf{A}_0)) \approx  f(\xi) k^{-\delta} (\log k)^{-3/2},\]
where \[ f(\xi) = (\frac{\pi}{2})^{1/2} (\log 2)^{3/2} e^{\gamma(\frac{1}{\log 2} - 1)}\mb{E}_{\mbf{x},\mbf{u}} (\log_2 \mbf{x}_{\ell} - \ell)^+ (\ell - \mbf{u}_{\ell})^+  g_{\tau_{\mbf{x}}(\ell)\varrho_{\mbf{u}}(\ell)}(\xi), \]
with
\[ g_{\lambda}(\xi) := \sum_{D \in \Z} (\log 2)^{D - \xi} (1 -  e^{-\lambda 2^{D - \xi}}).\]
Note that
\begin{align}
g_{\lambda}(\xi)  & = \sum_{D \in \Z} (\log 2)^{D - \xi} (1 -  e^{-2^{D - \xi + \log_2 \lambda}}) \\ & = (\log 2)^{-\log_2\lambda} \sum_{D \in \Z} (\log 2)^{D - \xi + \log_2\lambda} (1 -  e^{-2^{D - \xi + \log_2 \lambda}}) = \lambda^{-\frac{\log \log 2}{\log 2}} g(\xi  -  \log_2\lambda)\label{g-lambda-trans}
\end{align}
with $g$ the function in \cref{thm:main}
and so we indeed see that $f = c_0 g \ast \mu \ast \mu'$ with $\mu, \mu'$ defined as in \cref{prop16.2,sec17-main} and $c_0 = (\frac{\pi}{2})^{1/2} (\log 2)^{3/2} e^{\gamma(\frac{1}{\log 2} - 1)}$.

\begin{remark}
Properties of uniform order statistics analogous to \cref{xi-rough-pos-robust,yi-rough-pos-robust} (so-called ``strong barrier conditions'') have been exploited in a related context by Schlitt in recent work \cite{schlitt}; see in particular \cite[page 5]{schlitt} for some further references.
\end{remark}

\section{Changing measure}\label{new-sec-3}
We will give a detailed outline of the rest of the paper in \cref{outline-sec4} below. However, it is expedient to first give a proper treatment of the `change of measure' step (that is, the move from $\mbf{A}_0$ to $\mbf{A}_1$) described at the start of \cref{heuristic-sec}, since this sits slightly to one side of the rest of the paper. 

Rather than working with exactly the random mulitset $\mbf{A}_1$ as described in \cref{heuristic-sec}, we will instead consider the random multiset $\mbf{A}$ in which  \begin{equation}\label{a-law} r_{\mbf{A}}(i) \samedist \Pois\big(\frac{n}{iH_k}\big)\end{equation} for $i \in [k]$, where $H_k = \sum_{i \le k} 1/i$ is the harmonic sum. Note here that $n/H_k \approx 1/\log 2$, so $\mbf{A}$ is very closely related to $\mbf{A}_1$. Observe that 
\[ \big|\mbf{A} \big| \samedist \Pois \big( \frac{n}{H_k} \sum_{i \le k} \frac{1}{i}\big) = \Pois(n).\]
The fact that $n$ is an integer is the reason for this particular rescaling; this will enable us to make effective use of embedded random walks, where we can bring into play the existing literature on the distribution of almost positive walks.

In this short section, we accomplish two things. First, we show that only the contribution to \cref{pk-sumset} from sets with $|\mbf{A}_0 \cap [k]| \approx \log_2 k$ is important. Second, for sets of roughly this critical size, we analyse the change in passing from the random multiset $\mbf{A}_0$ to the reweighted random multiset $\mbf{A}$. The following is the key result of the section, and the only one we will need in the rest of the paper. Recall that $\gamma$ is Euler's constant and $\delta$ the Erd\H{o}s-Ford-Tenenbaum constant \cref{eft-def}.
\begin{proposition} 
\label{key-measure-change} Write $k = 2^{n + \xi}$. We have \begin{align*} p(k) = \Big(1 + O\big(\frac{\log \log k}{\log k} \big)\Big)e^{\gamma(\frac{1}{\log 2} - 1)} k^{-\delta} \sum_{|D| \le 20 \log n}   (\log 2)^{D - \xi} & \mb{P}\big(k \in \Sigma(\mbf{A}), \,|\mbf{A} | = n + D\big) \\ & + O\big(k^{-\delta} (\log k)^{-2}\big).\end{align*} \end{proposition}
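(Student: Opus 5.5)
The plan is to compare the laws of $\mbf{A}_0 \cap [k]$ and $\mbf{A}$ exactly, via the ratio of their Poisson likelihoods, and to handle the values of $|\mbf{A}_0\cap[k]|$ far from $n$ by crude bounds. Since every element of $\Sigma(\mbf{A}_0)$ equal to $k$ uses only parts $\le k$, \cref{pk-sumset} gives $p(k) = \sum_{m \ge 0}\mb{P}(k \in \Sigma(\mbf{A}_0\cap[k]),\ |\mbf{A}_0 \cap [k]| = m)$. I split this sum into the range $m = n+D$ with $|D| \le 20\log n$ and the two tails.

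\emph{The central range.} For a fixed multiset $A \subset [k]$ with multiplicities $r_i$, the independence of the Poisson multiplicities gives
\[ \frac{\mb{P}(\mbf{A}_0 \cap [k] = A)}{\mb{P}(\mbf{A} = A)} = \prod_{i \le k} \frac{e^{-1/i} (1/i)^{r_i}}{e^{-n/(iH_k)}(n/(iH_k))^{r_i}} = e^{n - H_k}\Big(\frac{H_k}{n}\Big)^{|A|}. \]
This depends only on $|A|$, so summing over $A$ with $|A| = n+D$ and $k \in \Sigma(A)$ gives the identity
\[ \mb{P}\big(k \in \Sigma(\mbf{A}_0\cap[k]),\ |\mbf{A}_0\cap[k]| = n+D\big) = e^{n-H_k}(H_k/n)^{n+D}\,\mb{P}\big(k\in\Sigma(\mbf{A}),\ |\mbf{A}| = n+D\big). \]
Next I expand the ratio using $H_k = (n+\xi)\log 2 + \gamma + O(1/k)$. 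Writing $H_k/n = \log 2\,(1 + a/n)$ with $a = \xi + \gamma/\log 2 + O(1/k)$, we have $(n+D)\log(1+a/n) = a + O((|D|+1)/n)$. Combining this with $e^{n - H_k} = e^{n - (n+\xi)\log 2 - \gamma}$ and $k^{-\delta} = e^{-(n+\xi)(\log 2 - 1 - \log\log 2)}$ yields
\[ e^{n - H_k}(H_k/n)^{n+D} = \Big(1 + O\big(\tfrac{|D|+1}{n}\big)\Big)\, e^{\gamma(\frac{1}{\log 2}-1)}\, k^{-\delta}\, (\log 2)^{D-\xi}. \]
This is routine bookkeeping: the $n$-linear terms combine into $k^{-\delta}$, and the $\xi$ terms combine into $(\log 2)^{-\xi}$. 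For $|D| \le 20\log n$ the error is $O(\log n/n) = O(\log\log k/\log k)$, which gives the main term of the proposition.

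\emph{Tail $D > 20\log n$.} I drop the condition $k\in\Sigma$ and use the same identity. For $D \le n$, the ratio $e^{(n+D)\log(1+a/n)}$ is $O(1)$, so the contribution is $\ll k^{-\delta}\sum_{D > 20\log n}(\log 2)^{D}$. Since $(\log 2)^{20 \log n} = n^{20\log\log 2} \le n^{-7}$, this is $\ll k^{-\delta} n^{-7}$. For $m > 2n$ I use the plain Poisson tail of $|\mbf{A}_0 \cap [k]| \samedist \Pois(H_k)$. Because $2n > H_k\cdot 2/\log 2 \cdot(1-o(1))$, this tail is $\le k^{-c'}$ for some $c' > \delta$, which is far smaller than needed.

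\emph{Tail $D < -20\log n$, the main obstacle.} Here $(\log 2)^D$ is large, so the set-size probability alone is insufficient and I must use $k\in\Sigma$. I would bound the indicator by the number of sub-multisets $B \subset \mbf{A}_0\cap[k]$ with $\sum B = k$. By the Poisson splitting (Mecke) identity, the expected number of pairs $(B,C)$ with $B \cup C = \mbf{A}_0\cap[k]$, $\sum B = k$, $|B| = j$ and $|C| = m-j$ equals $e^{H_k}\,\mb{P}(\sum \mbf{B}' = k,\ |\mbf{B}'| = j)\,\mb{P}(|\mbf{C}'| = m-j)$. Here $\mbf{B}'$ and $\mbf{C}'$ are independent copies of $\mbf{A}_0\cap[k]$. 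The standard estimate $\sum_{a_1+\dots+a_j = k}\prod_i a_i^{-1} \ll j(H_k+O(1))^{j-1}/k$ then gives $\mb{P}(\sum\mbf{B}'=k,\,|\mbf{B}'|=j) \ll e^{-H_k}(H_k+O(1))^{j-1}/((j-1)!\,k)$. Summing over $j$ yields
\[ \mb{P}\big(k\in\Sigma(\mbf{A}_0\cap[k]),\ |\mbf{A}_0\cap[k]| = m\big) \ll \frac{e^{-H_k}(2H_k+O(1))^{m-1}}{(m-1)!\,k}. \]
At $m \approx n$ this is $k^{-\delta}n^{O(1)}$, which is Ford's crude bound. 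Decreasing $m$ by one multiplies it by roughly $m/(2H_k) \approx 1/(2\log 2) \approx 0.72$. Hence for $m \le n - 20\log n$ the total is $\ll k^{-\delta} n^{O(1)}\, 0.72^{20\log n} \ll k^{-\delta}(\log k)^{-2}$, provided the polynomial factor $n^{O(1)}$ (about $n^{-1/2}$ from Stirling) is tracked carefully. I expect this bookkeeping of the prefactor, together with the uniformity of the convolution estimate in $j$, to be the only delicate point. If the factor $0.72^{20\log n}$ proved insufficient, the constant $20$ leaves ample room.
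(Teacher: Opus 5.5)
Your argument is correct. For the central range and the upper tail it coincides with the paper. The likelihood ratio $e^{n-H_k}(H_k/n)^{|A|}$ and its expansion are exactly the paper's. The paper disposes of $|\mbf{A}_0\cap[k]|\ge n+20\log n$ by Stirling for $\Pois(H_k)$, which is the same computation you phrase through the ratio.

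The genuine difference is the lower tail.

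\textbf{The paper's route.} It splits $\mbf{A}_0$ at $k/(2\log k)$ and notes that at most $n-20\log n$ parts summing to $k$ must include one of size $\ge k/(2\log k)$. It conditions on the number of such large parts, which are i.i.d.\ with point masses $O(\log k/k)$. A union bound over at most $2^{m'}$ choices of $(\eps,s)$ then gives $\frac{\log k}{k}2^{m'}\mb{P}(|\mbf{A}_0\cap[k]|=m')$.

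\textbf{Your route.} You compute the first moment of the number of representations exactly. The key input is the identity
$\sum_{a_1+\cdots+a_j=k}\prod_i a_i^{-1}=\frac{j}{k}\sum_{a_1+\cdots+a_j=k}\prod_{i\ge 2}a_i^{-1}\le jH_k^{j-1}/k$.
It follows by inserting $1=\frac1k\sum_i a_i$ and symmetrising. This is the clean proof of your ``standard estimate''; it is uniform in $j$ and needs no $O(1)$ in the power. It gives the bound $e^{-H_k}(2H_k)^{m-1}/((m-1)!\,k)$. This is smaller than the paper's bound by the factor $m/(2H_k\log k)\le 1/\log k$, and it needs no large/small split.

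\textbf{The bookkeeping you flag is not delicate.} Since $H_k\ge n\log 2$, consecutive terms have ratio $(m-1)/(2H_k)<1/(2\log 2)$ for all $m\le n$. The $m=n$ term is $\ll k^{-\delta}n^{-1/2}$. Hence the tail is $\ll k^{-\delta}n^{-1/2}(2\log 2)^{-20\log n}\ll k^{-\delta}n^{-7}$.

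\textbf{What each buys.} Your route is sharper and self-contained, but it relies on the exact harmonic intensity $1/i$. The paper's argument only needs pointwise anticoncentration of one large summand plus a union bound. That is the template it reuses later (e.g.\ in \cref{lem102,lem10.3}) for the $\mbf{a}_{i,j}$, where no exact identity is available.
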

\begin{proof}
The starting point is of course \cref{pk-sumset}. We first eliminate the contribution from sets with size not close to $\log_2 k$. The key claim is that 
\begin{equation}\label{lemma6.1} \mb{P}\big(k \in \Sigma(\mbf{A}_0) \; \mbox{and} \; \big| |\mbf{A}_0 \cap [k]| - n\big| \ge 20 \log n \big) \ll k^{-\delta} (\log k)^{-2} .\end{equation}
A short computation with Stirling's formula using the fact that $|\mbf{A}_0 \cap [k]| \samedist \Pois(H_k)$ shows that 
\[ \mb{P}\big( |\mbf{A}_0 \cap [k]| - n  \ge 20 \log n \big) \ll k^{-\delta} (\log k)^{-2} ,\] so it is enough to show 
\begin{equation}\label{enough-10.1} \mb{P}\big(k \in \Sigma(\mbf{A}_0) \; \mbox{and} \; |\mbf{A}_0 \cap [k]| \le  m \big) \ll k^{-\delta} (\log k)^{-2},\end{equation} where, for brevity, we have written $m := n - 20 \log n$. Define $\mbf{A}_0^{\sml} := \mbf{A}_0 \cap [1,\frac{k}{2\log k})$ and $\mbf{A}_0^{\lrg} := \mbf{A}_0 \cap [\frac{k}{2\log k},k]$. Then the LHS of \cref{enough-10.1} is bounded above by
\begin{equation}\label{int-10.2} \sum_{\substack{m' \le m \\ 1 \le r \le m'}} \mb{P}\big(k \in \Sigma(\mbf{A}_0) \mid |\mbf{A}_0^{\lrg}| = r,\, |\mbf{A}_0^{\sml}| = m' - r \big) \cdot \mb{P} \big(|\mbf{A}_0^{\lrg}| = r,\, |\mbf{A}_0^{\sml}| = m' - r \big); \end{equation} the key point here is that we do not need to include $r = 0$ since, if $\mbf{A}_0^{\lrg}$ is empty and $|\mbf{A}_0 \cap [k]| \le  m$ then $\max \big(\Sigma(\mbf{A}_0) \cap [k]\big) \le \frac{mk}{2\log k} < 3k/4$, so in particular $k \notin \Sigma(\mbf{A}_0)$.

Conditioned on $|\mbf{A}_0^{\lrg}| = r$, we can write $\mbf{A}_0^{\lrg} = \{ \mbf{a}_1,\dots, \mbf{a}_r\}$, where the $\mbf{a}_j$ are independent samples from $\Z \cap [\frac{k}{2\log k}, k]$ with $\mb{P}(\mbf{a}_j = x)$ proportional to $1/x$. (This is the standard relation between the conditioned Poisson and multinomial distributions.) Further conditioning on a fixed choice $A_0^{\sml}$ of $\mbf{A}_0^{\sml}$ of size $m' -r$, we have 
\begin{equation}\label{union-10.1} \{ k \in \Sigma(\mbf{A}_0) \} = \bigcup_{\eps \in \{0,1\}^r \setminus \{0^r\}} \bigcup_{s \in \Sigma(A_0^{\sml})} \{ k = \sum_{j = 1}^r \eps_j \mbf{a}_j + s\}.\end{equation}
For each $\eps$, select the minimum $i$ with $\eps_i \ne 0$, and reveal all $\mbf{a}_j$ except for $j = i$; since the probability mass function of $\mbf{a}_i$ is pointwise bounded by $O(\frac{\log k}{k})$, we see that $\mb{P} \big(k = \sum_{j = 1}^r \eps_j \mbf{a}_j + s\big) \ll \frac{\log k}{k}$. Therefore, from \cref{union-10.1} we have
\[ \mb{P}\big(k \in \Sigma(\mbf{A}_0) \mid |\mbf{A}_0^{\lrg}| = r, \, \mbf{A}_0^{\sml} = A^{\sml}_0 \big) \ll 2^{m'} \frac{\log k}{k}\] and so, summing over all $A^{\sml}_0$ weighted by $\mb{P}(\mbf{A}_0^{\sml} = A^{\sml}_0)$,
\[ \mb{P}\big(k \in \Sigma(\mbf{A}_0) \mid |\mbf{A}_0^{\lrg}| = r, \,|\mbf{A}_0^{\sml}| = m' - r\big) \ll 2^{m'} \frac{\log k}{k}.\] Substituting into \cref{int-10.2} gives
\begin{align*}
\mb{P}\big(k \in \Sigma(\mbf{A}_0) \; \mbox{and} \; |\mbf{A}_0 \cap [k]| & \le  m \big)  \ll \frac{\log k}{k}\sum_{\substack{m' \le m \\ 1 \le r \le m'}} 2^{m'} \mb{P} \big(|\mbf{A}_0^{\lrg}| = r, \, |\mbf{A}_0^{\sml}| = m' - r \big)  \\ & \le \frac{\log k}{k} \sum_{m' \le m} 2^{m'} \mb{P} \big(|\mbf{A}_0 \cap [k] | = m'\big) = \frac{\log k}{k} \sum_{m' \le m}  \frac{e^{-H_k}(2H_k)^{m'} }{(m')!}.
\end{align*}
Using the bounds $e^{-H_k} \ll 1/k$ and $(m')! \ge (m'/e)^{m'}$, one may see that this is bounded above by $\ll \frac{\log k}{k^2} \sum_{m' \le m} (2e H_k/m')^{m'}$. Since $(C/x)^x$ is increasing on $[0, C/e]$, and since $m =  (1 + o(1)) \frac{\log k}{\log 2}$, this is $\ll \frac{(\log k)^2}{k^2} (2e H_k/m)^m$. Noting that $(2e \log k/m_0)^{m_0} = k^{2 - \delta}$, where $m_0 := \log k/\log 2$, we can rewrite this bound as 
\[ (\log k)^2 k^{-\delta} \cdot \big( \frac{m_0}{m}\big)^m e^{m - m_0} \cdot \big( \frac{H_k}{\log k}\big)^m \cdot (2 \log 2)^{m - m_0}.\] Using $(m_0/m)^m e^{m - m_0} \le 1$ and $(H_k/\log k)^m = (1 + O(1/\log k))^m \ll 1$, we see that this is $\ll (\log k)^2k^{-\delta} (2 \log 2)^{-20 \log \log k} < k^{-\delta}( \log k)^{-2}$. This completes the proof of the claim \cref{lemma6.1}. 

From this and \cref{pk-sumset} we evidently have
\[
p(k) = \mb{P}\big(k \in \Sigma(\mbf{A}_0) \; \mbox{and} \; \big| |\mbf{A}_0 \cap [k]| - n\big| \le 20 \log n \big) + O\big( k^{-\delta} (\log k)^{-2}\big).
\]
Now we change measure and express the probabilities using $\mbf{A}$ as given by the law \cref{a-law}.

If $A$ is a fixed multiset of elements from $[k]$ (with the multiplicity of $i$ in $A$ being $r_A(i)$) then 
\[ \mb{P}(\mbf{A}_0 \cap [k] = A) = \prod_{i \in [k]} \frac{e^{-1/i} (1/i)^{r_A(i)}}{r_A(i)!}, \qquad \mb{P}(\mbf{A} = A) =  \prod_{i \in [k]} \frac{e^{-n/H_k i} (n/(H_k i))^{r_A(i)}}{r_A(i)!} ,\] and so if $|A| = n + D$ then 
\[ \mb{P}(\mbf{A}_0 \cap [k]= A) =  e^{n - H_k} \big(\frac{H_k}{n}\big)^{n + D}\mb{P}(\mbf{A} = A).\]
Using that 
\[ n = \frac{\log k}{\log 2} - \xi, \quad \frac{H_k}{n} =  \Big( 1 + \frac{1}{\log k} (\gamma + \xi \log 2) + O\big(\frac{1}{\log^2 k}\big) \Big) \log 2,\]
it follows that we have the key relation \[ \mb{P}(\mbf{A}_0 \cap [k] = A) = \Big(1 + O\big(\frac{1 + |D|}{\log k}\big) \Big) e^{\gamma(\frac{1}{\log 2} - 1)} (\log 2)^{D - \xi} k^{-\delta} \mb{P}(\mbf{A} = A).\] 
Summing over all sets $A$ with cardinality in the interval $[n - 20\log n, n + 20\log n]$ (and thus $|D| \ll \log \log k$) and with $k \in \Sigma(A)$, and applying \cref{lemma6.1}, the proof of \cref{key-measure-change} is complete.
\end{proof}

\section{Outline of the rest of the paper}\label{outline-sec4}

We now give an outline of the rest of the paper. \cref{key-measure-change} has reduced our main task of proving \cref{thm:main} to that of estimating $\mb{P}(k \in \Sigma(\mbf{A}))$ (and that $|\mbf{A}|$ has a certain fixed size close to $\frac{\log k}{\log 2}$), where the random (multi-)set $\mbf{A}$ is defined using \cref{a-law}. An important tool for understanding $\mbf{A}$ -- and indeed the reason for the particular definition \cref{a-law} -- will be the use of random walks.

Roughly, the idea is that an equivalent way to sample $\mbf{A}$ is via the use of two random walks $\boldbeta, \boldbeta'$. The random walk $\boldbeta = (\boldbeta(i))_{i \in \N}$ is a random walk with $\Pois(1) - 1$ steps $\boldxi_i = \boldbeta(i) - \boldbeta(i-1)$, where we adopt the convention that $\boldbeta(0) = 0$ deterministically. Similarly $\boldbeta'$ has $1 - \Pois(1)$ steps $\boldxi'_i$. 

Only the $\boldxi_i$ with $i \le \lceil n/2\rceil$ and the $\boldxi'_i$ with $i \le \lfloor n/2\rfloor$ will be relevant. The choice of $n/2$ here is somewhat arbitrary; any pair of cutoffs summing to $n$ and both tending to infinity would do, and our particular choice corresponds to taking $k_0 \sim \sqrt{k}$ in the heuristic of \cref{heuristic-sec}.

The precise manner in which this sampling of $\mbf{A}$ is done is a little technical and is described carefully in \cref{section5} below. Roughly, one should think of $1 + \boldxi_i$ being the number of elements of $\mbf{A}$ of size $\sim k 2^{-i}$, and $1 - \boldxi'_i$ the number of elements of $\mbf{A}$ of size $\sim 2^i$. The precise meaning of $\sim$ will be clarified in \cref{section5} but the definitions will be such that we have
\begin{equation} |\mbf{A} |  = \sum_{i \le \lceil n/2\rceil}(1 + \boldxi_i) + \sum_{i \le \lfloor n/2\rfloor} (1 - \boldxi'_i)  = n + \boldbeta(\lceil n/2\rceil) - \boldbeta'(\lfloor n/2\rfloor).\label{a-discrep}\end{equation}

Note that $1 + \boldxi_i, 1 - \boldxi'_i \samedist \Pois(1)$, which makes this modelling of $\mbf{A}$ natural. The reason for using two `opposite' random walks (rather than simply $\Pois(1)$ random variables) is completely analogous to the modelling of $\mbf{A}_1$ using the upper and lower processes $\mbf{u},\mbf{x}$ as described in \cref{heuristic-sec}, and has to do with the fact that we will exploit positivity properties of these walks.

Throughout many of our arguments we will condition to a fixed choice $(\beta, \beta')$ of (the initial segments of) $(\boldbeta, \boldbeta')$. We use the term \emph{upper walk} to describe any possible instance $\beta$ of $\boldbeta$; this is simply a sequence $(\beta(i))_{i \in \N}$ whose increments $\xi_i := \beta(i) - \beta(i-1)$ are bounded below by $-1$. Similarly a lower walk is any sequence $(\beta'(i))_{i \in \N}$ whose increments $\xi'_i = \beta'(i) - \beta'(i-1)$ are bounded above by $1$.

Write $(\mbf{A} \mid \beta,\beta')$ for the corresponding (random) choice of $\mbf{A}$. The definition of this object is described carefully in \cref{subsec-53} below, but this outline can be followed without detailed knowledge of that section. By \cref{a-discrep} we have
\begin{equation*}  \# (\mbf{A} \mid \beta, \beta')  = n + \beta(\lceil n/2\rceil) - \beta'(\lfloor n/2\rfloor).  \end{equation*}
We therefore have from \cref{key-measure-change} that 
\begin{align}\nonumber  p(k) = \Big(& 1 + O\big(\frac{\log \log k}{\log k}\big) \Big)e^{\gamma(\frac{1}{\log 2} - 1)} k^{-\delta} \sum_{|D| \le 20 \log n}  (\log 2)^{D - \xi} \times \\ & \times \!\!\!\!\!\!\!\!\!\!\!\!\!\sum_{\substack{\beta,\beta' \\ \beta(\lceil n/2\rceil) - \beta'(\lfloor n/2\rfloor) = D}}\!\!\!\!\!\!\!\!\!\!\!\mb{P}\big((\boldbeta,\boldbeta') = (\beta, \beta')\big)  \mb{P}\big(k \in \Sigma(\mbf{A} \mid \beta,\beta')\big) + O(k^{-\delta} (\log k)^{-2}). \label{key-measure-change-cond}\end{align}
This formula will be the basis for much of our analysis, and in particular the bulk of our paper will be devoted to trying to understand the quantity 
\begin{equation}\label{key-a-beta} \mb{P}(k \in \Sigma(\mbf{A} \mid \beta,\beta')).\end{equation} Broadly, our strategy for doing this will follow the initial steps of the heuristic laid out in \cref{heuristic-sec}. However, by conditioning to fixed $\beta, \beta'$ the actual details look somewhat different.

The analogue of \cref{xi-rough-pos,yi-rough-pos} is that the main contribution will come from $\beta, \beta'$ which are almost positive in the sense that \begin{equation}\label{min-cond-overview} \min \big( \min_{i \le \lceil n/2\rceil} \beta(i), \min_{i \le \lfloor n/2\rfloor} \beta'(i)\big) \ge - O(1). \end{equation}

The main point of departure of the paper will then be the fact that most such walks satisfy $\beta(i),\beta'(i) \ge i^{1/2 - \frac{1}{100}}$ for large $i$, which is the appropriate analogue of \cref{xi-rough-pos-robust,yi-rough-pos-robust}, as well as some weak boundedness properties on their increments. We give the appropriate technical definitions now, since they will feature throughout the paper. In the following definitions $\kappa$ and $\eta$ are fixed small positive constants, global to the paper. We can take $\kappa = \eta = \frac{1}{100}$, but it makes the exposition clearer to keep these constants unspecified (and separate).

\begin{definition}\label{bounded-walk-def}
Let $R \ge 1$ be a parameter and let $L \in \N$. Let $\beta$ be a walk (upper or lower) with increments $\xi_i = \beta(i) - \beta(i - 1)$. Then we say that $\beta$ is $R$-bounded to length $L$ if $|\xi_i| \le Ri^{\kappa}$ for $i = 1,\dots, L$. 
\end{definition}

\begin{definition}\label{good-walk-def}
Let $T \ge 0$ be a parameter and let $L \in \N$. Let $\beta$ be a walk (upper or lower) with increments $\xi_i = \beta(i) - \beta(i - 1)$. Then we say that $\beta$ is $T$-\emph{positive} to length $L$ if $-T + \ell^{1/2 - \eta} \le \beta(\ell) \le T + \ell^{1/2 + \eta}$
for $\ell = 1,\dots, L$.
\end{definition}

Continuing to follow the heuristic in \cref{heuristic-sec}, the next task is to define appropriate analogues of the $\tau$ and $\varrho$ statistics. These will be called $\tau^*_{\mbf{x} \mid \beta'}(\ell)$ and $\varrho^*_{\mbf{u} \mid \beta}(\ell)$ respectively, where $\ell$ is a positive integer parameter. They are defined, and their key properties explored, in \cref{tau-sec} and \cref{rho-sec} respectively. In particular, we will show that if $\beta,\beta'$ are appropriately bounded and positive then these statistics have good convergence properties as $\ell \rightarrow \infty$, statements which correspond to the heuristic assertions \cref{tau-stability,rho-stability}. The precise statements here are \cref{lem23} and \cref{lemma7point3}.\vspace*{10pt}

\cref{part2} of the paper is an interlude consisting of three sections, whose purpose is to develop various background material. These three sections may be read independently of each other and the rest of the paper.

\cref{random-walk-sec} contains material on almost positive random walks, in particular concerning boundedness and positivity properties of such walks. Rigorous statements to the effect that most almost positive walks are appropriately bounded (resp. positive) may be found in \cref{walks-bd-min-est} and \cref{pos-paths-cor} respectively. \cref{random-walk-sec} also introduces the technical concept of a \emph{jump step}. This is a single time $t$ at which the walk $\beta$ has a large increment, followed by suitable boundedness and positivity properties; the definition is \cref{jump-step-7}. Almost positive walks generically have a jump step in appropriate ranges (see \cref{lemma7.19}).

\cref{01-matrix-subsection} assembles various bounds for $\{0,1\}$-matrices satisfying linear-algebraic conditions, and finally \cref{g-function-sec} gives the basic properties of the function $g$ featuring in our main result (see \cref{g-funct-def}).\vspace*{10pt}

\cref{part3} of the paper, consisting of two sections \cref{sec3,sec4}, is the beating heart of the paper. This is the portion of the paper where the analysis corresponding to \cref{sig-s-small} to \cref{s-cond} in our heuristic overview is carried out rigorously. In particular, the phrase `assuming suitable independence' leading up to \cref{prob-k-s} is justified in \cref{sec3} by a (lengthy) local limit theorem analysis; the main result is \cref{lem:Poisson}. Here, the existence of a jump step $t$ is used crucially in order to characterise the relevant distributions as Gaussians. 

The analysis is continued in \cref{sec4}, where the heuristic \cref{prob-k-s} (suitably rephrased in the random walk formalism) is justified via an inclusion--exclusion argument. The apparent dependence on the location of the jump step $t$ is shown to be illusory, and the main result of the section is \cref{sec6-main}, which is a rigorous version of \cref{s-cond}. Roughly speaking it provides the desired expression for \cref{key-a-beta}, namely that under suitable conditions we have 
\begin{equation}\label{key-a-beta-form} \mb{P}\big(k \in \Sigma(\mbf{A} \mid \beta,\beta')\big) \approx 1 - \mb{E} \exp\big( -2^{D - \xi}  \varrho^*_{\mbf{u} \mid \beta}(L)\tau^*_{\mbf{x} \mid \beta'}(L)\big),\end{equation} where here $\xi = \{ \log_2 k\}$,  $D = \beta(\lceil n/2\rceil) - \beta'(\lfloor n/2\rfloor)$ and $L$ is an appropriate large parameter. This should be compared with \cref{s-cond}.\vspace*{10pt}

\cref{part4} of the paper contains the main synthesis, substituting \cref{key-a-beta-form} (or rather the more technical \cref{sec6-main}) into \cref{key-measure-change} in order to prove the weak form of \cref{thm:main} in which the measures $\mu, \mu'$ are shown to exist, but are not yet described explicitly. The first task here, accomplished in \cref{sec:reduc-to-crit}, is to reduce to the case where (essentially) $D = O(1)$ and \cref{min-cond-overview} holds. Here, with an eye to applying the main results from \cref{part3}, one must additionally restrict to walks which are suitably bounded, positive, and have a jump step.

\cref{decouple-sec} then contains the main analysis, which is parallel to the leveraging of Facts 1 and 2 in our heuristic discussion. The point here is to evaluate the probability that two walks $\boldbeta, \boldbeta'$ of lengths $\lceil n/2\rceil,\lfloor n/2\rfloor$ are almost positive and satisfy $\boldbeta(\lceil n/2\rceil) - \boldbeta'(\lfloor n/2\rfloor) = D$. This is done by using a suitable local limit theorem, which states that the endpoints of the two almost positive walks, appropriately scaled, each have a Rayleigh distribution. The result we apply follows from work of Denisov, Tarasov and Wachtel \cite{DTW24b}, though we will give our own self-contained treatment. The endpoints of the two walks may then be coupled via a fairly straightforward analysis very similar to \cref{coupling-heuristic}.

Throughout the above argument, the walks $\boldbeta, \boldbeta'$ under consideration are bounded below by $-M$, for some parameter $M$. By the end of \cref{decouple-sec} we have established an approximate form of \cref{thm:main} in which ``$f$'' is of the form $g \ast \mu_M \ast \mu'_{M} + O(e^{-cM})$ for certain Borel measures $\mu_M, \mu'_M$ on $\R/\Z$.  To conclude the proof of \cref{thm:main} (and, in particular, to show that $f$ actually exists) we must show appropriate convergence of $\mu_M, \mu'_M$ to nonzero limit measures $\mu, \mu'$ as $M \rightarrow \infty$. This task, which is rather delicate, is accomplished in \cref{sec15-mainthm-proof}. It makes considerable further use of the results from \cref{tau-sec,rho-sec,random-walk-sec}.\vspace*{10pt}

The final part of the main paper, \cref{part5}, gives the explicit characterisations of the measures $\mu, \mu'$ as detailed in \cref{prop16.2} and \cref{sec17-main} respectively. The proofs occupy \cref{sec16,sec17} respectively. These involve a translation from the formalism of random walks $\boldbeta, \boldbeta'$ to the language of sequences $\mbf{u}, \mbf{x}$ as in the statements of \cref{prop16.2,sec17-main} (and the heuristic sketch). A surprising amount of care is required to carry this out correctly. A key technical issue is that when defining the measures $\mu$ and $\mu'$ we have not specified that the corresponding walks are ``almost positive'' and one needs to prove that the contribution from remaining paths is essentially negligible. \vspace*{10pt}

There are six appendices. \cref{poisson-app} collects various facts about random walks and a basic estimate from renewal theory. \cref{dtw-appendix} then gives a self-contained account of a version of the local limit theorem of Denisov, Tarasov and Wachtel which is sufficient for our purposes. \cref{appB} pulls together some Fourier estimates of fairly standard type. \cref{appC} gives assorted real-variable inequalities whose proofs would have diverted attention to too great an extent if given in the main text. \cref{bl-app} gives the basic properties of the so-called bounded Lipschitz norm on Borel measures, which we use in \cref{sec15-mainthm-proof} to study the convergence of the measures $\mu_M,\mu'_M$, and again in \cref{sec16,sec17} when characterising $\mu, \mu'$. Finally, \cref{appF} collects some standard large deviation estimates.

\section{Sampling using walks }\label{section5}

In this section we give the details of how the random (multi-)set $\mbf{A}$ defined by $r_{\mbf{A}}(i) \samedist \Pois\big(\frac{n}{iH_k}\big)$ (that is, as in \cref{a-law}) may be sampled via a pair of random walks $\boldbeta, \boldbeta'$. 

As in the previous section, and for the rest of the paper, $\boldbeta$ denotes a random walk with $\Pois(1) - 1$ steps $\boldxi_i = \boldbeta(i) - \boldbeta(i-1)$, and $\boldbeta'$ has $1 - \Pois(1)$ steps $\boldxi'_i = \boldbeta'(i) - \boldbeta'(i-1)$. By convention, $\boldbeta(0) = \boldbeta'(0) = 0$. The walks $\boldbeta,\boldbeta'$ are independent.

At the same time we will show how to sample the upper and lower processes (see \cref{upper-lower-process-def}) $\mbf{u},\mbf{x}$ in a similar manner. More precisely we will explain how to couple them to $\boldbeta, \boldbeta'$ respectively.

\subsection{The upper and lower processes}
To couple the upper process $\mbf{u}$ to $\boldbeta$, we first sample $\boldbeta$ and then, for each $i$, sample $1 + \boldxi_i$ uniform random elements from $(i-1, i]$ and order them to create $\mbf{u} \cap (i-1,i]$. It is a standard fact that $\mbf{u}$ is indeed a rate 1 Poisson process. Observe that $\boldbeta(i) = \# (\mbf{u} \cap [0,i]) - i$.

Handling the lower process is a little more involved. We begin by associating to $\boldbeta'$ an auxiliary rate 1 Poisson process $\mbf{s}$ by first sampling $\boldbeta'$, then for each $i$ sampling $1 - \boldxi'_i$ uniform random elements from $(i-1,i]$ and ordering them to create $\mbf{s} \cap (i-1,i]$. Again, $\mbf{s}$ is a rate 1 Poisson process, and now $\boldbeta'(i) = i - \# (\mbf{s} \cap [0,i])$.

We then introduce the following pair of closely related maps. Here, $H_N$ denotes the harmonic sum and by convention $H_0 = 0$.

\begin{definition}\label{phis-def}
Define maps $\phi, \tilde\phi : [0,\infty) \rightarrow \N$ as follows. $\phi(0) = \tilde\phi(0) = 1$, and if $x > 0$ then $\tilde\phi$ is the unique map so that $x \in (\frac{n}{H_k} H_{\tilde\phi(x)-1}, \frac{n}{H_k}H_{\tilde\phi(x)}]$. $\phi : [0,\infty) \rightarrow \N$ is the unique map defined by $x \in (\frac{1}{\log 2} H_{\phi(x)-1}, \frac{1}{\log 2}H_{\phi(x)}]$.
\end{definition}
We will use $\phi$ now, and $\tilde\phi$ will be used later when discussing how to sample $\mbf{A}$.
For orientation, we note that $\phi(x), \tilde\phi(x) \asymp 2^x$, and also that $\tilde\phi$ depends on $k$, but $\phi$ is `universal'. For a detailed proof, see \cref{lemma53} below.

We now couple the lower process $\mbf{x}$ (\cref{upper-lower-process-def}) to $\boldbeta'$ as follows. First sample $\boldbeta'$, then associate the auxiliary process $\mbf{s}$. Define $\mbf{x} = \phi(\mbf{s})$. We observe that, unravelling the definitions, 
\[ r_{\mbf{x}}(i) = \big| \mbf{s} \cap \frac{1}{\log 2} (H_{i-1},  H_i] \big|\samedist \Pois(1/(i \log 2)). \] Thus $\mbf{x}$ indeed has the required distribution of the lower process.

\subsection{Sampling $\mbf{A}$}

We now turn to the discussion of sampling $\mbf{A}$ (the random multiset defined by \cref{a-law}) via the random walks $\boldbeta,\boldbeta'$.  Let $\mbf{u}$ be the upper process coupled to $\boldbeta$ as above, and let $\mbf{s}$ be the auxiliary rate 1 Poisson process coupled to $\boldbeta'$, again as described above. Now set 
\begin{equation*} \mbf{A} = \tilde\phi\big(\mbf{s} \cap[0, \lfloor n/2\rfloor]\big) \cup \tilde\phi \big(n - (\mbf{u} \cap [0,\lceil n/2\rceil])\big).\end{equation*}
That is, we use $(\boldbeta(i))_{i \le \lceil n/2\rceil}$ to describe the large elements of $\mbf{A}$, and $(\boldbeta'(i))_{i \le \lfloor n/2\rfloor}$ to describe the small elements.
Observe that $[0, \lfloor n/2\rfloor] \cup (n - [0,\lceil n/2\rceil]) = [0,n]$ and so the distribution of $\mbf{A}$ is exactly that of $\tilde\phi(\mbf{p} \cap [0,n])$, where $\mbf{p}$ is a Poisson process of rate $1$. Therefore we have
\[ r_{\mbf{A}}(i) = r_{\tilde\phi(\mbf{p} \cap [0,n])}(i) =  \big| \mbf{p} \cap \frac{n}{H_k}(H_{i-1}, H_i] \big| \samedist \Pois\big(\frac{n}{H_k i}\big)\] for $i = 1,\dots, k$, and so this $\mbf{A}$ indeed coincides with the random multiset defined by \cref{a-law}.

\subsection{Conditioning to fixed walks.} \label{subsec-53} We recall in a formal definition the notions of upper and lower walk.

\begin{definition}
An \emph{upper walk} of length $N$ is simply a sequence $(\beta(i))_{i = 1}^{N}$ where the increments $\xi_i := \beta(i) - \beta(i-1)$ are integers satisfying $\xi_i \ge -1$ for all $i$. A \emph{lower walk} of length $N'$ is a sequence $(\beta'(i))_{i = 1}^{N'}$ where the increments $\xi'_i := \beta'(i) - \beta'(i-1)$ are integers satisfing $\xi'_i \le 1$ for all $i$. By convention we set $\beta(0) = \beta'(0) = 0$.    
\end{definition}

For much of the analysis we will condition to the event that $(\boldbeta(i))_{i \le \lceil n/2\rceil} = (\beta(i))_{i \le \lceil n/2\rceil}$ and that $(\boldbeta'(i))_{i \le \lfloor n/2\rfloor} = (\beta'(i))_{i \le \lfloor n/2\rfloor}$ for some pair of upper- and lower walks $\beta, \beta'$, which we will always take to have lengths $\lceil n/2\rceil$, $\lfloor n/2\rfloor$ respectively. It is convenient to write $(\boldbeta,\boldbeta') = (\beta,\beta')$ for this event. 

We now consider the upper and lower processes $\mbf{u}, \mbf{x}$ and random multiset $\mbf{A}$ conditioned to fixed walks. We take the opportunity to spell out these definitions and at the same time specify notation for listing their elements.\vspace*{8pt}

\emph{Conditioned upper process.} The conditioned upper process $(\mbf{u} \mid \boldbeta = \beta)$, which we write $(\mbf{u} \mid \beta)$ for brevity, is obtained by sampling, for each $i \le \lceil n/2\rceil$, $1 + \xi_i$ uniform random elements from $(i-1,i]$, and ordering them. (We remark that we will never need to discuss any values of $\mbf{u}$ outside the interval $[0, \lceil n/2\rceil]$, so we happily ignore the slight inconsistency of notation here.) We write the elements of $(\mbf{u} \mid \beta)$ as $(\mbf{u}_{i,j})_{i \le \lceil n/2\rceil; j \in [1 + \xi_i]}$, but leave these randomly ordered on each $(i-1, i]$. When this notation is used, the underlying $\beta$ will be clear from context and so we do not indicate an explicit dependence. Thus $\mbf{u}_{i,j}$ is uniform on $(i-1,i]$, and these variables are independent for $j = 1,2,\dots, 1 + \xi_i$. \vspace*{8pt}

\emph{Conditioned lower process.} To describe $(\mbf{x} \mid  \beta') = (\mbf{x} \mid \boldbeta' = \beta')$ we first consider the conditioned auxiliary Poisson process $(\mbf{s} \mid \beta')$. This is obtained by sampling $1 - \xi'_i$ uniform random elements from $(i-1,i]$, $i \le \lfloor n/2\rfloor$. We write its elements as $(\mbf{s}_{i,j})_{i \le \lfloor n/2\rfloor, j \in [1 - \xi'_i]}$, again randomly ordered on $(i-1, i]$. Thus $\mbf{s}_{i,j}$ is uniform on $(i-1,i]$, and these variables are independent for $j = 1,2,\dots, 1 - \xi'_i$. We have, by definition, $(\mbf{x} \mid \beta') = \phi(\mbf{s} \mid \beta')$. We write $\mbf{x}_{i,j} = \phi(\mbf{s}_{i,j})$, $i \le \lfloor n/2\rfloor$ and $j \in [1 - \xi'_i]$ for the elements of $(\mbf{x} \mid \beta')$. \vspace*{8pt}

\emph{Conditioned random multiset.} Now we define $(\mbf{A} \mid \beta,\beta')$ for $\mbf{A}$ conditioned to $(\boldbeta , \boldbeta') = (\beta,\beta')$. First of all we introduce notation which unifies the $\lceil n/2\rceil$ steps of $\beta$ and the $\lfloor n/2\rfloor$ steps of $\beta'$ which are relevant to the construction. Thus we define non-negative integers $b_1,\dots, b_n$ by  \begin{equation}\label{bi-defs} b_{i} := 1 - \xi'_i \; \;  (i = 1,\dots, \lfloor n/2\rfloor) \;\;\; \mbox{and}\;\;\;  b_{n + 1 - i} := 1 + \xi_i \; \; (i = 1,\dots, \lceil n/2\rceil).\end{equation}
$b_i$ should be thought of as roughly the number of elements of $(\mbf{A} \mid \beta,\beta')$ which are of size $\asymp 2^{i}$, as we will see in \cref{aij-dist} below.

The elements of $(\mbf{A} \mid \beta,\beta')$  may be listed as $(\mbf{a}_{i,j})_{i \in [n], j \in [b_i]}$ with
\begin{equation}\label{a-list} \mbf{a}_{i,j} = \left\{\begin{array}{ll}  \tilde\phi(\mbf{s}_{i,j}) &   1 \le i \le \lfloor n/2\rfloor; \\ \tilde\phi(n - \mbf{u}_{n + 1 - i,j}) & \lfloor n/2\rfloor < i \le n. \end{array}\right.   \end{equation}
Note in particular that we have 
\begin{equation}\label{a-size-2} \# (\mbf{A} \mid \beta,\beta') = \sum_{i = 1}^n b_i.\end{equation}

\subsection{Size and anticoncentration estimates}

We will make extensive use of \cref{aij-dist} which gives the approximate distribution of the $\mbf{a}_{i,j}$. In the proof of this we will require some simple properties of the function $\tilde\phi$. We give these, together with essentially identical properties of $\phi$, in the following lemma.

\begin{lemma}\label{lemma53}
Uniformly for $x \in [0,n]$ we have
\begin{equation}\label{phi-tilde-x-size-log} \log_2 \phi(x),\log_2 \tilde\phi(x) = x + O(1).\end{equation}    Thus $\phi(x), \tilde\phi(x) \asymp 2^x$.
\end{lemma}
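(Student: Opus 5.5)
The plan is to invert the defining inequalities for $\phi$ and $\tilde\phi$ using the standard estimate $H_m = \log m + \gamma + O(1/m)$ for $m \ge 1$ (with $H_0 = 0$). Both maps are handled the same way; only the scaling constant differs. For $\phi$ the constant is $\frac{1}{\log 2}$. For $\tilde\phi$ it is $\frac{n}{H_k}$, and the whole point is that this is $\frac{1}{\log 2} + O(1/\log k)$.

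\emph{Step 1: the map $\phi$.} If $x = 0$ then $\phi(0) = 1$ and the claim is trivial. Otherwise write $m = \phi(x) \ge 1$, so that $H_{m-1} < x\log 2 \le H_m$.
\begin{itemize}
\item If $m = 1$, then $x \le 1/\log 2$ and $\log_2 m = 0 = x + O(1)$.
\item If $m \ge 2$, both $H_{m-1}$ and $H_m$ equal $\log m + O(1)$, hence $x \log 2 = \log m + O(1)$, i.e.\ $\log_2 \phi(x) = x + O(1)$.
\end{itemize}
This holds uniformly for all $x \ge 0$, not just $x \in [0,n]$.

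\emph{Step 2: the map $\tilde\phi$.} Again let $m = \tilde\phi(x)$, so that $\frac{n}{H_k}H_{m-1} < x \le \frac{n}{H_k}H_m$. The key input is that $\tilde\phi$ takes values in $[k]$ when $x \in [0,n]$. Indeed $\frac{n}{H_k}H_k = n \ge x$, so $m \le k$. This ensures we never use the harmonic sum beyond $k$.

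From the computation in the proof of \cref{key-measure-change}, with $n = \log_2 k - \xi$ and $H_k = \log k + \gamma + O(1/k)$, we have
\[
\frac{H_k}{n} = \log 2\,\big(1 + O(1/\log k)\big).
\]
Therefore
\[
x\,\frac{H_k}{n} = x\log 2 + O\big(x/\log k\big) = x \log 2 + O(1),
\]
using $x \le n \ll \log k$. Combining this with $H_{m-1}, H_m = \log m + O(1)$ (and treating $m = 1$ separately as in Step 1) gives $\log m = x\log 2 + O(1)$, as required.

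\emph{Main obstacle.} The only genuinely delicate point is uniformity in $x$ for $\tilde\phi$. The error $x\,|H_k/n - \log 2|$ is bounded only because $x \le n$, which is exactly why the lemma is restricted to $x \in [0,n]$. Everything else is routine bookkeeping with harmonic sums. The final statement $\phi(x), \tilde\phi(x) \asymp 2^x$ follows by exponentiating.
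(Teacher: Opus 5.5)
Your proof is correct and follows the paper's argument: invert the defining inequalities using $H_m = \log m + O(1)$ together with $n/H_k = \frac{1}{\log 2} + O(1/n)$. The only cosmetic differences are that you bound the cross term via $x \le n$, whereas the paper uses $\tilde\phi(x) \le k$ (so your observation that $\tilde\phi(x)\le k$ is not actually needed in your version), and that you treat the case $m=1$ explicitly where the paper glosses over it.
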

\begin{proof}
We give the proofs for $\tilde\phi$. Recall throughout that $n = \lfloor \log_2 k \rfloor$. We have $\frac{n}{H_k} = \frac{1}{\log 2} + O(\frac{1}{n})$ and $H_{\tilde{\phi}(x)} = \log \tilde\phi(x) + O(1)$. Thus from $x \le \frac{n}{H_k} H_{\tilde\phi(x)}$ we obtain $x \le (\frac{1}{\log 2} + O(\frac{1}{n}))(\log \tilde\phi(x) + O(1)) = \log_2 \tilde\phi(x) + O(1)$, or in other words the RHS of \cref{phi-tilde-x-size-log} is at most the LHS. Here, we used that $\tilde\phi(x) \le k$ for $x \le n$ in order to bound the cross term $O(\frac{1}{n}) \log \tilde\phi(x)$ by an absolute constant.

In a very similar manner one may obtain an inequality in the opposite direction as well, and putting these together gives \cref{phi-tilde-x-size-log}.

The proof for $\phi$ is extremely similar (and slightly simpler). This estimate in fact holds for all $x$.
\end{proof}

Before turning to \cref{aij-dist} itself, we first give distributional estimates for the components $\mbf{x}_{i,j}$ of the conditioned lower process $(\mbf{x} \mid \boldbeta' = \beta')$.

\begin{lemma} We have
\begin{equation} \label{aij-prob-bd} \mbf{x}_{i,j} \asymp 2^i \end{equation} deterministically and
\begin{equation}\label{non-conc-n} \sup_{t\in \Z} \mb{P}(\mbf{x}_{i,j} = t) \ll 2^{-i}.\end{equation}

\end{lemma}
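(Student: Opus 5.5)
The plan is to use the definition $\mbf{x}_{i,j} = \phi(\mbf{s}_{i,j})$, where $\mbf{s}_{i,j}$ is uniform on $(i-1,i]$, together with \cref{lemma53} for $\phi$. Everything reduces to understanding the preimages of single integers under $\phi$.

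\emph{Size.} Since $\mbf{s}_{i,j} \in (i-1,i]$, \cref{lemma53} gives $\log_2 \mbf{x}_{i,j} = \mbf{s}_{i,j} + O(1) = i + O(1)$. This holds deterministically, so $\mbf{x}_{i,j} \asymp 2^i$, which is \cref{aij-prob-bd}. Recall that the estimate for $\phi$ holds for all $x \ge 0$, so there is no restriction to $i \le n$.

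\emph{Anticoncentration.} Fix $t \in \N$. By \cref{phis-def}, the event $\phi(\mbf{s}_{i,j}) = t$ is exactly the event $\mbf{s}_{i,j} \in (\frac{1}{\log 2}H_{t-1}, \frac{1}{\log 2}H_t]$. This interval has length $\frac{1}{t\log 2}$. Since $\mbf{s}_{i,j}$ has density $1$ on $(i-1,i]$, we get
\[ \mb{P}(\mbf{x}_{i,j} = t) \le \frac{1}{t \log 2}\, 1\big(t \in \phi((i-1,i])\big). \]
By the size estimate, any $t$ in the image $\phi((i-1,i])$ satisfies $t \gg 2^i$. Hence the probability is $\ll 2^{-i}$ uniformly in $t$, and for $t$ outside the image it is $0$. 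This gives \cref{non-conc-n}.

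Both steps are routine, and I do not expect a real obstacle. The only points needing care are that the constant in \cref{lemma53} is uniform in $x$ for $\phi$, and that the case $i = 1$, where $\phi$ can equal $1$, is absorbed into the implied constants.
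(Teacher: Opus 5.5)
Your proposal is correct and follows essentially the same route as the paper: the size bound comes from \cref{lemma53} applied to $\mbf{s}_{i,j}\in(i-1,i]$. The anticoncentration comes from the length $1/(t\log 2)$ of the preimage interval, together with $t \gg 2^i$ for any attainable value $t$. The paper obtains this last fact via monotonicity of $\phi$ ($t \ge \phi(i-1)$), which is the same point as your use of the image $\phi((i-1,i])$.
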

\begin{proof}
For both parts we use that $\mbf{x}_{i,j} = \phi(\mbf{s}_{i,j})$. Item \cref{aij-prob-bd} is immediate from \cref{lemma53} and the fact that $\mbf{s}_{i,j} \in (i-1,i]$. For \cref{non-conc-n}, observe that $\mb{P}(\phi(\mbf{s}_{i,j}) = t)$ is the probability that a uniform random variable on $(i-1, i]$ lies in $(\frac{1}{\log 2} H_{t-1}, \frac{1}{\log 2} H_t]$, which is an interval of length $1/t\log 2$. For this interval to intersect $(i-1, i]$ we must (since $\phi$ is nondecreasing) have $t \ge \phi(i-1)$, and so the desired probability is at most $1/\phi(i-1) \log 2$. The stated bound \cref{non-conc-n} then follows using \cref{lemma53}.
\end{proof}

Now we give the key estimate on the $\mbf{a}_{i,j}$, the elements of the conditioned set $(\mbf{A} \mid \beta,\beta')$.

\begin{lemma}\label{aij-dist}
Deterministically we have
\begin{equation}\label{dyadic-aij}  \mbf{a}_{i,j} \asymp 2^{i}.\end{equation}
We also have the anticoncentration estimate
\begin{equation}\label{anti-concentration-aij} \sup_{x \in \Z}\mb{P}(\mbf{a}_{i,j} = x) \ll 2^{-i}.\end{equation}  
Here, the implied constants are absolute \textup{(}and, in particular, do not depend on $k$\textup{)}.
\end{lemma}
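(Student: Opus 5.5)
The proof splits into the two ranges of $i$ in the definition \cref{a-list}. In each range we combine the size estimate of \cref{lemma53} with the fact that the preimage under $\tilde\phi$ of a single integer is a short interval.

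\emph{Size estimate \cref{dyadic-aij}.} For $1 \le i \le \lfloor n/2\rfloor$ we have $\mbf{a}_{i,j} = \tilde\phi(\mbf{s}_{i,j})$ with $\mbf{s}_{i,j} \in (i-1,i] \subset [0,n]$. Thus \cref{lemma53} gives $\log_2 \mbf{a}_{i,j} = \mbf{s}_{i,j} + O(1) = i + O(1)$. For $\lfloor n/2\rfloor < i \le n$ we have $\mbf{a}_{i,j} = \tilde\phi(n - \mbf{u}_{n+1-i,j})$. Since $\mbf{u}_{n+1-i,j} \in (n-i, n+1-i]$, the argument $y := n - \mbf{u}_{n+1-i,j}$ lies in $[i-1, i)$. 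This is contained in $[0,n]$ because $n + 1 - i \le \lceil n/2\rceil \le n$. Applying \cref{lemma53} again gives $\log_2 \mbf{a}_{i,j} = i + O(1)$. In both cases the implied constants come only from \cref{lemma53}, so they are absolute.

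\emph{Anticoncentration \cref{anti-concentration-aij}.} In either range $\mbf{a}_{i,j} = \tilde\phi(\mbf{y})$, where $\mbf{y}$ is uniformly distributed on an interval of length $1$ contained in $[i-1,i]$: it is $\mbf{s}_{i,j}$ in the first range and $n - \mbf{u}_{n+1-i,j}$ in the second. By \cref{phis-def}, $\tilde\phi(\mbf{y}) = t$ exactly when $\mbf{y} \in (\frac{n}{H_k}H_{t-1}, \frac{n}{H_k}H_t]$, an interval of length $\frac{n}{tH_k}$. Hence
\[ \mb{P}(\mbf{a}_{i,j} = t) \le \frac{n}{tH_k}. \]
Moreover, this probability is nonzero only when the interval meets $[i-1,i]$. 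Since $\tilde\phi$ is nondecreasing, that forces $t \ge \tilde\phi(i-1)$; at the endpoint $i-1$, where the half-open conventions matter, we can use $t \ge \tilde\phi(\max(i-2,0))$ to be safe. By \cref{lemma53} this gives $t \gg 2^i$. Finally, $\frac{n}{H_k} = \frac{1}{\log 2} + O(\frac{1}{n}) \ll 1$, so $\mb{P}(\mbf{a}_{i,j} = t) \ll 2^{-i}$ uniformly in $t$, as required.

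\emph{Main obstacle.} There is no real obstacle here. The only care needed is bookkeeping: checking that in the upper range the argument $n - \mbf{u}_{n+1-i,j}$ really lies in $[0,n]$, so that \cref{lemma53} applies with constants independent of $k$, and handling the endpoint conventions in the preimage bound. Both are covered by the slack available in $O(1)$.
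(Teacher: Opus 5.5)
Your proposal is correct and follows essentially the same route as the paper: the size bound comes straight from \cref{lemma53}, and the anticoncentration bound comes from the preimage $\tilde\phi^{-1}(x)$ being an interval of length $\frac{n}{xH_k} \ll 1/x$, hit by a uniform variable of density one. The only cosmetic difference is how you show that only $x \gg 2^i$ matter: the paper invokes the already-proved deterministic bound \cref{dyadic-aij}, whereas you use monotonicity of $\tilde\phi$, as the paper itself does for \cref{non-conc-n}. Your endpoint hedge with $\max(i-2,0)$ is unnecessary, since $\mbf{y} \ge i-1$ already gives $\tilde\phi(\mbf{y}) \ge \tilde\phi(i-1)$.
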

\begin{proof} 
Item \cref{dyadic-aij} is immediate from \cref{lemma53,a-list} and the fact that $\mbf{s}_{i,j}, \mbf{u}_{i,j} = i + O(1)$.

For \cref{anti-concentration-aij}, suppose first that $\lfloor n/2\rfloor < i \le n$. Observe that 
\[ \mb{P}(\mbf{a}_{i,j} = x) = \mb{P}(\tilde\phi(n - \mbf{u}_{n + 1 - i,j}) = x) = \mb{P} ( n - \mbf{u}_{n + 1 - i,j} \in I )\] where $I := (\frac{n}{H_k} H_{x - 1}, \frac{n}{H_k} H_x ]$. Now $\mbf{u}_{n + 1 - i,j}$ is uniformly distributed on $(n -i , n + 1 - i]$, so it follows that 
\[ \mb{P}(\mbf{a}_{i,j} = x) \le |I| = \frac{n}{H_k x} \ll \frac{1}{x}.\]
Since $\mbf{a}_{i,j}$ is deterministically $\asymp 2^{i}$, the result follows.
To proof in the case $i \le \lfloor n/2\rfloor$ is very similar and left to the reader.
\end{proof}

\subsection{Coupling top and bottom to universals} 

The material in the following subsection will not be required until the end of \cref{sec4}. The idea is to couple $\mbf{A}$, whose definition is quite heavily dependent on $k$, to the `universal' upper and lower processes $\mbf{u}, \mbf{x}$. Essentially this consists of relating $\mbf{A}$ to the set $\mbf{A}_1$ which featured in our heuristic discussion. 

We first handle the large elements of $\mbf{A}$.

\begin{lemma}\label{u-couple}
Let $\beta, \beta'$ be upper and lower walks, and consider the conditioned upper process $(\mbf{u} \mid \beta)$ and the conditioned random multiset $(\mbf{A} \mid \beta,\beta')$. Let $i$ satisfy $\lfloor n/2\rfloor < i \le n$. Then $|\frac{1}{k} \mbf{a}_{i,j} - 2^{-\mbf{u}_{n + 1-i,j}} | \ll 2^{i - n} (n + 1 - i)/n$ deterministically.
\end{lemma}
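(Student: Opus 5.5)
Write $u := \mbf{u}_{n+1-i,j}$, so that $u \in (n-i, n+1-i]$ and $\mbf{a}_{i,j} = \tilde\phi(n - u)$ by \cref{a-list}. Set $a := \tilde\phi(n-u)$ and $y := n - u \in [i-1, i)$. The plan is to invert the definition of $\tilde\phi$ precisely. By \cref{phis-def}, $y \in (\frac{n}{H_k}H_{a-1}, \frac{n}{H_k}H_a]$. Since $a \asymp 2^i$ by \cref{lemma53} and $H_a - H_{a-1} = 1/a$, this gives
\[ \frac{H_k}{n}\, y = H_a + O(2^{-i}).\]
We compare this with $\log k$. Use $H_a = \log a + \gamma + O(1/a)$ and $H_k = \log k + \gamma + O(1/k)$, and subtract the relation $\frac{H_k}{n}\cdot n = H_k$. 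This yields
\[ \log a - \log k = -\frac{H_k}{n}(n - y) + O(2^{-i}) = -\frac{H_k}{n}\,u + O(2^{-i}).\]
Here the $\gamma$'s cancel and the $O(1/k)$ term is absorbed.

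Next write $\frac{H_k}{n} = \log 2 + \theta$. From $n = \log_2 k - \xi$ we get $\frac{H_k}{n} = \frac{\log k + \gamma + O(1/k)}{n} = \log 2 + O(1/n)$, so $\theta = O(1/n)$. Exponentiating,
\[ \frac{a}{k} = 2^{-u} e^{-\theta u} e^{O(2^{-i})}.\]
Since $u \le n+1-i \le n$, we have $\theta u = O((n+1-i)/n) = O(1)$, so $e^{-\theta u} = 1 + O((n+1-i)/n)$. Therefore
\[ \Big|\frac{a}{k} - 2^{-u}\Big| \ll 2^{-u}\Big(\frac{n+1-i}{n} + 2^{-i}\Big).\]

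Finally, $2^{-u} \asymp 2^{i-n}$ because $u = n - i + O(1)$. For the $2^{-i}$ error term, note that $i > n/2$ gives $2^{-i} \le 2^{-n/2} \ll 1/n \le (n+1-i)/n$. Hence both errors are $\ll 2^{i-n}(n+1-i)/n$, which is the claimed bound.

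The only delicate point is keeping the multiplicative error as $O(\theta u)$ rather than $O(\theta n)$. This requires expanding around $\log k$, using the exact identity $\frac{H_k}{n}n = H_k$, instead of around $0$. Doing so makes the error scale with $u \approx n+1-i$, which is what gives the factor $(n+1-i)/n$.
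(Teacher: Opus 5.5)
Your proof is correct and follows essentially the same route as the paper: invert the definition of $\tilde\phi$, use $H_t = \log t + \gamma + O(1/t)$ together with $H_k = \log k + \gamma + O(1/k)$ and $u \le n+1-i$ to get $\log(a/k) = -u\log 2 + O((n+1-i)/n)$, then pass from logarithms to the values. The only differences are cosmetic: you exponentiate directly where the paper cites $|x-y| \le \max(x,y)\,|\log x - \log y|$, and you spell out the absorption of the $O(2^{-i})$ term via $i > n/2$, which the paper leaves implicit.
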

\begin{proof}  By \cref{a-list} we have $\mbf{a}_{i,j}  = \tilde\phi( n - \mbf{u}_{n + 1 -i,j})$. From \cref{phis-def}, this implies that $n - \mbf{u}_{n + 1 - i,j} \in (\frac{n}{H_k} H_{\mbf{a}_{i,j} - 1}, \frac{n}{H_k}H_{\mbf{a}_{i,j}}]$.
Since $H_t, H_{t-1} = \log t + \gamma + O(\frac{1}{t})$ uniformly for $t \ge 2$, it follows that
\[ n - \mbf{u}_{n + 1 -i,j} = \frac{n}{H_k}\big(\log \mbf{a}_{i,j} + \gamma + O(2^{-i})\big).\] Multiplying by $\frac{H_k}{n}$ and using $H_k = \log k + \gamma + O(\frac{1}{k})$, $\frac{H_k}{n} = \log 2 + O(\frac{1}{n})$ and $\mbf{u}_{n + 1 - i,j} \le n + 1 -i$, this implies that
\[ \log k - \mbf{u}_{n + 1 -i,j} \log 2 = \log \mbf{a}_{i,j} + O\big(\frac{n + 1 -i}{n}\big).\] That is, writing $x := \frac{1}{k} \mbf{a}_{i,j}$ and $y := 2^{-\mbf{u}_{n + i - 1,j}}$, we have $|\log x - \log y| \ll \frac{n + 1 - i}{n}$. The desired result then follows using the inequality $|x - y| \le \max(x,y) |\log x - \log y|$ (valid for all real $x,y \in (0,1]$) and the fact that $x,y \asymp 2^{i - n}$.
\end{proof}

Now we turn to the small elements of $\mbf{A}$.

\begin{lemma}\label{s-couple}
Let $\beta, \beta'$ be upper and lower walks, and consider the conditioned lower process $(\mbf{x} \mid \beta)$, with elements $\mbf{x}_{i,j}$, and the conditioned random multiset $(\mbf{A} \mid \beta,\beta')$, with elements $\mbf{a}_{i,j}$. Then, uniformly for $i \le \lfloor n/2\rfloor$, we have $\mb{P}(\mbf{x}_{i,j} \neq \mbf{a}_{i,j})  \ll i/n$.\end{lemma}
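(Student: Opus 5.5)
The plan is to reduce everything to a comparison of the two step functions $\phi$ and $\tilde\phi$ at a single uniform point. By \cref{a-list}, for $i \le \lfloor n/2\rfloor$ we have $\mbf{a}_{i,j} = \tilde\phi(\mbf{s}_{i,j})$ and $\mbf{x}_{i,j} = \phi(\mbf{s}_{i,j})$. Here $\mbf{s}_{i,j}$ is uniform on $(i-1,i]$. So the quantity to bound is $\mb{P}(\phi(\mbf{s}) \ne \tilde\phi(\mbf{s}))$ for $\mbf{s}$ uniform on $(i-1,i]$, and the walks $\beta,\beta'$ play no further role.

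\emph{Step 1: $\tilde\phi$ is a rescaling of $\phi$.} Write $c := n/H_k$. Unwinding \cref{phis-def}, $\tilde\phi(x) = t$ iff $x/c \in (H_{t-1},H_t]$, while $\phi(y) = t$ iff $y\log 2 \in (H_{t-1},H_t]$. Hence $\tilde\phi(x) = \phi(\lambda x)$ with $\lambda := 1/(c\log 2)$. Using $n = \log_2 k - \xi$ and $H_k = \log k + \gamma + O(1/k)$, as in the proof of \cref{key-measure-change}, I will record that $\lambda = 1 + \frac{\gamma + \xi\log 2}{\log k} + O(n^{-2})$. In particular $\lambda - 1 \asymp 1/n$, and $\lambda \ge 1$ for $k$ large.

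\emph{Step 2: the mismatch event.} Since $\phi$ is nondecreasing, $\phi(\mbf{s}) \ne \phi(\lambda\mbf{s})$ happens iff some breakpoint $b_t := H_t/\log 2$ lies in $[\mbf{s},\lambda\mbf{s})$. This window has length $(\lambda-1)\mbf{s} \ll i/n$, since $\mbf{s}\le i$. Only $t$ with $b_t \in (i-1, i+O(i/n)]$ are relevant, and by \cref{lemma53} these satisfy $t \asymp 2^i$. For each such $t$, the set of $s$ with $b_t \in [s,\lambda s)$ is an interval of length $\ll i/n$. The probability is then at most the Lebesgue measure of the union of these intervals.

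\emph{Step 3: the main obstacle, and the order in which I would attack it.} A naive union bound over the $\asymp 2^i$ relevant breakpoints costs a factor $2^i$, which is far too much. The first thing I would try is the exact computation instead. The breakpoints $b_t$ have consecutive spacing $b_t - b_{t-1} = 1/(t\log 2) \asymp 2^{-i}$. The mismatch set is exactly the union of intervals $[b_t/\lambda, b_t)$. Its measure inside $(i-1,i]$ is therefore at most $\min\{1,\ \#\{t : b_t \in (i-1,i+1]\}\cdot i/n\}$. Equivalently, it is the proportion of each cell $(b_{t-1},b_t]$ lying within distance $(\lambda-1)b_t$ of its right endpoint.

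This gives $\ll i/n$ exactly when $(\lambda-1)i$ is at most a constant times the cell length $2^{-i}$. That is the regime $i \lesssim \log_2 n$, and there the bound is immediate. For larger $i$ the shift $\asymp i/n$ exceeds the cell length, and the argument as sketched does not give $i/n$. This is the step I expect to be genuinely hard. I would first re-examine whether the intended normalisation of $\tilde\phi$ makes $\lambda - 1$ much smaller than $1/n$; I expect it does not, since $\lambda-1\asymp 1/n$ seems forced by $n = \lfloor\log_2 k\rfloor$. Failing that, I would look for a different coupling of $\mbf{x}_{i,j}$ with $\mbf{a}_{i,j}$. Such a coupling would match the two laws, which differ in total variation by $\ll i/n$ because their point masses $1/(t\log 2)$ and $c/t$ differ by relative error $O(1/n)$ and the supports are shifted by $O(i/n)$. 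I expect this is what is needed for the bound to hold for all $i\le\lfloor n/2\rfloor$.
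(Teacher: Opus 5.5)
\textbf{There is a gap, but it cannot be closed: the statement is false for the coupling the paper uses.} Your Steps 1 and 2 are correct. We have $\tilde\phi(x)=\phi(\lambda x)$ with $\lambda=H_k/(n\log 2)=1+\frac{\gamma+\xi\log 2}{n\log 2}+O(1/k)$, and a mismatch occurs exactly when some breakpoint $b_t=H_t/\log 2$ lies in $[\mbf{s},\lambda\mbf{s})$.

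Step 3 goes wrong where you say the regime $i\lesssim\log_2 n$ is immediate. Suppose $(1-\lambda^{-1})b_t<b_t-b_{t-1}=1/(t\log 2)$. Then the mismatch set inside the cell $(b_{t-1},b_t]$ is exactly $(b_t/\lambda,b_t]$, of length $\asymp i/n$. These pieces are pairwise disjoint, so the union bound over breakpoints is sharp, not lossy. Summing over the $\asymp 2^i$ cells meeting $(i-1,i]$ gives $\mb{P}(\mbf{x}_{i,j}\neq\mbf{a}_{i,j})\asymp i2^i/n$, not $i/n$. Once $i2^i\gg n$, every cell is entirely mismatched and the probability is $1$.

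So, with $\mbf{x}_{i,j}=\phi(\mbf{s}_{i,j})$ and $\mbf{a}_{i,j}=\tilde\phi(\mbf{s}_{i,j})$ built from the same $\mbf{s}_{i,j}$ (as the paper defines them), $\mb{P}(\mbf{x}_{i,j}\neq\mbf{a}_{i,j})\asymp\min(1,i2^i/n)$ for $i$ beyond an absolute constant. The lemma as stated is therefore false. For $i=\lceil 2\log_2 n\rceil$ the probability equals $1$, while $i/n\to 0$. Your instinct in the final sentences is the right one.

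\textbf{The paper's proof has matching errors.} First, it says it suffices to bound $\sum_x|\mb{P}(\tilde\phi(\mbf{s})=x)-\mb{P}(\phi(\mbf{s})=x)|$. That is the wrong direction: half of this sum is a lower bound for $\mb{P}(X\neq Y)$ under every coupling, not an upper bound for a given one. Second, what it actually estimates is $\sum_x|(I_x\triangle I'_x)\cap(i-1,i]|$, where $I_x=(H_{x-1}/\log 2,H_x/\log 2]$ and $I'_x=(\frac{n}{H_k}H_{x-1},\frac{n}{H_k}H_x]$. Each $s$ with $\phi(s)\neq\tilde\phi(s)$ lies in exactly two of these symmetric differences, and every other $s$ lies in none. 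So this sum equals $2\mb{P}(\phi(\mbf{s})\neq\tilde\phi(\mbf{s}))$, which is precisely your union bound. Third, the final step $\frac1n\sum_{x\asymp 2^i}(H_{x-1}+H_x)\ll i/n$ drops a factor $2^i$, since the sum is $\asymp i2^i$.

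\textbf{How to recover a true statement.} Your proposed repair is correct. By data processing, $d_{TV}(\phi(\mbf{s}),\phi(\lambda\mbf{s}))\le d_{TV}(\mbf{s},\lambda\mbf{s})\ll(\lambda-1)i\ll i/n$. Hence a maximal coupling of each pair, taken independently over $(i,j)$, has mismatch probability $\ll i/n$. This is a statement about a different coupling, though, not the one in the lemma.

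Either version suffices for the lemma's only use, which is bounding $E_3$ (with $i\le L(M)$) in the proof of the main proposition of the inclusion--exclusion section.
\begin{itemize}
\item The corrected bound $\ll i2^i/n$ for the paper's own coupling gives $E_3\ll R^2L^32^L/n$ instead of $R^2L^3/n$. This is still negligible, since $n$ is taken huge relative to $L(M)$.
\item Alternatively, the two expectations compared there depend only on the laws of $\tau(\mbf{A}_{\sml})$ and $\tau^*_{\mbf{x}\mid\beta'}(L)$, each independent of its $\varrho$-factor. So re-coupling the small elements is legitimate.
\end{itemize}
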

\begin{proof}
Let $(\mbf{s} \mid \beta') = (\mbf{s}_{i,j})_{i \le \lfloor n/2\rfloor, j \in [1 - \xi'_i]}$ be the auxiliary process introduced above. Thus $\mbf{x}_{i,j} = \phi(\mbf{s}_{i,j})$ and $\mbf{a}_{i, j} = \tilde\phi(\mbf{s}_{i,j})$. Thus we must bound $\mb{P}(\phi(\mbf{s}_{i,j}) \ne \tilde\phi(\mbf{s}_{i,j}))$; that is, it suffices to prove that \[ \sum_x \big|\mb{P}(\tilde\phi(\mbf{s}_{i,j}) = x) - \mb{P}(\phi(\mbf{s}_{i,j}) = x)\big|  \ll i/n.\] From the definitions (\cref{phis-def}) of $\phi, \tilde\phi$, the task is to show
\[ \sum_x \Big| \mb{P} \big( \mbf{s}_{i,j} \in (\frac{1}{\log 2} H_{x-1}, \frac{1}{\log 2}H_x] \big) - \mb{P} \big( \mbf{s}_{i,j} \in (\frac{n}{H_k} H_{x-1}, \frac{n}{H_k}H_x] \big)\Big| \ll i/n. \]
Recall that here $\mbf{s}_{i,j}$ is uniform on $(i-1, i]$. In particular, both probabilities vanish unless $x \asymp 2^i$. For each $x$, the difference in probabilities is the length of the symmetric difference between the two intervals, and therefore the above expression is bounded by
\[ \sum_{x \asymp 2^i} \Big|\frac{n}{H_k} - \frac{1}{\log 2}\Big| \big( H_{x - 1} + H_x\big) \ll \frac{1}{n} \sum_{x \asymp 2^i} \big(H_{x-1} + H_x\big) \ll \frac{i}{n}, \] as required.
\end{proof}

\section{The random variables \texorpdfstring{$\tau$}{} and \texorpdfstring{$\tau^*$}{}}
\label{tau-sec}
In this section we develop the basic properties of the $\tau$-statistics as defined in \cref{tau-stat-def}, which feature prominently in the measure $\mu'$ (see \cref{sec17-main}). 
 
Let $\beta'$ be a lower walk with increments $\xi'_i = \beta'(i) - \beta'(i-1)$. In this section we suppose that $\beta'$ is infinite (rather than of length $\lfloor n/2\rfloor$) although we will only be dealing with finite segments of $\beta'$.

Recall the definition of the conditioned lower process $(\mbf{x} \mid \beta')$ and its elements $\mbf{x}_{i,j}$, $j \in [1 - \xi'_i]$, as described in \cref{subsec-53}. We define two statistics related to $\tau$, as defined in \cref{tau-stat-def}.

\begin{definition}\label{y-beta-def} Let $\ell \in \N$. We define
\begin{equation}\label{y-beta-form} \tau^*_{\mbf{x} \mid \beta'}(\ell) := 2^{\beta'(\ell) - \ell} \# \big\{ \sum_{i \le \ell; j \in [1 - \xi'_i]} \eps_{i,j} \mbf{x}_{i,j} : \eps_{i,j} \in \{0,1\}\big\}.\end{equation}   By convention we set $\tau_{\mbf{x} \mid \beta'}(0) = \tau^*_{\mbf{x} \mid \beta'}(0) := 1$. Suppose that 
\begin{equation}\label{lim-beta-j-assump} \lim_{j \rightarrow \infty} (j - \beta'(j)) = \infty.\end{equation}
Then we also define
\begin{equation}\label{tau-unstar-def} \tau_{\mbf{x} \mid \beta'}(\ell) := 2^{-\ell} \# \big\{ \sum_{i,j} \eps_{i,j} \mbf{x}_{i,j} : \eps_{i,j} \in \{0,1\}\big\},\end{equation} where the $(i,j)$-sum is taken over the $\ell$ smallest $\mbf{x}_{i,j}$s. 
\end{definition}
\begin{remarks}
The assumption \cref{lim-beta-j-assump} guarantees that $\tau_{\mbf{x} \mid \beta'}(\ell)$ is defined for all $\ell$. We note that the definition of $\tau_{\mbf{x} \mid \beta'}$ is compatible with the definition \cref{tau-stat-def}, being the $\tau$ statistic defined there applied to the conditioned sequence $(\mbf{x} \mid \beta')$.
We think of $\tau^*_{\mbf{x} \mid \beta'}(\ell)$ as rather analogous to $\tau_{\mbf{x} \mid \beta'}(\ell)$, but `adapted to the random walk'; instead of working with the first $\ell$ elements of $(\mbf{x} \mid \beta')$, we work with the elements up to $\ell$.
\end{remarks}

Observe that the $\ell - \beta'(\ell)$ elements $(\mbf{x}_{i,j})_{i \le \ell; j \in [1 - \xi'_i]}$ are precisely the initial $\ell - \beta'(\ell)$ elements of the conditioned lower process $(\mbf{x} \mid \beta')$. Therefore we have
\begin{equation}\label{tau-y-relation}
\tau^*_{\mbf{x} \mid \beta'}(\ell) = \tau_{\mbf{x} \mid \beta'}(\ell - \beta'(\ell)).
\end{equation}

It is straightforward to check the monotonicity properties
\begin{equation}\label{y-unstar-monotonicity} 1 = \tau_{\mbf{x} \mid \beta'}(0) \ge \dots \ge \tau_{\mbf{x} \mid \beta'}(\ell) \ge \tau_{\mbf{x} \mid \beta'}(\ell+1) \ge \dots \ge  0\end{equation} and
\begin{equation}\label{y-monotonicity} 1 = \tau^*_{\mbf{x} \mid \beta'}(0) \ge \dots \ge \tau^*_{\mbf{x} \mid \beta'}(\ell) \ge \tau^*_{\mbf{x} \mid \beta'}(\ell+1) \ge \dots \ge  0.\end{equation}

The main aim in this section is to develop some basic properties of the variables $\tau_{\mbf{x} \mid \beta'}(\ell)$ and $\tau^*_{\mbf{x} \mid \beta'}(\ell)$, particularly the latter. The first statement says that, under reasonable conditions, $\tau^*_{\mbf{x} \mid \beta'}(\ell) \lessapprox 2^{-m}$ if $\beta'$ attains the value $-m$. This should be considered a rigorous version of the heuristics just before \cref{x-2m-heur}.

Recall from \cref{bounded-walk-def} the notion of a walk being $R$-bounded to some length $L$.

\begin{lemma}\label{Y-upper}
Let $m \ge 0$ be an integer. Suppose that $\beta'$ is a lower walk and that $\beta'(q) = -m$ for some $q$ with $0 \le q \le \ell$. Suppose that $\beta'$ is $R$-bounded to length $\ell$. Then 
\begin{equation}\label{Y-minm-bd}  \tau^*_{\mbf{x} \mid \beta'}(\ell) \ll R (1+q)^{\kappa} 2^{-m}.\end{equation} 
\end{lemma}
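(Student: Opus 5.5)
Since $\tau^*_{\mbf{x}\mid\beta'}$ is non-increasing in $\ell$ by \cref{y-monotonicity}, it suffices to bound $\tau^*_{\mbf{x}\mid\beta'}(\ell)$ by bounding the number of distinct subset sums. We split the elements $(\mbf{x}_{i,j})_{i\le \ell}$ into those with $i\le q$ and those with $q<i\le \ell$. The set of subset sums of all of them is contained in the sumset of the subset sums of the first group with those of the second group. Its cardinality is therefore at most the product of the two counts. This is the rigorous form of the heuristic preceding \cref{x-2m-heur}.

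\textbf{Second group.} The number of elements with $q<i\le\ell$ is $\sum_{q<i\le \ell}(1-\xi'_i) = (\ell-q) - \beta'(\ell)+\beta'(q)$. So the trivial bound on the count of their subset sums is
\[ 2^{(\ell - q) - \beta'(\ell) - m}. \]

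\textbf{First group.} The first-group subset sums are nonnegative integers. By \cref{aij-prob-bd}, each $\mbf{x}_{i,j}\ll 2^i$ deterministically. Hence every such sum is at most
\[ \sum_{i\le q}(1-\xi'_i)\,C2^i. \]
Using $R$-boundedness, $1-\xi'_i\le 1+Ri^\kappa \ll R(1+q)^\kappa$ for $i\le q$, so the sum is $\ll R(1+q)^\kappa 2^q$. The number of distinct first-group subset sums is therefore $\ll R(1+q)^\kappa 2^{q}$.

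\textbf{Combining.} Multiplying the two counts and inserting the normalisation $2^{\beta'(\ell)-\ell}$ from \cref{y-beta-form} gives
\[ \tau^*_{\mbf{x}\mid\beta'}(\ell)\ll 2^{\beta'(\ell)-\ell}\cdot R(1+q)^\kappa 2^q\cdot 2^{\ell-q-\beta'(\ell)-m} = R(1+q)^\kappa 2^{-m}, \]
which is \cref{Y-minm-bd}. The case $q=0$ forces $m=0$, and then the bound follows from $\tau^*\le 1$.

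\textbf{Main obstacle.} The only delicate point is bounding the range of the small sums with the correct polynomial factor. We bound the multiplicities by $Ri^\kappa$ and sum the geometric series $\sum_{i\le q} i^\kappa 2^i\ll q^\kappa 2^q$. No probabilistic input is needed: the bound holds deterministically.
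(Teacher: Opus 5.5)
Your argument is correct and is essentially the paper's proof. The paper reduces to the case $\ell = q$ via the monotonicity \cref{y-monotonicity} and then counts distinct subset sums by their range, $\ll R\,2^{q}q^{\kappa}$. Your split into $i\le q$ and $q<i\le \ell$, with the trivial count $2^{(\ell-q)-\beta'(\ell)-m}$ for the second group, is exactly that monotonicity step written out. (Your opening appeal to monotonicity is then not actually used, which is harmless.)
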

\begin{proof} If $q = 0$ then $m = 0$ and the result is trivial from \cref{y-monotonicity}. Suppose, then, that $q \ge 1$. By the monotonicity property \cref{y-monotonicity} it suffices to handle the case $\ell = q$. By \cref{aij-prob-bd} we have
 \begin{equation*}\big| \sum_{i \le \ell;j \in [1 - \xi'_i]} \eps_{i,j} \mbf{x}_{i,j}\big| \ll \sum_{i \le \ell} 2^{i} |1 - \xi'_i| \ll R 2^{\ell}\ell^{\kappa},\end{equation*} where in the last step we used the $R$-boundedness of $\beta'$. In particular the number of distinct elements  $\sum_{i \le \ell;j \in [1 - \xi'_i]} \eps_{i,j} \mbf{x}_{i,j}$ is $\ll R 2^{\ell} \ell^{\kappa}$.  From the definition \cref{y-beta-form}, it follows that $\tau^*_{\mbf{x} \mid \beta'}(\ell) \le 2^{\beta'(\ell)} R \ell^{\kappa} = 2^{-m} R \ell^{\kappa}$, which implies \cref{Y-minm-bd}.
\end{proof}

We will also require the following variant for the unstarred $\tau_{\mbf{x} \mid \beta'}$ quantities. 

\begin{corollary}\label{Y-upper-nonstar}
Let $m \ge 0$ and $\ell \ge 1$ be integers. Suppose that \cref{lim-beta-j-assump} holds, and denote by $\ell'$ the smallest integer such that $\ell' - \beta'(\ell') \ge \ell$. Suppose that $\beta'(q) = -m$ for some $q$ with $0 \le q \le \ell'-1$, and that $\beta'$ is $R$-bounded to length $\ell'-1$. Then 
\begin{equation*} \tau_{\mbf{x} \mid \beta'}(\ell) \ll R (1 + q)^{\kappa} 2^{-m}.\end{equation*}    
\end{corollary}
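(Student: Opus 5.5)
The plan is to reduce to \cref{Y-upper} using the relation \cref{tau-y-relation} together with the monotonicity \cref{y-unstar-monotonicity}. By the choice of $\ell'$ we have $\ell' - \beta'(\ell') \ge \ell$. Since $\tau_{\mbf{x} \mid \beta'}$ is non-increasing, this gives
\[ \tau_{\mbf{x} \mid \beta'}(\ell) \ge \tau_{\mbf{x} \mid \beta'}(\ell' - \beta'(\ell')) = \tau^*_{\mbf{x}\mid\beta'}(\ell').\]
That inequality points the wrong way, so instead I will use $\ell'-1$. By minimality of $\ell'$ we have $(\ell'-1) - \beta'(\ell'-1) < \ell$. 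Hence
\[ \tau_{\mbf{x} \mid \beta'}(\ell) \le \tau_{\mbf{x} \mid \beta'}\big((\ell'-1) - \beta'(\ell'-1)\big) = \tau^*_{\mbf{x}\mid\beta'}(\ell'-1).\]
There is one subtlety. If $\ell'-1 = 0$, the quantity $(\ell'-1)-\beta'(\ell'-1)$ equals $0$, and the conventions $\tau(0)=\tau^*(0)=1$ keep the identity consistent. Moreover, $\ell \ge 1$ forces $\ell' \ge 1$, because $0 - \beta'(0) = 0 < \ell$. So $\ell'-1 \ge 0$ is a legitimate index.

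Next I apply \cref{Y-upper} with $\ell'-1$ in place of $\ell$. Its hypotheses are exactly what the corollary assumes: $\beta'(q) = -m$ for some $0 \le q \le \ell'-1$, and $\beta'$ is $R$-bounded to length $\ell'-1$. The lemma then gives $\tau^*_{\mbf{x}\mid\beta'}(\ell'-1) \ll R(1+q)^{\kappa}2^{-m}$, which combined with the previous display is the claim.

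The only point needing care is the identity \cref{tau-y-relation} itself. It requires that the elements indexed by $i \le \ell'-1$ are exactly the $(\ell'-1)-\beta'(\ell'-1)$ smallest elements of $(\mbf{x}\mid\beta')$. This was noted before the lemma, since the blocks $\mbf{x}_{i,j} = \phi(\mbf{s}_{i,j})$ with $\mbf{s}_{i,j} \in (i-1,i]$ and $\phi$ nondecreasing come in increasing order. Ties between blocks do not matter, since the multiset of values is what counts.

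Assumption \cref{lim-beta-j-assump} is used only to guarantee that $\ell'$ exists and that $\tau_{\mbf{x}\mid\beta'}(\ell)$ is defined.
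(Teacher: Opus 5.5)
Your proposal is correct and is essentially the paper's own proof. Both use the minimality of $\ell'$, the monotonicity \cref{y-unstar-monotonicity} and \cref{tau-y-relation} to get $\tau_{\mbf{x}\mid\beta'}(\ell)\le\tau^*_{\mbf{x}\mid\beta'}(\ell'-1)$, and then apply \cref{Y-upper} with $\ell$ replaced by $\ell'-1$; your extra remarks on the case $\ell'=1$ and on ties between blocks are harmless additional care.
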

\begin{proof}
By the monotonicity of the $\tau_{\mbf{x} \mid \beta'}(\ell)$ in $\ell$, the definition of $\ell'$, and \cref{tau-y-relation}, we have 
\[ \tau_{\mbf{x} \mid \beta'}(\ell) \le \tau_{\mbf{x} \mid \beta'}(\ell' - 1 - \beta'(\ell' - 1)) = \tau^*_{\mbf{x} \mid \beta'}(\ell'-1).\] The result now follows immediately from \cref{Y-upper}, replacing $\ell$ by $\ell' - 1$ in that lemma.
\end{proof}

The next fact is more difficult. It allows us to establish rapid convergence of $\tau^*_{\mbf{x} \mid \beta'}(\ell)$ to the limit as $\ell \rightarrow \infty$ under a suitable condition on the lower walk $\beta'$ (that $\beta'$ is suitably positive in the sense of \cref{good-walk-def}).

\begin{lemma}\label{lem23}
Let $\beta'$ be a lower walk. Then uniformly for non-negative integers $\ell$ we have
\begin{equation} \label{bd-on-y-beta} \mb{E}\big( \tau^*_{\mbf{x} \mid \beta'}(\ell) -  \tau^*_{\mbf{x} \mid \beta'}(\ell+1) \big) \ll 2^{-\beta'(\ell+1)}. \end{equation}
Here, the expectation is over random choices of $(\mbf{x} \mid \beta') = (\mbf{x}_{i,j})_{i;j \in [1 - \xi'_i]}$.
\end{lemma}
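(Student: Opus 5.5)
Write $S_\ell := \Sigma\big((\mbf{x}_{i,j})_{i \le \ell, j \in [1-\xi'_i]}\big)$, so that $\tau^*_{\mbf{x}\mid\beta'}(\ell) = 2^{\beta'(\ell)-\ell}|S_\ell|$. Let $b := 1 - \xi'_{\ell+1} \ge 0$ be the number of new elements $y_1,\dots,y_b$ (these are the $\mbf{x}_{\ell+1,j}$). Then $S_{\ell+1} = S_\ell + \Sigma(y_1,\dots,y_b)$. Note that $2^{\beta'(\ell+1)-(\ell+1)} = 2^{\beta'(\ell)-\ell}2^{-b}$. Hence
\[ \tau^*(\ell) - \tau^*(\ell+1) = 2^{\beta'(\ell+1)-\ell-1}\big(2^b|S_\ell| - |S_{\ell+1}|\big). \]
The plan is to bound $\mb{E}(2^b|S_\ell| - |S_{\ell+1}|)$ by $O(2^{\ell})$, which gives exactly the claimed $O(2^{-\beta'(\ell+1)})$ after multiplying by $2^{\beta'(\ell+1)-\ell-1}$. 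Since the difference is nonnegative by \cref{y-monotonicity}, only an upper bound is needed.

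The quantity $2^b|S_\ell| - |S_{\ell+1}|$ counts collisions. Index the $2^b$ translates by $\eps \in \{0,1\}^b$ and write $\eps\cdot y := \sum_j \eps_j y_j$; the translates are $S_\ell + \eps\cdot y$. By a union bound (the size of a union is at least the sum of the sizes minus the number of pairwise coincidences),
\[ 2^b|S_\ell| - |S_{\ell+1}| \le \sum_{\eps \ne \eps'} \#\{(s,s') \in S_\ell^2 : s + \eps\cdot y = s' + \eps'\cdot y\}. \]
Fix $\eps \ne \eps'$ and choose $j$ with $\eps_j \ne \eps'_j$. Condition on $S_\ell$ (which is independent of the $y$'s) and on all $y_{j'}$ with $j' \ne j$. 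For each pair $(s,s')$, the event above forces $y_j$ to take one specific value, which by \cref{non-conc-n} happens with probability $\ll 2^{-\ell}$.

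The naive count of pairs is $|S_\ell|^2$, which is too large, so it must be reduced. The differences $s'-s$ lie in $S_\ell - S_\ell \subset [-C2^\ell\cdot(\text{stuff}), \ldots]$, and this is the step I expect to be the main obstacle. The better approach is to count, for each fixed $s$, the possible values $s'$ rather than pairs. The condition is $y_j = \pm(s' - s) + \text{(fixed)}$, and since $\mbf{x}_{\ell+1,j} \asymp 2^{\ell}$ deterministically by \cref{aij-prob-bd}, only $s'$ in a window of $O(2^\ell)$ integers around $s$ contribute. Summing the probability over those $s'$ gives at most $\sum_{s'} \mb{P}(y_j = v(s')) \le 1$, because the values $v(s')$ are distinct. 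Summing over $s \in S_\ell$ then bounds the expected number of collisions for a fixed pair $(\eps,\eps')$ by $|S_\ell|$.

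Summing over the pairs $\eps \ne \eps'$ gives a total of $\ll 4^b|S_\ell|$. Combined with $|S_\ell| \le 2^{\ell - \beta'(\ell)}$ this yields $\mb{E}\big(\tau^*(\ell) - \tau^*(\ell+1)\big) \ll 2^{\beta'(\ell+1)-\ell}4^b 2^{\ell-\beta'(\ell)} = 2^{b}\cdot 2^{\beta'(\ell+1)-\beta'(\ell)}\cdot\ldots$. This is off by factors of $2^{b}$, so I would refine it. Rather than summing over all pairs, I would add the new elements one at a time and telescope. Adding a single element $y$ to a set $S$ loses $|S \cap (S+y)|$, whose expectation is $\le \sum_{s}\mb{P}(y \in S - s)$. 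We have $|S\cap(S-s)\cap[\asymp 2^\ell]|\ll 2^{\ell}\cdot$ density, giving $\mb{E}|S\cap (S+y)| \ll 2^{-\ell}\sum_{s\in S}|(S-s)\cap I_\ell| \le 2^{-\ell}|S|\cdot|I_\ell|$.

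The remaining issue is the set size. One can crudely bound $|S\cap(S+y)| \le |S|$ and $\mb{P}(y\in S-S)$ via anticoncentration. Here I would use the trivial bound $|(S-s)\cap I_\ell|\ll 2^\ell$ only after noting that this already gives $\mb{E}(\text{loss}) \ll |S|\cdot 2^{-\ell}\cdot 2^\ell$. That is insufficient, so the genuine gain has to come from $\mb{E}\,\#\{(s,s'): s'-s = y\} \ll 2^{-\ell}|S|^2$, together with $|S| \le 2^{\ell+O(1)}$ up to the $\beta'$ normalisation. With that gain, each step's normalised loss is $\ll 2^{\beta'(\ell+1)-\ell}\cdot 2^{-\ell}|S|^2/|S|\cdot$, which is what yields \cref{bd-on-y-beta}.
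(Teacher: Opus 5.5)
Your identity $\tau^*(\ell)-\tau^*(\ell+1)=2^{\beta'(\ell+1)-\ell-1}\big(2^b|S_\ell|-|S_{\ell+1}|\big)$ is correct. So is your degree-two Bonferroni bound on $2^b|S_\ell|-|S_{\ell+1}|$ by the number of coincidences $s+\eps\cdot y=s'+\eps'\cdot y$.

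The target you then set is wrong. Multiplying $O(2^{\ell})$ by $2^{\beta'(\ell+1)-\ell-1}$ gives $O(2^{+\beta'(\ell+1)})$, not $O(2^{-\beta'(\ell+1)})$. What is actually needed is $\mb{E}\big(2^b|S_\ell|-|S_{\ell+1}|\big)\ll 2^{\ell-2\beta'(\ell+1)}$.

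Because of this slip you discard the estimate that works and replace it with arguments that do not close:
\begin{itemize}
\item The ``for each $s$, sum over $s'$'' refinement yields only $4^b|S_\ell|$ coincidences, i.e.\ the trivial inequality $\tau^*(\ell)-\tau^*(\ell+1)\ll 2^b\tau^*(\ell)$. It throws away the anticoncentration gain entirely.
\item The one-element-at-a-time paragraph appeals to $|S|\le 2^{\ell+O(1)}$. That is not available here: the lemma is for an arbitrary lower walk with no boundedness hypothesis, so the only bound is the count of sign patterns, $|S_\ell|\le 2^{\ell-\beta'(\ell)}$.
\item The final displayed expression, with its unexplained division by $|S|$, does not evaluate to $2^{-\beta'(\ell+1)}$.
\end{itemize}
As written, the proposal does not prove the lemma.

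The repair is to keep your first approach unchanged. Fix $\eps\ne\eps'$ and $(s,s')\in S_\ell^2$, and condition on $S_\ell$ and on the $y_{j'}$ with $j'\ne j$. The coincidence then forces $y_j$ to a single value, which has probability $\ll 2^{-\ell}$. Hence the expected number of coincidences is
$\ll 4^b2^{-\ell}|S_\ell|^2\le 4^b2^{\ell-2\beta'(\ell)}=4\cdot 2^{\ell-2\beta'(\ell+1)}$, using the identity $b-\beta'(\ell)=1-\beta'(\ell+1)$. Multiplying by $2^{\beta'(\ell+1)-\ell-1}$ gives exactly $\ll 2^{-\beta'(\ell+1)}$. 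The ``naive'' $|S_\ell|^2$ count is not too large; it is precisely the right size.

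This repaired argument is a mild variant of the paper's proof. The paper bounds the same energy quantity $E(\mbf{x})$, which is your coincidence count over ordered pairs $(\eps,\eps')$. It converts this into a lower bound via Cauchy--Schwarz, $|S_{\ell+1}|\ge (2^b|S_\ell|)^2/(2^b|S_\ell|+E)$, giving the multiplicative inequality $\tau^*(\ell)\le \tau^*(\ell+1)\big(1+E/(2^b|S_\ell|)\big)$, and then uses $\tau^*(\ell+1)\le 1$. Your Bonferroni inequality gives the additive bound $2^b|S_\ell|-|S_{\ell+1}|\le E/2$ directly, which is if anything slightly shorter.
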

\begin{proof} For an integer $\ell \ge 1$, denote $\eps_{\le \ell} = (\eps_{i,j})_{i \le \ell, j \in [1 - \xi'_i]}$ and for $t \in \Z$ write
\[ r_{\mbf{x},\ell}(t) := \sum_{\eps_{\le \ell}} 1 \big( \sum_{i \le \ell, j \in [1 - \xi'_i]} \eps_{i,j} \mbf{x}_{i,j} = t \big).\] (Thus $r_{\mbf{x},\ell}(t)$ is a random variable.) Define also $r_{\mbf{x},0}(t) = 1_{t = 0}$. Thus by \cref{y-beta-form} we have, for all $\ell \ge 0$, that
\begin{equation}\label{tau-def} \tau^*_{\mbf{x} \mid \beta'}(\ell) = 2^{\beta'(\ell) - \ell} \big| \Supp(r_{\mbf{x},\ell})\big|,\end{equation} where $\Supp$ denotes support.
Define 
\[ f_{\mbf{x},\ell}(t) := \sum_{\eps_{\ell+1}} 1_{\Supp(r_{\mbf{x},\ell})} \big(t - \sum_{j = 1}^{1 - \xi'_{\ell+1}} \eps_{\ell+1, j} \mbf{x}_{\ell+1, j}\big),\] where $\eps_{\ell+1}$ denotes $(\eps_{\ell+1, j})_{j \in [1 - \xi'_{\ell+1}]}$. Since
\[ r_{\mbf{x},\ell+1}(t) = \sum_{\eps_{\ell+1}} r_{\mbf{x},\ell} \big(t - \sum_{j = 1}^{1 - \xi'_{\ell+1}} \eps_{\ell+1, j} \mbf{x}_{\ell+1, j}\big),\] we see that 
\begin{equation}\label{sup-rel-1} \Supp (f_{\mbf{x},\ell}) = \Supp (r_{\mbf{x},\ell+1}) \quad \mbox{and} \quad \sum_t f_{\mbf{x},\ell}(t) = 2^{1 - \xi'_{\ell+1}} \big|\Supp(r_{\mbf{x},\ell})\big|.\end{equation}
By expansion and a substitution, we have
\[ \sum_t f_{\mbf{x},\ell}(t)^2 = \sum_{\eps_{\ell+1}, \eps'_{\ell+1}} \sum_t 1_{\Supp(r_{\mbf{x},\ell})}(t) 1_{\Supp(r_{\mbf{x},\ell})}\Big( t - \sum_{j = 1}^{1 - \xi'_{\ell+1}} (\eps_{\ell+1, j} - \eps'_{\ell+1, j}) \mbf{x}_{\ell+1, j}\Big),\] whence
\begin{align*} \sum_t & f_{\mbf{x}, \ell}(t)^2 - 2^{1 - \xi'_{\ell+1}}\big|\Supp(r_{\mbf{x}, \ell})\big| =  \\ & = \sum_{\eps_{\ell+1} \ne \eps'_{\ell+1}} \sum_t 1_{\Supp(r_{\mbf{x}, \ell})}(t) 1_{\Supp(r_{\mbf{x}, \ell})}\Big( t - \sum_{j = 1}^{1 - \xi'_{\ell+1}} (\eps_{\ell+1, j} - \eps'_{\ell+1, j}) \mbf{x}_{\ell+1, j}\Big).\end{align*} Denote the two equal expressions here by $E(\mbf{x})$.
We may write
\begin{equation}\label{e-def-6} E(\mbf{x})= \sum_{\eps_{\ell+1} \ne \eps'_{\ell+1}} \sum_{t,t'} 1_{\Supp(r_{\mbf{x}, \ell})}(t) 1_{\Supp(r_{\mbf{x},\ell})}(t') 1\Big( t - t' = \sum_{j = 1}^{1 - \xi'_{\ell+1}} (\eps_{\ell+1, j} - \eps'_{\ell+1, j}) \mbf{x}_{\ell+1, j}\Big).\end{equation}
By Cauchy-Schwarz we have $\big|\Supp (f_{\mbf{x}, \ell})\big| \ge \big( \sum_t f_{\mbf{x}, \ell}(t) \big)^2/\sum_t f_{\mbf{x}, \ell}(t)^2$, and using this, \cref{sup-rel-1} and the definition of $E(\mbf{x})$, it follows that 
\[ \big|\Supp(r_{\mbf{x}, \ell+1})\big| \ge \frac{2^{2(1 - \xi'_{\ell+1})} |\Supp (r_{\mbf{x}, \ell})|^2}{2^{1 - \xi'_{\ell+1}} \big|\Supp (r_{\mbf{x}, \ell})\big| +  E(\mbf{x})},\] and hence from \cref{tau-def} (and using $\ell+1 - \beta'(\ell+1) = \ell - \beta'(\ell) + (1 - \xi'_{\ell+1})$) that 
\begin{equation*}  \tau^*_{\mbf{x} \mid \beta'}(\ell)  \le \tau^*_{\mbf{x} \mid \beta'}(\ell+1)\Big(1 + \frac{E(\mbf{x})}{2^{1 - \xi'_{\ell+1}} |\Supp(r_{\mbf{x}, \ell})|}\Big). \end{equation*} Taking expectations over the random choice of $(\mbf{x} \mid \beta')$ and using the trivial bound $\tau^*_{\mbf{x} \mid \beta'}(\ell+1) \le 1$ on the second term gives
\begin{equation}\label{diff-y-bet}
\mb{E} \tau^*_{\mbf{x} \mid \beta'}(\ell)  \le \mb{E}\tau^*_{\mbf{x} \mid \beta'}(\ell+1) + \mb{E}\Big( \frac{E(\mbf{x})}{2^{1 - \xi'_{\ell+1}} |\Supp(r_{\mbf{x}, \ell})|}\Big).
\end{equation}
The remaining task is to bound the final average here. Consider again the definition \cref{e-def-6} of $E(\mbf{x})$. Condition to  $\mbf{x}_{i,j} = x_{i,j}$ for $i \le \ell$ and $j \in [1 - \xi'_i]$. From \cref{e-def-6} and the anticoncentration estimate \cref{non-conc-n} it follows that 
\[ \mb{E} \big( E(\mbf{x}) \mid \mbf{x}_{i,j} = x_{i,j} \; \mbox{for $i \le \ell$ and $j \in [1 - \xi'_i]$}\big)  \ll 2^{2(1 - \xi'_{\ell+1}) - \ell} \big| \Supp(r_{x,\ell})\big|^2;\] note here that we have written $r_{x, \ell}$ rather than $r_{\mbf{x}, \ell}$ since this quantity depends only on the fixed variables $x_{i,j}$ with $i \le \ell$ and $j \in [1 - \xi'_i]$. Consequently

\[ \mb{E} \bigg( \frac{E(\mbf{x})}{2^{1 - \xi'_{\ell+1}} |\Supp(r_{x,\ell})|}  \mid \mbf{x}_{i,j} = x_{i,j} \; \mbox{for $i \le \ell$, $j \in [1 - \xi'_i]$}\bigg)  \ll 2^{1 - \xi'_{\ell+1} - \ell} \big|\Supp(r_{x,\ell})\big| \ll 2^{-\beta'(\ell+1)}, \] where here we used the trivial bound $\big|\Supp(r_{x,\ell})\big| \le 2^{\ell - \beta'(\ell)}$.
Undoing the conditioning and recalling \cref{diff-y-bet}, the estimate \cref{bd-on-y-beta} follows.

\end{proof}

\section{The random variables \texorpdfstring{$\varrho$}{} and \texorpdfstring{$\varrho^*$}{}}\label{rho-sec}

In this section we develop the basic properties of the $\varrho$-statistics as defined in \cref{rho-stat-sec}, which feature prominently in the measure $\mu$ (see \cref{prop16.2}). 
 
Let $\beta$ be an upper walk with increments $\xi_i = \beta(i) - \beta(i-1)$. In this section we suppose that $\beta$ is infinite (rather than of length $\lceil n/2\rceil$) although we will only be dealing with finite segments of $\beta$. Recall the definition of the conditioned upper process $(\mbf{u} \mid \beta)$ and its elements $\mbf{u}_{i,j}$, $j \in [1 + \xi_i]$, as described in \cref{subsec-53}.

\begin{definition}\label{xbeta-ybeta-defs}
Let $\beta$ be an upper walk and let $\ell \ge 1$ be an integer. Then we define the random variable $\varrho^*_{\mbf{u} \mid \beta}(\ell)$ by \begin{equation} \varrho^*_{\mbf{u} \mid \beta}(\ell)  := 2^{-\beta(\ell)} \sum_{\eps} 1 \Big( \sum_{i \le \ell; j \in [1 + \xi_i]} \eps_{i,j}2^{-\mbf{u}_{i, j}} \in [1 - 2^{-\ell},1]\Big).\label{x-beta-form}\end{equation} Here, $\eps$ ranges over all $\eps = (\eps_{i,j})_{i \le \ell; j \in [1 + \xi_i]}$ with $\eps_{i,j} \in \{0,1\}$.
By convention we set $\varrho^*_{\mbf{u} \mid \beta}(0) := 1$. 
\end{definition}
A very closely-related definition is that of $\varrho_{\mbf{u} \mid \beta}(\ell)$, which is defined as in \cref{x-stat-def}, with $u$ being the sequence $(\mbf{u} \mid \beta)$. It is slightly annoying to express this using the $\mbf{u}_{i,j}$ notation for the elements of $(\mbf{u} \mid \beta)$; we do this now. Fix the upper walk $\beta$. In order for the quantities $\varrho_{\mbf{u} \mid \beta}(\ell)$ to be well-defined, we will need to assume that  
\begin{equation}\label{lim-betaupper-j-assump} \lim_{j \rightarrow \infty} (j + \beta(j)) = \infty.\end{equation}
Given $\ell \ge 1$, let $\ell'$ be minimal so that $\ell' + \beta(\ell') \ge \ell$. (Note carefully that the definition of $\ell'$ here, in the context of an \emph{upper} walk $\beta$, differs from the definition in the previous section, but there should be no danger of confusion.) Define an integer $r \ge 1$ by 
\begin{equation}\label{r-defn} \ell'-1 + \beta(\ell'-1) = \ell - r.\end{equation}
We have $r \le \max(\xi_{\ell'} + 1,\ell)$. Define $\mbf{u}^*_{\ell',1},\dots, \mbf{u}^*_{\ell', r}$ to be the $r$ smallest elements among the $\mbf{u}_{\ell', j}$, $j \in [1 + \xi_{\ell'}]$, listed in ascending order so that $\mbf{u}^*_{\ell', r}$ is the largest of these elements, and therefore the $\ell$th element of $(\mbf{u} \mid \beta)$ when these are listed in ascending order. Then from the definition \cref{x-stat-def} we have
\begin{equation}\label{rho-alt} \varrho_{\mbf{u} \mid \beta}(\ell) = 2^{\mbf{u}^*_{\ell',r} - \ell}\sum_{\eps \in \{0,1\}^{\ell}} 1 \big( \sum_{i < \ell'; j} \eps_{i,j} 2^{-\mbf{u}_{i,j}} + \sum_{j = 1}^r \eps_{\ell', j} 2^{-\mbf{u}^*_{\ell', j}} \in [1 - 2^{-\mbf{u}^*_{\ell', r}},1]\big).\end{equation}

We now provide a lemma giving upper bounds for the $\varrho$ and $\varrho^*$ statistics. The gist of this is that these variables are bounded in expectation (assuming weak boundedness properties of the walk $\beta$) and that both have only a small ($\sim 2^{-m}$) probability of being nonzero if $\beta$ assumes the value $-m$. This last point corresponds to a rigorous version of the heuristic around \cref{member-a1,conc-heur}.

The proofs for $\varrho, \varrho^*$ are slightly different in minor details but are best given together.

\begin{lemma}\label{X-upper-gen} 
Let $m \ge 0$ and $\ell \ge 1$ be integers and let $R \ge 1$ be a parameter. Suppose that $\beta$ is an upper walk which is $R$-bounded to length $\ell$.  Then 
\begin{equation}\label{X-upper-simple} \mb{E} \varrho^*_{\mbf{u} \mid \beta}(\ell) \ll R^2,\end{equation} where the expectation is over random choices of $(\mbf{u} \mid \beta)$. Suppose in addition that $\beta(q) = -m$ for some $q$ with $0 \le q \le \ell$. Then 
\begin{equation}\label{X-non-vanishing} \mb{P}\big(\varrho^*_{\mbf{u} \mid \beta}(\ell) \ne 0\big) \ll R^3 (1 + q)^{\kappa} 2^{-m}.\end{equation}
Now suppose that $\beta$ satisfies \cref{lim-beta-j-assump}, and that $\beta$ is $R$-bounded to length $\ell'$, where $\ell'$ is minimal so that $\ell' + \beta(\ell') \ge \ell$. Then 
\begin{equation}\label{rho-moment-bd} \mb{E} \varrho_{\mbf{u} \mid \beta}(\ell) \ll R^{2}.\end{equation}
Suppose in addition that $\beta(q) = -m$ for some $q$ with $0 \le q \le \ell'$. Then \begin{equation}\label{rho-nonvanishing-bd} \mb{P}\big(\varrho_{\mbf{u} \mid \beta}(\ell) \ne 0\big) \ll \max(\ell, \ell')^{3} 2^{-m}.\end{equation} 
\end{lemma}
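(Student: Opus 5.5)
The plan is to prove all four bounds by the same first-moment argument: for each $\eps$, reveal every $\mbf{u}_{i,j}$ except one carefully chosen coordinate, and use the fact that this coordinate is uniform on an interval of length 1. That makes $2^{-\mbf{u}_{i,j}}$ anticoncentrated at scale $2^{-i}$.

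\textbf{The bound \cref{X-upper-simple}.} Fix $\eps$ and let $(i_0,j_0)$ be the coordinate with $\eps_{i_0,j_0}=1$ and $i_0$ minimal. If $\eps$ is identically zero the sum is $0\notin[1-2^{-\ell},1]$, so $\eps=0$ does not contribute.
\begin{itemize}
\item Conditional on all other coordinates, the event $\sum \eps_{i,j}2^{-\mbf{u}_{i,j}}\in[1-2^{-\ell},1]$ forces $2^{-\mbf{u}_{i_0,j_0}}$ into an interval of length $2^{-\ell}$.
\item Since the density of $2^{-\mbf{u}_{i_0,j_0}}$ is $\asymp 2^{i_0}$, this has probability $\ll 2^{i_0-\ell}$.
\item There is also a deterministic constraint. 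All terms with $i\ge i_0$ are at most $2^{1-i}$ each, so their total is at most $\sum_{i\ge i_0}(1+\xi_i)2^{1-i}\ll R\,i_0^{\kappa}2^{-i_0}$, using $R$-boundedness. For this to be at least $1/2$ we need $i_0\ll \log R$ roughly; in any case we get the factor $2^{-i_0}R i_0^\kappa\gg 1$ only for small $i_0$.
\end{itemize}
Next count the $\eps$ with a given $(i_0,j_0)$: there are at most $2^{\sum_{i>i_0}(1+\xi_i)}=2^{\ell-i_0+\beta(\ell)-\beta(i_0)}$ of them. Multiplying by $2^{-\beta(\ell)}$ and by the probability $2^{i_0-\ell}$, each $(i_0,j_0)$ contributes $\ll 2^{-\beta(i_0)}$. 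Summing over $j_0\le 1+\xi_{i_0}\le Ri_0^\kappa$ and over the admissible $i_0$ (those with $Ri_0^\kappa 2^{-i_0}\gg1$, for which also $|\beta(i_0)|\le\sum_{i\le i_0}Ri^\kappa$) gives $O(R^2)$. One must be a little careful here to get exactly $R^2$; I would bound $2^{-\beta(i_0)}\le 2^{i_0}$ and use $i_0\ll\log R$.

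\textbf{The bound \cref{X-non-vanishing}.} Apply a union bound over $\eps$ restricted to the first $q$ levels, with the tail treated as slack. If $\varrho^*\ne0$, some $\eps$ has $\sum_{i\le q}\eps_{i,j}2^{-\mbf{u}_{i,j}}\in[1-T,1]$, where $T$ is the maximal tail contribution from levels $>q$, namely $T\ll R(1+q)^\kappa 2^{-q}$. Running the same reveal-one-coordinate argument at scale $T$ instead of $2^{-\ell}$, the number of such $\eps$ with first nonzero level $i_0$ is at most $2^{q-i_0+\beta(q)-\beta(i_0)}$, and each has probability $\ll 2^{i_0}T$. The total is then $\ll 2^{\beta(q)}R(1+q)^\kappa\sum_{i_0}2^{-\beta(i_0)}(1+\xi_{i_0})$. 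The $i_0$-sum is $O(R^2)$ as before, giving $R^3(1+q)^\kappa 2^{-m}$.

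\textbf{The $\varrho$ versions, \cref{rho-moment-bd} and \cref{rho-nonvanishing-bd}.} These follow the same way from \cref{rho-alt}. The difference is that the window $[1-2^{-\mbf{u}^*_{\ell',r}},1]$ is random, but $\mbf{u}^*_{\ell',r}\in(\ell'-1,\ell']$, so its length is $\asymp 2^{-\ell'}$. The prefactor is $2^{\mbf{u}^*-\ell}\asymp 2^{\ell'-\ell}$ and the number of coordinates is $\ell$, which reproduces the previous bookkeeping with $\ell=\ell'+\beta(\ell')+O(\text{slack})$.

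\textbf{Main obstacle.} The one subtlety is the dependence between the window endpoint and the chosen coordinate when $i_0=\ell'$. I would handle it by choosing the revealed coordinate among levels $<\ell'$ whenever possible, and otherwise crudely bounding by the polynomial factor $\max(\ell,\ell')^3$. That crude bound is exactly why \cref{rho-nonvanishing-bd} is stated with the weaker polynomial loss.
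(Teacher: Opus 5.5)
Your treatment of the two $\varrho^*$ bounds is essentially the paper's argument: reveal all but one coordinate at a level $a\le 2\log_2 R$ (such a coordinate exists by $R$-boundedness), and use that $2^{-\mbf{u}_{a,b}}$ has density $\asymp 2^{a}$. The foliation by first nonzero level is unnecessary, and your count of $\eps$ with first nonzero entry $(i_0,j_0)$ omits the free coordinates $(i_0,j)$ with $j>j_0$. Nothing breaks: with the correct count, each level contributes $\ll 2^{-\beta(i_0-1)}\le 2^{i_0}$, and the sum over admissible $i_0$ is $O(R^2)$. Even simpler is the uniform bound $\ll 2^{a-\ell}\le R^2 2^{-\ell}$ per $\eps$, times the $2^{\ell+\beta(\ell)}$ choices of $\eps$. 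In \cref{X-non-vanishing} you should also dispose of $q\le 2\log_2 R$ first, which is trivial since $q\ge m$. Otherwise your slack $T$ is not small and the restriction on $i_0$ does not carry over.

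The genuine gap is in the $\varrho$ bounds, exactly where you flag it. Consider $\eps$ with no nonzero coordinate at a level $\le\min(2\log_2 R,\ell'-1)$, in particular $\eps$ supported on the order statistics $\mbf{u}^*_{\ell',j}$. For these there is no anticoncentrated coordinate to reveal. Your fallback, a crude factor $\max(\ell,\ell')^3$, cannot give \cref{rho-moment-bd}, which must be $O(R^2)$ uniformly in $\ell$. The missing idea is a deterministic dichotomy.
\begin{itemize}
\item If $\ell'\le 3$, use $\varrho_{\mbf{u}\mid\beta}(\ell)\le 2^{\mbf{u}^*_{\ell',r}}\le 2^{\ell'}$.
\item If $\ell'\ge 4$, the window forces the total sum to be $\ge\tfrac34$. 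By $R$-boundedness the levels $<\ell'$ contribute at most $\tfrac12$ for such $\eps$, so $\sum_{j\le r}\eps_{\ell',j}2^{-\mbf{u}^*_{\ell',j}}\ge\tfrac14$. Each term is at most $2^{1-\ell'}$, so $r\ge 2^{\ell'-3}$.
\item Since $r\le 1+\xi_{\ell'}\le 1+R(\ell')^{\kappa}$, either such $\eps$ never occur, or $2^{\ell'}\ll R^2$. In the latter case the trivial bound $\varrho_{\mbf{u}\mid\beta}(\ell)\le 2^{\ell'}$ already gives \cref{rho-moment-bd}.
\item For \cref{rho-nonvanishing-bd}, the same case gives $2^{\ell'}\ll r\le \ell$. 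Together with $m\le q\le \ell'$ this gives $2^m\ll \ell$, so the bound is trivial.
\end{itemize}

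Separately, \cref{rho-nonvanishing-bd} contains no $R$. So ``reproducing the previous bookkeeping'', which yields something like $R^3(1+q)^{\kappa}2^{-m}$, does not prove it when $R$ is large compared with $\ell$. You need the observation that $1+\xi_i\le \sum_{i'\le \ell'-1}(1+\xi_{i'})=\ell-r<\ell$ for every $i\le \ell'-1$. Hence $\beta$ is automatically $\ell$-bounded to length $\ell'-1$, and at level $\ell'$ only $r\le\ell$ points enter the sum. Running the reveal-one-coordinate argument with $R$ replaced by $\ell$ is where the polynomial loss really comes from, not the jump-level case. The paper also replaces $q$ by the first $q'\le \ell'-1$ with $\beta(q')\le -m$, which exists because $\xi_{\ell'}\ge 0$ by minimality of $\ell'$. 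This way the union bound over levels $\le q'$ never involves the order statistics.
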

\begin{proof} An important tool in the proof will be the anticoncentration estimate
\begin{equation}\label{main-x-anti}\mb{P}(2^{-\mbf{u}_{i,j}} \in I) \ll 2^i |I|\end{equation} uniformly for all intervals $I$ and all $i$. This follows quickly from the fact that $\mbf{u}_{i,j}$ is uniformly distributed on $(i-1,i]$. An important technical point is that when $r$ is small there may not be a similar concentration estimate for the $\mbf{u}^*_{\ell, j}$, which are likely to be concentrated at one end of the interval. Throughout the proof we may assume without loss of generality that $R$ is sufficiently large. \vspace*{8pt}

\emph{Proof of \cref{X-upper-simple}.}  We observe that using the $R$-boundedness property we have (deterministically)
\begin{equation}\label{R-bded-small-1} \sum_{i > 2\log_2 R; j \in [1 + \xi_i]} \eps_{i,j} 2^{-\mbf{u}_{i,j}} \le \sum_{i > 2 \log_2 R} ( 1+\xi_i) 2^{1-i} \le \sum_{i> 2 \log_2 R} (1 + R i^{\kappa}) 2^{1-i} < \tfrac{1}{2}.\end{equation} To see the second inequality, one can bound the sum by $\ll R^{-1/2}$, then use the assumption that $R$ is sufficiently large. Now suppose that $\eps$ counts towards \cref{x-beta-form}, that is to say $\sum_{i \le \ell;j \in [1 + \xi_i]} \eps_{i,j} 2^{-\mbf{u}_{i,j}} \in [1 - 2^{-\ell},1]$. In particular, $\sum_{i \le \ell;j \in [1 + \xi_i]} \eps_{i,j} 2^{-\mbf{u}_{i,j}} \ge \frac{1}{2}$. It follows from this and \cref{R-bded-small-1} that some $\eps_{a,b}$ is nonzero with $a \le \min(2\log_2 R,\ell)$. We then have \[ \mb{P}\big(\sum_{i \le \ell; j \in [1 + \xi_i]} \eps_{i,j} 2^{-\mbf{u}_{i,j}} \in [1 - 2^{-\ell},1]\big) \ll 2^{a- \ell} \ll R^2 2^{-\ell};\] this follows by revealing all $\mbf{u}_{i,j}$ with $(i,j) \ne (a,b)$ then using \cref{main-x-anti}. Therefore \[ \mb{P}\big(\sum_{i \le \ell; j \in [1 + \xi_i]} \eps_{i,j} 2^{-\mbf{u}_{i,j}} \in [1 - 2^{-\ell},1]\big) \ll  R^22^{-\ell},\] uniformly for all $\eps$. Summing over the $2^{\sum_{i \le \ell} (1+\xi_i)} = 2^{\ell + \beta(\ell)}$ choices of $(\eps_{i,j})_{i \le \ell, j \in [1+\xi_i]}$, and multiplying by the normalising factor $2^{-\beta(\ell)}$ in the definition \cref{x-beta-form}, we obtain \cref{X-upper-simple}.\vspace*{8pt}

\emph{Proof of \cref{rho-moment-bd}.} This is similar but slightly more technical due to the failure of \cref{main-x-anti} for the $\mbf{u}^*$ variables.  First note the trivial bound \begin{equation}\label{varrho-triv-7} \varrho_{\mbf{u} \mid \beta}(\ell) \le 2^{\mbf{u}^*_{\ell', r}} \le 2^{\ell'}.\end{equation} If $\ell' \le 3$, this immediately implies the required bound (since we are assuming $R \ge 1$), so suppose in what follows that $\ell' \ge 4$. 
Suppose that $\eps$ counts towards \cref{rho-alt}. Since $\mbf{u}^*_{\ell', r} \ge \ell' - 1 \ge 3$, we have
\begin{equation}\label{pre-two-cases}  \sum_{i < \ell'; j \in [1 + \xi_i]} \eps_{i,j} 2^{-\mbf{u}_{i,j}}  + \sum_{j = 1}^r \eps_{\ell', j} 2^{-\mbf{u}^*_{\ell', j}} \ge \tfrac{3}{4} .\end{equation}

We divide into two cases: Case 1 is that there is some nonzero $\eps_{a,b}$ with $a \le \min(2 \log_2 R, \ell' - 1)$, and Case 2 is that this fails to be the case. In Case 1, we may argue as before using \cref{main-x-anti}. If we are in Case 2 then, arguing as in the proof of \cref{X-upper-simple}, we have $\sum_{i < \ell'; j \in [1 + \xi_i]} \eps_{i,j} 2^{-\mbf{u}_{i,j}} \le \frac{1}{2}$, and hence from \cref{pre-two-cases} it follows that $\sum_{j = 1}^r \eps_{\ell', j} 2^{-\mbf{u}^*_{\ell', j}} \ge \frac{1}{4}$.  Then, since $\mbf{u}_{\ell', j}^* \ge \ell'-1$, we have $r \ge 2^{\ell' - 3}$. By $R$-boundedness we have $r \le 1+\xi_{\ell'} \le 1+R(\ell')^{\kappa}$. Since $r \ge 2^{\ell' - 3} \ge 2$, this implies that $R \gg 2^{\ell'/2}$, and then \cref{rho-moment-bd} follows using \cref{varrho-triv-7}.\vspace*{8pt}

\emph{Proof of \cref{X-non-vanishing}.} Suppose first that $q \le 2 \log_2 R$. Since all steps of $\beta$ are at least $-1$, we have $q \ge m$, and so $R^2 2^{-m} \ge 1$. The stated bound is therefore trivial in this case, so we assume from now on that $q > 2 \log_2 R$.

Suppose that $\varrho^*_{\mbf{u} \mid \beta}(\ell) \ne 0$. Then there is some $\eps$ such that $\sum_{i \le \ell} \eps_{i,j} 2^{-\mbf{u}_{i,j}} \in [1 - 2^{-\ell},1]$. As before, this implies that $\eps_{a,b} \ne 0$ for some $a \le \min(2\log_2 R, \ell)$. In particular, $a \le q$. Moreover, since $\mbf{u}_{i,j} \ge i - 1$, $\sum_{i \le q} \eps_{i,j} 2^{-\mbf{u}_{i,j}}$ lies in the interval $I := [1 - \sum_{i \ge q + 1} (1 + \xi_i) 2^{1-i} - 2^{-\ell} ,    1 ]$. By revealing all $\mbf{u}_{i,j}$ except $\mbf{u}_{a,b}$, it follows (since $q \ge a$) that $\mb{P}\big( \sum_{i \le q} \eps_{i,j} 2^{-\mbf{u}_{i,j}} \in I\big) \ll 
R^2|I|$. Summing over all possibilities for $(\eps_{i,j})_{i \le q, j \in [1+\xi_i]}$, of which there are $2^{\sum_{i =1}^q (1 + \xi_i)} = 2^{q + \beta(q)} = 2^{q - m}$, it follows that 
\begin{equation}\label{nonzero-prob-inter} \mb{P}(\varrho^*_{\mbf{u} \mid \beta}(\ell) \ne 0) \ll R^2 2^{q - m}|I| = R^2 2^{-m} \big( \sum_{i \ge q +1} (1 + \xi_i) 2^{q - i} + 2^{q - \ell}\big).\end{equation}
Note that $2^{q - \ell} \le 1$. Moreover, the $R$-boundedness assumption implies that
\begin{equation}\label{r-boundedness-q} \sum_{i \ge q+1} (1 + \xi_{i}) 2^{q- i} \le q^{\kappa}  \sum_{j \ge 1}2^{-j} \Big( \frac{1 + R(j + q)^{\kappa}}{q^{\kappa}} \Big) \ll Rq^{\kappa} \end{equation} 
The desired bound \cref{X-non-vanishing} therefore follows from \cref{nonzero-prob-inter}.\vspace*{8pt}

\emph{Proof of \cref{rho-nonvanishing-bd}.} Since the steps of $\beta$ are bounded below by $-1$, we have $\ell' \ge q \ge m$. Since \cref{rho-nonvanishing-bd} is vacuous for $m \le 3$, we may assume $\ell' \ge 3$. From the definition of $\ell'$ we have that $\beta(\ell') \ge \beta(\ell' - 1)$, so the smallest $q'$ for which $\beta(q') \le -m$ has $q' \le \ell' - 1$. Replacing $q$ by $q'$ and $m$ by $m' = -\beta(q')$, we may assume that $q \le \ell' - 1$. Write $\tilde R$ for the smallest quantity such that $\beta$ is $\tilde R$ bounded to length $\ell' - 1$. Since $\sum_{i \le \ell'-1} (\xi_i + 1) = (\ell' - 1) + \beta(\ell' - 1) < \ell$, we certainly have $\xi_i \le \ell$ for all $i \le \ell' - 1$ and so (crudely) $\tilde R \le \ell$. 

Suppose that $\varrho_{\mbf{u} \mid \beta}(\ell) \ne 0$. Then there is some $\eps$ such that 
\begin{equation}\label{state-rho} \sum_{i < \ell'; j \in [1 + \xi_i]} \eps_{i,j} 2^{-\mbf{u}_{i,j}} + \sum_{j = 1}^r \eps_{\ell', j} 2^{-\mbf{u}^*_{\ell', j}} \in [1 - 2^{1 - \ell'},1].\end{equation} Since $\ell' \ge 3$, we must again be in at least one of the two cases, Case 1 and Case 2, as described following \cref{pre-two-cases}.

We first treat Case 1. Using \cref{R-bded-small-1} (with $\tilde R$ replacing $R$) we see that in this case there is some nonzero $\eps_{a,b}$ with $a \le \min(2\log_2 \tilde R,\ell'-1)$. Suppose that $q < a$. Then $q \le 2 \log_2 \tilde R$. Since $q \ge m$, $\tilde R^2 2^{-m} \ge 1$ and hence $\ell^2 2^{-m} \ge 1$. The bound \cref{rho-nonvanishing-bd} is therefore trivial in this case, so we assume from now on that $q \ge a$. 

By \cref{state-rho}, $\sum_{i \le q} \eps_{i,j} 2^{-\mbf{u}_{i,j}}$ lies in the interval $\tilde I := [1 - \sum_{q + 1 \le i \le \ell' -1} (1 + \xi_i) 2^{1 - i} - (r+1)2^{1 - \ell'}, 1]$. Here, it is important to note that $q \le \ell'-1$. Arguing as in \cref{nonzero-prob-inter} (using here that $q \ge a$, and also that $\tilde R \le \ell$) we obtain 
\[ \mb{P}\big(\varrho_{\mbf{u} \mid \beta}(\ell) \ne 0\big) \ll  \ell^2 2^{-m}\big( \sum_{q + 1 \le i \le \ell' -1} (1 + \xi_i) 2^{q - i} + (r+1)2^{q - \ell'}  \big) . \] We have $(r + 1) 2^{q - \ell'} < r + 1 \ll \ell$, so the contribution from this term is acceptable. The contribution from the sum over $i$ may be bounded exactly as in \cref{r-boundedness-q} (with $\tilde R$ replacing $R$), and this term is also acceptable. This concludes the analysis of Case 1.

Finally, we consider Case 2. As before, $r > 2^{\ell' - 3}$. We also have $r \le \ell$. Furthermore, since $\beta$ is an upper walk we have $\min_{0 \le i \le \ell'} \beta(i) \ge {-}\ell'$, and so $m \le \ell'$. Combining these facts gives $2^m \le 8 \ell$, and so \cref{rho-nonvanishing-bd} is trivial in this case.\end{proof}

Now we come to the second main result of the section, which asserts that $\varrho^*_{\mbf{u} \mid \beta}(\ell)$ has good convergence properties as $\ell \rightarrow \infty$, under the assumption that $\beta$ is appropriately $T$-positive in the sense of \cref{good-walk-def}. We will also need a related statement involving the $\varrho_{\mbf{u} \mid \beta}(\ell)$ and, as with \cref{X-upper-gen}, it makes sense to prove this at the same time. 

\begin{lemma}\label{lemma7point3}
Let $\ell\ge 1$ be an integer and let $T \ge 1$ be a parameter. Then there is some absolute constant $c \in (0,\frac{1}{2})$ such that the following are true.

Suppose that $\beta$ is an upper walk which is $T$-positive up to $\ell$. Then we have \begin{equation}\label{lem73-first} \mb{E} \big| \varrho^*_{\mbf{u} \mid \beta}(\ell-1) - \varrho^*_{\mbf{u} \mid \beta}(\ell) \big|^2\ll 2^{3T - c\ell^{1/4}}.\end{equation}

Now suppose that \cref{lim-betaupper-j-assump} holds. Given $\ell$ and $\beta$, we again define $\ell'$ to be minimal such that $\ell' + \beta(\ell') \ge \ell$. Suppose that $\beta$ is $T$-positive up to $\ell'$. Then we have \begin{equation}\label{lem73-second} \mb{E} \big| \varrho^*_{\mbf{u} \mid \beta}(\ell'-1) - \varrho_{\mbf{u} \mid \beta}(\ell) \big|^2\ll 2^{3T - c\ell^{\prime 1/4}}.\end{equation}
\end{lemma}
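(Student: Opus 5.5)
The plan is a second-moment (pair-counting) argument in the spirit of the proofs of \cref{lem23} and \cref{X-upper-gen}. Throughout, write $S_{\le j}(\eps) := \sum_{i \le j;\, k \in [1+\xi_i]} \eps_{i,k} 2^{-\mbf{u}_{i,k}}$ and $W_j := [1 - 2^{-j}, 1]$.

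\emph{Decomposition.} Every $\eps$ counted in $\varrho^*_{\mbf{u}\mid\beta}(\ell)$ restricts to some $\eps' = \eps_{\le \ell-1}$. Since the level-$\ell$ block $T(\eps_\ell) := \sum_j \eps_{\ell,j} 2^{-\mbf{u}_{\ell,j}}$ lies in $[0, (1+\xi_\ell)2^{1-\ell}]$, each such $\eps'$ has $S_{\le \ell-1}(\eps')$ within $O(\ell^{\kappa'} 2^{-\ell})$ of $W_{\ell-1}$. (Here $T$-positivity caps the increments; I write $\ell^{\kappa'}$ for a polynomial factor.) Using $2^{-\beta(\ell)} = 2^{-\beta(\ell-1)} 2^{-\xi_\ell}$, write
\[ \varrho^*_{\mbf{u}\mid\beta}(\ell) - \varrho^*_{\mbf{u}\mid\beta}(\ell-1) = 2^{-\beta(\ell-1)} \sum_{\eps'} Z(\eps'), \qquad Z(\eps') := 2^{-\xi_\ell}\sum_{\eps_\ell} 1\big(S_{\le \ell-1}(\eps') + T(\eps_\ell) \in W_\ell\big) - 1\big(S_{\le\ell-1}(\eps') \in W_{\ell-1}\big). \]
The point is that $Z(\eps')$ is a difference of two localisation events with the same expected mass. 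Conditionally on everything except the scale-$\ell$ variables, and averaging over a translate, both terms have mean $2^{\ell-1}\cdot(\text{width})$ times a locally constant density of $S_{\le \ell-1}$ near $1$. The discrepancy comes only from the density failing to be flat on scale $2^{-\ell}$.

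\emph{Second moment.} I expand $\mb{E}\big(\sum_{\eps'} Z(\eps')\big)^2 = \sum_{\eps'_1, \eps'_2} \mb{E}\, Z(\eps'_1) Z(\eps'_2)$ and split according to the largest index $a$ at which $\eps'_1, \eps'_2$ differ.

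\emph{Case $a$ large} ($a \ge \ell^{1/2}$, say, or the diagonal). Here I bound $|Z|$ trivially and proceed as in the proof of \cref{X-upper-simple}. Reveal all variables but one $\mbf{u}_{a_0,b}$ with $a_0 \le 2\log_2 R$ to gain $2^{-\ell}$ for the first event, and one variable at index $a$ to gain $2^{a-\ell}$ for the second. The number of such pairs is $\ll 2^{2(\ell+\beta(\ell))}\, 2^{-(a+\beta(a))}$. After normalising by $2^{-2\beta(\ell-1)}$, the contribution of index $a$ is $\ll \mathrm{poly}(a)\, 2^{-\beta(a)}$. The lower bound $\beta(a) \ge a^{1/2-\eta} - T$ from $T$-positivity then gives a total $\ll 2^{T}\sum_{a \ge \ell^{1/2}} 2^{-a^{1/2-\eta}/2} \ll 2^{T - c\ell^{1/4}}$. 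The upper bound $\beta \le T + \ell^{1/2+\eta}$ controls the $R$-type factors, and this is where the power $2^{3T}$ is spent.

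\emph{Case $a$ small} ($a < \ell^{1/2}$). This is the main obstacle. Here genuine cancellation is needed, and it comes from the fact that the two partial sums share everything except indices $\le a$. Fix $\eps'_1, \eps'_2$ and condition on all $\mbf{u}_{i,j}$ with $i \le \ell^{1/2}$. Then $S_{\le \ell-1}(\eps'_r) = x_r + S_{(\ell^{1/2},\ell-1]}(\eps'_r)$, with a common tail $\eps$ on the range $(\ell^{1/2}, \ell-1]$ and $x_1 \ne x_2$ typically separated by $\gg 2^{-a}$. I would show that, for the random multiset of tail variables (of which there are $\ell - \ell^{1/2} + \beta(\ell) - \beta(\ell^{1/2})$, many more than the scale count by positivity), the pair counting measure is very close to a product of smooth densities. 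Concretely:
\[ \sum_{\eps_{\rm tail}} \prod_{r=1}^{2} \big(\text{window indicator at } x_r\big) \approx \text{smooth density} \times \text{width}, \]
with error $2^{-\beta(\ell^{1/2})}$, via a Fourier or local-limit estimate using the anticoncentration \cref{main-x-anti} of each independent uniform $\mbf{u}_{i,j}$. Once this is in hand, the leading terms of $\mb{E} Z(\eps'_1)Z(\eps'_2)$ cancel exactly, because both windows carry the same mass $2^{-\ell}\cdot 2^{1+\xi_\ell}\cdot 2^{-\xi_\ell} = 2^{1-\ell}$. Only the error remains, which again sums to $\ll 2^{3T - c\ell^{1/4}}$. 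If the Fourier route is too heavy, a fallback is to run the same dichotomy with a second intermediate cutoff and treat the first $\ell^{1/2}$ indices as fixed shifts.

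\emph{Second statement \cref{lem73-second}.} The same scheme applies with $\ell-1$ replaced by $\ell'-1$, using the representation \cref{rho-alt}. The difference is that the window is $[1 - 2^{-\mbf{u}^*_{\ell',r}}, 1]$ and the last block uses the order statistics $\mbf{u}^*_{\ell',j}$, for which \cref{main-x-anti} fails. I would handle this by conditioning on the whole level-$\ell'$ block, so that $\mbf{u}^*$ is fixed. All anticoncentration is then drawn from indices $\le \ell'-1$, exactly as in Case 1 of the proof of \cref{rho-nonvanishing-bd}. The normalisations $2^{\mbf{u}^*_{\ell',r}-\ell}$ and $2^{-\beta(\ell'-1)}$ again match the expected window masses, giving the bound $2^{3T - c\ell'^{1/4}}$.
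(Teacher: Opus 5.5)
Your second-moment step has a genuine gap. The case split is internally inconsistent, and under either consistent reading the generic pairs, which are the only ones that need cancellation, are never handled.

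In the ``$a$ large'' case you use the count $2^{2(\ell+\beta(\ell))}2^{-(a+\beta(a))}$. That counts pairs agreeing on all levels \emph{below} $a$, so there $a$ is the \emph{smallest} differing index. In the ``$a$ small'' case you use a common tail on $(\ell^{1/2},\ell-1]$, so there $a$ is the \emph{largest} differing index.

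\emph{Reading 1: $a$ is the largest differing index.}
\begin{itemize}
\item The pairs with $a\ge\ell^{1/2}$ are essentially all pairs, and their number is $2^{(\ell-1+\beta(\ell-1))+(a+\beta(a))}$, not your count.
\item Bounding $|Z|$ trivially on this class gives at best something of the order of $\mb{E}\,\varrho^*_{\mbf{u}\mid\beta}(\ell-1)^2$, which does not decay.
\item Your common-tail pairs need no cancellation at all. Since the tail cancels, $Z(\eps'_1)Z(\eps'_2)\neq 0$ forces $|x_1-x_2|\ll \ell^{O(1)}2^{-\ell}$, which has probability $\ll \ell^{O(1)}2^{a-\ell}$.
\item In particular, the product of window indicators you want to approximate by ``smooth density $\times$ width'' is identically zero when $x_1,x_2$ are $2^{-a}$-separated. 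There is no main term to cancel.
\end{itemize}

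\emph{Reading 2: $a$ is the smallest differing index.}
\begin{itemize}
\item Your trivial bound for $a\ge\ell^{1/2}$ is fine. It only yields $2^{T-\Omega(\ell^{1/4-\eta/2})}$, though, so the threshold must be raised to reach $c\ell^{1/4}$.
\item The class $a<\ell^{1/2}$ now consists of generic pairs that also differ at high levels. There is no common tail, so the conditioning/Fourier argument you sketch does not apply.
\end{itemize}

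What is missing is a cancellation mechanism for a fixed generic pair. The paper writes both statistics with the same normalisation $2^{-\beta(L-1)}\sum_{\eps_{<L}}\Av_{\eps_L}2^{\lambda}1(E(\mbf{u},\eps_{<L},\lambda,x))$, with $L=\ell$ or $L=\ell'$. It then uses independence of the top block and $\sum_{\delta,\delta'}(-1)^{\delta+\delta'}=0$ to reduce to bounding $|p_{\eps,\eps'}(x,x',\lambda,\lambda')-p_{\eps,\eps'}(0,0,0,0)|$ uniformly in the window parameters.

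\emph{Choice of pivots.} Take $(a,j_a)$ to be the first position where $(\eps,\eps')$ is nonzero. Take $(b,j_b)$ to be the first position where it is nonzero and differs from its value at $(a,j_a)$. This makes the $2\times 2$ coefficient matrix nonsingular.

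\emph{Smoothness at the pivots.} Condition on all other $\mbf{u}_{i,j}$. Then $(2^{-\mbf{u}_{a,j_a}},2^{-\mbf{u}_{b,j_b}})$ has the explicit density $((\log 2)^2yy')^{-1}$ on $[2^{-a},2^{1-a}]\times[2^{-b},2^{1-b}]$.
\begin{itemize}
\item The two target parallelograms have equal area after multiplying by $2^{\lambda+\lambda'}$, and lie within $O(RL2^{-L})$ of each other.
\item This gives a difference $O(RL\,2^{a+2b-3L})$, i.e.\ a relative saving of $2^{b-L}$.
\item Configurations meeting the boundary of the support rectangle, where the density is discontinuous, are handled separately. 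They have probability $O(R^3L\,2^{b-L})$.
\end{itemize}

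\emph{Summation.} Multiplying by the number of pairs with given $(a,b)$ gives $R^3L\,2^{b-L-\beta(a-1)-\beta(b-1)}$. This is small for $b\le L/2$ because of the factor $2^{b-L}$, and for $b>L/2$ by $T$-positivity.

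For generic pairs $a,b=O(1)$. So the saving comes from smoothness at the \emph{lowest} pivot variables, not from a local limit theorem for a shared tail.
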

\begin{proof} 
We begin by observing that in both situations, the positivity of $\beta$ implies some weak boundedness properties. Starting with \cref{lem73-first}, from the definition \cref{x-beta-form} we have the trivial bound $\varrho^*_{\mbf{u} \mid \beta}(\ell) \le 2^{\ell}$, and so \cref{lem73-first} is vacuous unless $T \le \ell$. Thus we may assume $T \le \ell$ throughout. But then, using the $T$-positive nature of $\beta$, we have $\xi_i = \beta(i) - \beta(i - 1) \le  2T + \ell^{1/2 + \eta}  \le 3\ell$ for all $i \le \ell$. This implies that $\beta$ is $R$-bounded up to $\ell$ for $R := 3\ell$.

An essentially identical remark applies regarding \cref{lem73-second}, here using the trivial bound \cref{varrho-triv-7}; in this case we may assume that $\beta$ is $R' := 3\ell'$-bounded up to $\ell'$. 

We now begin the proof proper, starting with \cref{lem73-second}, indicating as we go the (minor) modifications required to prove \cref{lem73-first}.  From this point on in the proof it is convenient to adopt the following shorthand for intervals: $a + b[\eta]$ denotes the interval $[a, a + b\eta] \subset \R$. Here, $b$ is allowed to be negative (in which case we mean the interval $[a + b\eta, a]$).

Let $\lambda \in [0, \infty)$, $x \in \R$ and $L \in \N$ (we will have $L \in \{\ell', \ell\}$ later on). Set $\eps_{< L} = (\eps_{i,j})_{i < L; j \in [1 + \xi_i]} \in \{0,1\}^{L - 1 + \beta(L-1)}$. Denote by $E(\mbf{u}, \eps_{< L},\lambda, x)$ the event (for random $(\mbf{u} \mid \beta) = (\mbf{u}_{i,j})_{i,j}$) that
\begin{equation}\label{e-def} \sum_{i < L;j \in [1 + \xi_i]} \eps_{i,j} 2^{-\mbf{u}_{i,j}} \in 1- x - 2^{-\lambda} [2^{1-L}].\end{equation} Denote $\eps_{\ell'} = (\eps_{\ell',j})_{j = 1}^r \in \{0,1\}^r$.
By mild rearrangement of \cref{rho-alt} using \cref{r-defn}, we have
\begin{equation}\label{x-ell-0-def} \varrho_{\mbf{u} \mid \beta}(\ell) = 2^{-\beta(\ell'-1)} \sum_{\eps_{< \ell'}} \Av_{\eps_{\ell'}} 2^{\lambda_0(\mbf{u}^*_{\ell'})} 1\big(E(\mbf{u}, \eps_{< \ell'}, \lambda_{0}(\mbf{u}^*_{\ell'}), x_{0}(\eps_{\ell'},\mbf{u}^*_{\ell'})\big)
\end{equation}
where here $\lambda_0(\mbf{u}^*_{\ell'}) := \mbf{u}^*_{\ell',r} - (\ell' - 1)\in [0,1]$ and $x_0(\eps_{\ell'}, \mbf{u}^*_{\ell'}) := \sum_{j = 1}^r \eps_{\ell', j} 2^{-\mbf{u}^*_{\ell',j}}$, and $\Av_{\eps_{\ell'}}$ denotes the average over all tuples in $\eps_{\ell'} \in \{0,1\}^r$.
On the other hand from the definition \cref{x-beta-form} we have
\begin{equation}\label{x-ell-1-def} \varrho^*_{\mbf{u} \mid \beta}(\ell'-1) = 2^{-\beta(\ell'-1)} \sum_{\eps_{< \ell'}} \Av_{\eps_{\ell'}} 2^{\lambda_1(\mbf{u}^*_{\ell'})} 1\big(E(\mbf{u}, \eps_{< \ell'}, \lambda_{1}(\mbf{u}^*_{\ell'}), x_{1}(\eps_{\ell'},\mbf{u}^*_{\ell'})\big)
\end{equation} with $\lambda_1(\mbf{u}^*_{\ell'}) = x_1(\eps_{\ell'}, \mbf{u}^*_{\ell'}) := 0$.
Subtracting \cref{x-ell-1-def} from \cref{x-ell-0-def} gives
\begin{align} \nonumber \varrho_{\mbf{u} \mid \beta}(\ell) & - \varrho^*_{\mbf{u} \mid \beta}(\ell'-1) \\ &  = 2^{-\beta(\ell'-1)} \sum_{\delta \in \{0,1\}}(-1)^{\delta}\sum_{\eps_{< \ell'}} \Av_{\eps_{\ell'}} 2^{\lambda_{\delta}(\mbf{u}^*_{\ell'})} 1\big(E(\mbf{u}, \eps_{< \ell'}, \lambda_{\delta}(\mbf{u}^*_{\ell'}), x_{\delta}(\eps_{\ell'},\mbf{u}^*_{\ell'})\big).\label{marker-1}\end{align}
Squaring and taking expectations over the random choice of $(\mbf{u} \mid \beta)$ gives
\begin{align*} \mb{E} \big|\varrho_{\mbf{u} \mid \beta}(\ell) -\varrho^*_{\mbf{u} \mid \beta}(\ell'-1)  \big|^2  & = 2^{-2\beta(\ell'-1)} \sum_{\delta,\delta' \in \{0,1\}}(-1)^{\delta + \delta'} \times \\ & \times \sum_{\eps_{< \ell'},\eps'_{<\ell'}} \Av_{\eps_{\ell'},\eps'_{\ell'}}  p_{\eps_{< \ell'}, \eps'_{< \ell'}}\big(x_{\delta}(\eps_{\ell'}, \mbf{u}^*_{\ell'}), x_{\delta'}(\eps'_{\ell'}, \mbf{u}^*_{\ell'}), \lambda_{\delta}(\mbf{u}^*_{\ell'}\big),\lambda_{\delta'}(\mbf{u}^*_{\ell'})\big),\end{align*} where
\begin{equation}\label{p-def} p_{\eps_{< L}, \eps'_{< L}}(x,x',\lambda, \lambda') := 2^{\lambda+ \lambda'} \mb{P} \big( E(\mbf{u},\eps_{< L},\lambda, x) \cap E(\mbf{u}, \eps'_{< L},\lambda', x')\big).\end{equation}
Since $\sum_{\delta, \delta'} (-1)^{\delta + \delta'} = 0$ we have
\begin{align}\nonumber & \mb{E} \big|\varrho_{\mbf{u} \mid \beta}(\ell) -\varrho^*_{\mbf{u} \mid \beta}(\ell'-1) \big|^2 \ll \\ &  \ll 2^{-2\beta(\ell'-1)} \sum_{\eps_{< \ell'},\eps'_{<\ell'}} \max_{\substack{0 \le x,x' \le 2R\ell' 2^{-\ell'} \\ \lambda,\lambda' \in [0,1]}} \big| p_{\eps_{< \ell'}, \eps'_{< \ell'}}(x, x', \lambda,\lambda') - p_{\eps_{< \ell'},\eps'_{< \ell'}}(0,0,0,0)\big|.\label{ell2-rho-new}\end{align} Here, the constraints on $x,x'$ come from the definition of $x_0( \; , \;)$; we have $|x_0(\eps_{\ell'}, \mbf{u}^*_{\ell'})| \le r 2^{1-\ell'} \le 2R(\ell')^{\kappa} 2^{-\ell'}$.

To complete the proof of \cref{lem73-second}, it therefore suffices to show that the RHS of \cref{ell2-rho-new} is $\ll 2^{3T - \Omega(\ell^{\prime 1/4})}$. At this point, $\ell'$ can be thought of as simply a dummy variable (rather than specifically the smallest $\ell'$ such that $\ell' + \beta(\ell') \ge \ell$). Replacing it by $L$ (in both \cref{p-def} and \cref{ell2-rho-new}), the task is therefore to show that if $\beta$ is $R$-bounded and $T$-positive up to $L$, where $R := 3L$ and $T \le L$, then
\begin{equation}\label{main-prop-task} 2^{-2\beta(L-1)} \sum_{\eps_{< L},\eps'_{< L}} \max_{\substack{0 \le x,x' \le 2RL 2^{-L} \\ \lambda,\lambda' \in [0,1]}} \big| p_{\eps_{< L}, \eps'_{< L}}(x, x', \lambda,\lambda') - p_{\eps_{< L},\eps'_{< L}}(0,0,0,0)\big| \ll 2^{3T - \Omega(L^{1/4})}.\end{equation}
Before turning to the proof of this, we claim that \cref{lem73-first} also follows from it in the case $L = \ell$. For this, first recall \cref{x-ell-1-def} (with the dashes dropped from the $\ell$s):
\[ \varrho^*_{\mbf{u} \mid \beta}(\ell-1) = 2^{-\beta(\ell-1)} \sum_{\eps_{< \ell}} \Av_{\eps_{\ell}} 2^{\lambda_1(\mbf{u}_{\ell})} 1\big( \sum_{i < \ell;j \in [1 + \xi_i]} \eps_{i,j} 2^{-\mbf{u}_{i,j}} \in 1- x_1(\eps_{\ell}, \mbf{u}_{\ell}) - 2^{-\lambda_1(\mbf{u}_{\ell})} [2^{1-\ell}]\Big),\] where $\lambda_1(\mbf{u}_{\ell}) = x_1(\eps_{\ell}, \mbf{u}_{\ell}) = 0$.
On the other hand, 
\[ \varrho^*_{\mbf{u} \mid \beta}(\ell) = 2^{-\beta(\ell-1)} \sum_{\eps_{< \ell}} \Av_{\eps_{\ell}} 2^{\lambda_2(\mbf{u}_{\ell})} 1\big( \sum_{i < \ell;j \in [1 + \xi_i]} \eps_{i,j} 2^{-\mbf{u}_{i,j}} \in 1- x_2(\eps_{\ell}, \mbf{u}_{\ell}) - 2^{-\lambda_2(\mbf{u}_{\ell})} [2^{1-\ell}]\Big)\]
where $\lambda_2(\mbf{u}_{\ell}) = 1$ and $x_2(\eps_{\ell}, \mbf{u}_{\ell}) := \sum_{j = 1}^{\xi_{\ell} + 1} \eps_{\ell, j} 2^{-\mbf{u}_{\ell,j}}$. That \cref{lem73-first} is a consequence of \cref{main-prop-task} then follows almost verbatim as in the argument from \cref{marker-1} to \cref{ell2-rho-new} above.

To complete the proof of \cref{lemma7point3}, then, the remaining task is to prove \cref{main-prop-task}. The bound is trivial for $L = O(1)$, so we may assume $L$ sufficiently large. To ease notation, we write $\eps = \eps_{< L}$ and $\eps' = \eps'_{< L}$ for the rest of the proof.

For any of the $p_{\eps,\eps'}(\cdots)$ terms in \cref{main-prop-task} to be nonzero (for a given fixed $\eps, \eps'$) we must have that some event $E(\mbf{u}, \eps_{< L}, \lambda, x)$ with $0 \le x \le 2R L 2^{-L}$ and $\lambda \in [0,1]$ is nontrivial, which implies $\sum_{i < L; j \in [1 + \xi_i]} \eps_{i,j} 2^{1-i} \ge 1 - 4RL 2^{-L} > \frac{1}{2}$, using here that $R = 3L$ and that $L$ is sufficiently large. By \cref{R-bded-small-1}, it follows that there is some $(i_0, j_0)$ with $i_0 \le 2 \log_2 R$ ($< L$) such that $\eps_{i_0, j_0} \ne 0$. Say that $\eps$ is \emph{good} if it has this property. Thus we may restrict the sum in \cref{main-prop-task} to $\eps$ good, and similarly also to $\eps'$ good.

We first consider the contribution to \cref{main-prop-task} from the diagonal terms with $\eps = \eps'$ (with both being good). The number of such pairs is $< 2^{L + \beta(L-1)}$. For fixed $\eps = \eps'$, pick some $(i_0, j_0)$ with $\eps_{i_0, j_0} \ne 0$ and $i_0 \le 2 \log_2 R$. Revealing all $\mbf{u}_{i,j}$ except $\mbf{u}_{i_0,j_0}$, it follows from \cref{main-x-anti} that
\[ p_{\eps, \eps'}(x,x',\lambda, \lambda') \ll \mb{P} \big( \sum_{i < L;j \in [1 + \xi_i]} \eps_{i,j} 2^{-\mbf{u}_{i,j}} \in 1- x - 2^{-\lambda} [2^{1-L}] \big) \ll 2^{i_0 - L} \ll R^2 2^{-L}.\] Thus the contribution of these terms to \cref{main-prop-task} is $\ll R^2 2^{-\beta(L-1)}$, which is $\ll R^2 2^{T - \Omega(L^{1/4})}$ by the fact that $\beta$ is a $T$-positive walk. Thus the contribution of the diagonal terms to \cref{main-prop-task} is acceptable.

Now consider the non-diagonal terms with $\eps \ne \eps'$ and both $\eps, \eps'$ good. Order the pairs $(i,j)$ with $i < L$ and $j \in [1 + \xi_i]$ lexicographically. Let $(a, j_a)$ be the first pair in this ordering for which at least one of $\eps_{a, j_a}, \eps'_{a,j_a}$ is not zero. Let $(b, j_b)$ be the first pair for which at least one of $\eps_{b, j_b}, \eps'_{b,j_b}$ is not zero, and for which $(\eps_{a,j_a}, \eps'_{a, j_a}) \neq (\eps_{b, j_b}, \eps'_{b,j_b})$. These pairs must exist since $\eps, \eps'$ are distinct and neither is the zero tuple (since they are both good).

For future reference we count the number of pairs $\eps, \eps'$ corresponding to a given $(a,b)$. For such pairs we have $\eps_{i,j} = \eps'_{i,j} = 0$ for $i \le a - 1$, whilst 
$(\eps_{i,j}, \eps'_{i,j}) \in \{ (0,0), (\eps_{a,j_a}, \eps'_{a,j_a})\}$ for $a \le i \le b-1$, so the number of such choices is bounded by 
\begin{equation}\label{comp-W}  2^{(b + \beta(b-1)) - (a + \beta(a - 1))} 2^{2(L + \beta(L-1)) - 2(b + \beta(b-1))}.\end{equation}

Expanding out the definitions (that is, \cref{p-def} and the definition \cref{e-def} of the event $E(\cdots)$), one sees that 
\[ p_{\eps, \eps'}(x, x', \lambda, \lambda') = 2^{\lambda + \lambda'} \mb{P}\big(  Z_{\eps,\eps'}(x,x' ,\lambda,\lambda') \big) \] where
$Z_{\eps,\eps'}(x,x',\lambda,\lambda')$ is the event that 
\[ \eps_{a,j_a}  2^{-\mbf{u}_{a,j_a}} +  \eps_{b,j_b} 2^{-\mbf{u}_{b,j_b}} + t(\mbf{u}) \in 1 - x - 2^{-\lambda} [ 2^{1 - L}]\] and \[ \eps'_{a,j_a} 2^{-\mbf{u}_{a,j_a}} + \eps'_{b,j_b} 2^{-\mbf{u}_{b,j_b}} + t'(\mbf{u}) \in 1 - x' - 2^{-\lambda'} [ 2^{1 - L}] , \] where
\begin{equation*} t(\mbf{u}) :=  \sum_{(i,j) \neq (a, j_a), (b, j_b)} \eps_{i,j} 2^{-\mbf{u}_{i,j}},  \quad t'(\mbf{u}) :=  \sum_{(i,j) \neq (a, j_a), (b, j_b)} \eps'_{i,j} 2^{-\mbf{u}_{i,j}}.\end{equation*}
Write $M = M(\eps, \eps')$ for the (nonsingular) $2 \times 2$ matrix $\left(\begin{smallmatrix} \eps_{a,j_a} & \eps_{b, j_b}  \\ \eps'_{a,j_a} & \eps'_{b, j_b}\end{smallmatrix}\right)$. Then we can recast the definition of $Z_{\eps,\eps'}(x,x',\lambda,\lambda')$ as the event that 
\begin{equation}\label{star}  \left(\begin{smallmatrix} 2^{-\mbf{u}_{a, j_a}} \\ 2^{-\mbf{u}_{b, j_b}}\end{smallmatrix}\right)  \in D_{\lambda,\lambda'} - M^{-1} \left(\begin{smallmatrix} x + t(\mbf{u}) \\  x' + t'(\mbf{u})\end{smallmatrix}\right) , \end{equation} 
where $D_{\lambda,\lambda'} := M^{-1} \big( (1 - 2^{-\lambda}[2^{1-L}]) \times (1 - 2^{-\lambda'}[2^{1-L}])\big) \subset \R^2$.
To continue the computation we condition on the values of $t = t(\mbf{u})$ and $t' = t'(\mbf{u})$, thus we write 
\begin{equation}\label{pepseps-cond}  p_{\eps,\eps'}(x,x',\lambda,\lambda') = 2^{\lambda+\lambda'} \int f(t,t')  \mb{P} \big(Z_{\eps,\eps'}(x,x',\lambda,\lambda') \mid t(\mbf{u}) = t, t'(\mbf{u}) = t' \big) \,\mathrm{d} t \mathrm{d}t' ,\end{equation} where $f$ is the probability density function of the variable $(t(\mbf{u}),t'(\mbf{u}))$.
We divide into two cases for the pair $(t, t')$ via the following definition.

\begin{definition} We say that a pair $(t,t')$ is an \emph{edge pair} if at least one of the sets $S_{\lambda,\lambda', x, x'} := D_{\lambda,\lambda'} - M^{-1}\left(\begin{smallmatrix} x + t \\ x' + t'\end{smallmatrix}\right)$, $\lambda,\lambda' \in [0,1]$, $0 \le x, x' \le 2RL2^{-L}$, has a point outside the rectangle $[2^{-a}, 2^{1 - a}] \times [2^{-b},2^{1 - b}] \subset \R^2$, and also one of the $S_{\tilde\lambda,\tilde\lambda',\tilde x,\tilde x'}$ has a point inside this rectangle.\end{definition} 
Using this definition and \cref{pepseps-cond} we may decompose $p_{\eps,\eps'} = p_{\eps,\eps'}^{\edge} + p_{\eps,\eps'}^{\nonedge}$ in the obvious way. By the triangle inequality, it is sufficient to show the bound \cref{main-prop-task} with $p_{\eps,\eps'}$ replaced by each of $p_{\eps,\eps'}^{\edge}$ and $p_{\eps,\eps'}^{\nonedge}$ separately.\vspace*{8pt}

\emph{The edge case.} We begin by handling the contribution of $p_{\eps,\eps'}^{\edge}$. One may compute that the distance between any point in $S_{\lambda,\lambda',x,x'}$ and any point in $S_{\tilde\lambda,\tilde\lambda', \tilde x, \tilde x'}$ is $\ll RL 2^{-L}$. Therefore if $(t,t')$ is an edge pair, $M^{-1}\left(\begin{smallmatrix} 1-t \\  1-t'\end{smallmatrix}\right)$ (which lies in $S_{0,0,0,0}$) lies in one of the four thin rectangles $[2^{-a} - O(RL 2^{-L}), 2^{-a}] \times [2^{-b}, 2^{1 - b}]$, $[2^{1 - a} , 2^{1-a} + O(RL 2^{-L})] \times [2^{-b}, 2^{1 - b}]$, $[2^{-a}, 2^{1 - a}] \times [2^{-b} -  O(RL 2^{-L}), 2^{-b}]$ or $[2^{-a}, 2^{1 - a}] \times [2^{1-b}, 2^{1 - b} + O(RL 2^{-L})]$. Since the possible rows of $M^{-1}$ are $\pm (0,1)$, $\pm (1,0)$ and $\pm (1, -1)$, we see that one of $t, t', t - t'$ is constrained to some interval of length $O(RL 2^{-L})$. We call these Cases 1,2 and 3 respectively.

\emph{Case 1.} Observe that $\eps_{i_1,j_1} \ne 0$ for some $(i_1, j_1)$ with $i_1 \le b + 2\log_2 R$: this is a consequence of the restriction to $\eps, \eps'$ being good. Revealing all $\mbf{u}_{i,j}$ except $\mbf{u}_{i_1, j_1}$, $\mbf{u}_{a, j_a}$, $\mbf{u}_{b,j_b}$, it follows that the probability of $M^{-1}(t(\mbf{u}), t'(\mbf{u}))$ lying in one of the four thin rectangles is $\ll R^3L 2^{b - L}$.

\emph{Case 2.} The analysis is identical to Case 1 except now we use a non-zero value of $\eps'_{i_1,j_1}$.

\emph{Case 3.} Note that $\eps_{i_1,j_1} - \eps'_{i_1,j_1} \ne 0$ for some $(i_1, j_1)$ with $i_1 \le b + 2\log_2 R$, using the definition of $b$. The rest of the analysis is as in Case 1. 

By the above discussion, 
\begin{equation}\label{edge-prob} \mb{P} \big( (t(\mbf{u}), t'(\mbf{u})) \; \mbox{is an edge pair}\big) \ll R^3 L 2^{b - L}.\end{equation}

Now note that uniformly we have
\[ \mb{P}\big(  Z_{\eps,\eps'}(x,x',\lambda,\lambda') \mid t(\mbf{u}) = t, \, t'(\mbf{u}) = t'\big) \ll 2^{a + b  - 2L}.\] This follows by observing that any translate of $D_{\lambda,\lambda'}$ is contained in some $I_1 \times I_2$, where $I_1, I_2$ are intervals of length $2^{2 - L}$, and so by \cref{main-x-anti} we have 
\begin{equation}\label{p-simple-upper}  \mb{P}\big( Z_{\eps,\eps'}(x,x',\lambda,\lambda') \mid t(\mbf{u}) = t, t'(\mbf{u}) = t'\big) \le \mb{P}(2^{-\mbf{u}_{a, j_a}} \in I_1) \mb{P}(2^{-\mbf{u}_{b, j_b}} \in I_2) \ll 2^{a - L} \cdot 2^{b - L}.\end{equation}

Putting \cref{edge-prob,p-simple-upper} together, for a given $\eps, \eps'$ corresponding to $(a,b)$, uniformly in $x, x',\lambda,\lambda'$ we have
\[ p_{\eps,\eps'}^{\edge}(x, x', \lambda,\lambda) \ll R^3L 2^{a + 2b - 3L}.\] In particular this holds with $x = x' = \lambda = \lambda'=  0$.

We now bound the total contribution to \cref{main-prop-task} from the $p_{\eps,\eps'}^{\edge}$ terms by summing the above bound over $\eps, \eps'$ corresponding to a given pair $(a,b)$, then finally summing over $(a,b)$. Using \cref{comp-W}, the contribution to \cref{main-prop-task} for a fixed $(a,b)$ is bounded by 
\[ \ll R^3L \cdot 2^{-2\beta(L-1)} \cdot  2^{(b + \beta(b-1)) - (a + \beta(a - 1))} \cdot 2^{2(L + \beta(L-1)) - 2(b + \beta(b-1))}\cdot 2^{a + 2b - 3L},\] which simplifies to $R^3 L 2^{b - \beta(a - 1) - \beta(b - 1) - L}$. 
Finally, we sum over all $(a,b)$ with $a \le b \le L$. The contribution from $b < L/2$ is negligible (using the $T$-positive nature of $\beta$ to bound $-\beta(a - 1) - \beta(b - 1) \le 2T$). For the contribution from $b \ge L/2$, we use $b - L \le 0$ and $-\beta(a - 1) - \beta(b - 1) \le 2T - (L/2 - 1)^{1/2 - \eta}$, which again follows from the $T$-positive nature of $\beta$. Using these estimates we obtain our sum over all $(a,b)$, that is to say the total contribution from the edge cases, is $\ll R^32^{2T - \Omega(L^{1/4})}$. Since $R = 3L$, the $R^3$ factor can be absorbed by reducing the $\Omega$ constant, and this is acceptable towards \cref{main-prop-task}.\vspace*{8pt}

\emph{The non-edge case.} In this case, using \cref{star} we see that 
\begin{align} \nonumber 2^{\lambda + \lambda'}\mb{P}\big( Z_{\eps,\eps'}(x,x',\lambda,\lambda') \mid t(\mbf{u}) = t,\, t(\mbf{u}') = t'\big)&  - \mb{P}\big( Z_{\eps,\eps'}(0,0,0,0) \mid t(\mbf{u}) = t,\, t(\mbf{u}') = t'\big) \\ &  = \frac{1}{(\log 2)^2} \Big( 2^{\lambda + \lambda'}\int_{\mathcal{R}'} \frac{dy dy'}{y y'} - \int_{\mathcal{R}} \frac{dy dy'}{y y'}  \Big),\label{p-minus-p}\end{align}
where $\mathcal{R} := D_{0,0} - M^{-1}\left(\begin{smallmatrix} t \\ t' \end{smallmatrix} \right)$, $\mathcal{R}' := D_{\lambda,\lambda'}- M^{-1}\left(\begin{smallmatrix} x+ t \\ x'+ t' \end{smallmatrix} \right)$, and both regions of integration are contained in $[2^{-a}, 2^{1 - a}] \times [2^{-b}, 2^{1 - b}]$. Set $\left(\begin{smallmatrix} u \\ u' \end{smallmatrix} \right) := M^{-1}\left(\begin{smallmatrix} 1 - t \\ 1 - t' \end{smallmatrix}\right) \in \mathcal{R}$; thus $\left(\begin{smallmatrix} u \\ u' \end{smallmatrix} \right) \in [2^{-a}, 2^{1 - a}]\times [2^{-b}, 2^{1 - b}]$. If $\left(\begin{smallmatrix} y \\ y' \end{smallmatrix} \right) \in \mathcal{R} \cup \mathcal{R}'$ then $|u - y|, |u' - y'| \ll R L 2^{-L}$. Now since $a \le b$, both first partial derivatives of $1/yy'$ on $[2^{-a}, 2^{1 - a}] \times [2^{-b}, 2^{1 - b}]$ are bounded by $\ll 2^{a + 2b}$, and therefore on $\mathcal{R} \cup \mathcal{R}'$ we have
\[ \frac{1}{y y'} = \frac{1}{uu'} +  O(R L 2^{-L} 2^{a + 2b}).\]
Substituting into \cref{p-minus-p}, the contributions from the two $\frac{1}{uu'}$ terms cancel since $\mu(\mathcal{R}) = 2^{\lambda + \lambda'}\mu(\mathcal{R}')$ (here $\mu$ is Lebesgue measure on $\R^2$). The contribution from the $O(R L 2^{-L} 2^{a + 2b})$ error is $\ll RL 2^{a + 2b - 3L}$, since $\mu(\mathcal{R}), \mu(\mathcal{R}') \ll 2^{-2L}$.

Combining these observations gives 
\begin{align*} \Big|2^{\lambda + \lambda'}\mb{P}\big( Z_{\eps,\eps'}(x,x',\lambda,\lambda') \mid t(\mbf{u}) = t,\,t(\mbf{u}') = t'\big)  - \mb{P}\big(Z_{\eps,\eps'}(0,0,0,0) & \mid t(\mbf{u}) = t,\,t(\mbf{u}') = t'\big)\Big| \\ & \ll RL 2^{a + 2b - 3L}.\end{align*}
Integrating over all non-edge $(t,t')$ and using $\int f(t,t') \, \mathrm{d}t \mathrm{d}t' = 1$, we have
\[ \big| p^{\nonedge}_{\eps,\eps'}(x,x',\lambda,\lambda) - p^{\nonedge}_{\eps,\eps'}(0,0,0,0)\big| \ll RL 2^{a + 2b - 3L}.\]

Exactly as before, we may sum this over all $\eps, \eps'$ corresponding to a given $(a,b)$, and then over all $(a,b)$, obtaining the same bound $2^{2T - \Omega( L^{1/4})}$ for the contribution of the $p_{\eps,\eps'}^{\nonedge}$ terms to \cref{main-prop-task}. 

This concludes the proof of \cref{main-prop-task} and hence of \cref{lemma7point3}.
\end{proof}

A corollary of \cref{lemma7point3} is a square mean bound for $\varrho^*_{\mbf{u} \mid \beta}(\ell)$, which we will need in \cref{sec15-mainthm-proof}.

\begin{corollary}
\label{rho-ell-2-bound} 
Let $\ell$ be sufficiently large and let $T \ge 1$ be a parameter. Suppose that $\beta$ is an upper walk which is $T$-positive up to $\ell$. Then we have \[ \mb{E}  \varrho^*_{\mbf{u} \mid \beta}(\ell)^2 \ll  2^{3T}.\]
\end{corollary}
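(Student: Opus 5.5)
The plan is to telescope \cref{lem73-first} and use Minkowski's inequality in $L^2$. Write
\[ \varrho^*_{\mbf{u} \mid \beta}(\ell) = \varrho^*_{\mbf{u} \mid \beta}(\ell_0) + \sum_{j = \ell_0+1}^{\ell} \big(\varrho^*_{\mbf{u} \mid \beta}(j) - \varrho^*_{\mbf{u} \mid \beta}(j-1)\big)\]
for a suitable starting index $\ell_0 \le \ell$. Minkowski then gives
\[ \big(\mb{E}\varrho^*_{\mbf{u} \mid \beta}(\ell)^2\big)^{1/2} \le \big(\mb{E}\varrho^*_{\mbf{u} \mid \beta}(\ell_0)^2\big)^{1/2} + \sum_{j = \ell_0+1}^{\ell} \big(\mb{E}|\varrho^*_{\mbf{u} \mid \beta}(j) - \varrho^*_{\mbf{u} \mid \beta}(j-1)|^2\big)^{1/2}.\]
A walk which is $T$-positive up to $\ell$ is also $T$-positive up to each $j \le \ell$, so \cref{lem73-first} applies at every $j$. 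Each summand is therefore $\ll 2^{3T/2 - cj^{1/4}/2}$, and the sum over $j$ converges to $\ll 2^{3T/2}$. Squaring, the tail contributes $\ll 2^{3T}$.

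It remains to bound the initial term. The simplest choice is $\ell_0 = 0$, where $\varrho^*_{\mbf{u} \mid \beta}(0) = 1$ by convention. The telescoping then runs over all $1 \le j \le \ell$, giving $(\mb{E}\varrho^*(\ell)^2)^{1/2} \le 1 + O(2^{3T/2})$. This yields $\mb{E}\varrho^*_{\mbf{u} \mid \beta}(\ell)^2 \ll 2^{3T}$, using $T \ge 1$ to absorb the constant.

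The one thing to check is that \cref{lem73-first} is valid for all small $j \ge 1$, not just large ones. Its proof said the bound is trivial for $L = O(1)$ and that we may assume $T \le j$. These cases are harmless but deserve a check. For $T > j$, the trivial bound $\varrho^* (j) \le 2^{j}$ gives a difference whose square is $\le 4^{j} \le 2^{2T}$. This is fine, since $\sum_{j < T} 2^{j} \ll 2^{T}$ is dominated by $2^{3T/2}$.

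In fact the bound is already stated for all $\ell \ge 1$ with an absolute constant, so the main point is simply that the series $\sum_j 2^{-cj^{1/4}/2}$ converges. The hypothesis ``$\ell$ sufficiently large'' is then not even needed. I expect no genuine obstacle here; the only subtlety is using Minkowski in $L^2$ rather than summing the squared bounds directly, which would lose a factor of $\ell$.
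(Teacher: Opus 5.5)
Your proof is correct and is essentially the paper's argument: telescope from $\varrho^*_{\mbf{u} \mid \beta}(0) = 1$ and apply the first bound of \cref{lemma7point3} at every step $1 \le j \le \ell$. The only difference is cosmetic: you sum the increments with Minkowski's inequality in $L^2$, whereas the paper uses Cauchy--Schwarz with weights $i^{-2}$ and $i^{2}$. Both rely only on the summability of $2^{-c i^{1/4}}$, and you are right that the hypothesis ``$\ell$ sufficiently large'' is not really needed.
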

\begin{proof}
We use the inequalities
\[ \mb{E} |X_N - X_0 |^2  \le\mb{E}\big(  \sum_{i = 1}^N |X_i - X_{i-1}|\big)^2  \le \big( \sum_{i =1}^N i^{-2} \big) \big(\sum_{i = 1}^N i^2 \mb{E} |X_i - X_{i - 1}|^2 \big)
\] and
$\mb{E} |X_N|^2 \le 2 \mb{E}|X_N - X_0|^2 + 2\mb{E}|X_0|^2$, valid for any sequence $(X_i)_{i = 0}^N$ of real-valued random variables. Applying these with $N = \ell$ and $X_i = \varrho^*_{\mbf{u} \mid \beta}(i)$, the result follows from \cref{lemma7point3}.
\end{proof}
\begin{remark}
A somewhat shorter, direct, proof of  \cref{rho-ell-2-bound} is possible, along the lines of \cref{lemma7point3} but without the careful analysis of `edge' cases. We leave the details to the interested reader.  
\end{remark}

\part{Background material}\label{part2}

Each section of this part of the paper may be read independently of the rest of the paper. Our aim here is to assemble various background ingredients for the remaining parts of the argument. 

\section{Properties of random walks}\label{random-walk-sec}

In this section we collect the results we need on random walks constrained to be almost positive. Throughout the section $\boldbeta$ will be a random walk with increments $\boldxi_i \samedist \Pois(1) - 1$, and $\boldbeta'$ a random walk with increments $\boldxi'_i \samedist 1 - \Pois(1)$, though much of the discussion would apply to rather general random walks. 

For integers $m$, we write
\begin{equation}\label{hm-def}  h(m) = \lim_{N \rightarrow \infty} N^{1/2} \mb{P} \big(\min_{1 \le i \le N} \boldbeta(i) \ge -m\big)\end{equation} 
and
\begin{equation}\label{hm-def-2}  h'(m) = \lim_{N \rightarrow \infty} N^{1/2} \mb{P} \big(\min_{1 \le i \le N} \boldbeta'(i) \ge -m\big).\end{equation} The following proposition assembles the basic properties of these quantities.

\begin{proposition}\label{first-rand-walk-pos}
We have the following statements.
\begin{enumerate}
\item $h(m)$, $h'(m)$ exist for all $m \ge 0$ \textup{(}that is, the limits in \cref{hm-def,hm-def-2} exist for $m \ge 0$\textup{)}.
\item For $m \ge 0$ we have $h(m) = (2/\pi)^{1/2} (m + 1)$ and $h'(m) = (2/\pi)^{1/2} (m + O(1))$.
\item Let $0\le m \le \sqrt{N}$. Then, uniformly,
\begin{equation}\label{prop81-3-i} N^{1/2} \mb{P}\big(\min_{1 \le i \le N} \boldbeta(i) \ge - m\big) \asymp m + 1,\end{equation} and similarly for $\boldbeta'$. Moreover we have the upper bound
\begin{equation}\label{prop81-3-ii} N^{1/2} \mb{P}\big(\min_{1 \le i \le N} \boldbeta(i) \ge - m\big) \ll m + 1\end{equation}
with no restriction on $m$.
\end{enumerate}
\end{proposition}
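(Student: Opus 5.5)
The plan is to use classical fluctuation theory for one-dimensional, mean zero, finite variance random walks. The walk $\boldbeta$ with $\Pois(1)-1$ steps is \emph{left-continuous} (downward jumps are exactly $-1$), and $\boldbeta'$ is its negative, so it is right-continuous. The standard input is the Sparre Andersen / Spitzer theory. Let $\tau_m := \min\{i \ge 1 : \boldbeta(i) < -m\}$. Then $\mb{P}(\tau_0 > N) \sim c_0 N^{-1/2}$. More generally, $\mb{P}(\tau_m > N) \sim V(m)\, c_0 N^{-1/2}$, where $V$ is the renewal function of the strict descending ladder heights. This is the result of Kozlov, Doney and Bertoin--Doney for walks in the domain of attraction of the normal law, and the appendix material (\cref{poisson-app}, \cref{dtw-appendix}) is where I would place a self-contained version. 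Existence of $h(m)$, $h'(m)$ in part (1) is then exactly this asymptotic.

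For the explicit constants in part (2), I would use the structure of each walk separately.

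\textbf{Upper walk.} Because $\boldbeta$ is left-continuous, every descending ladder height equals $-1$. So reaching level $-(m+1)$ requires passing through each of the levels $-1,\dots,-(m+1)$ exactly, and $V(m)=m+1$ is forced: the renewal function of a lattice renewal process with unit steps. For the constant $c_0$, I would use the Sparre Andersen identity $\sum_N s^N \mb{P}(\tau_0>N)=\exp\big(\sum_n \frac{s^n}{n}\mb{P}(\boldbeta(n)\ge 0)\big)$. Combined with the fact that $\sum_n \frac1n(\mb{P}(\boldbeta(n)\ge0)-\frac12)$ converges, a Tauberian argument gives $\mb{P}(\tau_0>N)\sim C/\sqrt{\pi N}$. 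Here $C=\exp\big(\sum_n \frac1n(\mb{P}(\boldbeta(n)\ge 0)-\tfrac12)\big)$, and the standard identity for this constant is $C=\sigma\sqrt2/\mb{E}|H^-|$. With $\sigma=1$ and $\mb{E}|H^-|=1$, this gives $C=\sqrt2$ and hence $h(0)=(2/\pi)^{1/2}$. A cleaner route is the martingale argument: optional stopping for $\boldbeta(i\wedge\tau_m)$ gives $\mb{E}[\boldbeta(N);\tau_m>N]=m+1-\mb{P}(\tau_m\le N)(m+1)+\dots$. Together with the Rayleigh limit of the conditioned endpoint, whose mean is $\sqrt{\pi/2}\,\sqrt N$, this yields $h(m)\sqrt{\pi/2}=m+1$ directly. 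I would present that martingale computation.

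\textbf{Lower walk.} For $\boldbeta'$ the downward overshoot is nontrivial, so the same optional stopping argument gives $h'(m)(\pi/2)^{1/2}=V'(m)=\mb{E}\big[-\boldbeta'(\tau'_m)\big]$, which is $m+1+\mb{E}(\text{overshoot})$. The overshoot has bounded expectation uniformly in $m$, by renewal theory: the ladder heights of $\boldbeta'$ have finite second moment because the Poisson tails are light. This gives $h'(m)=(2/\pi)^{1/2}(m+O(1))$.

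For part (3), I would again use the martingale. The upper bound \cref{prop81-3-ii} follows from $\mb{E}[\boldbeta(N);\tau_m>N]\le m+1$ (or $m+O(1)$) combined with a lower bound $\mb{E}[\boldbeta(N)+m\mid\tau_m>N]\gg\sqrt N$. The simplest way I see to get the latter uniformly is a Markov-type splitting: condition at time $N/2$, and use the fact that a walk started from height $x\ge0$ survives $N/2$ steps with probability $\ll (x+1)/\sqrt N$. That bound can be bootstrapped from a reflection-principle estimate via Doob's maximal inequality. For the lower bound in \cref{prop81-3-i} with $m\le\sqrt N$, I would combine the following.
\begin{itemize}
\item The survival probability to time $m^2$ is $\gg 1$ for walks started at $m$; by Donsker, or by directly reaching height $\asymp m$ with constant probability.
\item From height $\asymp m$, survival for $N$ steps has probability $\gg m/\sqrt N$, by the invariance principle for the meander.
\end{itemize}

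\textbf{Main obstacle.} The main obstacle is the uniformity in $m$ in part (3), in particular the upper bound with no restriction on $m$. I would handle $m>\sqrt N$ trivially, since the right-hand side then exceeds $\sqrt N$. For $m\le\sqrt N$, I would rely on a uniform Kolmogorov/Doney-type estimate, which I would cite from \cite{DTW24b} or reprove in \cref{dtw-appendix}.
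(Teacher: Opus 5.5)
Your route to (1)--(2) differs from the paper's.

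\textbf{Your route.} You import existence of $h(m),h'(m)$ from the Kozlov/Doney theory and then identify the constants by optional stopping. With $Y_n:=\boldbeta(n\wedge\tau_m)+m+1$ one has exactly $\mb{E}[Y_N1_{\tau_m>N}]=m+1$, because $\boldbeta$ is left-continuous. So the Rayleigh mean $(\pi N/2)^{1/2}$ of the conditioned endpoint forces $h(m)=(2/\pi)^{1/2}(m+1)$. For $\boldbeta'$ the same argument gives $h'(m)=(2/\pi)^{1/2}\mb{E}|\boldbeta'(\tau'_m)|$, and Lorden's overshoot bound supplies the $O(1)$.

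\textbf{The paper's route.} It derives everything from the cases $m\in\{-1,0\}$. Sparre Andersen's identities plus a Tauberian theorem express $h(-1),h(0),h'(-1),h'(0)$ through $\alpha,\tilde\alpha,\alpha',\tilde\alpha'$. These are evaluated using Spitzer's formulas \cref{spitzer-beta}, the fact $Z'_+=1$, and Borel's identity \cref{borel-eval}; your Sparre Andersen computation of $h(0)$ is this step. For $m\ge1$ it splits $\{\max_{i\le N}\boldbeta(i)\le m\}$ over strict ascending ladder epochs, using \cref{first-ascending-ladder}, to get $h'(m)=h'(0)\sum_{r\ge0}\mb{P}(X_1+\dots+X_r\le m)$, and finishes with the renewal estimate \cref{prop12}. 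For $h$ the relevant ladder heights are identically $1$, giving $h(m)=h(0)(m+1)$ at once.

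\textbf{Comparison.} The two outputs agree: by Wald, $\mb{E}|\boldbeta'(\tau'_m)|=\mb{E}Z_+\sum_{r\ge0}\mb{P}(X_1+\dots+X_r\le m)$, and $h'(0)=(2/\pi)^{1/2}\mb{E}Z_+$. Your identification is more conceptual and needs neither Borel's identity nor a quantitative renewal theorem. It does lean on heavier external results: existence for every barrier, and the meander limit. The paper uses only classical fluctuation identities and an elementary renewal estimate, and also gets the exact values \cref{h0h0} used later. For (3) the paper simply cites \cite{PP95} (Kozlov's upper bound, Zhang's lower bound), so your fallback to a citation matches it.

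Three points need repair.

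(i) The constant identity you quote is inverted. The correct form is $C=\sqrt2\,\mb{E}|H^-|/\sigma$; combine \cref{spitzer-beta} with \cref{alpha-alpha-prime}. It agrees with yours for $\boldbeta$ only because $\mb{E}|H^-|=1$. For $\boldbeta'$, where $\mb{E}|H^-|=\mb{E}Z_+=e/2$, your version gives a wrong $h'(0)$.

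(ii) Convergence in law of $\boldbeta(N)/\sqrt N$ given $\tau_m>N$ does not give convergence of the conditional mean. You need uniform integrability. This comes from $\mb{E}Y_N^2=(m+1)^2+\mb{E}[N\wedge\tau_m]$ together with both bounds in (3), so (3) must be proved first.

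(iii) Your self-contained sketches of (3) are circular.
\begin{itemize}
\item The splitting at time $N/2$ invokes a survival probability $\ll(x+1)/\sqrt N$ from height $x$, which is the upper bound itself. There is no reflection principle for these walks, Doob's inequality bounds survival from below, and the natural induction $N\to2N$ loses a factor $\sqrt2$ per step.
\item Survival from height $\asymp m$ with probability $\gg m/\sqrt N$ ``by the meander invariance principle'' is the lower bound itself. Invariance principles only control probabilities up to additive $o(1)$, which is useless for probabilities of size $m/\sqrt N$.
\end{itemize}

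\textbf{A short direct proof of (3), for either walk.}
\begin{itemize}
\item Upper bound. On $\{\min_{i\le N}\boldbeta(i)\ge-m\}$ at most $m$ strict descending ladder epochs occur by time $N$, since each lowers the running minimum by at least $1$. So the event lies in $\{\sigma_1+\dots+\sigma_{m+1}>N\}$, with $\sigma_j$ i.i.d.\ and $\mb{P}(\sigma_j>n)\ll n^{-1/2}$ by the $m=0$ case. Truncating at $N$ and applying Markov bounds this by $(m+1)\mb{P}(\sigma_1>N)+(m+1)N^{-1}\sum_{n<N}\mb{P}(\sigma_1>n)\ll(m+1)N^{-1/2}$, with no restriction on $m$.
\item Lower bound. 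Since $Y_{\tau_m}\le0$, we get $\mb{E}[Y_N1_{\tau_m>N}]\ge m+1$. Cauchy--Schwarz with $\mb{E}Y_N^2=(m+1)^2+\sum_{k<N}\mb{P}(\tau_m>k)\ll(m+1)\sqrt N$ then gives $\mb{P}(\tau_m>N)\gg(m+1)/\sqrt N$ for $m\le\sqrt N$.
\end{itemize}
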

\begin{proof} These may mostly be regarded as classical results from the fluctuation theory of random walks. However, it is not easy to locate appropriate references. See \cref{poisson-app} for further details and proofs. We remark that \cref{prop81-3-ii} is a trivial consequence of \cref{prop81-3-i}.
\end{proof}

Once it is established that $h(m)$ exists for all $m \ge 0$, it follows by conditioning on the first step of the walk that $h(m)$ exists for all $m \in \Z$ and that we have the recurrence relation
\begin{equation}\label{vbar-recur} h(m) = \sum_{r \ge -m} \mb{P}(\Pois(1) - 1 = r) h(m + r)\end{equation} (and similarly for $h'$). $h(-1)$ is a convenient shorthand for $\lim_{N \rightarrow \infty} N^{1/2} \mb{P} \big( \min_{1 \le i \le N} \boldbeta(i) > 0\big)$, and similarly for $h'(-1)$. We remark that $h'(m) = 0$ for $m \le -2$, since $\boldbeta'(1) \le 1$.

\subsection{Spaces of almost positive paths}\label{sec7.1}
In this subsection we describe how, associated to random walks such as $\boldbeta,\boldbeta'$, there are certain measures on $\Z^\N$ supported on sequences which are bounded below. One should heuristically think of this as allowing us to sample `a random $\boldbeta$ which is bounded below', though some care should be taken with this interpretation since these are not probability measures. The construction is mostly standard and is an example of Doob conditioning; we give brief details for the convenience of the reader.

Let $m \ge -1$. Denote by $\Z^{\N}_{\ge - m}$ the space of sequences $\beta = (\beta(i))_{i=1}^{\infty}$ with $\beta(i) \in \Z$ and $\beta(i) \ge -m$ for all $i$.
Consider the $\sigma$-algebra on $\Z^\N_{\ge -m}$ generated by `cylinder' sets of the form $\Gamma(c_1,\dots, c_{\ell}) := \{ \beta \in \Z^\N_{\ge -m} : \beta(i) = c_i \; \mbox{for $1 \le i \le \ell$}\}$ for some $\ell \in \N$, where here we may restrict to $\min_{1 \le i \le \ell} c_i \ge -m$.

Now consider the upper random walk $\boldbeta$, that is to say the random walk with $\Pois(1) - 1$ increments. Define $\omega_m(\Z^{\N}_{\ge -m}) = h(m)$ and
\begin{equation}\label{omega-meas-def} \omega_m \big(\Gamma(c_1,\dots, c_{\ell})\big) := h(m + c_{\ell}) \mb{P}\big(\boldbeta(i) = c_i \; \mbox{for $1 \le i \le \ell$}\big).\end{equation}

\begin{lemma}\label{72lemma}
$\omega_m$ is well-defined and extends to a measure on $\Z^{\N}_{\ge -m}$ \textup{(}with the $\sigma$-algebra described above\textup{)}. $\omega_m$ is supported on sequences $\beta$ for which $\beta(i+1) \ge \beta(i) - 1$ for all $i$.
\end{lemma}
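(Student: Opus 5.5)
We will obtain $\omega_m$ from the Kolmogorov (or Carathéodory) extension theorem, so the work is checking the consistency of the cylinder assignments. The key identity is the harmonicity relation \cref{vbar-recur}. This can be written as $h(j) = \sum_{r} \mb{P}(\boldxi = r)\, h(j+r)$, with the convention that $h(j) = 0$ for $j \le -1 - \ldots$ (more precisely, $h(j+r)$ vanishes whenever $j + r < 0$ in the shifted indexing). Applied with $j = m + c_\ell$, and using the Markov property $\mb{P}(\boldbeta(i) = c_i,\ i \le \ell+1) = \mb{P}(\boldbeta(i)=c_i,\ i\le \ell)\,\mb{P}(\boldxi_{\ell+1} = c_{\ell+1}-c_\ell)$, it gives
\[ \sum_{c_{\ell+1} \ge -m} \omega_m\big(\Gamma(c_1,\dots,c_{\ell+1})\big) = \omega_m\big(\Gamma(c_1,\dots,c_\ell)\big). \]
Here the restriction $c_{\ell+1} \ge -m$ matches the restriction $r \ge -(m+c_\ell)$ in \cref{vbar-recur}. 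Similarly, summing over $c_1 \ge -m$ and using \cref{vbar-recur} with starting point $\boldbeta(0)=0$ gives total mass $h(m) = \omega_m(\Z^\N_{\ge -m})$.

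This consistency shows that the finite-dimensional assignments $\nu_\ell(c_1,\dots,c_\ell) := \omega_m(\Gamma(c_1,\dots,c_\ell))$ form a projective family of finite nonnegative measures on $(\Z_{\ge -m})^\ell$, all of the same total mass $h(m)$. Note that $h(m) > 0$ is finite by \cref{first-rand-walk-pos}, and nonnegativity of $h$ is clear from its definition as a limit of nonnegative quantities. Normalising by $h(m)$ turns the family into probability measures, and since each coordinate space is countable (hence Polish), Kolmogorov's extension theorem gives a unique probability measure on $\Z^\N_{\ge -m}$ with the product $\sigma$-algebra, which is exactly the $\sigma$-algebra generated by cylinders. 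Multiplying back by $h(m)$ yields $\omega_m$. Well-definedness is immediate, since each cylinder $\Gamma(c_1,\dots,c_\ell)$ with all $c_i \ge -m$ is uniquely specified by $(c_1,\dots,c_\ell)$, and any other description of the same set is a refinement handled by the consistency relation.

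For the support statement, note that if $c_{i+1} < c_i - 1$ for some $i<\ell$, then $\mb{P}(\boldbeta(j) = c_j,\ j \le \ell) = 0$ because the increments of $\boldbeta$ are at least $-1$. Thus every cylinder containing such a violation has $\omega_m$-measure zero. The set of sequences violating $\beta(i+1) \ge \beta(i)-1$ for some $i$ is a countable union of such cylinders, so it is $\omega_m$-null. The analogous statement $\beta(1) \ge -1$ follows in the same way.

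The only real subtlety, and the step to handle most carefully, is the consistency step: it depends on \cref{vbar-recur} holding for every $j = m + c_\ell \ge 0$, including the boundary behaviour when $m + c_\ell + r < 0$, where the corresponding terms must be excluded or vanish. I would first re-derive \cref{vbar-recur} directly from the definition \cref{hm-def} by conditioning on the first step and passing to the limit $N \to \infty$, using $(N/(N-1))^{1/2} \to 1$. The exchange of limit and the infinite sum over $r$ is justified by dominated convergence, using the uniform bound \cref{prop81-3-ii} together with the finite first moment of the Poisson distribution.
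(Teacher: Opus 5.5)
Your proposal is correct and follows essentially the same route as the paper. The paper also invokes the Daniell--Kolmogorov extension theorem and checks compatibility by factoring off the last increment via the Markov property and applying \cref{vbar-recur} at $m+c_\ell$ with $r=c_{\ell+1}-c_\ell$. The key point, as you say, is that $m+c_{\ell+1}\ge 0$, so the truncation $r\ge -(m+c_\ell)$ in \cref{vbar-recur} matches the range of $c_{\ell+1}$. Your ``vanishing convention'' should be read as this truncation, since the limit-defined $h$ does not vanish at $-1$.

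Your explicit level-zero total-mass check, the normalisation by $h(m)$, and the observation that walks with an increment below $-1$ form an $\omega_m$-null countable union of cylinders spell out details the paper leaves implicit.
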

\begin{proof} By the Daniell-Kolmogorov extension theorem it is enough to check the compatibility condition 
\begin{equation*}\sum_{c_{\ell+1}} \omega_m \big( \Gamma(c_1,\dots, c_{\ell},c_{\ell+1})\big) = \omega_m \big( \Gamma(c_1,\dots, c_{\ell})\big).\end{equation*}
We have
\begin{align*}   \sum_{c_{\ell+1}} \omega_m \big( \Gamma(c_1,\dots, c_{\ell},c_{\ell+1})\big)  & = \sum_{c_{\ell+1}  }h(m + c_{\ell+1}) \mb{P}\big(\boldbeta(i) = c_i \; \mbox{for $1 \le i \le \ell+1$}\big) \\ & = \sum_{c_{\ell+1}  }\mb{P}\big(\boldxi_{\ell+1} = c_{\ell+1} - c_{\ell}\big)h(m + c_{\ell+1}) \mb{P}\big(\boldbeta(i) = c_i \; \mbox{for $1 \le i \le \ell$}\big) \\ & = h(m + c_{\ell}) \mb{P}\big(\boldbeta(i) = c_i \; \mbox{for $1 \le i \le \ell$}\big) = \omega_m \big( \Gamma(c_1,\dots, c_{\ell})\big), \end{align*}
where the $\boldxi_i$ are the increments of $\boldbeta$ and in the penultimate step we used \cref{vbar-recur} with $m$ replaced by $m + c_{\ell}$ and the dummy variable $r$ equal to $c_{\ell+1} - c_{\ell}$. The key point to note is that this is valid since $m + c_{\ell+1} \ge 0$ by assumption.
\end{proof}
In an essentially identical manner we define measures $\omega'_m$ associated to the lower walk $\boldbeta'$ (in fact a similar construction holds for any random walk $\boldgamma$ with identical increments satisfying suitable assumptions, but we only need $\omega_m$ and $\omega'_m$ in the present paper).

The following lemma provides a reasonably intuitive way to think about these measures.
\begin{lemma}\label{lemma7.2}
Let $m \ge -1$ and $L \ge 1$ be integers, and suppose that $S \subset [-m, \infty)^L \subset \Z^L$. Denote by $E$ the set of walks $\beta$ with $(\beta(1),\dots, \beta(L)) \in S$. Then 
\begin{equation}\label{limit-omega-m} \omega_m \{ \beta \in E\}  = \lim_{N \rightarrow \infty} N^{1/2}\mb{P} \big(\boldbeta \in E , \min_{1 \le i \le N} \boldbeta(i) \ge -m\big). \end{equation}
Suppose moreover that $L \le N/2$ and that $m \le \sqrt{N}$. Then
\begin{equation}\label{lem72-eq}  \mb{P} \big(\boldbeta \in E, \; \min_{1 \le i \le N} \boldbeta(i) \ge -m\big) \ll  N^{-1/2} \omega_m \{ \beta \in E \}.\end{equation}
Analogous results hold for $\boldbeta'$ and $\omega'_m \{ \beta' \in E\}$.
\end{lemma}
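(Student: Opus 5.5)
The plan is to decompose according to the value of the walk at time $L$ and use the Markov property. Write $E$ as a disjoint union of cylinder sets $\Gamma(c_1,\dots,c_L)$ with $(c_1,\dots,c_L) \in S$. Since $S$ consists of $L$-tuples with entries at least $-m$, the event $\{\boldbeta \in E, \min_{1\le i\le N}\boldbeta(i) \ge -m\}$ is the disjoint union over $c \in S$ of the events $\{\boldbeta(i)=c_i \text{ for } i\le L\} \cap \{\min_{L< i\le N}\boldbeta(i)\ge -m\}$. By the Markov property, the probability of the piece indexed by $c$ is
\[ \mb{P}\big(\boldbeta(i)=c_i,\ i\le L\big)\, \mb{P}\big(\min_{1\le j\le N-L}\boldbeta(j)\ge -(m+c_L)\big). \]
Comparing with \cref{omega-meas-def}, the target $\omega_m\{\beta\in E\}$ is $\sum_{c\in S} h(m+c_L)\,\mb{P}(\boldbeta(i)=c_i,\ i\le L)$. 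By countable additivity of $\omega_m$ (\cref{72lemma}), this sum is valid even for infinite $S$.

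For \cref{limit-omega-m}, multiply each term by $N^{1/2}$. For each fixed $c$,
\[ N^{1/2}\,\mb{P}\big(\min_{j\le N-L}\boldbeta(j)\ge -(m+c_L)\big) \to h(m+c_L) \]
as $N\to\infty$, by \cref{first-rand-walk-pos}(1) together with $N^{1/2}/(N-L)^{1/2}\to 1$. To interchange the limit with the possibly infinite sum over $c$, I will use dominated convergence. By \cref{prop81-3-ii}, once $N\ge 2L$ each term is at most $O(m+c_L+1)\,\mb{P}(\boldbeta(i)=c_i,\ i\le L)$. The dominating function is therefore summable with sum $\ll (m+1+\mb{E}|\boldbeta(L)|)<\infty$, since $\boldbeta(L)$ has finite mean. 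This interchange is the only slightly delicate point.

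For \cref{lem72-eq}, assume $L\le N/2$, so $N-L\ge N/2$. Applying \cref{prop81-3-ii} to the walk of length $N-L$ bounds each term by
\[ \ll N^{-1/2}(m+c_L+1)\,\mb{P}(\boldbeta(i)=c_i,\ i\le L). \]
By \cref{first-rand-walk-pos}(2), $h(m+c_L)=(2/\pi)^{1/2}(m+c_L+1)$, which is $\asymp m+c_L+1$ because $m+c_L\ge 0$. Summing over $c\in S$ then gives $\ll N^{-1/2}\omega_m\{\beta\in E\}$.

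The hypothesis $m\le\sqrt N$ is not actually needed for this upper bound, since \cref{prop81-3-ii} holds without restriction on $m$. For the primed walk $\boldbeta'$ the same argument applies verbatim with $h'$, $\omega'_m$ and the primed version of \cref{first-rand-walk-pos}. The one extra point is that $h'(m+c_L)\asymp m+c_L+1$ for $m+c_L\ge 0$. This follows from $h'(j)=(2/\pi)^{1/2}(j+O(1))$ for large $j$, combined with positivity of $h'(j)$ for small $j\ge 0$, which \cref{prop81-3-i} guarantees.
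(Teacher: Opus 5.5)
Your proof is correct and follows the paper's route: decompose $E$ into the cylinder sets $\Gamma(c_1,\dots,c_L)$ and apply the Markov property at time $L$, then use the definition of $h$ for the limit and the unrestricted bound \cref{prop81-3-ii} together with \cref{first-rand-walk-pos}(2) for the upper bound. Your dominated-convergence step, with dominating function $(m+c_L+1)\,\mb{P}(\boldbeta(i)=c_i,\ i\le L)$ (summable because $\boldbeta(L)$ has finite mean), justifies the interchange of limit and infinite sum that the paper's ``summing over all $(c_1,\dots,c_L)$'' leaves implicit, since the $o(1)$ there is not uniform in $c_L$; you are also right that the hypothesis $m\le\sqrt N$ is not needed.
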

\begin{proof}
We first establish \cref{limit-omega-m}. First of all, fix $(c_1,\dots, c_L) \in S$, and let $N$ be much larger than $L$. Considering the translated walk $\boldgamma(i) := \boldbeta(i + L) - \boldbeta(L)$, we see that 
\begin{align*} \mb{P}\big(\boldbeta(1) = & c_1,\dots, \boldbeta(L) = c_L , \min_{1 \le i \le N} \boldbeta(i) \ge - m\big)\\ & = \mb{P}\big(\boldbeta(1) = c_1,\dots, \boldbeta(L) = c_L \big) \mb{P}\big( \min_{1 \le i \le N - L} \boldgamma(i) \ge -m - c_L\big).\end{align*} By the definitions of $h(m + c_L)$ and of $\omega_m$, this is
\begin{align*}  \mb{P}\big(\boldbeta(1) = c_1, & \dots, \boldbeta(L) = c_L \big) \cdot (1 + o_{N \rightarrow \infty}(1)) (N - L)^{-1/2} h(m + c_L) \\ & = \mb{P}\big(\boldbeta(1) = c_1,\dots, \boldbeta(L) = c_L \big) \cdot (1 + o_{N \rightarrow \infty}(1)) N^{-1/2} h(m + c_L) \\ & = (1 + o_{N \rightarrow \infty}(1)) N^{-1/2} \omega_m (\Gamma(c_1,\dots, c_L)).\end{align*}
Summing over all $(c_1,\dots, c_L) \in E$ gives \cref{limit-omega-m}. The proof of \cref{lem72-eq} is almost identical, but now using the second statement of \cref{first-rand-walk-pos} (2) and (3) in place of the definition of $h( \cdot)$, and applying the bound $N \ll N - L$, which is valid in the range $L \le N/2$.
\end{proof}

\emph{Path spaces.} We recall that, associated to $\boldbeta$, there is the standard notion of the path space $\Z^{\N}$ together with its associated measure which, with a slight abuse of notation, we denote by $\mb{P}$. Here the $\sigma$-algebra is generated by cylinder sets $\Gamma(c_1,\dots, c_{\ell})$ (but now in $\Z^{\N}$) and $\int 1_{\Gamma(c_1,\dots, c_{\ell})}(\beta) \, \mathrm{d} \mb{P}(\beta) := \mb{P}\big(\boldbeta(1) = c_1,\dots, \boldbeta(\ell) = c_{\ell})$. It follows from the definitions that if $F$ is a function supported on paths $\beta$ with $\beta(i) \ge -m$ for all $i$, but otherwise depending only on $\beta(1),\dots, \beta(\ell)$, then 
\begin{equation}\label{doob-path} \int F(\beta) \, \mathrm{d} \omega_m = \int F(\beta) h(m + \beta(\ell)) \,\mathrm{d} \mb{P}(\beta).\end{equation}
There is a similar construction with respect to $\boldbeta'$, and we will use $\mb{P}'$ to denote the corresponding path measure.

\subsection{Crude bounds for endpoints of almost positive walks} 

The following lemma may be found in various places, for instance \cite[Lemma 20]{VW2009} and \cite{addario-berry-reed-1}. A very succinct argument is presented as Theorem 1 in the unpublished note \cite{addario-berry-reed-2}; we sketch this in \cref{a3-sec}.

\begin{lemma}\label{rand-walk-aux}
Uniformly for integers $N \ge 1$ and $x \ge 0$ we have
\[ \mb{P}\big(\boldbeta(N) = x, \, \min_{1 \le i \le N} \boldbeta(i) \ge 0\big) \ll \min \big(N^{-1}, (1 + x) N^{-3/2}\big),\] and similarly for $\boldbeta'$.
\end{lemma}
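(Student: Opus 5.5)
The $N^{-1}$ bound and the $(1+x)N^{-3/2}$ bound will be proved by separate arguments, both resting on \cref{first-rand-walk-pos}(3), that is, $\mb{P}(\min_{1 \le i \le M} \boldbeta(i) \ge -m) \ll (m+1)M^{-1/2}$ with no restriction on $m$. The idea is to split the walk at time $N/2$, use this estimate on one half, and get a local factor from the other half. This is the standard argument, and I would follow the Addario-Berry--Reed note. We may take $N$ large, since small $N$ is trivial.

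\emph{The $N^{-1}$ bound.} Write $N_1 = \lfloor N/2\rfloor$. The event in question implies two things: the first half $\boldbeta(1),\dots,\boldbeta(N_1)$ stays $\ge 0$, and $\boldbeta(N) - \boldbeta(N_1) = x - \boldbeta(N_1)$. Condition on the first $N_1$ steps. The increment over the second half is a sum of $N - N_1$ i.i.d.\ centred $\Pois(1)-1$ variables, i.e.\ $\Pois(N-N_1) - (N-N_1)$. Its point masses are uniformly $\ll N^{-1/2}$ (local CLT, or directly from Stirling's formula for Poisson point probabilities). The first-half positivity has probability $\ll N^{-1/2}$ by \cref{first-rand-walk-pos}(3) with $m=0$. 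Multiplying gives $\ll N^{-1}$.

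\emph{The $(1+x)N^{-3/2}$ bound.} We need a factor $(1+x)N^{-1/2}$ from the second half, and time reversal supplies it. Set $\boldgamma(j) := \boldbeta(N) - \boldbeta(N-j)$. Its increments are again i.i.d.\ $\Pois(1)-1$, and reversing the order of finitely many i.i.d.\ increments preserves the law. On our event, $\boldbeta(N-j) \ge 0$ means $\boldgamma(j) \le x$ for $0 \le j \le N - N_1$. Condition on the second-half increments, i.e.\ on the $\boldgamma$-path up to time $N-N_1$. The reversed walk $x - \boldgamma$ is then a walk with $1-\Pois(1)$ steps, a $\boldbeta'$-type walk, constrained to stay $\ge -x$. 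By the $\boldbeta'$ version of \cref{first-rand-walk-pos}(3), this has probability $\ll (1+x)N^{-1/2}$. Given that, the first half has two requirements: it stays nonnegative, and its endpoint equals $\boldbeta(N_1) = x - \boldgamma(N-N_1)$, which is now fixed. The positivity alone has probability $\ll N^{-1/2}$, but this only gives $(1+x)N^{-1}$, losing the local factor. To avoid this, I would split into thirds: first third positive, last third (reversed) staying $\ge -x$, and the middle third supplying a local-CLT point mass $\ll N^{-1/2}$. The three pieces are independent after conditioning on the outer two, since the middle increment must hit a value determined by them. The product is $N^{-1/2}\cdot N^{-1/2}\cdot(1+x)N^{-1/2}$, as required. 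The same three-block splitting also recovers the $N^{-1}$ bound, by dropping the constraint on the last block.

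\emph{Main obstacle and the $\boldbeta'$ case.} The only delicate point is the bookkeeping in the three-block decomposition. The constraints on the outer blocks are events in the outer increments only, while the middle block contributes through a uniform point-mass bound. We need that uniform bound $\sup_y \mb{P}(\Pois(M)-M = y) \ll M^{-1/2}$; this is standard and I would cite it from \cref{poisson-app}. We also need \cref{first-rand-walk-pos}(3) for both walk types, without restriction on $m$, which is its stated form \cref{prop81-3-ii}. The statement for $\boldbeta'$ is identical with the roles of the two step distributions swapped.
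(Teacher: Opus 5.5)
Your proposal is correct and is essentially the paper's argument. The paper also follows Addario-Berry--Reed with a three-block split: the first third stays nonnegative with probability $\ll N^{-1/2}$, the reversed last third stays above $-x$ with probability $\ll \min(1,(1+x)N^{-1/2})$ by \cref{prop81-2}, and the middle-third increment hits a value fixed by the outer blocks with probability $\ll N^{-1/2}$ by \cref{p-pointwise}, so the minimum with $1$ in the last factor delivers the $N^{-1}$ bound just as your variant does. One harmless slip: it is $-\boldgamma$ (not $x-\boldgamma$) that must stay $\ge -x$, equivalently $x-\boldgamma \ge 0$.
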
\begin{proof} See the references above or \cref{a3-sec}.\end{proof}

We will need the following generalisation in which the condition on $\min_{1 \le i \le N} \boldbeta(i)$ is relaxed.

\begin{lemma}
Uniformly for integers $N \ge 1$, $m \ge 0$ and $x \ge 0$ we have
\begin{equation}\label{est-72-geq} \mb{P}\big(\boldbeta(N) = x,\, \min_{1 \le i \le N} \boldbeta(i) \ge -m\big) \ll (1 + m)^2(1 + x+ m) N^{-3/2}.\end{equation}
and
\begin{equation}\label{cor745} \mb{P}\big( \boldbeta(N) = x, \, \min_{1 \le i \le N} \boldbeta(i) \ge -m\big) \ll (1 + m)^2 N^{-1}.\end{equation}
Similar estimates hold for $\boldbeta'$.
\end{lemma}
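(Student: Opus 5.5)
The plan is to reduce to \cref{rand-walk-aux} by decomposing the walk at a suitable time, using the strong Markov property and the fact that the increments of $\boldbeta$ are bounded below by $-1$. This means the walk can only decrease through each integer level one step at a time.

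\emph{Proof of \cref{cor745}.} First consider walks that stay at or above $0$. \cref{rand-walk-aux} bounds this case by $N^{-1}$. Otherwise the walk does dip below $0$. Since downward steps are exactly $-1$, the minimum value $-j$ (with $1\le j\le m$) is attained, and we let $\tau$ be the \emph{last} time the walk is at its running minimum, so $\boldbeta(\tau)=-j$. We may equally use the first hitting time of the overall minimum. Conditioning on $\tau=s$ and $\boldbeta(s)=-j$, the path splits into two pieces. The reversed initial piece $\boldbeta(s)-\boldbeta(s-i)$, $0\le i\le s$, is a walk with reversed increments (steps $1-\Pois(1)$, i.e.\ a copy of $\boldbeta'$) that stays $\ge 0$ and ends at $j$. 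The final piece $\boldbeta(s+i)-\boldbeta(s)$ stays $\ge 0$ (strictly $>0$ after time $0$ if we use the last minimum) and ends at $x+j$ after $N-s$ steps. Summing over $s$, we get
\[\sum_{j\le m}\sum_{s}\mb{P}(\boldbeta'\text{-type walk of length } s \text{ stays}\ge0,\ \text{ends at } j)\cdot \mb{P}(\text{walk of length } N-s\ \text{stays}\ge 0,\ \text{ends at } x+j).\]
This is why \cref{rand-walk-aux} must be available for both $\boldbeta$ and $\boldbeta'$. Apply \cref{rand-walk-aux} to each factor, using $\min(s^{-1},(1+j)s^{-3/2})$ and $\min((N-s)^{-1},(1+x+j)(N-s)^{-3/2})$. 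Split the sum at $s\le N/2$ and $s>N/2$. For $s\le N/2$, bound the second factor by $\ll N^{-1}$; the first factor summed over $s$ gives $\sum_s \min(s^{-1},(1+j)s^{-3/2})\ll 1+j$ (roughly). Summing over $j\le m$ then yields $(1+m)^2N^{-1}$. For $s>N/2$, bound the first factor by $(1+j)N^{-3/2}$ and sum the second factor over $N-s$, noting that $\sum_r\min(r^{-1},(1+x+j)r^{-3/2})$ could grow with $x$. Here I would instead use $r^{-1}$ only up to $r\le N$, giving a $\log N$ loss, which is too weak. The better move is to use the marginal bound $\sum_{r\ge 0}\mb{P}(\text{stay}\ge0\text{ for } r \text{ steps, end at } y)$, which is a renewal-type Green's function bound. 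Alternatively, choose the decomposition so that the long piece is always the one bounded by $N^{-1}$. Concretely, I would handle $s>N/2$ by bounding the first factor by $N^{-1}$ (since $s\asymp N$) and summing the second over $r=N-s$ using $\sum_r \mb{P}(\cdots)$. That sum is at most the expected number of visits to level $x+j$ before dipping below $0$, which is $O(1+j)$ uniformly in $x$: a walk with mean zero started at height $\ll$ has this occupation bound. The analogous statement for the first factor gives the $s\le N/2$ case as well.

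\emph{Proof of \cref{est-72-geq}.} Use the same decomposition. Now both factors can be bounded with the $N^{-3/2}$ forms. When $s\le N/2$, use $(1+x+j)(N-s)^{-3/2}\ll(1+x+m)N^{-3/2}$ times the occupation bound $O(1+j)$. When $s>N/2$, use $(1+j)s^{-3/2}\ll (1+m)N^{-3/2}$ times the occupation bound for the second piece. Summing over $j\le m$ gives $(1+m)^2(1+x+m)N^{-3/2}$. The statements for $\boldbeta'$ follow identically, with the roles of the two walk types swapped.

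\emph{Main obstacle.} The main obstacle is the uniform occupation (Green's function) bound: for a walk started at $0$ and killed on going below $0$, the expected number of visits to level $y$ is $\ll 1+\min(y,\cdot)$, or at least $\ll 1+j$ in the form needed. I would prove this by summing \cref{rand-walk-aux} over lengths, $\sum_r \min(r^{-1},(1+y)r^{-3/2})$. That sum is $\ll 1+\log(1+y)$, which is not quite enough, so I would instead use the duality/ladder-height argument. The expected number of visits of the killed walk to $y$ equals $\sum$ over renewal functions of ascending and descending ladder heights, and this is $\ll \min(1+y,\,1+0)\cdot$const. By duality, $G(0,y)=\sum_{k\le 0}u_-(k)u_+(y-k)$, which is $O(1)$ because the renewal mass functions are bounded. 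This is the step to check carefully; the appendix renewal estimate mentioned in \cref{poisson-app} should supply it.
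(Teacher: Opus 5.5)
Your decomposition is the paper's: split at a time where the minimum $-j$ is attained, reverse the initial segment to get a $\boldbeta'$-type walk that stays nonnegative and ends at $j$, and bound both factors with \cref{rand-walk-aux}. (The reversed piece should be $\boldbeta(s-i)-\boldbeta(s)$. With your sign it is nonpositive and has $\Pois(1)-1$ steps.)

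However, the ``main obstacle'' you identify does not exist. You abandoned the direct estimate for $s>N/2$ because of an arithmetic slip. In that range the reversed piece is $\ll (1+j)s^{-3/2}\ll (1+j)N^{-3/2}$. Summing the forward factor over $r=N-s<N/2$ with the crude bound $r^{-1}$ costs only $\log N$, uniformly in $x$. So this range contributes $\ll (1+j)N^{-3/2}\log N\ll (1+j)N^{-1}$, and after summing over $j\le m$ this is exactly \cref{cor745}.

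The paper does precisely this in one stroke. It uses $(1+j)s^{-3/2}$ for the reversed piece for every $s$. For the forward piece it uses $(N+1-s)^{-1}$, respectively $(1+x+j)(N+1-s)^{-3/2}$. It then applies
\[ \sum_{1\le s\le N} s^{-3/2}(N+1-s)^{-1}\ll N^{-1}, \qquad \sum_{1\le s\le N} s^{-3/2}(N+1-s)^{-3/2}\ll N^{-3/2}, \]
and the $\log N$ you feared is exactly what is absorbed in the first convolution.

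In particular, no occupation (Green's function) bound is needed anywhere. In your argument for \cref{est-72-geq}, the two ``occupation'' sums are just $\sum_s (1+j)s^{-3/2}\ll 1+j$ and $\sum_r (1+x+j)r^{-3/2}\ll 1+x+j$. Both are immediate from \cref{rand-walk-aux}, and summing over $j\le m$ gives $(1+m)^2(1+x+m)N^{-3/2}$.

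The uniform bound you wanted is in fact true. By time reversal, $\sum_r \mb{P}(\boldbeta(r)=y,\ \min_{i\le r}\boldbeta(i)\ge 0)$ is the expected number of weak ascending ladder epochs at level $y$, which is $O(1)$. So your final argument is not wrong, but as written it rests on an unproved lemma. The duality formula you quote does not supply it: it sums over all $k\le 0$ products of renewal masses of size $\asymp 1$, which diverges. For the walk started at $0$ and killed below $0$, only the $k=0$ term survives. Drop the detour and the proof is complete.
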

\begin{proof}
In both cases we may assume that $\min_{1 \le i \le N} \boldbeta(i) = -s$ for some $s$ with $0 \le s \le m$ since the contribution from walks for which this is not the case can be bounded using \cref{rand-walk-aux}. For $j \in [N]$ define the probability
\[ p(j,s,x) := \mb{P}\big( \boldbeta(N) = x,\, \boldbeta(j) = \min_{1 \le i \le N} \boldbeta(i)  = -s\big).\] Thus it suffices to obtain upper bounds on 
\begin{equation}\label{sum-psx} \sum_{0 \le s \le m}\sum_{1 \le j \le N} p(j,s,x).\end{equation}
To bound $p(j,s,x)$ we employ a reversal principle, running a walk with $\boldxi'$ increments backwards from $j$ and a walk with $\boldxi$ increments forwards. (Specifically, these walks are given by $\boldgamma'(i) := \boldbeta(j - i) + s$ and $\boldgamma(i) := \boldbeta(j + i) + s$ respectively.) From this we see that 
\begin{equation}\label{reflection-applied} p(j,s,x) =  \mb{P}\big(\boldgamma'(j) = s, \;\min_{1 \le i \le j} \boldgamma'(i) \ge 0\big) \mb{P}\big(\boldgamma(N - j) = x+ s, \min_{1 \le i \le N - j} \boldgamma(i) \ge 0\big),\end{equation} where the second term is absent (equal to 1) when $j = N$, and for the first term to be valid we needed $s \ge 0$.

The second probability term in \cref{reflection-applied} is $\ll (N + 1 - j)^{-1}$ by \cref{rand-walk-aux} (considering the cases $j < N$ and $j = N$ separately). The first probability term in \cref{reflection-applied} is $\ll (1 + s) j^{-3/2}$, again using \cref{rand-walk-aux}. Inputting these bounds into \cref{reflection-applied} gives the estimate
\begin{equation}\label{argim-each-x} p(j,s,x) \ll (1 + s) j^{-3/2} (N + 1 - j)^{-1}.\end{equation} 
It then follows easily that the sum in \cref{sum-psx} is $\ll(1 + m)^2 N^{-1}$, which gives \cref{cor745}.

Alternatively, one can use the second estimate in \cref{rand-walk-aux} on the second probability term in \cref{reflection-applied}, obtaining
\begin{equation*} p(j,s,x) \ll (1 + s)  (1 + x + s) j^{-3/2}(N + 1 - j)^{-3/2}.\end{equation*} 
From this it follows that the sum in \cref{sum-psx} is $\ll (1 + m)^2 (1 + x + m) N^{-3/2}$, which gives \cref{est-72-geq}.
\end{proof}
\begin{remarks}
The statements \cref{est-72-geq,cor745} are provided as natural generalisations of \cref{rand-walk-aux}. In certain applications below one could save occasional powers of $1 + m$ by slightly more efficient use of the same underlying ideas. However, these powers of $1 + m$ are not important to us in this paper.  The bound \cref{argim-each-x} will be used a few times later on in its own right. Finally, we note that a similar statement with a similar proof may be found as \cite[Theorem 2]{ford-prob}.
\end{remarks}

As an application, we have the following bounds for measures of certain sets of walks.

\begin{lemma}\label{argmin-omega-lem}
Uniformly for $m \ge 0$, $i \ge 1$ and $j \ge -m$ we have
\begin{equation}\label{om-bd-3} \omega_m \{\beta \colon \beta(i) = j\} \ll (1 + m)^2(1 + j + m)^2 i^{-3/2},\end{equation} and similar bounds hold for $\omega'_m$.
In particular for all $q \ge 0$ we have
\begin{equation}\label{qm-beta-bd}  \omega_m \{ \beta \colon\beta(q) = -m \} \ll (1 + m)^2 (1 + q)^{-3/2}.\end{equation} 
\end{lemma}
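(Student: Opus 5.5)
The plan is to use the definition \cref{omega-meas-def}, or equivalently \cref{doob-path}, applied to the cylinder event $\{\beta(i) = j\}$. Summing \cref{omega-meas-def} over all cylinders $\Gamma(c_1,\dots,c_i)$ with $c_i = j$ and $\min_{1\le r\le i} c_r \ge -m$ gives
\[ \omega_m\{\beta \colon \beta(i) = j\} = h(m+j)\, \mb{P}\big(\boldbeta(i) = j, \, \min_{1 \le r \le i} \boldbeta(r) \ge -m\big). \]
Here we use that $\omega_m$ is supported on $\Z^{\N}_{\ge -m}$, so only paths staying above $-m$ up to time $i$ contribute.

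The two factors are then bounded separately. By \cref{first-rand-walk-pos}(2), $h(m+j) = (2/\pi)^{1/2}(m+j+1) \ll 1 + j + m$, which is valid since $m + j \ge 0$. For the probability factor, apply \cref{est-72-geq} with $N = i$ and $x = j$. This requires $x \ge 0$. If instead $-m \le j < 0$, shift the walk: the event $\{\boldbeta(i) = j, \min \boldbeta \ge -m\}$ is contained in $\{\boldbeta(i) = j', \min \boldbeta \ge -m'\}$ with $j' = j$ and $m' = m$. We can then rerun the argument of \cref{est-72-geq}, or simply note that its proof via \cref{reflection-applied} works for any endpoint $x \ge -m$ and gives $(1+m)^2(1+x+m)i^{-3/2}$. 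Either way the probability is $\ll (1+m)^2(1+j+m) i^{-3/2}$. Multiplying the two bounds gives \cref{om-bd-3}.

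For \cref{qm-beta-bd}, take $j = -m$ and $i = q$, so that $(1+j+m)^2 = 1$. The bound becomes $(1+m)^2 q^{-3/2} \ll (1+m)^2(1+q)^{-3/2}$ for $q \ge 1$. The case $q = 0$ is trivial: $\beta(0) = 0$, so the event has positive mass only if $m = 0$, and the $\omega_m$-measure is then at most $\omega_0(\Z^{\N}_{\ge 0}) = h(0) \ll 1$.

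The analogous statements for $\omega'_m$ follow in the same way, using $h'(m) \ll m + 1$ from \cref{first-rand-walk-pos} and the $\boldbeta'$-version of \cref{est-72-geq}. One caveat: $h'(m)$ may have the form $(2/\pi)^{1/2}(m + O(1))$, so $h'(m+j) \ll 1 + m + j$ still holds. The only mildly delicate step is the negative endpoint $x = j \in [-m, 0)$ in \cref{est-72-geq}. I would handle it by noting that the reflection decomposition \cref{reflection-applied} with $-s \le \min(j, 0)$ gives the same bound with $1 + x + s \le 1 + m$.
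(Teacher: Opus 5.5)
Your proposal is correct and is essentially the paper's proof: write $\omega_m\{\beta(i)=j\} = h(m+j)\,\mb{P}\big(\boldbeta(i)=j,\ \min_{1\le r\le i}\boldbeta(r)\ge -m\big)$ via \cref{omega-meas-def}, bound $h(m+j)\ll 1+m+j$ by \cref{first-rand-walk-pos}, and bound the probability by \cref{est-72-geq}.

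You are also right that two edge cases need care, and the paper glosses over both. First, \cref{est-72-geq} is stated only for $x\ge 0$, while \cref{qm-beta-bd} needs $x=-m$. Your fix is valid: in the proof of \cref{est-72-geq}, a negative endpoint simply forces $s\ge -x$, so \cref{reflection-applied} applies with $x+s\ge 0$. Second, $i\ge 1$ excludes $q=0$, which you handle correctly by direct inspection. Your sentence about ``shifting the walk'' with $j'=j$, $m'=m$ is vacuous, however, and can be deleted.
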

\begin{proof}
For \cref{om-bd-3}, the value is $h(m + j) \mb{P}\big( \boldbeta(i) = j,\; \min_{1 \le i' \le i} \boldbeta(i') \ge -m)$.
We have $h(m + j) \ll 1 + m + j$ by \cref{first-rand-walk-pos} (2), and the probability term may be bounded using \cref{est-72-geq}. This gives \cref{om-bd-3}. The bound \cref{qm-beta-bd} follows from \cref{om-bd-3} by taking $i = q$ and $j = -m$. 
\end{proof}

We also have the following statement about the position of the minimum.

\begin{lemma}\label{min-pos-lem}
Uniformly for integers $N \ge 1$, $0 \le q \le N$, and $m \ge 0$, we have
\[ \mb{P}\big( \boldbeta(q) = -m, \, \min_{1 \le i \le N} \boldbeta(i) \ge -m\big) \ll (1 + m)(1 +  q)^{-3/2}(N + 1 - q)^{-1/2}. \]
\end{lemma}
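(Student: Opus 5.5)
The plan is to split the walk at time $q$ and use the Markov property, in the same spirit as the reversal argument behind \cref{argim-each-x}. The event requires $\boldbeta(q) = -m$ together with $\min_{1 \le i \le q} \boldbeta(i) \ge -m$. It also requires that the walk after time $q$, namely $\boldgamma(i) := \boldbeta(q+i) - \boldbeta(q)$ for $1 \le i \le N - q$, satisfies $\min_i \boldgamma(i) \ge 0$. These two pieces are independent, so the probability factors as
\[ \mb{P}\big(\boldbeta(q) = -m,\, \min_{1 \le i \le q}\boldbeta(i) \ge -m\big)\cdot \mb{P}\big(\min_{1 \le i \le N-q} \boldbeta(i) \ge 0\big), \]
with the second factor equal to $1$ when $q = N$. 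The degenerate case $q = 0$ forces $m = 0$, and then the claim is just the bound $\ll N^{-1/2}$ from \cref{prop81-3-ii}.

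For the second factor, I would apply \cref{prop81-3-ii} with $m = 0$, which gives a bound $\ll (N - q)^{-1/2}$ for $q < N$. Combined with the trivial case $q = N$, this is $\ll (N + 1 - q)^{-1/2}$ in all cases.

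For the first factor, I would reverse time as in the proof of \cref{est-72-geq}. Set $\boldgamma'(i) := \boldbeta(q - i) + m$ for $0 \le i \le q$. This is a walk with $\boldxi'$-type increments (distributed as $1 - \Pois(1)$), and $\boldgamma'(0) = 0$. The event becomes
\[ \boldgamma'(q) = m \quad\text{and}\quad \boldgamma'(i) \ge 0 \ \text{for all } 0 \le i \le q. \]
The constraint at $i = q$ corresponds to $\boldbeta(0) = 0 \ge -m$, which holds automatically. The $\boldbeta'$ version of \cref{rand-walk-aux} then bounds this probability by $\ll (1+m) q^{-3/2}$. Multiplying the two factors gives $(1+m)(1+q)^{-3/2}(N+1-q)^{-1/2}$, as claimed.

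The only real point of care is checking that the reversal maps the events exactly, including the endpoint conditions at $i = 0$ and $i = q$. One should also confirm that \cref{rand-walk-aux} is indeed available for the reversed walk with $1-\Pois(1)$ steps; the lemma states that it is.
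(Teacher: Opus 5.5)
Your proof is correct and follows the paper's argument. Split at time $q$, use the independence of the walk after time $q$ together with the $h(0)$ bound $\ll (N+1-q)^{-1/2}$, and control the segment before $q$ by a ballot-type estimate. The one difference is in that first factor. The paper quotes \cref{est-72-geq}, which as stated yields only $(1+m)^2q^{-3/2}$ and is nominally stated for $x\ge 0$ rather than $x=-m$. Your direct reversal $\boldgamma'(i)=\boldbeta(q-i)+m$, combined with the $\boldbeta'$ case of \cref{rand-walk-aux}, gives $(1+m)q^{-3/2}$, which matches the factor $(1+m)$ in the statement exactly.
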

\begin{proof}
When $q = 0$ we must have $m = 0$, and the result is immediate from the existence of $h(0)$. Suppose that $q \ge 1$. Then by \cref{est-72-geq} we have $\mb{P}\big( \boldbeta(q) = -m, \, \min_{1 \le i \le q} \boldbeta(i) = -m\big) \ll (1 + m)^2 q^{-3/2}$. When $q = N$ this gives the result immediately. Suppose that $q < N$. The shifted walk $\boldgamma(i) := \boldbeta(q + i) - \boldbeta(q)$ of length $N - q$ must have $\min_{1 \le i \le N - q} \boldgamma(i) \ge 0$ which, by the existence of $h(0)$, occurs with probability $\ll (N - q)^{-1/2} \ll (N + 1 - q)^{-1/2}$. Since this walk is independent of $(\boldbeta(i))_{1 \le i \le q}$, the stated bound follows.
  \end{proof}

\subsection{Local limit theorems for almost positive walks and applications}
The main result of this subsection is a quantitative version of the local limit theorem for random walks constrained to be almost positive.  Here is the statement we need. In this result \begin{equation}\label{raleigh-intro} W(x) =  x e^{-x^2/2} 1_{x \ge 0}\end{equation} denotes the density of the Rayleigh distribution.
\begin{proposition}\label{lem13.5} There is an absolute constant $\eps_0 > 0$ such that the following holds.
Uniformly for integers $m, x, N$ with $m ,x \ge 0$ and $N \ge 1$ we have
\begin{equation}\label{lem13.5-expr} \mb{P} \big(\boldbeta(N) = x,\; \min_{1 \le i \le N} \boldbeta(i) \ge -m\big) = N^{-1} h(m) W\big(\frac{x}{\sqrt{N}}\big) + O\big((1 + m)^{O(1)} N^{-1 - \eps_0}\big)\end{equation} and
\begin{equation}\label{lem13.5-exprb} \mb{P} \big(\boldbeta'(N) = x,\; \min_{1 \le i \le N} \boldbeta'(i) \ge -m\big) = N^{-1} h'(m) W\big(\frac{x}{\sqrt{N}}\big) + O\big((1 + m)^{O(1)} N^{-1 - \eps_0}\big).\end{equation} 
\end{proposition}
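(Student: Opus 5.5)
We prove \cref{lem13.5-expr}; the argument for \cref{lem13.5-exprb} is identical, with $h'$ in place of $h$. The idea is to split the walk at an intermediate time $N_1 = \lfloor N^{1-\theta}\rfloor$, for a small fixed $\theta>0$. The initial segment $\boldbeta(1),\dots,\boldbeta(N_1)$ handles the barrier at $-m$ and produces the factor $h(m)$. On the remaining stretch of length $N-N_1$, starting from a height $y$ that is typically of order $N_1^{1/2}\lll N^{1/2}$, we use an invariance-principle estimate for walks conditioned to stay above a fixed level, which produces the Rayleigh profile.

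\emph{Step 1 (initial segment).} Condition on $\boldbeta(N_1)=y$ with $\min_{i\le N_1}\boldbeta(i)\ge -m$. By the Markov property,
\[ \mb{P}\big(\boldbeta(N)=x,\min_{i\le N}\boldbeta(i)\ge -m\big)=\sum_{y\ge -m} \mb{P}\big(\boldbeta(N_1)=y,\min_{i\le N_1}\boldbeta(i)\ge -m\big)\, q_{N-N_1}(y+m,x+m), \]
where $q_T(a,b)$ is the probability that a walk started at $a$ stays $\ge 0$ for $T$ steps and ends at $b$.

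We discard the values $y > N_1^{1/2+\theta}$ using large deviation bounds (\cref{appF}). We also discard $y+m\le N_1^{1/2-\theta}$: the point mass there is $\ll (1+m)^{O(1)}N_1^{-3/2}\cdot N_1^{1-2\theta}$ by \cref{est-72-geq}, and combined with $q_T\ll (1+y+m)T^{-1}$ (from \cref{rand-walk-aux}-type bounds) this gives a contribution of size $N^{-1-\eps_0}$.

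\emph{Step 2 (the main input).} We need a uniform local estimate: for $a\le T^{1/2-\theta'}$ and $b\ge0$,
\[ q_T(a,b) = \frac{2a}{T}\, W\big(b/\sqrt T\big)\,(1+O(T^{-\eps}))+O(T^{-1-\eps}) \]
(up to the correct constant, which is forced by consistency with $h$). This is the Denisov--Tarasov--Wachtel local limit theorem, which is presumably proved in \cref{dtw-appendix}; I would prove it by the same splitting trick. Run the walk for a further $T-T_2$ steps, with $T_2 = T^{1-\theta}$ at the end, and use the ordinary local CLT on the last stretch. For the middle portion, use the reflection principle for Brownian motion via KMT/Sakhanenko strong coupling, which has a polynomial error, to show that a walk started at height $a\ll\sqrt T$ survives and lands near $b$ with the Brownian-meander density $\frac{2a}{T}W(b/\sqrt T)$.

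The shift $x \mapsto x+m$ costs only $(1+m)^{O(1)}N^{-1-\eps_0}$, since $W$ is Lipschitz at scale $\sqrt N$.

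\emph{Step 3 (assembling).} Substituting into the sum, the main term becomes
\[ \frac{2}{N}W\big(x/\sqrt N\big)\;\mb{E}\big[(\boldbeta(N_1)+m)1_{\min_{i\le N_1}\boldbeta(i)\ge -m}\big]. \]
Since $\boldbeta(i)+m$ is a martingale stopped at the exit time, this expectation equals $m$ minus the expected overshoot at exit. Because $\boldbeta$ has increments $\ge -1$, it exits exactly at $-m-1$, so the expectation equals $m+1+O(\mb{P}(\text{exit after }N_1)\cdot\text{small})$ and converges to $(m+1)$, matching $h(m)=(2/\pi)^{1/2}(m+1)$ after tracking normalisations. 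The convergence rate is $N_1^{-1/2}$-polynomial by \cref{prop81-3-i}.

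\emph{Main obstacle.} The hard step is the uniform conditioned local limit estimate in Step 2, which needs a power-saving error. Controlling walks that approach the barrier in the middle stretch, where the coupling error is comparable to the distance to the barrier, requires care. I would handle this with a KMT coupling with error $T^{\theta/10}$ and by shifting the barrier up and down by that amount (sandwiching).
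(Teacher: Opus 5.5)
Your route works, modulo the repairs listed in the second paragraph, but it is genuinely different from the paper's.

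\textbf{The paper's route.} The paper works directly at the barrier. It first proves the case $m=-1$ (\cref{dtw-thm}: walk started at $0$ and kept strictly positive). This comes from a Berry--Esseen estimate for the conditioned walk (\cref{berry-esseen-prop}), obtained by decomposing at the first non-positive time and using the antisymmetrised tails $F_N(x)$. The constant there is identified via Spitzer's formula for $\mb{E}|\boldbeta(\tau)|$. This global estimate is made local by averaging over windows of width $N^{1/4}$ and applying the ordinary local CLT to the last $N^{3/4}$ steps. General $m\ge 0$ is then reduced to $m=-1$ by decomposing according to the last visit $q\le N^{2\eps_0}$ to the level $-m$; \cref{h-htilde} identifies the resulting sum over $q$ as $(h(m)-h(m-1))/h(-1)$, and one telescopes in $m$.

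\textbf{Your route and what each buys.} You spend an initial stretch of length $N^{1-\theta}$ lifting the walk to height $a\approx N^{1/2-\theta/2}$. There a KMT coupling costs only a relative error $O(\log N/a)$, the explicit killed Brownian kernel applies, and you read off the harmonic factor by optional stopping. This avoids fluctuation theory and any delicate analysis of a walk started on the barrier, and for $\boldbeta$ it produces $(2/\pi)^{1/2}(m+1)$ directly. The price is twofold. First, KMT is a much heavier black box than anything the paper uses. Second, $\boldbeta'$ is not literally identical: optional stopping there gives $m+\mb{E}[\text{undershoot}]$, so you need one more step to identify this with $(\pi/2)^{1/2}h'(m)$, e.g.\ sum your local estimate over $x$ and compare with the definition of $h'(m)$. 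The paper uses $h(m)$ only through its definition, so its argument transfers to $\boldbeta'$ verbatim.

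\textbf{Details to repair.}
\begin{itemize}
\item The correct kernel is $q_T(a,b)\approx(2/\pi)^{1/2}\,a\,T^{-1}W(b/\sqrt T)$. KMT gives coupling error $O(\log T)$, so the sandwiching is easy.
\item Your upper cutoff $N_1^{1/2+\theta}=N^{1/2+\theta/2-\theta^2}$ exceeds $\sqrt N$. That contradicts the hypothesis $a\le T^{1/2-\theta'}$ of Step 2; cut at $N_1^{1/2}N^{\theta/10}$ instead.
\item Step 2 as stated is false for bounded $a$. For $a=0$ the left side is of order $T^{-1}$ when $b\asymp\sqrt T$, while your main term vanishes. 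State it only for $T^{\gamma}\le a\le T^{1/2-\theta'}$, which is all you use.
\item Transferring from windows to points via a local CLT on the last stretch only works when $b\ge T^{1/2-\delta}$, so that the last stretch cannot reach the barrier. For smaller $b$ you need the ballot bound $q_T(a,b)\ll(1+a)(1+b)T^{-3/2}$. To get it, split into thirds and apply \cref{prop81-3-ii} to the forward walk and to the time-reversed walk. This shows both sides are $O(aT^{-1-\delta})$ in that range.
\end{itemize}
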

\begin{remarks}
For the proof, see \cref{loc-lim-thm-app} and \cref{dtw-appendix}. We give some brief bibliographical remarks here. The result also holds in the case $m = -1$ (with $(1 + m)^{O(1)}$ replaced by $1$) and this is a result of Denisov, Tarasov and Wachtel \cite[Theorem 1]{DTW24b}; however, this is a very precise asymptotic with a correspondingly lengthy proof, and moreover depends on \cite{DTW24}. For these reasons, we give an essentially self-contained account in our appendices. Other relevant references are Caravenna \cite{caravenna} (who was seemingly the first to prove a local limit theorem for random walks conditioned to stay positive, but with no quantitative dependencies) and Grama and Xiao \cite[Theorem 6.1]{grama-xiao}, which (with a little work) would imply \cref{lem13.5}. Let us be clear that we claim no originality for this result.
    \end{remarks}

For the remainder of the paper $\eps_0$ will denote a fixed constant for which \cref{lem13.5} holds true.

\subsection{Coupling results}

We now derive some results about coupling $\boldbeta,\boldbeta'$ together to have nearby endpoints. These will be critical inputs in our argument.

\begin{lemma}\label{decoupling} Let $\eps_1 > 0$ be a sufficiently small constant. Let $m,m'$ be non-negative integers. Let $L, N, N'$ be positive integers with $|N' - N| \le 1$. Let $c_1,\dots, c_L$ and $c'_1,\dots, c'_L \in \Z$ with $c_i \ge -m$ and $c'_i \ge -m'$ for all $i$, $1 \le i \le L$. If $m > 0$ suppose that $\min_{1 \le i \le L} c_i = -m$, and if $m' > 0$ that $\min_{1 \le i \le L} c'_i = -m'$. Assume that \[ \max_{1 \le i \le L} c_i, \; \max_{1 \le i \le L} c'_i \le N^{\eps_1} \quad \mbox{and that} \quad L,|d|,m,m' \le N^{\eps_1}.\] Then
\begin{align*} \mb{P} \big(\min_{0 \le i \le N} \boldbeta(i) = -m,\;  \min_{0 \le i \le N'} \boldbeta'(i) & = -m',\;  \boldbeta(N) - \boldbeta'(N') = d \mid  \boldbeta(i) = c_i, \; \boldbeta'(i) = c'_i, \; i = 1\dots,L\big) \\ & =  \frac{\sqrt{\pi}}{4} N^{-3/2} h(m + c_L) h'(m' + c'_L) + O(N^{-3/2 - \eps_1}). \end{align*}   
\end{lemma}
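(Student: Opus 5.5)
By the Markov property we can replace the conditioned walks by fresh, independent walks started at $c_L$ and $c'_L$. Then \cref{lem13.5} gives local limit asymptotics for each endpoint, and the convolution over the endpoint difference is evaluated as in \cref{coupling-heuristic}.

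\emph{Step 1: reduction to fresh walks.} Write $M := m + c_L$ and $M' := m' + c'_L$; both are $O(N^{\eps_1})$ and nonnegative. Put $\boldgamma(i) := \boldbeta(L+i) - c_L$ and $\boldgamma'(i) := \boldbeta'(L+i) - c'_L$. These are independent of each other and of the conditioning, with the same step laws as $\boldbeta,\boldbeta'$.

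If $m > 0$, the prescribed initial segment already attains $-m$. If $m = 0$, we have $\boldbeta(0) = 0$ with the minimum taken over $0 \le i \le N$. In either case the event $\min_{0 \le i \le N}\boldbeta(i) = -m$ becomes exactly $\min_{1 \le i \le N-L}\boldgamma(i) \ge -M$, and similarly for $\boldgamma'$ with $N'-L$ and $M'$. Hence the conditional probability equals
\[ \sum_{y \in \Z} P_1(y)\, P_2(y - d'), \qquad d' := d - c_L + c'_L, \]
where
\[ P_1(y) := \mb{P}\big(\boldgamma(N-L) = y,\ \min_{1\le i\le N-L} \boldgamma(i) \ge -M\big) \]
and $P_2$ is the analogous quantity for $\boldgamma'$, $N'-L$ and $M'$.

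\emph{Step 2: apply the local limit theorem.} The statement of \cref{lem13.5} requires the endpoint to be $\ge 0$. The $O(N^{\eps_1})$ values $y \in [-M, 0)$, and likewise for $P_2$, are discarded using \cref{cor745} (each factor is $\ll (1+M)^2N^{-1}$). Their total contribution is $N^{-2+O(\eps_1)}$, which is acceptable.

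For the remaining $y$, write $P_1 = \mathrm{Main}_1 + E_1$ with
\[ \mathrm{Main}_1(y) := (N-L)^{-1} h(M)\, W\big(y/\sqrt{N-L}\big), \qquad |E_1| \ll N^{O(\eps_1)} N^{-1-\eps_0}, \]
and write $P_2 = \mathrm{Main}_2 + E_2$ similarly. Expand $P_1P_2 = \mathrm{Main}_1\mathrm{Main}_2 + \mathrm{Main}_1 E_2 + E_1 P_2$.

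For the last term, use $\sum_y P_2(y) \le \mb{P}(\min \boldgamma' \ge -M') \ll (1+M')N^{-1/2}$ from \cref{first-rand-walk-pos}(3). This gives $\sum_y |E_1 P_2| \ll N^{-3/2-\eps_0+O(\eps_1)}$ without any truncation in $y$. For the middle term, $\mathrm{Main}_1 \ll N^{-1+\eps_1}W(y/\sqrt{N-L})$ is summable over $y$ at cost $\sqrt N$, so $\sum_y |\mathrm{Main}_1 E_2| \ll N^{-3/2-\eps_0+O(\eps_1)}$. Taking $\eps_1$ small compared with $\eps_0$ makes both acceptable.

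\emph{Step 3: the main term.} First replace $N-L$, $N'-L$ and the shift $d'$ by $N$, $N$ and $0$. Since $L, |d'|, |N-N'| \le N^{O(\eps_1)}$ and $W$ is Lipschitz with Gaussian decay, each replacement costs a relative error of $O(N^{-1/2+O(\eps_1)})$. What remains is
\[ N^{-2}h(M)h'(M')\sum_{y} W(y/\sqrt N)^2. \]
A Riemann sum estimate gives
\[ \sum_{y} W(y/\sqrt N)^2 = \sqrt N \int_0^\infty y^2e^{-y^2}\,\mathrm{d}y + O(1) = \tfrac{\sqrt\pi}{4}\sqrt N + O(1). \]
The main term is therefore $\frac{\sqrt{\pi}}{4}N^{-3/2}h(m+c_L)h'(m'+c'_L)$. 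Every error so far carries a factor $h(M)h'(M') \ll N^{2\eps_1}$ (by \cref{first-rand-walk-pos}(2)), so shrinking $\eps_1$ gives a total error of $O(N^{-3/2-\eps_1})$.

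The main obstacle is the error bookkeeping: ensuring that summing the pointwise errors of \cref{lem13.5} over all $y$ stays acceptable, which the factorisation in Step 2 is designed to handle. The edge case $m = 0$, where the conditioning is on $\boldbeta(0)$, is easy but needs care in Step 1.
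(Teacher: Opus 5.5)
Your proposal is correct and follows essentially the same route as the paper: shift to fresh walks started from $c_L, c'_L$, apply the local limit theorem for almost positive walks to each endpoint factor, and evaluate the resulting convolution via $\int_0^\infty W(t)^2\,\mathrm{d}t = \sqrt{\pi}/4$. The differences are minor. The paper truncates to endpoints $\le N^{1/2+\eps_1}$ by a large deviation bound, whereas you control $\sum_y |E_1 P_2|$ using the total mass of $P_2$. Your treatment of negative endpoints and of the shift $d' = d - c_L + c'_L$ is also slightly more careful than the paper's write-up.
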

\begin{proof} The result is vacuous for $N = O(1)$ so we assume $N$ sufficiently large throughout. 
Let $\boldgamma$ be the random walk of length $N- L$ defined by $\boldgamma(i) := \boldbeta(L+i) - c_L$, thus $\boldgamma$ has $\Pois(1) - 1$ increments. Define $\boldgamma'$ similarly. The condition that $\min_{0 \le i \le N} \boldbeta(i) = -m$ is then precisely the condition that $\min_{1 \le i \le N-L} \boldgamma(i) \ge -m-c_L$, and similarly for $\boldbeta'$.
Therefore the probability we are interested in equals
\[  \sum_{x - x'= d} \mb{P} \big( \boldgamma(N - L) = x,\; \min_{1 \le i \le N - L} \boldgamma(i) \ge  -m - c_L \big) \cdot \mb{P} \big( \boldgamma(N' - L) = x', \;\min_{1 \le i \le N' - L} \boldgamma'(i) \ge -m' - c'_L \big) . \]
We can localise the sum to $x, x' \le N^{1/2 + \eps_1}$ with simple large deviation estimates on $\mb{P}(\boldgamma(N - L) \ge N^{1/2 + \eps})$ and the corresponding estimate with $\boldgamma'$ (see \cref{lemmaF.1}).
By \cref{lem13.5}, the remaining sum is
\[ \sum_{\substack{x - x'= d \\ x, x' \le N^{1/2 + \eps_1}}} \!\!\!\!\!\!\Big( \frac{h(m + c_L)}{N - L} W\big(\frac{x}{\sqrt{N - L}}\big) + O\big(  N^{C\eps_1 - \eps_0 - 1}\big)\Big)\Big( \frac{h(m' + c'_L)}{N' - L} W\big(\frac{x'}{\sqrt{N' - L}}\big) + O\big(  N^{C\eps_1 - \eps_0 - 1}\big)\Big) \] for some absolute constant $C$. 
By the assumptions and \cref{first-rand-walk-pos} (2) we have $h(m + c_L), h'(m' + c'_L) \ll N^{\eps_1}$ and so one may see that the contribution from all terms involving one of the $O(\cdot)$ errors is acceptable, provided $\eps_1$ is sufficiently small in terms of $\eps_0$. We are then left with the sum
\begin{equation*}\frac{h(m + c_L) h'(m' + c'_L)}{(N - L)(N' - L)}  \sum_{\substack{x - x'= d \\ x, x' \le N^{1/2 + \eps_1}}} W\big(\frac{x}{\sqrt{N - L}}\big) W\big(\frac{x'}{\sqrt{N' - L}}\big) . \end{equation*}
Now one may check that $\frac{x'}{\sqrt{N' - L}}, \frac{x}{\sqrt{N - L}} = \frac{x}{\sqrt{N}} + O(N^{\eps_1 - 1/2})$ in the range of summation, and so the above is
\[ N^{-2} h(m + c_L) h'(m' + c'_L)\big(1 + O(\frac{L}{N})\big) \sum_{x, x - d \le N^{1/2 + \eps_1}} \Big( W\big(\frac{x}{\sqrt{N}}\big)^2 + O(N^{\eps_1 - 1/2})\Big). \] The contributions from both the $O(L/N)$ and the $O(N^{\eps_1 - 1/2})$ terms are easily seen to be acceptable, so we are left with
\[ N^{-2} h(m + c_L) h'(m' + c'_L) \sum_{x, x - d \le N^{1/2 + \eps_1}}  W(\frac{x}{\sqrt{N}})^2. \] Comparing the sum to an integral we see that 
\[ N^{-1/2}  \sum_{x, x - d \le N^{1/2 + \eps_1}} W\big(\frac{x}{\sqrt{N}}\big)^2 = \int^{\infty}_0 W(t)^2 \, \mathrm{d}t + O(N^{-1/2}) = \frac{\sqrt{\pi}}{4} + O(N^{-1/2}).\]
This concludes the proof.
\end{proof}

The next result allows us to dispense with `rare' events in coupled situations like the above.

\begin{lemma}\label{lem13.7a}
Let $M, N,N'$ be positive integers with $|N' - N| \le 1$. Suppose that $\mathcal{E}$ is an event on the sample space of walks $\boldbeta$ of lengths $\le N$. Then uniformly for nonnegative integers $m, m'\le M$ and $d \in \Z$ we have
\begin{align*} \mb{P}&\big(\boldbeta \in \mathcal{E}, \; \min_{0 \le i \le N}  \boldbeta(i) = -m, \; \min_{0 \le i\le N'}  \boldbeta'(i) = -m', \;\boldbeta(N) - \boldbeta'(N') = d\big) \\ & \ll M^2 N^{-1} \mb{P} \big( \boldbeta \in \mathcal{E}, \;\min_{0 \le i \le N} \boldbeta(i) = -m\big)  \ll M^{3} N^{-3/2} \mb{P} \big( \boldbeta \in \mathcal{E} \mid \min_{1 \le i \le N} \boldbeta(i) \ge -m\big)  .\end{align*}
 There is a symmetric estimate with the roles of $\boldbeta, \boldbeta'$ reversed.   
\end{lemma}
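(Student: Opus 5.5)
The plan is to exploit the independence of $\boldbeta$ and $\boldbeta'$: condition on the whole walk $\boldbeta$ and bound the conditional probability of the $\boldbeta'$-events uniformly. For the first inequality, fix a realisation $\beta$ of $\boldbeta$ (of length $N$) lying in $\mathcal{E}$ with $\min_{0\le i\le N}\beta(i) = -m$. Then $x := \beta(N) - d$ is determined, and the remaining event is
\[ \boldbeta'(N') = x, \qquad \min_{0 \le i \le N'} \boldbeta'(i) = -m'. \]
This event is contained in $\{\boldbeta'(N') = x,\ \min_{1\le i\le N'}\boldbeta'(i) \ge -m'\}$ (which is empty if $x < -m'$). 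By \cref{cor745}, applied to $\boldbeta'$, its probability is $\ll (1+m')^2 (N')^{-1}$ uniformly in $x$. Since $m' \le M$ and $N' \ge N - 1$, this gives $\ll M^2 N^{-1}$; the case $N = 1$ is trivial. Summing over $\beta$ with the weights $\mb{P}(\boldbeta = \beta)$ and using independence yields
\[ \ll M^2 N^{-1}\, \mb{P}\big(\boldbeta\in\mathcal{E},\ \min_{0\le i\le N}\boldbeta(i) = -m\big). \]
A small point: \cref{cor745} is stated for $x \ge 0$. For $-m' \le x < 0$ I would handle this either by shifting (treat the walk as staying above $-m'$ and ending at $x$, and rerun the reversal argument with $s \ge -x$), or simply by noting that the proof of \cref{cor745} never used $x\ge 0$ beyond requiring $x + s \ge 0$, which holds because $s = -\min \ge -x$.

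For the second inequality, bound
\[ \mb{P}\big(\boldbeta\in\mathcal{E},\ \min_{0\le i\le N}\boldbeta(i) = -m\big) \le \mb{P}\big(\boldbeta\in\mathcal{E},\ \min_{1\le i\le N}\boldbeta(i)\ge -m\big), \]
and write the right-hand side as
\[ \mb{P}\big(\boldbeta \in \mathcal{E} \mid \min_{1\le i\le N}\boldbeta(i) \ge -m\big)\cdot \mb{P}\big(\min_{1\le i\le N}\boldbeta(i) \ge -m\big). \]
By \cref{prop81-3-ii} in \cref{first-rand-walk-pos}, the last factor is $\ll (1+m)N^{-1/2} \ll M N^{-1/2}$. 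Multiplying by $M^2N^{-1}$ gives $M^3N^{-3/2}$ times the conditional probability. (If $m = 0$ one should read $M$ as $\max(M,1)$, which is harmless since $M \ge 1$.)

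The symmetric estimate follows identically: condition on $\boldbeta'$, and apply the $\boldbeta$-version of \cref{cor745} to the event $\boldbeta(N) = \beta'(N') + d$. The only step needing care is the uniformity of \cref{cor745} in the endpoint, including slightly negative endpoints $x \ge -m'$; everything else is bookkeeping.
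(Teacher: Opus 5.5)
Your argument is correct and is essentially the paper's proof: the paper also splits according to the endpoint $x=\boldbeta(N)$ and uses independence. It bounds the $\boldbeta'$ factor uniformly in $x$ by \cref{cor745}, then relaxes $\min_{0\le i\le N}\boldbeta(i)=-m$ to $\min_{1\le i\le N}\boldbeta(i)\ge -m$ and applies \cref{prop81-3-ii}. Your remark about endpoints $-m'\le x<0$ is a genuine minor point that the paper passes over silently, and your fix is right: the reversal proof of \cref{cor745} only needs $x+s\ge 0$, which holds automatically because $s=-\min\ge -x$.
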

\begin{proof}
We can write the probability as
\[ \sum_x \mb{P} \big( \boldbeta \in \mathcal{E}, \;\min_{0 \le i \le N} \boldbeta(i) = -m, \;\boldbeta(N) = x\big)\mb{P} \big( \min_{0 \le i \le N'} \boldbeta'(i) = -m',\; \boldbeta'(N') = x - d\big).\]
Applying \cref{cor745} to the second factor gives the first estimate. To get the second estimate we relax the $\min_{0 \le i \le N} \boldbeta(i) = - m$ condition to $\min_{1 \le i \le N}\boldbeta(i) \ge -m$ and apply \cref{prop81-3-ii}.
\end{proof}

\subsection{Boundedness estimates}  In this section we bound the probability of an almost positive walk having an unusually large increment. We start with a definition which is closely related to \cref{bounded-walk-def}. Recall that $\kappa = \frac{1}{100}$ is a globally defined small constant.

\begin{definition}\label{r-beta-def}
Let $\beta$ be a walk. Then we define $R(\beta)$ to be the minimal positive integer $R$ for which $|\xi_i| \le R i^{\kappa}$ for all $i$. Set $R(\beta) := \infty$ if no such $R$ exists. More generally denote by $R_{\le L}(\beta)$ the minimal positive integer for which this holds for all $i \le L$, that is to say the minimal positive integer $R$ for which $\beta$ is $R$-bounded to length $L$.
\end{definition} 
\begin{remark}
The restriction of the values of $R$ to a discrete set (the integers) is a technical convenience.   
\end{remark}

\begin{lemma}\label{walks-bd-min-est}
Uniformly in integers $L, N$ with $1 \le L \le N$ and non-negative integers $m, r$ we have
\[ \mb{P} \big( R_{\le L}(\boldbeta) > r, \, \min_{1 \le i \le N} \boldbeta(i) \ge -m\big) \ll (m + r + 1)e^{-r} N^{-1/2}.\] A similar estimate holds for $\boldbeta'$.
\end{lemma}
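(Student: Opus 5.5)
The natural approach is a union bound over the location of the first large increment, combined with the Markov property at that time. The event $R_{\le L}(\boldbeta) > r$ means that for some $i \le L$ we have $|\boldxi_i| > r i^{\kappa}$. We sum over $i$ the probability that $|\boldxi_i| > r i^{\kappa}$ and $\min_{1 \le j \le N} \boldbeta(j) \ge -m$. For each fixed $i$ we condition on $\boldxi_i$. Since the walk restricted to the other steps is independent of $\boldxi_i$, dropping the constraint on the steps before $i$ would lose the $N^{-1/2}$ factor. So we keep it, splitting the walk into three independent pieces.

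\emph{The first piece} is $\boldbeta(1),\dots,\boldbeta(i-1)$, constrained to stay $\ge -m$ and ending at some value $y$. \emph{The second piece} is the single step $\boldxi_i = s$ with $|s| > r i^{\kappa}$. \emph{The third piece} is the shifted walk $\boldgamma(j) := \boldbeta(i+j) - \boldbeta(i)$ of length $N-i$, which must stay $\ge -m - y - s$.

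By \cref{prop81-3-ii}, the third piece has probability $\ll (1 + m + y + s)(N - i + 1)^{-1/2}$, and the terms with $s \le 0$ only help. We then handle the two ranges of $i$.

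\emph{Case $i \le N/2$.} Here $(N-i+1)^{-1/2} \ll N^{-1/2}$. It remains to sum $\mb{E}\big[(1 + m + \boldbeta(i-1) + |\boldxi_i|)\,1(|\boldxi_i| > r i^{\kappa})\big]$ over $i$, where we drop the positivity constraint on the first piece. We use $\mb{E}|\boldbeta(i-1)| \ll \sqrt{i}$ and the independence of $\boldxi_i$ from $\boldbeta(i-1)$. The Poisson tail gives $\mb{P}(|\boldxi_i| > t) \ll e^{-t\log t} \le e^{-2t}$ for large $t$, and $\mb{E}|\boldxi_i|1(|\boldxi_i| > t)$ obeys a similar bound. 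Each summand is therefore $\ll (1 + m + \sqrt{i} + r i^{\kappa}) e^{-2 r i^{\kappa}}$. Summing over $i \ge 1$ gives $\ll (1 + m + r)e^{-r}$, since $e^{-2ri^{\kappa}}$ decays fast enough in $i$ to absorb $\sqrt{i}$. This holds once $r \ge 1$. For $r = 0$ the claimed bound follows trivially from \cref{prop81-3-ii}, and for small $r$ we have $e^{-r} \asymp 1$, so this case is also trivial.

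\emph{Case $N/2 < i \le L$.} Here we cannot use $(N-i+1)^{-1/2} \ll N^{-1/2}$. Instead we use the first piece. By \cref{prop81-3-ii} applied up to time $i-1 \ge N/2 - 1$, the probability that the first $i-1$ steps stay $\ge -m$ is $\ll (1+m) N^{-1/2}$. Multiplying by $\mb{P}(|\boldxi_i| > r i^{\kappa}) \ll e^{-2 r i^{\kappa}}$ and dropping the constraint after time $i$, we sum over $i > N/2$. This gives $\ll (1+m) N^{-1/2} \sum_{i > N/2} e^{-2ri^{\kappa}}$, which is acceptable. When $i$ is small, $i - 1$ may be $0$, but that case falls under $i \le N/2$ unless $N$ is bounded, in which event everything is trivial.

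\emph{Main obstacle.} The main technical point is the first case. There we must control $\mb{E}\big[(1 + m + \boldbeta(i-1))^+\big]$ while retaining independence from $\boldxi_i$. The crude $\sqrt{i}$ bound suffices thanks to the super-exponential Poisson tail. The same argument applies to $\boldbeta'$. Its increments $1 - \Pois(1)$ are bounded above by $1$, so only downward large steps occur, and these make the positivity constraint harder to satisfy, which only helps.
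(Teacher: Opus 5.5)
Your proof is correct, and its skeleton is the paper's: a union bound over the index of a large increment, the Markov property at that index, \cref{prop81-3-ii} for the shifted walk afterwards, and a split at $N/2$. It differs from the paper in two details.

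First, the paper takes the \emph{first} bad index $\ell$. Then $|\boldxi_j| \le r j^{\kappa}$ for $j<\ell$, so $\boldbeta(\ell) \le r\ell^2 + |\boldxi_\ell|$ deterministically. You allow any bad index and control the height only in mean, via $\mb{E}|\boldbeta(i-1)| \le \sqrt{i}$ and the independence of $\boldxi_i$ from the past. In both versions the factor $e^{-\Omega(r i^{\kappa})}$ absorbs the polynomial growth in the index.

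Second, for indices beyond $N/2$ the paper gains nothing from the positivity constraint: it bounds $(N+1-\ell)^{-1/2}$ by $1$. It then uses that $\mb{P}(|\boldxi_\ell| \ge r(N/2)^{\kappa})$ is super-exponentially small in $N^{\kappa}$, which beats a factor $N^3$. You instead recover the $N^{-1/2}$ from the constraint on the first $i-1 \gg N$ steps. Your treatment of this range is the cleaner one. It gets the decay in $N$ from the walk itself rather than from a single increment's tail beating a polynomial in $N$. It therefore avoids the very large (though absolute) constants hidden in comparing $e^{-r(N/2)^{\kappa}}$ with $N^{-13}$ when $\kappa = 1/100$. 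The paper's minimality device, in turn, spares the moment computation for the initial segment.

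One small imprecision: for $t \ge 1$, $\mb{P}(|\boldxi_i| > t) = \mb{P}(\Pois(1) > t+1)$ is $e^{-t\log t + O(t)}$, not $\ll e^{-t\log t}$. The bound $\ll e^{-2t}$ (for all $t \ge 0$) that you actually use is correct, so nothing breaks.
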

\begin{proof}
We may assume $r \ge 1$, since for $r = 0$ the result is immediate from \cref{prop81-3-ii}. If $R_{\le L}(\boldbeta) > r$ then there is some (minimal) index $\ell$, $1 \le \ell \le L$, such that $|\xi_{\ell}| \ge r \ell^{\kappa}$. Suppose that $|\xi_{\ell}| = i$. Then, by the minimality of $\ell$, we have $\boldbeta(\ell) \le |\xi_1| + \cdots + |\xi_{\ell}| \le r \ell^2 + i$. The shifted walk $\boldgamma(j) := \boldbeta(\ell + j) - \boldbeta(\ell)$ of length $N - \ell$ must therefore stay above $-(m + r \ell^2 + i)$, and so the contribution of a given pair $(\ell, i)$ to our probability is, by \cref{prop81-3-ii},
\[ \ll \mb{P}\big( |\boldxi_{\ell}| = i\big) \cdot (m + r \ell^2 + i) (N + 1 - \ell)^{-1/2} \ll \frac{1}{(i+1)!} (m + r \ell^2 + i) (N + 1 - \ell)^{-1/2} .\] (Note that this estimate does hold when $\ell = N$, which is why we included the $+1$ term.)
To complete the proof, we must show that the sum of the RHS over all $(\ell, i)$ is bounded by $\ll (m+r+1) e^{-r} N^{-1/2}$. 

The contribution from $\ell \le N/2$ is
\[ \ll N^{-1/2} \sum_{\ell \ge 1} \sum_{i \ge r \ell^{\kappa}} \frac{1}{(i+1)!}(m + r \ell^2 +i) \le N^{-1/2} \sum_{\ell \ge 1} \sum_{i \ge r \ell^{\kappa}} \frac{1}{(i+1)!} (m + r + 1) \ell^2 i.\] Using the bounds $\sum_{i \ge X} \frac{1}{i!} \ll e^{-X}$ and $\sum_{\ell \ge 1} \ell^2 e^{-r\ell^{\kappa}} \ll e^{-r}$, this is seen to be acceptable.

To estimate the contribution from $N/2 < \ell \le N$ we can use very crude bounds. We may bound this by
\begin{align*} \ll N \sum_{i \ge r (N/2)^{\kappa}}  \frac{1}{(i+1)!} & (m + r N^2 + i) \le (m + r + 1) N^3 \sum_{i \ge r(N/2)^{\kappa}} \frac{1}{i!} \\ & \ll (m + r + 1)N^3 e^{-r (N/2)^{\kappa}} \ll (m + r + 1) e^{-r} N^{-10}.\end{align*} This is (amply) acceptable and the proof is complete. The proof for $\boldbeta'$ is identical since $|\boldxi_{\ell}|$ and $|\boldxi'_{\ell}|$ have the same distribution.
\end{proof}

A consequence of this is the following bound on the $\omega$-measure of walks which are not $r$-bounded.

\begin{corollary}\label{omega-bdedness-est}
Suppose that $m, r \ge 0$ are integers. Then uniformly in $L \in \N$ we have 
\[ \omega_m \{  R_{\le L}(\beta) > r\}, \, \omega'_m \{  R_{\le L}(\beta') > r\} \ll (m + r+1) e^{-r}. \] Similar estimates hold for $\omega_m\{R(\beta) > r\}$ and $\omega'_m\{ R(\beta') > r\}$.
\end{corollary}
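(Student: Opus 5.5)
The plan is to pass from the $\omega_m$-measure to actual walk probabilities using \cref{lemma7.2}, and then apply \cref{walks-bd-min-est}. Let $E$ be the event $\{R_{\le L}(\beta) > r\}$. It depends only on $(\beta(1),\dots,\beta(L))$, since it concerns only the increments $\xi_1,\dots,\xi_L$. Moreover, $\omega_m$ is supported on $\Z^{\N}_{\ge -m}$, so we may take $S$ to be the set of $L$-tuples in $[-m,\infty)^L$ whose increments violate $r$-boundedness. Then \cref{limit-omega-m} gives
\[ \omega_m\{R_{\le L}(\beta) > r\} = \lim_{N \to \infty} N^{1/2}\, \mb{P}\big(R_{\le L}(\boldbeta) > r,\ \min_{1 \le i \le N}\boldbeta(i) \ge -m\big). \]

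For each $N \ge L$, \cref{walks-bd-min-est} bounds the probability inside the limit by $\ll (m+r+1)e^{-r}N^{-1/2}$. The implied constant is uniform in $L,N,m,r$. Multiplying by $N^{1/2}$ and letting $N\to\infty$ gives the claimed bound uniformly in $L$. The argument for $\omega'_m$ is identical, using the analogue of \cref{lemma7.2} for $\boldbeta'$ and the $\boldbeta'$ version of \cref{walks-bd-min-est}. The case $m=-1$ is excluded here, since the statement only concerns $m\ge 0$.

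For $R(\beta)$, note that the events $\{R_{\le L}(\beta) > r\}$ increase in $L$, and their union over $L$ is $\{R(\beta) > r\}$. This includes the case $R(\beta)=\infty$: if no finite $R$ works, then for $R=r$ some index $i$ has $|\xi_i|>r i^{\kappa}$, so $R_{\le i}(\beta)>r$. By continuity of the measure from below,
\[ \omega_m\{R(\beta)>r\} = \lim_{L\to\infty}\omega_m\{R_{\le L}(\beta)>r\} \ll (m+r+1)e^{-r}, \]
and the same holds for $\omega'_m$.

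No step is a serious obstacle; the only points needing care are measurability and the limit argument. We need $\{R(\beta)>r\}$ to be in the cylinder $\sigma$-algebra, which holds because it is a countable union of cylinder events. We also need \cref{limit-omega-m} to apply to an arbitrary, possibly infinite, set $S$ of $L$-tuples. In its proof the sum over $(c_1,\dots,c_L)\in S$ may be infinite, so exchanging limit and sum needs justification. I would handle this by dominated convergence, using \cref{lem72-eq}, which gives a uniform bound for $N\ge 2L$ and $N\ge m^2$. Alternatively, we only need an upper bound, and Fatou's lemma gives exactly the inequality $\omega_m(E)\le\liminf_N N^{1/2}\mb{P}(\cdots)$, which suffices.
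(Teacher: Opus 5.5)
Your proposal is correct and is the paper's argument: the paper simply says the bounds are immediate from \cref{limit-omega-m} and \cref{walks-bd-min-est}. The extra details you supply are sound but left implicit in the paper: Fatou (or dominated convergence via \cref{lem72-eq}) handles the possibly infinite sum over $S$, and continuity from below handles the $R(\beta)$ version.
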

\begin{proof} The required bounds are immediate from \cref{limit-omega-m} and \cref{walks-bd-min-est}.\end{proof}

\subsection{Positivity estimates} In this section we show that an almost positive random walk almost surely has a parabolic shape. The following definition is very closely related to the definition of $T$-positive walk, which is \cref{good-walk-def}.
Recall that $\eta = \frac{1}{100}$ is a globally-defined small parameter.

\begin{definition}
Denote by $T(\beta)$ the smallest integer $T$ for which $-T + \ell^{1/2 - \eta} \le \beta(\ell) \le T + \ell^{1/2 + \eta}$ for all $\ell$, and set $T(\beta) := \infty$ if no such integer exists. More generally, define $T_{\le L}(\beta)$ to be the smallest integer $T$ for which this holds for all $\ell \le L$, that is to say for which $\beta$ is $T$-positive to length $L$.
\end{definition}

\begin{lemma}\label{pos-paths-cor}
Suppose that $m \ge 0$ and $T, N > 0$ are integers. Then
We have \[ \mb{P} \big( T_{\le N}(\boldbeta) > T ,\; \min_{1 \le i \le N} \boldbeta(i) \ge -m \big)  \ll (1 + m)^{O(1)} T^{-\eta/2} N^{-1/2}.\] A similar statement holds for $\boldbeta'$.
\end{lemma}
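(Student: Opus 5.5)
Since $T_{\le N}(\boldbeta) > T$ means the walk leaves the band $[-T + \ell^{1/2-\eta},\, T + \ell^{1/2+\eta}]$ at some $\ell \le N$, I will bound separately the event of an upper violation and the event of a lower violation, each intersected with $\min_{1\le i\le N}\boldbeta(i)\ge -m$. We may assume $T$ is large compared with $m$ (say $T \ge 2m+2$); otherwise $(1+m)^{O(1)}T^{-\eta/2}\gg 1$ and the bound follows from \cref{prop81-3-ii}.

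\emph{Upper violation.} Suppose $\boldbeta(\ell) > T + \ell^{1/2+\eta}$ for some $\ell\le N$. I will sum over dyadic ranges $\ell \in [2^j, 2^{j+1})$. In each range, a maximal inequality for the walk (Doob or Bernstein, via the large-deviation estimates of \cref{appF}) gives probability at most $\exp(-c(T + 2^{j(1/2+\eta)})^2/2^{j+1})$, using that the increments have exponential tails. After time $\ell$ the walk must still stay above $-m$. I will handle this by conditioning on $\boldbeta(\ell)=y$ and applying \cref{prop81-3-ii} to the shifted walk, which gives a factor $\ll (m+y+1)(N+1-\ell)^{-1/2}$. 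The large-deviation factor absorbs the $y$ and $(N+1-\ell)^{-1/2}$ losses for $\ell \le N/2$. For $\ell > N/2$ the deviation itself is superpolynomially small in $N$, which is much stronger than needed. Summing over $j$ and $y$ then gives $\ll (1+m)e^{-cT^{\Omega(1)}}N^{-1/2}$, which is far stronger than required.

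\emph{Lower violation.} Suppose $\boldbeta(\ell) < -T + \ell^{1/2-\eta}$ for some $\ell$. Since $\boldbeta(\ell)\ge -m$, this forces $\ell^{1/2-\eta} > T-m \ge T/2$, so $\ell \gg T^{2}$. I will fix $\ell$, set $y=\boldbeta(\ell)\in[-m, \ell^{1/2-\eta}]$, and split into two factors:
\begin{itemize}
\item the probability of staying above $-m$ up to time $\ell$ and ending at $y$, which is $\ll (1+m)^2(1+y+m)\ell^{-3/2}$ by \cref{est-72-geq};
\item the probability of staying above $-m$ afterwards, which is $\ll (1+m+y)(N+1-\ell)^{-1/2}$ by \cref{prop81-3-ii}.
\end{itemize}
Summing over $y\le \ell^{1/2-\eta}$ gives a contribution $\ll (1+m)^{O(1)}\ell^{3(1/2-\eta)}\ell^{-3/2}(N+1-\ell)^{-1/2} = (1+m)^{O(1)}\ell^{-3\eta}(N+1-\ell)^{-1/2}$.

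The main obstacle is the sum over $\ell$. Summing $\ell^{-3\eta}(N+1-\ell)^{-1/2}$ over $T^2\ll\ell\le N$ naively gives $N^{1/2-3\eta}$, not $N^{-1/2}$, so this per-time union bound is too lossy. I would first try to avoid the union bound: restrict to dyadic blocks $\ell\in[2^j,2^{j+1})$ and bound the probability that the walk is low (below $2^{j(1/2-\eta)}$) at \emph{some} time in the block. Given positivity up to $2^j$, the endpoint distribution at time $2^j$ is essentially Rayleigh by \cref{lem13.5}, so the walk is at height $\le 2^{j(1/2-\eta)}$ at time $2^j$ with probability $\ll 2^{-2j\eta}$ times the positivity probability. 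If it is higher than that, re-entering the low region within the block while still staying positive costs a similar factor, using \cref{rand-walk-aux}-type bounds. This should give each block a contribution $\ll (1+m)^{O(1)}2^{-\Omega(j\eta)}N^{-1/2}$. Summing over blocks with $2^j\gg T^2$ then yields $(1+m)^{O(1)}T^{-\Omega(\eta)}N^{-1/2}$, and the exponent $\eta/2$ should come out with care.

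For $\boldbeta'$ the same argument applies, using the primed versions of the cited estimates.
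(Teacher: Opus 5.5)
Your treatment of the upper violation is fine and is essentially the paper's argument. The paper simply takes a union bound over $\ell$, which the Gaussian tail already affords. If you use a maximal inequality over a block instead, apply \cref{prop81-3-ii} at the first exceedance time rather than at a fixed $\ell$.

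For the lower violation you have located the difficulty correctly, and your remedy differs from the paper's. The paper uses averaging: at dyadic scale $k$ it bounds the expected number of $j\in[k/2,2k]$ with $\boldbeta(j)\le 2k^{1/2-\eta}$ (on the positivity event) by $(1+m)^{O(1)}N^{-1/2}k^{1-3\eta}$. It then uses a regularity (``reasonable'') event to show that one low time forces $\gg k^{1-2\eta}(\log k)^{-2}$ low times, and applies Markov's inequality. The range $\ell>N/8$ is treated separately, on a grid of times spaced $N^{1-5\eta/2}$.

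Your first-passage idea avoids the union bound altogether, but the decisive step is only asserted. Moreover, the real saving is not the cost of re-entry: from height $\asymp\sqrt K$ the walk drops by $\sqrt K$ within time $K$ with probability bounded below. The saving is the cost of staying above $-m$ afterwards, starting from a low height. Done this way the argument is short and needs neither \cref{lem13.5} nor your split at time $2^j$. For dyadic $K\le N/8$ put $a:=(2K)^{1/2-\eta}$ and $\sigma:=\inf\{\ell\ge K:\boldbeta(\ell)\le a\}$. The bad event for $[K,2K)$ is contained in $\{\min_{i\le K}\boldbeta(i)\ge -m,\ \sigma<2K\}$, followed by a walk from height $\boldbeta(\sigma)\le a$ that stays above $-m$ for $N-\sigma\ge 3N/4$ steps. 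The strong Markov property and \cref{prop81-3-ii} (used twice) then give
\[ \ll (1+m)K^{-1/2}\cdot (a+m+1)N^{-1/2} \ll (1+m)^2 K^{-\eta} N^{-1/2}. \]
Summing over dyadic $K\gg T^2$ gives $(1+m)^2T^{-2\eta}N^{-1/2}$.

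What is genuinely missing is the range $\ell\ge N/8$, which your sketch treats like the other blocks. There $N-\sigma$ can be $O(1)$, so the post-entry factor is no longer $\ll (a+m)N^{-1/2}$. Summing over entry times $s$ with the bound $\mb{P}(\boldbeta(s)\le a,\ \min_{i\le s}\boldbeta(i)\ge -m)\ll(1+m)^{O(1)}a^2N^{-3/2}$ brings back exactly the union-bound loss. One gets about $a^2/N=N^{-2\eta}$ instead of $N^{-1/2-\Omega(\eta)}$, because once the walk reaches level $a$ it spends many steps near it.

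One clean fix is to run the same stopping-time argument on the reversed walk $i\mapsto\boldbeta(N-i)$, which has $1-\Pois(1)$ steps.
\begin{enumerate}
\item Write the probability as a sum over $x=\boldbeta(N)$, and set $a:=N^{1/2-\eta}$. The reversed walk from $x$ must enter $(-\infty,a]$ within $7N/8$ steps.
\item After that entry, what remains is the initial segment $\boldbeta(0),\dots,\boldbeta(n)$ with $n\ge N/8$. It stays above $-m$ and ends at height at most $a$, so by \cref{est-72-geq} it costs $\ll(1+m)^{O(1)}aN^{-3/2}$.
\item Summing over $x$ the probability that the reversed walk from $x$ enters $(-\infty,a]$ within $7N/8$ steps gives $\ll a+m+1+\sqrt N$, by Doob's inequality for $x>a$.
\end{enumerate}
So this range contributes $\ll(1+m)^{O(1)}aN^{-1}=(1+m)^{O(1)}N^{-1/2-\eta}$. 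That suffices, because a lower violation at some $\ell\ge N/8$ forces $N\gg T^2$. A first-passage density bound $\mb{P}_y(\sigma=s)\ll(1+y-a)s^{-3/2}$ would also do. With this added, your route is complete and somewhat simpler than the paper's.
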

\begin{proof} When $T < 2m$, the result follows immediately from \cref{first-rand-walk-pos} by simply ignoring the condition $T_{\le N}(\boldbeta) > T$ on the LHS. Thus we assume that $T \ge 2m$ in what follows. Once again the same proof applies to both the dashed and undashed statements, so we prove only the former.
We first consider the `upper' failure of positivity in which there is some $\ell \le N$ such that $\boldbeta(\ell) \ge T + \ell^{1/2 + \eta}$. In fact we will handle the event $\boldbeta(\ell) \ge T + \ell^{1/2} \log \ell$. By \cref{first-rand-walk-pos} applied to the shifted walk $\boldgamma( \cdot ) := \boldbeta(\ell + \cdot) - \boldbeta(\ell)$ of length $N - \ell$, and by \cref{lemmaF.1}, we have
\begin{align*}
\sum_{\ell\le N} \mb{P} \big( \boldbeta(\ell) \ge T + \ell^{1/2} \log \ell,\; \min_{1 \le i \le N} \boldbeta(i) \ge -m\big) & \ll \sum_{\ell \le N}(N +1 - \ell)^{-1/2} \!\!\!\!\!\sum_{i \ge T + \ell^{1/2}\log \ell} (m + i)  \mb{P}(\boldbeta(\ell) = i)   \\ & \ll\sum_{\ell\le N} (N +1 - \ell)^{-1/2}\!\!\!\!\sum_{i \ge T + \ell^{1/2}\log \ell} (m + i) e^{-i^2/4\ell}  .
\end{align*}
 To bound this, set $i := T + \lceil \ell^{1/2} \log \ell\rceil + j$ and note that $i^2 \ge T^2 + j^2 + \ell \log^2 \ell$; the sum is therefore bounded by 
\begin{align*} \sum_{\ell \le N} (N +1 - \ell)^{-1/2}\ell^{-10} & \sum_{j \ge 0} \big( m + T + \ell^{1/2} \log \ell + j\big) e^{-T^2/4\ell} e^{-j^2/4\ell} \\ & \ll (1+m) T \sum_{\ell \le N} (N +1 - \ell)^{-1/2}\ell^{-9} e^{-T^2/4\ell}\sum_{j \ge 0} (j+1) e^{-j^2/4\ell} ,\end{align*} using here that $m + T + \ell^{1/2} \log \ell + j \le (1+m) T \ell (j+1)$. Since $\sum_j j e^{-j^2/4\ell} \ll \ell$, this is bounded by
\[ \ll (1+m) T \sum_{\ell\le N} (N +1 - \ell)^{-1/2}\ell^{-8} e^{-T^2/4\ell} .\] For $\ell \le T$ we use $e^{-T^2/4\ell} \le e^{-T/4}$ and $\sum_{\ell \le N} (N + 1 - \ell)^{-1/2}\ell^{-8} \ll N^{-1/2}$, and for $\ell \ge T$ we use $e^{-T^2/4\ell} \le 1$ and $\sum_{T \le \ell \le N} (N + 1 - \ell)^{-1/2}\ell^{-8} \ll T^{-7}N^{-1/2}$. Thus the probability of this `upper failure' is $\ll (m+1) T^{-6} N^{-1/2}$, which is substantially smaller than the stated bound.

We turn now to bounds for the more subtle `lower' failure of positivity. This fact, namely that almost positive walks almost surely have nearly squareroot growth, is critical for the whole paper. Our argument is closely related to that of Ritter \cite{Rit81}. Suppose that for some $\ell$, $1 \le \ell \le  N$, we have $\boldbeta(\ell) \le -T + \ell^{1/2 - \eta}$. Since $\boldbeta(\ell) \ge \min_{1 \le i \le N} \boldbeta(i) \ge -m$, we have $\ell \ge (T - m)^{2} \ge T^2/4$, so $T \le 2N^{1/2}$, and (obviously) $\boldbeta(\ell) \le \ell^{1/2 - \eta}$. We will bound the measure of walks for which there is an $\ell$ with these last two properties. The argument will be slightly different according to whether $\ell \le N/8$ (say) or not. \vspace*{8pt}

\emph{The case $\ell \le N/8$.} Suppose that $k \le N/4$ is a power of two (here $k$ is just a dummy variable, not the global variable $k$ from the main paper). Let $j$ be an integer with $j \in [k/2, 2k]$. Denote by $Z_{j,k}(\boldbeta)$ the indicator of the event that $\boldbeta(j) \le 2k^{1/2 - \eta}$.   Using \cref{first-rand-walk-pos} (applied to the shifted walk $\boldgamma(\cdot) := \boldbeta(j + \cdot) - \boldbeta(j)$, which has length $N - j \ge N/2$ and minimum $\ge -u - m$) and then \cref{est-72-geq} we have
\begin{align*} \mb{P} \big( & Z_{j,k}(\boldbeta) = 1, \; \min_{1 \le i \le N} \boldbeta(i) \ge -m\big)  \ll  N^{-1/2}\!\!\!\!\sum_{-m \le u \le 2 k^{1/2 - \eta}} (1 + m + u)\mb{P}\big(\boldbeta(j) = u, \,\min_{1 \le i \le j} \boldbeta(i) \ge -m\big)  \\ &  \ll (1 + m)^2 N^{-1/2}\sum_{-m \le u \le 2 k^{1/2 - \eta}} (m + u + 1)^2 j^{-3/2} \ll (1 + m)^5 N^{-1/2} k^{- 3\eta}. \end{align*} 

Denote
\[ Z_k(\boldbeta) := \sum_{j \in [k/2, 2k]} Z_{j,k}(\boldbeta).\]
It follows that for any $\delta \in (0,1)$ we have 
\begin{equation}\label{zk-bd} \mb{P} \big( Z_k(\boldbeta) \ge \delta k,\; \min_{1 \le i \le N} \boldbeta(i) \ge  -m \big)\ll (1 + m)^5 \delta^{-1} N^{-1/2}k^{ - 3\eta}. \end{equation}
We say that $\boldbeta$ is \emph{reasonable} at scale $k$ if $\boldbeta(j_2) - \boldbeta(j_1) \le k^{1/2 - \eta}$ and $\boldbeta(j_1) \le k$ whenever $j_1, j_2$ satisfy $k/2 \le j_1 < j_2 \le 2k$ and $j_2 - j_1 \le k^{1 - 2\eta} (\log k)^{-2}$. We claim the following estimate:
\begin{equation}\label{unreasonable}\mb{P} \big( \mbox{$\boldbeta$ unreasonable at scale $k$}, \; \min_{1 \le i \le N} \boldbeta(i) \ge - m \big) \ll (1 + m)^{2} k^{-10} N^{-1/2}. \end{equation}
To see this we use a union bound over the $\ll k^2$ choices of $j_1, j_2$. For each choice, we need to bound
\[ \sum_{\substack{-m \le i_1 \le k \\ i_2 - i_1 > k^{1/2 - \eta}}}  \mb{P} \big (\boldbeta(j_1) = i_1, \boldbeta(j_2) = i_2, \min_{1 \le i \le N} \boldbeta(i) \ge -m\big) + \sum_{i_1 > k} \mb{P} \big (\boldbeta(j_1) = i_1, \min_{1 \le i \le N} \boldbeta(i) \ge -m\big).\]
For this we apply \cref{first-rand-walk-pos} to the walk shifted by $j_2$ (for the first term) or $j_1$ (for the second term). Since $N- j_1, N - j_2 \gg N$ we see that this is
\begin{equation}\label{8330} \ll N^{-1/2}\!\!\!\!\!\!\sum_{\substack{-m \le i_1 \le k \\ i_2 - i_1 > k^{1/2 - \eta}}} \!\!\!\!(m + 1 + i_2) \mb{P} \big (\boldbeta(j_1) = i_1, \boldbeta(j_2) = i_2\big) + N^{-1/2} \sum_{i_1 > k} (m + 1 + i_1) \mb{P} \big (\boldbeta(j_1) = i_1\big).\end{equation}
By \cref{large-deviation-2} we have $\mb{P}(\boldbeta(j_1) = i_1) \ll e^{-i_1/10}$ uniformly for $j_1 \in [k/2, 2k]$ and $i_1 > k$, and so (after summing over the $O(k^2)$ choices of $j_1, j_2$) the contribution of the second term here is bounded as claimed in \cref{unreasonable}. To bound the first term in \cref{8330} we set $i = i_2 - i_1$ and sum out the $i_1$ variable, obtaining a bound
\[ \ll N^{-1/2}(k + m) \sum_{i > k^{1/2 - \eta}} (m + k + i) \mb{P}(\boldbeta (j_2 - j_1) = i) .\] By a standard large deviation estimate (see \cref{lemmaF.1}) this is \[ \ll N^{-1/2}(k+m) \sum_{i > k^{1/2 - \eta}} (m + k + i) e^{-i^2/4(j_2 - j_1)},\]  which in turn is bounded by\[ \ll N^{-1/2}(k + m)^2 \sum_{i > k^{1/2 - \eta}} i e^{-i^2(\log k)^2/4k^{1 - 2\eta}} < N^{-1/2}(1 + m)^{2} k^{-100}.\] Summing over the $O(k^2)$ choices for $j_1, j_2$, this completes the proof of \cref{unreasonable}. 

Now suppose that $\boldbeta(\ell) \le \ell^{1/2 - \eta}$ (with $\ell \le N/8$) and let $k$ be the unique power of two such that $\ell \in (k/2, k]$. Thus $k \le N/4$, and $\boldbeta(\ell) \le k^{1/2 - \eta}$. Suppose that $\boldbeta$ is reasonable at scale $k$. Then $\boldbeta(j) \le 2 k^{1/2 - \eta}$ for all $j$ with $0 \le j - \ell \le k^{1 - 2\eta} (\log k)^{-2}$, and so $Z_k(\boldbeta) \ge k^{1 - 2\eta} (\log k)^{-2}$. By \cref{zk-bd} with $\delta := k^{-2\eta} (\log k)^{-2}$, it follows that 
\begin{align*} \mb{P} \big( \min_{1 \le i \le N} \boldbeta(i) \ge -m, \; \boldbeta(\ell) \le \ell^{1/2 - \eta} & \, \mbox{for some $\ell \in (k/2, k]$}, \; \mbox{$\boldbeta$ reasonable at scale $k$}\big) \\ & \ll  (1 + m)^{5} N^{-1/2} k^{-\eta} (\log k)^2\end{align*} for all (powers of two) $k \le N/4$. Putting this together with \cref{unreasonable} gives
\[ \mb{P} \big( \min_{1 \le i \le N} \boldbeta(i) \ge -m,\; \boldbeta(\ell) \le \ell^{1/2 - \eta} \, \mbox{for some $\ell \in (k/2, k]$} \big) \ll  (1 + m)^{5} N^{-1/2} k^{-\eta} (\log k)^2.\] Summing $k$ over powers of two with $k \ge (T - m)^2$ yields
\[ \mb{P} \big( \min_{1 \le i \le N} \boldbeta(i) \ge -m, \; \boldbeta(\ell) \le \ell^{1/2 - \eta}\, \mbox{for some $\ell$, $N/8 \ge \ell \ge (T - m)^2$} \big) \ll  (1 + m)^{5} N^{-1/2} T^{-\eta/2}.\] 
This is the desired bound in the case $\ell \le N/8$.\vspace*{8pt}

\emph{The case $N/8 < \ell \le N$.} The proof in this case is somewhat similar. In this case, we say that $\boldbeta$ is \emph{reasonable} if $|\boldbeta(j_2) - \boldbeta(j_1)| \le N^{1/2 - \eta}$ whenever $j_1, j_2 \in [N/8, N]$ and $|j_2 - j_1| \le N^{1 - 5\eta/2}$. By another large deviation estimate (see \cref{cor-F2}), $\boldbeta$ is reasonable with probability $\ge 1 - N^{-10}$.

If $\boldbeta$ is reasonable, and if $\boldbeta(\ell) \le \ell^{1/2 - \eta}$ for some $\ell \in [N/8, N]$, then by appropriate rounding there is some $j = \lambda \lfloor N^{1 - 5\eta/2}\rfloor$, $\lambda \in \Z_{\ge 0}$, $N - j \in [N/8, N]$, with $\boldbeta(N - j) \le 2 N^{1/2 - \eta}$.

Now for a fixed such $j$ we have
\begin{align*} \mb{P} \big( \boldbeta(N - j) & \le 2N^{1/2 - \eta},\; \min_{1 \le i \le N} \boldbeta(i) \ge -m\big) = \sum_{-m \le u \le 2N^{1/2 - \eta}} \mb{P} \big( \boldbeta(N - j) = u, \;\min_{1 \le i \le N} \boldbeta(i) \ge -m\big)\\
& \ll  \sum_{-m \le u \le 2N^{1/2 - \eta}} \mb{P} \big( \boldbeta(N - j) = u, \,\min_{1 \le i \le N-j} \boldbeta(i) \ge -m\big) \cdot (m + u + 1) (1 + j)^{-1/2} \end{align*}
by applying \cref{first-rand-walk-pos} to the walk of length $j$ starting after $N - j$. (Here we used $(1 + j)^{-1/2}$ so that the result is true for $j = 0$). By \cref{est-72-geq}, and since $N - j \gg N$, this is
\[ \ll (1 + m)^{2} \sum_{-m \le u \le 2N^{1/2 - \eta}} (1 + u + m)^2 N^{-3/2} (1 + j)^{-1/2} \ll (1 + m)^5 N^{ - 3\eta} (1 +  j)^{-1/2}.\]
Summing over $j = \lambda \lfloor N^{1 - 5\eta/2} \rfloor$ with $\lambda \in \Z_{\ge 0}$ such that $N - j \in [N/8, N]$ (thus $\lambda \ll N^{5\eta/2}$) gives $\ll (1 + m)^{5} N^{-1/2 - \eta/2}$. Since $T \le 2 N^{1/2}$, this is $\ll (1 + m)^{5} N^{-1/2} T^{-\eta}$, which implies the desired bound in this case.\end{proof}

\begin{corollary}\label{lem13.10} Let $m \ge 0$ and $T \ge 1$ be integers.
We have \[ \omega_m \{  T(\beta) > T \},\;  \omega'_m \{  T(\beta') > T \} \ll (1 + m)^{O(1)} T^{-\eta/2}.\]
\end{corollary}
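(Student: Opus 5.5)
The plan is to mimic the proof of \cref{omega-bdedness-est}: deduce the $\omega$-measure bound from the finite-$N$ probability estimate \cref{pos-paths-cor}, using \cref{limit-omega-m}. The one complication is that \cref{limit-omega-m} applies only to events depending on finitely many coordinates, while $\{T(\beta) > T\}$ involves the whole path. So I will first handle the truncated events $\{T_{\le L}(\beta) > T\}$ for fixed $L$, and then let $L \to \infty$.

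\emph{Truncated bound.} Fix $L \ge 1$. The event $E_L := \{T_{\le L}(\beta) > T\}$ depends only on $(\beta(1),\dots,\beta(L))$. Moreover $\omega_m$ is supported on $\Z^{\N}_{\ge -m}$, so we may intersect $E_L$ with $[-m,\infty)^L$, as \cref{lemma7.2} requires. By \cref{limit-omega-m},
\[ \omega_m(E_L) = \lim_{N\to\infty} N^{1/2}\,\mb{P}\big(T_{\le L}(\boldbeta) > T,\ \min_{1 \le i \le N}\boldbeta(i) \ge -m\big). \]
For $N \ge L$ the event $T_{\le L}(\boldbeta) > T$ is contained in $T_{\le N}(\boldbeta) > T$. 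Hence \cref{pos-paths-cor} bounds the probability on the right by $\ll (1+m)^{O(1)} T^{-\eta/2} N^{-1/2}$, uniformly in $N$. Multiplying by $N^{1/2}$ and letting $N \to \infty$ gives
\[ \omega_m(E_L) \ll (1+m)^{O(1)} T^{-\eta/2}, \]
with implied constant independent of $L$.

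\emph{Passage to $L = \infty$.} The events $E_L$ increase in $L$. Their union is exactly $\{T(\beta) > T\}$: if $T(\beta) > T$ then some $\ell$ violates the $T$-positivity inequalities, so $\beta \in E_\ell$. This holds whether $T(\beta)$ is infinite or simply a finite value larger than $T$, since $T(\beta) = \sup_L T_{\le L}(\beta)$ over integers. Since $\omega_m$ is a genuine (finite, total mass $h(m)$) measure by \cref{72lemma}, continuity from below gives
\[ \omega_m\{T(\beta) > T\} = \lim_{L\to\infty}\omega_m(E_L) \ll (1+m)^{O(1)} T^{-\eta/2}. \]

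\emph{The lower walk.} The same argument applies verbatim to $\omega'_m$, using the $\boldbeta'$ versions of \cref{lemma7.2} and \cref{pos-paths-cor}. The only step needing any care is the monotone-limit passage (checking measurability and that the union is the full event), and that is routine. The substantive input is \cref{pos-paths-cor}, which is assumed.
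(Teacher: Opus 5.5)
Your proposal is correct and is essentially the paper's argument: bound $\omega_m\{T_{\le L}(\beta) > T\}$ using \cref{pos-paths-cor} and \cref{limit-omega-m} (letting $N \to \infty$), then let $L \to \infty$. Your check that $T(\beta) = \sup_L T_{\le L}(\beta)$, together with continuity from below for the finite measure $\omega_m$, makes explicit the final limiting step that the paper leaves implicit.
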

\begin{proof} Once again the proofs of the two statements are the same so we handle only the first one.
It follows immediately from \cref{pos-paths-cor} that if $L \le N$ then
\[ N^{1/2} \mb{P} \big( T_{\le L}(\boldbeta) > T, \; \min_{1 \le i \le N} \boldbeta(i) \ge -m\big) \ll (1 + m)^{O(1)} T^{-\eta/2} .\]
The result now follows from \cref{limit-omega-m} by letting $N \rightarrow \infty$, and then letting $L \rightarrow \infty$.
\end{proof}

\subsection{Constructing a jump step}

We now come to the most technical part of the section. We start with a definition. Here, $\eta = \kappa = \frac{1}{100}$ are the global constants involved in \cref{bounded-walk-def,good-walk-def}.
\begin{definition}\label{jump-step-7}
Let $V \in \N$. Let $\beta$ be a walk of length $N$ with increments $\xi_i = \beta(i) - \beta(i-1)$. Then we say that an index $t$ is a $V$-jump step if $1 + \xi_t = V$ and if, for all $\ell$ with $0 \le \ell \le N - t$, we have
\begin{equation}\label{jump-step-1}
\beta(t + \ell) - \beta(t) \ge \ell^{1/2 - 2\eta},
\end{equation}
and if $1 \le \ell \le N - t$ we have
\begin{equation}\label{jump-step-2}
\xi_{t + \ell} \le \ell^{2\kappa}.    
\end{equation}
\end{definition}

We will show that a random $\boldbeta$ almost surely has a jump step in a suitable range. This will be done by recursive application of the following result.

\begin{lemma}\label{first-step}
Suppose that $N, V$ are sufficiently large integers and let $\boldbeta$ be a random walk of length $N$ with $\Pois(1) - 1$ increments. Then we have
\[ \mb{P}\big(\mbox{index $1$ is a $V$-jump step for $\boldbeta$} \mid \min_{1 \le i \le N} \boldbeta(i) \ge 0\big) \ge V^{-V}.\]
\end{lemma}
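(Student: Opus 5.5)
Index $1$ is a $V$-jump step exactly when $\boldxi_1 = V-1$ (so $\boldbeta(1) = V-1$) and the shifted walk $\boldgamma(\ell) := \boldbeta(1+\ell) - \boldbeta(1)$, of length $N-1$, satisfies $\boldgamma(\ell) \ge \ell^{1/2-2\eta}$ and $\boldgamma(\ell) - \boldgamma(\ell-1) \le \ell^{2\kappa}$ for all relevant $\ell$. On the event $\boldxi_1 = V-1$ we have $\boldbeta(1+\ell) = V-1+\boldgamma(\ell) \ge 0$ automatically, since $\boldgamma \ge 0$. Using independence of $\boldxi_1$ and $\boldgamma$ and dividing by $\mb{P}(\min_{1\le i\le N}\boldbeta(i)\ge 0) \asymp N^{-1/2}$ (from \cref{first-rand-walk-pos}), the conditional probability is therefore
\[ \gg N^{1/2}\,\mb{P}(\boldxi_1 = V-1)\,\mb{P}(\boldgamma \in \mathcal{G}), \]
where $\mathcal{G}$ is the set of walks of length $N-1$ satisfying \cref{jump-step-1,jump-step-2}. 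Since $\mb{P}(\boldxi_1 = V-1) = e^{-1}/V! \ge V^{-V}\cdot V^{1/2}e^{V-2}$ by Stirling, it suffices to show $\mb{P}(\boldgamma\in\mathcal{G}) \gg N^{-1/2}$ with an absolute constant. The spare factor $e^{V}$ then absorbs that constant once $V$ is large.

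The plan for $\mb{P}(\boldgamma \in \mathcal{G}) \gg N^{-1/2}$ is to start from almost positive walks and remove the bad ones. By \cref{first-rand-walk-pos}, $\mb{P}(\min_{1\le i\le N-1}\boldgamma(i)\ge 0)\ge cN^{-1/2}$. The difficulty is that we need the \emph{strict} parabolic lower bound $\boldgamma(\ell)\ge \ell^{1/2-2\eta}$ from the start, with no slack $T$, and the increment bound $\ell^{2\kappa}$ with no constant $R$. \cref{pos-paths-cor} and \cref{walks-bd-min-est} only say that these properties hold with some large constants $T$, $R$, except on a set of probability $\ll (T^{-\eta/2} + Re^{-R})N^{-1/2}$.

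To remove the constants I will prescribe the first $s$ steps. Fix a large constant $s$ and force $\boldgamma$ to climb deterministically, say $\boldxi_{i}=1$ (so $1+\boldxi_i = 2$) for $1\le i\le s$. This has probability $(e^{-1}/2)^s$, a constant, and it ensures \cref{jump-step-1,jump-step-2} for $\ell\le s$ because $\ell \ge \ell^{1/2-2\eta}$ and $1\le \ell^{2\kappa}$. The walk then sits at height $s$ at time $s$. Write $\boldgamma'$ for the continuation, started at time $s$ and of length $N-1-s$. It must satisfy $s+\boldgamma'(j)\ge (s+j)^{1/2-2\eta}$ and $\boldxi'_{s+j}\le (s+j)^{2\kappa}$.

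Now apply \cref{pos-paths-cor} and \cref{walks-bd-min-est} to $\boldgamma'$ with $m = s/2$ and constants $T,R$ chosen as small powers of $s$. With this choice, $T$-positivity together with $\min\boldgamma'\ge -s/2$ gives
\[ s+\boldgamma'(j)\ge \max\big(s/2,\; s-T+j^{1/2-\eta}\big)\ge (s+j)^{1/2-2\eta}, \]
using the gap between exponents $1/2-\eta$ and $1/2-2\eta$ when $j$ is large relative to $T$. Likewise $R$-boundedness gives $\boldxi'_{s+j}\le Rj^{\kappa}\le (s+j)^{2\kappa}$, since $R\le s^{\kappa}$ and $(s+j)^{2\kappa} \ge s^{\kappa}j^{\kappa}$.

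The main obstacle is balancing the exceptional probabilities against the main term. The main term is $\mb{P}(\min\boldgamma'\ge -s/2)\asymp s\,N^{-1/2}$ by \cref{first-rand-walk-pos}(3). The bad events cost $(1+s)^{O(1)}T^{-\eta/2}N^{-1/2}$ and $(s+R)e^{-R}N^{-1/2}$. The polynomial loss $(1+m)^{O(1)}$ in \cref{pos-paths-cor} is what threatens the argument. I would first try taking $T = s^{A}$ with $A$ large, checking that the positivity inequality above still holds because the condition $j\ge T^{C}$ only involves the region where $s-T+j^{1/2-\eta}$ is the relevant bound. In the range $j\le T^{C}$, I would instead use $\boldgamma'\ge -s/2$ together with an additional prescribed climb, i.e.\ increasing the length of the forced initial segment to reach height $T^{C'}$. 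This gives a net positive constant times $N^{-1/2}$, depending only on $\eta,\kappa$, which the factor $e^{V}$ then swallows.
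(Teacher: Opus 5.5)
Your route is the paper's: force $\boldxi_1=V-1$, force a deterministic climb, and then ask the rest of the walk to be almost positive, $R$-bounded and $T$-positive, using \cref{pos-paths-cor,walks-bd-min-est} to see that this costs only a constant factor on the scale $N^{-1/2}$. The reduction to $\mathbb{P}(\boldgamma\in\mathcal{G})\gg N^{-1/2}$ is fine. (One harmless slip: Stirling gives $e^{-1}/V!\ge V^{-V}V^{-1/2}e^{V-2}$, not $V^{+1/2}$.)

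The gap is in the step you yourself flag as the main obstacle; your last paragraph does not actually settle it. You tie the barrier of the continuation to the climb height ($m=s/2$), so the loss $(1+m)^{O(1)}$ in \cref{pos-paths-cor} grows with $s$. If $T$ is a small power of $s$, this loss swamps $T^{-\eta/2}$. If instead $T=s^A$ with $A$ large, then neither the barrier nor $T$-positivity gives the required lower bound in the range $s^{2+O(\eta)}\lesssim j\lesssim T^{2/(1-2\eta)}$, as you noticed. Raising the climb to height $T^{C'}$ is circular if the barrier is again half the climb height: then $m\asymp T^{C'}$, and the loss $T^{O(C')}$ beats $T^{-\eta/2}$ once more. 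It only works if $m$ and $T$ are frozen \emph{before} the climb length is chosen. You never say this, and once you do, the splitting into ranges of $j$ becomes unnecessary.

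The clean fix, which is exactly what the paper does, is to decouple the barrier from the climb and take $m=0$. Since $\mathbb{P}(\min_{j\le N-1-s}\boldgamma'(j)\ge 0)\gg N^{-1/2}$, \cref{walks-bd-min-est,pos-paths-cor} with $m=0$ give \emph{absolute} constants $R,T$ such that the continuation is nonnegative, $R$-bounded and $T$-positive with probability $\gg N^{-1/2}$.

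Only then choose the climb length $s\ge\max(10,2T,R^{1/\kappa})$. For every $j$ this gives
$s+\boldgamma'(j)\ge s-T+j^{1/2-\eta}\ge s^{1/2-\eta}+j^{1/2-\eta}\ge(s+j)^{1/2-\eta}$
and $Rj^{\kappa}\le s^{\kappa}j^{\kappa}\le(s+j)^{2\kappa}$. Hence \cref{jump-step-1,jump-step-2} hold throughout. The cost of the climb is a constant depending only on $\eta,\kappa$, which the spare factor $e^{V}$ absorbs.
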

\begin{proof} Choose absolute constants $R, T$ such that 
\begin{equation}\label{RT-bd} \mb{P} \big( \min_{1 \le i \le N} \boldbeta(i) \ge 0, \; \boldbeta \; \mbox{is $R$-bounded and $T$-positive up to $N$}\big) \gg  N^{-1/2},\end{equation} uniformly for $N$ sufficiently large. That such $R, T$ exist follows from \cref{walks-bd-min-est} and \cref{pos-paths-cor} with $m = 0$, together with the existence of $h(0)$, which implies that $\mb{P} \big(\min_{1 \le i \le N} \boldbeta(i) \ge 0\big) \gg N^{-1/2}$. Set $L := \lceil \max (10, 2T, R^{1/\kappa})\rceil$ (thus $L$ is still an absolute constant).

The walk with $\boldbeta(i) = V - 2 + i$ for $i = 1,\dots, N$ satisfies \cref{jump-step-1,jump-step-2} for $t = 1$. This walk occurs with probability $ \mb{P}( \Pois(1) = V) \cdot \mb{P}(\Pois(1) = 2)^{N-1}$ which, if $V$ is sufficiently large, is $\ge V^{-V}$ for any $N \le 2L$. Therefore we may assume $N > 2L$ in what follows.

Now by the existence of $h(0)$ we have $\mb{P}\big( \min_{1 \le i \le N} \boldbeta(i) \ge 0\big) \le C N^{-1/2}$ for some absolute $C$, so it suffices to show that
\begin{equation}\label{j-lem-1} \mb{P}\big(\mbox{index $1$ is a $V$-jump step for $\boldbeta$}, \; \min_{1 \le i \le N} \boldbeta(i) \ge 0\big) \ge C V^{-V} N^{-1/2}.\end{equation} 
Consider the shifted walk $\boldgamma(i) := \boldbeta(L + i) - \boldbeta(L)$ and let its increments be $\tilde\boldxi_i := \boldxi_{L + i}$. We claim that the event on the left in \cref{j-lem-1} contains the event $\mathcal{E}$ defined by
\[ \{ \boldxi_1 = V-1, \boldxi_2 = \cdots \boldxi_L = 1\}  \cap \{ \min_{1 \le i \le N-L} \boldgamma(i) \ge 0,\; \boldgamma \; \mbox{is $R$-bounded and $T$-positive up to $N-L$}\}.\] 
This is enough to complete the proof, since (using \cref{RT-bd} applied to $\boldgamma$) we have
\[ \mb{P}(\mathcal{E}) \gg \mb{P}( \Pois(1) = V) \cdot \mb{P}(\Pois(1) = 2)^{L - 1} (N - L)^{-1/2},\] so $\mb{P}(\mathcal{E}) \ge CV^{-V} N^{-1/2}$ provided $V$ is sufficiently large.

To prove the claim, suppose that $\boldbeta \in \mathcal{E}$. First note that we clearly have $1 + \boldxi_1 = V$. Now for $\ell$ with $1 \le \ell \le L - 1$, we have $\boldbeta(1 + \ell) - \boldbeta(1) = \ell \ge \ell^{1/2 - 2\eta}$. Thus \cref{jump-step-1} holds for $\ell \le L - 1$. Moreover for $i = 0,\dots, N - L$ we have, using the $T$-positive nature of $\boldgamma$ and the fact that $L \ge \max(10, 2T)$, that
\begin{align*} \boldbeta(L + i) - \boldbeta(1) & = \boldgamma(i) + (\boldbeta(L) - \boldbeta(1)) \ge  - T + i^{1/2 - \eta}  + (L-1)  \\ & \ge L^{1/2 - \eta} + i^{1/2 - \eta} \ge (L+ i)^{1/2 - \eta}.\end{align*} Thus \cref{jump-step-1} holds for $L \le \ell \le N - 1$ as well. 

The inequality \cref{jump-step-2} certainly holds for $1 \le \ell \le L - 1$. For $i = 0,\dots, N - L$ we have 
\[ \boldxi_{1 + L + i} = \tilde\boldxi_{1 + i} \le R (1 + i)^{\kappa} < (L + i)^{2\kappa},\]  and so \cref{jump-step-2} also holds for $L \le \ell \le N - 1$. This completes the proof of the claim, and hence of \cref{first-step}.
\end{proof}

Now we turn to the main result of this subsection.

\begin{lemma}\label{lemma7.19}
There exists an absolute constant $C$ such that the following holds. Let $N, V$ be sufficiently large integer parameters. Let $m \ge 0$ be an integer, and suppose that $\delta \in (0,\frac{1}{10})$. Let $t_{\min}, t_{\max}$ be two integer thresholds, and suppose that
\begin{equation}\label{tmin-threshold} N > t_{\max} > t_{\min} \ge \Big( \frac{8(m+2)^{C} V^V}{\delta} \Big)^{4/\eta}\end{equation} and
\begin{equation}\label{massive-scales} \log \log t_{\max} - \log \log t_{\min} \ge 20 V^{V} \log(1/\delta).\end{equation} Then 
\[ \mb{P}\big( \boldbeta \; \mbox{does not have a $V$-jump step in $[t_{\min}, t_{\max}]$} \mid \min_{1\le i \le N} \boldbeta(i) \ge -m\big) \ll \delta.\]
\end{lemma}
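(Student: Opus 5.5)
The plan is to iterate \cref{first-step} over a sequence of roughly doubly-exponentially separated scales inside $[t_{\min}, t_{\max}]$. Each attempt succeeds with conditional probability at least of order $V^{-V}$, so after about $V^V\log(1/\delta)$ attempts the failure probability is at most $(1-cV^{-V})^{K} \le \delta$. Hypothesis \cref{massive-scales} is exactly what allows $K \approx 20V^V\log(1/\delta)$ disjoint scales $t_1<t_2<\dots<t_K$ with $\log\log t_{j+1}-\log\log t_j \ge 1$, for instance $t_{j+1} = t_j^{e}$. Hypothesis \cref{tmin-threshold} makes the error terms at every scale (which are polynomial in $m$ with small negative powers of $t_j$) smaller than $\delta/K$.

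\textbf{Step 1: one attempt at scale $t_j$.} Condition on $\mathcal{F}_{t_j-1}$, the first $t_j - 1$ steps. Also require that $\boldbeta$ is ``good'' at time $t_j - 1$ in the sense that $\boldbeta(t_j - 1) \ge t_j^{1/2 - \eta}/2$ and not too large. By \cref{pos-paths-cor} (and \cref{lem13.10}), the bad event has conditional probability $\ll (1+m)^{O(1)}t_j^{-\eta/2}\cdot$ (conditioning normalisation), which is at most $\delta/K$ by \cref{tmin-threshold}. Given a good prefix with $\boldbeta(t_j-1) = y \ge 0$, the future walk $\boldgamma(i) = \boldbeta(t_j - 1 + i) - y$ must stay above $-m-y$.

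I then lower-bound the conditional probability that $\boldgamma$ stays above $0$ and that index $1$ is a $V$-jump step for $\boldgamma$. By \cref{first-step}, this is at least $V^{-V}\,\mb{P}(\min \boldgamma \ge 0)$. Comparing with $\mb{P}(\min\boldgamma\ge -m-y)$ via \cref{first-rand-walk-pos} (2),(3) costs a factor of order $(1+m+y)^{-1}$, which is too lossy if $y \approx t_j^{1/2}$.

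The fix is to allow slack. On the event that $\boldgamma$ has a $V$-jump step at index $1$ with a shifted floor, the jump-step condition \cref{jump-step-1} concerns only increments after $t$, so it is translation invariant. What is really needed is the conditional probability, given $\min \boldgamma \ge -m-y$, that index $1$ is a jump step. I would rerun the proof of \cref{first-step} for a walk started at height $y+m$: the event $\mathcal{E}$ there, with the shifted walk being $R$-bounded, $T$-positive and staying above $0$, has probability $\gg V^{-V}(N-L)^{-1/2}$. The constraint $\min\ge -m-y$ gives at most $\ll (1+m+y)(N)^{-1/2}$, which is the same loss again.

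So the cleaner approach is to run the attempt from a time when the walk is \emph{low}. Rather than a fixed time, I would use the stopping time $\sigma_j$, the first time after $t_j$ at which $\boldbeta$ returns to within $O(1)$ of its running minimum over $[t_j,\infty)$. An almost positive walk does come near its future infimum, and $\min_{i\ge \sigma}\boldbeta(i) - \boldbeta(\sigma) \ge 0$ at the time of the future minimum. The honest choice is to decompose according to the position $\sigma$ of the minimum of $\boldbeta$ on $[t_j, N]$ (last-minimum decomposition, as in the proof of \cref{min-pos-lem}). After $\sigma$ the walk is exactly a walk conditioned to stay $\ge 0$, to which \cref{first-step} applies at index $\sigma+1$ with the factor $V^{-V}$ and no loss. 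The jump-step conditions at $t = \sigma+1$ only involve the future. Using \cref{min-pos-lem} and \cref{est-72-geq}, $\sigma \le 2t_j$ with probability $1 - O(\text{small})$, so the jump step lies in $[t_j, 2t_j] \subset [t_j, t_{j+1})$.

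\textbf{Step 2: iterate.} Failure at scale $j$ means that after the minimum $\sigma_j$ the future walk does not have a jump at $\sigma_j+1$. This is an event about the post-$\sigma_j$ walk with probability at most $1 - V^{-V}$, conditionally on everything before $\sigma_j$. The next scale's attempt uses the minimum on $[t_{j+1},N]$, whose post-walk is nested inside the earlier one. The conditional independence needed follows because $t_{j+1} \gg t_j$: with high probability the walk has risen by $t_{j+1}^{1/2-\eta}$ (by \cref{pos-paths-cor} applied to the conditioned post-$\sigma_j$ walk), so the events at different scales depend on essentially disjoint stretches. This is the main obstacle, and it is where I expect most of the work. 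I would handle it by a Markov-property argument, conditioning on $\mathcal{F}_{\sigma_{j+1}}$ successively from the largest scale downward (time-reversal ordering). Multiplying the factors $(1-V^{-V})$ and adding the error terms $K\cdot\delta/K$ from Step 1 gives $\ll \delta$.
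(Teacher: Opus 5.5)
There is a genuine gap, and it sits exactly where you locate the main obstacle. The event you iterate, that index $\sigma_j+1$ is a $V$-jump step, is not adapted to any filtration. First, $\sigma_j=\argmin_{t_j\le i\le N}\boldbeta(i)$ depends on the whole path. Second, \cref{jump-step-1,jump-step-2} at $\sigma_j+1$ constrain every later increment, including the entire stretch after $\sigma_{j+1}$. So the failure events at different scales are nested rather than supported on disjoint stretches, and conditioning on $\mathcal{F}_{\sigma_j}$ in either order does not give the product $(1-V^{-V})^K$. The events at the other scales are not measurable with respect to the past of $\sigma_j$. Once you condition on them, the post-$\sigma_j$ walk is no longer simply a walk conditioned to stay $\ge 0$, so \cref{first-step} no longer applies. ``Essentially disjoint stretches'' is the right intuition, but you need a device that makes it literally true.

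The missing device is a \emph{localised} jump event. Take $T_1=t_{\min}$ and $T_{j+1}=T_j^4$. Let $\mathcal{J}_j$ be the event that \emph{some} $t\in[T_j,T_j^2]$ has $1+\boldxi_t=V$ and satisfies \cref{jump-step-1,jump-step-2} only for $1\le\ell<T_{j+1}-t$. This event is determined by $\boldbeta(1),\dots,\boldbeta(T_{j+1})$, and it asks for a good index somewhere in a window rather than at a random index. Write $M$ for the event $\min_{1\le i\le N}\boldbeta(i)\ge -m$, $B$ for $\neg\mathcal{J}_1\cap\dots\cap\neg\mathcal{J}_{j-1}$, $E_s$ for $\{\argmin_{T_j\le i\le N}\boldbeta(i)=s\}$, and $\tau_j:=\mb{P}(\neg\mathcal{J}_1\cap\dots\cap\neg\mathcal{J}_j\mid M)$. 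Split $\tau_{j-1}-\tau_j$ according to $s$:
\begin{itemize}
\item For $T_j\le s<T_j^2$, the event $B\cap E_s\cap M$ has the form $\mathcal{E}_s(\boldbeta(1),\dots,\boldbeta(s))\cap\{\min\boldgamma_s\ge 0\}$, where $\boldgamma_s$ is the walk after $s$. Hence \cref{first-step} gives conditional probability $\ge V^{-V}$ that $s+1$ is a local jump step.
\item The mass with $s\ge T_j^2$ is negligible by $t_{\min}^{1/2}$-positivity.
\item The bound $\mb{P}(E_s\mid B\cap M)\le \mb{P}(E_s\mid M)/\tau_{j-1}$ controls the effect of conditioning on past failures.
\end{itemize}
Together these give $\tau_j\le(1-V^{-V})\tau_{j-1}+\delta V^{-V}/4$.

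You then need a second step that your plan also omits. A local jump step at $t\in[T_j,T_j^2]$ that is not a global one forces a violation at some lag $\ell\ge T_{j+1}-t\gg t$. That makes $\boldbeta$ fail to be $t_{\min}$-positive or fail to be $t_{\min}^{\kappa/2}$-bounded, and both are rare by \cref{pos-paths-cor,walks-bd-min-est}.

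Separately, your claim that $\sigma_j\le 2t_j$ outside a small event is false. Conditionally on $M$, $\boldbeta(t_j)$ is typically at height $\asymp t_j^{1/2}$. The time the walk then takes to reach its minimum over $[t_j,N]$ is itself of order $t_j$ (compare Williams' decomposition of the Bessel-$3$ process). So $\mb{P}(\sigma_j>2t_j\mid M)$ is bounded below by a positive constant. What positivity does give is $\sigma_j<t_j^2$ with high probability, since a $t_{\min}^{1/2}$-positive walk has $\boldbeta(s)>\boldbeta(t_j)$ for all $s\ge t_j^2$. This is why the windows and the scale separation must be polynomial, as in $[T_j,T_j^2]$ and $T_{j+1}=T_j^4$, which \cref{massive-scales} still affords.
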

\begin{proof} For brevity in what follows set $\eps := V^{-V}$. 
Define thresholds $T_1 := t_{\min}$ and $T_{j+1} := T_j^{4}$, $j = 1,2,\dots$. We assume in what follows that any such threshold under discussion is $\le t_{\max}$; this will ultimately turn out to be the case by use of \cref{massive-scales}, as we shall show later. Set $I_j := [T_j, T_{j+1})$. We say that $\boldbeta$ has a \emph{local jump step in $I_j$} if there is some $t \in [T_j, T_{j}^2]$ such that $1 + \boldxi_t = V$ and such that \cref{jump-step-1,jump-step-2} are satisfied for all $\ell$ with $1 \le \ell < T_{j+1} - t$. (Note the case $\ell = 0$ of \cref{jump-step-1} is trivial.)

Denote by $\mathcal{J}_j$ the event that $\boldbeta$ has a local jump step in the interval $I_j$. Set
\[ \tau_j := \mb{P} \big( \neg \mathcal{J}_1 \cap \dots \cap \neg \mathcal{J}_{j} \mid \min_{1 \le i \le N} \boldbeta(i) \ge -m\big). \]
Observe that $\tau_1 \ge \tau_2 \ge \dots$ and that for $j \ge 2$ we have
\begin{align}\nonumber
\tau_{j-1} - \tau_j & = \mb{P} \big( \mathcal{J}_j \cap \neg \mathcal{J}_1 \cap \dots \cap  \neg\mathcal{J}_{j-1} \mid \min_{1 \le i \le N} \boldbeta(i) \ge -m\big) \\ & = \sum_{T_j \le s \le N} \mb{P} \big( \mathcal{J}_j \cap \neg \mathcal{J}_1 \cap \dots \cap \neg\mathcal{J}_{j-1} \cap \argmin_{T_j \le i \le N} \boldbeta(i) = s \mid \min_{1 \le i \le N} \boldbeta(i) \ge -m\big).\label{taujj}
\end{align}
(That is, $s$ is the first $s \in [T_j, N]$ for which $\boldbeta$ attains its minimum on that interval.) To estimate \cref{taujj} from below, we first consider the sum over $s \ge T_j^2$. For this, we simply ignore the $\mathcal{J}$ events, bounding the sum by
\begin{equation}\label{tail-s-tobound}  \mb{P} \big(\argmin_{T_j \le i \le N} \boldbeta(i) \in [T_j^2, N] \mid \min_{1 \le i \le N} \boldbeta(i) \ge -m\big) . \end{equation}
Suppose that $\boldbeta$ is $t_{\min}^{1/2}$-positive up to $N$ (see \cref{good-walk-def}). Then $\boldbeta(T_j) \le T_j^{1/2 + \eta} + t_{\min}^{1/2}$ and $\boldbeta(s) \ge (T_j^2)^{1/2 - \eta} - t_{\min}^{1/2}$ for all $s$ with $T_{j}^2 \le s \le N$. Since $T_j \ge t_{\min} > 2^8$ and $\eta \le \frac{1}{8}$, we have $T_j^{1/2 + \eta} + t_{\min}^{1/2} < (T_j^2)^{1/2 - \eta} - t_{\min}^{1/2}$, and so in this case we do not have $\argmin_{T_j \le i \le N} \boldbeta(i) \in [T_{j}^2, N]$. Thus we can bound \cref{tail-s-tobound} by
\[ \mb{P} \big(   \boldbeta \; \mbox{is not $t_{\min}^{1/2}$-positive up to $N$} \mid \min_{1 \le i \le N} \boldbeta(i) \ge -m\big).\]
Using \cref{first-rand-walk-pos} (3) and \cref{pos-paths-cor}, we see that the probability in \cref{tail-s-tobound} is $\le (m + 2)^{C} t_{\min}^{-\eta/4}$ for some absolute $C$. Thus we have shown that 
\begin{equation}\label{large-s-sum-jump} \sum_{T_j^2 \le s \le N} \mb{P} \big( \mathcal{J}_j \cap \neg \mathcal{J}_1 \cap \dots \cap \neg\mathcal{J}_{j-1} \cap \argmin_{T_j \le i \le N} \boldbeta(i) = s \mid \min_{1 \le i \le N} \boldbeta(i) \ge -m\big) \le (m + 2)^{C} t_{\min}^{-\eta/4}.\end{equation}

Consider now the portion of the sum in \cref{taujj} over the range $T_j \le s < T_{j}^2$.
We bound this below by
\begin{equation}\label{small-s-sum-jump} \sum_{T_j \le s < T_j^2} \mb{P} \big( \mathcal{J}^{(s)}_j \cap \neg \mathcal{J}_1 \cap \dots \cap \neg\mathcal{J}_{j-1} \cap \argmin_{T_j \le i \le N} \boldbeta(i) = s \mid \min_{1 \le i \le N} \boldbeta(i) \ge -m\big) ,\end{equation} where $\mathcal{J}^{(s)}_j$ is the event that $t = s+1$ is a local jump step; this event is obviously contained in $\mathcal{J}_j$.

We the apply the chain rule for conditional probability
\[ \mb{P}(A \cap B \cap E_s \mid M) = \mb{P}(B \mid M) \cdot \mb{P}(E_s \mid B \cap M) \cdot \mb{P}(A \mid B \cap E_s \cap M),\]
where $A = \mathcal{J}^{(s)}_j$, $B = \neg \mathcal{J}_1 \cap \dots \cap \neg\mathcal{J}_{j-1}$, $E_s = (\argmin_{T_j \le i \le N} \boldbeta(i) = s)$ and $M = (\min_{1 \le i \le N} \boldbeta(i) \ge -m)$. Observe that $\mb{P}(B \mid M) = \tau_{j-1}$. Denote $\mb{P}(E_s \mid B \cap M) := p(s)$. Then \cref{small-s-sum-jump} is
\begin{equation}\label{725-truncated} \tau_{j-1}\sum_{T_j \le s < T_{j}^2} p(s)\mb{P} \big( \mathcal{J}^{(s)}_j \mid \neg \mathcal{J}_1 \cap \dots \cap \neg\mathcal{J}_{j-1} ,\; \argmin_{T_j \le i \le N} \boldbeta(i) = s ,\;  \min_{1 \le i \le N} \boldbeta(i) \ge -m \big) .\end{equation}
The event we are conditioning on here has the form $\mathcal{E}_s(\boldbeta(1),\dots, \boldbeta(s)) \cap \big( \min_{1 \le i \le N - s} \boldgamma(i) \ge 0)$, where $\boldgamma_s(i) := \boldbeta(s + i) - \boldbeta(s)$ is the shifted walk starting at $s$ and 
\[ \mathcal{E}_s := \neg \mathcal{J}_1 \cap \cdots \cap \neg \mathcal{J}_{j-1} \cap (\min_{1 \le i \le s} \boldbeta(i) \ge -m ) \cap \big( \boldbeta(i) > \boldbeta(s) \; \mbox{for $T_j \le i < s$}\big);  \] note carefully that, by the definition of local jump step, $\mathcal{E}_s$ really does depend only on the values $\boldbeta(i)$ for $i \le s$. The event $\mathcal{J}^{(s)}_j$ depends only on $\boldgamma_s$ (the portion of $\boldbeta$ after $s$), and so \cref{725-truncated} is
\[ \tau_{j-1} \sum_{T_j \le s < T_{j}^2} p(s) \mb{P} \big( \mathcal{J}^{(s)}_j \mid \min_{1 \le i \le N - s} \boldgamma(i) \ge 0\big) \ge  \eps \tau_{j-1} \sum_{T_j \le s < T_{j}^2} p(s),\]  where the second step follows from \cref{first-step} applied to $\boldgamma$ (recalling here that $\eps = V^{-V}$).

Combining the upper bound \cref{large-s-sum-jump} with the lower bound we have just obtained for \cref{small-s-sum-jump} via \cref{725-truncated}, and substituting in to \cref{taujj}, we have shown that 
\begin{equation}\label{taujj-2} \tau_{j-1} - \tau_j  \ge \eps \tau_{j-1} \big( \sum_{T_j \le s < T_{j}^2} p(s)\big) - (m+2)^{C} t_{\min}^{-\eta/4}.\end{equation}
We must now give a lower bound for the sum over $p(s)$. Note first of all that 
\begin{equation}\label{sum-p-1} \sum_{T_j \le s \le N} p(s) = 1,\end{equation} since precisely one of the events $E_s$ occurs for any $\boldbeta$. Now for each $s$ we have
\begin{equation}\label{ps-individual} p(s) = \mb{P}(E_s \mid B \cap M) = \frac{\mb{P}(E_s \cap B \cap M)}{\mb{P}(B \cap M)} \le \frac{\mb{P}(E_s \cap M)}{\mb{P}(B \cap M)} = \frac{\mb{P}(E_s \mid M)}{\mb{P}(B \mid M)} = \frac{1}{\tau_{j-1}} \mb{P}(E_s \mid M).\end{equation} Furthermore, $\sum_{T_{j}^2 \le s \le N} \mb{P}(E_s \mid M)$ is precisely the probability in \cref{tail-s-tobound}, which we have already shown (in the analysis leading up to \cref{large-s-sum-jump}) to be at most $(m + 2)^{C} t_{\min}^{-\eta/4}$. Combining this with \cref{taujj-2,sum-p-1,ps-individual} gives 
\begin{equation*} \tau_{j-1} - \tau_j  \ge \eps\tau_{j-1}  -2(m+2)^{C+1} t_{\min}^{-\eta/4} \ge \eps\tau_{j-1} - \frac{\delta\eps}{4},\end{equation*} where this last inequality follows from \cref{tmin-threshold}. By induction, $\tau_j \le \frac{\delta}{4} + (1 - \frac{\delta}{4})(1 - \eps)^{j - 1}$, and so $\tau_j \le \delta/2$ for all $j \ge 10 \eps^{-1} \log(1/\delta)$, that is to say
\begin{equation}\label{loc-jump} \mb{P} \big(\boldbeta \; \mbox{has no local jump step in any $I_j$, $j \le 10 \eps^{-1} \log(1/\delta)$} \mid \min_{1 \le i \le N} \boldbeta(i) \ge -m\big) \le \delta/2.\end{equation}
Suppose now that $\boldbeta$ has a local jump step in $I_j$, but does not have an actual jump step in $I_j$. This means that either \cref{jump-step-1} or \cref{jump-step-2} fails for some $\ell$ with $T_{j+1} - t \le \ell \le N - t$ and for some $t \in [T_j, T_{j}^2]$. That is, either
\begin{equation}\label{failure-1} \boldbeta(t + \ell) \le \boldbeta(t) + \ell^{1/2 - 2\eta} \quad \mbox{or} \quad \boldxi_{t+\ell} > \ell^{2\kappa} \end{equation}
for some such $\ell$. We additionally observe that $\ell$ is much larger than $t$; certainly $\ell \ge t$ since $t \le T_j^2 \le \frac{1}{2} T_{j+1}$.

Suppose the first of the alternatives in \cref{failure-1} holds. We claim that either $\boldbeta(t + \ell) \le (t + \ell)^{1/2 - \eta} - t_{\min}$ or $\boldbeta(t) \ge t^{2/3} + t_{\min}$. To see this, first note that 
\begin{equation}\label{simple-jump-ineqs} \tfrac{1}{4}(t + \ell)^{1/2 - \eta} \ge \max \big(t^{2/3}, \ell^{1/2 - 2\eta} ,2t_{\min}\big).\end{equation} The first inequality here follows using $t + \ell \ge T_{j+1}$ and $t \le T_j^2$ and the fact that $T_{j+1} := T_j^4$. The second inequality follows from $\frac{1}{4}(t + \ell)^{1/2 - \eta} > \frac{1}{4}\ell^{1/2 - 2\eta}$ and the fact that $\ell$ is sufficiently large. The third inequality following using $t + \ell \ge T_{j+1}$ and $t_{\min} = T_1$. Adding the inequalities in \cref{simple-jump-ineqs} gives $(t + \ell)^{1/2 - \eta} - t_{\min} \ge t^{2/3}  + t_{\min}  + \ell^{1/2 - 2\eta}$, which implies the claim. The claim implies that $\boldbeta$ fails to be $t_{\min}$-positive  in the sense of \cref{good-walk-def}. By another application of \cref{first-rand-walk-pos} and \cref{pos-paths-cor}, the probability of this event conditional on $\min_{1 \le i \le N} \boldbeta(i) \ge -m$ is $\le (m + 2)^{C} t_{\min}^{-\eta/2} \le \delta/4$.

Now suppose the second option in \cref{failure-1} holds. Then $\boldbeta$ fails to be $t_{\min}^{\kappa/2}$-bounded in the sense of \cref{bounded-walk-def}; this is because $\ell^{2\kappa} > t_{\min}^{\kappa/2} (t + \ell)^{\kappa}$, since $\ell^{2/3} \ge (T_{j+1}/2)^{2/3} > T^{1/2}_1 = t^{1/2}_{\min}$ and $\ell^{4/3} > 2\ell \ge t + \ell$. By \cref{first-rand-walk-pos} (3) and \cref{walks-bd-min-est}, the probability of this conditional on $\min_{1 \le i \le N} \boldbeta(i) \ge -m$ is certainly $\le \delta/4$; the key point here is that the assumption \cref{tmin-threshold} implies that the term $e^{-t_{\min}^{\kappa/2}}$ arising from the application of \cref{walks-bd-min-est} is massively smaller than $\delta$, and also that $N$ is massively larger than $m$ and $1/\delta$.

The conclusion of the above analysis is that 
\[ \mb{P}\big(\boldbeta \; \mbox{has a local but not a global jump step in some $I_j$} \mid\min_{1 \le i \le N} \boldbeta(i) \ge -m\big) \le \delta/2.\]
Combining with \cref{loc-jump} proves that, conditioned on $\min_{1 \le i \le N} \boldbeta(i) \ge -m$, the probability that $\boldbeta$ has no (global) jump step in any $I_j$ with $j \le 10\eps^{-1} \log(1/\delta)$ is at most $\delta$. The proof is concluded by noting that, under condition \cref{massive-scales}, all such intervals $I_j$ are contained in $[t_{\min}, t_{\max}]$.
\end{proof}

\section{Bounds on \texorpdfstring{$\{0,1\}$}{}-matrices}\label{01-matrix-subsection}

This section assembles various ingredients that will be used in \cref{part3} of the paper, having to do with bounds on the number of $\{0,1\}$-matrices satisfying various linear-algebraic conditions.

Let $M$ be an $\ell \times N$ matrix. Write $r$ for its rank (thus $r \le \ell$), and for $d$ with $1 \le d \le r$ denote by $q_d$ the least index such that the first $q_d$ columns of $M$ span a subspace of $\Q^{\ell}$ of dimension $d$. Set $q_{r+1} := N+1$. We call $(r; q_1,\dots, q_r)$ the \emph{rank profile} of $M$ and denote it by $\rp(M)$.

\begin{lemma} \label{lem24}We have the following statements. Suppose that $\ell \in \N$.
\begin{enumerate}
    \item[\textup{(i)}] The number of $\ell$-by-$N$ matrices $M$ with $\{0,1\}$-entries and $\rp(M) = (r; q_1,\dots, q_{r})$ is bounded above by $2^{\ell^2} 2^{rN - \sum_{d = 1}^r q_d}$.
    \item[\textup{(ii)}] Suppose that $N \ge 10 \ell$ and that $1 \le r < \ell$. The number of $\ell$-by-$N$ matrices $M$ with $\{0,1\}$-entries and rank $r$, and at least $r+1$ different nonzero rows, is $\ll 2^{\ell^2} 2^{(r-\frac{1}{10}) N}$. \textup{(}By different rows we mean distinct as elements of $\Q^N$.\textup{)}
\end{enumerate}

\end{lemma}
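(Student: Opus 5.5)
For (i) the plan is to build $M$ column by column and count the choices at each stage. Fix the rank profile $(r;q_1,\dots,q_r)$. For $q_d \le j < q_{d+1}$, column $j$ lies in the span $V_d$ of the first $q_d$ columns, which has dimension $d$. A $d$-dimensional subspace of $\Q^{\ell}$ contains at most $2^d$ vectors with $\{0,1\}$ entries. To see this, choose $d$ coordinates on which the projection of $V_d$ is injective (a nonsingular $d\times d$ minor); a vector of $V_d$ is then determined by its values on those coordinates, and there are only $2^d$ such patterns. Hence, once the columns before index $q_d$ are fixed, each column at index $j$ with $q_d<j<q_{d+1}$ has at most $2^d$ choices. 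The pivot columns $q_1,\dots,q_r$ each get the crude bound $2^\ell$, and columns before $q_1$ are forced to be zero.

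Multiplying these counts gives at most
\[ 2^{r\ell}\prod_{d=1}^r 2^{d(q_{d+1}-q_d-1)}. \]
The exponent is $\sum_d d(q_{d+1}-q_d) - \sum_d d$. By summation by parts, $\sum_{d=1}^r d(q_{d+1}-q_d) = r q_{r+1} - \sum_{d=1}^r q_d = r(N+1)-\sum_d q_d$. So the total exponent is at most $r\ell + rN + r - \sum_d q_d - r(r+1)/2 \le \ell^2 + rN - \sum_d q_d$, using $r\le \ell$. This is the bound claimed in (i).

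For (ii) the plan is to work with rows instead. $M$ has rank $r$, so its row space $U$ is an $r$-dimensional subspace of $\Q^N$, and every row is a $\{0,1\}$-vector in $U$. The set $U\cap\{0,1\}^N$ has at most $2^r$ elements by the same projection argument. Moreover, $U$ is spanned by $r$ linearly independent rows, and we may choose these among the rows of $M$. Count as follows:
\begin{itemize}
\item choose which $r$ row indices form a basis, in at most $2^\ell$ ways;
\item choose those $r$ rows, in at most $2^{rN}$ ways;
\item choose each remaining row from $U\cap\{0,1\}^N$, in at most $2^{r(\ell-r)}\le 2^{\ell^2}$ ways.
\end{itemize}
This only gives about $2^{\ell^2}2^{rN}$. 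The hypothesis of at least $r+1$ distinct nonzero rows is what supplies the saving of $2^{-N/10}$.

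The main obstacle is to exploit that hypothesis. I would first pick $r+1$ distinct nonzero rows $v_0,\dots,v_r$, where $v_1,\dots,v_r$ form a basis of $U$. Then write $v_0=\sum_i c_i v_i$ with coefficients $c_i\in\Q$ not all zero; since $v_0$ differs from each basis vector and is nonzero, the vector $c$ is not a standard basis vector and not zero. Now count the column patterns. Each column of the $(r+1)\times N$ submatrix formed by these rows is a vector $(x_1,\dots,x_r)\in\{0,1\}^r$ together with the forced entry $x_0=\sum c_ix_i$, which must also lie in $\{0,1\}$. So each column lies in the set $S_c=\{x\in\{0,1\}^r: c\cdot x\in\{0,1\}\}$.

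The key claim is $|S_c|\le 2^r-1$ whenever $c$ is neither zero nor a standard basis vector. To prove it, suppose $c\cdot x\in\{0,1\}$ for all $x\in\{0,1\}^r$. Testing $x=e_i$ gives $c_i\in\{0,1\}$, and testing $x=e_i+e_j$ gives $c_i+c_j\le1$, so at most one $c_i$ equals $1$ and $c$ is zero or a standard basis vector, a contradiction. The claim then bounds the number of choices for these $r$ basis rows by $(2^r-1)^N\le 2^{rN}(1-2^{-r})^N$.

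This yields a saving of $2^{-N/10}$ only if $2^{-r}$ is not tiny, and here I expect difficulty, since $r$ can be as large as $\ell-1$. The fallback is to use $N\ge10\ell$. For large $r$ the factor $(1-2^{-r})^N$ alone is not enough, so I would instead choose the coefficients $c$ first, in $2^{O(\ell^2)}$ ways or via sign patterns, and then count solutions more cleverly. A concrete attempt is to pick a minimal support set $J$ for $c$, which uses only $|J|\le r$ rows. Then one extra coordinate of each column is constrained against that small subset of rows, giving per column at most $(2^{|J|}-1)2^{r-|J|}$ choices. This still depends on $|J|$, so obtaining the uniform exponent $\frac{1}{10}$ is the step I expect to need the most care, possibly by using that $c$ can be chosen with $|J|\le2$ after passing to a suitable pair of distinct rows.
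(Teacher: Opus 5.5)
Your part (i) is correct and is exactly the paper's argument: you reveal the columns one at a time, allow $2^{\ell}$ choices at each pivot $q_d$, and allow at most $2^d$ choices strictly between $q_d$ and $q_{d+1}$, because a $d$-dimensional subspace meets $\{0,1\}^{\ell}$ in at most $2^d$ points.

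Part (ii), however, has a genuine gap: you never obtain the saving $2^{-N/10}$, and two ingredients are missing.

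\textbf{First missing ingredient.} You need a per-column loss that is a constant fraction, independent of $r$. If $c\neq 0$ is not a standard basis vector, then $|S_c|\le \tfrac{3}{4}\,2^r$. To see this, consider two cases.
\begin{enumerate}
\item If $c$ has a single nonzero entry $c_u\notin\{0,1\}$, then $x_u=0$ is forced, so $|S_c|\le 2^{r-1}$.
\item If $c_u,c_v\neq 0$ with $u\neq v$, fix the remaining coordinates of $x$, and let $y$ be their contribution to $c\cdot x$. The set $\{0,c_u,c_v,c_u+c_v\}$ has at least three elements, so the four values $y+\{0,c_u,c_v,c_u+c_v\}$ cannot all lie in $\{0,1\}$. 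Hence at most three of the four choices of $(x_u,x_v)$ survive.
\end{enumerate}
Your bounds $2^r-1$ and $(2^{|J|}-1)2^{r-|J|}$ remove only a $2^{-|J|}$ fraction per column, which is worthless once $2^{|J|}\gg N$. The suggested reduction to $|J|\le 2$ is not available either. Suppose the distinct nonzero rows are exactly $v_0,v_1,\dots,v_r$, where $v_1,\dots,v_r$ have disjoint supports and $v_0=v_1+\dots+v_r$. Then whichever $r$ of these rows you take as a basis, the remaining one has all $r$ coefficients nonzero.

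\textbf{Second missing ingredient.} You must control the union over $c$. An unspecified $2^{O(\ell^2)}$ count does not suffice: $N$ may be as small as $10\ell$, so $(\tfrac{3}{4})^N$ cannot absorb an extra factor $2^{C\ell^2}$.

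The paper resolves this with a two-stage reveal of a uniformly random $r\times N$ matrix.
\begin{enumerate}
\item The first $N'\in[N/3,N/2]$ columns fail to contain a set $S$ of $r$ spanning columns with probability at most $2^{r-N'}$.
\item Given $S$, a $\{0,1\}$-vector $\sum_i\lambda_i v_i$ in the row span is determined by its restriction to $S$. So only $2^r$ coefficient vectors $\lambda$ are possible.
\item Each of the remaining $N-N'$ columns $\varepsilon$ satisfies $\lambda\cdot\varepsilon\in\{0,1\}$ with probability at most $\tfrac{3}{4}$.
\end{enumerate}
Hence the row span of a random $r\times N$ matrix meets $\{0,1\}^N$ only in $0$ and its own rows, except with probability $2^{r-N'}+2^r(\tfrac{3}{4})^{N-N'}=O(2^{-N/10})$, using $r\le N/10$. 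The hypothesis of $r+1$ distinct nonzero rows forces the chosen basis rows into this exceptional set.

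Alternatively, you can stay within your own framework. Write $c^{T}=v_0|_S\,B^{-1}$, where $B$ is an invertible $r\times r$ $\{0,1\}$-minor of the basis rows; then $c$ ranges over at most $2^{r^2+r}$ values. Counting the row indices, the values of $c$ and the remaining rows gives
\[
\binom{\ell}{r}(\ell-r)\,2^{r^2+r}\,2^{r(\ell-r-1)}\le \ell\,2^{\ell^2},
\]
because $r\le \ell-1$. Combined with the $\tfrac{3}{4}$ bound, the total is at most $\ell\,2^{\ell^2}\,2^{rN}(\tfrac{3}{4})^N$. This is $\ll 2^{\ell^2}2^{(r-\frac{1}{10})N}$ because $N\ge 10\ell$.
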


\begin{proof}

(i) We uncover the columns of $M$ one-by-one. Consider the $q$th column. If $q$ is one of the $q_d$ then we bound the number of choices trivially by $2^{\ell}$. Otherwise, suppose that $q_d < q < q_{d+1}$; then the $q$th column lies in the linear span of the previous columns, which has dimension $d$. It is a folklore fact (see, for instance, \cite[Lemma 5.1]{FGK23}) that any subspace of $\Q^{\ell}$ of dimension $d$ intersects the cube $\{0,1\}^{\ell}$ in at most $2^d$ points, and so there are at most $2^d$ choices for the $q$th column. Therefore the quantity we seek is bounded above by
\[ 2^{\sum_{d = 1}^r d(q_{d+1} - q_d - 1) + \ell r} = 2^{rN - \sum_{d = 1}^r q_d - \sum_{d = 1}^r d + \ell r} \le 2^{\ell^2} 2^{rN - \sum_{d = 1}^r q_d} \] as claimed.

(ii) There is some set of $r$ rows which span the row space of $M$; this can be selected in $\binom{\ell}{r}$ ways. Fix such a set of rows. Consider the induced $r$-by-$N$ submatrix $M'$.

Say that an $r$-by-$N$ matrix is \emph{typical} if the intersection of the $\Q$-linear span of its rows with $\{0,1\}^N$ consists of only the rows themselves and zero. We claim that a random $r$-by-$N$ matrix with $\{0,1\}$-entries is typical with probability $\ge 1 - 2^{-N/10}$.  

Given the claim, we may conclude the proof of (ii) as follows. If $M'$ is typical, then $M$ can only have $r$ different nonzero rows, since all rows of $M$ are in the $\Q$-linear span of $M'$. If $M'$ is not typical, then by the claim there are $\le 2^{(r - \frac{1}{10}) N}$ choices for $M'$. Each of the remaining $\ell - r$ rows of $M$ is drawn from the intersection of $\{0,1\}^N$ with the row space of $M'$, which has size at most $2^r$ by the folklore fact above. This gives $\le (2^r)^{\ell - r}$ ways to complete $M'$ to $M$. Since $\binom{\ell}{r} (2^r)^{\ell - r} \le 2^{\ell^2}$, the result follows.

It remains to prove the claim. Let $Q$ be a random $r$-by-$N$ matrix with $\{0,1\}$-entries. Let $N'$ be an integer with $N/3 \le N' \le N/2$. We first reveal the $r$-by-$N'$ submatrix $Q'$ formed from the first $N'$ columns. The probability that $Q'$ does not have full rank is bounded above by $2^{r - N'}$, even over $\mb{F}_2$: each of the $2^r - 1$ possible annihilators of the column space occurs with probability $2^{-N'}$.

Suppose now that $Q'$ has been revealed and that it does have full rank. Fix a set $S$ of $r$ columns which span. We now reveal the remaining elements of $Q \setminus Q'$. Consider a hypothetical element of $\{0,1\}^N$ in the row span of $Q$, not zero or one of rows. Suppose that it is $\sum_{i = 1}^r \lambda_i v_i$ where $v_1,\dots, v_r$ are the rows of $Q$ and at least two of the $\lambda_i$ are not zero. Restricting to the columns in $S$, we see that there are at most $2^r$ choices for $(\lambda_1,\dots, \lambda_r)$. Fix such a choice. We now proceed to reveal the $N - N'$ columns in $Q \setminus Q'$. Each of these is a vector $\eps = (\eps_1,\dots, \eps_r) \in \{0,1\}^r$ such that $\sum_{i = 1}^r \lambda_i \eps_i \in \{0,1\}$. Observe that we have $\mb{P}_{\eps \in \{0,1\}^r} \big(\sum_{i = 1}^r \lambda_i \eps_i \in \{0,1\}\big) \le \frac{3}{4}$ since, if $\lambda_u,\lambda_v \ne 0$, then the set $\{0, \lambda_u ,\lambda_v, \lambda_u + \lambda_v\}$ has size at least $3$, hence for every $x = \sum_{i \ne u,v} \lambda_i \eps_i$ at least one of $x, x+ \lambda_u,x + \lambda_v, x + \lambda_u + \lambda_v$ is not in $\{0,1\}$. It follows that the probability that $\sum_{i = 1}^r \lambda_i v_i  \in \{0,1\}^N$ is $\le (3/4)^{N - N'}$. Summing over the possible choices of $(\lambda_1,\dots, \lambda_r)$ gives that the probability of $Q$ failing to be typical (for fixed $Q'$ of full rank) is $\le 2^r (3/4)^{N - N'}$.

Combining the above estimates, we see that the probability that $Q$ is not typical is $\le 2^{r - N'} +2^r (3/4)^{N - N'}$. Using the bounds $r \le N/10$ and the choice of $N'$, the result follows.
\end{proof}

\section{The function \texorpdfstring{$g$}{}}\label{g-function-sec}

In this section we give the basic properties of the function $g \in C^{\infty}(\R/\Z)$ featuring in our main theorem, and defined as in \cref{g-funct-def}. We have already noted the transformation law \cref{g-lambda-trans}. The other key properties we need are contained in the following lemma.

\begin{lemma}
The Fourier coefficients $\wh{g}(m) := \int_{\R/\Z} g(x) e^{-2\pi i m x} dx$ are given by
\begin{equation}\label{g-fourier-formula}
\wh{g}(m) = {-}\frac{1}{\log 2} \Gamma \big( \frac{\log \log 2}{\log 2} + \frac{2\pi m}{\log 2}i\big).
\end{equation}
In particular, $\wh{g}(m) \ne 0$ for all $m \in \Z$. We have the bounds
\begin{equation}\label{g-fourier-bound} |\wh{g}(m)| < \frac{6}{\log 2} e^{-\pi^2 |m|/\log 2}.\end{equation}
We have
\begin{equation}\label{g-ratio-bd}
\frac{\max g}{\min g} \le 1 + 2 \times 10^{-7}.
\end{equation}
\end{lemma}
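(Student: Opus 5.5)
The plan is to compute $\wh{g}(m)$ by unfolding the sum over $D$ into an integral over $\R$, recognise a Mellin transform, and then read off the remaining two claims from Stirling-type bounds on $\Gamma$.

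\textbf{Fourier coefficients.} Write $g(x) = \sum_{D} F(D - x)$ with $F(y) := (\log 2)^{y}(1 - e^{-2^{y}})$. As $y \to +\infty$ we have $F(y) \approx (\log 2)^y$, which decays since $\log 2 < 1$. As $y \to -\infty$ we have $F(y) \approx (2\log 2)^{y}$, which also decays since $2\log 2 > 1$. So $F \in L^1(\R)$, the sum converges absolutely, and $g$ is smooth. Unfolding gives
\[ \wh{g}(m) = \int_{\R} F(-y)e^{-2\pi i m y}\,dy = \int_{\R} F(y) e^{2\pi i m y}\,dy. \]
Substitute $t = 2^y$, so $dy = dt/(t\log 2)$, $(\log 2)^y = t^{\alpha}$ and $e^{2\pi i m y} = t^{2\pi i m/\log 2}$, where $\alpha := \log\log 2/\log 2 \approx -0.53$. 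This yields
\[ \wh{g}(m) = \frac{1}{\log 2}\int_0^\infty t^{s-1}(1 - e^{-t})\,dt, \qquad s = \alpha + \frac{2\pi i m}{\log 2}, \qquad -1 < \Re s < 0. \]
In this strip $\int_0^\infty t^{s-1}(1-e^{-t})\,dt = -\Gamma(s)$. To see this, integrate by parts: $(1-e^{-t})t^s/s$ vanishes at both endpoints, leaving $-\frac{1}{s}\int_0^\infty t^{s}e^{-t}\,dt = -\Gamma(s+1)/s = -\Gamma(s)$. This proves \cref{g-fourier-formula}. Since $\Gamma$ has no zeros, $\wh{g}(m) \neq 0$ for every $m$.

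\textbf{Decay bound.} Use $|\Gamma(\sigma + i\tau)| \le |\Gamma(\sigma+1+i\tau)|/|\sigma + i\tau|$ together with the exact identity $|\Gamma(1+i\tau)|^2 = \pi\tau/\sinh(\pi\tau)$, or more simply the standard bound $|\Gamma(\sigma + i\tau)| \le \Gamma(\sigma)$-type estimates combined with the classical $|\Gamma(\sigma+i\tau)| \ll |\tau|^{\sigma - 1/2}e^{-\pi|\tau|/2}$, uniform for $\sigma$ in a compact set. Here $\tau = 2\pi m/\log 2$, so $e^{-\pi|\tau|/2} = e^{-\pi^2|m|/\log 2}$. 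The polynomial factor $|\tau|^{\alpha - 1/2}$ is less than $1$, and for $m \ne 0$ we have $|\tau| \ge 9$.

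To get the explicit constant $6$, the concrete route is the reflection-type estimate: write $\Gamma(s) = \Gamma(s+1)/s$, then bound $|\Gamma(s+1)|$ via $|\Gamma(x+i\tau)| \le |\Gamma(1/2 + i\tau)| \cdot$ (monotone factor), using $|\Gamma(\tfrac12 + i\tau)|^2 = \pi/\cosh(\pi\tau)$ and log-convexity in $\sigma$. For $m = 0$, check directly that $|\Gamma(\alpha)| \approx 3.4 < 6$. This step requires care with explicit constants and is the main obstacle; I would first try the log-convexity interpolation between $\Re s = -1/2$ and $\Re s = 1/2$.

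\textbf{Ratio bound.} Expand
\[ g(x) = \wh{g}(0) + \sum_{m\ne 0}\wh{g}(m)e^{2\pi i m x}, \]
so that
\[ |g(x) - \wh{g}(0)| \le 2\sum_{m\ge1}\frac{6}{\log 2}e^{-\pi^2 m/\log 2}. \]
Since $\pi^2/\log 2 \approx 14.24$, the right-hand side is about $17.3\, e^{-14.24} \approx 1.1\times 10^{-5}$. That is too crude on its own, so for $m = \pm 1$ I would instead evaluate $|\Gamma(\alpha + 9.06 i)|$ more precisely via $|\Gamma(1/2+i\tau)|$-type identities. This should give something like $\sqrt{2\pi}\,|\tau|^{\alpha-1/2}e^{-\pi|\tau|/2} \approx 2.5 \cdot 0.1 \cdot 6.5\times10^{-7}$. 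Then compare with $\wh{g}(0) = -\Gamma(\alpha)/\log 2 \approx 4.9$; this ratio should produce the required $10^{-7}$ accuracy.
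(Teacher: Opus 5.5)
Your derivation of \eqref{g-fourier-formula} is the paper's: unfold the sum over $D$, substitute $t=2^y$, and recognise $-\Gamma(s)$ for $-1<\Re s<0$. Your integration by parts is an equally good replacement for the paper's Fubini step. Your plan for \eqref{g-ratio-bd} (Fourier expansion, crude tail for $|m|\ge 2$, precise values at $m=0,\pm1$) is also the paper's.

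The genuine gap is the one you flagged: nothing in the proposal actually yields the explicit constant $6$ in \eqref{g-fourier-bound}. The classical bound $|\Gamma(\sigma+i\tau)|\ll|\tau|^{\sigma-1/2}e^{-\pi|\tau|/2}$ has an unspecified constant. The route you would try first fails. For fixed $\tau$ with $|\tau|\approx 9$, the map $\sigma\mapsto\log|\Gamma(\sigma+i\tau)|$ is not convex on $[-\frac12,\frac12]$: its second derivative is $\Re(\log\Gamma)''(\sigma+i\tau)=(\sigma-\frac12)/\tau^2+O(\tau^{-4})<0$ there. So interpolating between $\Re s=\pm\frac12$ gives the inequality in the wrong direction.

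The paper sidesteps $\Gamma$-asymptotics altogether. It rotates the contour of $\int_0^\infty u^{s-1}(1-e^{-u})\,du$ onto the positive imaginary axis; the arcs vanish since $-1<\sigma<0$. This extracts $|i^{s}|=e^{-\pi t/2}$, and then $|1-e^{-iu}|\le\min(u,2)$ gives
\[ |\Gamma(\sigma+it)|\le 2^{\sigma+1}\Big(\frac{1}{\sigma+1}-\frac{1}{\sigma}\Big)e^{-\pi|t|/2}<6\,e^{-\pi|t|/2},\qquad \sigma=\tfrac{\log\log 2}{\log 2}. \]
If you prefer to keep your own idea, the correct form of your ``monotone factor'' comes from the Weierstrass product $|\Gamma(x)/\Gamma(x+iy)|^2=\prod_{n\ge0}(1+y^2/(x+n)^2)$. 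For $0<x\le\frac12$ each factor in the comparison with $x=\frac12$ is at most $1$, so $|\Gamma(x+iy)|^2\le\Gamma(x)^2/\cosh(\pi y)$. Taking $x=1+\alpha\approx0.471$ and using $\Gamma(s)=\Gamma(s+1)/s$ gives $|\Gamma(\alpha+iy)|\le\sqrt2\,\Gamma(1+\alpha)|\alpha|^{-1}e^{-\pi|y|/2}\approx5.03\,e^{-\pi|y|/2}$ for all $y$.

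For \eqref{g-ratio-bd}, be careful, because there is very little room. In fact $|\Gamma(\alpha)|\approx3.554$, so $\wh g(0)\approx5.128$ (not $4.9$). Also $|\wh g(\pm1)|=|\Gamma(\alpha\pm 2\pi i/\log 2)|/\log2\approx2.448\times10^{-7}$. With $S=\sum_{m\ne0}|\wh g(m)|\approx4.90\times10^{-7}$, the bound $(\wh g(0)+S)/(\wh g(0)-S)$ is about $1+1.91\times10^{-7}$. That is within about 5\% of the claim, and essentially sharp, since $g$ is dominated by its first harmonic.

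The comparison with $|\Gamma(\frac12+i\tau)|^2=\pi/\cosh(\pi\tau)$ overestimates $|\Gamma(\alpha+2\pi i/\log 2)|$ by roughly 13\% and would only give $1+2.16\times10^{-7}$. So ``$|\Gamma(\frac12+i\tau)|$-type identities'' are not enough for $m=\pm1$. You need Stirling's formula with an explicit remainder (its relative error at $|s|\approx 9$ is well under 1\%) or a direct high-precision evaluation, which is what the paper does.
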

\begin{remark}
Note that since the Fourier coefficients $|\wh{g}(m)|$ decay quicker than any fixed power of $m$, the function $g$ is indeed smooth with $g^{(j)}(\theta) = \sum_{m \in \Z} (2 \pi i m)^j \widehat{g}(m) e^{2\pi i \theta m}$ (see for instance \cite[Proposition 3.3.12]{grafakos}).
\end{remark}
\begin{proof} From the definition \cref{g-funct-def} and unfolding the $\int_{\R/\Z}$ and $\sum_{D \in \Z}$ into an integral over the real line, it follows that for $m \in \Z$ we have
\[
\wh{g}(m) = \int^{\infty}_{-\infty} (\log 2)^x (1 - e^{-2^{x}}) e^{2\pi i m x} \, \mathrm{d}x  = (\log 2)^{-1}\int^{\infty}_0 u^{\frac{\log \log 2}{\log 2} + \frac{2\pi i m}{\log 2} - 1} (1 - e^{-u})\, \mathrm{d}u    . 
\]
To investigate this, it makes sense to consider more generally the integral
 \begin{equation}\label{ist-def} I_{s} := \int_{0}^{\infty}u^{s - 1}(1-e^{-u})\, \mathrm{d}u\end{equation}
 for $\Re s \in (-1,0)$. It turns out that this can be explicitly evaluated in terms of the $\Gamma$-function, namely $I_{s} = {-}\Gamma(s)$ for $-1 < \Re s < 0$, and from this \cref{g-fourier-formula} follows immediately.
 
The evaluation of $I_s$ can be justified\footnote{
As an aside for the reader, the authors originally discovered that $g$ was almost constant via the use of GPT o4-mini. Upon being asked to prove this fact, the model suggested the broad strategy but miscomputed this integral (instead replacing it by the computation of the integral $\int_{0}^{\infty}\frac{x^{s-1}}{e^{x}-1}\, \mathrm{d}x = \zeta(s)\Gamma(s)$); this embarrassingly tricked the second author until clarification by the first author. Upon prompting the model with the relevant Wikipedia page on Mellin transforms, the model was able to complete the proof and give the counter integral argument to bound the $\Gamma(\cdot)$ (which is of course completely standard). }as follows:
\begin{align*}
\int^{\infty}_0 u^{s-1} (1 - e^{-u}) \, \mathrm{d}u &  = \int^{\infty}_0 u^{s-1} \int^u_0 e^{-t} \, \mathrm{d}t \mathrm{d}u   = \int^{\infty}_0 e^{-t} \int^{\infty}_t u^{s-1} \, \mathrm{d}u \mathrm{d}t \\ & = \int^{\infty}_0 e^{-t} \big(-\frac{t^s}{s}\big)\, \mathrm{d}t  = {-}\frac{1}{s}\Gamma(s+1) = {-}\Gamma(s).
\end{align*}  

Turning to \cref{g-fourier-bound}, presumably one can look up precise numerical asymptotics for $\Gamma$ in the relevant strip $-1 < \Re s < 0$. However, the integral representation \cref{ist-def} allows us to derive an appropriate bound from first principles with minimal effort via a contour integration argument, which we give now.

 Write $s = \sigma + it$ and assume $t > 0$ (the case $t < 0$ can be handled symmetrically or by taking complex conjugates). One may note that the integral here indeed converges for $\sigma \in (-1,0)$; as $u \rightarrow \infty$, the condition $\sigma < 0$ guarantees convergence, whilst as $u \rightarrow 0$ we have $1 - e^{-u} \ll u$ and the condition $\sigma > -1$ guarantees it.
To estimate $I_{\sigma+ it}$ we perform a contour integration in the upper right quadrant. Denote by $\mathcal{C}_r = \{ r e^{i \theta} : 0 \le \theta \le \pi/2\}$ the quarter circle of radius $r$. Let $0 < r < 1 < R$. Then applying Cauchy's theorem around the contour formed by $\mathcal{C}_r, \mathcal{C}_R$ and the portions of the real and imaginary axes between $r$ and $R$, we see that 
\begin{align*} \int^R_r & u^{\sigma - 1 + it}(1-e^{-u})\,\mathrm{d}u \\ & = i\int^R_r (u i)^{\sigma - 1 + it} (1 - e^{-u i})\,\mathrm{d}u - \int_{\mathcal{C}_R} u^{\sigma - 1 + it}(1-e^{-u})\,\mathrm{d}u + \int_{\mathcal{C}_r} u^{\sigma - 1 + it}(1-e^{-u})\,\mathrm{d}u . \end{align*}
We first estimate the integral over $\mathcal{C}_R$. Using that $|1 - e^{-z}| \le 2$ for $\Re z \ge 0$, we see that on this contour we have $|1 - e^{-u}| \le 2$, and at $u = R e^{i \theta}$ we have $|u^{\sigma - 1 + it}| = R^{\sigma-1} e^{-\theta t} \le R^{\sigma - 1}$ for $t \ge 0$. The length of the contour is $\pi R/2$, and so the total contribution is $\ll R^{\sigma - 1} \cdot R$ which tends to zero as $R \rightarrow \infty$, since $\sigma < 0$.

Next we estimate the integral over $\mathcal{C}_r$. Now we use $|1 - e^{-u}| \ll |u| = r$ and $|u^{\sigma - 1 + it}| \ll r^{\sigma-1}$. The length of the contour is $\pi r/2$, so the total contribution is $\ll r \cdot r^{\sigma-1} \cdot r = r^{\sigma + 1}$, which tends to zero as $r \rightarrow 0$ since $\sigma > -1$.

It follows that 
\[ I_{\sigma+it} = i \int^{\infty}_0 (u i)^{\sigma - 1 + it} (1 - e^{-u i})\, \mathrm{d}u = \int^{\infty}_0 u^{\sigma - 1 + it}e^{\sigma i\pi/2  - \pi t/2} (1-e^{-ui})\,\mathrm{d}u. \]

Taking absolute values,
\begin{align*} |I_{\sigma+it}| \le e^{-\pi t/2}  \int^{\infty}_0  u^{\sigma-1} |1 - e^{-iu}|\, \mathrm{d}u  & \le e^{-\pi t/2}\int_{0}^{\infty}u^{\sigma-1}(\min(u, 2))\,\mathrm{d}u\\ & =  2^{\sigma + 1}\big(\frac{1}{\sigma+1} - \frac{1}{\sigma}\big) e^{-\pi t/2} .\end{align*} 

In the case of interest to us, where $\sigma = \frac{\log \log 2}{\log 2}$, we get a numerical value of less than $6$, so $|\Gamma(\sigma + it)| = |I_{\sigma + it}| \le 6 e^{-\pi t/2}$ in this case. Since $\widehat{g}(m) = -\frac{1}{\log 2} I_{\frac{\log \log 2}{\log 2} + \frac{2\pi i m}{\log 2}}$, the bound \cref{g-fourier-bound} follows immediately.

Finally we establish \cref{g-ratio-bd}. Using \cref{g-fourier-bound} and simple numerics we have
\[
\sum_{|m| \ge 2} |\wh{g}(m)|  \le \frac{12}{\log 2} \sum_{m = 2}^{\infty} e^{-\pi^2 m/\log 2} < 7.5 \times 10^{-12}.
\]

By further numerical evaluations (using Wolfram Alpha) we have that 
\[
\wh{g}(0) = -\frac{1}{\log 2} \Gamma\big(\frac{\log \log 2}{\log 2}\big) \approx 5.1278218186\] and
\[ |\wh{g}(1)|  = |\wh{g}(-1)| = \Big|\frac{1}{\log 2}\Gamma\big(\frac{\log\log 2 + 2\pi i}{\log 2}\big)\Big| \approx 2.4479026947 \times 10^{-7}.
\]
Thus we conclude that
\[\frac{\max g}{\min g} \le \frac{\wh{g}(0) + \sum_{m\neq 0}|\wh{g}(m)|}{\wh{g}(0)-\sum_{m\neq 0}|\wh{g}(m)|}\le 1 + 2 \cdot 10^{-7},\] which is the desired bound \cref{g-ratio-bd}. 
\end{proof}

\part{The Poisson paradigm}\label{part3}

\section{Approximate \texorpdfstring{$\ell$}{}-wise independence}\label{sec3}

The main result of this section is \cref{lem:Poisson}. We will describe it informally below, but in order to do so we set up some notation.

We will consider a random multiset $(\mbf{A} \mid \beta, \beta')$ sampled conditioned on $(\boldbeta,\boldbeta') = (\beta, \beta')$ for some fixed pair of upper and lower walks $\beta,\beta'$. The general setup under discussion here is outlined in \cref{section5} and the reader may wish to review that now. We recall in particular from \cref{bi-defs,a-list,a-size-2} that
 $\mbf{A} = (\mbf{a}_{i,j})_{i \in [n], j \in [b_i]}$, where 
 \begin{equation}\label{bi-defs-rept} b_{i} := 1 - \xi'_i \; \;  (i = 1,\dots, \lfloor n/2\rfloor) \;\;\; \mbox{and}\;\;\;  b_{n + 1 - i} := 1 + \xi_i \; \; (i = 1,\dots, \lceil n/2\rceil).\end{equation}
 Throughout the section we will write
 \begin{equation}\label{disc-def} D = D(\beta,\beta') := \beta(\lceil n/2\rceil) - \beta'(\lfloor n/2\rfloor) = \sum_{i = 1}^n b_i - n.\end{equation}
 We occasionally refer to this quantity as the \emph{discrepancy} of $(\beta, \beta')$.

From now on we fix a truncation parameter $M$; the walks under consideration will have \[ \min_{1 \le i \le \lceil n/2\rceil} \beta(i), \; \min_{1 \le i \le \lfloor n/2\rfloor} \beta'(i) \ge -M,\] and later we will let $M \rightarrow \infty$ as $n \rightarrow \infty$. Associated to $M$ we will also associate the following key truncation thresholds.

\begin{definition}\label{truncation-threshold-defs} Given $M$ we define
\begin{itemize}
\item $R(M) := 20 M$ is a threshold for the $R$-boundedness of walks;
   \item $T(M) := e^{40M/\eta}$ is a threshold for the $T$-positivity of walks;
    \item $L(M)$ is an extremely large function of $M$. (How large it needs to be is reliant on the bounds in \cref{lemma7.19}, the result on the existence of jump steps; the point at which we need $L(M)$ to be very large is in the proof of \cref{exceptions-bd}.)
\end{itemize}
\end{definition}
These thresholds will be in place for the next several sections. We will also require some more technical parameters  namely $\ell_*(M), V(M), t_{\min}(M), t_{\max}(M)$. The parameter hierarchy will be

\begin{equation}\label{param-heir} T(M) \lll \ell_*(M)  \lll V(M) \lll t_{\min}(M) \lll t_{\max}(M) \lll L(M) < n/2.\end{equation}
and $V(M)$ will be an integer. For the rest of this section we will drop the explicit mention of $M$ when referring to $R$, $T$, $\ell_*$, $V$, $t_{\min}$, $t_{\max}$ and $L$ for notational brevity. We briefly discuss the purpose of the rest of these parameters.

The $\ell_*$ parameter will control the depth to which we manage to run a crucial inclusion-exclusion principle during the argument.

The $V, t_{\min}, t_{\max}$ parameters are connected with the (rather technical) notion of jump step (\cref{jump-step-7}). Throughout the next two sections we will assume that $\beta$ has a $V$-jump step at some index $t = t(\beta)$ which satisfies $t_{\min} \le t \le t_{\max}$; for definiteness we take $t$ minimal subject to these constraints. We informally remark that by \cref{lemma7.19} this will be a generic property of $\beta$ for appropriate choices of parameters. Note also that $b_{n + 1 - t} = 1 + \xi_t = V$. The jump step $t$ is associated with the random elements $\mbf{a}_{n + 1 -t,j}$, $j \in [V]$, of $\mbf{A}$, all of which have size $\asymp 2^{n - t}$; these will play a special role in what follows.

For technical reasons to do with calculations in the local limit theorem we also need to condition on the approximate distribution of these elements $\mbf{a}_{n + 1 - t,j}$. To set this up let us briefly recall from \cref{a-list} that $\mbf{a}_{n + 1 -t,j} = \tilde\phi(n - \mbf{u}_{t,j})$, where the $\mbf{u}_{t,j}$, $j \in [V]$, are independent uniform random variables on $(t-1,t]$. The definition of $\tilde\phi$ may be found at \cref{phis-def}; recall in particular that $\tilde\phi(x) \asymp 2^x$, so that indeed $\mbf{a}_{n + 1 - t} \asymp 2^{n - t}$.

For the rest of the section we abuse notation by writing $V^{1/3}$ for $\lfloor V^{1/3}\rfloor$ for brevity. Partition 
\begin{equation}\label{im-def} (t - 1,t] = \bigcup_{m = 1}^{V^{1/3}} I_m, \qquad I_m := \big( t - 1+ \frac{m - 1}{V^{1/3}}, t - 1 + \frac{m}{V^{1/3}}\big].\end{equation}
For each $\mbf{u}$ we record the `jump distribution' which is the random tuple $\mbf{f}  = (\mbf{f}_1,\dots, \mbf{f}_{V^{1/3}})$ where $\mbf{f}_m$ is the number (frequency) of the $\mbf{u}_{t,j}$ in the interval $I_m$. Thus, in particular we have $\sum_m \mbf{f}_m = V$ deterministically. The conditioned variables $(\mbf{u}_{t,j} \mid \mbf{f} = f)$ consist of $f_m$ independent uniform samples from $I_m$ for $m = 1,\dots, V^{1/3}$. 

For the remainder of this section and much of the next, we will condition not just to fixed choices $\boldbeta = \beta$, $\boldbeta' = \beta'$, but also to a fixed choice $\mbf{f} = f$. The precise point at which the additional conditioning on $\mbf{f}$ will be removed is after \cref{eq730d} below. For notational brevity, until that point we write $\mbf{A}$ rather than $(\mbf{A} \mid \boldbeta = \beta, \boldbeta' = \beta', \mbf{f} = f)$. The key result of the section is \cref{lem:Poisson} below. The main aim of this result (cf. \cref{key-a-beta}) is to understand the quantity $\mb{P}(k \in \Sigma(\mbf{A}))$ (with $\mbf{A}$ conditioned to $\boldbeta = \beta,\boldbeta' = \beta', \mbf{f} = f$ as discussed above).

The statement of \cref{lem:Poisson} is a little technical, and the proof lengthy, so we start by offering some motivation. We set 
\begin{equation}\label{as-def} \mbf{A}_{\sml} := (\mbf{a}_{i,j})_{1 \le i \le L, j \in [b_i]}, \quad  \mbf{A}_{\med} := (\mbf{a}_{i,j})_{L+1 \le i \le n + 1 - t, j \in [b_i]},\end{equation}
and
\begin{equation*}  \mbf{A}_{\lrg} := (\mbf{a}_{i,j})_{n +1 - t < i \le n, j \in [b_i]}.\end{equation*}
Note in particular that the elements associated to the jump step of $\beta$ are the elements of $\mbf{A}_{\med}$ with the highest index $i = n + 1 - t$.
We remind the reader that $\Sigma (\mbf{A})$ is the set of sums $\{\sum_{1 \le i \le n, j \in [b_i]} \eps_{i,j} \mbf{a}_{i,j} : \eps_{i,j} \in \{0,1\}\}$, and (for example) $\Sigma(\mbf{A}_{\med})$ is defined analogously.

We consider the event $(k \in \Sigma(\mbf{A}))$ as the union of the events $X_{\eps} = (k - \sum_{n + 1 - t < i \le n, j \in [b_i]} \eps_{i,j} \mbf{a}_{i,j} \in \Sigma(\mbf{A}_{\med}) + \Sigma(\mbf{A}_{\sml}))$, over all choices of $\eps_{\lrg} := (\eps_{i,j})_{n + 1 - t < i \le n, j \in [b_i]}$. The idea will be to compute $\mb{P}(k \in \Sigma(\mbf{A}))$ by using inclusion-exclusion on these events. To enable this, we need to show that they are approximately $\ell$-wise independent for $\ell \rightarrow \infty$.

We relabel the elements of $\mbf{A}$ associated to the jump step $t$ as $(\mbf{a}_{n + 1 -t,j})_{j \in [V]}$ to be consistent with the conditioning to $\mbf{f} = f$. Thus we write $\mbf{a}_{n + 1 - t,j} = \tilde\phi (n - \mbf{U}_m)$ where $m = m(j)$ is the unique value of $m \in \{1,\dots V^{1/3}\}$ for which $f_1 + \cdots f_{m-1} < j \le f_1 + \cdots + f_m$, $\mbf{U}_m$ is uniform on $I_m$ and $\tilde\phi$ is defined in \cref{phis-def}. Unravelling the definitions, we thus have 
\[ \mb{P}(\mbf{a}_{n + 1 - t,j} = x) = V^{1/3}\big| (n - I_m) \cap \big[ \frac{n}{H_k} H_{x-1}, \frac{n}{H_k} H_x\big)\big|.\] Thus, if we set $\alpha := V^{1/3}n/H_k \approx V^{1/3}/\log 2$ then the distribution of $\mbf{a}_{n + 1 - t,j}$ is as follows. There are integers $x_0, x_1 \asymp 2^{n - t}$ and $\alpha_0, \alpha_1$ with $0 \le \alpha_0, \alpha_1 \le \alpha$ (all potentially depending on $j$) such that:
\begin{equation}\label{a-jump-dist} \mb{P}(\mbf{a}_{n +1 - t,j} = x) = \left\{ \begin{array}{ll} 0 & \mbox{$x < x_0$ or $x > x_1$};\\ \alpha_0/x_0 & x = x_0; \\ \alpha/x & x_0 < x < x_1;\\ \alpha_1/x_1 & x = x_1.  \end{array}\right.\end{equation}
Thus, $\mbf{a}_{n + 1 - t, j}$ takes values in an interval of length $\asymp V^{-1/3} 2^{n - t}$. A particular point to note is that we must replace the anticoncentration bound \cref{anti-concentration-aij} for $i = n + 1 -t$ by the weaker bound
 \begin{equation}\label{anti-concentration-jump-scale} \sup_x \mb{P}(\mbf{a}_{n + 1- t,j} = x) \ll V^{1/3} 2^{t - n}\end{equation} (which follows immediately from \cref{a-jump-dist}); for other values of $i$ it is unchanged.

Define $\mu_j = \mb{E} \mbf{a}_{n + 1 - t,j}$ and write
\begin{equation}\label{mu-sig-def} \mu = \frac{1}{2}\sum_{j = 1}^V \mu_j \quad \mbox{and} \quad  \sigma^2 = \frac{1}{4}\sum_{j = 1}^V \mb{E} \mbf{a}_{n + 1 - t,j}^2.\end{equation}
In various places in the analysis we will use that (deterministically, and uniformly in $f$)
\begin{equation}\label{basic-a-bds} \mbf{a}_{n + 1 - t,j} , \mu_j \asymp 2^{n - t}, \qquad |\mbf{a}_{n + 1 - t,j} - \mu_j| \ll V^{-1/3} 2^{n - t}, \qquad \sigma \asymp V^{1/2} 2^{n - t},\end{equation} all of which are clear from the definitions. Finally let $g$ be the Gaussian density with mean $\mu$ and variance $\sigma^2$,
\begin{equation}\label{gaussian-def} g(x) := \frac{1}{\sigma \sqrt{2 \pi}} e^{-\frac{1}{2}\big( \frac{x - \mu}{\sigma}\big)^2},\end{equation} noting that once again this depends on $f$. 

Finally we come to the statement of \cref{lem:Poisson}. Recall \cref{bounded-walk-def,good-walk-def}.

\begin{lemma}\label{lem:Poisson} Let $\beta, \beta'$ be upper and lower walks that are each $R$-bounded and $T$-positive up to lengths $\lceil n/2\rceil, \lfloor n/2\rfloor$ respectively. Set $L := L(M) < n/2$. Suppose that $\min_{1 \le i \le \lceil n/2\rceil} \beta(i)$, $\min_{1 \le i \le \lfloor n/2\rfloor} \beta'(i) \ge - M$, and that $|D| \le M/10$, where the discrepancy $D$ is given by \cref{disc-def}.
Suppose that $\beta$ has a $V$-jump step. Let $n'$ be an integer satisfying $L \le n' \le n - L$. Suppose that $S \subset \mbf{N}$ is a set of size $\tau 2^{\sum_{i=1}^{n'} b_{i}}$ with $\max(S) \le 2^{n - L/2}$ and where $V^{-1} \le \tau \le 1$. Let $\ell \le \ell_*$, and let $x_i$, $i = 1,\dots, \ell$, be positive integers such that 
\begin{equation}\label{xi-assump} |x_i - \mu|\le (\log V)^{1/4} \sigma.\end{equation} Furthermore suppose that we have the separation condition \begin{equation}\label{lem:Poisson:sep} |x_i-x_{i'}| > 2^{n - L/2} \end{equation} for $i \ne i'$. Write $\mbf{A}_{\med}^{> n'} := (\mbf{a}_{i,j})_{n'+1 \le i \le n + 1 - t, j \in [b_i]}$. Then we have that 
\begin{equation}\label{lem31-main} \mb{P} \big(x_1,\dots, x_{\ell} \in S + \Sigma(\mbf{A}^{> n'}_{\med}) \big) =\big(1 + O( V^{-1/4})\big) \Big( \tau 2^{\sum_{i =1}^{n+1 - t} b_i}\Big)^{\ell} \prod_{i = 1}^{\ell} g(x_i).\end{equation}
\end{lemma}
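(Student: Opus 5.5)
The plan is to write the probability on the left of \cref{lem31-main} as a sum over ``witness'' configurations and run a second-moment/inclusion--exclusion style computation. For each $i \le \ell$, the event $x_i \in S + \Sigma(\mbf{A}^{>n'}_{\med})$ is $\bigcup_{s \in S}\bigcup_{\eps} \{x_i = s + \sum_{n' < i' \le n+1-t, j} \eps_{i',j}\mbf{a}_{i',j}\}$. The first step is to show that the expected number of representations,
\[
\mb{E}\,\#\{(s_1,\eps^{(1)},\dots,s_\ell,\eps^{(\ell)}) : x_i = s_i + \textstyle\sum \eps^{(i)}_{i',j}\mbf{a}_{i',j}\ \forall i\},
\]
is $(1+O(V^{-1/4}))\big(\tau 2^{\sum_{i\le n+1-t} b_i}\big)^{\ell}\prod_i g(x_i)$. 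The target then follows if this count is $0$ or $1$ except on an event of relatively negligible probability. That is, the correlated or ``degenerate'' tuples contribute only $O(V^{-1/4})$ times the main term, and so do the representations of a single $x_i$ in two ways.

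For the main term, fix tuples $\eps^{(1)},\dots,\eps^{(\ell)}$ and form the $\ell\times N$ $\{0,1\}$-matrix $E$ whose rows are these tuples, with $N = \sum_{n'<i'\le n+1-t} b_{i'}$. Split according to the rank profile. The \emph{generic} case is $E$ of full rank $\ell$ with the pivots $q_1,\dots,q_\ell$ at the top scale. The jump-step coordinates are the $V$ entries at $i' = n+1-t$; since $1+\xi_t = V$, generically each row has $\approx V/2$ ones there, and distinct rows differ in $\gg V$ of those positions. Condition on all $\mbf{a}_{i',j}$ with $i' < n+1-t$. The remaining vector $\big(\sum_j \eps^{(i)}_{n+1-t,j}\mbf{a}_{n+1-t,j}\big)_{i\le\ell}$ is a sum of $V$ independent bounded vectors with distribution \cref{a-jump-dist}, so a multidimensional local limit theorem (Fourier inversion, using \cref{appB}) should give the point probability as a product of Gaussian densities with mean $\mu$ and variance $\sigma^2$, up to a factor $1+O(V^{-1/4})$. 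The role of the conditioning on $\mbf{f}$ is to make each summand smooth on the scale $V^{-1/3}2^{n-t}$, which controls the Fourier tail. The lower-order sums $s_i + \sum_{i'<n+1-t}(\cdots)$ shift $x_i$ by at most $2^{n-t}\ll\sigma V^{-1/2}$, so by \cref{xi-assump} the density $g$ is essentially unchanged. The count of generic tuples is $\approx \big(|S|2^{N}\big)^\ell/2^{\ell V}$ times $2^{\ell V}$, which produces the factor $(\tau 2^{\sum b_i})^\ell$. The separation \cref{lem:Poisson:sep} together with $\max S\le 2^{n-L/2}$ forces distinct $i$ to use distinct $\eps^{(i)}$, so the Gaussian is genuinely $\ell$-dimensional and nondegenerate.

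For the error terms, the non-generic $\eps$-matrices are those with rank $<\ell$, or with small pivots, or with atypically unbalanced jump coordinates. These are bounded using \cref{lem24}: part (i) counts matrices with a prescribed rank profile, and part (ii) shows that matrices with repeated-but-distinct rows beyond the rank are rare. For each such matrix, the probability is bounded by the anticoncentration bounds \cref{anti-concentration-aij} and \cref{anti-concentration-jump-scale}, applied at the pivot columns. This gives roughly $\prod_d 2^{-(\text{pivot scale})}$, and this saving beats the count $2^{rN-\sum q_d}$ thanks to the $T$-positivity of $\beta,\beta'$: the walks grow like $\ell^{1/2-\eta}$, which makes lower scales exponentially costly. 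The overlap and double-representation terms are handled the same way, with the pairwise-representation matrix having $2\ell$ rows; here the input $\tau\ge V^{-1}$ ensures that the main term dominates. The main obstacle I expect is the local limit theorem with a relative error that is uniform over $\ell\le\ell_*$ rows and over $f$. One needs to show that the characteristic function of the $\ell$-dimensional sum decays outside a $\sigma^{-1}$-neighbourhood of the origin, modulo the lattice, with enough room to absorb the non-Gaussian Edgeworth corrections into $O(V^{-1/4})$. I would first try to reduce this to a one-dimensional smoothness estimate for each $\mbf{a}_{n+1-t,j}$, using \cref{a-jump-dist} on its interval of length $V^{-1/3}2^{n-t}$.
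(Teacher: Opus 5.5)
Your overall skeleton matches the paper's. You count weighted representations, split by rank profile using \cref{lem24} with anticoncentration at the pivot columns, and remove the weights with a factorial-moment bound. The main-term step, however, is wrong as stated.

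\textbf{The main-term local limit step.} Fix a matrix of jump-column signs. The vector $\big(\sum_{j\le V}\eps^{(u)}_{n+1-t,j}\mbf a_{n+1-t,j}\big)_{u\le\ell}$ is then centred at $\big(\sum_j\eps^{(u)}_{n+1-t,j}\mu_j\big)_u$, not at $(\mu,\dots,\mu)$. Because of the conditioning on $\mbf f$, each $\mbf a_{n+1-t,j}$ lies in an interval of length $O(V^{-1/3}2^{n-t})$ (see \cref{basic-a-bds}). So each coordinate fluctuates only on the scale $V^{1/6}2^{n-t}$, a factor $V^{1/3}$ below $\sigma\asymp V^{1/2}2^{n-t}$. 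The coordinates are also correlated, since the rows share the same $\mbf a_j$.

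Consequently no individual generic term has point probability $(1+O(V^{-1/4}))\prod_u g(x'_u)$. For typical $\eps$ the target lies about $V^{1/3}$ of these standard deviations from the centre, where a fixed-$\eps$ local limit theorem with relative error is not available. The parameters $\mu=\frac12\sum_j\mu_j$ and $\sigma^2=\frac14\sum_j\mb E\mbf a_j^2$ in \cref{mu-sig-def} are essentially the mean and variance of $\sum_j\boldeps_j\mbf a_j$ with $\boldeps_j$ \emph{uniformly random}. The Gaussian comes from the sum over the jump-column signs, not from the randomness of $\mbf a$.

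The paper therefore performs that sum first. It recasts \cref{counting-cor-4} as the local limit statement \cref{counting-cor-4a} for $(\boldeps,\mbf a)$ jointly random, with $\boldeps$ uniform over \emph{good} tuples: every basis vector $e_i$ occurs as a jump column at least $4^{-\ell}V$ times. Your condition that rows differ in $\gg V$ places does not give the coordinatewise decay used in \cref{exp-avg-bd}. The Fourier analysis then splits into three ranges.
\begin{itemize}
\item For $\Vert\theta\Vert_\infty\le\log V$, the $\mbf f$-conditioning gives $\sum_j\mbf a_j^2=4\sigma^2(1+O(V^{-1/3}))$ deterministically and makes the $\mbf a$-phases negligible \cref{328-suff}. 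This is exactly what makes the limit a \emph{product} of Gaussians. Without it, the shared $\mbf a_j$ would create cross-covariances $\frac14\sum_j\operatorname{Var}(\mbf a_j)\asymp\sigma^2$ between rows.
\item For $\log V\le\Vert\theta\Vert_\infty\le V^{1/2}/\log V$, the decay comes from averaging over $\boldeps$.
\item Only beyond that is the randomness of $\mbf a$ used, via \cref{a1-lem}.
\end{itemize}
So the role of $\mbf f$ is decorrelation through concentration, not smoothing.

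\textbf{Intermediate pivots.} Your claim that $T$-positivity makes every non-generic pivot configuration costly fails for pivots just past the jump block. By \cref{qd-id}, a pivot with $q_d>V$ only satisfies $q_d-(i_d-t)>V-1+\beta(i_d-1)-\beta(t)$. $T$-positivity allows $\beta(i_d-1)-\beta(t)$ to be as small as about $-t^{1/2+\eta}$, which swamps $V$ since $t\ge t_{\min}\ggg V$. This is the range $V<q_d\le t_{\max}^{10}$ (Step~3), and it needs the jump-step conditions. \cref{jump-step-1} gives $\beta(i_d-1)-\beta(t)\ge(i_d-1-t)^{1/2-2\eta}$, hence the $2^{-V}$ saving. \cref{jump-step-2} gives $q_d\le V+O((i_d-t)^{1+2\kappa})$, which turns the extra saving into a summable factor $2^{-\Omega((q_d-V)^{1/3})}$.

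\textbf{Double representations.} Here the decisive input is not $\tau\ge V^{-1}$; that hypothesis only lets the factor $\tau^{\ell}$ be absorbed into $V^{O(\ell)}$. What matters is that the expected number of representations is tiny: $2^{\sum_{i\le n+1-t}b_i}/\sigma\ll V^{-1/2}2^{-\frac12 t^{1/2-\eta}}$ by the upper bound in \cref{bnt}. This is combined with the $(\ell+1)$-row moment bound of \cref{lem:Poisson-upper}.
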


\begin{remarks} Note that $\mbf{A}_{\med}^{> L} = \mbf{A}_{\med}$. Note that by \cref{mu-sig-def,basic-a-bds,xi-assump} we have
\begin{equation} \label{xi-rough} x_1,\dots, x_{\ell} \asymp V 2^{n - t},\end{equation} and so one typically expects the separation condition to be comfortably satisfied; see \cref{7lem-claim-2} for more details on this.
\end{remarks}

The following simple lemma will feature a few times in the proof.
\begin{lemma}
Suppose that $\beta, \beta'$ are upper and lower walks. Suppose that $\beta$ is $T$-positive up to length $\lceil n/2\rceil$, and that the discrepancy $D = D(\beta,\beta')$ satisfies $|D| \le M/10$. Let notation be as in \cref{bi-defs-rept}. Then
\begin{equation}\label{bnt} -2t_{\max} \le \sum_{i=1}^{n + 1 - t} b_{i} - (n - t)  \le - \tfrac{1}{2}t^{1/2 - \eta},\end{equation}
and
\begin{equation}\label{bnL} \sum_{i = 1}^{n-L} b_{i} < n - L.\end{equation}
\end{lemma}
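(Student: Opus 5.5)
The plan is to compute both partial sums exactly in terms of $D$ and a single value of the upper walk $\beta$, and then apply $T$-positivity together with the parameter hierarchy \cref{param-heir}. No probabilistic input is needed.

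\emph{Step 1: an exact formula.} By \cref{disc-def} we have $\sum_{i=1}^n b_i = n + D$. By \cref{bi-defs-rept}, the indices $i$ with $n+1-s \le i \le n$ are exactly $i = n+1-i'$ with $1 \le i' \le s$, for any $s \le \lceil n/2\rceil$, and $b_{n+1-i'} = 1 + \xi_{i'}$. Therefore
\[ \sum_{i=n+1-s}^{n} b_i = \sum_{i'=1}^{s}(1+\xi_{i'}) = s + \beta(s).\]
It follows that for every $s \le \lceil n/2\rceil$,
\[ \sum_{i=1}^{n-s} b_i = n + D - s - \beta(s).\]

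\emph{Step 2: proof of \cref{bnt}.} Apply Step 1 with $s = t-1$; this is legitimate because $t \le t_{\max} < L < n/2$. It gives
\[ \sum_{i=1}^{n+1-t} b_i - (n-t) = D + 1 - \beta(t-1).\]
Since $\beta$ is $T$-positive up to $\lceil n/2\rceil$, we have
\[ -T + (t-1)^{1/2-\eta} \le \beta(t-1) \le T + (t-1)^{1/2+\eta}.\]
\begin{itemize}
\item For the upper bound, use $|D| \le M/10$ to get that the quantity is at most $M/10 + 1 + T - (t-1)^{1/2-\eta}$. This is $\le -\tfrac12 t^{1/2-\eta}$ because $t \ge t_{\min}$, and $t_{\min}$ is enormous compared with $T$ and $M$ by \cref{param-heir}.
\item For the lower bound, the quantity is at least $-M/10 + 1 - T - t^{1/2+\eta}$. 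This is $\ge -2t_{\max}$ because $t \le t_{\max}$ and $M, T \lll t_{\max}$.
\end{itemize}

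\emph{Step 3: proof of \cref{bnL}.} Apply Step 1 with $s = L$, again valid since $L < n/2$. This gives
\[ \sum_{i\le n-L} b_i = (n-L) + D - \beta(L).\]
So it suffices to show $\beta(L) > D$. By $T$-positivity, $\beta(L) \ge -T + L^{1/2-\eta}$. Since $L \ggg T, M$, this exceeds $M/10 \ge D$.

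The only point requiring care is the index bookkeeping in Step 1, namely checking that the top $s$ values of $b_i$ correspond exactly to the first $s$ steps of $\beta$. The rest is immediate from the hierarchy \cref{param-heir}.
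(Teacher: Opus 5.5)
Your proof is correct and follows the paper's argument essentially line for line. The paper also records the identity $\sum_{i=1}^{n-m} b_i = n + D - (m+\beta(m))$ for $m < \lceil n/2\rceil$, specialises it to $m = t-1$ (giving $D + 1 - \beta(t-1)$) and to $m = L$, and concludes from the $T$-positivity of $\beta$ together with the parameter hierarchy ($T, M \lll t_{\min} \le t \le t_{\max} \lll L$).
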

\begin{proof} 
From \cref{bi-defs-rept,disc-def} we have that if $m < \lceil n/2\rceil$ then \begin{equation}\label{bbeta}\sum_{i = 1}^{n - m} b_{i} =  n + D - (m + \beta(m)).\end{equation}
 We first prove \cref{bnt}. We certainly have $t \le t_{\max} \lll n/2$, and so by \cref{bbeta} we have $\sum_{i = 1}^{n + 1 - t} b_i - (n - t) = D + 1 - \beta(t - 1)$. 
 
 For the lower bound, by the $T$-positive nature of the walk $\beta$ it follows that $D - \beta(t-1) \ge D - T - (t - 1)^{1/2 + \eta}$, and the result follows from the parameter hierarchy \cref{param-heir}, noting in particular that $t \le t_{\max}$.

 For the upper bound, by the $T$-positive nature of $\beta$ it follows that $D - \beta(t-1) \le D + T - (t - 1)^{1/2 - \eta}$, and the result again follows from the parameter hierarchy, specifically that $t \ge t_{\min} \ggg T$, and all these parameters are sufficiently large in terms of $M$. For \cref{bnL}, an analogous proof works since $L \ggg T$.
\end{proof}

For the proof of \cref{lem:Poisson} (which is rather lengthy), the plan is to first establish a weighted version of \cref{lem31-main} in which the number of representations in $S + \Sigma(\mbf{A}_{\med}^{\ge n'})$ is counted. This is stated as \cref{lem:Poisson-weights} below. However, in practice the number of representations is essentially always either 0 or 1, so we can later remove the weights. To do this rigorously a related technical result (\cref{lem:Poisson-upper} below) is required. The deduction of \cref{lem:Poisson} from the weighted version is quite short and is given at the end of the section.

Fix $S$ and $n'$ and define $R_{\mbf{A}}(x)$ to be the number of representations of $x$ in $S + \Sigma(\mbf{A}_{\med}^{> n'})$, that is to say as a sum of a $\{0,1\}$-combination of the $\mbf{a}_{i,j}$, $n'+1 \le i \le n + 1 - t, j \in [b_i]$, and an element of $S$.

\begin{lemma}\label{lem:Poisson-weights} With the notation and assumptions of \cref{lem:Poisson}, and with $R_{\mbf{A}}$ as just defined, we have 
\begin{equation}\label{lem31-main-weighted} \mb{E} R_{\mbf{A}}(x_1) \cdots R_{\mbf{A}}(x_{\ell}) =\big(1 + O( V^{-1/4})\big) \Big( \tau 2^{\sum_{i =1}^{n + 1 - t} b_i}\Big)^{\ell} g(x_1) \cdots g(x_{\ell}).\end{equation}
\end{lemma}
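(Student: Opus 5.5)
We expand the product of representation counts as a sum over $\ell$-tuples of representations. Write $\mbf{A}^{>n'}_{\med}$ as the list of independent variables $\mbf{a}_{i,j}$, $n' < i \le n+1-t$. Then
\[ \mb{E}\prod_{r=1}^{\ell} R_{\mbf{A}}(x_r) = \sum_{s_1,\dots,s_\ell \in S}\ \sum_{\eps^{(1)},\dots,\eps^{(\ell)}} \mb{P}\Big(\sum_{i,j} \eps^{(r)}_{i,j}\mbf{a}_{i,j} = x_r - s_r \ \text{for } r=1,\dots,\ell\Big). \]
Here each $\eps^{(r)} \in \{0,1\}^{\sum_{i>n'} b_i}$. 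We organise the $\eps$-tuples as an $\ell\times N$ $\{0,1\}$-matrix $E$, whose columns are indexed by the pairs $(i,j)$. We then split according to the behaviour of $E$ on the jump-step columns $(n+1-t,j)$, $j\in[V]$, and on the remaining columns.

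The main term comes from matrices $E$ whose restriction to the $V$ jump columns is \emph{generic}. By this we mean that the rows restricted to the jump block are pairwise ``far apart'': every pair of rows, and more generally every nontrivial combination pattern, has $\gg V$ columns of each relevant type. For such $E$ we first condition on all non-jump variables. The vector $(\sum_j \eps^{(r)}_{n+1-t,j}\mbf{a}_{n+1-t,j})_{r\le\ell}$ is a sum of $V$ independent bounded vectors in $\Z^\ell$, with the distribution given by \cref{a-jump-dist}. A multidimensional local limit theorem (Fourier analysis as in \cref{appB}) shows that its probability mass function at integer points is
\[ (1+O(V^{-1/4}))\,\text{(Gaussian density)}. \]
Summing over the generic jump patterns, the average over $\eps_{\text{jump}}$ becomes a product of $\ell$ independent Gaussians $g$, since for typical independent uniform rows the covariance is diagonal with entries $\sigma^2$ and the means are $\mu$. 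Each remaining $x_r - s_r - (\text{non-jump part})$ lies within $O(2^{n-t}\cdot\text{poly}(t))$ of $x_r$. By \cref{xi-assump} and $\sigma\asymp V^{1/2}2^{n-t}$, replacing the argument by $x_r$ costs only a factor $1+O(V^{-1/4})$; here we use the hierarchy \cref{param-heir} and the bound $\max S\le 2^{n-L/2}$. The free sum over $s_r\in S$ and over the non-jump $\eps$ then gives
\[ \Big(|S|\, 2^{\sum_{n'<i\le n+1-t}b_i}\Big)^{\ell} = \Big(\tau 2^{\sum_{i\le n+1-t}b_i}\Big)^{\ell}, \]
which is exactly the claimed main term.

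It remains to show that all other $E$ contribute $O(V^{-1/4})$ times the main term, and this is the main obstacle. Two kinds of degenerate tuple arise.

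\emph{Jump-degenerate tuples.} These are tuples where two rows (or some combination of rows) nearly coincide on the jump block. There are only a $e^{-cV}$ fraction of such jump patterns, but the local limit theorem no longer gives a product bound. For these we use \cref{lem:Poisson:sep}. If rows $r,r'$ agree on the jump block and on all columns above level $n-L/2$, then $x_r-x_{r'}$ would have to be $O(2^{n-L/2})$, which is excluded. So some differing column at level $\ge n-L/2$ provides extra anticoncentration via \cref{anti-concentration-aij}.

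\emph{Rank-deficient non-jump parts.} In general we bound the probability for a matrix $E$ by revealing variables column by column, following the rank profile $\rp(E)$. Each pivot column $q_d$ at level $i$ yields a factor $\ll 2^{-i}$ by \cref{anti-concentration-aij}, or $V^{1/3}2^{t-n}$ at the jump level by \cref{anti-concentration-jump-scale}. We then count matrices with a given rank profile using \cref{lem24}(i). Rank-$r<\ell$ matrices with at least $r+1$ distinct nonzero rows are handled by \cref{lem24}(ii). Matrices with repeated rows force $x_r-s_r$ and $x_{r'}-s_{r'}$ to be related, and then the sums over $s_r,s_{r'}$ lose a factor $|S|\ge V^{-1}2^{\sum_{i\le n'} b_i}$, using $\tau\ge V^{-1}$.

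The positivity estimates \cref{bnt,bnL} ensure that every loss of rank costs a factor roughly $2^{-\Omega(t^{1/2-\eta})}$ or $2^{-\Omega(L^{1/2-\eta})}$. This exponentially beats the $2^{\ell^2}$ losses (with $\ell\le\ell_*\lll t_{\min}$), so the total error is $O(V^{-1/4})$ relative to the main term. The careful bookkeeping of this rank-profile sum, tracking which levels the pivots sit at against $\sum b_i$, is the step I expect to be the most delicate.
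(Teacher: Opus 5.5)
Your skeleton (expand into tuples $(\eps^{(u)},s_u)$, split by rank profile, reduce to the $V$ jump columns, prove a local limit theorem there) is the paper's, but the local limit step fails as you set it up.

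For a \emph{fixed} jump pattern, the vector $(\sum_j\eps^{(u)}_j\mbf{a}_{\jump,j})_{u\le\ell}$ has mean $(\sum_j\eps^{(u)}_j\mu_j)_u$, not $(\mu,\dots,\mu)$. By \cref{basic-a-bds} its standard deviation is only $\ll V^{1/6}2^{n-t}\asymp V^{-1/3}\sigma$, and its support has width $O(V^{2/3}2^{n-t})$ about that mean. The target $x_u-s_u-h(\mbf{A},\eps^{(u)})$ typically lies at distance of order $\sigma$ from the mean, i.e.\ about $V^{1/3}$ of that pattern's standard deviations away, and for some generic patterns outside the support altogether. So there is no fixed-pattern Gaussian approximation with relative error $1+O(V^{-1/4})$. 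The variance $\sigma^2=\frac14\sum_j\mb{E}\mbf{a}_{\jump,j}^2$ and the nearly diagonal covariance belong to the \emph{joint} law of $(\boldeps,\mbf{a})$; they are produced by the Bernoulli signs, not by the $\mbf{a}$'s.

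The missing idea is to make $\boldeps$ (uniform over good tuples) part of the probability space and run the Fourier argument on the joint variable. For $\Vert\theta\Vert_\infty\ll V^{1/2}/\log V$, the average over $\boldeps$ gives $e^{-\Vert\theta\Vert_2^2/2}$ or decay $e^{-\Omega(\Vert\theta\Vert_\infty^2)}$; here the goodness restriction is dropped at cost $e^{-\Omega(2^{-\ell}V)}$. For larger $\theta$ the phases $\theta_u\mbf{a}_{\jump,j}/\sigma$ are no longer small and the sign average alone need not decay. There one uses the randomness of $\mbf{a}$ via \cref{a1-lem}, applied on the at least $4^{-\ell}V$ columns equal to each $e_i$.

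Separately, your bound $O(2^{n-t}\,\mathrm{poly}(t))$ on the non-jump shift is fatal rather than harmless. Since $t\ge t_{\min}\ggg V$, such a shift is many multiples of $\sigma\asymp V^{1/2}2^{n-t}$. You need $|h(\mbf{A},\eps^{(u)})|\ll 2^{n-t}$ with an absolute constant, which is precisely what \cref{jump-step-2} gives.

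The error bookkeeping also runs in the wrong direction. In \cref{bd-fixed-rank-profile} the bracket $\sum_{i\le n+1-t}b_i-(n-t)$ is negative, at most $-\frac12t^{1/2-\eta}$ by \cref{bnt}. So for a tuple of rank $r<\ell$, the factor $2^{r(\sum_{i\le n+1-t}b_i-(n-t))}$ exceeds the main-term factor $2^{\ell(\sum_{i\le n+1-t}b_i-(n-t))}$ by between $2^{\frac12t^{1/2-\eta}}$ and $2^{O(\ell t_{\max})}$. Positivity never makes rank loss cheap. In particular, treating two equal rows as merely `losing a factor $|S|$' leaves a contribution exceeding the main term by at least $2^{\frac12t^{1/2-\eta}}$.

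In the paper such tuples contribute exactly zero:
\begin{itemize}
\item equal rows force $x_u-s_u=x_{u'}-s_{u'}$, impossible by \cref{lem:Poisson:sep} and $\max S\le2^{n-L/2}$;
\item a zero row (a case you omit) forces $x_u=s_u$, impossible since $x_u\asymp V2^{n-t}$.
\end{itemize}
The remaining cases are handled as follows.
\begin{itemize}
\item Tuples with $q_r\ge t_{\max}^{10}$ are handled by the gap \cref{iq-bd} between $q_d$ and $i_d$.
\item The remaining rank-deficient tuples, with at least $r+1$ distinct nonzero rows, are handled by \cref{lem24}(ii). Its saving $2^{-|\Xi|/10}$, with $|\Xi|>L/2$ and $L\ggg t_{\max}$, beats the $2^{O(\ell t_{\max}^{10})}$ loss.
\item Full-rank tuples with a pivot beyond the jump block are handled by the jump-step properties \cref{jump-step-1,jump-step-2}, which save $2^{-\Omega(V)}$ (not $2^{-\Omega(t^{1/2-\eta})}$).
\item Full-rank but non-good jump patterns need no separation argument at all, only the crude bound \cref{1124-1126} multiplied by their $e^{-\Omega(2^{-\ell}V)}$ frequency.
\end{itemize}
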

\begin{proof}  Denote by $\Xi$ the set of pairs $(i,j)$ with $n'+1 \le i \le n + 1 - t$ and $j \in [b_i]$. Thus
\begin{equation}\label{xi-size} |\Xi| = \sum_{i = n'+1}^{n + 1 - t} b_i.\end{equation}
Note for future reference that, since $n' \le n - L$ and $b_i \ge 0$ for all $i$, 
\begin{equation}\label{xi-fr} |\Xi| \ge \sum_{i = n+1-L}^{n + 1 - t}b_i = (L + \beta(L)) - \big((t-1) +  \beta(t-1)\big) > L/2\end{equation} by the $T$-positive property \cref{good-walk-def} and the parameter hierarchy \cref{param-heir}, and similarly, using the parameter hierarchy and that $n' \ge L$, $L < n/2$ and $|D| < M$,
\begin{equation}\label{xi-upper-mat} |\Xi| \le \sum_{i = L+1}^{n + 1 - t} b_i = n + D - \big((t - 1) + \beta(t - 1)\big) - (L - \beta'(L)) < n - L/2.\end{equation}
Recall that by definition $R_{\mbf{A}}(x)$ is the number of pairs $(\eps, s) \in \{0,1\}^{\Xi} \times S$ with 
\[ \sum_{(i,j) \in \Xi} \eps_{i,j} \mbf{a}_{i,j} + s = x.\] Thus 
\begin{equation}\label{counting-cor} \mb{E} R_{\mbf{A}}(x_1) \cdots R_{\mbf{A}}(x_{\ell}) = \sum_{\eps^{(1)}, \dots, \eps^{(\ell)} \in \{0,1\}^{\Xi}}\sum_{s_1,\dots, s_{\ell} \in S} \mb{P}\Big(\sum_{(i,j) \in \Xi} \eps^{(u)}_{i,j} \mbf{a}_{i,j} = x_u - s_u, u \in  [\ell]\Big).\end{equation}
For each choice of $\eps^{(1)},\dots, \eps^{(\ell)}$ consider the $\ell$-by-$|\Xi|$ matrix $M(\eps) := (\eps^{(u)}_{i,j})_{u \in [\ell], (i,j) \in \Xi}$ formed with the $\eps^{(u)}$ as rows. We will consider the columns $(\eps_{i,j}^{(u)})_{u \in [\ell]}$ to be ordered by decreasing $i$ and increasing $j$; thus the first $V$ columns of $M(\eps)$ correspond to $(i,j) = (n + 1 - t, j)$, $j \in [b_{n+1 - t}] = [V]$, these being the ones associated to the jump step.

To estimate the inner probability in \cref{counting-cor}, we will need to pay attention to the linear-algebraic structure of this matrix $M(\eps)$. To this end, recall the definition of rank profile from \cref{01-matrix-subsection} and split the sum according to the rank profile. Write $E_{(r; q_1,\dots, q_r)}$ for the contribution of the terms with $\rp(\eps^{(1)},\dots, \eps^{(\ell)})= (r; q_1,\dots, q_r)$ to \cref{counting-cor}. 

Our first main task (which is already somewhat lengthy) will be to show that only terms with $r = \ell$ and $q_{\ell} \le V$ contribute importantly; to this end will show that 
\begin{equation}\label{main-first-to-show} \sum_{\substack{(r; q_1,\dots, q_r) \\ r < \ell \mbox{\,\scriptsize or} \\ r = \ell \mbox{\, \scriptsize and \,} q_{\ell} > V}} E_{(r; q_1, \dots, q_r)} \ll 2^{-\Omega(V)} 2^{\ell \big(\sum_{i =1}^{n + 1 - t} b_i - (n - t) \big)}.\end{equation}
To see that this is indeed negligible compared to the stated bound in \cref{lem:Poisson} we use the assumption that $\tau \ge V^{-1}$, the fact that $g(x_i) \gg \sigma^{-1} e^{-C (\log V)^{1/2}}$ (here we use the assumption \cref{xi-assump}) and finally that $\sigma \asymp V^{1/2} 2^{n - t}$ (see \cref{basic-a-bds}) and that $V \ggg \ell$ (from \cref{param-heir}).

The proof of the claim \cref{main-first-to-show} will occupy the next few pages. 
Consider to begin with a fixed choice of $\eps^{(1)},\dots, \eps^{(\ell)}$ with $\rp(\eps^{(1)},\dots, \eps^{(\ell)}) = (r; q_1,\dots, q_r)$. To bound the inner sum in \cref{counting-cor}, first note that 
\begin{equation}\label{remove-sum-s} \sum_{s_1,\dots, s_{\ell} \in S} \mb{P} \Big(\sum_{(i,j) \in \Xi} \eps^{(u)}_{i,j} \mbf{a}_{i,j} = x_u - s_u, u \in [\ell]\Big) \le |S|^r \sup_{x'_u} \mb{P}\Big(\sum_{(i,j) \in \Xi} \eps^{(u)}_{i,j} \mbf{a}_{i,j} = x'_u, u \in [\ell]\Big), \end{equation} since (due to the rank of $M(\eps)$ being $r$) for fixed $x_u$ the probability on the left can only be nonzero for at most $|S|^r$ choices of $(s_1,\dots, s_{\ell})$, since all $s_u$s are determined by some subset of $r$ of them.

Consider the columns $(\eps_{i,j}^{(u)})_{u \in [\ell]}$ of $M(\eps)$ for $(i,j) \in \Xi$. We remind the reader that we order these first by decreasing $i$ and then by increasing $j$. Let the columns corresponding to increments in the rank be $(n +1 - i_1,j_1),\dots, (n +1 - i_r, j_r)$ where $t \le i_1 \le i_2 \le \cdots \le i_r$ (note that these are completely determined by the rank profile). In particular observe that 
\begin{equation}\label{qd-id}  \sum^{ n + 1 - t}_{n + 2- i_d }b_i < q_d \le \sum^{n + 1 - t}_{ n +1 -i_d} b_i,\end{equation} a relation we will use several times. We claim that 
\begin{equation}\label{single-prob} \mb{P} \Big(\sum_{(i,j) \in \Xi} \eps^{(u)}_{i,j} \mbf{a}_{i,j} = x'_u, u \in [\ell]\Big) \ll  V^{O(\ell)} \prod_{d = 1}^r 2^{i_d - n}.\end{equation}
To see this, first reveal all $\mbf{a}_{i,j}$ with $(i,j)$ not one of the $(n +1 -i_d, j_d)$, $d \in [r]$. The remaining elements $\mbf{a}_{n +1 -i_1,j_1},\dots, \mbf{a}_{n + 1 - i_r, j_r}$ are then determined (with the $\eps$ and $x'_u$ fixed). The claim \cref{single-prob} then follows from the anticoncentration bound \cref{anti-concentration-aij}, unless $n +1 - i_d = t$ in which case we must instead use \cref{anti-concentration-jump-scale}, which potentially gives rise to the $V^{O(\ell)}$ factors.

Combining \cref{remove-sum-s,single-prob} gives
\begin{equation}\label{1124-1126} \sum_{s_1,\dots, s_{\ell} \in S} \mb{P} \Big(\sum_{(i,j) \in \Xi} \eps^{(u)}_{i,j} \mbf{a}_{i,j} = x_u - s_u, u \in [\ell]\Big) \le V^{O(\ell)}|S|^r \prod_{d = 1}^r 2^{i_d - n}.  \end{equation}
From this, \cref{lem24} (i) and the definition of $E_{(r; q_1,\dots, q_r)}$ (just after \cref{counting-cor}) we have
\begin{align} \nonumber E_{(r; q_1,\dots, q_r)}  & \ll 2^{\ell^2} 2^{r|\Xi| - \sum_{d = 1}^r q_d}|S|^r V^{O(\ell)} \prod_{d = 1}^r 2^{i_d-n} \\ &  \ll V^{O(\ell)} 2^{r \big( \sum_{i=1}^{n + 1 - t} b_i - (n - t)\big) - \sum_{d = 1}^r (q_d - i_d + t)}.\label{bd-fixed-rank-profile} \end{align} 
(Here, the $2^{\ell^2}$ has been absorbed into the $V^{O(\ell)}$ term due to the assumption \cref{param-heir} on parameters, and for the final step we used \cref{xi-size} and the crude bound $|S| \le 2^{\sum_{i=1}^{n'} b_i}$, which follows from the assumption that $\tau \le 1$.) The bound \cref{bd-fixed-rank-profile} will be our key tool in the following analysis, the aim of which is to establish \cref{main-first-to-show}. Below we will use it examine the contributions to \cref{main-first-to-show} from various rank profiles. 

Suppose that $d \in \{1,\dots, r\}$. Using \cref{qd-id} we have
\begin{equation}\label{qd-low} q_d \ge \sum_{n + 2- i_d}^{n +1 - t} b_i = \sum_{n + 2- i_d}^{n} b_i - (t - 1 + \beta(t - 1)) \ge \sum_{n + 2- i_d }^{n} b_i - 2t_{\max},\end{equation}
 using the $T$-positive nature of $\beta$ (see \cref{good-walk-def}) and that $t \le t_{\max}$, $T \lll t_{\max}$.

Suppose first that $i_d \le 1 + \lceil n/2\rceil$. Then \cref{qd-low} gives $q_d \ge (i_d-1)  +\beta(i_d-1)  - 2t_{\max}$. By the $T$-positive nature of $\beta$ (and since $T \lll t_{\max}$) it follows that $q_d \ge i_d + i_d^{1/2 - \eta} - 3t_{\max}$, and therefore by \cref{calc-1} we have $q_d \ge i_d + \frac{1}{2}q_d^{1/2 - \eta} - 3t_{\max}$.

Alternatively, suppose that $i_d > 1 + \lceil n/2\rceil$. Then \cref{qd-low}, the $T$-positive nature of $\beta'$ and the assumption that $|D| \le M/10 \lll t_{\max}$ give $q_d \ge D -1 + i_d + \beta'(n - i_d) - 2t_{\max} \ge i_d + (n - i_d)^{1/2 - \eta} - 3t_{\max}$ and so $(n - q_d) \le (n - i_d) - (n - i_d)^{1/2 - \eta} + 3t_{\max}$. Note that by \cref{xi-upper-mat} and since $n + 1 - i_d \ge n' + 1$, we have $n - q_d, n-i_d > 0$. By \cref{calc-2} it follows that $(n - q_d) \le (n - i_d) - \frac{1}{2}(n - q_d)^{1/2 - \eta} + 6t_{\max}$.

In both cases we have
\begin{equation}\label{iq-bd}  q_d - i_d  \ge  \frac{1}{2}\min(q_d, n - q_d)^{1/2 - \eta} - O(t_{\max}).\end{equation}

We now turn to the promised analysis of the contributions to \cref{main-first-to-show} from various rank profiles.\vspace*{8pt}

Step 1: the contribution from $q_r \ge t_{\max}^{10}$. Using the parameter hierarchy \cref{param-heir} it follows from \cref{bd-fixed-rank-profile,iq-bd} that
\[ E_{(r; q_1,\dots, q_r)} \ll 2^{O(\ell t_{\max}) - \frac{1}{2}\sum_{d = 1}^r \min(q_d, n - q_d)^{1/2 - \eta}}2^{r \big(\sum_{i=1}^{n + 1 - t} b_i - (n - t) \big)} .\]
By \cref{bnt} we may replace the preceding with 
\[ E_{(r; q_1,\dots, q_r)} \ll 2^{O(\ell t_{\max}) - \frac{1}{2}\sum_{d = 1}^r \min(q_d, n - q_d)^{1/2 - \eta}}2^{\ell \big(\sum_{i=1}^{n + 1 - t} b_i - (n - t) \big)} .\]

We sum this over all rank profiles $(r; q_1, \dots, q_r)$ with $r \le \ell$ and $\max_d \min(q_d, n - q_d) = q$, i.e. all $q_d$ are either $\le q$ or $\ge n - q$, with at least one equality.  The number of choices for these is $\le \ell(2q)^{\ell}$, so the contribution of them to \cref{main-first-to-show} is 
\[ \ll \ell (2q)^{\ell}  2^{O(\ell t_{\max}) - \frac{1}{2}q^{1/2 - \eta}}2^{\ell \big(\sum_{i = 1}^{n + 1 - t} b_i - (n - t) \big)}.\] Taking into account the parameter hierarchy \cref{param-heir}, the sum of this over $q \ge t_{\max}^{10}$ is 
\[ \ll 2^{-t_{\max}^4} 2^{\ell \big(\sum_{i =1}^{n + 1 - t} b_i - (n - t) \big)}
,\]
which is completely negligible in comparison to the desired bound \cref{main-first-to-show} since $t_{\max}$ is much bigger than $V$. 

To complete the analysis of Step 1, it suffices to show that if $q < t_{\max}^{10}$ then $q_r < t_{\max}^{10}$. By the definition of $q$, it suffices to show that $q_r < n - t_{\max}^{10}$. This follows immediately from \cref{xi-upper-mat} and the parameter hierarchy, since $q_r \le |\Xi| \le n - L/2 < n - t_{\max}^{10}$. \vspace*{8pt} 

Step 2: The contribution to \cref{main-first-to-show} from $r < \ell$ and $q_r \le t_{\max}^{10}$.  We consider further the structure of the $\eps^{(u)}$. Suppose first that there are only $r$ distinct $\eps^{(u)}$. Then in order for the system of equations $\sum_{(i,j) \in \Xi} \eps_{i,j}^{(u)}\mbf{a}_{i,j}  = x_u - s_u$, $u \in [\ell]$ to have any solutions in the $\mbf{a}_{i,j}$ at all, we must have $x_u - s_u = x_{u'} - s_{u'}$ for some $u \ne u'$, and therefore $|x_u - x_{u'}| \le \operatorname{diam}(S) \le \max(S) \le 2^{n - L/2}$ by assumption. This is contrary to the separation assumption \cref{lem:Poisson:sep}.
 Thus the contribution of these $(\eps^{(1)},\dots, \eps^{(\ell)})$ to \cref{counting-cor} is zero.

Therefore there are at least $r+1$ different $\eps^{(u)}$. Suppose first that one of these is the zero vector. Then we have $x_u = s_u$ and so $x_u \le \max(S) \le 2^{n - L/2}$. However, as noted in \cref{xi-rough}, we have $x_u \asymp V 2^{n - t}$, so there is a contradiction since $L \ggg t$.

Suppose, then, that there are at least $r+1$ different nonzero $\eps^{(u)}$. However, we showed in \cref{lem24} (ii) that such matrices are relatively rare; there are at most $2^{\ell^2} 2^{(r - \frac{1}{10}) |\Xi|}$ of them, noting that the condition $|\Xi| \ge 10\ell$ required in \cref{lem24} (ii) is satisfied by \cref{xi-fr} and the parameter hierarchy.

Using \cref{1124-1126} (and recalling \cref{counting-cor}), we see that the contribution of this case to the LHS of \cref{main-first-to-show} is
\begin{equation}\label{633a} \ll V^{O(\ell)} 2^{(r - \frac{1}{10})|\Xi|}|S|^r \prod_{d = 1}^r 2^{i_d - n} \ll 2^{-|\Xi|/10}V^{O(\ell)} 2^{r \big( \sum_{i=1}^{n + 1 - t} b_i - (n - t) \big)} 2^{-rt + \sum_{d = 1}^r i_d}, \end{equation}
where in the last step we used \cref{xi-size} and the crude bound $|S| \le 2^{\sum_{i =1}^{n'} b_i}$.

To bound this we use \cref{iq-bd}, which certainly implies that $i_d \le q_d + O(t_{\max}) \le 2t_{\max}^{10}$, as well as $\sum_{i =1}^{n + 1 - t} b_i - (n - t) \ge - 2t_{\max}$, which is the left-hand inequality in \cref{bnt}. This gives that \cref{633a} is at most 
\[ \ll 2^{-|\Xi|/10} V^{O(\ell)} 2^{O(\ell t_{\max}^{10})} 2^{\ell \big(\sum_{i \le n + 1 - t} b_i - (n - t)\big)} . \]
This is bounded by the RHS of \cref{main-first-to-show} due to the fact that $|\Xi| > L/2$ (which is \cref{xi-fr}) and the parameter hierachy, which has $L$ vastly larger than $V, \ell$ and $t_{\max}$, and so the analysis of Step 2 is complete.\vspace*{8pt}

Step 3: The contribution to \cref{main-first-to-show} from $r = \ell$ and $V < q_{\ell} \le t_{\max}^{10}$. Here we use that $t$ is a $V$-jump step (\cref{jump-step-7}), which has not previously come in to the analysis. From \cref{bd-fixed-rank-profile} we see that it suffices to show that \begin{equation}\label{step3-main-to-show} \sum_{ \substack{q_1,\dots, q_s \le V \\  V < q_{s + 1}, \dots, q_{\ell} \le t_{\max}^{10}}} 2^{-\ell t + \sum_{d = 1}^{\ell}  (i_d - q_d)} \ll 2^{-V}.\end{equation} for every $s < \ell$ (and then sum over the $\ell$ choices for $s$ and the $V^{O(\ell)}$ choices for $q_1,\dots, q_s \le V$, always bearing in mind that $\ell$ is much smaller than $V$ so that these factors contribute only $2^{o(V)})$. Now for $d \le s$ we have $q_d \le V = b_{n + 1 - t}$ and so $i_d = t$ and hence $-t + (i_d - q_d) \le 0$, and so it suffices to show
\begin{equation}\label{suff-93} \sum_{V < q_d \le t_{\max}^{10}} 2^{-t + (i_d - q_d)} \ll 2^{-V}\end{equation} for all $d = s+1,\dots, \ell$, whereupon the desired bound \cref{step3-main-to-show} follows by taking the product over $d$ (since by the assumption of Step 3 there is at least one $d$ in the range). Fix such a $d \in \{s+1,\dots, \ell\}$. We have $i_d > t$. By \cref{iq-bd} we also have (comfortably) $i_d < n/2$, and so by \cref{qd-id} and \cref{jump-step-1} we have
\begin{align*}
q_d > \sum_{n +2 - i_d}^{n + 1 - t} b_i  & = V + \sum_{n +2 -  i_d}^{n - t} b_i = V + (i_d-1) + \beta(i_d -1) - t - \beta(t) \\ & \ge V + (i_d -1 - t) + (i_d -1 - t)^{1/2 - 2\eta}.
\end{align*}
The LHS of \cref{suff-93} is therefore bounded above by
\begin{equation}\label{suff94} \ll 2^{-V}\sum_{V < q_{d} \le t_{\max}^{10}} 2^{-(i_{d} - 1- t)^{1/2 - 2\eta}} \ll 2^{-V}\sum_{V < q_{d} \le t_{\max}^{10}} 2^{-(i_{d} - t)^{1/2 - 2\eta}}.\end{equation}
To bound this we additionally use \cref{jump-step-2} (and \cref{qd-id} again). This gives that 
\[ q_{d} \le \sum_{n+1  - i_d}^{n + 1 - t} b_i \le V + O \big(\sum_{u = 1}^{i_d - t} u^{2 \kappa}\big) \le V + O\big((i_d - t)^{1 + 2\kappa}\big),\]
and so
\[ (i_d - t)^{1/2 - 2\eta} \gg  (q_d - V)^{(1/2 - 2\eta)/(1 + 2\kappa)} \ge (q_d - V)^{1/3} \] (since $\eta = \kappa = \frac{1}{100}$). Therefore \cref{suff94} is bounded by 
\[ 2^{-V}\sum_{q_d > V} 2^{- \Omega((q_{d} - V)^{1/3})} \ll 2^{-V}\]
and so the required bound \cref{suff-93} holds.\vspace*{8pt}

Steps 1,2 and 3 cover all cases in \cref{main-first-to-show} and so the proof of that statement is now complete. The remaining argument required for the proof of \cref{lem:Poisson} concerns the contribution to \cref{counting-cor} from $\rp(\eps^{(1)},\dots, \eps^{(\ell)}) = (\ell; q_1,\dots, q_{\ell})$ with $q_{\ell} \le V$. Equivalently, the matrix $(\eps^{(u)}_{n + 1 -t,j})_{u \in [\ell], j \in [V]}$ formed from the first $V$ columns of $M(\eps)$ has full rank $\ell$. Beyond this point we will use only this property; the notion of rank profile will have no further role.

Before turning to the heart of the argument, we make one further reduction, which is to the case where at least $4^{-\ell} V$ of the $V$ columns $(\eps_{n + 1 - t,j}^{(u)})_{u \in [\ell]}$, $j = 1,\dots, V$ are the basis vector $e_i \in \mbf{Q}^{\ell}$, for every $i = 1,\dots, \ell$. In this situation we say that $(\eps^{(1)},\dots, \eps^{(\ell)})$ is \emph{good}. We claim that (with the $\eps$ sampled independently at random)
\begin{equation}\label{good-prob} \mb{P}_{\eps} ((\eps^{(1)},\dots, \eps^{(\ell)}) \; \mbox{bad}) \le e^{-\Omega(2^{-\ell} V)}.\end{equation}
For fixed $i \in \{1,\dots, \ell\}$ denote by $X_j$ the event that $(\eps_{n + 1 - t,j}^{(u)})_{u \in [\ell]} =  e_i \in \mbf{Q}^{\ell}$, thus $X_j$ is a Bernoulli random variable with mean $p := 2^{-\ell}$ and $X := \sum_{j = 1}^V X_j$ has the binomial $\operatorname{Bi}(V,p)$ distribution with mean $pV$. By \cref{binom-ld} (and since $p \le \frac{1}{2}$) we have $\mb{P}(X\le p^2 V) \le  \mb{P}( |X - \mb{E} X| \ge \frac{1}{2} \mb{E} X) \le 2 e^{-pV/12}$. Summing over $i = 1,\dots, \ell$ gives 
\[ \mb{P}_{\eps} ((\eps^{(1)},\dots, \eps^{(\ell)}) \; \mbox{bad}) \le  2\ell e^{-pV/12} \le e^{-\Omega(2^{-\ell} V)},\] which is the desired claim \cref{good-prob}.

Thus the \emph{number} of $(\eps^{(1)},\dots, \eps^{(\ell)})$ which are bad is $\le e^{-\Omega(2^{-\ell} V)} 2^{\ell |\Xi|}$. For each such bad matrix, we bound the inner sum in \cref{counting-cor} using \cref{1124-1126}, as before. Now we have $r = \ell$ and $i_1 = \cdots = i_{\ell} = t$ since all the rank jumps of $M(\eps)$ occur in the first $V$ columns (with the ordering of columns described above, that is to say these first $V$ columns are the ones indexed by $(n + 1 -t,1),\dots, (n + 1 - t,V)$). Thus we see that the contribution of these terms to \cref{counting-cor} is
\begin{equation}\label{eq339pre} \ll e^{-\Omega(2^{-\ell} V)} 2^{\ell |\Xi|} \cdot |S|^{\ell} V^{O(\ell)} 2^{\ell(t - n)} .\end{equation}
By the parameter hierarchy we have $e^{-\Omega(2^{-\ell} V)} V^{O(\ell)} < e^{-V^{1/2}}$. Using the trivial bound $|S| \le 2^{\sum_{i =1}^{n'} b_i}$ and \cref{xi-size}, we see that \cref{eq339pre} is
\begin{equation*} \ll e^{-V^{1/2}} 2^{\ell \big(\sum_{i =1}^{n + 1 - t} b_i - (n - t)\big)}.\end{equation*} 
This is negligible compared to the stated bound in \cref{lem:Poisson} by the discussion immediately following \cref{main-first-to-show}.\vspace*{8pt}

It remains to analyse the contribution to \cref{counting-cor} from good tuples $(\eps^{(1)},\dots, \eps^{(\ell)})$. To spell it out, we wish to show that 
\begin{equation}\label{counting-cor-2} \sum_{\substack{(\eps^{(1)}, \dots, \eps^{(\ell)}) \\  \operatorname{good}\\ s_1,\dots, s_{\ell} \in S}} \mb{P} \Big(\sum_{(i,j) \in \Xi} \eps^{(u)}_{i,j} \mbf{a}_{i,j} = x_u - s_u, u \in [\ell]\Big) = \big(1 + O(V^{-1/4})\big) \Big( \tau 2^{\sum_{i =1}^{n + 1 - t} b_i}\Big)^{\ell}\prod_{i = 1}^{\ell} g(x_i).\end{equation}
We now reduce to handling only the indices with $i = n + 1 -t$, that is to say `at the jump step'. To this end write 
\[ h(\mbf{A},\eps^{(u)}) = \sum_{\substack{(i,j) \in \Xi \\ i < n + 1 - t}} \eps_{i,j}^{(u)} \mbf{a}_{i,j}.\] 
Observe that using \cref{dyadic-aij,jump-step-2} that we have (for all choices of the $\eps^{(u)}$, and deterministically in $\mbf{A}$)
\begin{equation}\label{h-bound} |h(\mbf{A},\eps^{(u)})| \ll 2^{n - t}\sum_{u \ge 1} b_{n + 1 - t - u} 2^{- u}  \ll 2^{n - t} \sum_{u \ge 1} u^{2\kappa}2^{-u} \ll 2^{n - t}.\end{equation} 
Write $\eps_{\jump}^{(u)} = (\eps_{n + 1 - t,j}^{(u)})_{j \in [V]} \in \{0,1\}^V$ and also $\mbf{a}_{\jump, j} := \mbf{a}_{n+ 1 - t,j}$. In the light of \cref{h-bound}, and since $|S| = \tau 2^{\sum_{i =1}^{n'} b_i}$ and $|\Xi| = \sum_{i = n' + 1}^{n + 1 - t} b_i = V + \sum_{i = n' + 1}^{n -t} b_i$, in order to establish \cref{counting-cor-2} it suffices to show 
\begin{equation}\label{counting-cor-3} \sum_{(\eps_{\jump}^{(1)}, \dots, \eps_{\jump}^{(\ell)})  \operatorname{good}}\mb{P}\Big( \sum_{j = 1}^V \eps_{\jump,j}^{(u)} \mbf{a}_{\jump,j} = x'_u, u \in [\ell]  \Big) = \big(1 + O(V^{-1/4})\big) 2^{V\ell} \prod_{i=1}^{\ell} g(x_i) \end{equation} uniformly for $|x'_u - x_u| \ll  2^{n - t}$, applying this bound with the choice $x'_u = x_u - s_u - h(\mbf{A},\eps^{(u)})$ then summing over all $s_u$, all $\eps_{i,j}^{(u)}$, $i < n + 1 - t, j \in [b_i]$, and revealing the remaining elements $(\mbf{a}_{i,j})_{i < n + 1 - t}$ of $(\mbf{a}_{i,j})_{(i,j) \in \Xi}$. (Here there is a slight abuse of notation in talking about $(\eps_{\jump}^{(1)}, \dots ,\eps_{\jump}^{(\ell)})$ being good, a notion which was originally defined with reference to the full vectors $\eps^{(u)}$ but in fact only depends on the $\eps_{\jump}^{(u)}$.)

Now we observe that 
\begin{equation}\label{g-lipschitz-claim} \prod_{i = 1}^{\ell} g(x_i) = (1 + O(V^{-1/4})) \prod_{i = 1}^{\ell} g(x'_i).\end{equation}
This follows fairly simply from the analytic properties of $g( \cdot)$. Indeed, using $|x'_i - x_i| \ll 2^{n - t}$ and that $\sigma \asymp V^{1/2} 2^{n - t}$, we have $|\sigma^{-1}(x_i - \mu) - \sigma^{-1} (x'_i -\mu)| \ll V^{- 1/2}$, and so from the uniformly Lipschitz nature of $e^{-x^2/2}$ we have $|g(x_i) - g(x'_i)| \ll V^{- 1/2} \sigma^{-1}$. Therefore $|\prod_{i = 1}^{\ell} g(x_i) - \prod_{i = 1}^{\ell} g(x'_i)| \ll 2^{\ell} \sigma^{-\ell} V^{- 1/2} \ll V^{-1/3} \sigma^{-\ell}$. Finally, using that $\sigma^{-1}|x'_i - \mu| \le \sigma^{-1}|x_i - \mu| + O(V^{- 1/2}) \le 2(\log V)^{1/4}$ we have $g(x'_i) \ge (2\pi)^{-1/2}\sigma^{-1} e^{-2(\log V)^{1/2}}$, and so $V^{-1/3} \sigma^{-\ell} \ll V^{-1/4} \prod_{i = 1}^{\ell} g(x'_i)$. This completes the proof of \cref{g-lipschitz-claim}.
Thus \cref{counting-cor-3} follows from 
\begin{equation}\label{counting-cor-4} \sum_{(\eps_{\jump}^{(1)}, \dots, \eps_{\jump}^{(\ell)}) \operatorname{good}}\mb{P}\Big( \sum_{j = 1}^V \eps_{\jump,j}^{(u)} \mbf{a}_{\jump,j} = x'_u, u \in [\ell]  \Big) = (1 + O(V^{-1/4}))  2^{V\ell} \prod_{i = 1}^{\ell} g(x'_i), \end{equation} 
where we may assume $\sigma^{-1} |x'_i - \mu| \le 2(\log V)^{1/4}$.
We phrase this task probabilistically, thus \cref{counting-cor-4} is the statement that 
\begin{equation}\label{counting-cor-4a}
\mb{P}\Big( \sum_{j = 1}^V \boldeps_{\jump,j}^{(u)} \mbf{a}_{\jump,j} = x'_u, u \in [\ell]  \Big) =  \big(1 + O(V^{-1/4})\big) \prod_{i = 1}^{\ell} g(x'_i),
\end{equation}
where the tuple $(\boldeps_{\jump}^{(1)},\cdots,\boldeps_{\jump}^{(\ell)})$ is sampled uniformly from all good tuples, and the probability is taken over the random choices of the $\boldeps$ and the $\mbf{a}$. Here, we have noted that by our earlier estimate \cref{good-prob} on the number of bad tuples, and the parameter hierarchy \cref{param-heir}, the sample space of $\boldeps$ tuples has size $\big( 1 - O(e^{-\Omega( 2^{-\ell} V)})\big)^{\ell} 2^{V\ell} = (1 + O(V^{-1/2})) 2^{V\ell}$.

As before we have $g(x'_i) \ge (2\pi)^{-1/2}\sigma^{-1} e^{-2(\log V)^{1/2}}$ and so in order to establish \cref{counting-cor-4a} it is enough to prove
\begin{equation}\label{counting-cor-4b}
\mb{P}\Big( \sum_{j = 1}^V \boldeps_{\jump,j}^{(u)} \mbf{a}_{\jump,j} = x'_u, u \in [\ell]  \Big) =   \prod_{i=1}^{\ell} g(x'_i) + O(V^{-1/4} \sigma^{-\ell}).
\end{equation}
We will prove this for \emph{all} $(x'_u)_{u \in [\ell]} \in \mbf{R}^{\ell}$. For notational brevity, until the end of the proof (of \cref{lem:Poisson-weights}) we drop the jump subscripts; thus we write simply $\boldeps_j := \boldeps_{\jump, j}$, $\mbf{a}_{j} := \mbf{a}_{\jump, j}$. Define the random $\mbf{R}^{\ell}$-valued random variable
\begin{equation}\label{x-rv-def} X := \frac{1}{\sigma}\big( \sum_{j = 1}^V \boldeps_{j}^{(u)} \mbf{a}_{j} - \mu\big)_{u \in [\ell]}.\end{equation}  Let 
\[ \gamma(x) :=  \sigma^{-\ell} (2\pi)^{-\ell/2} e^{-\Vert x \Vert_2^2/2}.\] Then (dropping the dashes on the $x'_i$) the desired bound \cref{counting-cor-4b} can be rephrased as the following local limit bound:
\begin{equation}\label{counting-cor-5} \sup_{x \in ((\mbf{Z} - \mu)/\sigma)^{\ell}}\big| \mb{P}(X = x) - \gamma(x) \big| \ll V^{-1/4} \sigma^{-\ell}.\end{equation} As usual with statements of local limit type, we approach this task with a Fourier argument.
By orthogonality of characters we have that 
\begin{equation*} \mb{P}(X = x) =  (2\pi \sigma)^{-\ell}\int_{[-\pi\sigma,\pi \sigma]^{\ell}}e^{-i\theta \cdot x}\mb{E} (e^{i\theta \cdot X})\, \mathrm{d}\theta.\end{equation*}
Also, by direct calculation we have 
\begin{equation*} \gamma(x)  = (2\pi \sigma)^{-\ell}\int_{\mathbf{R}^{\ell}} e^{-i \theta \cdot x} e^{-\Vert \theta\Vert_2^2/2} \, \mathrm{d}\theta.
\end{equation*}
Therefore the required bound \cref{counting-cor-5} follows from 
\begin{equation}\label{counting-cor-6} \Big| \int_{\mathbf{R}^{\ell}} e^{-i \theta \cdot x} e^{-\Vert \theta\Vert_2^2/2} \, \mathrm{d}\theta  - \int_{[-\pi\sigma,\pi \sigma]^{\ell}}e^{-i\theta \cdot x}\mb{E} (e^{i\theta \cdot X})\, \mathrm{d}\theta \Big| \ll V^{-1/4}.\end{equation}
We prove this by establishing the following three estimates:
\begin{equation}\label{est-1} \sup_{\theta \in [-\log V, \log V]^{\ell}} \big|\mb{E}(e^{i \theta \cdot X}) - e^{-\Vert \theta \Vert_2^2/2} \big|  \ll V^{-2/7}  ;\end{equation}
\begin{equation}\label{est-2} \int_{\mbf{R}^{\ell} \setminus [-\log V, \log V]^{\ell} } e^{-\Vert \theta \Vert_2^2/2} \, \mathrm{d}\theta  \ll V^{-10};\end{equation} and
\begin{equation}\label{est-3} \int_{[-\pi \sigma, \pi \sigma]^{\ell} \setminus [-\log V, \log V]^{\ell} } |\mb{E}(e^{i \theta \cdot X})|  \, \mathrm{d}\theta  \ll V^{-10}.\end{equation}
These three estimates are easily seen to imply \cref{counting-cor-6}, using here that $(2\log V)^{\ell} = V^{o(1)}$ to handle the contribution from $\Vert \theta \Vert_{\infty} \le \log V$.\vspace*{8pt}

\emph{Proof of \cref{est-1}.} This is essentially a standard computation associated with proving a multivariate central limit theorem. Since the proportion $(\eps_{\jump}^{(1)}, \dots ,\eps_{\jump}^{(\ell)})$ of tuples in $(\{0,1\}^V)^{\ell}$ that are bad is exponentially small in $V$ (see \cref{good-prob}), in this range we may for the purposes of proving \cref{est-1} drop the `good' condition; that is to say it suffices to prove \cref{est-1} with $X$ replaced by the variable $\tilde X$, defined as in \cref{x-rv-def} but with the $\boldeps_{j}^{(u)}$ sampled independently at random from $\{0,1\}$. Let $\theta = (\theta_1,\dots, \theta_{\ell})$ and for notational convenience in the rest of the proof write $e(\xi) := e^{i\xi}$. Assume (as in the estimate \cref{est-1} we are trying to prove) that $|\theta_u| \le \log V$ for all $u$. 

From this definition and \cref{mu-sig-def} we have
\begin{align}\nonumber \mb{E}(e^{i \tilde X \cdot \theta}) &  = \mb{E}_{\boldeps,\mbf{a}} e\big( \frac{1}{\sigma} \sum_{u = 1}^{\ell} \sum_{j = 1}^V (\boldeps_j^{(u)} \mbf{a}_j - \tfrac{1}{2}\mu_j)\theta_u \big) \\ &  = \mb{E}_{\mbf{a}}\prod_{j=1}^{V}\prod_{u = 1}^{\ell}e\big( \frac{1}{2\sigma}  (\mbf{a}_j -\mu_j) \theta_u \big)\mb{E}_{\boldeps} \prod_{j=1}^{V}\prod_{u = 1}^{\ell}e\big( \frac{1}{\sigma}  (\boldeps_j^{(u)}-\tfrac{1}{2}) \mbf{a}_j \theta_u \big). \label{first-328}\end{align}

To estimate the inner average over the $\boldeps_j^{(u)}$ we use that 
\begin{equation}\label{taylor-eit} \frac{1}{2}\big(e(\xi)+e(-\xi)\big) = e^{-\xi^2/2} (1 + O(|\xi|^3))\end{equation} for $|\xi| \le 1$. We apply this with $\xi = \mbf{a}_j \theta_u/2\sigma$. Note (see \cref{basic-a-bds}) that $\mbf{a}_j \asymp 2^{n - t}$, $|\theta_u| \le \log V$ and $\sigma \asymp V^{1/2} 2^{n - t}$, so $|\xi| \ll V^{-1/2}\log V$. Therefore substituting \cref{taylor-eit} into \cref{first-328} gives
\[ \mb{E}_{\mbf{a}}(e^{i \tilde X \cdot \theta}) =   \mb{E}(1 + \eta(\mbf{a}))\prod_{j=1}^{V}\prod_{u = 1}^{\ell}  e\big( \frac{1}{2\sigma}  (\mbf{a}_j -\mu_j) \theta_u \big) \cdot \exp\Big(-\sum_{j=1}^{V}\sum_{u=1}^{\ell}\frac{\mbf{a}_j^2\theta_u^2}{8\sigma^2}\Big) \] where $\Vert \eta \Vert_{\infty} \ll \ell V ( V^{-1/2} \log V)^3 \ll V^{-1/2 + o(1)}$. 
Now from \cref{mu-sig-def,basic-a-bds} we have $\mbf{a}_j^2 = (1 + O(V^{-1/3})) \mu_j^2 = (1 + O(V^{-1/3}) )\mb{E} \mbf{a}_j^2$ and so $\sum_{j = 1}^V \mbf{a}_j^2 = 4 \sigma^2 (1 + O(V^{-1/3}))$; since $\sum_{u = 1}^{\ell} \theta_u^2 = \Vert \theta \Vert_2^2$, this gives
\[ \mb{E}(e^{i \tilde X \cdot \theta}) =  e^{-\Vert \theta \Vert_2^2/2}  \mb{E}_{\mbf{a}}(1 + \eta'(\mbf{a}))\prod_{j=1}^{V}\prod_{u = 1}^{\ell}  e\big( \frac{1}{2\sigma}  (\mbf{a}_j -\mu_j) \theta_u \big)  \] where $\Vert \eta' \Vert_{\infty} \ll \ell V^{-1/3}\log V \ll V^{-2/7}$. Consequently
\[ \mb{E}(e^{i \tilde X \cdot \theta}) =  O(V^{-2/7}) + e^{-\Vert \theta \Vert_2^2/2}  \mb{E}_{\mbf{a}}\prod_{j=1}^{V}\prod_{u = 1}^{\ell}  e\big( \frac{1}{2\sigma}  (\mbf{a}_j -\mu_j) \theta_u \big) . \] 
Hence, in order to prove the desired result \cref{est-1} it suffices to prove that 
\begin{equation}\label{328-suff} \mb{E}\prod_{j=1}^{V}\prod_{u = 1}^{\ell}e\big( \frac{1}{2\sigma}  (\mbf{a}_j -\mu_j) \theta_u \big) = 1 + O(V^{-1/2}).\end{equation}
Note that the LHS here is
\[ \prod_{j = 1}^V \mb{E}_{\mbf{a}_j} e\big( \lambda(\mbf{a}_j - \mu_j)\big) , \] where $\lambda := \frac{1}{2\sigma} \sum_{u = 1}^{\ell} \theta_u$. By \cref{basic-a-bds} and the fact that $|\theta_u| \le \log V$ we have $|\lambda (\mbf{a}_j - \mu_j)| \ll (\ell \log V) V^{-5/6} \ll V^{-3/4} \lll 1$ and so  
\[e\big( \lambda(\mbf{a}_j - \mu_j)\big) = 1 + i\lambda (\mbf{a}_j - \mu_j) + O(\lambda^2 (\mbf{a}_j - \mu_j)^2); \] taking expectations gives
\[\mb{E}_{\mbf{a}_j} e\big( \lambda(\mbf{a}_j - \mu_j)\big) = 1  + O(\lambda^2 (\mbf{a}_j - \mu_j)^2) = 1 + O(V^{-3/2}), \] and \cref{328-suff} follows by taking the product over $j = 1,\dots, V$.\vspace*{8pt}

\emph{Proof of \cref{est-2}.} This is a straightforward exercise using $\int_{t > r} e^{-t^2/2} dt \le r^{-1} \int_{t > r} t e^{-t^2/2} dt = r^{-1} e^{-r^2/2}$.\vspace*{8pt}

\emph{Proof of \cref{est-3}.} We split into dyadic ranges $K \le \Vert \theta \Vert_{\infty} < 2K$, $K \in \{ 2^i \log V : i = 0,1,\dots\}$. The region of integration in \cref{est-3} corresponding to such a range has volume $\le (4K)^{\ell}$ and so it suffices to show the pointwise bound
\begin{equation} \label{eq331} |\mb{E}(e^{i X \cdot \theta})| \ll (8 K)^{-\ell} V^{-10}\end{equation}
 uniformly for $K \le \Vert \theta \Vert_{\infty} < 2K$. 

We first consider the smaller ranges $\log V \le K \ll V^{1/2}/\log V$ (say). As in the analysis of \cref{est-1} it suffices to prove \cref{eq331} with $X$ replaced by $\tilde X$, defined as in \cref{x-rv-def} but without the `good' condition, that is to say with the $\boldeps_j^{(u)}$ sampled independently from $\{0,1\}$. From the definition \cref{x-rv-def} and the triangle inequality we have 
\[ \big|\mb{E}(e^{i \tilde X \cdot \theta})\big|  \le  \mb{E}_{\mbf{a}}\Big| \mb{E}_{\boldeps} e\big( \frac{1}{\sigma} \sum_{u = 1}^{\ell} \sum_{j = 1}^V \boldeps_j^{(u)} \mbf{a}_j \theta_u \big) \Big|  = \mb{E}_{\mbf{a}} \prod_{u = 1}^{\ell} \prod_{j = 1}^V \Big| \frac{1 + e(\theta_u \mbf{a}_j/\sigma)}{2} \Big|.
 \]
Note that, since $\mbf{a}_j \asymp 2^{n - t}$, $\sigma \asymp V^{1/2} 2^{n - t}$ and $|\theta_u| = o(V^{1/2})$, the arguments of the exponentials here are $o(1)$ and we may use the inequality $\frac{1}{2}|1 + e(\xi)| \le e^{-|\xi|^2/8}$ to give $|\mb{E}(e^{i \tilde X \cdot \theta})| \le e^{-\Omega(K^2)}$. This is bounded as in \cref{eq331} for $K \ge \log V$, using here that $\ell$ is much smaller than $V$ by the parameter hierarchy.

Now we turn to the larger values $K \ge V^{1/2}/\log V$. Here, we apply the triangle inequality the other way around, obtaining
\begin{equation*} \big|\mb{E}(e^{i X \cdot \theta})\big|  \le \mb{E}_{(\boldeps^{(1)},\dots, \boldeps^{(\ell)}) \operatorname{good}} \Big| \mb{E}_{\mbf{a}} e\big( \frac{1}{\sigma} \sum_{u = 1}^{\ell} \sum_{j = 1}^V \boldeps_j^{(u)} \mbf{a}_j \theta_u \big) \Big|.\end{equation*}
For $j \in [V]$ denote by $\phi_j$ the characteristic function $\phi_j(\xi) := \mb{E} e(\xi \mbf{a}_j)$. Then the preceding may be rewritten as 
\[ \big|\mb{E}(e^{i X \cdot \theta})\big| \le \mb{E}_{(\boldeps^{(1)},\dots, \boldeps^{(\ell)}) \operatorname{good}} \prod_{j = 1}^V \Big| \phi_j\big(\frac{1}{\sigma}\sum_{u = 1}^{\ell} \boldeps_j^{(u)} \theta_u\big)\Big|.\]
Consequently, 
\begin{equation}\label{exp-avg-bd} \big|\mb{E}(e^{i X \cdot \theta})\big| \le \prod_{i= 1}^{\ell} \sup_j\big|\phi_j(\theta_i/\sigma)\big|^{4^{-\ell} V} \le \sup_j \big|\phi_j(\Vert \theta \Vert_{\infty}/\sigma)\big|^{4^{-\ell} V}:\end{equation} 
this follows by picking out just those indices $j$ for which $(\boldeps_j^{(u)})_{u \in [\ell]} = e_i \in \mbf{Q}^{\ell}$, for $i = 1,\dots, \ell$; by the definition of `good' there are at least $4^{-\ell} V$ such $j$ for each value of $i$. We may use the trivial bound $|\phi_j(\theta_u/\sigma)| \le 1$ everywhere else.

To obtain estimates for the quantities $\phi_j(\Vert \theta \Vert_{\infty}/\sigma)$, first recall from \cref{a-jump-dist} how $\mbf{a}_j$ is distributed. With the notation used there (recalling that $\mbf{a}_j = \mbf{a}_{\jump, j} = \mbf{a}_{n + 1 - t, j}$), it follows that 
\[ \phi_j(\xi) = \frac{\alpha_0}{x_0} e(\xi x_0) + \frac{\alpha_1}{x_1} e(\xi x_1) + \big(1 - \frac{\alpha_0}{x_0} - \frac{\alpha_1}{x_1}\big) \mb{E} (e^{i \xi X}),\] where $X$ is the random variable taking values in $(x_0, x_1) \subset \Z$, with $\mb{P} (X = x) \propto 1/x$. In our context $\alpha_0/x_0$, $\alpha_1/x_1$ are minuscule and so we certainly have
\begin{equation*} |\phi_j(\xi)| \le \tfrac{1}{2}\big(1 + |\mb{E}(e^{i \xi X})|\big).\end{equation*}
We can estimate $|\mb{E}(e^{i \xi X})|$ using \cref{a1-lem}, which is designed for this purpose. We take $N = x_0$, $R = x_1 - x_0 + 1$ and $\xi = \Vert \theta \Vert_{\infty}/\sigma$. One may easily check that $N \asymp 2^{n - t}$ and $R \asymp V^{-1/3} 2^{n - t}$. Since $\sigma \asymp V^{1/2} 2^{n - t}$, we have $\xi R \asymp \Vert \theta \Vert_{\infty} V^{-5/6} \asymp K V^{-5/6}$.
\cref{a1-lem} then gives
\begin{equation} \label{eq332}  \big|\phi_j(\Vert \theta \Vert_{\infty}/\sigma)\big| \le \left\{ \begin{array}{ll} 1 - c K^2 V^{-5/3}  &  K \le CV^{5/6}; \\ \frac{3}{4} & K \ge CV^{5/6}.\end{array} \right.\end{equation}
Here, $c, C$ are absolute constants with $0 < c < 1 < C$ (independent of $j$, and $c$ may be smaller than the constant in \cref{a1-lem}).

For $V^{1/2}/\log V \le K \le CV^{5/6}$ we use the first bound in \cref{eq332}, obtaining from \cref{exp-avg-bd} that $|\mb{E}(e^{i X \cdot \theta})|\ll (1 - c \frac{V^{-2/3}}{\log^2 V})^{4^{-\ell} V} \ll e^{-V^{1/4}}$ which is certainly a bound of strength \cref{eq331}. For $K \ge CV^{5/6}$ we instead use the second bound in \cref{eq332} getting $|\mb{E}(e^{i X \cdot \theta})| \ll (\frac{3}{4})^{4^{-\ell} V}$, which is again (massively) of strength \cref{eq331}.

We have now completed the proof of the three statements \cref{est-1,est-2,est-3}, and hence \cref{counting-cor-6}. Tracing back through the successive reductions \cref{counting-cor-5}, \cref{counting-cor-4}, \cref{counting-cor-3} and \cref{counting-cor-2}, we see that the contribution to \cref{counting-cor} from the good tuples $(\eps^{(1)},\dots, \eps^{(\ell)})$ provides the main term in \cref{lem31-main-weighted}, and the proof of \cref{lem:Poisson-weights} is (at last) complete. \end{proof}

As previously stated, we will also require the following technical estimate, which will be used to deduce the unweighted \cref{lem:Poisson} from the version \cref{lem:Poisson-weights} with weights.

\begin{lemma}\label{lem:Poisson-upper} Let the notation and setup be as in \cref{lem:Poisson} and, as in \cref{lem:Poisson-weights}, let $R_{\mbf{A}}(x)$ be the number of representations of $x$ in $S + \Sigma(\mbf{A}^{> n'}_{\med})$. Then we have 
\[\mb{E} R_{\mbf{A}}(x_1)  R_{\mbf{A}}(x_2)\cdots R_{\mbf{A}}(x_{\ell})(R_{\mbf{A}}(x_\ell)-1)\le V^{O(\ell)} \Bigg(\frac{2^{\sum_{i =1}^{n + 1 - t} b_i}}{\sigma}\Bigg)^{\ell+1}.\] 
\end{lemma}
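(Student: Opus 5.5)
The plan is to rerun the counting argument from the proof of \cref{lem:Poisson-weights}, now with $\ell+1$ rows, and to settle for an upper bound of crude $V^{O(\ell)}$ quality. Write $R_{\mbf{A}}(x_\ell)(R_{\mbf{A}}(x_\ell)-1)$ as the number of ordered pairs of \emph{distinct} representations $(\eps^{(\ell)},s_\ell)\neq(\eps^{(\ell+1)},s_{\ell+1})$ of $x_\ell$. Then the expectation becomes
\[ \sum_{\eps^{(1)},\dots,\eps^{(\ell+1)}}\ \sum_{s_1,\dots,s_{\ell+1}\in S} \mb{P}\Big(\sum_{(i,j)\in\Xi}\eps^{(u)}_{i,j}\mbf{a}_{i,j}=x_u-s_u,\ u\in[\ell+1]\Big), \]
where $x_{\ell+1}:=x_\ell$ and the pairs $(\eps^{(\ell)},s_\ell)$ and $(\eps^{(\ell+1)},s_{\ell+1})$ are required to differ. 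We split by the rank profile $(r;q_1,\dots,q_r)$ of the $(\ell+1)\times|\Xi|$ matrix $M(\eps)$. The bound \cref{bd-fixed-rank-profile} still applies, with $r\le \ell+1$ and $2^{(\ell+1)^2}$ absorbed into $V^{O(\ell)}$. The target $\big(2^{\sum_{i\le n+1-t}b_i}/\sigma\big)^{\ell+1}$ equals $V^{-O(\ell)}2^{(\ell+1)(\sum_{i\le n+1-t} b_i-(n-t))}$, since $\sigma\asymp V^{1/2}2^{n-t}$. So it suffices to bound every contribution by $V^{O(\ell)}2^{(\ell+1)(\sum_{i \le n+1-t} b_i-(n-t))}$.

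\textbf{Full rank $r=\ell+1$.} Steps 1 and 3 of the previous proof go through verbatim with $\ell$ replaced by $\ell+1$: large $q_r$ is killed by \cref{iq-bd}, and $q_{\ell+1}>V$ is handled by the jump-step structure. If all $q_d\le V$, then all $i_d=t$ and \cref{bd-fixed-rank-profile} gives directly $V^{O(\ell)}2^{(\ell+1)(\sum_{i \le n+1-t} b_i-(n-t))}$. This is acceptable, since no Gaussian asymptotic is needed here, only the trivial anticoncentration \cref{anti-concentration-jump-scale}.

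\textbf{Rank deficient $r\le \ell$.} This is the main point. First, $r<\ell$ is impossible unless rows among $\eps^{(1)},\dots,\eps^{(\ell)}$ create dependencies, and those are treated as in Step 2: either two of the first $\ell$ rows coincide, which contradicts the separation \cref{lem:Poisson:sep}, or a row is zero, which contradicts \cref{xi-rough}, or there are many distinct rows, which is rare by \cref{lem24}(ii). The genuinely new case is $r=\ell$ with $\eps^{(\ell+1)}$ in the span of the others.

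If $\eps^{(\ell+1)}=\eps^{(\ell)}$, then $s_{\ell+1}=s_\ell$, so the two representations coincide, which is excluded. Otherwise there are at least $r+1$ distinct nonzero rows, and \cref{lem24}(ii) bounds the number of such matrices by $2^{(\ell+1)^2}2^{(r-\frac1{10})|\Xi|}$. Combined with \cref{1124-1126} and $|\Xi|>L/2$ from \cref{xi-fr}, this contributes
\[ 2^{-|\Xi|/10}2^{O(\ell t_{\max}^{10})}2^{r(\sum_{i \le n+1-t} b_i-(n-t))}. \]
One must then convert the exponent $r=\ell$ to $\ell+1$. By \cref{bnt}, the quantity $\sum_{i \le n+1-t} b_i-(n-t)\ge -2t_{\max}$, so this costs at most $2^{2t_{\max}}$, and that is swamped by $2^{-L/20}$ through the hierarchy \cref{param-heir}.

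I expect the main obstacle to be this bookkeeping in the rank-deficient case. Specifically, one must check that the rank jumps $i_d$ for the extra row still satisfy \cref{iq-bd}, and that the distinctness of the last two representations really forces either a new distinct nonzero row or a rank increase. The case of equal $\eps$ with different $s$ is impossible, because equal sums force $s_\ell=s_{\ell+1}$. Once this is settled, summing over rank profiles with $V^{O(\ell)}$ multiplicity gives the claim.
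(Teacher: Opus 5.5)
Your proposal is correct and follows the paper's proof essentially step for step: you write the factorial moment as a sum over $(\ell+1)$-tuples with $\eps^{(\ell)}\neq\eps^{(\ell+1)}$ (equal $\eps$ forcing equal $s$), rerun Steps 1--3 of \cref{lem:Poisson-weights} with $\ell+1$ rows (distinctness of the last two representations rules out the only new coincidence in Step 2), and bound the full-rank terms with all $q_d\le V$ directly by \cref{bd-fixed-rank-profile} with every $i_d=t$. Two small bookkeeping points. First, the large-$q_r$ reduction (Step 1) must be applied to every rank $r$, not only $r=\ell+1$, since your rank-deficient estimate relies on $i_d\le 2t_{\max}^{10}$. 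Second, in the case $r=\ell$ you should also exclude $\eps^{(u)}=\eps^{(\ell+1)}$ with $u<\ell$; this follows from the separation condition \cref{lem:Poisson:sep} because $x_{\ell+1}=x_\ell$.
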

\begin{proof}
The proof goes along similar lines to (part of) the proof of \cref{lem:Poisson-weights}. In particular, define $\Xi$ in the same way. We have the following modification of \cref{counting-cor}:
\begin{align}\nonumber \mb{E} R_{\mbf{A}}(x_1) & \cdots R_{\mbf{A}}(x_{\ell})(R_{\mbf{A}}(x_{\ell}) - 1) \\ & = \sum_{\substack{\eps^{(1)}, \dots, \eps^{(\ell+1)} \in \{0,1\}^{\Xi} \\ \eps^{(\ell)} \ne \eps^{(\ell+1)}}}\sum_{s_1,\dots, s_{\ell+1} \in S} \mb{P}\Big(\sum_{(i,j) \in \Xi} \eps^{(u)}_{i,j} \mbf{a}_{i,j} = x_u - s_u, u \in  [\ell+1]\Big),\label{counting-cor-mod}\end{align} where $x_{\ell+1} := x_{\ell}$. This follows by observing that $R_{\mbf{A}}(x_{\ell}) (R_{\mbf{A}}(x_{\ell}) - 1)$ counts pairs of distinct representations of $x_{\ell}$ in $S + \Sigma(\mbf{A}^{> n'}_{\med})$.

We now proceed completely analogously to the initial steps in the proof of \cref{lem:Poisson-weights}, simply changing $\ell$ to $\ell + 1$ in many places. In particular the first main task is to show 
\begin{equation}\label{main-first-to-show-rpt} \sum_{\substack{(r; q_1,\dots, q_r) \\ r < \ell+1 \mbox{\,\scriptsize or} \\ r = \ell+1 \mbox{\, \scriptsize and \,} q_{\ell+1} > V}} E_{(r; q_1, \dots, q_r)} \ll 2^{-\Omega(V)} 2^{(\ell+1) \big(\sum_{i =1}^{n + 1 - t} b_i - (n - t) \big)}.\end{equation}
which is \cref{main-first-to-show} with $\ell$ replaced by $\ell + 1$. Key tools for doing this will be \cref{bd-fixed-rank-profile,iq-bd}, which hold verbatim.

The analysis of Steps 1 and 3 proceeds essentially identically, changing $\ell$ to $\ell +1$ a few times.

The initial part of Step 2 requires a very small modification, in that if $r < \ell + 1$ and there are only $r$ distinct $\eps^{(u)}$, then we must have $\eps^{(u)} = \eps^{(u')}$ for some pair $(u,u')$ with $u < u'$, and moreover we cannot have $(u,u') = (\ell, \ell + 1)$ due to the stipulation that $\eps^{(\ell)} \ne \eps^{(\ell + 1)}$ in the range of summation in \cref{counting-cor-mod}. The rest of that argument proceeds as before, once again switching some instances of $\ell$ to $\ell + 1$. 

This concludes the proof of \cref{main-first-to-show-rpt}. Recalling that $\sigma \asymp V^{1/2} 2^{n - t}$, to conclude the proof of \cref{lem:Poisson-upper} we need only show that 
\begin{equation*}
 \sum_{q_1,\dots, q_{\ell +1} \le V} E_{(\ell+1; q_1,\dots, q_{\ell+1})} \ll V^{O(\ell)} 2^{(\ell+1) \big(\sum_{i =1}^{n + 1 - t} b_i - (n - t) \big)}.
\end{equation*}
Since the number of possibilities for $q_1,\dots, q_{\ell + 1}$ is $V^{O(\ell)}$, it suffices to prove the same bound for each $E_{(\ell+1; q_1,\dots, q_{\ell+1})}$ individually.
For this we use \cref{bd-fixed-rank-profile}, where we can take all $i_d$ to be equal to $t$ since $q_1,\dots, q_{\ell+1} \le V$. This gives exactly the desired bound (in fact with an extra factor $2^{-\sum_{d = 1}^{\ell + 1} q_d} \le 1$). This concludes the proof of \cref{lem:Poisson-upper}.\end{proof}

The final task of this section is to establish the key result \cref{lem:Poisson}.

\begin{proof}[Proof of \cref{lem:Poisson}]
The upper bound is immediate from \cref{lem:Poisson-weights} since $1_{x \in S + \Sigma(\mbf{A}^{> n'}_{\med})} \le R_{\mbf{A}}(x)$ pointwise. 
For the lower bound, we use that $1_{x \in S + \Sigma(\mbf{A}^{> n'}_{\med})} \ge R_{\mbf{A}}(x)  \mathbf{1}_{R_{\mbf{A}}(x)\in \{0,1\}}$, and so 
\[ \mb{P}\big(x_1, \dots, x_{\ell} \in S + \Sigma(\mbf{A}^{> n'}_{\med})\big) \ge \mb{E} R_{\mbf{A}}(x_1) \cdots R_{\mbf{A}}(x_{\ell}) - \mb{E} R_{\mbf{A}}(x_1) \cdots R_{\mbf{A}}(x_{\ell}) \big(1 - \prod_{i = 1}^{\ell} \mathbf{1}_{R_{\mbf{A}}(x_i) \in \{0,1\}}\big).\]
However
\begin{align*}
 \mb{E} R_{\mbf{A}}(x_1) \cdots R_{\mbf{A}}(x_{\ell}) \big(1 - \prod_{i = 1}^{\ell} \mathbf{1}_{R_{\mbf{A}}(x_i) \in \{0,1\}}\big) &\le  \mb{E} R_\mbf{A}(x_1) \cdots R_{\mbf{A}}(x_{\ell}) \sum_{i = 1}^{\ell} \mathbf{1}_{R_{\mbf{A}}(x_i) > 1} \\
&\le \sum_{i=1}^{\ell}\mb{E} R_{\mbf{A}}(x_1) \cdots R_{\mbf{A}}(x_{\ell}) (R_{\mbf{A}}(x_i)-1).
\end{align*}
Using \cref{lem:Poisson-upper}, and noting that $g(x_i) \ge V^{-o(1)}/\sigma$ due to the hypotheses on the $x_i$, comparison with \cref{lem:Poisson-weights} shows that it is enough to prove that 
\begin{equation*} V^{O(\ell)} \frac{2^{\sum_{i=1}^{n + 1 - t} b_i}}{\sigma} \ll V^{-1/4}.\end{equation*}
This follows using $\sigma \asymp V^{1/2} 2^{n - t}$ and the upper bound in \cref{bnt}, together with the parameter hierarchy (in particular, that $t \ge t_{\min} \ggg V$).
\end{proof}

\section{Inclusion--exclusion}\label{sec4}

In this section we carry out the inclusion-exclusion analysis hinted at in the introduction to \cref{sec3}, using \cref{lem:Poisson} as our main tool. The following is the main result of the section.

As in the previous section, we have a main truncation parameter $M$, key further thresholds $R(M)$, $T(M)$ and $L(M)$ as described in \cref{truncation-threshold-defs}, and technical auxiliary thresholds $\ell_*(M)$, $V(M)$, $t_{\min}(M)$, $t_{\max}(M)$ as in the hierarchy \cref{param-heir}.

The reader may also wish to recall the definitions of the random variables $\varrho^*_{\mbf{u} \mid \beta}(\ell)$, $\tau^*_{\mbf{x} \mid \beta'}(\ell)$, which may be found in \cref{xbeta-ybeta-defs,y-beta-def}. Here is the main result of the section.

\begin{proposition}\label{sec6-main}
Fix upper and lower walks $\beta, \beta'$ which are $R$-bounded and $T$-positive up to lengths $\lceil n/2\rceil$, $\lfloor n/2\rfloor$, and which have $\min_{1 \le i \le \lceil n/2\rceil} \beta(i), \min_{1 \le i \le \lfloor n/2\rfloor} \beta'(i) \ge -M$. Write $D  = D(\beta, \beta') := \beta(\lceil n/2\rceil) - \beta'(\lfloor n/2\rfloor)$ and suppose that $|D| \le M/10$. Suppose that $\beta$ has a $V(M)$-jump step at some $t \in [t_{\min}(M), t_{\max}(M)]$. Then
\[ \mb{P}\big(k \in \Sigma(\mbf{A} \mid \beta, \beta')\big) = 1 - \mb{E} \exp \big( -2^{D - \xi}  \varrho^*_{\mbf{u} \mid \beta}(L(M))\tau^*_{\mbf{x} \mid \beta'}(L(M))\big) + O(\ell_*(M)^{-1}).\] 
\end{proposition}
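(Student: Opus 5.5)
We plan to write $k\in\Sigma(\mbf{A})$ as the union, over choices $\eps_{\lrg}=(\eps_{i,j})_{n+1-t<i\le n}$, of the events $X_{\eps_{\lrg}}$ that $y(\eps_{\lrg}) := k-\sum_{n+1-t<i\le n}\eps_{i,j}\mbf{a}_{i,j}$ lies in $\Sigma(\mbf{A}_{\med})+\Sigma(\mbf{A}_{\sml})$. We then run Bonferroni inclusion--exclusion to depth $\ell\le\ell_*$. The steps are as follows.

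First, condition on $\mbf{A}_{\lrg}$ and on $\mbf{A}_{\sml}$, and put $S:=\Sigma(\mbf{A}_{\sml})$. Its size is $|S|=2^{\sum_{i\le L}b_i}\tau$, and by \cref{s-couple} (coupling $\mbf{a}_{i,j}=\mbf{x}_{i,j}$ for $i\le L$, with failure probability $O(L^2/n)$) this is $\tau=\tau^*_{\mbf{x}\mid\beta'}(L)$ up to negligible error. We have $\max S\le 2^{n-L/2}$ since $b_i$ is bounded and $\mbf{a}_{i,j}\asymp 2^i$. The case $\tau<V^{-1}$ contributes at most $O(V^{-1}\cdot\text{stuff})$ through the exponent, so it may be treated separately.

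Second, let $Y$ be the set of values $y(\eps_{\lrg})$ lying in the window $|y-\mu|\le(\log V)^{1/4}\sigma$. Values outside the window contribute little: by \cref{lem:Poisson-weights} with $\ell=1$ and Gaussian tails, their expected count is $O(e^{-c(\log V)^{1/2}})$. Values inside the window are pairwise separated by $2^{n-L/2}$ with high probability, by anticoncentration \cref{anti-concentration-aij} applied to differences of large sums. Then \cref{lem:Poisson} with $n'=L$ gives, for every $\ell$-tuple of distinct elements of $Y$, the joint probability $(1+O(V^{-1/4}))\prod_i \tau 2^{\sum_{i\le n+1-t}b_i}g(y_i)$. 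The Bonferroni inequalities therefore give
\[
\mb{P}(k\in\Sigma\mid \mbf{A}_{\lrg},\mbf{A}_{\sml})=\sum_{\ell<\ell_*}\frac{(-1)^{\ell+1}}{\ell!}\Lambda^{\ell}+O\Big(\frac{\Lambda^{\ell_*}}{\ell_*!}\Big),
\qquad
\Lambda:=\tau 2^{\sum_{i\le n+1-t}b_i}\sum_{y\in Y}g(y).
\]
This is $1-e^{-\Lambda}$ up to error $O(\ell_*^{-1})$, provided $\Lambda$ is bounded. The event that $\Lambda$ is large must be handled using moment bounds on $\varrho^*$ from \cref{X-upper-gen}, truncating at $\Lambda\le \log\ell_*$.

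Third, identify $\Lambda$. Since $g$ is a Gaussian of width $\sigma\asymp V^{1/2}2^{n-t}$, the sum $\sum_y g(y)$ is a smoothed count of large subset sums in a window near $k-\mu$. Rescaling by $k$ and coupling via \cref{u-couple}, $\mbf{a}_{i,j}/k\approx 2^{-\mbf{u}_{n+1-i,j}}$, this count should be $\approx 2^{-t}\cdot 2^{-\beta(t-1)}\varrho^*(t-1)$-type quantity times normalisations. The exponents should combine to give $2^{D-\xi}\varrho^*_{\mbf{u}\mid\beta}(t)\tau^*(L)$, using \cref{bbeta} and $k=2^{n+\xi}$. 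The jump coordinates are averaged against the Gaussian, which reproduces the interval $[1-2^{-t},1]$ in the definition of $\varrho^*$ up to Lipschitz smoothing. Finally we replace $\varrho^*(t)$ by $\varrho^*(L)$ via \cref{lemma7point3}, summing the $L^2$ increments $2^{3T-c\ell^{1/4}}$ for $\ell\ge t_{\min}$. Since $1-e^{-x}$ is $1$-Lipschitz, this costs $o(1)$.

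The main obstacle is this third step: showing that the Gaussian-weighted count at the jump scale matches $\varrho^*$ at a sharp cutoff. We would try to pass through the sharp window $[1-2^{-\ell},1]$ at a scale $\ell$ somewhat above $t$, where $g$ is essentially constant. Splitting into cells of length $2^{-\ell}$, we would use the stability \cref{lem73-first} to argue that each cell has density $\varrho^*$, then integrate $g$ over the cells.
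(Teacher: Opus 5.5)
\textbf{Overview.} Your plan agrees with the paper on everything except the identification of the Poisson parameter: the union over $\eps_{\lrg}$, the window $|y-\mu|\le(\log V)^{1/4}\sigma$, separation, $\ell$-wise independence from \cref{lem:Poisson} with $n'=L$, Bonferroni, and the final replacement $\varrho^*(t-1)\to\varrho^*(L)$ via \cref{lemma7point3}. The step you flag as the main obstacle is a genuine gap, and the tool you propose does not supply what is needed.

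\textbf{Why the proposed tool fails.} Since $g$ is centred at $\mu\asymp V2^{n-t}$, the sum $\sum_{\eps_{\lrg}}g(y(\eps_{\lrg}))$ counts large subset sums lying about $\mu$ below $k$. That is roughly $V$ cells of length $k2^{1-t}$ away from $k$. By contrast, $\varrho^*(t-1)$ only counts them in the single cell $[k-k2^{1-t},k]$. What you need is therefore translation invariance of the normalised cell counts for offsets up to $O(V2^{-t})$ after rescaling by $k$. The estimate \cref{lem73-first} does not give this: it compares windows anchored at $1$ when one more level of elements is added, and says nothing about displaced windows. Refining the cells to length $2^{-\ell}$ does not remove the displacement, which is present at every scale.

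\textbf{A possible repair of your route.} A displaced-window bound does appear inside the proof of \cref{lemma7point3}: \cref{main-prop-task} allows offsets up to $2RL2^{-L}$ with $R=3L$. At $L=t$ this covers the Gaussian window, because $V\lll t$. So your route can plausibly be completed that way, after coupling $\mbf{a}_{i,j}/k$ to $2^{-\mbf{u}}$. But that is not the lemma you invoke, and as written step 3 is not an argument.

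\textbf{How the paper avoids the Gaussian-weighted count.} It applies \cref{lem:Poisson} a second time, with $n'=n-L$ and $S$ an arbitrary subset of $[2^{-L}k]$ of size $2^{\sum_{i\le n-L}b_i}$. Then $\tau=1$, and such $S$ exist by \cref{bnL}. This produces the same factor $2^{\sum_{i\le n+1-t}b_i}g(x)$, so
\[
\mb{P}\big(x\in\Sigma(\mbf{A}_{\med})+\Sigma(A_{\sml})\big)\approx\tau(A_{\sml})\,\mb{P}\big(x\in S+\Sigma(\mbf{A}^{>n-L}_{\med})\big).
\]
Passing to first moments (with \cref{lem:Poisson-upper}) and averaging over all such $S$ turns ``hitting $S$'' into ``hitting the interval $[2^{-L}k]$''; this is \cref{prescore-3e}. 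Summing over $\eps_{\lrg}$ and rescaling by $k$, the Poisson parameter becomes exactly $2^{D-\xi}\tau(A_{\sml})\,\mb{E}_{\mbf{A}_{\med}}\varrho^*_{\tilde{\mbf{u}}\mid\beta}(L)$. The estimate \cref{lem73-first} is then used only, telescoped, to bound $\mb{E}|\varrho^*(t-1)-\varrho^*(L)|$, so that the exponent depends on $\mbf{A}_{\lrg}$ alone.

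You also do not mention the conditioning on the jump distribution $\mbf{f}$, which \cref{lem:Poisson} requires. The paper removes it at the end at a cost of a factor $V^{V/3}$, which the parameter hierarchy absorbs.

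\textbf{Two smaller points.}
\begin{itemize}
\item \cref{lem:Poisson-weights} assumes \cref{xi-assump}, so it cannot control sums outside the window. The paper uses Hoeffding's inequality on the jump variables instead (\cref{claim3-lem}).
\item Bounding the event $\Lambda>\log\ell_*$ by Markov's inequality on moments of $\varrho^*$ only gives an error of order $2^{M/10}R^2/\log\ell_*$, not $O(\ell_*^{-1})$. The paper instead uses a second-moment argument (Case 2 of \cref{key-61-tech}). It shows directly that the union of the events $X_{\eps}$ has probability $1-O(\ell_*^{-1})$ once the parameter exceeds $\ell_*/100$.
\end{itemize}
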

\begin{remark}
Note that the final expression is independent of the location of the jump step $t$.    
\end{remark}
To avoid excessively cumbersome notation, we drop the explicit mention of $M$ from the parameters $T, t_{\min}, t_{\max}, V, L, \ell_*$, as we did in the last section. For the rest of the section we will assume that $\beta, \beta'$ satisfy the hypotheses of \cref{sec6-main} without further comment; this applies in particular to the subsidiary \cref{key-61-tech}.

For the initial analysis notation will be as in \cref{sec3}. Specifically, the reader should recall \cref{bi-defs-rept}, the parameter hierarchy \cref{param-heir} and the fact that we are conditioning to $\boldbeta = \beta, \boldbeta' = \beta'$ and also to a fixed choice of the jump distribution $\mbf{f} = f$. In particular, from this point onwards in the section we write $\mbf{A}$ as shorthand for $(\mbf{A} \mid \boldbeta = \beta, \boldbeta' = \beta', \mathbf{f} = f)$.

Associated to this data are the parameters $\mu, \sigma^2$ defined in \cref{mu-sig-def}, and the Gaussian density $g(x)$ with mean $\mu$ and variance $\sigma^2$ as in \cref{gaussian-def}.

As in \cref{as-def} we will divide $\mbf{A}$ as $\mbf{A}_{\sml} \cup \mbf{A}_{\med} \cup  \mbf{A}_{\lrg}$. For much of the section we will additionally condition to fixed choices of $A_{\sml} = (a_{i,j})_{1 \le i \le L;j \in [b_i]}$ and $A_{\lrg} = (a_{i,j})_{n + 1 - t < i \le n;j \in [b_i]}$ and sample only $\mbf{A}_{\med}$ at random, with the randomly selected object being denoted by bold font as usual. Since \cref{dyadic-aij} holds deterministically, we always assume
\begin{equation*} a_{i,j} \asymp 2^{i}.\end{equation*}

\subsection{Consequences of \cref{sec3}} 
In the proof of \cref{sec6-main} we will make crucial use of the main result of the last section, \cref{lem:Poisson}. We record the consequences we need here.  For notational clarity in what follows we will write $E \approx E'$ as shorthand for $E' = (1 + O(V^{-\Omega(1)}))E$. Recall that $\Omega(1)$ denotes a small positive constant, which may change from line to line.

First (in \cref{lem:Poisson}) take $n' = L$ and $S := \Sigma(A_{\sml})$. As remarked after the statement of \cref{lem:Poisson}, in this case we have $\mbf{A}_{\med}^{> n'} = \mbf{A}_{\med}$. The condition $\max(S) \le 2^{n - L/2}$ is satisfied by a large margin. Write $\tau(A_{\sml})$ for the parameter $\tau$ in this application of \cref{lem:Poisson}, that is to say
\begin{equation}\label{tau-small} \tau(A_{\sml}) = 2^{-\sum_{i =1}^L b_i} |\Sigma(A_{\sml})|.\end{equation}
Here the notation $\Sigma(S)$ is as defined in \cref{sig-tau-def}.
Assume in what follows that $\tau(A_{\sml}) \ge V^{-1}$. Suppose that $\ell \le \ell_*$ and that $x_1,\dots, x_{\ell}$ satisfy \cref{xi-assump} and the separation condition \cref{lem:Poisson:sep}. Then all the hypotheses of \cref{lem:Poisson} hold and the conclusion \cref{lem31-main} is that
\begin{equation}\label{poss-asmall}
\mb{P}\big(x_1,\dots, x_{\ell} \in \Sigma(\mbf{A}_{\med}) + \Sigma(A_{\sml})\big) \approx \big(\tau(A_{\sml}) 2^{\sum_{i=1}^{n + 1 - t} b_i} \big)^{\ell} \prod_{i = 1}^{\ell} g(x_i)
\end{equation}
(where $g$ is a Gaussian density with mean $\mu$ and variance $\sigma^2$ as in \cref{gaussian-def}.)
Comparing this with the case $\ell = 1$, we immediately obtain the key almost-independence statement
\begin{equation}\label{almost-indep-statement}
\mb{P}\big(x_1,\dots, x_{\ell} \in \Sigma(\mbf{A}_{\med}) + \Sigma(A_{\sml})\big) \approx \prod_{i = 1}^{\ell} \mb{P}\big(x_i \in \Sigma(\mbf{A}_{\med}) + \Sigma(A_{\sml})\big),
\end{equation} where here we used the fact that $(1 + V^{-\Omega(1)})^{\ell} = 1 + O(V^{-\Omega(1)})$ due to the hierarchy of parameters. This will be the first key consequence of \cref{lem:Poisson} that we will use later.

The second important consequence will be an alternative expression for the probabilities on the right in \cref{almost-indep-statement}. For this, note first that the case $\ell = 1$ of \cref{poss-asmall} gives that
\begin{equation} \label{prescore-2} \mb{P}\big(x \in \Sigma(\mbf{A}_{\med}) + \Sigma(A_{\sml})\big) \approx \tau(A_{\sml}) 2^{\sum_{i =1}^{n + 1 - t} b_i} g(x),\end{equation} still under the assumption that $\tau(A_{\sml}) \ge V^{-1}$ and also that 
\begin{equation}\label{x-assump} |x - \mu| \le (\log V)^{1/4}\sigma.\end{equation} (The separation condition \cref{lem:Poisson:sep} is redundant.) Leaving this aside temporarily, we now use \cref{lem:Poisson} differently in order to obtain an alternative expression for the right-hand side of \cref{prescore-2}.
Now take $n' = n - L$ and $S$ to be any subset of $[2^{- L}k]$ of size $2^{\sum_{i =1}^{n'} b_i}$. That such a set exists follows from \cref{bnL} (recalling that $k \ge 2^n$). The condition $\max(S) \le 2^{n - L/2}$ is again satisfied quite comfortably (noting that $k \asymp 2^n$), and in this case $\tau = 1$. Take $\ell = 1$ in \cref{lem31-main}, obtaining (for any $S$) 
\begin{equation*}  \mb{P}(x \in S + \Sigma(\mbf{A}_{\med}^{> n - L})) \approx 2^{\sum_{i=1}^{n + 1 - t} b_i} g(x).\end{equation*} Comparing with \cref{prescore-2} gives
\begin{equation}\label{prescore-3c} 
 \mb{P}\big(x \in \Sigma(\mbf{A}_{\med}) + \Sigma(A_{\sml})\big) \approx \tau(A_{\sml}) \mb{P}(x \in S + \Sigma(\mbf{A}_{\med}^{> n - L})) .
\end{equation}

We claim that 
\begin{equation}\label{12.7claim}
 \mb{P}(x \in S + \Sigma(\mbf{A}_{\med}^{> n - L})) \approx \mb{E} R_{\mbf{A}}(x).
\end{equation}
where $R_{\mbf{A}}$ denotes the number of representations of $x$ in $S + \Sigma(\mbf{A}_{\med}^{> n - L})$ (as defined just prior to \cref{lem:Poisson-weights}). First note that the LHS of \cref{12.7claim} is simply $\mb{E} 1_{R_{\mbf{A}}(x) \ge 1}$, so clearly $\mb{P}(x \in S + \Sigma(\mbf{A}_{\med}^{> n - L})) \le \mb{E} R_{\mbf{A}}(x)$. Obtaining a bound in the other direction is only slightly more involved. By employing the inequality $R - R(R - 1) \le 1_{R \ge 1}$, it suffices to show that 
\begin{equation}\label{sec12-suff}
\mb{E} R_{\mbf{A}}(x) (R_{\mbf{A}}(x) - 1) \ll V^{-\Omega(1)}\mb{E} R_{\mbf{A}}(x).
\end{equation}
To see that this is so, we note that \cref{lem:Poisson-weights} (together with the lower bound $\tau(A_{\sml}) \ge V^{-1}$, \cref{basic-a-bds,x-assump}) tells us that $\mb{E} R_{\mbf{A}}(x) \gg V^{-1}  2^{\sum_{i =1}^{n + 1 - t} b_i} g(x) \gg V^{-O(1)} 2^{\sum_{i =1}^{n + 1 - t} b_i - (n - t)}$, whilst \cref{lem:Poisson-upper} gives $\mb{E} R_{\mbf{A}}(x) (R_{\mbf{A}}(x) - 1) \ll V^{O(1)} 2^{2\big(\sum_{i=1}^{n + 1 - t} b_i - (n - t)\big)}$. The bound \cref{sec12-suff} then follows using \cref{bnt} and the parameter hierarchy, and this concludes the proof of the claim \cref{12.7claim}.

Combining \cref{prescore-3c,12.7claim} evidently gives
\[ \mb{P}\big(x \in \Sigma(\mbf{A}_{\med}) + \Sigma(A_{\sml})\big) \approx \tau(A_{\sml})\mb{E} R_{\mbf{A}}(x) .\] Expanding out the expectation term on the RHS we obtain
\begin{equation}\label{prescore-3d} 
 \mb{P}\big(x \in \Sigma(\mbf{A}_{\med}) + \Sigma(A_{\sml})\big) \approx \tau(A_{\sml}) \sum_{\eps'} \mb{P}\big(x \in S + \sum_{\substack{n - L < i \le n +1 - t\\ j \in [b_i]}} \eps'_{i,j} \mbf{a}_{i,j} \big) ,
\end{equation}
where $\eps' = (\eps'_{i,j})_{n - L < i \le n +1 - t, j \in [b_i]}$ ranges over all tuples with $\eps'_{i,j} \in \{0,1\}$, and the use of $\eps'$ as a dummy variable is to avoid conflict later.
Recall that $S$ is an arbitrary subset of $[2^{-L} k]$ of size $2^{\sum_{i =1}^{n - L} b_i}$. Noting that the LHS of \cref{prescore-3d} is independent of $S$, we average over all such $S$ and thereby obtain (assuming $\tau(A_{\sml}) \ge 1/V$ and \cref{x-assump}) that
\begin{align}\nonumber
\mb{P}\big(x \in \Sigma(\mbf{A}_{\med}) & + \Sigma(A_{\sml})\big) \approx \\ &  
\tau(A_{\sml}) 2^{\sum_{i \le n - L} b_i} (2^{-L} k)^{-1}\sum_{\eps'} \mb{P}(x \in [2^{-L} k] + \sum_{\substack{n - L < i \le n +1 - t\\ j \in [b_i]}}\eps'_{i,j} \mbf{a}_{i,j} ).\label{prescore-3e} \end{align} 
This is the second key result that we will require in the proof of \cref{sec6-main}.

\subsection{The Poisson paradigm} In this section we establish the key ingredient in the proof of \cref{sec6-main}, which we state as \cref{key-61-tech} below. We retain the notation in force so far, in particular that we are conditioning to fixed choices of $A_{\sml}, A_{\lrg}$. In what follows we write $\eps:= (\eps_{i,j})_{n +1 - t < i \le n;j \in [b_i]}$ and denote 
\begin{equation}\label{e-alarge-def} \mathcal{E} = \mathcal{E}(A_{\lrg}) := \big\{ \eps : \big| k - \sum_{n +1 - t < i \le n;j \in [b_i]} \eps_{i,j} a_{i,j} - \mu\big| \le (\log V)^{1/4} \sigma \big\} \end{equation} and
\[ \Sigma'(A_{\lrg}) := \{ \sum_{n + 1 - t < i \le n;j \in [b_i]} \eps_{i,j} a_{i,j} : \eps  \in \mathcal{E}(A_{\lrg})\}.\]
Here, $\mu, \sigma$ are defined in \cref{mu-sig-def}. In particular, it is important to note that the definition of $\mathcal{E}$ depends on the jump distribution vector $f$ that we conditioned to at the beginning of the last section, and not just on the walks $\beta,\beta'$.

\begin{lemma}\label{key-61-tech} Let $\beta, \beta'$ be as in the statement of \cref{sec6-main}. Let $A_{\sml} = (a_{i,j})_{1 \le i \le L;j \in [b_i]}$ and $A_{\lrg} = (a_{i,j})_{n + 1 - t < i \le n;j \in [b_i]}$ be multisets. Define $\mathcal{E} = \mathcal{E}(A_{\lrg})$ as in \cref{e-alarge-def}.
Suppose that $\tau(A_{\sml}) \ge 1/V$ and that $A_{\lrg}$ satisfies the following separation condition: as $\eps$ ranges over $\{0,1\}^{\sum_{n +1 - t < i \le n} b_i}$, the numbers $\sum_{n +1 - t < i \le n ;j \in [b_i]} \eps_{i,j} a_{i,j}$ are $2^{n - L/2}$-separated. Then, writing $\mbf{A}_{\med} := (\mbf{a}_{i,j})_{L+1 \le i \le  n + 1 - t; j \in [b_i]}$ \textup{(}chosen at random\textup{)}, we have
\[ \mb{P}(k \in \Sigma'(A_{\lrg}) + \Sigma(\mbf{A}_{\med}) + \Sigma(A_{\sml})) = 1 - e^{-S(A_{\sml}, A_{\lrg})} + O(\ell_*^{-1}),\] where 
\begin{equation}\label{score-def-1} S(A_{\sml}, A_{\lrg}) := 
\sum_{\eps \in \mathcal{E}} \mb{P}(X_{\eps}), \end{equation} with
$X_{\eps}$ the event that 
\begin{equation}\label{score-def-2}  k - \sum_{n + 1 - t < i \le n;j\in [b_i]} \eps_{i,j} a_{i,j} \in \Sigma(\mbf{A}_{\med}) + \Sigma(A_{\sml}) .\end{equation}
\end{lemma}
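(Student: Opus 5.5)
The event in question is the union $\bigcup_{\eps \in \mathcal{E}} X_{\eps}$. By the separation hypothesis on $A_{\lrg}$, for distinct $\eps, \eps' \in \mathcal{E}$ the points $x_{\eps} := k - \sum \eps_{i,j} a_{i,j}$ are $2^{n-L/2}$-separated. By the definition of $\mathcal{E}$ they also satisfy $|x_{\eps} - \mu| \le (\log V)^{1/4}\sigma$. So any $\ell \le \ell_*$ distinct points $x_{\eps^{(1)}},\dots,x_{\eps^{(\ell)}}$ satisfy the hypotheses of \cref{lem:Poisson} (with $n' = L$, $S = \Sigma(A_{\sml})$, using $\tau(A_{\sml}) \ge 1/V$). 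The almost-independence statement \cref{almost-indep-statement} then gives, for every set $J \subset \mathcal{E}$ with $|J| = \ell \le \ell_*$,
\[ \mb{P}\big(\textstyle\bigcap_{\eps \in J} X_{\eps}\big) = \big(1 + O(V^{-\Omega(1)})\big) \textstyle\prod_{\eps \in J} \mb{P}(X_{\eps}). \]
The plan is to feed this into truncated inclusion--exclusion (Bonferroni) and compare with the expansion of $1 - e^{-S}$.

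\emph{Step 1.} Apply the Bonferroni inequalities at depths $\ell_0$ and $\ell_0+1$, where $\ell_0 \approx \ell_*$ is chosen so that the truncation error is small. This gives
\[ \mb{P}\big(\textstyle\bigcup X_{\eps}\big) = \sum_{\ell=1}^{\ell_0} (-1)^{\ell+1} \sum_{|J|=\ell} \mb{P}\big(\textstyle\bigcap_J X_{\eps}\big) + O\Big(\sum_{|J|=\ell_0+1} \mb{P}\big(\textstyle\bigcap_J X_{\eps}\big)\Big). \]
Inserting the almost-independence estimate, $\sum_{|J|=\ell}\prod_J \mb{P}(X_\eps) = e_\ell$ is the $\ell$-th elementary symmetric function of the $p_\eps := \mb{P}(X_\eps)$. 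Since $e_\ell \le S^\ell/\ell!$, the main term becomes $\sum_{\ell \le \ell_0} (-1)^{\ell+1} e_\ell$, with error $O\big(V^{-\Omega(1)} \sum_{\ell \le \ell_0+1} S^\ell/\ell!\big)$ plus $O(S^{\ell_0+1}/(\ell_0+1)!)$.

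\emph{Step 2.} Each $p_\eps$ is tiny. By \cref{prescore-2}, $p_\eps \ll 2^{\sum_{i \le n+1-t} b_i} g(x) \ll 2^{\sum b_i - (n-t)} V^{-1/2}$, which is $\le 2^{-t^{1/2-\eta}/2}$ by \cref{bnt}. Hence $\sum_\eps p_\eps^2 \le \max_\eps p_\eps \cdot S$ is negligible. Standard comparison of elementary symmetric functions with power sums then gives $e_\ell = S^\ell/\ell! + O\big(S^{\ell-2}(\max_\eps p_\eps) S/(\ell-2)!\big)$. It follows that $\sum_{\ell\le\ell_0} (-1)^{\ell+1} e_\ell = 1 - e^{-S} + O\big(S^{\ell_0+1}/(\ell_0+1)!\big)$ plus negligible terms.

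\emph{Step 3, the main obstacle.} The errors are only acceptable if $S$ is bounded. So I first handle large $S$ separately: if $S \ge \log \ell_*$, say, then $1 - e^{-S} = 1 + O(\ell_*^{-1})$. In that case I would show the union has probability $1 - O(\ell_*^{-1})$ by truncating: pick a subfamily $\mathcal{E}' \subset \mathcal{E}$ with $S' := \sum_{\mathcal{E}'} p_\eps \in [\log \ell_*, \log \ell_* + 1]$, which is possible because each $p_\eps$ is minuscule. Running Steps 1--2 on $\mathcal{E}'$ gives $\mb{P}(\bigcup_{\mathcal{E}'} X_\eps) \ge 1 - e^{-S'} - O(\ell_*^{-1})$, and monotonicity finishes this case. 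When $S \le \log \ell_* + 1$, take $\ell_0 = \ell_* - 1$. Then $S^{\ell_0}/\ell_0! \ll \ell_*^{-1}$ comfortably, and $V^{-\Omega(1)} e^{S} \ll V^{-\Omega(1)}\ell_* \ll \ell_*^{-1}$ by the hierarchy \cref{param-heir}, since $V \ggg \ell_*$. Together these give $1 - e^{-S} + O(\ell_*^{-1})$.
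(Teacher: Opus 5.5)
Your argument is correct. For $S$ of moderate size it is essentially the paper's argument: Bonferroni at consecutive depths, the almost-independence \cref{key-Pois}, and a comparison of the elementary symmetric sums with $S^\ell/\ell!$ using that every $\mb{P}(X_\eps)$ is tiny. The paper bounds $\mb{P}(X_\eps)$ via \cref{prescore-3e} and anticoncentration at the jump step. Your bound from \cref{prescore-2}, $g\le \sigma^{-1}$ and \cref{bnt} is shorter and equally valid.

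The genuine difference is how you handle large $S$.

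\emph{The paper's route.} It splits at $S=\ell_*/100$. Above this threshold it uses a second-moment argument: pairwise almost-independence gives $\mb{E}\bigl|\sum_{\eps}(1_{X_\eps}-\mb{P}(X_\eps))\bigr|^2\ll S+V^{-\Omega(1)}S^2$. Chebyshev then shows that no $X_\eps$ occurs with probability $\ll S^{-1}+V^{-\Omega(1)}\ll \ell_*^{-1}$.

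\emph{Your route.} You split at $S\approx\log\ell_*$ and reduce the large case to the small one by monotonicity. Since each $\mb{P}(X_\eps)\le 1$, a greedy subfamily $\mathcal{E}'\subset\mathcal{E}$ has $S'\in[\log\ell_*,\log\ell_*+1]$. The inclusion--exclusion analysis applies to $\mathcal{E}'$, giving $\mb{P}(\bigcup_{\mathcal{E}}X_\eps)\ge\mb{P}(\bigcup_{\mathcal{E}'}X_\eps)\ge 1-O(\ell_*^{-1})$. This matches $1-e^{-S}$ up to $O(\ell_*^{-1})$.

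\emph{What each buys.} Your route needs no second probabilistic tool. Because $S\ll\log\ell_*$ in the inclusion--exclusion regime, absorbing $V^{-\Omega(1)}e^{S}$ only requires $V^{\Omega(1)}\gg\ell_*^2$. The paper's Case 1 allows $S$ up to $\ell_*/100$, so it needs $V^{-\Omega(1)}e^{\ell_*/100}$ small, i.e.\ $\ell_*$ logarithmically small in $V$; the hierarchy \cref{param-heir} permits this anyway. In exchange, the paper handles all large $S$ uniformly with a one-line Chebyshev estimate that uses only the $\ell=2$ case of almost-independence.
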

\begin{proof} In this proof all instances of $\mb{P}$ are probabilities with respect to the random choice of $\mbf{A}_{\med}$.  We retain the convention that $E \approx E'$ is shorthand for $E' = (1 + O(V^{-\Omega(1)}))E$.
Write $S :=S(A_{\sml}, A_{\lrg})$ for brevity. We have
\begin{equation}\label{p-xe-union} \mb{P}\big(k \in \Sigma'(A_{\lrg}) + \Sigma(\mbf{A}_{\med}) + \Sigma(A_{\sml})\big) = \mb{P} \big(\bigcup_{\eps \in \mathcal{E}} X_{\eps} \big).\end{equation} 
Let $\eps^{(1)},\dots, \eps^{(\ell)} \in \mathcal{E}$ be distinct. We claim that \cref{almost-indep-statement} holds with the $x_u$ being the quantities $k - \sum_{n +1 - t < i \le n;j \in [b_i]} \eps^{(u)}_{i,j} a_{i,j}$. Indeed, the latter statement was proven assuming that the $x_u$ satisfy \cref{xi-assump} and the separation condition \cref{lem:Poisson:sep}. In the present context the first of these conditions is a consequence of the restriction to $\eps \in \mathcal{E}$, and the separation condition is one of the assumptions of \cref{key-61-tech}. The claim follows.

This instance of \cref{almost-indep-statement} gives the key almost-independence statement
\begin{equation}\label{key-Pois} \mb{P}\big( \bigcap_{u = 1}^{\ell} X_{\eps^{(u)}}\big) \approx \prod_{u = 1}^{\ell} \mb{P}(X_{\eps^{(u)}}),\end{equation} provided that $\ell \le \ell_*$. 

We pause to show that the $\mb{P}(X_{\eps})$ are individually rather small. To do this we first note by the definition \cref{score-def-2} of $X_{\eps}$ and \cref{prescore-3e} we have
\begin{equation}\label{p-eps-eval} \mb{P}(X_{\eps}) \approx \tau(A_{\sml}) 2^{\sum_{i =1}^{n -L}  b_i}(2^{-L}k)^{-1} \sum_{\eps'}  \mb{P} \big(k - \sum_{\substack{i > n + 1 - t \\ j \in [b_i]}} \eps_{i,j} a_{i,j}  - \sum_{\substack{i = n +1 - L\\ j \in [b_i]}}^{n +1 - t} \eps'_{i,j} \mathbf{a}_{i,j}  \in [2^{- L}k] \big),  \end{equation}  where (as before) $\eps'= (\eps'_{i,j})_{n +1 - L \le i \le n + 1 - t;j \in [b_i]}$.  Now if $\eps \in \mathcal{E}$ and if, for some $\eps'$, the inner probability is not zero then for some instance $(a_{i,j})_{n +1 - L \le i \le n +1 - t;j \in [b_i]}$ of $(\mbf{a}_{i,j})_{n +1 - L\le i \le n +1 - t;j \in [b_i]}$ we have
\begin{equation}\label{77a}  \big|\sum_{\substack{i = n +1 - L \\ j \in [b_i]}}^{n +1 - t}\eps'_{i,j} a_{i,j}\big| \ge \mu - 2^{-L} k - (\log V)^{1/4} \sigma.\end{equation}  
We claim that this forces at least one of the $\eps'_{n + 1 - t, u}$ (that is, associated to the jump step) to not be zero. To see this claim, we may use \cref{dyadic-aij,jump-step-2} to bound \begin{equation} \label{eq77b} \big| \sum_{\substack{i = n +1 - L \\ j \in [b_i]}}^{n - t} \eps'_{i,j} a_{i,j}\big| \ll \sum_{i = n +1 - L}^{n - t} b_{i} 2^{i} \ll 2^{n - t} \sum_{i = 1}^L  (1 + i^{2\kappa} )2^{-i} \ll 2^{n - t}.\end{equation} 
However, recalling that $\mu \asymp  V2^{n - t}$, $k \asymp 2^n$, $\sigma \asymp V^{1/2} 2^{n - t}$ and $L \ggg t$, the RHS of \cref{77a} is $\gg V2^{n - t}$, which is much larger than \cref{eq77b}; this establishes the claim.

Using the anticoncentration estimate \cref{anti-concentration-jump-scale} (revealing all $\mbf{a}_{i,j}$ other than $\mathbf{a}_{n + 1 -t,u}$), we have that \[ \mb{P} \Big(k - \sum_{\substack{i > n +1 - t \\ j \in [b_i]}} \eps_{i,j} a_{i,j}  - \sum_{\substack{i = n +1 - L\\ j \in [b_i]}}^{n +1 - t}  \eps'_{i,j} \mathbf{a}_{i,j}  = x\Big) \ll V^{1/3}2^{t - n},\] uniformly for $x \in [2^{-L} k]$, $\eps'$, and $\eps \in \mathcal{E}$.

Applying this to \cref{p-eps-eval}, summing over the $2^{\sum_{i = n +1 - L}^{n + 1 - t} b_i}$ choices of $\eps'$ and using the trivial bound $\tau(A_{\sml}) \le 1$ and the upper bound in \cref{bnt} we obtain
\begin{equation}\label{p-xeps-upper} \mb{P}(X_{\eps}) \ll 2^{\sum_{i =1}^{n -L} b_i} \cdot 2^{\sum_{i = n +1 - L}^{n +1 - t} b_i} \cdot  V^{1/3}2^{t - n}  \ll V^{1/3}2^{-\frac{1}{2} t^{1/4}} \le 2^{- \frac{1}{3}t_{\min}^{1/4}}.\end{equation}
We set this aside for later use, and resume the main line of argument. We divide into two cases according to the value of $S = S(A_{\sml}, A_{\lrg})$. \vspace*{8pt}

\emph{Case 1:} $S \le \ell_*/100$. This is the main case. To estimate the RHS of \cref{p-xe-union} we use the inclusion-exclusion principle (``Poisson paradigm''). Let $\ell_0 \in [\ell_*/2, \ell_*]$ be even. Then by the Bonferroni inequalities we have
\begin{equation*} 1 - \mb{P} \big( \bigcup_{\eps \in \mathcal{E}} X_{\eps}\big) \le \sum_{0 \le \ell \le \ell_0} \sum_{\substack{\eps^{(1)} < \cdots < \eps^{(\ell)} \\ \eps^{(1)},\dots,\eps^{(\ell)} \in \mathcal{E}}} (-1)^{\ell} \mb{P}(\bigcap_{u = 1}^{\ell} X_{\eps^{(u)}}).\end{equation*} By \cref{key-Pois}
it follows that 
\begin{align}\nonumber 1  - & \mb{P} \big( \bigcup_{\eps \in \mathcal{E}} X_{\eps}\big)  \le \sum_{0 \le \ell \le \ell_0} (-1)^{\ell}\sum_{\substack{\eps^{(1)} < \cdots < \eps^{(\ell)} \\ \eps^{(1)},\dots,\eps^{(\ell)} \in \mathcal{E}}}  (1 + O(V^{-\Omega(1)})) \prod_{u = 1}^{\ell} \mb{P}(X_{\eps^{(u)}}) \\ &  \le \sum_{0 \le \ell \le \ell_0} (-1)^{\ell} \sum_{\substack{\eps^{(1)} < \cdots < \eps^{(\ell)} \\ \eps^{(1)},\dots,\eps^{(\ell)} \in \mathcal{E}}}  \prod_{u = 1}^{\ell} \mb{P}(X_{\eps^{(u)}}) + O(V^{-\Omega(1)})\sum_{0 \le \ell \le \ell_0} \sum_{\substack{\eps^{(1)} < \cdots < \eps^{(\ell)} \\ \eps^{(1)},\dots,\eps^{(\ell)} \in \mathcal{E}}} \prod_{u = 1}^{\ell} \mb{P}(X_{\eps^{(u)}}). \label{bonf}\end{align}

The error term here is bounded by $\ll V^{-\Omega(1)} \sum_{0 \le \ell \le \ell_0} \frac{1}{\ell!} \big( \sum_{\eps} \mb{P}(X_{\eps}) \big)^{\ell} < V^{-\Omega(1)} e^{S} < e^{-\ell_*}$ (provided $\ell_*$ is small enough in terms of $V$), and so \cref{bonf} implies that 
\begin{equation}\label{1223a} 1  -  \mb{P} \big( \bigcup_{\eps \in \mathcal{E}} X_{\eps}\big) \le  \sum_{0 \le \ell \le \ell_0} (-1)^{\ell} \sum_{\substack{\eps^{(1)} < \cdots < \eps^{(\ell)} \\ \eps^{(1)},\dots,\eps^{(\ell)} \in \mathcal{E}}}  \prod_{u = 1}^{\ell} \mb{P}(X_{\eps^{(u)}}) + O(e^{-\ell_*}).\end{equation}
We also have
\begin{equation}\label{2-sum} \big| e^{-S} - \sum_{\ell = 0}^{\ell_0} (-1)^{\ell}\frac{1}{\ell!} S^{\ell} \big| \le \sum_{\ell > \ell_0} \frac{1}{\ell!} S^{\ell} \ll \big( \frac{eS}{\ell_0}\big)^{\ell_0} \ll \big(\frac{e}{50}\big)^{\frac{1}{2}\ell_*} \ll e^{-\ell_*}.\end{equation}
Also, \cref{p-xeps-upper,lemmaB1} give that 
\begin{equation}\label{1224a} \Big| S^{\ell} - \ell! \sum_{\substack{\eps^{(1)} < \cdots < \eps^{(\ell)} \\ \eps^{(1)},\dots,\eps^{(\ell)} \in \mathcal{E}}}\prod_{u = 1}^{\ell} \mb{P}(X_{\eps^{(u)}})\Big| \le \ell(\ell-1) 2^{-\frac{1}{3}t_{\min}^{1/4}} S^{\ell - 1} < 2^{-\frac{1}{4}t_{\min}^{1/4}} < e^{-10\ell_*}.\end{equation} 
Combining \cref{1223a,1224a,2-sum} with \cref{p-xe-union} gives
\[ \mb{P}\big( k \in \Sigma'(A_{\lrg}) + \Sigma(\mbf{A}_{\med}) + \Sigma(A_{\sml})\big) \ge 1 - e^{-S(A_{\sml},A_{\lrg})} + O(e^{-\ell_*}).\]
A corresponding upper bound follows using the lower bound Bonferroni inequalities (taking $\ell_0$ odd) in an entirely analogous manner. This completes the proof of \cref{key-61-tech} in Case 1.\vspace*{8pt}

\emph{Case 2:} $S > \ell_*/100$. For this we use a second moment method. We have
\begin{equation}\label{sec-mom}
\mb{E} \big| \sum_{\eps \in \mathcal{E}} ( 1_{X_{\eps}} - \mb{P}(X_{\eps})) \big|^2  = \sum_{\eps, \eps' \in \mathcal{E}} \mb{P}(X_{\eps} \cap X_{\eps'}) - S^2.
\end{equation}
Now by \cref{key-Pois} with $\ell =2$ we have when $\eps \neq \eps'$ that
\[ \mb{P}(X_{\eps} \cap X_{\eps'}) = \big(1 + O(V^{-\Omega(1)})\big)  \mb{P}(X_{\eps}) \mb{P}(X_{\eps'}).\] Summing over $\eps, \eps'$ gives
\[ \sum_{\eps, \eps' \in \mathcal{E}}\mb{P}(X_{\eps} \cap X_{\eps'}) = \big(1 + O(V^{-\Omega(1)})\big) \Big( S^2  -  \sum_{\eps \in \mathcal{E}} \mb{P}(X_{\eps})^2\Big) + S.\]
Therefore from \cref{sec-mom} we have
\[ \mb{E} \big| \sum_{\eps \in \mathcal{E}} ( 1_{X_{\eps}} - \mb{P}(X_{\eps})) \big|^2 \ll S+ V^{-\Omega(1)} S^2.\] It follows by Markov's inequality that the complement of $\bigcup_{\eps} X_{\eps}$, that is to say the event $\sum_{\eps} 1_{X_{\eps}} = 0$, has probability $\ll S^{-1} + V^{-\Omega(1)} \ll \ell_*^{-1}$. 

This concludes the analysis of Case 2 and hence the proof of \cref{key-61-tech}.
\end{proof}

\subsection{Proof of the main result}

We now turn to the task of proving the main result of the section, \cref{sec6-main}.  To do this, we will use \cref{key-61-tech} and undo the conditioning to $\mbf{A}_{\sml} = A_{\sml}$, $\mbf{A}_{\lrg} = A_{\lrg}$.
We first need to establish that the application of \cref{key-61-tech} is generically valid. That this is so is the content of \cref{7lem-claim-1,7lem-claim-2} below.

\begin{lemma}\label{7lem-claim-1} Define $\tau(A_{\sml})$ as in \cref{tau-small}. Then, with the setup as in \cref{sec6-main}, the probability that $\tau(\mbf{A}_{\sml}) \le 1/V$ is at most $2^T/V$.\end{lemma}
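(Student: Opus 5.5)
We need to bound $\mb{P}(\tau(\mbf{A}_{\sml}) \le 1/V)$. The plan is to compare $\tau(\mbf{A}_{\sml})$ with $\tau^*_{\mbf{x}\mid\beta'}(L)$ and then apply Markov's inequality to $1 - \tau$. The small elements are $\mbf{a}_{i,j} = \tilde\phi(\mbf{s}_{i,j})$ for $i \le L < n/2$, and $b_i = 1 - \xi'_i$. By definition \cref{tau-small},
\[ \tau(\mbf{A}_{\sml}) = 2^{\beta'(L) - L}\, |\Sigma(\mbf{A}_{\sml})|, \]
which is exactly the analogue of \cref{y-beta-form} with $\tilde\phi$ in place of $\phi$. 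Since $\tilde\phi$ satisfies the same dyadic bound and anticoncentration estimate (\cref{aij-dist}), the proof of \cref{lem23} applies verbatim to this $\tilde\phi$-version $\tilde\tau(\ell)$. It gives
\[ \mb{E}\big(\tilde\tau(\ell) - \tilde\tau(\ell+1)\big) \ll 2^{-\beta'(\ell+1)}. \]
(Alternatively, one could couple to $\phi$ via \cref{s-couple}, but the direct route avoids an $L^2/n$ error and the issue of $n$-dependence.)

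Next, telescope from $\tilde\tau(0) = 1$. This gives
\[ \mb{E}\big(1 - \tau(\mbf{A}_{\sml})\big) \ll \sum_{\ell=1}^{L} 2^{-\beta'(\ell)}. \]
By $T$-positivity of $\beta'$ (up to $\lfloor n/2\rfloor \ge L$), we have $\beta'(\ell) \ge -T + \ell^{1/2-\eta}$. Hence
\[ \sum_{\ell} 2^{-\beta'(\ell)} \le 2^T \sum_{\ell \ge 1} 2^{-\ell^{1/2-\eta}} \ll 2^T. \]
This is an absolute bound times $2^T$, but it is only a bound on the expected \emph{deficit} $1-\tau$, not on $\tau$ being small. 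Markov's inequality then gives $\mb{P}(\tau \le 1/V) = \mb{P}(1 - \tau \ge 1 - 1/V) \ll 2^T$, which is useless. So a sharper route is needed.

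The fix is to use the multiplicative form from the proof of \cref{lem23}, namely $\tau(\ell) \le \tau(\ell+1)(1 + Y_\ell)$ with $\mb{E}\, Y_\ell \ll 2^{-\beta'(\ell+1)}$, where $Y_\ell = E(\mbf{x})/(2^{b_{\ell+1}}|\Supp r_\ell|)$ is nonnegative. Iterating,
\[ \tau(L) \ge \prod_{\ell < L} (1 + Y_\ell)^{-1} \ge \exp\Big(-\sum_{\ell} Y_\ell\Big). \]
Thus $\tau(L) \le 1/V$ forces $\sum_\ell Y_\ell \ge \log V$. By Markov's inequality this has probability at most
\[ \frac{\sum_\ell \mb{E}\, Y_\ell}{\log V} \ll \frac{2^T}{\log V}. \]
I expect this to be the main obstacle: this bound is weaker than the claimed $2^T/V$ by a factor of $V/\log V$.

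To reach $2^T/V$, I would apply Markov's inequality to $\prod_\ell(1+Y_\ell)$ instead, using $\mb{P}(\prod_\ell(1+Y_\ell) \ge V) \le V^{-1}\,\mb{E}\prod_\ell(1+Y_\ell)$. The expectation should be computed by conditioning successively on the first $\ell$ layers. The bound $\mb{E}(Y_\ell \mid \text{first } \ell \text{ layers}) \ll 2^{-\beta'(\ell+1)}$ holds conditionally, since the proof of \cref{lem23} conditions exactly on $x_{i,j}$ for $i \le \ell$. Iterating gives
\[ \mb{E}\prod_\ell (1+Y_\ell) \le \prod_\ell \big(1 + C 2^{-\beta'(\ell+1)}\big) \le \exp\Big(C 2^T \sum_\ell 2^{-\ell^{1/2-\eta}}\Big). \]
This is $e^{O(2^T)}$, not $2^T$. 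To obtain the stated $2^T$ one needs to handle the early terms more carefully: bound $1+Y_\ell$ by $2^{b_{\ell+1}}$ trivially for $\ell \lesssim T^{O(1)}$, and use the exponential product only for the tail, where $2^{-\beta'} \le 2^{T}2^{-\ell^{1/2-\eta}}$ is summably small. I would try exactly this split, and accept a bound of the form $2^{O(T)}/V$ if it comes to that. Given the hierarchy $T \lll V$, a bound of this form serves the same purpose downstream.
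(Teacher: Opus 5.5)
Your argument does not prove the stated bound. Several steps are correct:
\begin{itemize}
\item the proof of \cref{lem23} does transfer to $\tilde\phi$;
\item the conditional bound $\mb{E}(Y_\ell \mid \text{layers} \le \ell) \ll 2^{-\beta'(\ell+1)}$ is exactly what that proof gives;
\item the tower-property iteration is valid.
\end{itemize}
But Markov applied to $\prod_\ell(1+Y_\ell)$ only gives $\mb{P}(\tau(\mbf{A}_{\sml}) \le 1/V) \le V^{-1}\exp\bigl(O(\sum_\ell 2^{-\beta'(\ell)})\bigr) = e^{O(2^T)}/V$.

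Your proposed repair does not reach $2^{O(T)}/V$ either. To make the tail $2^T\sum_{\ell\ge\ell_0}2^{-\ell^{1/2-\eta}}$ bounded you need $\ell_0 \approx T^{2/(1-2\eta)}$. The trivial bound on the early factors is then $\prod_{\ell<\ell_0}2^{b_{\ell+1}} = 2^{\ell_0-\beta'(\ell_0)}$. Since $\beta'(\ell_0)\le T+\ell_0^{1/2+\eta}$, this is $2^{T^{2+O(\eta)}}$ for every $T$-positive walk.

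The loss is structural. At each layer the denominator $|\Supp r_\ell|$ in $Y_\ell$ is random, so you can only control $Y_\ell$ conditionally, and the per-layer errors compound multiplicatively instead of adding. You are right that some bound $F(T)/V$ would suffice downstream, since $V(M)$ is chosen after $T(M)$ and one only needs the probability to be $\ll \ell_*^{-1}$. But that is a weaker statement than the lemma.

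The missing idea is to apply Cauchy--Schwarz once, at level $L$, to the full representation function $r(x) := \#\{\eps : \sum_{i\le L,\, j\in[b_i]}\eps_{i,j}\mbf{a}_{i,j} = x\}$.
\begin{itemize}
\item Its total mass $\sum_x r(x) = 2^{\sum_{i\le L}b_i}$ is deterministic. Hence $\tau(\mbf{A}_{\sml}) \le 1/V$ forces $\sum_x r(x)^2 \ge V\,2^{\sum_{i\le L}b_i}$.
\item The collision count $\sum_x r(x)^2$ is linear in pair indicators, so its mean can be computed directly. Foliate the pairs $\eps\neq\eps'$ by the largest layer $\ell$ in which they differ. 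Reveal everything except one differing $\mbf{a}_{\ell,m}$, and use \cref{anti-concentration-aij} to bound the collision probability by $\ll 2^{-\ell}$.
\item Counting the pairs in each class gives $\mb{E}\sum_x r(x)^2 \ll 2^{\sum_{i\le L}b_i}\sum_{\ell\le L}2^{-\beta'(\ell)}$, which is $\ll 2^{T+\sum_{i\le L}b_i}$ by $T$-positivity.
\end{itemize}
Markov's inequality then gives $\ll 2^T/V$ immediately. This is the same collision computation as in \cref{lem23}, performed globally rather than layer by layer, and that is what removes the random denominators.
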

\begin{proof}
Writing $r_{\mbf{A}}(x)$ for the multiplicity of $x$ in $\Sigma(\mbf{A}_{\sml})$, we have $\tau(\mbf{A}_{\sml}) = 2^{-\sum_{i =1}^{L} b_i} \Supp(r_{\mbf{A}})$. By Cauchy--Schwarz it follows that if $|\tau(\mbf{A}_{\sml})|\le 1/V$ then 
\begin{equation}\label{cauchy-est} \sum_{x} r_{\mbf{A}}(x)^2 \ge 2^{\sum_{i =1}^L b_i} V. \end{equation}
On the other hand, we have
\[ \mb{E}\sum_x  r_{\mbf{A}}(x)^2 = \sum_{\eps, \eps' } \mb{P}\big( \sum_{1 \le i \le L;j \in [b_i]} \eps_{i,j} \mathbf{a}_{i,j} =  \sum_{1 \le i \le L;j \in [b_i]} \eps'_{i,j} \mathbf{a}_{i,j}\big),\] where now $\eps = (\eps_{i,j})_{1 \le i \le L;j \in [b_i]}$, $\eps' = (\eps'_{i,j})_{1 \le i \le L;j \in [b_i]}$.
For each pair $(\eps, \eps')$ in the above sum, consider the largest $\ell$ for which there is some $m$ (which we take minimal) such that $\eps_{\ell,m} \neq \eps'_{\ell,m}$. Then by revealing all $\mathbf{a}_{i,j}$ except $\mathbf{a}_{\ell,m}$, we see from \cref{anti-concentration-aij} that the inner probability is $\ll 2^{-\ell}$.

The number of pairs $(\eps, \eps')$ corresponding to a given $(\ell, m)$ is $2^{ 2\sum_{i =1}^L b_i - \sum_{i = \ell + 1}^Lb_i - (m - 1)}$. 
Thus we have 
\begin{align*} 
\mb{E}\sum_x r_{\mbf{A}}(x)^2 \ll \sum_{\ell = 1}^L & 2^{-\ell} \sum_{m \in [b_{\ell}]} 2^{ 2\sum_{i =1}^L b_i - \sum_{i = \ell + 1}^L b_i - (m - 1)} \\ \nonumber  & \ll  \sum_{\ell =1}^L 2^{-\ell + 2\sum_{i=1}^L b_i - \sum_{i = \ell + 1}^L b_i}   = 2^{\sum_{i =1}^L b_i} \sum_{\ell =1}^L 2^{- \beta'(\ell)}.\end{align*}
Since $\beta'$ is a $T$-positive walk, $\beta'(\ell) \ge -T + \ell^{1/4}$ and so it follows that 
\[ \mb{E} \sum_x r_{\mbf{A}}(x)^2 \ll 2^{T + \sum_{i=1}^L b_i} .\] By Markov, 
\[ \mb{P} \big( \sum_x r_{\mbf{A}}(x)^2 \ge 2^{\sum_{i=1}^L  b_i} V\big) \ll 2^T/V,\] and so the lemma follows from the remarks leading to \cref{cauchy-est}.\end{proof}

The next lemma controls the separation property required in \cref{key-61-tech}. Here, very crude estimates suffice.

\begin{lemma}\label{7lem-claim-2} With the setup in \cref{sec6-main}, the probability that there exist distinct $\eps = (\eps_{i,j})_{n +1 - t < i \le n;j \in [b_i]}$ and $\eps' = (\eps'_{i,j})_{n +1 - t < i \le n;j \in [b_i]}$ for which $\big| \sum_{i,j} (\eps_{i,j}  - \eps'_{i,j}) \mbf{a}_{i,j}\big| \le 2^{n - L/2}$ is $\le 2^{-L/8}$.
\end{lemma}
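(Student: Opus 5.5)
We will use a union bound over pairs $(\eps,\eps')$, combined with the anticoncentration estimate \cref{anti-concentration-aij}. Here $\mbf{A}_{\lrg}$ consists of the elements $\mbf{a}_{i,j}$ with $n+1-t < i \le n$, so $\mbf{a}_{i,j} \asymp 2^i \ge 2^{n+2-t}$. By \cref{bi-defs-rept}, the total number of such elements is
\[ \sum_{n+1-t<i\le n} b_i = (t-1) + \beta(t-1). \]
By $T$-positivity this is at most $t + T + t^{1/2+\eta} \le 3t_{\max}$. So there are at most $2^{6t_{\max}}$ pairs $(\eps,\eps')$ in total, and this count is negligible against $2^{-L}$ because $L \ggg t_{\max}$ by \cref{param-heir}.

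Fix distinct $\eps,\eps'$. Choose some index $(a,b)$ with $\eps_{a,b}\ne \eps'_{a,b}$, so the coefficient $\eps_{a,b}-\eps'_{a,b}$ equals $\pm1$. We reveal every $\mbf{a}_{i,j}$ with $(i,j)\ne(a,b)$. The event $\big|\sum (\eps_{i,j}-\eps'_{i,j})\mbf{a}_{i,j}\big| \le 2^{n-L/2}$ then forces $\mbf{a}_{a,b}$ into a fixed interval of length at most $2^{n-L/2+1}+1$. Since $a > n+1-t$, the anticoncentration bound \cref{anti-concentration-aij} (which applies because $a$ is not the jump-step index) gives a probability of
\[ \ll 2^{n-L/2}\cdot 2^{-a} \le 2^{t-L/2}. \]

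Summing over all pairs gives a total probability $\ll 2^{6t_{\max}+t-L/2}$. This is at most $2^{-L/8}$ since $L$ is enormous compared with $t_{\max}$.

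There is no real obstacle. The only care needed is to check that the index $(a,b)$ lies in the large range, so that the standard anticoncentration bound \cref{anti-concentration-aij} applies rather than the weaker jump-scale bound \cref{anti-concentration-jump-scale}.
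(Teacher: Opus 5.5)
Your proof is correct and is essentially the paper's argument: a union bound over the $2^{O(t_{\max})}$ pairs $(\eps,\eps')$, where for each pair you reveal all but one differing $\mbf{a}_{u,v}$, apply \cref{anti-concentration-aij} to get a bound $\ll 2^{t-n}\cdot 2^{n-L/2}$, and then finish using the parameter hierarchy. The only cosmetic difference is in bounding $\sum_{n+1-t<i\le n} b_i=(t-1)+\beta(t-1)$: you use $T$-positivity where the paper uses $R$-boundedness. You also correctly note that the jump-step index $n+1-t$, the only one affected by the conditioning on $\mbf{f}$, lies outside the range in question.
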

\begin{proof}
The number of pairs $\eps, \eps'$ is $< 2^{2\sum_{i = n + 2 - t}^n b_i} < 2^{L/4}$, using the $R$-boundedness of $\beta$ and the parameter hierarchy for the second bound. For each pair, choose some index with $\eps_{u,v} \neq \eps'_{u,v}$, and reveal all $\mbf{a}_{i,j}$ except for $\mbf{a}_{u,v}$. By \cref{anti-concentration-aij}, $\mb{P} \big( \big| \sum_{n +1 - t < i \le n;j \in [b_i]} (\eps_{i,j}  - \eps'_{i,j}) \mbf{a}_{i,j}\big| \le  2^{n - L/2} \big) \ll 2^{t - n} \cdot 2^{n - L/2}$. Summing over all pairs $\eps, \eps'$ and using the parameter hierarchy again gives the result.
\end{proof}

Now we begin the proof of \cref{sec6-main} in earnest.
From \cref{key-61-tech} we have 
\begin{align} \nonumber \mb{P}\big(k \in \Sigma'(A_{\lrg}) + \Sigma(\mbf{A}_{\med}) + \Sigma(A_{\sml})\big) & = (1 - e^{-S(A_{\sml}, A_{\lrg})})1_{\tau(A_{\sml}) \ge 1/V,\, A_{\lrg} \in \mathscr{S}} \\  & + O\big(1_{\tau(A_{\sml}) \le 1/V} + 1_{A_{\lrg} \notin \mathscr{S}} + \ell_*^{-1}\big),\label{first-k-est}\end{align} where $\mathscr{S}$ denotes the separation condition in the statement of \cref{key-61-tech}.
Now from the definition \cref{score-def-1} of $S(A_{\sml}, A_{\lrg})$ and \cref{score-def-2} we have
\[ S(A_{\sml}, A_{\lrg})\ = \sum_{\eps \in \mathcal{E}} \mb{P} \big( k - \sum_{\substack{n +1 - t < i \le n \\ j \in [b_i]}} \eps_{i,j} a_{i,j} \in \Sigma(\mbf{A}_{\med}) + \Sigma(A_{\sml}) \big).\]
If $\tau(A_{\sml}) \ge 1/V$ we now apply \cref{prescore-3e} with $x = k - \sum_{n +1 - t < i \le n, j \in [b_i]} \eps_{i,j} a_{i,j}$, $\eps \in \mathcal{E}$; the relevant condition \cref{x-assump} holds precisely because $\eps \in \mathcal{E}(A_{\lrg})$ (recall the definition \cref{e-alarge-def}).  This gives that if $\tau(A_{\sml}) \ge 1/V$ then
\begin{equation*}  S(A_{\sml}, A_{\lrg}) = (1 + O(V^{-\Omega(1)})) S'(A_{\sml}, A_{\lrg}) \end{equation*} where $S'(A_{\sml}, A_{\lrg})$ equals
\begin{equation}\label{s-prime-def} \tau(A_{\sml})2^{\sum_{i=1}^{n - L} b_i}(2^{-L}k)^{-1} \sum_{\substack{\eps \in \mathcal{E}(A_{\lrg}) \\  \eps'}}  \mb{P} \Big(k - \sum_{\substack{i > n +1 - t \\ j \in [b_i]}} \eps_{i,j} a_{i,j}  - \sum_{\substack{i = n +1 - L  \\ j \in [b_i]}}^{n + 1 - t} \eps'_{i,j} \mathbf{a}_{i,j}  \in [2^{- L}k] \Big)  .\end{equation}
where $\eps = (\eps_{i,j})_{n +1 - t < i \le n;j \in [b_i]}$ and $\eps' = (\eps'_{i,j})_{n +1 - L \le i \le n +1 - t;j \in [b_i]}$ range over all tuples of elements from $\{0,1\}$ (subject to the restriction $\eps \in \mathcal{E}(A_{\lrg})$).
Substituting into \cref{first-k-est}, removing the conditioning on $A_{\lrg}, A_{\sml}$ and applying \cref{7lem-claim-1,7lem-claim-2} (noting that all error probabilities, as well as the $V^{-\Omega(1)}$ term, are much less than $\ell_*^{-1}$ by the parameter hierarchy) we obtain
\begin{equation}\label{preprop-72} \mb{P} \big(k \in \Sigma'(\mbf{A}_{\lrg}) + \Sigma(\mbf{A}_{\med}) + \Sigma(\mbf{A}_{\sml})\big) = 1 - \mb{E}e^{-S'(\mbf{A}_{\sml}, \mbf{A}_{\lrg})} +  O(\ell_*^{-1}).\end{equation}

Our next task is to relate the LHS here to $\mb{P}(k \in \Sigma(\mbf{A}))$. We pause to recall here that $\mbf{A}$ still means that we are conditioning to $\boldbeta = \beta, \boldbeta' = \beta'$ and $\mbf{f} = f$. 

In carrying out the analysis (and also later on) we will use the following lemma.
\begin{lemma}\label{claim3-lem}
Suppose that $s$ is an integer with $0 \le s \le n +1 - L$. Write $\eps = (\eps_{i,j})_{s \le i \le n; j \in [b_i]}$, and $\eps_{\lrg} := (\eps_{i,j})_{n + 1 - t <i \le n; j \in [b_i]}$. Then
\[ \sum_{\eps} \mb{P} \big(k - \sum_{s \le i \le n; j \in [b_i]} \eps_{i,j} \mathbf{a}_{i,j} \in [2^{s - n - 1} k], \, \eps_{\lrg} \notin \mathcal{E}(\mbf{A}_{\lrg})\big) \ll R^22^{\sum_{i = s}^n b_i - (n - s)} e^{-\Omega((\log V)^{1/2})}. \]
\end{lemma}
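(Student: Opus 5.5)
We count tuples $\eps$ and bound each probability by separating the contribution of the large elements from that of the medium ones. For each $\eps$, write $\eps = (\eps_{\lrg}, \eps_{\med})$, where $\eps_{\med} = (\eps_{i,j})_{s \le i \le n+1-t}$. The condition $\eps_{\lrg}\notin\mathcal{E}(\mbf{A}_{\lrg})$ says that $y := k - \sum_{\lrg}\eps_{i,j}\mbf{a}_{i,j}$ satisfies $|y-\mu| > (\log V)^{1/4}\sigma$. The target event then forces $\sum_{\med}\eps_{i,j}\mbf{a}_{i,j} \in y - [2^{s-n-1}k]$.

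We condition on $\mbf{A}_{\lrg}$, so that $y$ is fixed, and reveal all medium elements except those at the jump step $i = n+1-t$. The jump-step sum $\sum_{j\in[V]}\eps_{n+1-t,j}\mbf{a}_{n+1-t,j}$ is a sum of independent terms, each $\asymp 2^{n-t}$. By \cref{h-bound}, the contribution from indices $s\le i< n+1-t$ is $O(2^{n-t})$ deterministically, using $s\ge 0$ and the bound \cref{jump-step-2}. So the event requires the jump sum to lie in an interval of length $O(2^{s-n}k + 2^{n-t})$ centred at distance at least $(\log V)^{1/4}\sigma - O(2^{n-t})$ from $\mu$.

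We next average over $\eps_{\med}$. Summing over the jump coordinates $\eps_{n+1-t,\cdot}\in\{0,1\}^V$ as well as over the $\mbf{a}$'s is exactly the random variable $X$ from \cref{x-rv-def}, with independent (not necessarily good) signs. The local limit theorem \cref{counting-cor-5} with $\ell=1$ gives $\mb{P}(X=x) = \gamma(x)+O(V^{-1/4}\sigma^{-1})$; the case $\ell=1$ applies equally with the good restriction dropped, since bad tuples have exponentially small proportion. A direct Hoeffding bound over the random $\eps$ gives more robustly $\mb{P}(|X|\ge (\log V)^{1/4}/2)\ll e^{-\Omega((\log V)^{1/2})}$. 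Combining this with the anticoncentration \cref{anti-concentration-jump-scale} for the interval length, the average over the $2^V$ jump signs of the probability is
\[
\ll e^{-\Omega((\log V)^{1/2})}\cdot \max\!\big(1,\, 2^{s-n}k\, V^{1/3}2^{t-n}\big)\cdot \sigma^{-1}2^{n-t}\,(\text{density factor}).
\]
More cleanly: the tail gives $\ll e^{-\Omega((\log V)^{1/2})}\,2^{s}k^{-1}\cdot$(window count), with the window count per unit length being $O(2^{t-n}V^{-1/2})$ from the Gaussian density in the tail.

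Finally we sum over the remaining coordinates. Summing trivially over the non-jump $\eps_{\med}$ contributes $2^{\sum_{i=s}^{n-t} b_i}$; the jump coordinates contribute $2^V$, already averaged; and summing over $\eps_{\lrg}$ contributes $2^{\sum_{i>n+1-t}b_i}$. Together these give $2^{\sum_{i=s}^n b_i}$. Multiplying by the window probability $\ll e^{-\Omega((\log V)^{1/2})} 2^{s-n}$, using $k\asymp 2^n$, produces the main factor $2^{\sum_{i=s}^n b_i-(n-s)}$. The $R^2$ absorbs the constants from \cref{h-bound} and from the bounded increments used when $s$ is small.

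The main obstacle is handling the whole jump-step block correctly when the window length $2^{s-n}k$ is smaller than the spacing scale $V^{-1/3}2^{n-t}$. In that regime the Gaussian tail must be combined with the anticoncentration \cref{anti-concentration-jump-scale} rather than with a continuous density. I would first try to use the local limit theorem \cref{counting-cor-5} directly, pointwise in the tail region, summed over the window. That gives $\gamma(x)$, which is $\le \sigma^{-1}e^{-(\log V)^{1/2}/2}$ there, plus the error $V^{-1/4}\sigma^{-1}$. If the polynomial error $V^{-1/4}$ proves too large relative to $e^{-\Omega((\log V)^{1/2})}$, I would instead decouple by conditioning on a single jump element and apply the Hoeffding tail to the others.
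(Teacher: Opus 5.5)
There is a genuine gap, and it is quantitative: your argument loses a factor of roughly $2^{t}V^{-1/2}$.

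Once you condition on $\mbf{A}_{\lrg}$ and on the non-jump medium elements, the only randomness left is the jump block. The best it can supply is the local-limit density $\asymp\sigma^{-1}\asymp V^{-1/2}2^{t-n}$ (or the weaker \cref{anti-concentration-jump-scale}). So the probability of landing in a tail window of length $2^{s-n-1}k\asymp 2^s$ is of order
$2^{s}\sigma^{-1}e^{-\Omega((\log V)^{1/2})}=2^{s-n}\cdot 2^{t}V^{-1/2}e^{-\Omega((\log V)^{1/2})}$.
This order is actually attained for windows near the edge of the tail region: for $s=1$ the window contains $O(1)$ integers, each hit with probability $\asymp\sigma^{-1}$ times the Gaussian factor.

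Your closing step, ``multiplying by the window probability $\ll e^{-\Omega((\log V)^{1/2})}2^{s-n}$'', is not supported by anything before it. Your own intermediate computation gives density $2^{t-n}V^{-1/2}$ per unit length, not $2^{-n}$. Since $t\ge t_{\min}\ggg V$, the loss is fatal. By conditioning on $\mbf{A}_{\lrg}$ and then summing over $\eps_{\lrg}$ trivially, you have discarded exactly the randomness that produces the factor $2^{-n}$. (Your worry about $V^{-1/4}$ versus $e^{-\Omega((\log V)^{1/2})}$ is a red herring: $V^{-1/4}$ is the smaller quantity.)

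The missing idea is to split the roles: the jump block supplies only the Gaussian tail, and a \emph{large} element supplies the anticoncentration. This is the paper's route.
\begin{enumerate}
\item Move the tail condition entirely onto the jump block. The window condition, $|k-\sum_{n+1-t<i\le n;\,j\in[b_i]}\eps_{i,j}\mbf{a}_{i,j}-\mu|\ge(\log V)^{1/4}\sigma$, the fact $2^{s-n}k\lll\sigma$, and \cref{eq77b} together give $|\sum_{j\in[V]}\eps_{\jump,j}\mbf{a}_{\jump,j}-\mu|\ge\frac14(\log V)^{1/4}\sigma$. This event depends only on the jump variables.
\item The $R$-boundedness of $\beta$ forces $\eps_{i,j}=1$ for some $i\ge n-2\log_2R$, since otherwise the whole sum is $<k/2$. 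This index lies in the large block because $t\ggg\log R$.
\item Condition on the jump variables, which are independent of the rest, and reveal every other $\mbf{a}_{i,j}$ except that large one. Then \cref{anti-concentration-aij} bounds the window probability by $\ll 2^{s-n}k\cdot R^2 2^{-n}$. This, not \cref{h-bound}, is where the factor $R^2$ comes from.
\item Average over $\eps_{\jump}\in\{0,1\}^V$ and apply \cref{hoeffding-inequality} to the jump sum to get the factor $e^{-\Omega((\log V)^{1/2})}$. Counting the remaining $\eps$ trivially then yields the stated bound.
\end{enumerate}
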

\begin{proof} Suppose $k \in \sum_{s \le i \le n; j \in [b_i]} \eps_{i,j} \mbf{a}_{i,j}  + [2^{s -n - 1} k]$ with $\eps_{\lrg} \notin \mathcal{E}(\mbf{A}_{\lrg})$, that is to say \[ \Big|k - \sum_{n + 1 - t < i \le n;j \in [b_i]} \eps_{i,j} \mathbf{a}_{i,j} - \mu\Big| \ge  (\log V)^{1/4}\sigma.\] Then \[ \Big|\sum_{s \le i \le n +1 - t;j \in [b_i]} \eps_{i,j} \mbf{a}_{i,j} - \mu\Big| \ge (\log V)^{1/4}\sigma - 2^{s - n - 1} k > \tfrac{1}{2} (\log V)^{1/4}\sigma,\] where the last step follows using that $\sigma \asymp V^{1/2} 2^{n - t}$ whilst $2^{s - n} k \ll 2^s \le 2^{n - L} \lll 2^{n - t}$.

By \cref{eq77b} (truncated to $i = s$ at the lower end of the summation) it follows that the contribution from $i < n +1 - t$ is negligible and hence that
\[ \Big|\sum_{j=1}^V \eps_{\jump,j} \textbf{a}_{\jump,j} - \mu\big| = \Big|\sum_{j=1}^V \eps_{n +1 - t,j} \mbf{a}_{n +1 - t,j} - \mu\big| \ge \tfrac{1}{4} (\log V)^{1/4}\sigma,\] where here and henceforth we write $\eps_{\jump, j} := \eps_{n + 1 - t,j}$ and $\mbf{a}_{\jump, j} := \mbf{a}_{n +1 - t,j}$ for $j \in [V]$. 

Next we observe that if some $\eps$ contributes nontrivially to the LHS in the lemma then it must lie in the set $\mathcal{E}_0$ consisting of those $\eps$ for which at least one $\eps_{i,j}$, $i \ge  n - 2 \log_2 R$, is not zero. This follows immediately from the $R$-boundedness of $\beta$ and \cref{R-bded-small-1}, and the fact that $\mbf{a}_{i,j} \ll 2^{i}$ deterministically (see \cref{dyadic-aij}).

Putting these observations together, the LHS in the lemma is bounded above by
\[ \sum_{\eps \in \mathcal{E}_0} \mb{P}\Big(k - \sum_{s \le i \le n;j \in [b_i]} \eps_{i,j} \mathbf{a}_{i,j} \in [2^{s - n - 1} k],\, \big|\sum_{j=1}^V \eps_{\jump,j} \textbf{a}_{\jump,j} - \mu\big| \ge \tfrac{1}{4} (\log V)^{1/4}\sigma\Big). \]
Denoting $\tilde\eps := (\eps_{i,j})_{s \le i \le n, i \neq n +1 - t; j \in [b_i]}$ (that is, all the $\eps_{i,j}$ other than the $\eps_{\jump, j}$), this is 
\[ \ll 2^{s - n} k\sum_{\tilde\eps: \tilde\eps \in \mathcal{E}_0}  \sup_x \mb{P} \big( \sum_{\substack{s \le i \le n \\ i \neq n +1 - t \\ j \in [b_i]}} \eps_{i,j} \mathbf{a}_{i,j} = x\big) \cdot \sum_{\eps_{\jump}} \mb{P} \Big( \big|\sum_{j=1}^V \eps_{\jump,j} \textbf{a}_{\jump,j} - \mu\big| \ge \tfrac{1}{4} (\log V)^{1/4}\sigma\Big). \]
Here, $\tilde\eps$ ranges over the set $\{0,1\}^{\sum_{s \le i \le n, i \ne n + 1 - t} b_i}$, and $\eps_{\jump} = (\eps_{\jump, j})_{j \in [V]}$ ranges over $\{0,1\}^V = \{0,1\}^{b_{n + 1 - t}}$. Additionally, we have implicitly used that the parameter hierarchy \cref{param-heir} implies that $t \ggg 2\log_2 R$, so the conditions $\eps \in \mathcal{E}_0$ and $\tilde\eps \in \mathcal{E}_0$ are the same.
By revealing all $\mathbf{a}_{i,j}$ except for one corresponding to an index $(i,j)$ with $i \ge n -  2 \log_2 R$ for which $\tilde\eps_{i,j} \ne 0$, we see from \cref{anti-concentration-aij} that the $\sup_x \mb{P}( \sum \eps_{i,j} \mbf{a}_{i,j} = x)$ term is $\ll R^2 2^{-n}$.

Thus, writing $\boldeps_{\jump,j}$, $j = 1,\dots, V$, for i.i.d. random $\{0,1\}$ variables (independent of the $\mbf{a}$ variables) and using that $k \asymp 2^n$, the LHS in the lemma is bounded above by
\begin{align}\nonumber 2^{s - n}k \cdot & 2^{\sum_{s \le i \le n, i \ne n +1 - t} b_i}  R^2 2^{-n} \cdot 2^V \mb{P} \big( \big| \sum_{j = 1}^V \boldeps_{\jump,j} \mathbf{a}_{\jump,j} - \mu\big| > \tfrac{1}{4}\sigma (\log V)^{1/4} \big) \\ &  = R^2 2^{\sum_{i = s}^n b_i - (n - s)} \mb{P} \big( \big| \sum_{j = 1}^V \mbf{X}_j - \mu\big| > \tfrac{1}{4} (\log V)^{1/4} \sigma\big),\label{63-a1} \end{align} where $\mbf{X}_j := \boldeps_{\jump, j} \mbf{a}_{\jump, j}$.
Now recall that by definition \cref{mu-sig-def}, $\mu = \frac{1}{2}\sum_{j = 1}^V \mb{E} \mbf{a}_{\jump, j}$, and so $\sum_{j = 1}^V \mb{E}\mbf{X}_j = \mu$. Now by Hoeffding's inequality (see \cref{hoeffding-inequality}), if $H$ is an upper bound for $\max_j |\mbf{X}_j|$ we have
\[ \mb{P} \big( \big|\sum_{j = 1}^V \mbf{X}_j - \mu \big| \ge \tfrac{1}{4}(\log V)^{1/4} \sigma\big) \le 2 \exp \big(-\sigma^2(\log V)^{1/2}/(32VH^2)\big) \ll e^{-\Omega((\log V)^{1/2})},\]
where in the last step we used the fact that we can take $H \asymp 2^{n - t}$, and that $\sigma \asymp V^{1/2} 2^{n - t}$. The proof of \cref{claim3-lem} is complete.
\end{proof}

Finally we complete the proof of \cref{sec6-main}. 

\begin{proof}[Proof of \cref{sec6-main}] From the case $s = 1$ of \cref{claim3-lem}, noting that in this case we have $\sum_{s \le i \le n}  b_i - n = D$, we see that the probability that $k \in \Sigma(\mbf{A})$, but $k \notin \Sigma'(\mbf{A}_{\lrg}) + \Sigma(\mbf{A}_{\med}) + \Sigma(\mbf{A}_{\sml})$, is $\ll R^2 2^{D}e^{-\Omega((\log V)^{1/2})} \ll \ell_*^{-1}$. By this and \cref{preprop-72} we have
\begin{equation}\label{eq730a} \mb{P}(k \in \Sigma(\mbf{A})) =  1 - \mb{E} e^{-S'(\mbf{A}_{\sml}, \mbf{A}_{\lrg})}  + O(\ell_*^{-1}).\end{equation} Recall that $S'(\mbf{A}_{\sml}, \mbf{A}_{\lrg})$ is defined in \cref{s-prime-def}. Define $S''(A_{\sml}, A_{\lrg})$ as for $S'(A_{\sml}, A_{\lrg})$, but without the constraint $(\eps_{i,j})_{n + 1 - t < i \le n;j \in [b_i]} \in \mathcal{E}(A_{\lrg})$. That is, 
\begin{align}\nonumber S''(A_{\sml}, & A_{\lrg}) := \tau(A_{\sml})2^{\sum_{i = 1}^{n - L} b_i} (2^{-L}k)^{-1} \times \\ & \times \sum_{(\eps_{i,j})_{ n +1 - L \le i \le n ; j \in [b_i]}}\mb{P} \big(k - \sum_{\substack{n + 1 - t < i \le n \\ j \in [b_i]}} \eps_{i,j} a_{i,j}  - \sum_{\substack{i = n+1 - L\\ j \in [b_i]}}^{n +1 - t} \eps_{i,j} \mathbf{a}_{i,j}  \in [2^{- L}k] \big)  .\label{s-prime-double-def}\end{align}
Using the trivial bound $\tau(A_{\sml}) \le 1$, we have that $\mb{E}\big( S''(\mbf{A}_{\sml},  \mbf{A}_{\lrg})  - S'(\mbf{A}_{\sml}, \mbf{A}_{\lrg})\big)$ is bounded by
\[ \ll 2^{\sum_{i=1}^{n - L} b_i - (n - L)}\sum_{\eps_{i,j}} \mb{P} \big( k  - \sum_{\substack{n + 1 - L \le i \le n \\j \in [b_i]}} \eps_{i,j} \mathbf{a}_{i,j}  \in [2^{- L}k],\, (\eps_{i,j})_{n +1 - t < i \le n; j \in [b_i]} \notin \mathcal{E}(\mbf{A}_{\lrg}) \big) \] where the first sum is over all $(\eps_{i,j})_{ n +1 - L \le i \le n ; j \in [b_i]}$ with components in $\{0,1\}$.

Applying \cref{claim3-lem} with $s = n -  L + 1$, and using that $\sum_{i = 1}^n b_i = n + D$, it follows using the parameter hierarchy \cref{param-heir} that
\begin{equation}\label{eq730b} \mb{E}\big( S''(\mbf{A}_{\sml},  \mbf{A}_{\lrg})  - S'(\mbf{A}_{\sml}, \mbf{A}_{\lrg})\big) \ll R^2 2^{D}  e^{-\Omega((\log V)^{1/2})} \ll \ell_*^{-1}.\end{equation} 
 Since $x \mapsto e^{-x}$ is Lipschitz with constant $1$ on $[0, \infty)$, it follows from \cref{eq730a,eq730b} that we have
\begin{equation}\label{eq730c} \mb{P}(k \in \Sigma(\mbf{A})) =  1 - \mb{E} e^{-S''(\mbf{A}_{\sml}, \mbf{A}_{\lrg})} + O(\ell_*^{-1}).\end{equation}

Recall once more that, throughout the preceeding analysis, $\mbf{A}$ means $(\mbf{A} \mid \boldbeta = \beta, \boldbeta' = \beta', \mbf{f} = f)$.

From the definition \cref{s-prime-double-def}, using again that $\sum_{i = 1}^n b_i = n + D$, recalling that $\sum_{i > n - L} b_i = L + \beta(L)$ and $k = 2^{n + \xi}$, and rescaling the $a$-variables by dividing by $k$, we have that $S''(A_{\sml}, A_{\lrg})$ equals
\[  2^{D - \xi} \tau(A_{\sml}) 2^{-\beta(L)}\sum_{\eps = (\eps_{i,j})_{i \ge n +1 - L; j}} \mb{P} \big(1 - \sum_{\substack{i > n +1 - t \\ j \in [b_i]}} \eps_{i,j} \frac{a_{i,j}}{k}  - \sum_{\substack{i = n +1 - L \\ j \in [b_i]}}^{n +1 - t} \eps_{i,j} \frac{\mbf{a}_{i,j}}{k} \in [2^{- L}] \big). \]
Therefore
\[ S''(A_{\sml}, \mbf{A}_{\lrg})  = 2^{D - \xi} \tau(A_{\sml}) 2^{-\beta(L)}\sum_{\eps = (\eps_{i,j})_{i \ge n +1 - L; j}} \mb{P}_{\mbf{A}_{\med}} \big(1 -  \sum_{\substack{i = n +1 - L \\ j \in [b_i]}}^{n} \eps_{i,j} \frac{\mbf{a}_{i,j}}{k} \in [2^{- L}] \big). \] In this expression, note carefully that the probability is taken only over random choices of $\mbf{A}_{\med} = (\mbf{a}_{i,j})_{L +1 \le i \le n + 1 - t; j \in [b_i]}$, which means that $S''(A_{\sml},\mbf{A}_{\lrg})$ still depends nontrivially on the random variables $\mbf{A}_{\lrg} = (\mbf{a}_{i,j})_{n +1 - t < i \le n; j \in [b_i]}$.
Recalling \cref{xbeta-ybeta-defs}, this may be written
\begin{equation*} S''(A_{\sml}, \mbf{A}_{\lrg})  = 2^{D - \xi} \tau(A_{\sml})  \mb{E}_{\mbf{A}_{\med}} \varrho^*_{\tilde{\mbf{u}} \mid \beta}(L),\end{equation*} where $\tilde{\mbf{u}} = (\tilde{\mbf{u}}_{i,j})_{1 \le i \le L, j \in [1 + \xi_i]}$ is defined by \begin{equation}\label{utilde-def} 2^{-\tilde{\mbf{u}}_{i,j}} = \frac{1}{k}\mbf{a}_{n +1 - i,j}\end{equation} for $1 \le i \le L$ and $j \in [1 + \xi_i]$.  Recall here that $b_{n +1 - i} = 1 + \xi_i$ for $i \le \lceil n/2\rceil$, where the $\xi_i$ are the increments of $\beta$, and note that $\tilde{\mbf{u}}$ depends on $\mbf{A}_{\med}$ and $\mbf{A}_{\lrg}$.
Set 
\begin{equation}\label{s-triple-prime} S'''(A_{\sml}, \mbf{A}_{\lrg}) := 2^{D - \xi} \tau(A_{\sml}) \varrho^*_{\tilde{\mbf{u}} \mid \beta}(t-1);
\end{equation} note that $\varrho^*_{\tilde{\mbf{u}} \mid \beta}(t-1)$ depends only on $\mbf{A}_{\lrg}$ and not on $\mbf{A}_{\med}$, so this is well-defined. By the Lipschitz nature of $e^{-x}$ and the triangle inequality, and since $|D| \le M/10$, we have 
\begin{align*}\nonumber
& \mb{E}_{\mbf{A}_{\lrg}} \big|   e^{-S''(A_{\sml},\mbf{A}_{\lrg})}   - e^{-S'''(A_{\sml},\mbf{A}_{\lrg})}\big|  \ll \mb{E}_{\mbf{A}_{\lrg}} \big|  S''(A_{\sml}, \mbf{A}_{\lrg})  - S'''(A_{\sml}, \mbf{A}_{\lrg})\big| \\ & \ll 2^{M/10} \mb{E}_{\mbf{A}_{\lrg}} \big| \varrho^*_{\tilde{\mbf{u}} \mid \beta}(t-1) - \mb{E}_{\mbf{A}_{\med}} 
\varrho^*_{\tilde{\mbf{u}} \mid \beta}(L) \big| \le 
2^{M/10} \mb{E}_{\mbf{A}_{\lrg}, \mbf{A}_{\med}} \big| \varrho^*_{\tilde{\mbf{u}} \mid \beta}(t-1) - \varrho^*_{\tilde{\mbf{u}} \mid \beta}(L)\big|,\nonumber \end{align*}
and so
\begin{equation*}\mb{E} \big|   e^{-S''(\mbf{A}_{\sml}, \mbf{A}_{\lrg})}   - e^{-S'''(\mbf{A}_{\sml}, \mbf{A}_{\lrg})}\big| \ll 
2^{M/10} \mb{E} \big| \varrho^*_{\tilde{\mbf{u}} \mid \beta}(t-1) - \varrho^*_{\tilde{\mbf{u}} \mid \beta}(L)\big| ,\end{equation*} where here averages are taken over the whole random choice of $\mbf{A} = \mbf{A}_{\sml} \cup \mbf{A}_{\med} \cup \mbf{A}_{\lrg}$, conditioned to $\boldbeta = \beta$, $\boldbeta' = \beta'$ and $\mathbf{f} = f$. Therefore by \cref{eq730c} we have 
\begin{equation}\label{eq730d} \mb{P}(k \in \Sigma(\mbf{A})) =  1 - \mb{E} e^{-S'''(\mbf{A}_{\sml}, \mbf{A}_{\lrg})} + O(\ell_*^{-1}) + 
O \big( 2^{M/10} \big)\mb{E} \big| \varrho^*_{\tilde{\mbf{u}} \mid \beta}(t-1) - \varrho^*_{\tilde{\mbf{u}} \mid \beta}(L)\big|.\end{equation}
Recall that $\mbf{f}$ is the `jump distribution' from the previous section; specifically, $f_m$ is the number of the $\mbf{u}_{t,j}$ in the interval $I_m$ (see \cref{im-def}). However, the term $S'''(\mbf{A}_{\sml}, \mbf{A}_{\lrg})$ is independent of $\mbf{f}$, using \cref{a-list} and that the jump indices with $\mbf{a}_{n + 1 - t,j}$ are part of $\mbf{A}_{\med}$. Thus it is tempting to remove the conditioning on $\mbf{f}$ at this juncture. To this end we note the simple comparison estimate
\[
\mb{E} \big| \varrho^*_{\tilde{\mbf{u}} \mid \beta}(t-1) - \varrho^*_{\tilde{\mbf{u}} \mid \beta}(L)\big| \le \mb{P}(\mbf{f} = f)^{-1} \tilde{\mb{E}} \big| \varrho^*_{\tilde{\mbf{u}} \mid \beta}(t-1) - \varrho^*_{\tilde{\mbf{u}} \mid \beta}(L)\big| \le V^{V/3}\tilde{\mb{E}} \big| \varrho^*_{\tilde{\mbf{u}} \mid \beta}(t-1) - \varrho^*_{\tilde{\mbf{u}} \mid \beta}(L)\big|
\]
where (as above) $\mb{E}$ denotes averaging with the conditioning to $\mbf{f} = f$ in place, and $\tilde{\mb{E}}$ is the average with no condition on $\mbf{f}$ (but still conditioning to $\boldbeta = \beta$ and $\boldbeta' = \beta'$). The second computation here comes from
\[ \mb{P}(\mbf{f} = (f_1,\dots, f_{V^{1/3}})) = \frac{V!}{f_1! \cdots f_{V^{1/3}}!} V^{-V/3} \ge V^{-V/3}.\] Thus, at the expense of a slightly worse error term, we can remove the conditioning on $\mbf{f}$ in \cref{eq730d} to conclude that 
\begin{equation}\label{eq730dd} \mb{P}(k \in \Sigma(\mbf{A})) =  1 - \mb{E} e^{-S'''(\mbf{A}_{\sml}, \mbf{A}_{\lrg})} + O(\ell_*^{-1}) + 
O \big( 2^{M/10}V^{V/3} \big)\mb{E} \big| \varrho^*_{\tilde{\mbf{u}} \mid \beta}(t-1) - \varrho^*_{\tilde{\mbf{u}} \mid \beta}(L)\big|.\end{equation}
Here, and from now on in this section, we are conditioning only to $\boldbeta = \beta, \boldbeta' = \beta'$, thus $\mbf{A}$ denotes $(\mbf{A} \mid \beta, \beta')$ and the averages on the right are with no conditioning on $\mbf{f}$ (we called this averaging $\tilde{\mb{E}}$ above), with the $\tilde{\mbf{u}}_{i,j}$ still defined as in \cref{utilde-def}.

From the definition \cref{s-triple-prime} we have
\[ \mb{E} e^{-S'''(\mbf{A}_{\sml}, \mbf{A}_{\lrg})} = \mb{E} \exp\big({-}2^{D- \xi}\tau(\mbf{A}_{\sml}) \varrho^*_{\tilde{\mbf{u}} \mid \beta}(t-1)\big).\]
Since $e^{-2^D x}$ is Lipschitz on $[0,\infty)$ with constant at most $2^D \le 2^{M/10}$, we have
\begin{align} \nonumber \big| \mb{E} e^{-S'''(\mbf{A}_{\sml}, \mbf{A}_{\lrg})} & - \mb{E} \exp \big(-2^{D - \xi} \tau^*_{\mbf{x} \mid \beta'}(L) \varrho^*_{\mbf{u} \mid \beta}(L)\big)\big| \\ &  \ll 2^{M/10}  \mb{E} \big| \tau^*_{\mbf{x} \mid \beta'}(L) \varrho^*_{\mbf{u} \mid \beta}(L) - \tau(\mbf{A}_{\sml})\varrho^*_{\tilde{\mbf{u}} \mid \beta}(t-1) \big| \label{eq730f}.\end{align} 
Applying the inequality  $|x_1 x_2 - x_3 x_4| \le x_3 |x_4 - x_5| + x_3 |x_5 - x_2| + x_2 |x_3 - x_1|$ (trivially valid for non-negative reals $x_1,\dots, x_5$) with $x_1 := \tau^*_{\mbf{x} \mid \beta'}(L)$, $x_2 := \varrho^*_{\mbf{u} \mid \beta}(L)$, $x_3 := \tau(\mbf{A}_{\sml})$, $x_4 := \varrho^*_{\tilde{\mbf{u}} \mid \beta}(t-1)$ and $x_5 := \varrho^*_{\mbf{u} \mid \beta}(t-1)$, we may bound the RHS of \cref{eq730f} by $2^{M/10} (E_1 + E_2 + E_3)$, where
\[  E_1 := \mb{E}  \tau(\mbf{A}_{\sml}) \cdot \mb{E} \big| \varrho^*_{\tilde{\mbf{u}} \mid \beta}(t-1) - \varrho^*_{\mbf{u} \mid \beta}(t-1)\big|, \qquad
 E_2 := \mb{E}  \tau(\mbf{A}_{\sml}) \cdot \mb{E} \big| \varrho^*_{\mbf{u} \mid \beta}(t-1) - \varrho^*_{\mbf{u} \mid \beta}(L)\big|,\] and
\[ E_3 := \mb{E} \varrho^*_{\mbf{u} \mid \beta}(L) \cdot \mb{E} | \tau(\mbf{A}_{\sml}) - \tau^*_{\mbf{x} \mid \beta'}(L) |.\]
We also introduce 
\[ E_4 := \mb{E} \big| \varrho^*_{\tilde{\mbf{u}} \mid \beta}(t-1) - \varrho^*_{\tilde{\mbf{u}} \mid \beta}(L)\big| \] (which appears in \cref{eq730dd}). We now proceed to bound $E_1,\dots, E_4$ in turn.\vspace*{8pt}

\emph{Bounding $E_1$.}
To bound $E_1$, we first use that $\tau(\mbf{A}_{\sml}) \le 1$ deterministically, and so
\begin{equation}\label{e1-first-bd}  E_1 \le \mb{E} \big| \varrho^*_{\tilde{\mbf{u}} \mid \beta}(t-1) - \varrho^*_{\mbf{u} \mid \beta}(t-1)\big|.\end{equation}
To bound this we use \cref{u-couple} and \cref{utilde-def}, which give that $|2^{-\mbf{u}_{i,j}} - 2^{-\tilde{\mbf{u}}_{i,j}}| \ll 2^{-i}i/n \ll \frac{1}{n}$ deterministically for $1 \le i \le L$ and $j \in [1 + \xi_i]$. Given $\eps = (\eps_{i,j})_{1 \le i \le t-1, j \in [1 + \xi_i]}$, set $\sigma_{\eps} := \sum_{1 \le i \le t - 1, j \in [1 + \xi_i]} \eps_{i,j} 2^{-\mbf{u}_{i,j}}$ and $\tilde{\sigma}_{\eps} := \sum_{1 \le i \le t-1, j \in [1 + \xi_i]} \eps_{i,j} 2^{-\tilde{\mbf{u}}_{i,j}}$. Then we have
\begin{equation}\label{sig-sig-prime} \big| \sigma_{\eps} - \tilde\sigma_{\eps}\big| \ll \frac{1}{n} \sum_{i = 1}^{t-1} (1 + \xi_i) \lll n^{-1/2}, \end{equation} with the final bound following very comfortably from the parameter hierarchy and since $\beta$ is $R$-bounded. Now we have by definition

\[ \varrho^*_{\mbf{u} \mid \beta}(t-1 ) = 2^{-\beta(t-1)} \sum_{\eps} 1\big(\sigma_{\eps} \in [1 - 2^{1-t},1]), \quad \varrho^*_{\tilde{\mbf{u}} \mid \beta}(t-1 ) = 2^{-\beta(t-1)} \sum_{\eps} 1\big(\tilde{\sigma}_{\eps} \in [1 - 2^{1-t},1]),\]
and so using \cref{e1-first-bd,sig-sig-prime} we obtain
\[ E_1 \le 2^{-\beta(t-1)} \sum_{\eps} \mb{P}(\sigma_{\eps} \in [1 - O( n^{-1/2}), 1 + O(n^{-1/2})] \cup [1 - 2^{1 - t} - O(n^{-1/2}), 1 - 2^{1 - t} + O(n^{-1/2})]\big).\]

As in the proof of \cref{X-upper-gen}, using the $R$-boundedness of $\beta$ we see that if $\sigma_{\eps} > \frac{1}{2}$ then $\eps_{a,b} \ne 0$ for some $a \le 2 \log_2 R$. In particular, the probability term above is zero unless this is the case.
Revealing all $\mbf{u}_{i,j}$ except $\mbf{u}_{a,b}$, it follows that, for each $\eps$, the probability term above is bounded by $\mb{P}(\sigma_{\eps} \in \cdots\big) \ll 2^{a} n^{-1/2}$. Since the number of $\eps$ is $2^{\sum_{i =1}^{t-1} (1 + \xi_i)} < 2^{R t_{\max}^2} < n^{1/10}$ and since $2^a < n^{1/10}$, we have the bound \begin{equation}\label{e1-bd} E_1 \ll n^{-1/4}.\end{equation} 

\emph{Bounding $E_2$.} We again use that $\tau(\mbf{A}_{\sml}) \le 1$, and for the second term we use \cref{lem73-first} and a telescoping sum. This gives a bound of \begin{equation}\label{e2-bound} E_2 \ll  2^{3T - \Omega(t_{\min}^{1/4})}.\end{equation}

\emph{Bounding $E_3$.} By \cref{X-upper-simple} we have $\mb{E} \varrho^*_{\mbf{u} \mid \beta}(L) \ll R^2$. To bound the $\mb{E} | \tau(\mbf{A}_{\sml}) - \tau^*_{\mbf{x} \mid \beta'}(L)|$ term, define $\tilde{\mbf{x}}$ by $\tilde{\mbf{x}}_{i,j} := \mbf{a}_{i, j}$ for $i \in [L]$ and $j \in [1 - \xi'_i]$. Thus by the definitions of the various quantities (see \cref{tau-small} and \cref{y-beta-def}) we have
\begin{equation}\label{form-11} \tau(\mbf{A}_{\sml}) = \tau^*_{\tilde{\mbf{x}} \mid \beta'}(L). \end{equation}
We showed in \cref{s-couple} that $\mb{P}(\mbf{x}_{i,j} \ne \tilde{\mbf{x}}_{i,j}) \ll L/n$, uniformly for $i \in [L]$ and $j \in [1 - \xi'_i]$; thus with probability $1 - O(L^3/n)$ we have $\mbf{x}_{i,j} = \tilde{\mbf{x}}_{i,j}$ for all $i \in [L]$ and $j \in [1 - \xi'_i]$, using here the crude bound $\sum_{i =1}^L (1 - \xi'_i) \ll L^2$, which follows from the $R$-boundedness of $\beta'$ and the parameter hierarchy.

In the event that $\mbf{x}_{i,j} = \tilde{\mbf{x}}_{i,j}$ for all $i \in [L]$ and $j \in [1 - \xi'_i]$, it follows from \cref{form-11} that $\tau(\mbf{A}_{\sml}) = \tau^*_{\mbf{x} \mid \beta'}(L)$. Since all $\tau^*$ quantities are $\le 1$ deterministically, we obtain from this analysis that
\begin{equation}\label{e3-bound} E_3 \ll R^2 \frac{L^3}{n} .\end{equation}

\emph{Bounding $E_4$.} Finally, to bound $E_4$, we use
\[ E_4 \le \mb{E}\big| \varrho^*_{\tilde{\mbf{u}} \mid \beta}(t-1) - \varrho^*_{\mbf{u} \mid \beta}(t-1)\big| + \mb{E} \big| \varrho^*_{\mbf{u} \mid \beta}(t-1) - \varrho^*_{\mbf{u} \mid \beta}(L)\big| + \mb{E}\big| \varrho^*_{\tilde{\mbf{u}} \mid \beta}(L) - \varrho^*_{\mbf{u} \mid \beta}(L)\big|. \] The first term we already bounded during the analysis of $E_1$. The third term may be bounded analogously, replacing $t-1$ by $L$ throughout. Due to the parameter hierarchy we have the bound $n^{-1/4}$ for both of these terms. Finally, as we already saw during the analysis of $E_2$, the second term may be bounded by $\ll 2^{3T - \Omega(t_{\min}^{1/4})}$ by use of \cref{lem73-first}. Thus
\begin{equation}\label{e4-bound} E_4 \ll 2^{3T - \Omega(t_{\min}^{1/4})}.\end{equation}

Putting the bounds \cref{e1-bd,e2-bound,e3-bound,e4-bound} together with \cref{eq730d,eq730f} (recalling that the RHS of \cref{eq730f} is bounded by $2^{M/10}(E_1 + E_2 + E_3)$) and using the parameter hierarchy \cref{param-heir} concludes the proof of \cref{sec6-main}.\end{proof}

\part{Putting everything together}\label{part4}

\section{Expressing \texorpdfstring{$p(k)$}{} as an average over walks}\label{sec:reduc-to-crit}

We turn now to the proof of our main results. At this point we drop the convention (in force for the preceding two sections) that $\mbf{A}$ is shorthand for either $(\mbf{A} \mid \boldbeta = \beta,\, \boldbeta' = \beta',\, \mathbf{f} = f)$ or $(\mbf{A} \mid \beta, \beta')$. From now on, $\mbf{A}$ will denote the unconditioned multiset as defined in \cref{a-law}, and we will explicitly write $(\mbf{A} \mid \beta, \beta')$ for the conditioned sets as described in \cref{subsec-53}. There should be little danger of confusion since the only result we will need from the preceding two sections is \cref{sec6-main}, which is stated with the longhand notation. Our starting point will be the expression \cref{key-measure-change-cond} for $p(k)$.

As in the previous two sections, we have a main truncation parameter $M$ and various further key further thresholds $R(M)$, $T(M)$, $L(M)$ as described in \cref{truncation-threshold-defs}, as well as technical auxiliary thresholds $\ell_*(M)$, $V(M)$, $t_{\min}(M)$, $t_{\max}(M)$ as in the hierarchy \cref{param-heir}. We will always assume that $L(M)$ is much less than $n = \lfloor \log_2 k\rfloor$; later we will let $M \rightarrow \infty$ as $k \rightarrow \infty$ sufficiently slowly so that this holds.

To shorten some expressions it is convenient to set $N := \lceil n/2\rceil$ and $N' := \lfloor n/2\rfloor$ througout this section.

\subsection{Reducing to walks with large minima and small discrepancy} 
As before the \emph{discrepancy} of a pair of walks $(\beta,\beta')$, which we denote by $D(\beta,\beta')$, is $\beta(N) - \beta'(N')$. For $D \in \Z$ and $m, m' \in \Z_{\ge 0}$ denote
\begin{equation} \mathscr{B}_{D, m, m'}  := \{ (\beta, \beta') \in \Z^{\N} \times \Z^{\N}  : D(\beta,\beta') = D,  \min_{0 \le i \le N} \beta(i) = -m,\, \; \min_{0 \le i \le N'} \beta'(i) = -m'\},\label{bdm-def}\end{equation} where we declare $\beta(0) = \beta'(0) = 0$. It follows from \cref{lem13.7a} with $\mathcal{E}$ being the trivial (always satisfied) event that
\begin{equation}\label{bdm-prob} \mb{P} \big( (\boldbeta,\boldbeta') \in \mathscr{B}_{D,m,m'} \big) \ll (1 + m + m')^{3} (\log k)^{-3/2}.\end{equation}

 Write
\begin{equation}\label{edm-def} E_{D,m,m'} := \sum_{(\beta,\beta') \in \mathscr{B}_{D,m,m'}} \mb{P}\big((\boldbeta,\boldbeta') = (\beta,\beta')\big)  \mb{P}\big(k \in \Sigma(\mbf{A} \mid \beta,\beta')\big).\end{equation} 
Then \cref{key-measure-change-cond} gives

\begin{equation}\label{pk-walks-2} p(k) = (1 + o(1)) e^{\gamma(\frac{1}{\log 2} - 1)} k^{-\delta}  \sum_{|D| \le 20 \log n}(\log 2)^{D - \xi} \sum_{m, m' \ge 0} E_{D,m,m'}  +  O(k^{-\delta} (\log k)^{-2}).\end{equation}

We now wish to show that the most important contribution to this sum comes from $|D|, m,m' = O(1)$. In this direction, our first result shows that the contribution from large $\max(m,m')$ is negligible.

\begin{lemma}\label{lem102}
We have 
\[ \sum_{|D| \le 20 \log n}(\log 2)^D \sum_{\max(m, m') \ge (\log n)^2} E_{D,m,m'} \ll (\log k)^{-100}.\]
\end{lemma}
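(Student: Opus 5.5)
By symmetry it suffices to treat the terms with $m \ge (\log n)^2$; the terms with $m' \ge (\log n)^2$ are handled identically, using $\tau^*$ and \cref{Y-upper} in place of $\varrho^*$ and \cref{X-upper-gen}. The plan is to bound $\mb{P}(k \in \Sigma(\mbf{A} \mid \beta,\beta'))$ by roughly $2^{-m}$ times polynomial factors, in the spirit of the heuristic around \cref{conc-heur}. Summing this against the walk probabilities \cref{bdm-prob} should then give something tiny.

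\textbf{Step 1: a pointwise bound in terms of the minimum.} Let $q \le N$ be the first time with $\beta(q) = -m$. I would show
\[ \mb{P}\big(k \in \Sigma(\mbf{A} \mid \beta,\beta')\big) \ll 2^{D} R_{\le N}(\beta)^{O(1)}\, n^{O(1)}\, 2^{-m}. \]
The argument mirrors the proof of \cref{X-non-vanishing}, but is run directly on $\mbf{A}$.
\begin{itemize}
\item Write $k = \sum \eps_{i,j}\mbf{a}_{i,j}$. By $R$-boundedness, together with $\mbf{a}_{i,j}\asymp 2^i$ (see \cref{dyadic-aij}) and the estimate \cref{R-bded-small-1}, some $\eps_{a,b}\neq 0$ with $a \ge n - 2\log_2 R$.
\item Split the sum into the large part, with indices $i > n-q$, and the rest. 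The rest is at most $\ll R\, q^{\kappa} 2^{n-q}$ in size, as in \cref{r-boundedness-q}. So $k$ minus the large part must land in an interval of that length.
\item Reveal every variable except $\mbf{a}_{a,b}$ and apply the anticoncentration bound \cref{anti-concentration-aij}. Each choice of large $\eps$ then succeeds with probability $\ll R^{O(1)} q^{\kappa} 2^{-q}$.
\item The number of large $\eps$ is $2^{q+\beta(q)} = 2^{q-m}$. The union bound gives $\ll R^{O(1)} q^{\kappa} 2^{-m}$. In fact the $2^D$ factor is not even needed.
\end{itemize}
If $R_{\le N}(\beta)$ is large, I would instead use the trivial bound $1$ and pay with the tail estimate \cref{walks-bd-min-est}. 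That estimate gives decay $e^{-r}$ times $(m+r+1)N^{-1/2}$, and \cref{lem13.7a} converts this into the joint $N^{-3/2}$ bound.

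\textbf{Step 2: summation.} Split $E_{D,m,m'}$ according to whether $R_{\le N}(\beta) \le r_0 := m$ or not.
\begin{itemize}
\item On the bounded part, Step 1 combined with \cref{bdm-prob} gives a contribution $\ll m^{O(1)} 2^{-m}(1+m+m')^3 (\log k)^{-3/2}$.
\item On the unbounded part, \cref{lem13.7a} with $\mathcal{E} = \{R_{\le N}(\beta) > m\}$, together with \cref{walks-bd-min-est}, gives $\ll M^3 (m+1) e^{-m} N^{-3/2}$, where $M = \max(m,m')$.
\item It remains to handle $m'$ large while $m \ge (\log n)^2$ is fixed. Here I would take the bound uniform in $m'$ by relaxing the condition to $\min\beta' \ge -m'$. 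Alternatively, use the symmetric argument, which gives a factor $2^{-\max(m,m')}$ in whichever coordinate is larger.
\end{itemize}
Summing over $m,m'$ with $\max(m,m') \ge (\log n)^2$ and over $|D| \le 20\log n$, the weights $(\log 2)^D$ cost at most $n^{O(1)}$. The total is then $\ll n^{O(1)} 2^{-(\log n)^2/2}$, which is far below $(\log k)^{-100}$.

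\textbf{Main obstacle.} The delicate point is making Step 1 rigorous for the large elements $\mbf{a}_{i,j}$ with $i > n/2$, whose randomness comes from $\mbf{u}$. This needs two things. First, the first index $q$ must satisfy $q \le N$, and $q$ may be comparable to $n/2$. Second, controlling $\sum_{i \le n-q} \mbf{a}_{i,j}$ requires boundedness of both $\beta$ and $\beta'$. I would therefore intersect with $R$-boundedness of both walks, discarding the complement via \cref{walks-bd-min-est} and \cref{lem13.7a} as above. Since $m \le q$ always holds, the polynomial factors in $q \le n$ are absorbed by $2^{-m/2}$ once $m \ge (\log n)^2$.
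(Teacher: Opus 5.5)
Your argument is sound, but it takes a noticeably heavier route than the paper. Your Step 1 is essentially the paper's \cref{lem10.3}. Your Step 2 then feeds that bound into \cref{bdm-prob}, \cref{lem13.7a} and \cref{walks-bd-min-est}, which is how the paper later obtains \cref{edm1a,edm2} in the moderate range $m,m' \le (\log n)^2$.

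For the present lemma the paper does something much cruder. It discards the single event $\max_{i\le n} b_i \ge n$, which has probability $\ll e^{-n}$ since each $b_i \samedist \Pois(1)$. On the complement this gives deterministic control of the increments of both $\beta$ and $\beta'$ at once, so your ``main obstacle'' disappears. The same union-bound and anticoncentration argument then gives the pointwise bounds $\mb{P}(k\in\Sigma(\mbf{A}\mid\beta,\beta')) \ll n^3 2^{-m}$ and $\ll n^{O(1)} 2^{D-m'}$. The target $(\log k)^{-100}$ needs no $(\log k)^{-3/2}$ saving, so the paper simply sums these bounds against the walk probabilities, using only that their total mass is at most $1$. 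No fluctuation estimates are needed.

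Your route buys more: a bound carrying the $(\log k)^{-3/2}$ factor and exponential decay in $\max(m,m')$. The cost is bookkeeping you have not fully done. The bounds from \cref{bdm-prob} and \cref{lem13.7a} carry factors $(1+m+m')^3$ and $\max(m,m')^{O(1)}$ respectively. For fixed $m$ these do not sum over $m'$, and this affects the unbounded part as well as the bounded part.

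A bound ``uniform in $m'$'' does not cure this. What works is either of the following:
\begin{itemize}
\item sum out $m'$ using disjointness of the events $\{\min\beta'=-m'\}$, as the paper effectively does; or
\item use the symmetric $2^{D-m'}$ bound, with the boundedness threshold taken to be $r_0=\max(m,m')$ rather than $m$, so that both the bounded and unbounded parts decay in $\max(m,m')$.
\end{itemize}
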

\begin{proof} 
We first bound the contribution from the (extremely rare) pairs of walks for which $\max_{i \le n} b_i$ is greater than or equal to $n$. Since $b_i \samedist \Pois(1)$, the probability that $(\boldbeta,\boldbeta')$ have this property is $\ll n \mb{P}(\Pois(1) \ge n) < e^{-n}$.  Bounding $\mb{P}(k \in \Sigma(\mbf{A} \mid \beta,\beta'))$ trivially by $1$, and $(\log 2)^{D}$ by $(\log 2)^{- 20 \log n} \ll n^{O(1)}$, we see that the contribution from these walks is negligible.

Henceforth, we restrict to pairs of walks with $\max_{i \le n} b_i \le n$. Fix a pair of walks $(\beta,\beta') \in \mathscr{B}_{D,m,m'}$, where $|D| \le 20 \log n$. We consider the contributions from $m \ge (\log n)^2$ and from $m' \ge (\log n)^2$ separately.

Suppose first that $m \ge (\log n)^2$. Let $q$, $0 \le q \le N$, be the smallest value such that $\beta(q) = -m$. Since $\xi_i \ge -1$ for all $i$, we have $\beta(q) \ge -q$, and so $q \ge m \ge (\log n)^2$. Writing $\mbf{a}_{i,j}$ for the elements of $(\mbf{A} \mid \beta,\beta')$ as usual (see \cref{a-list}), it follows using \cref{anti-concentration-aij} that 
\begin{equation*}\max \{ \sum_{i \le n - q; j \in [b_i]} \eps_{i,j} \mbf{a}_{i,j} : \eps_{i,j} \in \{0,1\}\} \ll n 2^{n - q} . \end{equation*} Since $q \ge (\log n)^2$, this is certainly $o(k)$.
Therefore the event $(k \in \Sigma(\mbf{A} \mid \beta,\beta'))$ is contained in the union of the events $(\sum_{n  - q < i \le n; j \in [b_i]} \eps_{i,j} \mbf{a}_{i,j} = k - x)$, over all $x \ll n 2^{n - q} = o(k)$ and all $2^{q + \beta(q)}$ choices of $(\eps_{i,j})_{n - q < i \le n; j \in [b_i]}$. Note moreover that for such an event to be nontrivial, we must have $\eps_{i,j} \ne 0$ for some $i \ge n - 2 \log_2 n$; otherwise, 
\[ \sum_{n - q < i \le n;j \in [b_i]} \eps_{i,j} \mbf{a}_{i,j} \le \sum_{i \le n - 2 \log_2 n; j \in [b_i] } \eps_{i,j} \mbf{a}_{i,j} \ll n2^{n - 2 \log_2 n} < k/2 < k - x.\] By revealing all $\mbf{a}_{i,j}$ except for such a pair of indices, we see that $\mb{P} (\sum_{n - q < i \le n; j \in [b_i]} \eps_{i,j} \mbf{a}_{i,j} = k - x) \ll 2^{2 \log_2 n - n} = n^2 2^{-n}$ for each choice of $x$ and the $\eps_{i,j}$. Summing over these choices gives
\[ \mb{P}(k \in \Sigma(\mbf{A} \mid \beta,\beta')) \ll n 2^{n - q} \cdot 2^{q + \beta(q)} \cdot n^2 2^{-n} \ll n^3 2^{-(\log n)^2},\] and so $(\log 2)^D \mb{P}(k \in \Sigma(\mbf{A} \mid \beta,\beta')) \ll n^{O(1)} 2^{-(\log n)^2} \ll (\log k)^{-100}$.
Summing over all $(\beta,\beta')$ lying in some $\mathscr{B}_{D, m, m'}$ with $|D| \le 20 \log n$ and $m \ge (\log n)^2$, weighted by $\mb{P}\big((\boldbeta, \boldbeta') = (\beta, \beta')\big)$, gives a bound of the strength claimed in \cref{lem102}.

Now we consider the possibility that $m' \ge (\log n)^2$; the analysis here is very similar. Let $q'$, $0 \le q' \le N'$, be the smallest value such that $\beta'(q') = -m'$. Similarly to before, it follows using \cref{anti-concentration-aij} that
\begin{equation*} \max \{ \sum_{i \le q' ; j \in [b_i]} \eps_{i,j} \mbf{a}_{i,j}: \eps_{i,j} \in \{0,1\} \} \ll n 2^{q'}.
\end{equation*}
Since $q' \le N'$, this is vastly smaller than $k$. The event $(k \in \Sigma(\mbf{A} \mid \beta,\beta'))$ is contained in the union of the events $\big(\sum_{q' < i \le n;j \in [b_i]} \eps_{i,j} \mbf{a}_{i,j} = k - x\big)$, over all $x \ll n 2^{q'}$ and all $2^{n + D - q' + \beta'(q')}$ choices of $(\eps_{i,j})_{q' < i \le n; j \in [b_i]}$, noting here that $\sum_{q' < i \le n} b_i = \sum_{i = 1}^n b_i - \sum_{i = 1}^{q'} b_i = n + D - (q' - \beta'(q'))$. As before, $\mb{P}(\sum_{q' < i \le n;  j \in [b_i]} \eps_{i,j} \mbf{a}_{i,j} = k - x) \ll n^2 2^{-n}$ for each choice of $x$ and the $\eps_{i,j}$. Summing over these choices gives
\[ \mb{P}(k \in \Sigma(\mbf{A} \mid \beta,\beta')) \ll n 2^{q'} \cdot 2^{n +D - q' + \beta'(q')} \cdot n^2 2^{-n} \ll n^{O(1)} 2^{-(\log n)^2},\] using here that $|D| \le 20 \log n$. Summing over all $(\beta, \beta')$ lying in some $\mathscr{B}_{D,m,m'}$ with $|D| \le 20 \log n$ and $m' \ge (\log n)^2$, weighted by $\mb{P}\big((\boldbeta,\boldbeta') = (\beta,\beta')\big)$, again gives a bound of the strength claimed in \cref{lem102}. This completes the proof.
\end{proof}

To prepare for our study of the more modest values of $m$, $m'$, we establish the following lemma, which should be thought of as very roughly suggesting that $\mb{P}(k \in \Sigma(\mbf{A} \mid \beta,\beta'))$ behaves like $\min( 2^{-m}, 2^{-m'})$ when $(\beta, \beta') \in \mathscr{B}_{D,m,m'}$.

\begin{lemma}\label{lem10.3} Suppose that $D \in \Z$ and that $m, m' \in \Z_{\ge 0}$. Let $\beta, \beta'$ be walks such that $(\beta,\beta') \in \mathscr{B}_{D,m,m'}$. Let $q$, $0 \le q \le N$, be minimal such that $\beta(q) = -m$, and $q'$ be minimal such that $\beta'(q') = -m'$. Let $r$ be the least positive integer such that $\beta$ is $r$-bounded to length $N$ \textup{(}see \cref{r-beta-def}\textup{)}, and let $r'$ be defined similarly for $\beta'$. Then we have \begin{equation}\label{lem93-1} \mb{P}(k \in \Sigma(\mbf{A} \mid \beta,\beta')) \ll r^{O(1)} (1 + q)^{\kappa} 2^{-m},\end{equation} and 
\begin{equation}\label{lem93-2} \mb{P}(k \in \Sigma(\mbf{A} \mid \beta,\beta')) \ll (r')^{O(1)} (1 + q')^{\kappa} 2^{D-m'}.\end{equation} 
\end{lemma}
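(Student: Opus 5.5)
The plan is to run the arguments of \cref{lem102} again, this time keeping track of the walks' boundedness. For \cref{lem93-1}, write $\sigma_\eps := \sum_{i,j} \eps_{i,j}\mbf{a}_{i,j}/k$. The event $k \in \Sigma(\mbf{A}\mid\beta,\beta')$ says that $\sigma_\eps = 1$ for some choice of $\eps$. Split $\eps$ into the large part, with indices $n - q < i \le n$ (corresponding to upper-walk steps $1,\dots,q$), and the remaining part.

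First I will bound the remaining part. By \cref{dyadic-aij} and $r$-boundedness of $\beta$, the elements with $\lfloor n/2\rfloor < i \le n-q$ contribute at most
\[ \ll \sum_{i' > q} (1+\xi_{i'}) 2^{n-i'} \ll r (1+q)^{\kappa} 2^{n-q}, \]
exactly as in \cref{r-boundedness-q}. For the small elements, $i \le \lfloor n/2\rfloor$, I will use the crude bound $\ll n 2^{n/2}$. This is $\ll 2^{n-q}$ because $q \le N$, although some care with $q$ near $n/2$ is needed. If that causes trouble, I will include the $\beta'$-elements with a factor $2^{O(1)}$; their total sum is at most $\sum_{i \le N'} b_i 2^{i}$, and this is bounded using the lower walk alone, without any information on $r'$. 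Honestly, an unbounded lower walk could make this large. In that case I would instead use $\sum_{i\le N'} b_i = N' - \beta'(N') \le n$ together with $2^{N'} \le 2^{n-q+1}$, which gives $\ll n 2^{n-q}$. So the plan is to check whether a factor of $n$ is affordable, and if not, to separate out the small elements via the shift $x$.

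Next, $k \in \Sigma$ forces $k - \sum_{\text{large}} \eps_{i,j}\mbf{a}_{i,j}$ into an interval of length $\ll r(1+q)^{\kappa} 2^{n-q}$. There are $2^{q+\beta(q)} = 2^{q-m}$ choices of the large $\eps$. As in the proof of \cref{X-upper-gen}, $r$-boundedness forces some $\eps_{a,b} \ne 0$ with $n - a \le 2\log_2 r$. Revealing all other variables and applying \cref{anti-concentration-aij} bounds each interval probability by $\ll r^2 2^{-n} \cdot r(1+q)^\kappa 2^{n-q}$. Multiplying by the $2^{q-m}$ choices gives $r^{O(1)}(1+q)^\kappa 2^{-m}$. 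The trivial case $q \le 2\log_2 r$ gives the bound immediately, since then $m \le q$ and so $r^2 2^{-m} \ge 1$.

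For \cref{lem93-2}, I argue symmetrically with the small elements $i \le q'$. By $r'$-boundedness their sums all lie in $[0, O(r'(1+q')^\kappa 2^{q'})]$, as in \cref{Y-upper}. The elements with $q' < i \le n$ number $n + D - q' + \beta'(q') = n + D - q' - m'$. Thus $k$ must equal a sum of these plus some $x \ll r'(1+q')^\kappa 2^{q'}$. For each $\eps$ there is a nonzero coordinate at an index $i \ge n - O(\log_2 n)$, or at an index with $n-i \ll 1$ if the upper walk is bounded. Here the difficulty is that no bound on $r$ is assumed, so the anticoncentration factor is $2^{-i}$ with possibly $i \approx n - 2\log_2 n$. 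I expect this to be the main obstacle. I will try to avoid it by using the largest nonzero index $i_0$ together with the fact that the total sum of elements below $i_0$ is at most $\sum_{i<i_0} b_i 2^{i}$. That sum is controlled through $\sum_{i \le n} b_i \le n + D$, which lets one replace $2^{-i_0}$ by $2^{-n}$ up to absolute constants. Multiplying the choices, $2^{n+D-q'-m'}$ times $r'(1+q')^\kappa 2^{q'}$ times $2^{-n}$, gives the claim.
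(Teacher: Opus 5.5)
Your outline is the paper's own argument, which is itself a rerun of \cref{lem102}. For \cref{lem93-1}: cut at $q$, confine $k-\sum_{n-q<i\le n}\eps_{i,j}\mbf a_{i,j}$ to an interval of length $\ll r(1+q)^\kappa 2^{n-q}$, count the $2^{q-m}$ choices of $\eps$, and gain $r^{O(1)}2^{-n}$ from a nonzero index within $2\log_2 r$ of the top. For \cref{lem93-2}, symmetrically, with $2^{n+D-q'-m'}$ choices and an interval of length $\ll r'(1+q')^\kappa 2^{q'}$. However, the two steps you flag are genuine gaps, and your patches do not close them.

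\emph{First gap.} In \cref{lem93-1}, the multiplicities $b_i=1-\xi'_i$ for $i\le N'$ are not controlled by $r$. In general one only has $\sum_{i\le N'}b_i=N'-\beta(N)+D\le N'+m+D$, so the small elements contribute $\ll(n+m+D)2^{N'}$. This is $\ll 2^{n-q}=2^{N'}2^{N-q}$ only when $2^{N-q}\gg n+m+D$. For $q$ near $N$ you lose a factor up to $\asymp n$, which the stated bound cannot absorb. Absorbing the small elements into the shift $x$ changes nothing, since the length of the range of $x$ is exactly the problem.

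\emph{Second gap.} In \cref{lem93-2}, the largest-nonzero-index argument only yields $(n+D)2^{i_0+O(1)}\ge k/2$, i.e.\ $2^{-i_0}\ll(n+D)2^{-n}$. That is a loss of $n+D$, not an absolute constant. Without control of $\beta$ near the top, nothing better is available.

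These gaps are not an artefact of your write-up. The paper's proof passes over the same two points. \cref{crude-104} uses $b_{n+1-i}\le ri^\kappa+1$ also for $i>N$, where these are increments of $\beta'$. The anticoncentration factor written $(r')^{O(1)}2^{-n}$ in the second part actually needs $\beta$ to be bounded near the top.

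In fact the bounds as stated appear to be false. Take $2^m\asymp\sqrt n$ and the following walks:
\begin{itemize}
\item $\beta$ with $\xi_i=+1$ for $i\le(N-m)/2$ and $\xi_i=-1$ afterwards, so $r=1$ and $q=N$;
\item $\beta'$ with $\xi'_i=1$ for $i\le n/4$, $\xi'_i=0$ for $n/4<i<N'$, and $\xi'_{N'}=-(n/4+m)$, so $m'=m$ and $D=0$.
\end{itemize}
The $\asymp n$ elements at level $N'$ already violate \cref{crude-104}. Moreover, the $2^{N'+m}$ small subset sums fill a window of length $\asymp\sqrt n\,2^{N'}$ with density $\asymp1$. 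Meanwhile $\asymp\sqrt n\,2^{-m}\asymp1$ of the $2^{N-m}$ large subset sums are expected in the matching window below $k$. So one expects $\mb P(k\in\Sigma(\mbf A\mid\beta,\beta'))\gg1$, rather than the claimed $\ll n^{\kappa-1/2}$.

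Likewise, take $\beta$ with about $\log_2 n$ initial steps $-1$ followed by one step of size $\asymp n$. Then a random subset sum has density $\asymp\sqrt n\,2^{-n}$ at $k$, and this defeats \cref{lem93-2} with $r'=1$ and $2^{D-m'}\asymp n^{-1/2}$.

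The repair is to allow $(r+r')^{O(1)}$ in both bounds. Then $\sum_{i\le N'}b_i2^i\ll r'(1+q)^\kappa2^{n-q}$ and the top anticoncentration is $r^{O(1)}2^{-n}$, so your argument goes through as written. The averaging that gives \cref{edm1a,edm2} also survives, using $(r+r')^{O(1)}(1+q)^\kappa\ll r^{O(1)}+(r')^{O(1)}+(1+q)^{2\kappa}$, the symmetric form of \cref{lem13.7a}, and \cref{walks-bd-min-est} for $\boldbeta'$.
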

\begin{proof} By replacing $r$ by $\max(100, r)$ if necessary (and similarly for $r'$) we may assume $r, r' \ge 100$. We begin with the first bound \cref{lem93-1}. 
If $m \le 2 \log_2 r$, the bound is trivial, so assume henceforth that $m \ge 2 \log_2 r$. We may also assume that $m$ is sufficiently large. In particular $q \ge 1$. The argument is very similar to that in the previous lemma. Since $\beta$ is $r$-bounded, we have $|b_{n + 1 - i}| \le  r i^{\kappa} + 1$ and so using \cref{dyadic-aij} we have
\begin{equation}\label{crude-104} \max \{ \sum_{i \le n - q; j \in [b_i]} \eps_{i,j} \mbf{a}_{i,j} : \eps_{i,j} \in \{0,1\}\} \ll r q^{\kappa} 2^{n - q} . \end{equation}
As in the previous lemma, we have $q \ge m$, and so $q \ge 2 \log_2 m$; therefore (since also $q$ is sufficiently large) the bound in \cref{crude-104} is $\le k/2$. Now we proceed as in the previous lemma, writing the event $(k \in \Sigma(\mbf{A} \mid \beta,\beta'))$ as the union of the events $(\sum_{n - q < i \le n; j \in [b_i]} \eps_{i,j} \mbf{a}_{i,j} = k - x)$, over all $x \ll r q^{\kappa} 2^{n - q}  < k/2$ and all $2^{q + \beta(q)}$ choices of $(\eps_{i,j})_{n - q < i \le n; j \in [b_i]}$.

For such an event to be nontrivial, we must have $\eps_{i,j} \ne 0$ for some $i \ge n -  2 \log_2 r$; otherwise, 
\[ \sum_{n - q < i \le n;j \in [b_i]} \eps_{i,j} \mbf{a}_{i,j} \le \sum_{i \le n -2 \log_2 r; j \in [b_i] } \eps_{i,j} \mbf{a}_{i,j} \ll  \sum_{i \le n -  2 \log_2 r} (r i^{\kappa} + 1) 2^{i}  < k/2 < k - x.\]  
By revealing all $\mbf{a}_{i,j}$ except for such a pair of indices, we see that $\mb{P} (\sum_{n - q < i \le n; j \in [b_i]} \eps_{i,j} \mbf{a}_{i,j} = k - x) \ll 2^{2 \log_2 r - n} \ll r^{O(1)} 2^{-n}$ for each choice of $x$ and the $\eps_{i,j}$. Summing over these choices gives
\[ \mb{P}(k \in \Sigma(\mbf{A} \mid \beta,\beta')) \ll r q^{\kappa} 2^{n - q} \cdot 2^{q + \beta(q)} \cdot r^{O(1)} 2^{-n} \ll r^{O(1)} q^{\kappa} 2^{-m} ,\] which is the required result.

Now we turn to the second bound, whose proof is again very similar. The $r'$-boundedness of $\beta'$ gives $|b_{i}| \le 1 + r'i^{\kappa}$, and so using \cref{anti-concentration-aij} we have
\begin{equation*} \max \{ \sum_{i \le q'; j \in [b_i]} \eps_{i,j} \mbf{a}_{i,j} : \eps_{i,j} \in \{0,1\}\} \ll r'(q')^{\kappa} 2^{q'} . \end{equation*} The rest of the argument proceeds as before; as in the proof of \cref{lem102} the number of choices for $(\eps_{i,j})_{q' < i \le n; j \in [b_i]}$ is $2^{n + D - q' + \beta'(q')}$. We obtain
\[ \mb{P}(k \in \Sigma(\mbf{A} \mid \beta,\beta')) \ll r' (q')^{\kappa} 2^{q'} \cdot 2^{n + D - q' + \beta'(q')} \cdot (r')^{O(1)} 2^{-n} = (r')^{O(1)} (q')^{\kappa} 2^{D - m'},\] which is the required result.
\end{proof}

It follows from \cref{lem93-1} and the definition \cref{edm-def} that
\begin{align*} E_{D,m,m'} & \ll 2^{-m}\sum_{(\beta,\beta') \in \mathscr{B}_{D,m,m'}} \mb{P}\big((\boldbeta,\boldbeta') = (\beta,\beta')\big) r_{\beta}^{O(1)} (1 + q_{\beta})^{\kappa}  \\ & = 2^{-m}\sum_{\substack{0 \le q \le N \\ r \ge 1}} r^{O(1)} (1 +q)^{\kappa} \mb{P}(\min_{0 \le i \le N} \boldbeta(i) = -m,\; \min_{0 \le i \le N} \boldbeta'(i) = -m', \, D(\boldbeta, \boldbeta') = D,\\[-8pt] & \hspace{8cm} R_{\le N}(\boldbeta) = r, \, Q_m(\boldbeta) = q) ,\end{align*} where  $q_{\beta},r_{\beta}$ are the $q,r$ associated to $\beta$ as in the statement of \cref{lem10.3}, $R_{\le N}(\beta)$ is the least positive integer $r$ such that $\beta$ is $r$-bounded to length $N$, and $Q_m(\boldbeta)$ is the least non-negative integer such that $\boldbeta(q) = -m$. To estimate this, we apply the first estimate in \cref{lem13.7a} with the event $\mathcal{E}$ being that $R_{\le N}(\boldbeta) = r$ and that $Q_m(\boldbeta) = q$. This gives (recalling that $n \sim \log_2 k$)
\begin{align*} E_{D,m,m'}   \ll (1+ m + m')^2 2^{-m} (\log k)^{-1} \sum_{\substack{0 \le q \le N \\ r \ge 1}} r^{O(1)} & (1 + q)^{\kappa}\mb{P}\big(R_{\le N}(\boldbeta) = r,\\[-8pt] & Q_m(\boldbeta) = q, \,  \min_{0 \le i \le N} \boldbeta(i)= -m\big).\end{align*}
Applying the AM-GM inequality $r^{O(1)} (1 + q)^{\kappa} \le r^{O(1)} + (1 + q)^{2\kappa}$ and summing out the redundant variables gives
\begin{align}\nonumber E_{D, m, m'} & \ll (1 + m + m')^2 2^{-m} (\log k)^{-1} \sum_{0 \le q \le N} (1 + q)^{2\kappa} \mb{P}\big(Q_m(\boldbeta) = q, \, \min_{0 \le i \le N} \boldbeta(i) = -m\big) \\ & + (1 + m + m')^2 2^{-m} (\log k)^{-1}\sum_{r \ge 1} r^{O(1)} \mb{P}\big(R_{\le N}(\boldbeta) = r, \, \min_{0 \le i \le N} \boldbeta(i) = -m\big).\label{edm-est-sec13}\end{align}
By \cref{min-pos-lem} (relaxing the condition $Q_m(\boldbeta) = q$ to $\boldbeta(q) = -m$, and the condition $\min_{0 \le i \le N} \boldbeta(i) = -m$ to $\min_{1 \le i \le N} \boldbeta(i) \ge -m$), we have 
\begin{align} \nonumber \sum_{0 \le q \le N} (1 + q)^{2\kappa} \mb{P}\big(Q_m(\boldbeta) = q, \, \min_{0 \le i \le N} \boldbeta(i) = -m\big) & \ll (1 + m) \sum_{0 \le q \le N} (1 + q)^{2\kappa - 3/2} (N + 1 - q)^{-1/2} \\ & \label{q-estimate-sec13} \ll (1 + m) N^{-1/2} \ll (1 + m) (\log k)^{-1/2}. \end{align}
By \cref{walks-bd-min-est} (relaxing the condition $R_{\le N}(\boldbeta) = r$ to $R_{\le N}(\boldbeta) > r - 1$) we have
\begin{align}\nonumber \sum_{r \ge 1} r^{O(1)}  \mb{P}\big( R_{\le N}(\boldbeta) = r, \, & \min_{0 \le i \le N} \boldbeta(i) = -m\big) \ll N^{-1/2} \sum_{r \ge 1} r^{O(1)}(m + r) e^{-r} \\ & \ll (1 + m) N^{-1/2} \ll (1 + m) (\log k)^{-1/2}.\label{r-est-sec13}\end{align}
It follows from \cref{edm-est-sec13,q-estimate-sec13,r-est-sec13} that 
\begin{equation}\label{edm1a} E_{D,m,m'}   \ll (1 + m + m')^{3} 2^{-m} (\log k)^{-3/2} .\end{equation}

An essentially identical analysis starting from \cref{lem93-2} yields
\begin{equation}\label{edm2} E_{D,m,m'}   \ll (1+ m + m')^3 2^{D-m'} (\log k)^{-3/2} .\end{equation}
From this we deduce the following estimate concerning the contribution to \cref{pk-walks-2} from walks with small minima or large discrepancy.
\begin{lemma}
The sum of $(\log 2)^D E_{D,m,m'}$ over all $D, m,m'$ with $|D| \le 20 \log n$, $0 \le m, m' \le (\log n)^2$ and either $|D| \ge M/10$ or $\max(m,m') \ge M$ is bounded by $\ll  e^{-\Omega(M)} (\log k)^{-3/2}$.
\end{lemma}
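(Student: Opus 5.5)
The plan is to combine the two bounds \cref{edm1a} and \cref{edm2} by taking their minimum:
\[ E_{D,m,m'} \ll (1+m+m')^3 \min(2^{-m}, 2^{D-m'}) (\log k)^{-3/2}. \]
Multiplying by $(\log 2)^D$, the task reduces to bounding the sum of
\[ (\log 2)^D (1+m+m')^3 \min(2^{-m},2^{D-m'}) \]
over the stated range of $(D,m,m')$ by $e^{-\Omega(M)}$. Since $\log 2<1<2$, negative $D$ is penalised by $(\log 2)^D$ growing like $(1/\log 2)^{|D|}$, which has to be beaten by $2^{D}$. Positive $D$ is damped by $(\log 2)^D$ but possibly amplified by $2^{D}$ in the second bound. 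So the minimum must be used differently according to the sign of $D$.

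\emph{Case $D\ge 0$.} Use $\min \le 2^{-m/2}\cdot 2^{(D-m')/2}$ (geometric mean). Then the summand is at most
\[ (1+m+m')^3\, 2^{-m/2-m'/2}\, \big(2^{1/2}\log 2\big)^{D}. \]
Since $\sqrt2\log 2\approx 0.98<1$, the sum over $D\ge0$, $m,m'\ge0$ converges. It gains a factor $e^{-\Omega(M)}$ once we restrict to $D\ge M/10$ or $\max(m,m')\ge M$, because every variable carries a geometric decay, the polynomial factor being harmless.

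\emph{Case $D<0$.} Use only the second bound, $2^{D-m'}$. The summand is then
\[ (1+m+m')^3\,(2\log 2)^{D}\,2^{-m'}. \]
As $2\log 2>1$ and $D<0$, this decays geometrically in $|D|$. However, there is no decay in $m$, so the first bound must also be brought in. I would use the interpolation
\[ \min(2^{-m},2^{D-m'}) \le 2^{-\theta m}\,2^{(1-\theta)(D-m')} \]
with $\theta$ small, say $\theta=0.1$. This needs $\big(2^{1-\theta}\log 2\big)^{-1}<1$, i.e.\ $2^{0.9}\log 2>1$; indeed $2^{0.9}\approx1.866$, giving about $1.29>1$, which is fine. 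Then the sum again has geometric decay in all three variables, and restricting to $|D|\ge M/10$ or $\max(m,m')\ge M$ gives $e^{-\Omega(M)}$.

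One can in fact use a single interpolation $2^{-\theta m+(1-\theta)(D-m')}$ with a $\theta$ depending on the sign of $D$: $\theta=1/2$ for $D\ge0$ and $\theta=0.1$ for $D<0$. The main (mild) obstacle is exactly this choice of exponent so that $(\log 2)^D 2^{(1-\theta)D}$ decays in both directions of $D$. For $D\ge0$ we need $2^{1-\theta}\log 2<1$, i.e.\ $\theta>1-\log_2(1/\log 2)\approx0.47$; for $D<0$ we need $2^{1-\theta}\log2>1$, i.e.\ $\theta<0.47$. Hence two choices are needed, as planned. The remaining summation is routine, and the restriction $m,m'\le(\log n)^2$, $|D|\le20\log n$ is not even needed beyond ensuring that \cref{edm1a,edm2} apply.
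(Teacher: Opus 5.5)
Your proof is correct, but it organises the final summation differently from the paper. The paper forms the same minimum and then splits by region. For $|D|\ge M/10$ it evaluates the inner sum over $m,m'$ by hand for fixed $D$: splitting at $m'\le 2D$ or $m\le 2|D|$, it gets $\ll D^4$ when $D>0$ and $\ll |D|^4 2^{-|D|}$ when $D<0$. For $|D|\le M/10$ with $\max(m,m')\ge M$ it uses the crude bound
\[(\log 2)^D\min(2^{-m},2^{D-m'})\le (2/\log 2)^{M/10}\min(2^{-m},2^{-m'}),\]
which relies on $(2/\log 2)^{1/10}<2$.

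You instead interpolate $\min(a,b)\le a^\theta b^{1-\theta}$, with $\theta$ chosen on either side of the critical value $1-\log_2(1/\log 2)\approx 0.47$ according to the sign of $D$. This gives joint geometric decay in $D$, $m$ and $m'$, so every tail region is handled at once. It also makes the argument insensitive to the particular thresholds $M/10$ and $M$. The paper's route gives sharper rates: for example $(\log 2)^{M/10}$ rather than your $(\sqrt{2}\log 2)^{M/10}\approx 0.98^{M/10}$ for $D\ge M/10$. Since only $e^{-\Omega(M)}$ is needed, this difference is immaterial. Your numerical checks $\sqrt{2}\log 2\approx 0.980<1$ and $2^{0.9}\log 2\approx 1.29>1$ are correct.
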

\begin{proof} Given the estimates \cref{edm1a,edm2}, the task is to show that 
\begin{equation} \label{des-edmm}\sum_{D,m,m'} (\log 2)^D (1 + m + m')^3 \min (2^{-m}, 2^{D - m'}) \ll e^{-\Omega(M)},\end{equation}
where the sum is over triples $D,m, m'$, $D \in \Z$, $m, m' \in \Z_{\ge 0}$, satisfying one of (1) $|D| \ge M/10$, (2) $|D| \le M/10$ but $\max(m,m') \ge M$. \vspace*{8pt}

\emph{Contribution from} (1). First we consider the contribution from $D \ge M/10$. For this we use the estimate $\sum_{m,m' \ge 0} (1 + m + m')^{3} \min( 2^{-m}, 2^{D - m'}) \ll D^{4}$. (To verify this, first note that the contribution from $m' \le 2D$ is $\ll D\sum_{m \ge 0} (D + m)^{3} 2^{-m}$, which is acceptable; for $m' \ge 2D$ we have $\min (2^{-m}, 2^{D - m'}) \le \min (2^{-m/2}, 2^{-m'/2})$, so by symmetry the sum is $\ll \sum_{m \ge m'} (1 + m + m')^{3} 2^{-m/2} \ll \sum_{m} (1 + m)^{4} 2^{-m/2}$, which is again acceptable.) Including the factor $(\log 2)^D$ and summing this estimate over $D \ge M/10$ gives $\sum_{D \ge M/10} (\log 2)^D D^{4} \ll (M/10)^{4} (\log 2)^{M/10}$, which is acceptable.

Next we consider the contribution from $D \le -M/10$. For this we use the estimate $\sum_{m, m' \ge 0} (1 + m + m')^{3} \min(2^{-m}, 2^{D - m'}) \ll |D|^{4} 2^{-|D|}$. (To verify this, first note that the contribution from $m \le 2|D|$ is $\ll |D|\sum_{m' \ge 0} (|D| + m')^{3} 2^{-|D| - m'}$, which is acceptable; for $m \ge 2|D|$ we have $\min(2^{-m}, 2^{D - m'}) \le 2^{-|D|} \min(2^{-m/2}, 2^{-m'/2})$, and we can conclude as before.) Including the factor $(\log 2)^D$ and summing this estimate over $D \le -M/10$ gives $\ll \sum_{D \le -M/10} |D|^{4}(\log 2)^{-|D|} 2^{-|D|} \ll (M/10)^{4}(2 \log 2)^{-M/10}$, so this contribution is also acceptable.\vspace*{8pt}

\emph{Contribution from} (2). The contribution to \cref{des-edmm} from $|D| \le M/10$ and $\max(m,m') \ge M$ is
\[ \ll M (2/\log 2)^{M/10} \sum_{\max(m,m') \ge M}(1 + m + m')^{3} \min(2^{-m}, 2^{- m'}).\]
By symmetry it is enough to sum this over $m \ge \max(m', M)$. We then note, in turn, that 
\[ \sum_{m' \le m} (1 + m + m')^{3} \min(2^{-m}, 2^{-m'}) \ll (1 + m)^{4} 2^{-m},\] then that $\sum_{m \ge M}  (1 + m)^{4} 2^{-m} \ll M^{4} 2^{-M}$.  The result follows.
\end{proof}

It follows, substituting into \cref{pk-walks-2}, that 
\begin{align} \nonumber p(k) & =  (1 + o(1)) e^{\gamma(\frac{1}{\log 2} - 1)} k^{-\delta}  \sum_{|D| \le M/10} (\log 2)^{D - \xi}\sum_{m,m' \le M} E_{D,m,m'} + \\[5pt] & \hspace{8cm}  +  O \big( e^{-\Omega(M)}k^{-\delta} (\log k)^{-3/2}\big).\label{pk-walks-3}\end{align}
This is the desired reduction to the consideration of walks with large minimum and small discrepancy.

\subsection{Passing to `good' walks}

We now continue with the analysis, the next aim being to refine \cref{pk-walks-3} so that $E_{D,m,m'}$ (see \cref{edm-def}) only sums over walks which are $R(M)$-bounded, $T(M)$-positive and for which $\beta$ has a jump step, at which point we will be able to apply \cref{sec6-main}.

Recall the definition \cref{bdm-def} of $\mathscr{B}_{D, m, m'}$. We now define two slightly different subsets of $\mathscr{B}_{D,m,m'}$. Let $C_0$ be a (sufficiently large) constant to be specified later. Set
\[ \mathscr{B}^{(1)}_{D,m,m'} := \{(\beta, \beta') \in \mathscr{B}_{D,m,m'} : \beta,\beta' \; \mbox{are $R(M)$-bounded and $T(M)$-positive to length $L(M)$ and}\]
\begin{equation}\label{b1-def} \mbox{there are $q, q' \le e^{C_0 M}$ with $\beta(q) = -m$, $\beta'(q') = -m'$}\}.\end{equation} 
By a \emph{jump step at scale $M$} we mean a $V(M)$-jump step in the sense of \cref{jump-step-7} which lies in the interval $[t_{\min}(M), t_{\max}(M)]$, with the parameters $V(M), t_{\min}(M), t_{\max}(M)$ as in the hierarchy \cref{param-heir}. Define $\mathscr{B}^{(2)}_{D,m,m'}$ to be the set of all $(\beta, \beta') \in \mathscr{B}_{D,m,m'}$ such that $\beta,\beta'$ are $R(M)$-bounded and $T(M)$-positive to lengths $N$, $N'$ respectively, there are $q, q' \le e^{C_0 M}$ with $\beta(q) = -m$, $\beta'(q') = -m'$, and such that $\beta$ has a jump step at scale $M$.
We clearly have
\begin{equation}\label{nesting} \mathscr{B}^{(2)}_{D,m,m'} \subseteq \mathscr{B}^{(1)}_{D,m,m'} \subseteq \mathscr{B}_{D,m,m'}.\end{equation} 
We claim that both these sets are `generic' in the following sense.

\begin{lemma} \label{exceptions-bd} Suppose that $m , m' \le M$ and that $|D| \le M/10$. We have for $i = 1,2$ that
\[ \mb{P} \big((\boldbeta, \boldbeta') \in \mathscr{B}_{D,m,m'} \setminus \mathscr{B}^{(i)}_{D,m,m'}\big) \ll e^{-10M} (\log k)^{-3/2}.\]
\end{lemma}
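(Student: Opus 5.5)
By \cref{nesting} it suffices to treat $i = 2$. The complement $\mathscr{B}_{D,m,m'} \setminus \mathscr{B}^{(2)}_{D,m,m'}$ is covered by a bounded number of exceptional events, and I will bound each one separately:
\begin{itemize}
\item[(a)] $\beta$ or $\beta'$ fails to be $R(M)$-bounded to length $N$ (resp.\ $N'$);
\item[(b)] $\beta$ or $\beta'$ fails to be $T(M)$-positive to that length;
\item[(c)] the first time $\beta$ hits $-m$ (or $\beta'$ hits $-m'$) exceeds $e^{C_0M}$;
\item[(d)] $\beta$ has no jump step at scale $M$.
\end{itemize}
Each of these is an event $\mathcal{E}$ concerning one walk only. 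So I will apply \cref{lem13.7a}, in the form with the roles of the walks swapped where necessary, which gives
\[ \mb{P}\big((\boldbeta,\boldbeta')\in\mathscr{B}_{D,m,m'},\ \boldbeta\in\mathcal{E}\big) \ll M^3 N^{-3/2}\, \mb{P}\big(\boldbeta\in\mathcal{E}\mid \min_{1\le i\le N}\boldbeta(i)\ge -m\big). \]
Since $N\asymp \log k$, it then suffices to show that each conditional probability is $\ll e^{-11M}$, say.

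For (a), I combine \cref{walks-bd-min-est} with $r=R(M)=20M$ and the lower bound of \cref{first-rand-walk-pos}(3), $\mb{P}(\min\ge -m)\gg N^{-1/2}$ (valid since $m\le M\le\sqrt N$). This gives a conditional probability $\ll (M+r)e^{-20M}$, which is acceptable. For (b), I use \cref{pos-paths-cor} with $T=T(M)=e^{40M/\eta}$ in the same way. The conditional probability is then $\ll (1+M)^{O(1)}T^{-\eta/2} = (1+M)^{O(1)}e^{-20M}$, which is also fine.

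For (c), I apply \cref{min-pos-lem} and sum over $q > e^{C_0M}$. This gives $\ll (1+m)\sum_{q>e^{C_0M}} q^{-3/2}(N+1-q)^{-1/2} \ll (1+M)e^{-C_0M/2}N^{-1/2}$ (splitting the sum at $q = N/2$). After dividing by $\mb{P}(\min\ge -m)\gg N^{-1/2}$, this is $\ll e^{-11M}$ once $C_0\ge 30$. The case of $\beta'$ is identical.

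Event (d) is where the parameter hierarchy enters, and I expect it to be the main point. Here I invoke \cref{lemma7.19} with $\delta = e^{-11M}$, $V = V(M)$ and $t_{\min}, t_{\max} = t_{\min}(M), t_{\max}(M)$. That lemma requires \cref{tmin-threshold}, namely $t_{\min} \ge (8(M+2)^C V^V e^{11M})^{4/\eta}$, and \cref{massive-scales}, namely $\log\log t_{\max}-\log\log t_{\min}\ge 20V^V\cdot 11M$. Neither is automatic. I will simply impose both as part of the definitions of $t_{\min}(M)$ and $t_{\max}(M)$ in \cref{param-heir}, which is permissible because these parameters are only required to be sufficiently rapidly growing functions of $M$ and $V(M)$. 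I also need $t_{\max} < N$, which holds because $L(M) < n/2$ and $t_{\max}\lll L(M)$. Under these choices \cref{lemma7.19} gives the conditional bound $\ll e^{-11M}$.

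Summing the four contributions yields $\ll e^{-10M}(\log k)^{-3/2}$ for $i = 2$, and hence, by \cref{nesting}, for $i = 1$ as well.
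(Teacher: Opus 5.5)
Your proposal is correct and follows the paper's proof essentially verbatim. You reduce to $i=2$ via \cref{nesting}, apply \cref{lem13.7a} to each single-walk exceptional event, and dispatch the four cases with \cref{walks-bd-min-est}, \cref{pos-paths-cor}, \cref{min-pos-lem} and \cref{lemma7.19}, choosing $t_{\min}(M), t_{\max}(M)$ so that \cref{tmin-threshold,massive-scales} hold. The only cosmetic difference is that you use the conditional form of \cref{lem13.7a} and aim for $e^{-11M}$, which absorbs the polynomial-in-$M$ factor explicitly; the paper uses the joint form, where the factor is harmless because its individual bounds are much stronger than $e^{-10M}$.
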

\begin{proof}
By \cref{nesting} it is enough to handle the case $i = 2$. We use \cref{lem13.7a} with $\mathcal{E} = \bigcup_{i = 1}^4 \mathcal{E}^{(i)}$ being the event that at least one of the following occurs: (1) $\boldbeta$ is not $R(M)$-bounded to length $N$; (2) $\boldbeta$ is not $T(M)$-positive to length $N$; (3) there is $q$ with $e^{C_0 M} \le q\le N$ such $\beta(q) = -m$ and (4) $\boldbeta$ has no jump step at scale $M$. There are also (except for (4)) symmetric events involving $\boldbeta'$ to be considered, which may be handled in exactly the same way and about which we shall make no further comment. Applying \cref{lem13.7a}, we see that it is enough to show that 
\begin{equation}\label{except-prob} \mb{P}\big( \boldbeta \in \mathcal{E}^{(i)} ,\, \min_{1 \le i \le N} \boldbeta(i) \ge -m\big) \ll e^{-10M} N^{-1/2}.\end{equation}
We handle the case $i = 1,2,3,4$ in turn. \vspace*{8pt}

\emph{Case $i = 1$.} By \cref{walks-bd-min-est} it follows that \cref{except-prob} is $\ll R(M) e^{-R(M)} N^{-1/2}$. Recalling from \cref{truncation-threshold-defs} that $R(M) = 20 M$, this is acceptable.\vspace*{8pt}

\emph{Case $i = 2$.} By \cref{pos-paths-cor} it follows that \cref{except-prob} is $\ll M^{O(1)} T(M)^{-\eta/2}N^{-1/2}$. Recalling from \cref{truncation-threshold-defs} that $T(M) = e^{40M/\eta}$, this is acceptable.\vspace*{8pt}

\emph{Case $i = 3$.} By \cref{min-pos-lem}, \cref{except-prob} is 
\[ \ll M \sum_{e^{C_0 M} \le q \le N} q^{-3/2}(N + 1 - q)^{-1/2} \ll M e^{-C_0 M/2}N^{-1/2},\] which is acceptable if $C_0 > 20$.
\vspace*{8pt}

\emph{Case $i = 4$.} In this case the bound follows from  \cref{lemma7.19} with $\delta = e^{-10M}$ and \cref{prop81-3-ii}, provided parameters are chosen appropriately. Note here that for any choice of $V(M)$ there are appropriate choices of $t_{\min}(M)$ and $t_{\max}(M)$ for which \cref{tmin-threshold,massive-scales} both hold.
\end{proof}

For $i = 1,2$ define $E^{(i)}_{D,m,m'}$ analogously to $E_{D,m,m'}$ (cf. \cref{edm-def}), that is to say 
write
\begin{equation*}E^{(i)}_{D,m,m'} := \sum_{(\beta,\beta') \in \mathscr{B}^{(i)}_{D,m,m'}} \mb{P}\big((\boldbeta,\boldbeta') = (\beta,\beta')\big)  \mb{P}(k \in \Sigma(\mbf{A} \mid \beta,\beta')).\end{equation*}

If we replace $E_{D,m,m'}$ by $E^{(i)}_{D,m,m'}$ in \cref{pk-walks-3}, the error is bounded by 
\begin{equation}\label{be-bet-tobound} \ll k^{-\delta} \sum_{|D| \le M/10} (\log 2)^D \sum_{m,m' \le M}  \mb{P} ((\boldbeta, \boldbeta') \in \mathscr{B}_{D,m,m'} \setminus \mathscr{B}^{(i)}_{D,m,m'}) \ll e^{-5M} k^{-\delta} (\log k)^{-3/2}\end{equation}
by \cref{exceptions-bd}. Thus we may replace $E_{D,m,m'}$ by $E^{(i)}_{D,m,m'}$ in \cref{pk-walks-3} with changes only in the implied constant in the error term. 

Define also
\begin{equation*} \tilde E^{(i)}_{D,m,m'} := \sum_{(\beta,\beta') \in \mathscr{B}^{(i)}_{D,m,m'}} \mb{P}\big((\boldbeta,\boldbeta') = (\beta,\beta')\big)  \Big( 1 - \mb{E} \exp\big({-}2^{D - \xi}  \varrho^*_{\mbf{u} \mid \beta}(L(M))\tau^*_{\mbf{x} \mid \beta'}(L(M))\big)\Big).\end{equation*}
The purpose of this definition is that we have replaced $\mb{P}(k \in \Sigma(\mbf{A} \mid \beta,\beta'))$ by the main term in the formula obtained in \cref{sec6-main}. Indeed, by \cref{sec6-main} it follows that 
\[ E^{(2)}_{D,m,m'} = \tilde E^{(2)}_{D,m,m'} + O(\ell_*(M)^{-1}  \mb{P}((\boldbeta,\boldbeta') \in \mathscr{B}_{D,m,m'} )). \]
 Thus, using \cref{bdm-prob}, we see that the error if we further replace $E^{(2)}_{D,m,m'}$ by $\tilde E^{(2)}_{D,m,m'}$ in \cref{pk-walks-3} is bounded by 
\begin{align*}
& \ll \ell_*(M)^{-1} k^{-\delta}\sum_{|D| \le M/10, m, m' \le M} (\log 2)^D \mb{P}\big( (\boldbeta,\boldbeta') \in \mathscr{B}_{D,m,m'}\big) \\ & 
  \ll \ell_*(M)^{-1} k^{-\delta}\sum_{|D| \le M/10, m, m' \le M} (\log 2)^D (1 + m + m')^{3} (\log k)^{-3/2} \ll e^{-10M} k^{-\delta}(\log k)^{-3/2}   
\end{align*}
if an appropriate choice of $\ell_*(M)$ is made in the parameter hierarchy \cref{param-heir}. Finally, we replace $\tilde E^{(2)}_{D,m,m'}$ by $\tilde E^{(1)}_{D,m,m'}$ in \cref{pk-walks-3}. The error in doing this is again bounded as in \cref{be-bet-tobound} by a very similar argument using \cref{exceptions-bd}.

Let us state the final conclusion of the above analysis, and the only one we will need subsequently, which is that with negligible error we can replace $E_{D,m,m'}$ by $\tilde E^{(1)}_{D,m,m'}$ in \cref{pk-walks-3}, and therefore we have
\begin{equation}\label{pk-walks-4}  p(k) =  (c_0 + o(1))  k^{-\delta} (\log k)^{-3/2}  \big( f_{M,n}(\xi) +  O ( e^{-\Omega(M)}) \big),
\end{equation}
where $c_0$ is the constant in \cref{thm:main}, and for $0 \le \theta \le 1$ we define $f_{M,n}(\theta)$ to be
\begin{equation}\label{fmn-def} 
\big(\frac{2}{\pi}\big)^{1/2} n^{3/2}\!\!\!\!\!\sum_{\substack{|D| \le M/10  \\ m,m' \le M \\ (\beta,\beta') \in \mathscr{B}^{(1)}_{D,m,m'}} }\hspace{-0.3cm}(\log 2)^{D - \theta}  \mb{P} \big( (\boldbeta,\boldbeta') = (\beta,\beta') \big) \big( 1 - \mb{E} \exp\big({-}2^{D - \theta}  \varrho^*_{\mbf{u} \mid \beta}(L) \tau^*_{\mbf{x} \mid \beta'}(L)\big)\big) ,\end{equation}
with $L = L(M)$ and $\mathscr{B}^{(1)}_{D,m,m'}$ defined in \cref{b1-def}. (Here we have used that $(\log k)^{3/2} = (1 + o(1)) (\log 2)^{3/2} n^{3/2}$, but it is natural to write things in terms of $\log k$ in \cref{pk-walks-4} and in terms of $n$ in \cref{fmn-def}.)

\section{Decoupling the walks}\label{decouple-sec}

The notation in this section follows that of the previous three sections and in particular we continue to write $L = L(M)$ and $N = \lceil n/2\rceil$, $N' = \lfloor n/2\rfloor$ for brevity. It is also convenient to write $\beta_{\le L}$ for $(\beta(i))_{1 \le i \le L}$, and similarly for $\beta'$. We always have $\beta(0) = \beta'(0) = 0$. Our aim now is to show that the function $f_{M,n}$ in \cref{fmn-def} essentially does not depend on $n$. The average in \cref{fmn-def} is over pairs of walks $(\beta,\beta')$ satisfying the conditions \begin{equation}\label{couple-relation} D(\beta,\beta') := \beta(N) - \beta'(N') = D, \quad \min_{0 \le i \le N} \beta(i) = -m, \quad \mbox{and} \quad \min_{0 \le i \le N'} \beta'(i) = -m'.\end{equation}
Note that membership of $(\beta, \beta')$ in $\mathscr{B}^{(1)}_{D,m,m'}$ is defined by the conditions \cref{couple-relation} and the  following four conditions which depend only on each of $\beta,\beta'$ individually and in fact only on the values $\beta_{\le L}, \beta'_{\le L}$, namely 
\begin{enumerate}
    \item $\beta$ is $R(M)$-bounded to length $L$;
    \item $\beta$ is $T(M)$-positive to length $L$;
    \item $\min_{0 \le i \le L} \beta(i) = -m$;
    \item There is some $q \le e^{C_0 M}$ such that $\beta(q) = -m$,
\end{enumerate}
and similar conditions on $\beta'$. Note that the redundant condition (3) is deliberately retained here as it will be relevant to a later definition. It is also the case that the random variables $ \varrho^*_{\mbf{u} \mid \beta}(L)$, $\tau^*_{\mbf{x} \mid \beta'}(L)$ depend only $\beta_{\le L}$ and $\beta'_{\le L}$ respectively.

Define $\mathscr{C}_{m,M}$ to be the set of all tuples $\vec{c} = (c_1,\dots, c_L) \in \Z^L$ such that $\beta$ satisfies (1) to (4) above iff $\beta_{\le L} \in \mathscr{C}_{m,M}$. Define $\mathscr{W}_{m,M}$ to be the set of all walks $\beta$ satisfying (1) to (4) above; thus $\beta \in \mathscr{W}_{m,M}$ if and only if $\beta_{\le L} \in \mathscr{C}_{m,M}$.

We may then write \cref{fmn-def} as
\begin{align*}  & f_{M,n}(\theta) = \big(\frac{2}{\pi}\big)^{1/2}n^{3/2}\sum_{\substack{|D| \le M/10 \\ m,m' \le M}}\sum_{\substack{\vec{c} \in \mathscr{C}_{m, M} \\ \vec{c}' \in \mathscr{C}_{m',M}}}   (\log 2)^{
D - \theta}  \mb{E} \big(1 -  \exp \big({-}2^{D - \theta}  \varrho^*_{\mbf{u} \mid \vec{c}}(L) \tau^*_{\mbf{x} \mid \vec{c}'}(L)\big)\big)  \times \\ & \times \mb{P} \big(\min_{0 \le i \le N} \boldbeta(i) = -m,\;  \min_{0 \le i \le N'} \boldbeta'(i) = -m',\;  \boldbeta(N) - \boldbeta'(N') = D,\, \boldbeta_{\le L} = \vec{c}, \, \boldbeta'_{\le L} = \vec{c}'\big).\end{align*} Applying \cref{decoupling}, we obtain
\begin{align} \nonumber f_{M,n}(\theta) & = \sum_{\substack{|D| \le M/10 \\ m,m' \le M}} \sum_{\substack{\vec{c} \in \mathscr{C}_{m, M} \\ \vec{c}' \in \mathscr{C}_{m',M}}}   (\log 2)^{
D - \theta}   \mb{E} \big(1 -  \exp \big({-}2^{D - \theta}  \varrho^*_{\mbf{u} \mid \vec{c}}(L) \tau^*_{\mbf{x} \mid \vec{c}'}(L)\big)\big)  \times \\[-0.5cm] & \hspace{4cm} \times \big(  h(m + c_L) h'(m' + c'_L) + O(n^{- \eps_1})\big) \mb{P}\big( \boldbeta_{ \le L} = \vec{c}, \, \boldbeta'_{\le L} = \vec{c}'\big) \nonumber \\ &  = f_M(\theta) + O(n^{-\eps_1/2}),\label{fmn-fm}\end{align}
where $\eps_1$ is the (positive) exponent in \cref{decoupling} and 
\begin{equation} f_M(\theta) := \sum_{\substack{|D| \le M/10 \\ m,m' \le M }} (\log 2)^{
D - \theta}  \iint h(m + \beta(L)) h'(m' + \beta'(L)) \Psi_{m,m',M,D,\theta}(\beta,\beta')\, \mathrm{d}\mb{P}(\beta) \, \mathrm{d}\mb{P}'(\beta') \label{FM-form}\end{equation}
with
\begin{equation}\label{psi-def} \Psi_{m,m', M,D,\theta}(\beta,\beta') := \mb{E} \big(1 - \exp\big({-}2^{D - \theta}  \varrho^*_{\mbf{u} \mid \beta}(L) \tau^*_{\mbf{x} \mid \beta'}(L)\big)\big) 1_{\mathscr{W}_{m,M}}(\beta)1_{\mathscr{W}_{m',M}}(\beta'),\end{equation} and the integrations being with respect to the path measures of $\boldbeta$, $\boldbeta'$ respectively.
We comment briefly on the application of \cref{decoupling}. The required condition $\max c_i, \max c'_i \le N^{\eps_1}$ is comfortably implied by the definition of $\mathscr{W}_{m, M}$, in particular by the fact that $\vec{c}$ must be the initial segment of an $R(M)$-bounded walk. We also note that the $O(n^{-\eps_1/2})$ error term in \cref{fmn-fm} follows using the fact that $(\log 2)^{-D}$ is much smaller than $n^{\eps_1/2}$ (and that the $\mathscr{W}_{m,M}$ are disjoint as $m$ varies). 

The crucial point to note is that $f_M(\theta)$ no longer depends on $n$; in other words $f_M(\theta)$ is a `universal' object which does not see $k, n , \xi$.

Combining \cref{pk-walks-4,fmn-def,fmn-fm} gives
\begin{equation}\label{pk-walks-5}  p(k) =  (c_0 + o(1))  k^{-\delta} (\log k)^{-3/2}  \big( f_{M}(\xi) +  O ( e^{-\Omega(M)}) \big),
\end{equation}
where $\xi = \{\log_2 k\}$ as usual, the $o(1)$ denotes a quantity tending to zero as $k \rightarrow \infty$, and $M$ is any quantity satisfying $M \le M_0(k)$ for some fixed function $M_0$ with $\lim_{k \rightarrow \infty} M_0(k)=\infty$. The function $M_0$ is specified by the requirement that it must be possible to choose parameters as in \cref{param-heir}.

Let us pause to remark on how \cref{powers-two-cor} already follows from this and the existing literature.

\begin{proof}[Proof of \cref{powers-two-cor}]
If $k$ is a power of two then $\xi = \{ \log_2 k\} = 0$. By applying \cref{pk-walks-5} for fixed $M$ and $k \rightarrow \infty$, we see that $|f_M(0) - f_{M+1}(0)| \ll e^{-\Omega(M)}$, and therefore we have $f_M(0) = c_1 + O(e^{-\Omega(M)})$ for some $c_1$. Substituting in to \cref{pk-walks-5}, the corollary follows (with $C = c_0 c_1$) except we have not shown that $C > 0$. This does not follow directly from this argument, but is a consequence of \cite{EFG16} and will also be shown in a self-contained fashion in \cref{mu-not-zero,mu-prime-not-zero} below.
\end{proof}

\section{Proof of the main theorem}\label{sec15-mainthm-proof}

In this section we prove \cref{thm:main}. The main task is to show that the measures $\mu, \mu'$ exist and are not zero. The more detailed characterisations described in \cref{prop16.2,sec17-main} must wait until later.

Recall that $\mathscr{W}_{m,M}$ denotes the set of walks $\beta$ satisfying conditions (1) -- (4) at the start of \cref{decouple-sec}. Throughout the section $c := -\frac{\log \log 2}{\log 2}$.

\subsection{Approximating $f_M$ by a convolution} The starting point for this section is \cref{pk-walks-5}, with $f_M$ defined in \cref{FM-form}. Our aim is to show convergence of $f_M$ to a function $f$, which we will ultimately describe as a convolution $g \ast \mu \ast \mu'$. As a stepping stone to this result, our main goal in this subsection is to approximate $f_M$ by a convolution; in particular, the function $g$ (defined in \cref{g-funct-def}) will enter the picture for the first time. Specifically, we will prove the following.

\begin{lemma}
Let $M$ be a sufficiently large positive integer. There is a function $\tilde f_M \in C^{\infty}(\R/\Z)$ with 
\begin{equation}\label{fm-tildefm-approx} \Vert f_M - \tilde f_M \Vert_{\infty} \ll e^{-\Omega(M)}\end{equation}
and such that $\tilde f_M$ may be written as
\begin{equation}\label{fm-conv} \tilde f_M = g \ast \mu_M \ast \mu'_M,\end{equation}
$g \in C^{\infty}(\R/\Z)$ the function in \cref{g-funct-def}, $\mu_M$ is the Borel measure on $\R/\Z$ given by
\begin{equation}\label{mu-m-def} \int \psi \, \mathrm{d}\mu_{M} =  \sum_{m \le M}\int_{\Z^\N_{\ge -m}} \mb{E} \varrho^*_{\mbf{u} \mid \beta}(L(M))^c \psi\big(\log_2 \varrho^*_{\mbf{u} \mid \beta}(L(M))\big)1_{\mathscr{W}_{m,M}}(\beta)\, \mathrm{d}\omega_m(\beta)\end{equation} for test functions $\psi \in C(\R/\Z)$, and $\mu'_M$ is defined by
\begin{equation}\label{mu-m-prime-def} \int \psi \, \mathrm{d}\mu'_{M} =  \sum_{m \le M}\int_{\Z^\N_{\ge -m}} \mb{E} \tau^*_{\mbf{u} \mid \beta'}(L(M))^c \psi\big(\log_2 \tau^*_{\mbf{u} \mid \beta'}(L(M))\big)1_{\mathscr{W}_{m,M}}(\beta')\, \mathrm{d}\omega'_m(\beta').\end{equation} Here, the convention is that $x^c \psi(\log_2 x) = 0$ if $x = 0$.
\end{lemma}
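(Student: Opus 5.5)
The plan is to rewrite $f_M$ as an integral against the Doob-conditioned measures $\omega_m,\omega'_{m'}$, then use the transformation law \cref{g-lambda-trans} to turn the sum over $D$ into $g$. The first step uses \cref{doob-path}. Since $\Psi_{m,m',M,D,\theta}$ is supported on $\mathscr{W}_{m,M}\times\mathscr{W}_{m',M}$, and membership there (condition (3)) forces $\beta(i)\ge -m$ for $i\le L$ and depends only on $\beta_{\le L}$, we may write
\[ \int h(m+\beta(L))\, F(\beta)\,\mathrm{d}\mb{P}(\beta)=\int F(\beta)\,\mathrm{d}\omega_m(\beta) \]
for $F$ supported on $\mathscr{W}_{m,M}$, and similarly for $\beta'$ with $\omega'_{m'}$. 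Hence
\[ f_M(\theta)=\sum_{|D|\le M/10}\sum_{m,m'\le M}\iint (\log 2)^{D-\theta}\,\mb{E}\big(1-e^{-2^{D-\theta}\lambda}\big)\,1_{\mathscr{W}_{m,M}}(\beta)1_{\mathscr{W}_{m',M}}(\beta')\,\mathrm{d}\omega_m\,\mathrm{d}\omega'_{m'}, \]
where $\lambda=\varrho^*_{\mbf{u}\mid\beta}(L)\tau^*_{\mbf{x}\mid\beta'}(L)$. The random variables $\varrho^*$ and $\tau^*$ are independent (one is built from $\mbf{u}\mid\beta$, the other from $\mbf{x}\mid\beta'$), so the expectation factors as a joint expectation over independent samples.

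Next define $\tilde f_M$ by the same expression but with the $D$-sum extended to all of $\Z$. For fixed $\lambda>0$, the full sum is $g_\lambda(\theta)=\lambda^{c}g(\theta-\log_2\lambda)$ by \cref{g-lambda-trans}, and it vanishes when $\lambda=0$. Writing $\log_2\lambda=\log_2\varrho^*+\log_2\tau^*$ and $\lambda^c=(\varrho^*)^c(\tau^*)^c$, integrating $g(\theta-x-y)$ against the pushforward measures gives exactly $g\ast\mu_M\ast\mu'_M$ with $\mu_M,\mu'_M$ as in \cref{mu-m-def,mu-m-prime-def}. Two points need care here. First, Fubini must be justified, which requires finiteness of the measures. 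Second, smoothness of $\tilde f_M$ is inherited from $g$ once $\mu_M,\mu'_M$ are finite. Finiteness holds because $\lambda^c\le \lambda^{c}$ with $c\in(0,1)$, $\tau^*\le1$, $\mb{E}(\varrho^*)^c\le 1+\mb{E}\varrho^*\ll R(M)^2$ by \cref{X-upper-simple}, and $\omega_m$ has total mass $h(m)\ll m+1$.

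The main obstacle is the error bound \cref{fm-tildefm-approx}, namely controlling the tail $|D|>M/10$ uniformly in $\theta$. For $D\ge M/10$, use $1-e^{-x}\le1$, so each term is at most $(\log 2)^{D}$ times the total mass $\sum_{m,m'\le M}h(m)h'(m')\ll M^4$. Summing over $D\ge M/10$ gives $\ll M^4(\log 2)^{M/10}=e^{-\Omega(M)}$.

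For $D\le -M/10$, use $1-e^{-x}\le x$, so each term is at most
\[ (\log 2)^{D}2^{D}\,\mb{E}\big[\varrho^*_{\mbf{u}\mid\beta}(L)\big]\,\mb{E}\big[\tau^*_{\mbf{x}\mid\beta'}(L)\big]. \]
Since $\beta$ is $R(M)$-bounded on $\mathscr{W}_{m,M}$, \cref{X-upper-simple} gives $\mb{E}\varrho^*\ll R(M)^2=O(M^2)$, and $\tau^*\le1$. Because $2\log 2>1$, the factor $(2\log2)^{D}$ is geometric and summing over $D\le -M/10$ gives $M^{O(1)}(2\log 2)^{-M/10}=e^{-\Omega(M)}$.

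The remaining thing to check is that no small-$\lambda$ issue arises in these bounds, and none does: both tail estimates use only $1-e^{-x}\le\min(1,x)$ together with polynomial-in-$M$ bounds on $h(m)$ and $\mb{E}\varrho^*$. So the tail is $e^{-\Omega(M)}$ uniformly in $\theta$, which gives \cref{fm-tildefm-approx}.
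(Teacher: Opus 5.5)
Your proposal is correct and follows the paper's proof: rewrite $f_M$ via \cref{doob-path} as an integral against $\omega_m\times\omega'_{m'}$, complete the $D$-sum using $\Psi\le 1$ for $D\ge M/10$ and $1-e^{-x}\le x$ together with \cref{X-upper-simple} and $2\log 2>1$ for $D\le -M/10$, then sum over $D$ inside via \cref{g-lambda-trans} to obtain $g\ast\mu_M\ast\mu'_M$. The only minor difference is that you bound $\mb{E}\varrho^*$ directly by $R(M)^2$ using the boundedness built into $\mathscr{W}_{m,M}$, whereas the paper sums over $R(\beta)=r$ using \cref{omega-bdedness-est}; both work. You also make explicit the independence, finiteness and smoothness points the paper leaves implicit, and the line ``$\lambda^c\le\lambda^c$'' is a harmless slip (you mean $(\varrho^*)^c\le 1+\varrho^*$).
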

\begin{proof}
We again write $L = L(M)$ for brevity.
With \cref{FM-form} in mind, consider first any expression of the form
\[ \iint h(\beta(L) + m) h'(\beta'(L) + m') \Psi(\beta, \beta')\, \mathrm{d}\mb{P}(\beta)\, \mathrm{d}\mb{P}'(\beta')\] where $\Psi : \Z^\N_{\ge -m} \times \Z^\N_{\ge -m'} \rightarrow \R$ is such that $\Psi(\beta,\beta')$ depends only on the initial segments $\beta_{\le L}, \beta'_{\le L}$. (Note that the functions $\Psi_{m,m',M,D,\theta}$ in \cref{psi-def} are of this type due to the nature of the $1_{\mathscr{W}_{m,M}}$ cutoffs.) By \cref{doob-path} this expression can be written 
\begin{equation*}  \int_{\Z^\N_{\ge -m}}\int_{\Z^\N_{\ge -m'}} \Psi(\beta,\beta')  \, \mathrm{d}\omega_m (\beta)\, \mathrm{d}\omega'_{m'}(\beta').\end{equation*} 
Returning to \cref{FM-form}, we therefore have

\begin{equation}\label{fm-theta-exp} f_M(\theta) =  \sum_{|D| \le M/10} (\log 2)^{D - \theta} \sum_{m, m' \le M} \int_{\Z^\N_{\ge -m}}\int_{\Z^\N_{\ge -m'}}  \Psi_{m,m',M,D,\theta}(\beta, \beta') \, \mathrm{d}\omega_m(\beta) \, \mathrm{d}\omega'_{m'}(\beta').
\end{equation}

Suppose in all that follows that $\theta \in [0,1]$. The next part of our analysis is to show that we can extend the sum over $D$ in \cref{fm-theta-exp} to all of $\Z$ with small error.  First note that, immediately from the definition \cref{psi-def}, we have $\Psi_{m,m',M,D,\theta}(\beta, \beta') \le 1$ always and so by \cref{first-rand-walk-pos} the integral in \cref{fm-theta-exp} is $\ll \omega_m(\Z^{\N}_{\ge -m}) \omega'_{m'}(\Z^{\N}_{\ge -m'}) = h(m) h(m') \ll M^2$.

We immediately see that the error incurred by adding in the terms $D > M/10$ to \cref{fm-theta-exp} is $\ll e^{-\Omega(M)}$.

When $D$ is very negative we need an alternative argument. By the inequality $1 - e^{-x} \le x$ we have $1 - \exp\big({-}2^{D - \theta}  \varrho^*_{\mbf{u} \mid \beta}(L) \tau^*_{\mbf{x} \mid \beta'}(L)\big) \ll 2^{D}  \varrho^*_{\mbf{u} \mid \beta}(L) \tau^*_{\mbf{x} \mid \beta'}(L)$. Thus, since $\tau^*_{\mbf{x} \mid \beta'}(L) \le 1$, it follows from \cref{X-upper-gen} and the definition \cref{psi-def} that we additionally have $\Psi_{m,m',M,D,\theta}(\beta, \beta') \ll 2^D r^2$ if $\beta$ is $r$-bounded up to $L$, and hence certainly if $\beta$ is $r$-bounded. Thus, defining $R(\beta)$ to be the minimal integer $r$ for which $\beta$ is $r$-bounded (as in \cref{r-beta-def}), we have using \cref{omega-bdedness-est} that
\begin{align*} \int_{\Z^\N_{\ge -m}}\int_{\Z^\N_{\ge -m'}} \Psi_{m,m',M,D,\theta} & \, \mathrm{d}\omega_m \, \mathrm{d}\omega'_{m'} \ll 2^D \omega'_{m'}(\Z^{\N}_{\ge -m'}) \sum_r r^2 \omega_m(R(\beta) = r) \\ & \ll 2^D M \sum_{r \in \Z_{\ge 0}} (M + r + 1)e^{-r} \ll M^{2} 2^D. \end{align*}
From this (and since $2 \log 2 > 1$) we see that the error from adding the terms $D < -M/10$ to \cref{fm-theta-exp} is also $\ll e^{-\Omega(M)}$.

Thus, up to error $e^{-\Omega(M)}$, we can replace $f_M(\theta)$ by the completed sum
\begin{equation*} \tilde f_M(\theta) =  \sum_{D \in \Z} (\log 2)^{D - \theta} \sum_{m, m' \le M} \int_{\Z^\N_{\ge -m} \times \Z^\N_{\ge -m'}}  \Psi_{m,m',M,D,\theta}(\beta, \beta') \, \mathrm{d}\omega_m \, \mathrm{d}\omega'_{m'}.
\end{equation*}
Recalling the definition \cref{psi-def}, and temporarily writing $\varrho^*_{\mbf{u} \mid \beta} = \varrho^*_{\mbf{u} \mid \beta}(L)$, $\tau^*_{\mbf{x} \mid \beta'} = \tau^*_{\mbf{x} \mid \beta'}(L)$ for brevity, it follows that $\tilde f_M(\theta)$ equals
\[  \sum_{D \in \Z} (\log 2)^{D - \theta}  \sum_{m,m' \le M} \int_{\Z^\N_{\ge -m}}\int_{\Z^\N_{\ge -m'}} \mb{E} \big(1 - e^{-2^{D - \theta}\varrho^*_{\mbf{u} \mid \beta}\tau^*_{\mbf{x} \mid \beta'}}\big)  1_{\mathscr{W}_{m,M}}(\beta) 1_{\mathscr{W}_{m',M}}(\beta') \, \mathrm{d}\omega_m(\beta) \, \mathrm{d}\omega'_{m'}(\beta').
\]
Taking the sum over $D$ on the inside and using \cref{g-lambda-trans} with $\xi= \theta$ and $\lambda = \varrho^*_{\mbf{u} \mid \beta}\tau^*_{\mbf{x} \mid \beta'}$, we see that $\tilde f_M(\theta)$ equals
\[  \sum_{m,m' \le M}\int_{\Z^\N_{\ge -m}}\int_{\Z^\N_{\ge -m'}} \mb{E} \big(\varrho^*_{\mbf{u} \mid \beta}\tau^*_{\mbf{x} \mid \beta'} \big)^c g\big(\theta - \log_2( \varrho^*_{\mbf{u} \mid \beta} \tau^*_{\mbf{x} \mid \beta'})\big) 1_{\mathscr{W}_{m,M}}(\beta) 1_{\mathscr{W}_{m',M}}(\beta') \, \mathrm{d}\omega_m(\beta) \, \mathrm{d}\omega'_{m'}(\beta') . \]  Recalling the definitions \cref{mu-m-def} and \cref{mu-m-prime-def}, we see that \cref{fm-conv} indeed holds, and this concludes the proof.\end{proof}

\subsection{Convergence of the measures $\mu_M$} 

The remaining task is to examine the convergence of the sequences of measures $(\mu_M)_{M = 1}^{\infty}$ and $(\mu'_M)_{M = 1}^{\infty}$. In this section we handle the first of these sequences in detail. Recall that $\mu_M$ is defined in \cref{mu-m-def}, which we can write as 
\begin{equation*}
\int \psi \, \mathrm{d}\mu_{M} =  \sum_{m \ge 0}\int_{\Z^\N_{\ge -m}} \mb{E} \varrho^*_{\mbf{u} \mid \beta}(L(M))^c \psi\big(\log_2 \varrho^*_{\mbf{u} \mid \beta}(L(M))\big)1_{m \le M} 1_{\mathscr{W}_{m,M}}(\beta)\, \mathrm{d}\omega_m(\beta).
\end{equation*}
Set
\begin{equation}\label{mu-m-ti} 
\int \psi \, \mathrm{d}\tilde\mu_{M} =  \sum_{m \ge 0}\int_{\Z^\N_{\ge -m}} \mb{E} \varrho^*_{\mbf{u} \mid \beta}(L(M))^c \psi\big(\log_2 \varrho^*_{\mbf{u} \mid \beta}(L(M))\big)\, 1_{\min \beta = -m} 1_{\argmin \beta \le L(M)}\mathrm{d}\omega_m(\beta),
\end{equation} where $\min \beta = \min_{i \in \{0\} \cup \N} \beta(i)$ and $\argmin\beta$ denotes the smallest index $q \ge 0$ for which $\beta(q) = \min \beta$. Our main result is the following. Here, $\dist{\mu}{\nu}$ denotes the bounded Lipschitz distance between two measures; for a full discussion of this concept, see \cref{bl-app}.

\begin{lemma}\label{mu-mutilde-compar}
We have $\dist{\mu_M}{\tilde\mu_M} \ll e^{-\Omega(M)}$.
\end{lemma}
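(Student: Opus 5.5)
Both $\mu_M$ and $\tilde\mu_M$ are integrals of the same functional
\[F_M(\beta) := \mb{E}\, \varrho^*_{\mbf{u} \mid \beta}(L)^c\, \psi\big(\log_2 \varrho^*_{\mbf{u} \mid \beta}(L)\big), \qquad L = L(M),\]
against $\omega_m$, summed over $m$. They differ only in their cutoff indicators: $\mu_M$ uses $1_{m \le M} 1_{\mathscr{W}_{m,M}}(\beta)$, while $\tilde\mu_M$ uses $1_{\min\beta = -m}\, 1_{\argmin \beta \le L}$.

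Since $x \mapsto x^c$ is increasing with $c \in (0,1)$, for $\Vert \psi \Vert_\infty \le 1$ we have $|F_M(\beta)| \le \mb{E}\, \varrho^{*c} \le (\mb{E}\, \varrho^*)^c$ by Jensen. Hence for any test function $\psi$ with $\Vert\psi\Vert_{\BL} \le 1$,
\[ \Big|\int \psi\, \mathrm{d}\mu_M - \int \psi\, \mathrm{d}\tilde\mu_M\Big| \le \sum_{m \ge 0} \int (\mb{E}\, \varrho^*_{\mbf{u}\mid\beta}(L))^c\, \big|1_{m\le M}1_{\mathscr{W}_{m,M}}(\beta) - 1_{\min\beta=-m}1_{\argmin\beta\le L}\big|\, \mathrm{d}\omega_m(\beta). \]
The plan is to bound this symmetric-difference integral by $e^{-\Omega(M)}$. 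Lipschitz regularity of $\psi$ is never needed, only boundedness.

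We split the set where the indicators differ into four pieces.

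\emph{(a) Large $m$.} Take $m > M$ with $\min\beta = -m$ and $\argmin\beta \le L$. There is some $q \le L$ with $\beta(q) = -m$. By \cref{X-non-vanishing} we get $\mb{P}(\varrho^* \ne 0) \ll R^3 (1+q)^\kappa 2^{-m}$, and \cref{X-upper-simple} gives $\mb{E}\,\varrho^* \ll R^2$, where $R = R_{\le L}(\beta)$. Combining these by Hölder, $\mb{E}\, \varrho^{*c} \le \mb{E}(\varrho^*)^c 1_{\varrho^*\neq 0} \ll R^{O(1)} q^{\kappa} 2^{-(1-c)m}$. Integrating against $\omega_m$, we sum $q$ using \cref{qm-beta-bd} (which gives a factor $(1+m)^2 q^{-3/2}$) and sum $R$ using \cref{omega-bdedness-est}. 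The sum over $m > M$ is then $e^{-\Omega(M)}$.

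\emph{(b) $m \le M$, $\min\beta = -m$, $\argmin\beta \le L$, but $\beta \notin \mathscr{W}_{m,M}$.} Here condition (1), (2) or (4) fails; (3) holds by assumption. We bound $\mb{E}\,\varrho^{*c} \ll R^{O(1)}$ and use the following.
\begin{itemize}
\item If (1) fails, \cref{omega-bdedness-est} gives $\omega_m\{R > 20M\} \ll M e^{-20M}$. Summing the weight $R^{O(1)}$ over $R > 20M$ still leaves $e^{-\Omega(M)}$.
\item If (2) fails, \cref{lem13.10} gives $\omega_m\{T(\beta) > T(M)\} \ll M^{O(1)} e^{-20M}$. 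We combine this with the $R$-weights via Cauchy--Schwarz. Here \cref{omega-bdedness-est} controls $\int R^{O(1)}\, \mathrm{d}\omega_m \ll M^{O(1)}$.
\item If (4) fails, the minimum is attained only at some $q > e^{C_0 M}$. Then \cref{qm-beta-bd} gives $\sum_{q > e^{C_0M}} \omega_m\{\beta(q) = -m\} \ll M^2 e^{-C_0 M/2}$, again combined with the $R$-weights by Cauchy--Schwarz.
\end{itemize}

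\emph{(c) Walks in $\mathscr{W}_{m,M}$ with $\min\beta < -m$.} Condition (3) forces $\min_{i \le L}\beta = -m$, so the global minimum is attained at some $q > L$. This is contained in the event $T(\beta) > T(M)$. Indeed, $T(M)$-positivity would force $\beta(q) \ge -T + q^{1/2-\eta} > 0$ for $q > L \ggg T$. So this piece is handled exactly as (2) above.

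\emph{(d) Walks in $\mathscr{W}_{m,M}$ with $\min\beta = -m$ but $\argmin\beta > L$.} This case is impossible, since condition (3) puts a point with $\beta = -m$ at an index $\le L$.

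The main obstacle is step (a). There one must check that the $2^{-(1-c)m}$ decay beats the growth of $\omega_m$-mass (which is $\ll (1+m)^{O(1)}$), and that Hölder applies cleanly with the $q^{\kappa}$ and $R^{O(1)}$ factors. Everything else is a bookkeeping combination of the tail bounds \cref{omega-bdedness-est,lem13.10,qm-beta-bd}.
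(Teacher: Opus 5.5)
Your argument is essentially the paper's proof. It uses the same four-way split: large $m$, failure of $R(M)$-boundedness, $\argmin\beta>e^{C_0M}$, and failure of $T(M)$-positivity. It also uses the same inputs: \cref{X-upper-gen} plus H\"older for the large-$m$ piece, and \cref{qm-beta-bd,omega-bdedness-est,lem13.10} for the rest. Your (c)--(d) amount to the paper's observation that, on $\Z^{\N}_{\ge -m}$, the $\mu_M$-cutoff is pointwise dominated by the $\tilde\mu_M$-cutoff. In particular (c) is vacuous, since $\omega_m$ is supported on $\Z^{\N}_{\ge -m}$. The only real difference is that you decouple the $R^{O(1)}$ weights by Cauchy--Schwarz. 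The paper instead orders its telescoping so that $R_{\le L}(\beta)\le R(M)$ may be assumed in the last two pieces.

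One correction: drop the Jensen step in your displayed inequality. The bound $(\mb{E}\,\varrho^*)^c\ll R^{2c}$ has no decay in $m$; only $\mb{P}(\varrho^*\neq 0)$ is small. Meanwhile $\omega_m\{\min\beta=-m\}=h(m)-h(m-1)=(2/\pi)^{1/2}$ for every $m\ge 1$, so summing an $m$-independent bound over $m>M$ cannot give $e^{-\Omega(M)}$. Step (a) therefore only closes with $\mb{E}\,\varrho^{*c}$ itself as the integrand, which is in fact what you use there.
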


In the proof of this lemma we will use the following auxiliary bounds.
\begin{lemma}\label{x-ell-bounds} 
Uniformly for $m \in \Z_{\ge 0}$, $\ell \in \N$, and $N \in [\ell, \infty) \cup \{\infty\}$ we have the following bounds:
\begin{equation}\label{exc-bd} \int_{\substack{\min \beta = -m \\ \argmin \beta \le \ell}} \mb{E} \varrho^*_{\mbf{u} \mid \beta}(\ell)^c\, \mathrm{d}\omega_m(\beta) \ll 2^{-m/4};\end{equation}
\begin{equation}\label{exc-rm-bd} \int_{\substack{\min \beta = -m \\ R_{\le N}(\beta) = r }} \mb{E} \varrho^*_{\mbf{u} \mid \beta}(\ell)^c\, \mathrm{d}\omega_m(\beta) \ll r^2(m + r + 1) e^{-r};\end{equation}
\begin{equation}\label{exc-qrm-bd} \int_{\substack{ \min \beta = -m \\ R_{\le N}(\beta) = r \\ \argmin \beta = q}} \mb{E} \varrho^*_{\mbf{u} \mid \beta}(\ell)^c\, \mathrm{d}\omega_m(\beta) \ll r^3 (1 + q)^{\kappa - 3/2} \end{equation} for $q \le \ell$, and
\begin{equation}\label{exc-mt-bd} \int_{\substack{\min \beta = -m \\ R_{\le N}(\beta) = r \\ T(\beta) > T}} \mb{E} \varrho^*_{\mbf{u} \mid \beta}(\ell)^c\, \mathrm{d}\omega_m(\beta) \ll r^{2}(1 + m)^{O(1)} T^{-\eta/2}.\end{equation}
Here we define $R_{\le \infty}(\beta)  := R(\beta)$, the smallest integer $r$ for which $\beta$ is $r$-bounded.\end{lemma}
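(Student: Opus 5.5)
The common first step is to remove the power $c$. Since $c = -\log\log 2/\log 2 \approx 0.53 \in (0,1)$, Jensen's inequality gives $\mb{E}\varrho^c \le (\mb{E}\varrho)^c$, and moreover $\mb{E}\varrho^c \le \mb{P}(\varrho \ne 0)^{1-c}(\mb{E}\varrho)^c$ by H\"older. I will use this together with \cref{X-upper-gen}. If $\beta$ is $r$-bounded to length $\ell$, then $\mb{E}\varrho^*_{\mbf{u}\mid\beta}(\ell) \ll r^2$. If in addition $\beta(q) = -m$ with $q \le \ell$, then $\mb{P}(\varrho^* \ne 0) \ll r^3(1+q)^\kappa 2^{-m}$. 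Combining these, $\mb{E}\varrho^{*c} \ll r^{O(1)}(1+q)^{\kappa}2^{-(1-c)m}$, and $1-c \approx 0.47 > 1/4$. Note also that $R_{\le \ell}(\beta) \le R_{\le N}(\beta)$ for $N \ge \ell$, so $r$-boundedness to length $N$ gives it to length $\ell$.

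For \cref{exc-rm-bd}, I bound $\mb{E}\varrho^{*c} \le 1 + \mb{E}\varrho^* \ll r^2$ and then $\omega_m\{R_{\le N} = r\} \le \omega_m\{R_{\le N} > r-1\} \ll (m+r+1)e^{-r}$ by \cref{omega-bdedness-est}. For \cref{exc-mt-bd}, I bound the integrand by $r^2$ in the same way. I then control $\omega_m\{T(\beta) > T\}$ by \cref{lem13.10}, which gives $\ll (1+m)^{O(1)}T^{-\eta/2}$.

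For \cref{exc-qrm-bd}, the condition $\argmin\beta = q$ forces $\beta(q) = -m$, so the H\"older bound gives an integrand $\ll r^{O(1)}(1+q)^\kappa 2^{-(1-c)m}$; the crude estimate $\mb{E}\varrho^{*c} \ll r^2$ is weaker in $m$ but may also be used. The measure satisfies $\omega_m\{\beta(q) = -m\} \ll (1+m)^2(1+q)^{-3/2}$ by \cref{qm-beta-bd}. The resulting factor $(1+m)^2 2^{-(1-c)m}$ is bounded uniformly in $m$. The exponent of $r$ must come out as $r^3$, so rather than the crude $\mb{E}\varrho^{*c}$ bound I use $\mb{E}\varrho^{*c} \le \mb{P}(\varrho^*\ne 0)$ on the event $\varrho^* \le 1$ plus $\mb{E}\varrho^*$ restricted to $\varrho^* > 1$; more simply, $\varrho^c \le 1_{\varrho\ne0} + \varrho$. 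The term $\mb{P}(\varrho^* \ne 0)$ gives $r^3(1+q)^\kappa 2^{-m}$. The term $\mb{E}\varrho^*$ is $\ll r^2 \cdot 2^{-m}$ times a factor, which I obtain by redoing the proof of \cref{X-upper-simple} restricted to sums passing through $q$. This should be possible in the same way as \cref{X-non-vanishing}, since the $2^{q+\beta(q)}$ count already yields $2^{-m}$. Multiplying by $(1+m)^2(1+q)^{-3/2}$ and absorbing $(1+m)^2 2^{-m}$ then gives $r^3(1+q)^{\kappa-3/2}$.

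Finally, \cref{exc-bd} follows by summing \cref{exc-qrm-bd} in the H\"older form (which keeps the $2^{-(1-c)m}$ factor) over $r \ge 1$ and $q \le \ell$. To get convergence in $r$ I interpolate with the $e^{-r}$ decay: on $\{R_{\le\ell} = r,\ \argmin = q\}$ I take the minimum of the bound from \cref{exc-rm-bd} and the $q$-bound, or, more cleanly, apply Cauchy--Schwarz in the measure. The main obstacle is getting simultaneous decay in $r$, in $q$ (summable, since $\kappa - 3/2 < -1$), and in $m$ (at rate $2^{-m/4}$, which is weaker than $2^{-(1-c)m}$ and so leaves room to absorb polynomial factors in $m$). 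I plan to bound $\omega_m\{R=r,\ \argmin=q\}$ by the geometric mean of $(m+r+1)e^{-r}$ and $(1+m)^2(1+q)^{-3/2}$. This gives $e^{-r/2}(1+q)^{-3/4}$. Combined with the integrand bound $r^{O(1)}(1+q)^\kappa 2^{-(1-c)m}$, the sum over $q$ still converges because $\kappa - 3/4 < -1$ fails. I would therefore instead split: for $q \le 2^{m/2}$ I use the H\"older bound with the full $(1+q)^{-3/2}$ measure bound and the trivial sum over $r$ weighted via \cref{walks-bd-min-est}-type decay inside the conditional, and for $q > 2^{m/2}$ I use the plain $(1+q)^{\kappa-3/2}$ tail, which is $\ll 2^{-m/4}$ after summing over $q$. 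This last splitting is the step I expect to need the most care.
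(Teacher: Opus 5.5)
Your arguments for \cref{exc-rm-bd} and \cref{exc-mt-bd} are fine and match the paper. The routes you commit to for \cref{exc-qrm-bd} and \cref{exc-bd}, however, do not work as written.

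\textbf{The bound \cref{exc-qrm-bd}.} Your decomposition $\varrho^c\le 1_{\varrho\ne0}+\varrho$ needs $\mb{E}\varrho^*_{\mbf{u}\mid\beta}(\ell)\ll r^2 2^{-m}$ (up to harmless factors), and this is false.

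\begin{itemize}
\item Redoing the proof of \cref{X-upper-simple} gains nothing. The total number $2^{\ell+\beta(\ell)}$ of tuples $\eps$ is unchanged; the condition $\beta(q)=-m$ only shifts choices from $\eps_{\le q}$ to $\eps_{>q}$, and each $\eps$ still contributes probability $\asymp 2^{-\ell}$.
\item In truth $\varrho^*$ is nonzero only with probability about $2^{-m}$, but it then has size about $2^{m}$. For instance, take $\xi_1=3$, $\xi_2=\cdots=\xi_{m+4}=-1$ and $\xi_i=0$ afterwards. Counting only the $\eps$ with exactly one $\eps_{1,j}=1$ already gives $\mb{E}\varrho^*_{\mbf{u}\mid\beta}(\ell)\ge 1/4$ for every $m$.
\item Without the $2^{-m}$ you are left with $r^2(1+m)^2(1+q)^{-3/2}$. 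This is not $\ll r^3(1+q)^{\kappa-3/2}$ uniformly in $m$.
\end{itemize}

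The detour is unnecessary anyway. The H\"older bound you wrote at the outset has $r$-exponent $2c+3(1-c)=3-c\le 3$, so $\mb{E}\varrho^*_{\mbf{u}\mid\beta}(\ell)^c\ll r^3(1+q)^{\kappa}2^{-(1-c)m}$. Multiplying by $\omega_m\{\beta(q)=-m\}\ll(1+m)^2(1+q)^{-3/2}$ and using $(1+m)^2 2^{-(1-c)m}\ll1$ gives \cref{exc-qrm-bd}. This is exactly the paper's argument.

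\textbf{The bound \cref{exc-bd}.} Your split does not close. In the range $q>2^{m/2}$ you sum the bound $r^3(1+q)^{\kappa-3/2}$, which has no decay in $r$, so the sum over $r$ diverges. You also have no joint estimate for $\omega_m\{R_{\le\ell}(\beta)=r,\ \argmin\beta=q\}$, only the separate estimates \cref{qm-beta-bd} and \cref{omega-bdedness-est}.

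The missing idea is to decouple $r$ and $q$ in the integrand rather than in the measure, via AM--GM: $r^3(1+q)^{\kappa}\le r^6+(1+q)^{2\kappa}$. Keeping the factor $2^{-(1-c)m}\le2^{-m/3}$ throughout, this gives
\[ \int_{\substack{\min\beta=-m\\ \argmin\beta\le\ell}}\mb{E}\varrho^*_{\mbf{u}\mid\beta}(\ell)^c\,\mathrm{d}\omega_m(\beta)\ll 2^{-m/3}\Big(\sum_{q\ge0}(1+q)^{2\kappa}\,\omega_m\{\beta(q)=-m\}+\sum_{r\ge1}r^6\,\omega_m\{R_{\le\ell}(\beta)=r\}\Big). \]
The first sum is $\ll(1+m)^2\sum_q(1+q)^{2\kappa-3/2}\ll(1+m)^2$. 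The second is $\ll\sum_r r^6(m+r+1)e^{-r}\ll1+m$. Together they give $(1+m)^22^{-m/3}\ll2^{-m/4}$.

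Your small-$q$ range could be handled by simply bounding $(1+q)^{\kappa}\le 2^{O(\kappa m)}$ and dropping the $q$-constraint. The large-$q$ range genuinely needs a decoupling of this kind.
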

\begin{proof} The key estimate will be \cref{X-upper-gen}. From this, H\"older's inequality and AM-GM we have that, if $\min \beta = -m$ and $\argmin \beta \le \ell$, \begin{align*} \mb{E} \varrho^*_{\mbf{u} \mid \beta}(\ell)^{c}   \le \big( \mb{E}\varrho^*_{\mbf{u} \mid \beta}(\ell)\big)^c \mb{P}(\varrho^*_{\mbf{u} \mid \beta}(\ell) \ne 0)^{1 - c} &  \ll R_{\le \ell}(\beta)^{3} (1 + \argmin \beta)^{\kappa} 2^{-m/3} \\ & \le (R_{\le \ell}(\beta)^{6} + (1+ \argmin \beta)^{2\kappa}) 2^{-m/3}. \end{align*} Therefore 
\begin{align*} \int_{\substack{\min \beta = -m \\ \argmin \beta \le \ell}} & \mb{E} \varrho^*_{\mbf{u} \mid \beta}(\ell)^c\, \mathrm{d}\omega_m(\beta) \\ &  \ll 2^{-m/3} \sum_{q \ge 0} (1 + q)^{2\kappa} \omega_m \{\argmin \beta = q\}  +  2^{-m/3} \sum_r r^{6} \omega_m\{ R_{\le \ell}(\beta) = r\}.   \end{align*}
To estimate the two terms on the right we use \cref{qm-beta-bd,omega-bdedness-est} respectively, obtaining that the above is 
\[ \ll (1 + m) 2^{-m/3} \big( \sum_q (1 + q)^{2\kappa - 3/2} + \sum_r r^{6}(r +1)e^{-r} \big) \ll (1 + m) 2^{-m/3} \ll 2^{-m/4}, \] which gives \cref{exc-bd}.
For \cref{exc-rm-bd} we use H\"older as above, but now using the trivial bound $\mb{P}(\varrho^*_{\mbf{u} \mid \beta}(\ell) \ne 0) \le 1$. This gives $\mb{E} \varrho^*_{\mbf{u} \mid \beta}(\ell)^c \ll R_{\le \ell}(\beta)^{2} \le R_{\le N}(\beta)^2$, and so 
\[\int_{\substack{\min \beta = -m \\ R_{\le N}(\beta) = r}} \mb{E} \varrho^*_{\mbf{u} \mid \beta}(\ell)^c\, \mathrm{d}\omega_m(\beta) \ll r^{2}\omega_m \{  R_{\le N}(\beta) = r \}.\]
The bound \cref{exc-rm-bd} follows using \cref{omega-bdedness-est}.

For \cref{exc-qrm-bd} we proceed as for \cref{exc-bd}, except we replace $R_{\le \ell}(\beta)$ by $R_{\le N}(\beta)$ and avoid the temptation to apply AM-GM. We then have
\[ \int_{\substack{\min \beta = -m \\ R_{\le N}(\beta) = r \\ \argmin \beta = q}} \mb{E} \varrho^*_{\mbf{u} \mid \beta}(\ell)^c\, \mathrm{d}\omega_m(\beta) \ll r^{3} (1 + q)^{\kappa}2^{-m/3} \omega_m \{ \argmin \beta = q\},\] and the result follows from \cref{argmin-omega-lem} and the crude estimate $(1 + m) 2^{-m/3} \ll 1$.

Finally, for \cref{exc-mt-bd} we proceed as for \cref{exc-rm-bd}, obtaining
\[ \int_{\substack{\min \beta = -m \\ R_{\le N}(\beta) = r \\ T(\beta) > T}} \mb{E} \varrho^*_{\mbf{u} \mid \beta}(\ell)^c\, \mathrm{d}\omega_m(\beta) \ll r^{2}\omega\{ R_{\le N}(\beta) = r, T(\beta) > T\} \le r^{2} \omega_m\{ T(\beta) > T\}, \] which gives the required bound using \cref{lem13.10}.
\end{proof}

\begin{proof}[Proof of \cref{mu-mutilde-compar}]
Let $\psi$ be a function on $\R/\Z$ with $\Vert \psi \Vert_{\Lip} \le 1$. In particular, $\Vert \psi \Vert_{\infty} \le 1$ and so we have
\begin{align}\nonumber \big|\int \psi \, & \mathrm{d}\mu_M - \int \psi \, \mathrm{d} \tilde\mu_M \big| \\ &  \le  \sum_{m \ge 0}\int_{\Z^\N_{\ge -m}} \mb{E} \varrho^*_{\mbf{u} \mid \beta}(L(M))^c \big| 1_{\min \beta = -m, \argmin \beta \le L(M)} - 1_{m \le M}1_{\mathscr{W}_{m,M}}(\beta)\big| \, \mathrm{d}\omega_m(\beta).\label{psi-psi-tilde-diff}\end{align}
Recall that $\mathscr{W}_{m,M}$ is the set of walks $\beta$ satisfying items (1) -- (4) in \cref{decouple-sec}, and so we may write
\[ 1_{\mathscr{W}_{m,M}}(\beta) = 1_{R_{\le L(M)}(\beta) \le R(M)} 1_{T_{\le L(M)}(\beta) \le T(M)} 1_{\min_{0 \le i \le L(M)} \beta(i) = -m} 1_{\min \beta^{-1}(-m) \le e^{C_0 M}}.\] Restricted to walks $\beta$ in $\Z^{\N}_{\ge -m}$, for some $m \ge 0$, we have
\[ 1_{m \le M} 1_{\mathscr{W}_{m,M}}(\beta) = 1_{m \le M} 1_{R_{\le L(M)}(\beta) \le R(M)} 1_{T_{\le L(M)}(\beta) \le T(M)} 1_{\min \beta = -m} 1_{\argmin \beta \le e^{C_0 M}}.\] Note that, when written this way, it is clear that $1_{m \le M} 1_{\mathscr{W}_{m,M}}(\beta) \le 1_{\min \beta = -m} 1_{\argmin \beta \le L(M)}$ pointwise for $\beta \in \Z_{\ge -m}^{\N}$ (and so the absolute value signs in \cref{psi-psi-tilde-diff} are unnecessary). Writing $L = L(M)$ for brevity, by a telescoping sum we may bound \cref{psi-psi-tilde-diff} as a sum of four terms
\begin{equation}\label{15-term-1}
\sum_{m \ge 0} \int_{\Z_{\ge -m}^{\N}} \mb{E} \varrho^*_{\mbf{u} \mid \beta}(L)^c 1_{m > M} 1_{\min \beta = -m} 1_{\argmin \beta \le L} \, \mathrm{d}\omega_m(\beta),
\end{equation}
\begin{equation}\label{15-term-2}
\sum_{m \ge 0} \int_{\Z_{\ge -m}^{\N}} \mb{E} \varrho^*_{\mbf{u} \mid \beta}(L)^c  1_{m \le M} 1_{\min \beta = -m} 1_{\argmin \beta \le L} 1_{R_{\le L}(\beta) > R(M)} \, \mathrm{d}\omega_m(\beta),
\end{equation}
\begin{equation}\label{15-term-3}
\sum_{m \ge 0} \int_{\Z_{\ge -m}^{\N}} \mb{E} \varrho^*_{\mbf{u} \mid \beta}(L)^c 1_{m \le M} 1_{\min \beta = -m} 1_{e^{C_0 M} < \argmin \beta \le L} 1_{R_{\le L}(\beta) \le R(M)} \, \mathrm{d}\omega_m(\beta),
\end{equation}
and
\begin{equation}\label{15-term-4}
\sum_{m \ge 0} \int_{\Z_{\ge -m}^{\N}} \mb{E} \varrho^*_{\mbf{u} \mid \beta}(L)^c  1_{m \le M} 1_{\min \beta = -m} 1_{\argmin \beta \le e^{C_0 M}} 1_{R_{\le L}(\beta) \le R(M)} 1_{T_{\le L}(\beta) > T(M)} \, \mathrm{d}\omega_m(\beta).
\end{equation}
We bound the contributions from these pieces using \cref{x-ell-bounds}, showing that in each case we get a bound $\ll e^{-\Omega(M)}$. In what follows, recall from \cref{truncation-threshold-defs} that $R(M) := 20 M$ and that $T(M) := e^{40M/\eta}$.

To bound \cref{15-term-1} we use \cref{exc-bd} and sum over $m > M$. This clearly gives a bound $\ll e^{-\Omega(M)}$.

To bound \cref{15-term-2} we use \cref{exc-rm-bd}. Summing over $m \le M$ and $r > R(M)$, this gives a bound
\[ \ll \sum_{m \le M} \sum_{r > R(M)} r^2 (m + r + 1) e^{-r} \ll e^{-\Omega(M)}.\] 

To bound \cref{15-term-3} we use \cref{exc-qrm-bd}. Summing this over $m \le M$, $r \le R(M)$, and $e^{C_0 M} < q \le L$ gives
\[ \ll M \sum_{r \le R(M)} \sum_{e^{C_0 M} < q \le L} r^3 (1 + q)^{\kappa - 3/2} \ll M^5 (e^{C_0 M})^{\kappa - 1/2} \ll e^{-\Omega(M)}.\]

Finally, to bound \cref{15-term-4} we use \cref{exc-mt-bd}, noting that $T(\beta) \ge T_{\le L}(\beta)$. Summing this over $m \le M$ and $r \le R(M)$ gives a bound of $M^{O(1)} T(M)^{-\eta/2} \ll e^{-\Omega(M)}$. This completes the proof of \cref{mu-mutilde-compar}.\end{proof}

We now define measures $\mu^{(\ell)}$ by
\begin{equation}\label{mu-ell-def} \int \psi \, \mathrm{d} \mu^{(\ell)} := \sum_{m \ge 0}\int_{\substack{\min \beta = -m \\ \argmin \beta \le \ell}} \mb{E} \varrho^*_{\mbf{u} \mid \beta}(\ell)^c \psi (\log_2 \varrho^*_{\mbf{u} \mid \beta}(\ell)) \, \mathrm{d}\omega_m(\beta).\end{equation}
Thus from the definition \cref{mu-m-ti} we see that
\begin{equation}\label{mu-ell-mu-m} \tilde\mu_M = \mu^{(L(M))}.\end{equation}
We now claim the following.
\begin{proposition}\label{prop124}
There is a Borel measure $\mu$ on $\R/\Z$ with $\Vert \mu \Vert_{\BL} = \dist{\mu}{0} < \infty$ such that $\dist{\mu^{(\ell)}}{\mu} \rightarrow 0$ as $\ell \rightarrow \infty$.   
\end{proposition}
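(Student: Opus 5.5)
We will show that $(\mu^{(\ell)})_{\ell \ge 1}$ is Cauchy in the bounded Lipschitz metric and that $\sup_\ell \Vert \mu^{(\ell)} \Vert_{\BL} < \infty$. Completeness of the space of finite Borel measures on the compact space $\R/\Z$ under $\operatorname{d}_{\BL}$ then produces the limit $\mu$, with $\Vert \mu \Vert_{\BL} < \infty$ (see \cref{bl-app}). The uniform bound is immediate: taking $\psi$ with $\Vert\psi\Vert_{\Lip}\le 1$ and summing \cref{exc-bd} over $m$ gives $|\int\psi\,\mathrm{d}\mu^{(\ell)}| \ll \sum_m 2^{-m/4} \ll 1$.

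For the Cauchy property we compare $\mu^{(\ell)}$ with $\mu^{(\ell')}$ for $\ell<\ell'$, taking both large. First we truncate the walks in both measures to a good set $G = G(K)$, with $K$ a parameter that will depend on $\ell$. On $G$ we require $m\le K$, $\argmin\beta\le e^{C_0K}$, $R(\beta)\le 20K$ and $T(\beta)\le e^{40K/\eta}$. The same four-term telescoping used in the proof of \cref{mu-mutilde-compar} applies, via \cref{exc-bd,exc-rm-bd,exc-qrm-bd,exc-mt-bd} with $N=\infty$ and with $\ell$ replaced by $\ell$ or $\ell'$. It shows that restricting either measure to $G$ costs $e^{-\Omega(K)}$ in $\operatorname{d}_{\BL}$, uniformly in $\ell$, provided $e^{C_0K}\le \ell$. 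The restriction $\argmin\beta\le\ell$ in \cref{mu-ell-def} becomes automatic on $G$, so on $G$ the two measures integrate the same walks against the same $\omega_m$. The only difference is that one uses $F(\varrho^*_{\mbf u\mid\beta}(\ell))$ and the other $F(\varrho^*_{\mbf u\mid\beta}(\ell'))$, where $F(x) := x^c\psi(\log_2 x)$ and $F(0) := 0$.

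Next we bound the pointwise difference. Since $c\in(0,1)$ and $\Vert\psi\Vert_{\Lip}\le1$, the function $F$ is Hölder: $|F(x)-F(y)|\ll|x-y|^{c}$. To see this, note that $x^c$ is $c$-Hölder, and for $x\le y$ we have $x^c|\psi(\log_2x)-\psi(\log_2y)|\ll x^c\min(1,\log(y/x))\ll (y-x)^c$. Jensen's inequality then gives
\[\mb E|F(\varrho^*(\ell))-F(\varrho^*(\ell'))| \ll \big(\mb E|\varrho^*(\ell)-\varrho^*(\ell')|^2\big)^{c/2}.\]
For $\beta\in G$ we have $\beta$ being $T$-positive with $T=e^{40K/\eta}$. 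We telescope from $\ell$ to $\ell'$ using \cref{lem73-first}, applying Cauchy–Schwarz with weights $i^{-2}$ as in \cref{rho-ell-2-bound}. The second moment is then $\ll 2^{3T}\sum_{i\ge \ell} i^2 2^{-ci^{1/4}}\ll 2^{3T-\Omega(\ell^{1/4})}$. Integrating against $\omega_m$, which has total mass $h(m)\ll K$, and summing over $m\le K$, the contribution is $\ll K^2 2^{O(T)-\Omega(\ell^{1/4})}$.

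Finally we choose $K$ slowly growing in $\ell$, for instance with $e^{40K/\eta}\le \ell^{1/8}$. Then both the truncation error $e^{-\Omega(K)}$ and the main error tend to $0$ as $\ell\to\infty$, which gives the Cauchy property. The main obstacle is the Hölder step: $F$ is not Lipschitz near $0$, and the convention $F(0)=0$ must be respected. The $c$-Hölder bound handles this uniformly, including the case in which one of $\varrho^*(\ell),\varrho^*(\ell')$ vanishes, and costs only the exponent $c$. A secondary point to check is that \cref{lemma7point3} applies to the full walk with positivity up to $\ell'$; this is guaranteed by $T(\beta)\le T$.
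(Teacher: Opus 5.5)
Your proposal is correct and follows essentially the same route as the paper. You show $(\mu^{(\ell)})$ is Cauchy in $\operatorname{d}_{\BL}$ via the H\"older bound $|x^c\psi(\log_2 x)-y^c\psi(\log_2 y)|\ll|x-y|^c$, discard atypical walks using the estimates of \cref{x-ell-bounds}, and control the typical walks by telescoping \cref{lem73-first}. The only differences are cosmetic: you truncate at thresholds logarithmic in $\ell$ (mimicking \cref{mu-mutilde-compar}) where the paper uses $m, R(\beta)\le\ell^{\eta_0}$ and $T(\beta)\le\ell^{1/8}$, and you absorb the walks with $\ell<\argmin\beta\le\ell'$ into the truncation instead of bounding them as a separate term.
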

\begin{proof} Since the space of measures is complete with respect to the bounded Lipschitz metric (\cref{completeness-measures}) it is enough to show that the $\mu^{(\ell)}$ are a Cauchy sequence in the bounded Lipschitz norm. For the rest of the section suppose that $\psi \in C(\R/\Z)$ is a test function with $\Vert \psi \Vert_{\Lip} \le 1$.
We first observe the inequality 
\begin{equation}\label{lip-xy} \big| x^c \psi(\log_2 x) - y^c \psi(\log_2 y) \big| \ll |x - y|^c\end{equation} for $x,y \in [0,\infty)$. To prove this, suppose wlog that $y \ge x$ and write $y = x + h$. If $x = O(h)$ then the result is trivial. Otherwise, we have
\[ x^c \big( \psi (\log_2 x) - \psi(\log_2(x + h))\big) = x^c \big( \psi(\log_2 x) - \psi(\log_2 x + O(\frac{h}{x})) \big) \ll x^c \frac{h}{x} \ll h^c\] by the Lipschitz property. Also
\[ \big( (x + h)^c - x^c\big) \psi(\log_2(x + h)) \le \big| (x + h)^c - x^c\big| \ll x^c \big(1 + O(\frac{h}{x}) - 1\big) \ll h^c \big( \frac{h}{x}\big)^{1 - c} \ll h^c,\] and together these statements give \cref{lip-xy}. 
We note also that
\begin{equation}\label{simple-real-var} |x - y|^c \le x^c + y^c.\end{equation}

Let $\ell < \ell'$. Then by the definition \cref{mu-ell-def} and from \cref{lip-xy} we have
\begin{align}\nonumber\big| & \int \psi \, \mathrm{d}\mu^{(\ell)} - \int \psi \, \mathrm{d}\mu^{(\ell')} \big| \\\nonumber & \le \sum_m \int_{\substack{\min \beta = -m \\ \argmin \beta \le \ell}}\Big|  \mb{E} \varrho^*_{\mbf{u} \mid \beta}(\ell)^c \psi (\log_2 \varrho^*_{\mbf{u} \mid \beta}(\ell)) -  \mb{E} \varrho^*_{\mbf{u} \mid \beta}(\ell')^c \psi (\log_2 \varrho^*_{\mbf{u} \mid \beta}(\ell')) \Big| \, \mathrm{d}\omega_m(\beta) \\ \nonumber & \qquad\qquad\qquad\qquad\qquad\qquad + \sum_{m}\int_{\substack{\min \beta = -m \\ \ell < \argmin \beta \le \ell'}} \mb{E}\varrho^*_{\mbf{u} \mid \beta}(\ell')^c \, \mathrm{d}\omega_m(\beta)\\  
& \ll \sum_m \int_{\substack{\min \beta = -m \\ \argmin \beta \le \ell}}  \mb{E} \big|\varrho^*_{\mbf{u} \mid \beta}(\ell) -  \varrho^*_{\mbf{u} \mid \beta}(\ell')\big|^c \, \mathrm{d}\omega_m(\beta) + \sum_{m}\int_{\substack{\min \beta = -m \\ \ell < \argmin \beta \le \ell'}} \mb{E}\varrho^*_{\mbf{u} \mid \beta}(\ell')^c \, \mathrm{d}\omega_m(\beta).\label{key-integral}
\end{align}
To bound the first term we split into two parts. As usual, $R(\beta)$ denotes the least integer so that $\beta$ is $R(\beta)$-bounded, and $T(\beta)$ the least integer such that $\beta$ is $T(\beta)$-bounded. The `main' region of integration is over $m \le \ell^{\eta_0}$ and $\beta$ for which $R(\beta) \le \ell^{\eta_0}$ and $T(\beta) \le \ell^{1/8}$, where here $\eta_0\in (0,\eta)$ is an absolute constant to be specified. On this region, it follows from \cref{lem73-first} with a telescoping sum that $\mb{E} |\varrho^*_{\mbf{u} \mid \beta}(\ell) - \varrho^*_{\mbf{u} \mid \beta}(\ell')| \ll e^{-\Omega(\ell^{1/4})}$, and hence $\mb{E} |\varrho^*_{\mbf{u} \mid \beta}(\ell) - \varrho^*_{\mbf{u} \mid \beta}(\ell')|^c \ll e^{-\Omega(\ell^{1/4})}$ by H\"older's inequality. Since $\omega_m (\Z^{\N}_{\ge -m}) \ll m$, the contribution of the main region of integration to the first term in \cref{key-integral} is $\ll e^{-\Omega(\ell^{1/4})}$.

To estimate the contribution to the first term from the remaining portions, we first apply \cref{simple-real-var}, thereby reducing matters to estimating
\begin{equation}\label{remaining-portion-1} \sum_{m >\ell^{\eta_0}}\int_{\substack{\min \beta = -m \\\argmin \beta  \le \ell }}
\mb{E} \varrho^*_{\mbf{u} \mid \beta}(\tilde\ell)^c  \, \mathrm{d}\omega_m(\beta),\qquad \sum_{m \le \ell^{\eta_0}}\int_{\substack{\min \beta = -m \\
R(\beta) > \ell^{\eta_0}}}
\mb{E} \varrho^*_{\mbf{u} \mid \beta}(\tilde\ell)^c  \, \mathrm{d}\omega_m(\beta),\end{equation} 
and
\begin{equation}\label{remaining-portion-3} \sum_{m \le \ell^{\eta_0}}\int_{\substack{R(\beta) \le \ell^{\eta_0} \\ T(\beta) > \ell^{1/8}}}
\mb{E} \varrho^*_{\mbf{u} \mid \beta}(\tilde\ell)^c 1_{\min \beta = -m} \, \mathrm{d}\omega_m(\beta)\end{equation} for $\tilde\ell \in \{ \ell, \ell'\}$. (We dropped the $\argmin$ conditions in the second estimate in \cref{remaining-portion-1} and in \cref{remaining-portion-3} since we will not need these in the estimations.)

The contribution from the first integral in \cref{remaining-portion-1} is $\ll e^{-\Omega(\ell^{\eta_0})} < \ell^{-10}$ by \cref{exc-bd}.

The contribution from the second integral in \cref{remaining-portion-1} can be estimated using \cref{exc-rm-bd} with $N = \infty$, which gives a bound of 
\begin{equation}\label{mr-bd} \ll \sum_{m \le \ell^{\eta_0}} \sum_{r > \ell^{\eta_0}} r^2 (m + r + 1) e^{-r} \ll e^{-\Omega(\ell^{\eta_0/2})} < \ell^{-10}. \end{equation}

Finally, the contribution from \cref{remaining-portion-3} can be estimated using \cref{exc-mt-bd}, which gives a bound of
\[ \ll \ell^{-\eta/16} \sum_{m \le \ell^{\eta_0}} \sum_{r \le \ell^{\eta_0}} (r(1 + m))^{O(1)} \ll \ell^{-\eta/32},\]
provided that $\eta_0$ is chosen appropriately. This will be the weakest of the various estimates we obtain.

We also need to estimate the second term in \cref{key-integral}. Using \cref{exc-bd}, the contribution from $m > \ell^{\eta_0}$ is $< \ell^{-10}$ similarly to before.

The contribution from $m \le \ell^{\eta_0}$ and $r \ge \ell^{\eta_0}$ is $< \ell^{-10}$ as in the previous estimation of the second integral in \cref{remaining-portion-1}.

The final remaining contribution is at most
\[ \sum_{m \le \ell^{\eta_0}} \sum_{r \le \ell^{\eta_0}} \int_{\substack{\min \beta = -m \\ \ell < \argmin \beta \le \ell'}} \mb{E} \varrho^*_{\mbf{u} \mid \beta}(\ell')^c \, \mathrm{d}\omega_{m} (\beta).\] By \cref{exc-qrm-bd} with $N = \infty$ this is
\[ \ll \ell^{5\eta_0} \sum_{q > \ell} (1 + q)^{\kappa - 3/2} \ll \ell^{5\eta_0 + \kappa - 1/2} < \ell^{-1/10}.\]

Putting everything together, it follows that 
\[ \big| \int \psi \, \mathrm{d}\mu^{(\ell)} - \int \psi \, \mathrm{d}\mu^{(\ell')} \big| \ll \ell^{-\eta/32}.\] Since $\psi$ was an arbitrary Lipschitz test function, it follows that $\dist{\mu^{(\ell)}}{\mu^{(\ell')}} \ll \ell^{-\eta/32}$, which establishes that the $\mu^{(\ell)}$ are indeed a Cauchy sequence.
\end{proof}

The following proposition summarises what we have shown. 

\begin{proposition}\label{main-mu-conv-prop} 
Let measures $\mu_M$ be defined as in \cref{mu-m-def}. Then there is a Borel measure $\mu$ on $\R/\Z$ with $\Vert \mu \Vert_{\BL} = \dist{\mu}{0} < \infty$ such that $\dist{\mu_M}{\mu} \rightarrow 0$ as $M \rightarrow \infty$. Moreover, $\dist{\mu^{(\ell)}}{\mu} \rightarrow 0$ as $\ell \rightarrow \infty$, where the measures $\mu^{(\ell)}$ are defined as in \cref{mu-ell-def}.
\end{proposition}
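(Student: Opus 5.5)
This is essentially a summary of what has been established; the plan is to chain \cref{mu-mutilde-compar}, \cref{mu-ell-mu-m} and \cref{prop124} through the triangle inequality for the bounded Lipschitz distance.

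First, take $\mu$ to be the measure produced by \cref{prop124}. That proposition already gives $\Vert \mu \Vert_{\BL} < \infty$ and $\dist{\mu^{(\ell)}}{\mu} \rightarrow 0$ as $\ell \rightarrow \infty$. This disposes of the final assertion of the present proposition.

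Second, for the convergence of $\mu_M$, write
\[ \dist{\mu_M}{\mu} \le \dist{\mu_M}{\tilde\mu_M} + \dist{\tilde\mu_M}{\mu}. \]
The first term is $\ll e^{-\Omega(M)}$ by \cref{mu-mutilde-compar}. For the second term, \cref{mu-ell-mu-m} gives $\tilde\mu_M = \mu^{(L(M))}$, so it equals $\dist{\mu^{(L(M))}}{\mu}$.

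The only point requiring any care is that $L(M) \rightarrow \infty$ as $M \rightarrow \infty$. This holds because, in the hierarchy \cref{param-heir}, $L(M)$ is an extremely large (increasing) function of $M$. Hence $\dist{\mu^{(L(M))}}{\mu} \rightarrow 0$ by \cref{prop124}. Both terms therefore tend to zero, which proves $\dist{\mu_M}{\mu} \rightarrow 0$.

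I do not expect any real obstacle here, since the analytic work is contained in the earlier lemmas. The one convention to check is that the bounded Lipschitz distance is a genuine metric on the relevant space of measures, so that the triangle inequality applies; this is part of the material recalled in \cref{bl-app}.
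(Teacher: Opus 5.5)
Your proposal is correct and is essentially the paper's own argument: the paper proves the proposition by combining \cref{mu-mutilde-compar}, the identity $\tilde\mu_M = \mu^{(L(M))}$ and \cref{prop124} through the triangle inequality for the bounded Lipschitz distance. The point you flag, $L(M)\rightarrow\infty$, does hold, and monotonicity of $L(M)$ is not needed for it: in the parameter hierarchy $L(M)$ dominates $T(M) = e^{40M/\eta}$.
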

\begin{proof}
This follows by combining \cref{mu-mutilde-compar}, \cref{mu-ell-mu-m} and \cref{prop124}.
\end{proof}

\subsection{Convergence of the measures $\mu'_M$}
In this section we examine the convergence of the sequence of measures $(\mu'_M)_{M = 1}^{\infty}$, where the $\mu'_M$ are defined in \cref{mu-m-prime-def}. The analysis is very similar to that in the previous section, so we restrict ourselves to stating the main results and indicating the points at which the arguments differ in minor details. Analogously to \cref{main-mu-conv-prop}, the main statement is the following.

\begin{proposition}\label{main-mu-prime-conv-prop} 
Let measures $\mu'_M$ be defined as in \cref{mu-m-prime-def}. Then there is a Borel measure $\mu'$ on $\R/\Z$ such that $\Vert \mu' \Vert_{\BL} = \dist{\mu'}{0} < \infty$ and $\dist{\mu'_M}{\mu'} \rightarrow 0$ as $M \rightarrow \infty$. Moreover, $\dist{\mu^{\prime (\ell)}}{\mu'} \rightarrow 0$ as $\ell \rightarrow \infty$, where the measures $\mu^{\prime (\ell)}$ are defined by
\begin{equation}\label{mu-prime-ell-def} \int \psi d \mu^{\prime (\ell)} := \sum_{m \ge 0}\int_{\substack{\min \beta' = -m \\ \argmin \beta' \le \ell}} \mb{E} \tau^*_{\mbf{x} \mid \beta'}(\ell)^c \psi (\log_2 \tau^*_{\mbf{x} \mid \beta'}(\ell)) \, \mathrm{d}\omega'_m(\beta'),\end{equation} where $c = -\frac{\log \log 2}{\log 2}$.
\end{proposition}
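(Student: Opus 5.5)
The plan is to mirror the three-step argument that gave \cref{main-mu-conv-prop}, with $\varrho^*_{\mbf{u}\mid\beta}$, $\omega_m$ and \cref{X-upper-gen,lemma7point3} replaced by $\tau^*_{\mbf{x}\mid\beta'}$, $\omega'_m$ and \cref{Y-upper,lem23}.

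\textbf{Step 1: truncation.} First define $\tilde\mu'_M$ exactly as in \cref{mu-m-ti}: drop the cutoffs $m\le M$ and $1_{\mathscr{W}_{m,M}}$ and keep only $1_{\min\beta'=-m}1_{\argmin\beta'\le L(M)}$. Then $\tilde\mu'_M=\mu'^{(L(M))}$.

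\textbf{Step 2: the analogue of \cref{x-ell-bounds}.} Next prove the four bounds \cref{exc-bd,exc-rm-bd,exc-qrm-bd,exc-mt-bd} for $\int \mb{E}\tau^*_{\mbf{x}\mid\beta'}(\ell)^c\,\mathrm{d}\omega'_m$. This is easier than the $\varrho$ case, because $\tau^*\le 1$ deterministically. By \cref{Y-upper}, whenever $\argmin\beta'=q\le\ell$ we have
\[ \tau^*_{\mbf{x}\mid\beta'}(\ell)^c \ll \bigl(R_{\le\ell}(\beta')(1+q)^{\kappa}\bigr)^c 2^{-cm}. \]
Combining this with the $\omega'_m$ analogues of \cref{argmin-omega-lem,omega-bdedness-est,lem13.10} gives the four estimates, with $2^{-m/4}$ replaced by $2^{-cm/2}$, say. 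The remaining bounds are immediate from $\tau^*\le 1$ together with \cref{omega-bdedness-est} and \cref{lem13.10} for $\omega'_m$. With these in hand, the telescoping argument of \cref{mu-mutilde-compar} goes through verbatim and gives $\dist{\mu'_M}{\tilde\mu'_M}\ll e^{-\Omega(M)}$.

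\textbf{Step 3: the Cauchy property.} The main step is to show that $(\mu'^{(\ell)})$ is Cauchy in $d_{\BL}$, following \cref{prop124} and using \cref{lip-xy,simple-real-var}. The one genuinely new ingredient replaces the $L^2$ bound \cref{lem73-first}. By \cref{y-monotonicity} the sequence $\tau^*(i)$ is monotone, so \cref{lem23} telescopes to
\[ \mb{E}\,|\tau^*(\ell)-\tau^*(\ell')| = \sum_{\ell\le i<\ell'}\mb{E}\bigl(\tau^*(i)-\tau^*(i+1)\bigr) \ll \sum_{i\ge\ell} 2^{-\beta'(i+1)}. \]
On the main region $T(\beta')\le \ell^{1/8}$, positivity gives $\beta'(i)\ge i^{1/2-\eta}-\ell^{1/8}$. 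The sum is therefore $\ll e^{-\Omega(\ell^{1/4})}$, and H\"older converts this into a bound for $\mb{E}|\cdot|^c$.

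The complementary regions are handled exactly as in \cref{remaining-portion-1,remaining-portion-3}: walks with $m$ large, $R(\beta')$ large, or $T(\beta')$ large, using the Step 2 bounds. The second term of \cref{key-integral}, coming from $\ell<\argmin\beta'\le\ell'$, is controlled by the $(1+q)^{\kappa-3/2}$ bound. This yields
\[ d_{\BL}\bigl(\mu'^{(\ell)},\mu'^{(\ell')}\bigr)\ll \ell^{-\Omega(1)}. \]
Completeness (\cref{completeness-measures}) then gives a limit $\mu'$, and $\|\mu'\|_{\BL}<\infty$ follows from the uniform bound $\sum_m \omega'_m\{\min\beta'=-m\}\,2^{-cm/2}<\infty$.

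\textbf{Expected obstacle.} Everything above is routine. The only real care is needed in checking that the $\omega'_m$ analogues of the random-walk lemmas hold, especially \cref{argmin-omega-lem} and \cref{lem13.10} for the lower walk. The paper asserts these hold symmetrically, so I would verify each such analogue individually rather than assume it.
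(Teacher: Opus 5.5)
Your proposal is correct and follows essentially the same route as the paper, whose proof here is only a sketch. The paper makes the same substitutions: the trivial bound $\tau^*\le 1$ and \cref{Y-upper} replace \cref{X-upper-gen} in the analogue of \cref{x-ell-bounds}, and in the Cauchy step \cref{lem23}, $T$-positivity of $\beta'$ and a telescoping sum replace the $L^2$ bound \cref{lem73-first}. Your version is more explicit than the paper's, notably in using the deterministic monotonicity \cref{y-monotonicity} so that the first-moment bound of \cref{lem23} telescopes directly.
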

\begin{proof} (Sketch.)
For the most part the argument is the same as the one in the previous section, replacing $\beta$ by $\beta'$, $\varrho^*_{\mbf{u} \mid \beta}$ by $\tau^*_{\mbf{x} \mid \beta'}$, and $\omega_m$ by $\omega'_m$ throughout. In place of the estimates \cref{X-upper-simple} and \cref{X-non-vanishing} from \cref{X-upper-gen}, we instead use (respectively) the trivial bound $\tau_{\mbf{x} \mid \beta'}(\ell) \le 1$ and \cref{Y-minm-bd}. With very minor modifications this allows us to recover analogues of all the statements in \cref{x-ell-bounds}. Later on in the proof, at the point where we applied a telescoping sum to \cref{lem73-first}, we instead apply \cref{lem23}, the $T$-positivity of $\beta'$, and a telescoping sum, which gives a similar bound $\mb{E} |\tau^*_{\mbf{x} \mid \beta'}(\ell) - \tau^*_{\mbf{x} \mid \beta'}(\ell')| \ll e^{-\Omega(\ell^{1/4})}$ at the appropriate point.
\end{proof}

\subsection{$\mu$ is not zero}\label{mu-not-zero}
In this section we prove that the measure $\mu$ is not zero. Whilst this follows from \cref{thm:main} and the main result of Ford \cite{For08}, it does not take long to provide a direct proof using the machinery we have set up.

Our starting point is \cref{mu-ell-def}. We take $\psi = 1$ and we restrict attention to walks $\beta$ which satisfy $\xi_1 = C$, $\xi_2 = \cdots = \xi_{C+1} = -1$, and which are $R$-bounded and $T$-positive, where $R := C^2$ and $T := e^{C^2}$.  Here $C$ is a sufficiently large constant to be specified later. Denote by $\mathscr{B}_C$ the collection of such walks which additionally satisfy $\min \beta = 0$ (and hence $\argmin \beta = 0$). Ignoring all terms in \cref{mu-ell-def} with $m > 0$, it suffices to prove that there is some choice of $C$ (independent of $\ell$) such that 
\begin{equation}\label{large-bc} \omega_0(\mathscr{B}_C) > 0\end{equation}
and that 
\begin{equation}\label{main-rho-lower} \mb{E} \varrho^*_{\mbf{u} \mid \beta}(\ell)^c \gg 1\end{equation} uniformly for $\beta \in \mathscr{B}_C$ and all $\ell$ (the implied constant may depend on $C$).

We begin with the first statement \cref{large-bc}. Denote by $\mathscr{B}_C^*$ the event $\mathscr{B}_C$ without the $R$-bounded and $T$-positive conditions. By \cref{lemma7.2} we have, for all sufficiently large $N$,
\[ \omega_0(\mathscr{B}^*_C) \gg N^{1/2} \mb{P}\big( \boldbeta \in \mathscr{B}^*_C ,  \min_{1 \le i \le N} \boldbeta(i) \ge 0\big) .\]   The event that $\boldbeta \in \mathscr{B}^*_C$ and that $\min_{1 \le i \le N} \boldbeta(i) \ge 0$ has probability $\frac{e^{-C-1}}{(C+1)!} \mb{P}(\min_{1 \le i \le N - C - 1} \boldgamma(i) \ge 0)$, where $\boldgamma$ is another $\Pois(1) - 1$ walk. This is $\sim \frac{e^{-C-1}}{(C+1)!}h(0) (N - C - 1)^{-1/2} \gg C^{-C - 3/2} N^{-1/2}$. It follows that $\omega_0(\mathscr{B}_C^*) \gg C^{-C-3/2}$. The desired result \cref{large-bc} is immediate from this and \cref{omega-bdedness-est,pos-paths-cor} and the choice of $R$ and $T$.

For \cref{main-rho-lower}, it suffices by H\"older's inequality and \cref{rho-ell-2-bound} to show that 
\begin{equation}\label{main-rho-lower-1} \mb{E} \varrho^*_{\mbf{u} \mid \beta}(\ell) \gg 1\end{equation} and that 
uniformly for $\beta \in \mathscr{B}_C$ and all $\ell$ sufficiently large. Again the implied constant can depend on $C$. Let us recall the definition \cref{x-beta-form}, which is \begin{equation*}\varrho^*_{\mbf{u} \mid \beta}(\ell)  = 2^{-\beta(\ell)} \sum_{\eps_{\le \ell}} 1 \Big( \sum_{i \le \ell; j \in [1 + \xi_i]} \eps_{i,j}2^{-\mbf{u}_{i, j}} \in [1 - 2^{-\ell},1]\Big).\end{equation*} 
We can now prove \cref{main-rho-lower-1}. We restrict to $\eps_{\le \ell}$ satisfying \begin{equation}\label{eps-constraint} \eps_{1,1} = 1,\, \eps_{1,2} = \cdots = \eps_{1, C+1} = 0,\end{equation} and the other $\eps_{i,j}$ can be arbitrary. Note there are no $\eps_{i,j}$ for $2 \le i \le C + 1$ since $1 + \xi_i = 0$ for these $i$. Deterministically, we have that
\[ 0 \le  \sum_{i \ge C+2} \eps_{i,j} 2^{-\mbf{u}_{i,j}} \le \sum_{i \ge C+2} (1 + \xi_i) 2^{1 - i} < \tfrac{1}{10}\] for $C$ large enough, using here the fact that $\boldbeta$ is $C^2$-bounded and so $|\xi_i| \le C^2 i^{\kappa}$. Therefore for any such $\eps$ we have
\begin{equation}\label{individ-lower} \mb{P}\big( \sum_{i \le \ell; j \in [1 + \xi_i]} \eps_{i,j}2^{-\mbf{u}_{i, j}} \in [1 - 2^{-\ell},1]\big) \ge \inf_{x \in [0,\frac{1}{10}]} \mb{P} (2^{-\mbf{u}_{1,1}}  \in 1 - x- [2^{-\ell}]\big) \gg 2^{-\ell}.\end{equation}
(To justify this last step observe that if $f(x)$ is the density function of $2^{-\mbf{u}_{1,1}}$ (where, recall, $\mbf{u}_{1,1}$ is uniform on $[0,1]$) and $g(x)$ is the density function of a uniform $\operatorname{U}[\frac{1}{2},1]$ variable, then a short calculation gives that $f(x)/g(x) \ge 1/2\log 2 > \frac{1}{2}$.) The number of choices for $\eps_{\le \ell}$ satisfying \cref{eps-constraint} is $2^{-C - 1} 2^{\sum_{i \le \ell}(1 + \xi_i)} = 2^{-C - 1} 2^{\ell + \beta(\ell)}$. Summing \cref{individ-lower} over these choices immediately gives \cref{main-rho-lower-1}.\vspace*{8pt}

\subsection{$\mu'$ is not zero}\label{mu-prime-not-zero}

In this subsection we show that $\mu'$ is not zero.

Our starting point is \cref{mu-prime-ell-def}. We take $\psi$ to be the constant function $1$, and we remove all terms from the sum except $m = 0$, noting that they are all non-negative. It thus suffices to prove that

\begin{equation}\label{suff-mu-prime-pos} \int \mb{E} \tau^*_{\mbf{x} \mid \beta'}(\ell)^c \, \mathrm{d}\omega'_0(\beta') \gg 1\end{equation} uniformly in $\ell$. Note that both the $\min \beta' = 0$ condition and the $\argmin$ condition are trivial here since $\omega'_0$ is supported on $\Z_{\ge 0}^{\N}$ and the minimum of $\beta'$ occurs at $0$. In order to prove this we introduce the representation function 
\begin{equation*} r_{\mbf{x} \mid \beta', \ell}(t) := \# \{ \eps  = (\eps_{i,j})_{i \le \ell; j \in [1 - \xi'_i]}: \sum_{i \le \ell; j \in [1 - \xi'_i]} \eps_{i,j} \mbf{x}_{i,j} = t\}.\end{equation*} Thus by definition \cref{y-beta-form} we have
\begin{equation}\label{15-4-1} \tau^*_{\mbf{x} \mid \beta'}(\ell) = 2^{\beta'(\ell) - \ell} | \Supp(r_{\mbf{x} \mid \beta',\ell})|.\end{equation}
Note also that 
\begin{equation}\label{first-moment-15} \sum_t r_{\mbf{x} \mid \beta',\ell}(t) = 2^{\ell - \beta'(\ell)}.\end{equation}
We proceed via a second moment argument essentially identical to the one in the proof of \cref{7lem-claim-1} (which is unsurprising since our task here is closely analogous to the one there). By opening up the second moment we have
\begin{equation}\label{second-moment-15-expand} \mb{E} \sum_t r_{\mbf{x} \mid \beta',\ell}(t)^2 = \sum_{\eps,\eps'} \mb{P}\big( \sum_{i \le \ell;j \in [1 - \xi'_i]} (\eps_{i,j} - \eps'_{i,j}) \mbf{x}_{i,j} = 0 \big).\end{equation}
We foliate the sum over $\eps, \eps'$ according to the largest index $r$ for which there is some $s$ such that $\eps_{r,s} \neq \eps'_{r,s}$. For any such pair $\eps, \eps'$, the inner probability is $\ll 2^{-r}$; this follows by revealing all $\mbf{x}_{i,j}$ except $\mbf{x}_{r,s}$ and applying the anti-concentration estimate \cref{non-conc-n}.  The number of pairs $\eps, \eps'$ with a given value $r$ is $\le 2^{2\sum_{i \le r} (1 - \xi'_i) + \sum_{r < i \le \ell} (1 - \xi'_i)} = 2^{\ell - \beta'(\ell) + r - \beta'(r)}$. Therefore from \cref{second-moment-15-expand} we obtain
\[\mb{E} \sum_t r_{\mbf{x} \mid \beta',\ell}(t)^2 \ll 2^{\ell - \beta'(\ell)} \sum_{r \le \ell} 2^{-\beta'(r)}.\]
If we assume that $\beta'$ is $T$-positive (for some $T$) then $\beta'(r) \ge -T + r^{1/4}$ and so 
\[\mb{E} \sum_t r_{\mbf{x} \mid \beta',\ell}(t)^2 \ll 2^{\ell - \beta'(\ell) + T}.\] In particular, with probability at least $\frac{1}{2}$ in $(\mbf{x} \mid \beta')$ we have
\[ \sum_t r_{\mbf{x} \mid \beta',\ell}(t)^2 \ll 2^{\ell - \beta'(\ell) + T}.\]
For any such $(\mbf{x} \mid \beta')$ it follows from this, \cref{15-4-1,first-moment-15} that 
$\tau^*_{\mbf{x} \mid \beta'}(\ell) \gg 2^{-T}$, and so 
\[ \mb{E}\tau^*_{\mbf{x} \mid \beta'}(\ell)^c \gg 2^{-cT}. \]
The desired result \cref{suff-mu-prime-pos} now follows from the fact that 
\[ \omega'_0 \{ \beta' : \beta' \mbox{ is $T$-positive} \} > 0\] for sufficiently large $T$, which is an immediate consequence of \cref{lem13.10}.

\subsection{Conclusion of the proof of \cref{thm:main}}

It is now a fairly straightforward matter to complete the proof of \cref{thm:main}. We begin with \cref{pk-walks-5}, which we repeat here:

\begin{equation}\label{pk-walks-6}  p(k) =  (c_0 + o(1))  k^{-\delta} (\log k)^{-3/2}  \big( f_{M}(\xi) +  O ( e^{-\Omega(M)}) \big).
\end{equation}
This was established under the assumption that $M \le M_0(k)$ for some fixed function $M_0$ with $\lim_{k \rightarrow \infty} M_0(k) = \infty$.
We showed in \cref{fm-tildefm-approx,fm-conv} that 
\begin{equation}\label{fm-fm-tilde} \Vert f_M - \tilde f_M \Vert_{\infty} \ll e^{-\Omega(M)}\end{equation}
where
\begin{equation}\label{g-fm}
 \tilde f_M = g \ast \mu_M \ast \mu'_M\end{equation}
with $g \in C^{\infty}(\R/\Z)$ being the function in \cref{g-funct-def}. We have also shown that $\mu_M \rightarrow \mu$ and $\mu'_M \rightarrow \mu'$ in bounded Lipschitz norm. To complete the proof we set $f := c_0 g \ast \mu \ast \mu'$. Then $f \in C^{\infty}(\R/\Z)$ by two applications of \cref{smooth-conv-meas}. By \cref{pk-walks-6,fm-fm-tilde,g-fm}, it suffices to show that 
\begin{equation*} \Vert g \ast \mu_M \ast \mu'_M - g \ast \mu \ast \mu' \Vert_{\infty} \rightarrow 0.\end{equation*}
To show this, we use the triangle inequality and \cref{simple-lip-conv} to bound
\begin{align*}
\Vert g \ast \mu_M \ast \mu'_M - g \ast \mu \ast \mu'\Vert_{\infty}   & \le \Vert  g \ast \mu_M \ast (\mu'_M - \mu') \Vert_{\infty} +  \Vert g \ast (\mu - \mu_M) \ast \mu' \Vert_{\infty} \\ & \le \dist{\mu_M'}{\mu'} \Vert g \ast \mu_M \Vert_{\Lip} +  \dist{\mu_M}{\mu} \Vert g \ast \mu'\Vert_{\Lip} .
\end{align*}
We have $\dist{\mu_M'}{\mu'}, \dist{\mu_M}{\mu} \rightarrow 0$, whilst $\Vert g \ast \mu_M \Vert_{\Lip}$ and $\Vert g \ast \mu' \Vert_{\Lip}$ are bounded uniformly in $M$ by \cref{smooth-conv-meas}. This concludes the proof of \cref{thm:main}.

\part{Description of the measures}\label{part5}
We have completed the proof of \cref{thm:main}, but the measures $\mu, \mu'$ have only been described as rather complicated limits over random walks satisfying certain technical conditions. Our task in this, the final part of the main paper, is to remove these conditions (and the random walks!) and prove the characterisations of these measures in \cref{prop16.2,sec17-main}.

\section{Description of \texorpdfstring{$\mu$}{}}\label{sec16}

The proof of \cref{prop16.2} is the somewhat more difficult of these two tasks. At the moment, we have characterised $\mu$ as the (bounded Lipschitz) limit of measures $\mu^{(\ell)}$, defined as in \cref{mu-ell-def}. Once again, in this section $\eta_0 \in (0,\eta)$ denotes a sufficiently small absolute constant.

\subsection{From walks to sequences}
Recall the definition \cref{hm-def} of $h(m)$ and that (see \cref{first-rand-walk-pos} (2)) $h(m) = (2/\pi)^{1/2} (m + 1)$ for $m \ge 0$. By a slight abuse of notation we now write $h(x) := (2/\pi)^{1/2}(x + 1)^+$ for all $x \in \R$. (That is, $h(x) = (2/\pi)^{1/2}(x + 1)$ for $x > -1$, and $0$ otherwise.)
We observe the relation
\begin{equation}\label{recur-relat-2} e^{-a} h(s - a) = \int^{s+1}_a e^{-u} h(s + 1 - u) \, \mathrm{d}u\end{equation}
for $a \le s+1$.

\begin{definition}
Denote by $\Omega_{m}$ the space of all sequences $x = (x_i)_{i = 1}^{\infty}$ of real numbers with $\lim_{i \rightarrow \infty} x_i = \infty$ and satisfying $x_i \le i + m$ for all $i$. 
\end{definition}
We consider the $\sigma$-algebra on $\Omega_{m}$ generated by sets of the form $\{ x\in \Omega_{m}: (x_1,\dots, x_r) \in B\}$ for some $r$ and for some Borel measurable $B \subset \R^r$, which we can and do assume is contained in $\{ (x_1,\dots, x_r) : x_i \le i + m \; \mbox{for $i = 1,\dots, r$}\}$. We define
\begin{equation}\label{nu-m-def} \nu_m \{ x : (x_1,\dots, x_r) \in B\} := \int_{B} h(m + r - x_r) e^{-x_r} 1_{0 < x_1 < \cdots < x_r}\, \mathrm{d}x_1 \cdots \mathrm{d}x_r \end{equation} 
We remark that without the factor $h(m + r - x_r)$, this would be the usual density functional for the Poisson process. It turns out that $\nu_m$ extends to give a well-defined measure on $\Omega_m$ with the stated $\sigma$-algebra. This is a continuous variant of the Doob conditioning construction given in \cref{72lemma}.

\begin{lemma}
The measure $\nu_m$ is well-defined and $\nu_m (\Omega_{m}) := h(m)$.
\end{lemma}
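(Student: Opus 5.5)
The plan is to imitate the proof of \cref{72lemma}: verify a Kolmogorov consistency condition for the finite-dimensional set functions \cref{nu-m-def}, and then extend. Concretely, let $\nu_m^{(r)}$ be the measure on $\R^r$ with density $h(m+r-x_r)e^{-x_r}1_{0<x_1<\cdots<x_r}$. This density vanishes unless $x_r < m+r+1$. We would like the support inside $\{x_i \le i+m\}$ for all $i \le r$, but this needs an argument: the earlier coordinate constraints $x_i\le i+m$ for $i<r$ are not implied by $x_r\le r+m$ alone. One resolution is to read $B$ as being contained in the constraint region, as the paper does. The cleaner route is to check that the density, after integrating out the last coordinate, reproduces the density at level $r-1$ (restricted to the appropriate region). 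That is exactly what \cref{recur-relat-2} gives.

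\emph{Step 1 (consistency).} For fixed $0<x_1<\cdots<x_{r-1}$ with $x_{r-1}\le m+r$, we need
\[ \int_{x_{r-1}}^{\infty} h(m+r-x_r)e^{-x_r}\,\mathrm{d}x_r = h(m+r-1-x_{r-1})e^{-x_{r-1}}. \]
Take $s=m+r-1$ and $a=x_{r-1}$ in \cref{recur-relat-2}, whose hypothesis is $a\le s+1$. The integrand vanishes for $x_r>s+1$, so the left side equals $\int_a^{s+1}e^{-u}h(s+1-u)\,\mathrm{d}u = e^{-a}h(s-a)$, as required. For $r=1$ the same computation with $a=0$ gives total mass $\int_0^{m+1} h(m+1-u)e^{-u}\,\mathrm{d}u = h(m)$, and this is the claimed value $\nu_m(\Omega_m)=h(m)$. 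Note also that the density is automatically zero off the set where $x_i<i+m+1$ for each $i$. With the harmless boundary convention, this is consistent with the constraint region, so the family $(\nu_m^{(r)})$ is a projective family of finite measures of common mass $h(m)$.

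\emph{Step 2 (extension).} After normalising by $h(m)$, the Daniell--Kolmogorov extension theorem gives a probability measure on $\R^{\N}$, hence a measure $\nu_m$ of mass $h(m)$ agreeing with \cref{nu-m-def} on cylinders. It remains to check that $\nu_m$ is carried by $\Omega_m$.

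\emph{Step 3 (support on $\Omega_m$).} This is the expected obstacle, and the condition to be established is $x_i\to\infty$. The plan is to interpret the measure as a Doob transform. Under $\nu_m^{(r)}/h(m)$, the law of $x_r$ has density proportional to $h(m+r-x)e^{-x}x^{r-1}/(r-1)!$ (after integrating $x_1,\dots,x_{r-1}$ over the simplex, noting the simplex constraints). Since $x_i\le i+m+1$ already holds a.s., it suffices to bound $\nu_m\{x_r\le K\}$ for fixed $K$ as $r\to\infty$. This is at most $\int_0^K h(m+r)e^{-x}x^{r-1}/(r-1)!\,\mathrm{d}x \ll (m+r)K^r/(r-1)!\to0$. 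Because the $x_r$ are increasing, this gives $x_r\to\infty$ $\nu_m$-almost surely, which completes the proof.
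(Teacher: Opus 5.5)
Your Step 1 has an off-by-one error, and it breaks the consistency argument. Since $h(y)=(2/\pi)^{1/2}(y+1)^+$, the integrand $h(m+r-x_r)e^{-x_r}$ vanishes only for $x_r\ge m+r+1$, as you correctly say at the outset. It does not vanish for $x_r>s+1=m+r$, as you assert in Step 1. On $(m+r,m+r+1)$ it is strictly positive, and in fact
\[ \int_{x_{r-1}}^{\infty} h(m+r-x_r)e^{-x_r}\,\mathrm{d}x_r = h(m+r-1-x_{r-1})e^{-x_{r-1}} + (2/\pi)^{1/2}e^{-(m+r+1)} \qquad (x_{r-1}\le m+r). \]
So the untruncated densities you define on $\R^r$ do not form a projective family, and Step 2 cannot be applied to them. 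Their level-one mass is also $h(m)+(2/\pi)^{1/2}e^{-(m+2)}$, not $h(m)$. Your $r=1$ computation gives $h(m)$ only because you integrate $x_1$ over $(0,m+1]$ rather than over the support $(0,m+2)$ of the density you wrote down.

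Similarly, your claim that the level-$r$ density vanishes off $\{x_i<i+m+1 \text{ for all } i\}$ is false for $i<r$: only $x_r$ is constrained. Moreover, the difference between $x_i<i+m+1$ and the condition $x_i\le i+m$ defining $\Omega_m$ is a unit strip carrying positive mass. It is not a harmless boundary convention.

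The fix is the route you set aside: put the constraint into the definition. That is, use the density $h(m+r-x_r)e^{-x_r}1_{0<x_1<\cdots<x_r}\prod_{i\le r}1_{x_i\le i+m}$; equivalently, evaluate \cref{nu-m-def} only on $B\subset\{x_i\le i+m\}$, as the paper does. Then:
\begin{itemize}
\item $x_r$ ranges over $(x_{r-1},r+m]$, which is exactly the range of integration in \cref{recur-relat-2} with $a=x_{r-1}$, $s=m+r-1$, so consistency becomes exact;
\item the total mass is $\int_0^{m+1}h(m+1-u)e^{-u}\,\mathrm{d}u=h(m)$;
\item the extended measure is automatically carried by $\{x_i\le i+m \text{ for all } i\}$.
\end{itemize}
This is precisely the paper's argument.

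Your Step 3, showing $x_r\to\infty$ almost everywhere via $\nu_m\{x_r\le K\}\ll (m+r+1)K^r/r!$, goes through unchanged for the truncated density. It is a genuine supplement, since the paper does not verify this part of membership in $\Omega_m$.
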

\begin{proof} The essential point is to check compatibility of \cref{nu-m-def} across different values of $r$, that is to say we must show that the measures of $\{ x : (x_1,\dots, x_{r-1}) \in B\}$ and $\{ x : (x_1,\dots, x_{r}) \in B \times [0,r + m]\}$ coincide whenever $B \subset \prod_{i = 1}^{r-1} [0, i + m] \subset \R^{r - 1}$. To this end, if $r \ge 2$ we have
\begin{align*}
\nu_m  \{ x : & (x_1,\dots, x_{r}) \in B \times [0, r + m]\} \\ & = \int_{\R^{r-1}} 1_{B}(x_1,\dots, x_{r-1})\big( \int_{x_{r-1}}^{r + m} h(m+r - x_r) e^{-x_r} \, \mathrm{d}x_r \big) \, \mathrm{d}x_1 \dots\mathrm{d}x_{r-1} \\ & = \int_{\R^{r-1}} 1_{B}(x_1 ,\dots, x_{r-1}) h(m + r - 1 - x_{r-1}) e^{-x_{r-1}}\,\mathrm{d}x_1 \dots \mathrm{d}x_{r-1} \\ & = \nu_m \{ x : (x_1,\dots, x_{r-1}) \in B\}, \end{align*} where the middle step follows from \cref{recur-relat-2} with $a = 0$ and $s = m + r - 1 - x_{r-1}$ upon making the substitution $u = x_r - x_{r - 1}$. If $r = 1$ we instead take the lower limit of the inner integral down to $0$; this confirms that $\nu_m(\Omega_m) = \nu_m \{ x : x_1 \in [0, m + 1]\} = h(m)$.
\end{proof}

Given $x = (x_i)_{i = 1}^{\infty} \in \Omega_m$, we may define the walk $\beta_{x}$ by $\beta_{x}(j) = \# \{ i : x_i \le  j\} - j$. Thus, the number of elements of $x$ in $(j - 1, j]$ is $\beta(j) - \beta(j-1) + 1$. Observe that the condition that an increasing sequence $x$ lies in $\Omega_m$ is equivalent to $\min \beta_{x} \ge -m$. Indeed, if $x \in \Omega_m$ then $x_i \le i + m$ for all $i$, and so $x_{j-m} \le j$ for $j > m$, which means that $\# \{ i : x_i \le j\} \ge j - m$ and so $\beta_{x}(j) \ge -m$. Conversely if $x \notin \Omega_m$ then $x_j > j + m$ for some $j$, which means that $\# \{ i : x_i \le j + m\} \le j - 1$ and so $\beta_x(j+m) \le -m - 1$.

\begin{lemma}\label{lem125} Let $F : \Omega_m \rightarrow \C$ be an integrable function. Then
\[ \int_{\Omega_m} F(x) \, \mathrm{d}\nu_m(x) = \int_{\Z^{\N}_{\ge -m}} \mb{E} F(\mbf{u} \mid \beta) \, \mathrm{d}\omega_m(\beta).\] Here, $(\mbf{u} \mid \beta)$ is the upper Poisson process $\mbf{u}$ conditioned to $\boldbeta = \beta$ as described in \cref{subsec-53}.
\end{lemma}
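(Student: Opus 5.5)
The plan is to reduce to cylinder functions and then extend by a monotone class argument. Both sides are integrals against (finite, positive) measures on spaces whose $\sigma$-algebras are generated by finite-dimensional cylinder sets. So it suffices to verify the identity for $F = 1_{\{x : (x_1,\dots,x_r) \in B\}}$ with $B$ Borel in $\R^r$. From there one passes to general integrable $F$ by the $\pi$–$\lambda$ theorem and standard approximation, via simple functions and monotone convergence applied separately to the positive and negative parts of the real and imaginary parts.

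Both measures have total mass $h(m)$: by the previous lemma $\nu_m(\Omega_m)=h(m)$, and $\omega_m(\Z^\N_{\ge -m}) = h(m)$. Here one needs the correspondence $x \leftrightarrow \beta_x$ and the observation, made just before the statement, that $x\in\Omega_m$ iff $\min\beta_x \ge -m$. This observation guarantees that the map $\beta \mapsto (\mbf{u}\mid\beta)$ lands in $\Omega_m$ almost surely. Indeed, $\omega_m$-a.e. $\beta$ has $\beta(j)+j\to\infty$, which one can check from $h(m+\beta(\ell))$ weighting, or simply via $\lim x_i=\infty$.

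For the cylinder computation, fix $r$ and $B$. Since $(x_1,\dots,x_r)$ has $x_r \le r+m$, the event depends on the conditioned process only on $[0, r+m]$. I will therefore refine the cylinder according to the walk. Decompose by $J \ge r+m$ (any fixed integer $J \ge r+m$) and by the values $c_1,\dots,c_J$ of $\beta$ up to $J$. Using \cref{omega-meas-def}, the right-hand side becomes
\[ \sum_{c} h(m+c_J)\,\mb{P}(\boldbeta(i)=c_i,\ i\le J)\,\mb{P}\big((\mbf{u}_1,\dots,\mbf{u}_r)\in B \mid \boldbeta_{\le J}=c\big). \]
Undoing the conditioning, this equals $\mb{E}\big[h(m+\boldbeta(J))1_{\min_{i\le J}\boldbeta(i)\ge -m}1_{(\mbf{u}_1,\dots,\mbf{u}_r)\in B}\big]$ for the genuine rate 1 Poisson process $\mbf{u}$, using the coupling of \cref{section5}.

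Now express this expectation through the Poisson density. On $\{\mbf{u}_r = x_r\}$ with $\mbf{u}_1<\dots<\mbf{u}_r$ having joint density $e^{-x_r}$, the remaining points form an independent rate 1 Poisson process on $(x_r,\infty)$. We must show that, conditioned on the first $r$ points with $x_i\le i+m$, the quantity $\mb{E}[h(m+\boldbeta(J))1_{\min_{i \le J}\boldbeta(i)\ge-m}]$ equals $h(m+r-x_r)$. This is the main step.

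To prove it, I would show that $H(x_r) := h(m+r-x_r)$ is harmonic for the continuous-time process. By \cref{recur-relat-2}, conditioning on the next arrival $x_{r+1}$ gives
\[ \int_{x_r}^{m+r+1} e^{-(u-x_r)}h(m+r+1-u)\,\mathrm{d}u = h(m+r-x_r), \]
which is exactly the compatibility computation in the proof of the previous lemma. Since $h$ vanishes when the barrier is crossed, one can iterate arrival by arrival until the number of arrivals exceeds $J+m$. At that point the event $\min_{i\le J}\beta\ge -m$ is determined, and $h(m+\boldbeta(J))$ matches the $h$ evaluated at the last arrival.

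This matching is where I expect care to be needed. The discrete $h(m+\beta(J))$ at integer time $J$ must coincide with the continuous-time martingale $h(m+k-x_k)$ at the appropriate stopping point. I would handle it by applying the continuous harmonicity in the reverse direction. Start from $h(m+\beta(J))$, stop at time $J$, and use \cref{recur-relat-2} with $a$ equal to the gap to the last arrival before $J$ and $s$ chosen accordingly, which expresses $h(m+\beta(J))$ as $h(m+k-x_k)$ integrated over the next arrival. The two expressions then agree, giving the identity on cylinders and hence the lemma.
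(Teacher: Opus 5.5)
Your argument is correct in outline, but it is organised the opposite way round from the paper's, and the step you flag as delicate is exactly the one the paper's organisation avoids.

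\textbf{The two routes.} You test the identity on coordinate cylinders $\{(x_1,\dots,x_r)\in B\}$, which are natural for $\nu_m$. On the $\omega_m$ side you must then prove the harmonicity statement
$\mb{E}\big[h(m+\boldbeta(J))1_{\min_{i\le J}\boldbeta(i)\ge -m}\mid \mbf{u}_1,\dots,\mbf{u}_r\big]=h(m+r-\mbf{u}_r)$,
which involves a random number of arrivals.

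The paper instead tests on walk cylinders. It splits the $\nu_m$ side according to $\beta_x(j)=c_j$, $j\le L$. This event is a cylinder in the first $r+1=L+c_L+1$ coordinates: $x_1,\dots,x_r$ lie in prescribed unit intervals and $x_{r+1}>L$. The paper integrates out the single coordinate $x_{r+1}$ over $(L,m+r+1]$, and one application of \cref{recur-relat-2} with $a=L$, $s=m+r$ gives $e^{-L}h(m+c_L)$. The remaining volume of ordered points in the unit boxes is $\prod_{j\le L}1/(c_j-c_{j-1}+1)!$, which together with $e^{-L}$ is exactly $\mb{P}(\boldbeta(j)=c_j,\ j\le L)$. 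No martingale or stopping argument is needed.

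Your route buys a conceptual picture: $\nu_m$ is the Doob $h$-transform of the Poisson process, and $h(m+N(t)-t)$ killed at the barrier is harmonic. The cost is the extra argument below.

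\textbf{The matching step.} As written this step is loose. Iterating to a fixed number of arrivals such as $J+m$ cannot work. Here $\beta(J)=N(J)-J$, where $N(J)$ is the number of arrivals in $[0,J]$, and $N(J)$ is not determined by any fixed number of arrivals. Also, $h(m+\beta(J))$ is not the value of $h(m+k-x_k)$ at any arrival. What is needed is the following.

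Put $M_k:=h(m+k-x_k)1_{\{x_i\le i+m,\ i\le k\}}$; this is a martingale in $k$ by your one-step identity. Let $\sigma:=N(J)+1$ be the index of the first arrival after $J$. This is a stopping time with $\sigma>r$, since $x_r\le r+m\le J$.

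Your reverse use of \cref{recur-relat-2}, with $a=J$ and $s=m+k$ on $\{N(J)=k\}$, gives $\mb{E}\big[h(m+\beta(J))1_{\min_{i\le J}\beta(i)\ge -m}1_{N(J)=k}\big]=\mb{E}[M_{k+1}1_{N(J)=k}]$. Summing over $k$, the left side of your harmonicity claim equals $\mb{E}[M_\sigma\mid x_{\le r}]$.

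Finally, $\mb{E}[M_\sigma\mid x_{\le r}]=M_r$ requires optional stopping at an unbounded time. This is justified by dominated convergence: $M_{\sigma\wedge n}\ll m+\sigma+1$, and conditionally on $x_{\le r}$ we have $\sigma-r-1\samedist\Pois(J-x_r)$. Equivalently, you can use that $m+1+N(t)-t$, stopped when it hits $0$, is a continuous-time martingale. With this supplied your proof is complete.
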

\begin{proof} By a limiting argument we may assume that $F$ depends only on the elements of $x$ in $[0,L]$ for some $L$ (the functions we consider will in any case have this property). We evaluate the LHS by splitting up according to the values of $\beta_{x}(j)$, $j = 1,\dots, L$. Note that $\beta_{x}(j)= c_j$ for $j = 1,\dots, L$ is the condition that $x_{i} \in (j-1, j]$ for $j-1 + c_{j-1} < i \le j + c_j$, $j \le L$, and also that $x_{L + c_L + 1} > L$. Therefore, writing $r := c_L + L$ for brevity, we have
\begin{align*} \int_{\Omega_m} & F(x) \prod_{j = 1}^L 1_{\beta_{x}(j) = c_j} \, \mathrm{d}\nu_m(x) \\ & =  \int_{\substack{x_i \in (j-1,j] \; \mbox{\scriptsize for} \\ j - 1 + c_{j-1} < i \le j + c_j ,\\ L < x_{r+1} \le m + r + 1}} F(x_1,\dots, x_{r})  h(m + r + 1 - x_{r + 1}) 1_{x_1 < \dots < x_r} e^{-x_{r + 1}} \, \mathrm{d}x_1 \dots \mathrm{d}x_{r+1}\\ &  = e^{-L} h(m + c_L )\int_{\substack{x_i \in (j-1,j] \; \mbox{\scriptsize for} \\ j - 1 + c_{j-1} < i \le j + c_j } } F(x_1,\dots, x_{r}) 1_{x_1 < \dots < x_r} \, \mathrm{d}x_1 \dots \mathrm{d}x_{r} \\ & = e^{-L} h(m + c_L ) \prod_{j = 1}^L \frac{1}{(c_j - c_{j-1} + 1)!} \mb{E}F(\mbf{u} \mid \beta),\end{align*} where $\beta$ is any walk with $\beta(j) = c_j$ for $j \le L$ (it does not matter which). Here, to get the middle step we integrated out the $x_{r+1}$ variable and applied \cref{recur-relat-2}. This is exactly \[\omega_m\{ \beta : \beta(j) = c_j \; \mbox{for $j = 1,\dots, L$}\} \mb{E}F(\mbf{u} \mid \beta),\]  where here we used the definition \cref{omega-meas-def}, recalling that the increments of $\boldbeta$ are $\Pois(1) - 1$. Summing over all choices of $c_1,\dots, c_L$ gives the result.
\end{proof}

\subsection{Description of the measure $\mu$}

We turn now to the main analysis and proof of \cref{prop16.2}. The argument will proceed as follows. In \cref{main-mu-conv-prop}, we have already characterised $\mu$ as the BL-limit of the measures $\mu^{(\ell)}$ defined in \cref{mu-ell-def}. We have
\begin{equation}\label{mu-ell-def-rpt} \int \psi \, \mathrm{d} \mu^{(\ell)} := \sum_{m \ge 0}\int_{\min_{0 \le i \le \ell} \beta(i) = -m} \mb{E} \varrho^*_{\mbf{u} \mid \beta}(\ell)^c \psi (\log_2 \varrho^*_{\mbf{u} \mid \beta}(\ell)) \, \mathrm{d}\omega_m(\beta).\end{equation} 
Here, and for the rest of the section, $c = -\frac{\log \log 2}{\log 2}$. Note that \cref{mu-ell-def-rpt} differs slightly in appearance from \cref{mu-ell-def}; we have replaced the condition $(\min \beta = -m) \cap (\argmin \beta \le \ell)$ by $\min_{0 \le i \le \ell} \beta(i) = -m$, which is a seemingly weaker condition. However there is no contribution to \cref{mu-ell-def-rpt} from any walks with $\min \beta < -m$, since $\omega_m$ is supported on walks with $\min \beta \ge -m$, so the two definitions are equivalent.

Define $\mu^{(\ell)}_*$ by
\begin{equation}\label{mu-ell-star-def} \int \psi \, \mathrm{d}\mu^{(\ell)}_* :=  \big(\frac{2}{\pi}\big)^{1/2}\mb{E} (\ell - \mbf{u}_{\ell})^+\varrho_{\mbf{u}}(\ell)^c \psi(\log_2 \varrho_{\mbf{u}}(\ell)),\end{equation} where $\varrho_{\mbf{u}}(\ell)$ is defined as in \cref{x-stat-def}. To prove \cref{prop16.2}, it is enough to show that $\dist{\mu^{(\ell)}}{\mu^{(\ell)}_{*}} = o_{\ell \rightarrow \infty}(1)$. We will do this in steps, specifying intermediate measures $\mu_i^{(\ell)}$, $i = 0,1,\dots, 5$, with $\mu_0^{(\ell)} := \mu^{(\ell)}$ and $\mu_5^{(\ell)} := \mu_{*}^{(\ell)}$, and show that $\dist{\mu^{(\ell)}_{i}}{\mu^{(\ell)}_{i -1}} = o_{\ell \rightarrow \infty}(1)$ for $i = 1,\dots, 5$; an application of the triangle inequality then concludes the argument. For the remainder of the analysis, $\psi$ denotes an arbitrary function on $\R/\Z$ with $\Vert \psi \Vert_{\Lip} \le 1$. Then our task is to prove that 
\begin{equation}\label{mu-mu-circ-close}  \big| \int \psi \, \mathrm{d}\mu_i^{(\ell)} - \int \psi \, \mathrm{d}\mu_{i-1}^{(\ell)}\big| = o_{\ell \rightarrow \infty}(1),\end{equation} uniformly in $i \in \{1,\dots, 5\}$ and $\psi$.\vspace*{10pt}

\emph{Step 1.}  First we compare $\mu^{(\ell)}$ to the measure $\mu_1^{(\ell)}$ in which we restrict the integral in \cref{mu-ell-def-rpt} to walks $\beta$ which are suitably bounded, positive and for which $m \le \ell^{\eta_0}$. More precisely, define $\mu_1^{(\ell)}$ by
\[ \int \psi \, \mathrm{d} \mu_1^{(\ell)} := \sum_{m \le \ell^{\eta_0}}\int_{\substack{\min_{0 \le i \le \ell} \beta(i) = -m \\ R(\beta) \le \ell^{\eta_0} \\ T(\beta) \le \ell^{1/8}}} \mb{E} \varrho^*_{\mbf{u} \mid \beta}(\ell)^c \psi (\log_2 \varrho^*_{\mbf{u} \mid \beta}(\ell)) \, \mathrm{d}\omega_m(\beta).\]
We claim that \cref{mu-mu-circ-close} holds for $i = 1$, with a bound of $\ll \ell^{-\eta/32}$ on the RHS. To prove this claim, we first restrict \cref{mu-ell-def-rpt} to $m \le \ell^{\eta_0}$. The error in doing this has already been handled via the first estimate in \cref{remaining-portion-1}. The rest of the error comes from the contribution of $\beta$ with
$\min \beta = -m$, $m \le \ell^{\eta_0}$, and $\argmin \beta \le \ell$ but either $R(\beta) > \ell^{\eta_0}$ or $T(\beta) > \ell^{1/8}$; that these contributions are appropriately small follows in exactly the same way we bounded the second sum in \cref{remaining-portion-1} and the sum in \cref{remaining-portion-3}. 

\vspace*{10pt}

\emph{Step 2.} To obtain $\mu_2^{(\ell)}$ from $\mu_1^{(\ell)}$ we replace the two copies of $\varrho^*_{\mbf{u} \mid \beta}(\ell)$ by $\varrho_{\mbf{u} \mid \beta}(\ell)$ (which is defined in \cref{rho-alt}). Note that for $\varrho_{\mbf{u} \mid \beta}(\ell)$ to be defined we need \cref{lim-betaupper-j-assump} to be satisfied, that is to say $\lim_{i \rightarrow \infty}(i + \beta(i)) = \infty$. This will be the case for all walks under consideration here, which (since we can restrict to the support of $\omega_m$) have $\min \beta = -m$. Moreover, by the definition of $T$-positive walk and the fact that $T(\beta) \le \ell^{1/8}$, we have $- \ell^{1/8} + i^{1/4} \le \beta(i) \le \ell^{1/8} + i^{3/4}$. Therefore the quantity $\ell'$, defined to be minimal such that $\ell' + \beta(\ell') \ge \ell$, will satisfy
\begin{equation}\label{ell-prime-ell} \ell' \asymp \ell .\end{equation}
We thus define $\mu_2^{(\ell)}$ by
\[ \int \psi \, \mathrm{d} \mu_2^{(\ell)} := \sum_{m \le \ell^{\eta_0}}\int_{\substack{\min_{0 \le i \le \ell} \beta(i) = -m \\ R(\beta) \le \ell^{\eta_0} \\ T(\beta) \le \ell^{1/8}}} \mb{E} \varrho_{\mbf{u} \mid \beta}(\ell)^c \psi (\log_2 \varrho_{\mbf{u} \mid \beta}(\ell)) \, \mathrm{d}\omega_m(\beta).\]
By \cref{lip-xy}, and since $\omega_m(\Z^{\N}_{\ge -m}) = h(m) \ll 1 + m$, in order to establish \cref{mu-mu-circ-close} with $i = 2$ it is sufficient to show
\begin{equation}\label{suff-step2} \mb{E}\big| \varrho_{\mbf{u} \mid \beta}(\ell) - \varrho^*_{\mbf{u} \mid \beta}(\ell) \big|^c \ll \ell^{-10},\end{equation} 
uniformly for $\beta$ with $T(\beta) \le \ell^{1/8}$. However from \cref{lem73-first,lem73-second}, Cauchy-Schwarz, and the triangle inequality, for such walks $\beta$ we have 
\[ \mb{E}\big| \varrho_{\mbf{u} \mid \beta}(\ell) - \varrho^*_{\mbf{u} \mid \beta}(\ell) \big| \ll e^{-\Omega(\ell^{1/4})}.\]
Here we used \cref{ell-prime-ell} in the application of \cref{lem73-second}. The required statement \cref{suff-step2} then follows using H\"older's inequality.\vspace*{10pt}

\emph{Step 3.} To obtain $\mu_3^{(\ell)}$ from $\mu_2^{(\ell)}$ we remove the constraints $R(\beta) \le \ell^{\eta_0}$ and $T(\beta) \le \ell^{1/8}$ again. However we retain the constraint $\beta(\ell) \ge 0$ (which is implied by $T(\beta) \le \ell^{1/8}$, since $\beta(\ell) \ge -\ell^{1/8} + \ell^{1/2 - \eta} > 0$). Thus we define
\begin{equation}\label{mu-ell-3} \int \psi \, \mathrm{d} \mu_3^{(\ell)} := \sum_{m \le \ell^{\eta_0}}\int_{\substack{\min_{0 \le i \le \ell} \beta(i) = -m\\ \beta(\ell) \ge 0}} \mb{E} \varrho_{\mbf{u} \mid \beta}(\ell)^c \psi (\log_2 \varrho_{\mbf{u} \mid \beta}(\ell)) \, \mathrm{d}\omega_m(\beta).\end{equation} 
To establish \cref{mu-mu-circ-close} with $i = 3$, the argument is similar to the case $i = 1$. Bounding $\psi$ pointwise by $1$, it suffices to bound
\begin{equation}\label{undo-1} \sum_{m \le \ell^{\eta_0}} \int_{\substack{\min_{0 \le i \le \ell} \beta(i) = -m \\ R(\beta) > \ell^{\eta_0}}} \mb{E}\varrho_{\mbf{u} \mid \beta}(\ell)^c \, \mathrm{d}\omega_m(\beta)\end{equation}
and
\begin{equation}\label{undo-2} \sum_{m \le \ell^{\eta_0}} \int_{\substack{\min_{0 \le i \le \ell} \beta(i) = -m \\ R(\beta) \le  \ell^{\eta_0} \\ T(\beta) > \ell^{1/8}}} \mb{E}\varrho_{\mbf{u} \mid \beta}(\ell)^c \, \mathrm{d}\omega_m(\beta).\end{equation}
For \cref{undo-1} we use \cref{rho-moment-bd} and \cref{omega-bdedness-est}; \cref{undo-1} may then be bounded exactly as in \cref{mr-bd}, and is seen to be $\ll \ell^{-10}$. For \cref{undo-2}, we note that we have an exact analogue of \cref{exc-mt-bd} for $\varrho_{\mbf{u} \mid \beta}(\ell)$, namely 
\[ \int_{\substack{\min \beta = -m \\ R(\beta) = r \\ T(\beta) > T}} \mb{E}\varrho_{\mbf{u} \mid \beta}(\ell)^c\, \mathrm{d}\omega_m(\beta) \ll r^{O(1)}(1 + m)^{O(1)} T^{-\eta/2}.\] The proof is the same, using the other estimates in \cref{X-upper-gen}. We can then proceed exactly as in the estimation of \cref{remaining-portion-3}. This concludes the proof that \cref{mu-mu-circ-close} holds for $i = 3$.\vspace*{8pt}

Using \cref{lem125}, we can rewrite the definition \cref{mu-ell-3} of $\mu_3^{(\ell)}$. For a sequence $x$, denote by $S_{m}$ the condition that $\# (x \cap [0,i]) \ge i - m$ for $i \le \ell$, or equivalently $x_{i - m} \le i$ for all $m < i \le \ell$, and $\tilde S$ the condition that $\# (x \cap [0,\ell]) \ge \ell$, or equivalently $x_{\ell} \le \ell$. (These definitions also depend on $\ell$, but this is fixed through the argument so we suppress this.) We include $S_{-1}$ in the definition, with $S_{-1}$ being the empty set.

Now (deterministically in $\mbf{u}$ conditioned to $\boldbeta = \beta$) we have that $\min_{1 \le i \le \ell} \beta(i) \ge -m$ iff $\# ((\mbf{u} \mid \beta) \cap [0,i]) \ge i - m$ for $i \le \ell$, which is so if and only if $(\mbf{u} \mid \beta) \in S_{m}$. Therefore we have (deterministically) \begin{equation}\label{careful-support} 1_{S_{m} \setminus S_{m - 1}}(\mbf{u} \mid \beta) = 1_{\min_{0 \le i \le \ell} \beta(i) = -m}.\end{equation} 
Also (again deterministically) we have that $\beta(\ell) \ge 0$ iff $\# ((\mbf{u} \mid \beta) \cap [0, \ell]) \ge \ell$, which is so if and only if $(\mbf{u} \mid \beta) \in \tilde S$. Therefore
\begin{equation}\label{careful-support-2} 1_{\tilde S}(\mbf{u} \mid \beta) = 1_{\beta(\ell) \ge 0}.\end{equation} 
Thus, if for $m \ge 0$ we define
\begin{equation}\label{fm-defn} F_m(x) := \varrho_x(\ell)^c \psi (\log_2 \varrho_x(\ell))1_{S_{m} \setminus S_{m - 1}}(x)1_{\tilde S}(x)\end{equation} (with $\varrho_x(\ell)$ defined as in \cref{x-stat-def}) and apply \cref{lem125}, then it follows using \cref{mu-ell-3} and \cref{careful-support,careful-support-2} that we have

\[ \int \psi \, \mathrm{d} \mu_3^{(\ell)} = \sum_{m \le \ell^{\eta_0}}\int F_m(x)  \, \mathrm{d}\nu_m(x).\]

Since $F_m$ depends only on $x_1,\dots, x_{\ell}$ we can write in the definition \cref{nu-m-def} of $\nu_m$, obtaining
\begin{align} \nonumber \int \psi \, \mathrm{d} \mu_3^{(\ell)} & = \sum_{m \le \ell^{\eta_0}}\int F_m(x) h(m + \ell - x_{\ell}) e^{-x_{\ell}} 1_{0 < x_1 < \cdots < x_{\ell}}   \, \mathrm{d}x_1 \cdots \mathrm{d}x_{\ell} \\ & = \sum_{m \le \ell^{\eta_0}} \mb{E} F_m(\mbf{u}) h(m + \ell - \mbf{u}_{\ell}).\label{another-mu}\end{align}
Here, as usual, $\mbf{u}$ is the upper process (a rate $1$ Poisson process on $[0,\infty)$).\vspace*{10pt}

\emph{Step 4.} For this step, we replace $h(m + \ell - \mbf{u}_{\ell})$ in \cref{another-mu} by $(\frac{2}{\pi})^{1/2} (\ell - \mbf{u}_{\ell})^+$, that is we define
\begin{equation}\label{mu-ell-4} \int \psi \, \mathrm{d} \mu_4^{(\ell)} = \big(\frac{2}{\pi}\big)^{1/2}\sum_{m \le \ell^{\eta_0}}\mb{E} (\ell - \mbf{u}_{\ell})^+ F_m(\mbf{u})  .\end{equation}

For this we recall that $h(x) = (2/\pi)^{1/2} (x + 1)^+$, which implies that  
\[ h(m + \ell - \mbf{u}_{\ell}) = O(m+1) +  \big(\frac{2}{\pi}\big)^{1/2} (\ell - \mbf{u}_{\ell})^+.\]  (Here consider the cases $\ell - \mbf{u}_\ell \ge 0$ and $\ell - \mbf{u}_{\ell} < 0$ separately; in the latter case, the LHS is zero unless $m + \ell - \mbf{u}_{\ell} \in (-1,m]$). 

Thus, to show \cref{mu-mu-circ-close} with $i = 4$, it is enough to show that the contribution from the $O(m+1)$ term is small. Since $\eta_0 \lll \frac{1}{10}$, it therefore suffices to show
\begin{equation}\label{suff-i4} \mb{E} |F_m(\mbf{u})| \ll (1 + m)^{2}\ell^{-1/2}\end{equation} for all $m \le \ell^{\eta_0}$.
We prove this by conditioning on the embedded walk $\boldbeta$ of $\mbf{u}$. The average on the LHS of \cref{suff-i4} is 
\[ \int \mb{E} |F_m(\mbf{u} \mid \beta)|\, \mathrm{d}\mb{P}(\beta),\] where $\mb{P}$ denotes measure on the path space of $\boldbeta$. By the definition \cref{fm-defn} of $F_m$ and \cref{careful-support,careful-support-2} (and since $|\psi| \le 1$), this is
\[ \ll \int_{\min_{0 \le i \le \ell} \beta(i) = -m, \beta(\ell) \ge 0}  \mb{E} \varrho_{\mbf{u} \mid \beta}(\ell)^c\, \mathrm{d}\mb{P}(\beta).\] By \cref{rho-moment-bd} (noting here that $\beta(\ell) \ge 0$ gives $\ell' \le \ell$), this is 
\[ \ll \int_{\min_{0 \le i \le \ell} \beta(i) = -m}  R_{\le \ell}(\beta)^2 \, \mathrm{d} \mb{P}(\beta)= \sum_r r^2 \mb{P}(R_{\le \ell}(\boldbeta) = r, \; \min_{0 \le i \le \ell} \boldbeta(i) = -m).\] 
By \cref{walks-bd-min-est}, this is 
\[ \ll \ell^{-1/2} \sum_{r} r^2 (m + r + 1) e^{-r} \ll (1 + m)^{2} \ell^{-1/2},\]
which is exactly \cref{suff-i4}. This completes the analysis of the case $i = 4$.\vspace*{10pt}

\emph{Step 5.} For the final step, we begin with the claim that if we extend the sum over $m$ in \cref{mu-ell-4} to all $m \ge 0$ then we obtain $\int \psi \, \mathrm{d} \mu_5^{(\ell)} = \int \psi \, \mathrm{d} \mu_*^{(\ell)}$, where $\mu_*^{(\ell)}$ is defined in \cref{mu-ell-star-def}. Indeed, from the definition \cref{fm-defn} of $F_m$ and a telescoping sum we have
\[
 \sum_{m \ge 0}\mb{E} (\ell - \mbf{u}_{\ell})^+ F_m(\mbf{u}) =    \mb{E} (\ell - \mbf{u}_{\ell})^+\varrho_{\mbf{u}}(\ell)^c \psi(\log_2 \varrho_{\mbf{u}}(\ell)) 1_{\tilde{S}}(\mbf{u}).
\]
However, the cutoff $1_{\tilde S}(\mbf{u})$ is equivalent to $1_{\mbf{u}_{\ell} \le \ell}$ and so is redundant, and this establishes the claim.

Therefore to prove \cref{mu-mu-circ-close} when $i = 5$, it is enough to show that
\begin{equation*} \sum_{m > \ell^{\eta_0} }\mb{E} (\ell - \mbf{u}_{\ell})^+ F_m(\mbf{u}) = o_{\ell \rightarrow \infty}(1).\end{equation*}
Once again we sample using the embedded walk. By the definition \cref{fm-defn} of $F_m$ and \cref{careful-support,careful-support-2}, it is enough to bound
\begin{equation}\label{to-bound-16} \sum_{m > \ell^{\eta_0}} \int_{\substack{\min_{0 \le i \le \ell} \beta(i) = -m \\ \beta(\ell) \ge 0}}  \mb{E}(\ell - (\mbf{u} \mid \beta)_{\ell})^+\varrho_{\mbf{u} \mid \beta}(\ell)^c\, \mathrm{d}\mb{P}(\beta) .\end{equation}
Here, for clarity we note that $(\mbf{u} \mid \beta)_{\ell}$ is the $\ell$th element of $(\mbf{u} \mid \beta)$ when listed in increasing order.
Recall that, in the context of \cref{X-upper-gen}, $\ell'$ is defined to be minimal so that $\ell' + \beta(\ell') \ge \ell$. Since $\beta(\ell) \ge 0$, we have $\ell' \le \ell$. Since all steps of $\beta$ are $\ge -1$, for any $s \ge 0$ we have $\beta(\ell' + s) \ge \ell - \ell' - s$, and so in particular $\min_{\ell' \le i \le \ell} \beta(i) \ge 0$, and hence $\min_{0 \le i \le \ell} \beta(i) = \min_{0 \le i \le \ell'} \beta(i)$.

Denote by $R_{\le \ell}(\beta)$ the least integer $R$ for which $\beta$ is $R$-bounded to length $\ell$. We use the bounds \cref{rho-moment-bd,rho-nonvanishing-bd}. Since $\ell' \le \ell$, these give $\mb{E} \varrho_{\mbf{u} \mid \beta}(\ell) \ll R_{\le \ell}(\beta)^2$ and  $\mb{P}(\varrho_{\mbf{u} \mid \beta}(\ell) \ne 0) \ll \ell^3 2^{-m}$ respectively. By H\"older's inequality, 
\[ \mb{E} \varrho_{\mbf{u} \mid \beta}(\ell)^c \le R_{\le \ell}(\beta)^{2c} \mb{P}(\varrho_{\mbf{u} \mid \beta}(\ell) \ne 0)^{1 - c} \le R_{\le \ell}(\beta)^2 \ell^2 2^{-m/3}.\]
Using the trivial bound $(\ell - (\mbf{u} \mid \beta)_{\ell})^+ \le \ell$, \cref{to-bound-16} is bounded by
\[ \ell^3 \sum_{m > \ell^{\eta_0}} 2^{-m/3} \sum_{r} r^2 \mb{P} \big( R_{\le \ell}(\boldbeta) = r, \, \min_{0 \le i \le \ell} \boldbeta(i) = -m\big).\]
By \cref{walks-bd-min-est}, this is bounded by
\[ \ll \ell^3 \sum_{m \ge \ell^{\eta_0}} 2^{-m/3} \sum_r r^2 (m + r + 1)e^{-r}  \ll e^{-\Omega(\ell^{\eta_0})}.\]
This completes the proof of \cref{mu-mu-circ-close} for $i = 5$, and thus the whole proof of \cref{prop16.2}.

\section{Description of \texorpdfstring{$\mu'$}{}}\label{sec17}

In this section we establish \cref{sec17-main}, which is a description of the measure $\mu'$. Recall that we have currently identified $\mu'$ as the bounded Lipschitz limit of the measures $\mu^{\prime (\ell)}$, defined in \cref{mu-prime-ell-def}, as $\ell \rightarrow \infty$. Throughout this section, $\ell$ will denote a large integer parameter.  The reader may also want to recall the definition of the lower process $\mbf{x}$, defined just before \cref{sec17-main}, and the conditioned lower process $(\mbf{x} \mid \beta')$, which is described in \cref{subsec-53}. Throughout the section we will assume that $\beta'$ is a walk with increments $\xi'_i$ bounded above by $1$ for which \cref{lim-beta-j-assump} holds. Given $\ell$ and $\beta'$, we denote by $\ell'$ the least integer such that $\ell' - \beta'(\ell') \ge \ell$, as in the statement of \cref{Y-upper-nonstar}. In some cases where $\beta'$ is not clear from context, we will write this as $\ell'(\beta')$ (which of course still depends on $\ell$). 

\subsection{The first $\ell$ arrivals} In this subsection we assemble some ingredients for the main argument, having to do with the distribution of the first $\ell$ arrivals of $(\mbf{x} \mid \beta')$. 

\begin{lemma} \label{lem172} We have the following statements.
\begin{enumerate}
\item $\beta'(\ell) > 0$ if and only if $\ell' > \ell$;
\item The distribution of $\tau^*_{\mbf{x} \mid \beta'}(\ell)$ depends only on the values of $\beta'(1),\dots, \beta'(\ell')$;
\item We have $\log_2 (\mbf{x} \mid \beta')_{\ell}   = \ell +  \beta'(\ell') + O(1 +  |\xi'_{\ell'}|)$.
\end{enumerate}
\end{lemma}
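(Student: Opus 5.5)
The three items are all direct consequences of the block structure of $(\mbf{x}\mid\beta')$ from \cref{subsec-53}, so the plan is to unwind the definitions. The organising fact is that $j \mapsto j - \beta'(j) = \sum_{i \le j}(1-\xi'_i)$ is non-decreasing, since $1 - \xi'_i \ge 0$ for a lower walk. Moreover, the block $(i-1,i]$ of the auxiliary process $(\mbf{s}\mid\beta')$ carries exactly $1-\xi'_i$ points, and these are mapped by the non-decreasing map $\phi$ to $\mbf{x}_{i,1},\dots,\mbf{x}_{i,1-\xi'_i}$. Hence the elements of $(\mbf{x}\mid\beta')$ with $i \le j$ are precisely its first $j - \beta'(j)$ elements in increasing order, as already observed before \cref{tau-y-relation}.

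\emph{Item (1).} By monotonicity and the minimality in the definition of $\ell'$, we have $\ell' > \ell$ if and only if $\ell - \beta'(\ell) < \ell$, that is, if and only if $\beta'(\ell) > 0$.

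\emph{Item (2).} By \cref{y-beta-form}, $\tau^*_{\mbf{x}\mid\beta'}(\ell)$ is a function of $\beta'(\ell)$ and of the variables $\mbf{x}_{i,j}$ with $i \le \ell$. These are independent, with $\mbf{x}_{i,j} = \phi(\mbf{s}_{i,j})$ and $\mbf{s}_{i,j}$ uniform on $(i-1,i]$, and their number in block $i$ is $1-\xi'_i$. So the law of $\tau^*_{\mbf{x}\mid\beta'}(\ell)$ is determined by $\beta'(1),\dots,\beta'(\ell)$. When $\ell' \ge \ell$, which by item (1) includes every case with $\beta'(\ell) > 0$, this set is contained in $\beta'(1),\dots,\beta'(\ell')$, and we are done.

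The remaining case $\ell' < \ell$ (so $\beta'(\ell) \le 0$) is the one point I expect to need care. Here $\tau^*(\ell)$ involves the first $\ell - \beta'(\ell) \ge \ell$ arrivals, and in general it genuinely uses blocks $\ell'+1,\dots,\ell$. I would therefore first check how item (2) is invoked later: either only under $\beta'(\ell) > 0$, or with $\tau^*$ evaluated at $\ell'$ via \cref{tau-y-relation}. I would then state and prove it in that form. If no such reduction is available, the best available conclusion in this case is dependence on $\beta'(1),\dots,\beta'(\max(\ell,\ell'))$.

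\emph{Item (3).} By minimality of $\ell'$,
\[ (\ell'-1) - \beta'(\ell'-1) < \ell \le \ell' - \beta'(\ell'). \]
So the $\ell$-th arrival lies in block $\ell'$, and equals $\phi(\mbf{s}_{\ell',j})$ for some $j$ with $\mbf{s}_{\ell',j} \in (\ell'-1,\ell']$. By \cref{lemma53} this gives $\log_2(\mbf{x}\mid\beta')_\ell = \ell' + O(1)$. Finally, the left-hand side of the display above equals $\ell' - \beta'(\ell') - (1 - \xi'_{\ell'})$, so the display gives
\[ 0 \le \ell' - \beta'(\ell') - \ell < 1 - \xi'_{\ell'}. \]
Hence $\ell' = \ell + \beta'(\ell') + O(1 + |\xi'_{\ell'}|)$, and combining the two estimates yields the claim.
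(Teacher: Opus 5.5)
Your arguments for (1) and (3) are the paper's. For (1) you use monotonicity of $j\mapsto j-\beta'(j)$ together with minimality of $\ell'$. For (3) you locate $(\mbf{s}\mid\beta')_\ell$ in $(\ell'-1,\ell']$ via $(\ell'-1)-\beta'(\ell'-1)<\ell\le \ell'-\beta'(\ell')$, then apply \cref{lemma53}. Your bookkeeping of the $O(1+|\xi'_{\ell'}|)$ term is in fact cleaner than the printed version.

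On (2), you are right that the statement as written, with $\tau^*$, is false when $\ell'<\ell$. For example, take $\ell=2$ and $\xi'_1=-1$, so $\ell'=1$ and both points of block $1$ map to $1$ under $\phi$. Then $\tau^*_{\mbf{x}\mid\beta'}(2)$ equals $3/4$ if $\xi'_2=1$, but takes values in $\{1/2,5/8\}$ if $\xi'_2=0$.

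However, neither of your proposed repairs is the intended one. The paper's proof shows that the law of the first $\ell$ arrivals $(\mbf{s}\mid\beta')_1,\dots,(\mbf{s}\mid\beta')_\ell$ depends only on $\beta'(1),\dots,\beta'(\ell')$. That is, it proves the claim for the unstarred $\tau_{\mbf{x}\mid\beta'}(\ell)$, and the star in the statement should be dropped. The unstarred version is exactly what is needed in Step 3 of \cref{sec17}, where after Step 2 the integrand is $\tau_{\mbf{x}\mid\beta'}(\ell)$.

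Restricting to $\beta'(\ell)>0$ does not give this. There $\tau^*_{\mbf{x}\mid\beta'}(\ell)=\tau_{\mbf{x}\mid\beta'}(\ell-\beta'(\ell))$ with $\ell-\beta'(\ell)<\ell$, which is a different quantity. Evaluating $\tau^*$ at $\ell'$ also gives a different quantity, namely $\tau_{\mbf{x}\mid\beta'}(\ell'-\beta'(\ell'))$.

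The correct version follows directly from your own analysis in (3). The first $\ell$ arrivals consist of all points of blocks $1,\dots,\ell'-1$, together with the $\ell-\bigl((\ell'-1)-\beta'(\ell'-1)\bigr)$ smallest of the $1-\xi'_{\ell'}$ uniform points in block $\ell'$. Their joint law is therefore a function of $\xi'_1,\dots,\xi'_{\ell'}$ alone. This holds in all cases, with no split according to the sign of $\beta'(\ell)$.
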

\begin{proof}
(1) Indeed if $\beta'(\ell) > 0$ then $\ell - \beta'(\ell) < \ell$ and so, since $i - \beta'(i)$ is a non-decreasing function of $i$, we have $\ell' > \ell$. Conversely if $\ell' > \ell$ then $\ell$ does not satisfy $\ell - \beta'(\ell) \ge \ell$, which means $\beta'(\ell) > 0$.

To prepare for (2) and (3) we begin by reviewing the coupling of the lower process $\mbf{x}$ to the lower walk $\boldbeta'$, as described in \cref{subsec-53}. The key features of the coupling are as follows. There is an auxiliary rate $1$ Poisson process $\mbf{s}$ such that $\boldbeta'(i) = i - \#(\mbf{s} \cap [0,i])$, and the conditioned process $(\mbf{s} \mid \beta') := (\mbf{s} \mid \boldbeta' = \beta')$ consists of sampling $1 - \xi'_i$ uniform random elements from $(i-1,i]$, which are then ordered. We then define $\mbf{x} = \phi(\mbf{s})$, where $\phi : [0,\infty) \rightarrow \N$ is defined in \cref{phis-def}. In particular to establish (2) it is enough to show that the distribution of the first $\ell$ elements $(\mbf{s} \mid \beta')_i$, $i = 1,\dots, \ell$, depends only on the values $\beta'(1),\dots, \beta'(\ell')$. This statement is essentially the purpose of $\ell'$. Indeed, since $\# ((\mbf{s} \mid \beta') \cap [0,i]) = i - \beta'(i)$, we see that (deterministically)
\begin{equation}\label{s-bounds-1} \ell'-1 < (\mbf{s} \mid \beta')_{\ell} \le \ell',\end{equation} and (2) then follows.

Finally we turn to (3). Recall that by \cref{phi-tilde-x-size-log} we have $\log_2 \mbf{x} = \mbf{s} + O(1)$. Thus to prove (3) it suffices to show that 
\begin{equation}\label{s-suff-x} (\mbf{s} \mid \beta')_{\ell}   =  \ell + \beta'(\ell') + O(1 + |\xi'_{\ell'}|).\end{equation}
From the definition of $\ell'$ we have \begin{equation}\label{second-ell} (\ell' - 1) - \beta'(\ell' - 1) \le \ell - 1.\end{equation} Using \cref{s-bounds-1,second-ell} (and $\ell' - \beta'(\ell') \ge \ell$) one may check that 
\begin{equation*} \beta(\ell') - 1\le (\mbf{s} \mid \beta')_{\ell} - \ell \le \beta'(\ell' - 1),\end{equation*} and so (since $\beta'(\ell) = \beta(\ell'-1) + \xi'_{\ell'}$) the desired statement \cref{s-suff-x} follows immediately.
\end{proof}

Recall that the embedded walk $\boldbeta'$ has $1 - \Pois(1)$ steps. The following lemma records that $\ell'(\boldbeta')$ is usually $O(\ell)$.

\begin{lemma}\label{dyadic-ell-dash} Suppose that $j \ge 1$ is an integer. Then we have 
\[ \mb{P}(2^j \ell \le \ell'(\boldbeta') < 2^{j+1} \ell) \ll 2^j \ell e^{-2^{j - 4} \ell}.\]
\end{lemma}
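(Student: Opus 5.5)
The plan is to translate the event $\ell'(\boldbeta') \ge 2^j\ell$ into a large deviation statement for a sum of independent $\Pois(1)$ variables, and then apply a Chernoff bound.

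Write $\boldbeta'(i) = i - \mathbf{P}_i$, where $\mathbf{P}_i = \#(\mbf{s}\cap[0,i])$ is a sum of $i$ i.i.d.\ $\Pois(1)$ variables, so $\mathbf{P}_i \samedist \Pois(i)$. By definition $\ell'$ is the least integer with $\ell' - \boldbeta'(\ell') \ge \ell$, that is, with $\mathbf{P}_{\ell'} \ge \ell$. Since $\mathbf{P}_i$ is non-decreasing in $i$, we have
\[ \ell' \ge 2^j\ell \iff \ell'>2^j\ell-1 \iff \mathbf{P}_{2^j\ell - 1} \le \ell - 1. \]
So it suffices to bound $\mb{P}(\Pois(m) < \ell)$ with $m := 2^j\ell - 1 \ge 2^{j-1}\ell$ for $j\ge1$. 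This gives the left endpoint condition; the upper restriction $\ell' < 2^{j+1}\ell$ we simply drop.

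For the lower tail, take $\lambda = 1$ in the Chernoff bound:
\[ \mb{P}(\Pois(m) \le \ell) \le e^{\ell}\,\mb{E} e^{-\Pois(m)} = \exp\big(\ell - m(1 - e^{-1})\big). \]
Since $1 - e^{-1} > 1/2$, this is at most $\exp(\ell - m/2)$. For $j \ge 3$ we have $m/2 - \ell \ge 2^{j-2}\ell - \ell - 1/2 \ge 2^{j-3}\ell - 1/2$, which is stronger than the claimed bound $2^j\ell e^{-2^{j-4}\ell}$. For the small cases $j = 1,2$ we have $m \ge 2\ell - 1$, and the bound above is too weak, so we instead optimise $\lambda$: $\mb{P}(\Pois(m)\le \ell)\le \exp(-m\,h(\ell/m))$ with $h(x) = x\log x - x + 1$. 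For $x \le 1/2 + o(1)$ this gives $h(x) \ge 0.15 > 2^{j-4}\ell/m$ in the relevant range. For instance, when $j=1$ we need the exponent to be at least $\ell/8$, and $m\,h(1/2)\approx 2\ell\cdot 0.153 > \ell/8$. Any remaining bounded range of $\ell$ is absorbed by the prefactor $2^j\ell$ and the implied constant.

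I expect no real obstacle. The only points needing care are getting the monotonicity equivalence between $\ell'$ and the Poisson count exactly right (including the off-by-one), and checking the constants in the small-$j$ cases. The generous factor $2^j\ell$ in the statement presumably reflects the authors' route, which is a union bound over $i\in[2^j\ell,2^{j+1}\ell)$ of $\mb{P}(\boldbeta'(i)\ge i-\ell)$, each term controlled by a Chernoff bound. That route would work equally well.
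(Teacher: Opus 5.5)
Your proof is correct, and it decomposes the event differently from the paper. The paper argues by contrapositive that $\ell'\ge 2^j\ell\ge 2\ell$ forces $\boldbeta'(\ell'-1)\ge \ell'/2>(\ell'-1)/2$. This uses $\ell-1\ge(\ell'-1)-\boldbeta'(\ell'-1)$, which holds by minimality of $\ell'$. It then takes a union bound over the possible values $t=\ell'-1\in[2^j\ell-1,2^{j+1}\ell-1)$ and bounds each term by $\mb{P}(\Pois(t)\le t/2)\le e^{-t/8}$. So the prefactor $2^j\ell$ is just the number of terms, and the upper constraint $\ell'<2^{j+1}\ell$ is what keeps the union finite; this is close to the route you guessed in your last paragraph.

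You instead use that $i-\boldbeta'(i)$ is a nondecreasing, $\Pois(i)$-distributed count. This turns the event into the exact identity $\mb{P}(\ell'(\boldbeta')\ge 2^j\ell)=\mb{P}(\Pois(2^j\ell-1)\le \ell-1)$. As a result you need no union bound and no upper constraint, and you bound the whole tail without the factor $2^j\ell$. The paper's version, in exchange, needs only one uniform tail estimate at the fixed proportion $t/2$ and no case analysis.

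Your small-$j$ case split is in fact unnecessary. If you keep $1-e^{-1}\approx 0.632$ rather than rounding it to $1/2$, the $\lambda=1$ Chernoff bound gives exponent
$\ell-(2^j\ell-1)(1-e^{-1})\le 1-(2^j(1-e^{-1})-1)\ell\le 1-2^{j-4}\ell$ for every $j\ge 1$. Also, your remark that the crude bound fails for $j=2$ is not right: there $\ell-m/2=-\ell+1/2$, which already suffices. This is harmless.
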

\begin{proof} Suppose that $\beta'(\ell'-1) <  \ell'/2$. Then, since $\ell - 1 \ge (\ell'-1) - \beta'(\ell'-1) > \ell'/2 - 1$, we have $\ell' < 2\ell$. Thus if $\ell'(\beta') \ge 2^{j} \ell$ then $\beta'(\ell'-1) \ge  \ell'/2 > (\ell' - 1)/2$. 

Now for any fixed $t$ we have $\mb{P}(\boldbeta'(t) \ge t/2) = \mb{P}(\Pois(t) \le t/2) \le e^{-t/8}$ by the first estimate in \cite[Theorem A.1.15]{alon-spencer}. Thus if $j \ge 1$ then
\[ \mb{P}(2^j\ell \le \ell'(\boldbeta') < 2^{j + 1} \ell) \le \sum_{2^j\ell - 1 \le t < 2^{j+1}\ell - 1} \mb{P}(\boldbeta'(t) \ge t/2)  \ll 2^j \ell e^{-2^{j-4} \ell}.\qedhere
\]
\end{proof}

\subsection{The main argument}

Recall from \cref{main-mu-prime-conv-prop} that $\mu'$ is the bounded Lipschitz limit of the measures $\mu^{\prime (\ell)}$ defined by
\begin{equation}\label{mu-prime-ell-def-rpt} \int \psi \, \mathrm{d} \mu^{\prime (\ell)} := \sum_{m \ge 0}\int_{\min_{0 \le i \le \ell} \beta'(i) = -m} \mb{E} \tau^*_{\mbf{x} \mid \beta'}(\ell)^c \psi (\log_2 \tau^*_{\mbf{x} \mid \beta'}(\ell)) \, \mathrm{d}\omega'_m(\beta').\end{equation} 
Here, $\omega'_m$ is defined as in \cref{sec7.1}, and $c = -\frac{\log \log 2}{\log 2}$ as usual. Note that walks $\beta'$ in the support of $\omega'_m$ are lower, that is to say they have steps bounded above by $1$ rather than below by $-1$. 

Define a measure $\mu^{\prime(\ell)}_*$ by
\begin{equation}\label{mu-prime-ell-star-def} \int \psi \, \mathrm{d}\mu^{\prime(\ell)}_* := (\frac{2}{\pi})^{1/2}  \mb{E} (\log_2 \mbf{x}_{\ell} - \ell)^+ \tau_{\mbf{x}}(\ell)^c \psi(\log_2 \tau_{\mbf{x}}(\ell)).\end{equation} \cref{sec17-main} is equivalent to the statement that $\mu^{\prime (\ell)}_* \rightarrow \mu'$ in bounded Lipschitz norm.

As in the last section, we will prove this by specifying intermediate measures $\mu_i^{\prime (\ell)}$, $i = 0,1,\dots, 5$, with $\mu_0^{\prime (\ell)} := \mu^{\prime (\ell)}$ and $\mu_5^{\prime (\ell)} := \mu_{*}^{\prime (\ell)}$, and show that 
\begin{equation}\label{mu-mu-prime-circ-close} \dist{\mu^{\prime (\ell)}_{i}}{\mu^{\prime (\ell)}_{i -1}} = o_{\ell \rightarrow \infty}(1)\end{equation} for $i = 1,\dots, 5$, and this is enough to complete the proof.\vspace*{10pt}

\emph{Step 1.} In what follows we let $\psi$ be an arbitrary function with $\Vert \psi \Vert_{\Lip} \le 1$. First we replace $\mu^{\prime (\ell)}$ by $\mu_1^{\prime (\ell)}$ in which we restrict the integral in \cref{mu-prime-ell-def-rpt} to walks $\beta'$ which are suitably bounded, positive and for which $m \le \ell^{\eta_0}$. More precisely, define $\mu_1^{\prime (\ell)}$ by
\[ \int \psi \, \mathrm{d} \mu_1^{\prime (\ell)} := \sum_{0 \le m \le \ell^{\eta_0}}\int_{\substack{\min_{0 \le i \le \ell} \beta'(i) = -m \\ R(\beta') \le \ell^{\eta_0} \\ T(\beta') \le \ell^{1/8}}} \mb{E} \tau^*_{\mbf{x} \mid \beta'}(\ell)^c \psi (\log_2 \tau^*_{\mbf{x} \mid \beta'}(\ell)) \, \mathrm{d}\omega'_m(\beta').\]
That \cref{mu-mu-prime-circ-close} holds for $i = 1$ follows in essentially the same way as before.\vspace*{10pt}

\emph{Step 2.} To obtain $\mu_2^{\prime (\ell)}$ from $\mu_1^{\prime (\ell)}$ we replace the two copies of $\tau^*_{\mbf{x} \mid \beta'}(\ell)$ by $\tau_{\mbf{x} \mid \beta'}(\ell)$, where here the relevant definitions are \cref{y-beta-form,tau-unstar-def}. Thus we define $\mu_2^{\prime (\ell)}$ by
\begin{equation} \int \psi \, \mathrm{d} \mu_2^{\prime (\ell)} := \sum_{m \le \ell^{\eta_0}}\int_{\substack{\min_{0 \le i \le \ell} \beta'(i) = -m \\ R(\beta') \le \ell^{\eta_0} \\ T(\beta') \le \ell^{1/8}}} \mb{E} \tau_{\mbf{x} \mid \beta'}(\ell)^c \psi (\log_2 \tau_{\mbf{x} \mid \beta'}(\ell)) \, \mathrm{d}\omega'_m(\beta').\end{equation}
 By \cref{lip-xy}, and since $\omega'_m(\Z^{\N}_{\ge -m}) = h'(m) \ll 1 + m$, it is sufficient to show
\begin{equation}\label{suff-step2-y} \mb{E}\big|  \tau_{\mbf{x}\mid \beta'}(\ell) - \tau^*_{\mbf{x} \mid \beta'}(\ell) \big|^c \ll \ell^{-10}.\end{equation}

Now since $\beta'$ is $\ell^{1/8}$-positive, we have $\beta'(\ell) \ge \ell^{1/2 - \eta} - \ell^{1/8} > 0$. Therefore by \cref{tau-y-relation} and the monotonicity property \cref{y-unstar-monotonicity} we have $\tau^*_{\mbf{x} \mid \beta'}(\ell) = \tau_{\mbf{x} \mid \beta'}(\ell - \beta'(\ell)) \ge \tau_{\mbf{x} \mid \beta'}(\ell)$. We also have $\beta'(2\ell) \le (2\ell)^{1/8} + (2\ell)^{1/2 + \eta} < \ell$, and so by further applications of \cref{tau-y-relation} and \cref{y-unstar-monotonicity} we have $\tau^*_{\mbf{x} \mid \beta'}(2\ell) = \tau_{\mbf{x} \mid \beta'}(2 \ell - \beta'(2\ell)) \le \tau_{\mbf{x} \mid \beta'}(\ell)$. Combining these facts and then applying \cref{lem23} and a telescoping sum, we have
\[ \mb{E} |\tau_{\mbf{x}\mid \beta'}(\ell)  -\tau^*_{\mbf{x} \mid \beta'}(\ell)| \le \mb{E} |\tau^*_{\mbf{x} \mid \beta'}(2\ell) -\tau^*_{\mbf{x} \mid \beta'}(\ell)|  \ll e^{-\Omega(\ell^{1/4})}.\] The required bound \cref{suff-step2-y} then follows from H\"older's inequality.
\vspace*{10pt}

\emph{Step 3.} To obtain $\mu_3^{\prime (\ell)}$ from $\mu_2^{\prime (\ell)}$ we remove the constraints $R(\beta') \le \ell^{\eta_0}$ and $T(\beta') \le \ell^{1/8}$ again. However we retain the constraint $\beta'(\ell) > 0$ (which is implied by $T(\beta') \le \ell^{1/8}$ as noted above).

Thus we define
\begin{equation}\label{mu-dash-ell-3} \int \psi \, \mathrm{d} \mu_3^{\prime (\ell)} := \sum_{0 \le m \le \ell^{\eta_0}}\int_{\substack{\min_{0 \le i \le \ell} \beta'(i) = -m\\ \beta'(\ell) > 0}} \mb{E} \tau_{\mbf{x} \mid \beta'}(\ell)^c \psi (\log_2 \tau_{\mbf{x} \mid \beta'}(\ell)) \, \mathrm{d}\omega'_m(\beta').\end{equation} 
The proof that \cref{mu-mu-prime-circ-close} holds with $i = 3$ is very similar to, but easier than, the handling of Step 3 in the last section. Bounding $\psi$ and $\tau_{\mbf{x} \mid \beta'}(\ell)^c$ pointwise by $1$, it suffices to bound

\begin{equation*} \sum_{m \le \ell^{\eta_0}} \omega'_m \{ \beta': R(\beta') > \ell^{\eta_0}\} \quad \mbox{and} \sum_{m \le \ell^{\eta_0}} \omega'_m \{ \beta': T(\beta') > \ell^{1/8}\}\end{equation*}
That these are bounded by $o_{\ell \rightarrow \infty}(1)$ follows using \cref{omega-bdedness-est} and \cref{lem13.10} respectively, assuming that $\eta_0$ is sufficiently small.
\vspace*{8pt}

Now, as explained at the start of the proof of \cref{lem172}, the constraint $\beta'(\ell) > 0$ is equivalent to $\ell' > \ell$. Thus we may replace \cref{mu-dash-ell-3} by
\[ \int \psi \, \mathrm{d} \mu_3^{\prime (\ell)} := \sum_{m \le \ell^{\eta_0}}\sum_{t > \ell} \int_{\substack{\min_{0 \le i \le \ell} \beta'(i) = -m \\ \ell'(\beta') = t}} \mb{E} \tau_{\mbf{x} \mid \beta'}(\ell)^c \psi (\log_2 \tau_{\mbf{x} \mid \beta'}(\ell)) \, \mathrm{d}\omega'_m(\beta').\] Here, recall that we have written $\ell'(\beta')$ for $\ell'$ to emphasise that this quantity depends on $\beta'$.
Observe that the conditions $\min_{0 \le i \le \ell} \beta'(i) = -m$ and $\ell'(\beta') = t$ depend only on $\beta'(1),\dots, \beta'(t)$ (since $t \ge \ell$), and so too does the integrand (since $\tau_{\mbf{x} \mid \beta'}(\ell)$ is involves only the first $\ell$ elements of $\mbf{x}$, and the number of elements of $\mbf{x}$ up to $t$ is $t - \beta'(t) \ge \ell$). Recalling the definition (see \cref{sec7.1}) of $\omega'_m$, and in particular \cref{doob-path}, it follows that 
\begin{equation}  \int \psi \, \mathrm{d} \mu_3^{\prime (\ell)} = \sum_{m \le \ell^{\eta_0}}\sum_{t > \ell} \int_{\substack{ \min_{0 \le i \le \ell} \beta'(i) = -m \\ \min_{0 \le i \le t} \beta'(i) \ge -m \\ \ell'(\beta') = t}} h'(m + \beta'(t))\mb{E} \tau_{\mbf{x} \mid \beta'}(\ell)^c \psi (\log_2 \tau_{\mbf{x} \mid \beta'}(\ell))\, \mathrm{d} \mb{P}'(\beta') ,\label{mu-dash-ell-3-again}\end{equation} where here $\mb{P}'$ denotes measure on the path space of $\boldbeta'$.
Note that from \cref{second-ell} we have $\beta'(t - 1) \ge t - \ell$. Since all steps of $\beta'$ are bounded above by $1$, it follows that $\beta'(t - i) \ge t - \ell - (i - 1)$ for all $i \ge 1$, and in particular $\min_{\ell \le i \le t - 1} \beta'(i) \ge 1$. Therefore we may replace \cref{mu-dash-ell-3-again} by
\begin{equation} \int \psi \, \mathrm{d} \mu_3^{\prime (\ell)} = \sum_{m \le \ell^{\eta_0}}\sum_{t} \int_{\substack{\min_{0 \le i \le \ell} \beta'(i) = -m \\ \ell'(\beta') = t}}  1_{t > \ell}1_{\beta'(t) \ge -m} h'(m + \beta'(t))\mb{E} \tau_{\mbf{x} \mid \beta'}(\ell)^c \psi (\log_2 \tau_{\mbf{x} \mid \beta'}(\ell))\, \mathrm{d}\mb{P}'(\beta') .\label{mu-dash-ell-3-once-again}\end{equation} 

\vspace*{10pt}

\emph{Step 4.} We use the fact that 
\begin{equation}\label{h-prime-limit} h'(r) = (\frac{2}{\pi})^{1/2} r + O(1)
\end{equation} for $r \ge 0$, which is \cref{first-rand-walk-pos} (2).
Suppose that $\ell'(\beta') = t$ and $\beta'(t) \ge -m$. Then from \cref{h-prime-limit} and \cref{lem172} (3) we have
\begin{equation}\label{h-linearise-pre} h'(m + \beta'(t)) =  (\frac{2}{\pi})^{1/2} \big( \log_2(\mbf{x} \mid \beta')_{\ell} - \ell\big) + O(m +1 + |\xi'_{t}|).   \end{equation}
Suppose that $\log_2(\mbf{x} \mid \beta')_{\ell} < \ell$. Then from \cref{lem172} we have $\beta'(t) \le O(1 + |\xi'_{t}|)$ and so by \cref{h-prime-limit} $h'(m + \beta'(t)) = O(1 + m + |\xi'_{t}|)$.
Thus we may replace \cref{h-linearise-pre} by \begin{equation*} h'(m + \beta'(t))  =   (\frac{2}{\pi})^{1/2} (\log_2 (\mbf{x} \mid \beta')_{\ell} - \ell)^+ + O(1 + m + |\xi'_{t}|) .\end{equation*} 
Next we multiply by the cutoff $1_{t > \ell}1_{\beta'(t) \ge -m}$ appearing in \cref{mu-dash-ell-3-once-again}; we claim that (still assuming $\ell'(\beta') = t$)
\begin{equation}\label{h-linearise-1a} 1_{t > \ell}1_{\beta'(t) \ge -m} h'(m + \beta'(t))  =   (\frac{2}{\pi})^{1/2} (\log_2 (\mbf{x} \mid \beta')_{\ell} - \ell)^+ + O(1 + m + |\xi'_{t}|) .\end{equation} This is trivial if $t > \ell$ and $\beta'(t) \ge -m$, so suppose that either $t \le \ell$, or that $\beta'(t) < -m$. In either case, the LHS is zero. In both cases, $\beta'(t) \le 0$ (in the first case this is since $\ell'(\beta') = t$ and so $t - \beta'(t) \ge \ell$ by the definition of $\ell'$). It follows from \cref{lem172} (3) that $((\log_2 \mbf{x} \mid \beta')_{\ell} - \ell)^+ = O(1 + |\xi'_t|)$, so the claim follows in this case also. 

Now we define $\mu^{\prime (\ell)}_{4}$ itself. We set

\begin{equation}\label{mu-dash-ell-4} \int \psi \, \mathrm{d}\mu^{\prime (\ell)}_{4}   = (\frac{2}{\pi})^{1/2}\sum_{m \le \ell^{\eta_0}}\int_{\min_{0 \le i \le \ell} \beta'(i) = -m}\mb{E} (\log_2 (\mbf{x} \mid \beta')_{\ell} - \ell)^+ \tau_{\mbf{x} \mid \beta'}(\ell)^c \psi (\log_2 \tau_{\mbf{x} \mid \beta'}(\ell))\, \mathrm{d}\mb{P}'(\beta') .\end{equation} 

This is the same as \cref{mu-dash-ell-3-once-again} except that we have replaced $1_{t > \ell} 1_{\beta'(t) \ge -m} h'(m + \beta'(t))$ by the main term in \cref{h-linearise-1a}; note that we have removed the sum over $t$, since nothing on the right in \cref{mu-dash-ell-4} references this variable.

To prove \cref{mu-mu-prime-circ-close} in the case $i = 4$, we must handle the contribution from the error term in \cref{h-linearise-1a}. We split this error term into two parts: the contribution from $O(1+m)$ and the contribution from $O(|\xi'_{t}|)$. These terms are, by definition, 

\begin{equation}\label{first-error} \ll  \sum_{m \le \ell^{\eta_0}} (1 + m)\sum_{t} \int_{\substack{\min_{0 \le i \le \ell} \beta'(i) = -m \\ \ell'(\beta') = t}}\mb{E} \tau_{\mbf{x} \mid \beta'}(\ell)^c \psi (\log_2 \tau_{\mbf{x} \mid \beta'}(\ell)) \, \mathrm{d}\mb{P}'(\beta')\end{equation}  and
\begin{equation}\label{second-error} \ll  \sum_{m \le \ell^{\eta_0}} \sum_{t} \int_{\substack{\min_{0 \le i \le \ell} \beta'(i) = -m \\ \ell'(\beta') = t}}|\xi'_{t}| \mb{E} \tau_{\mbf{x} \mid \beta'}(\ell)^c \psi (\log_2 \tau_{\mbf{x} \mid \beta'}(\ell)) \, \mathrm{d}\mb{P}'(\beta'). \end{equation}

\emph{Bounding \cref{first-error}.} We bound the $\tau$ and $\psi$ terms trivially by $1$; thus this contribution is
\[ \ll \sum_{m \le \ell^{\eta_0}} (1+m) \sum_{t} \int_{\substack{\min_{0 \le i \le \ell} \beta'(i) = -m \\ \ell'(\beta') = t}} \, \mathrm{d}\mb{P}'(\beta') \ll  \sum_{m \le \ell^{\eta_0}} (1+m) \mb{P}(\min_{1 \le i \le \ell} \boldbeta'(i) \ge -m) \] From the definition \cref{hm-def-2} and the fact that $h'(m) \ll 1 + m$, this is $\ll \ell^{3\eta_0 - 1/2}$, which is acceptable.

\emph{Bounding \cref{second-error}.} Again we bound the $\tau$ and $\psi$ terms trivially by $1$, so the contribution is 
\begin{equation}  \ll\sum_{m \le \ell^{\eta_0}} \sum_{t} \int_{\substack{\min_{0 \le i \le \ell} \beta'(i) = -m \\ \ell'(\beta') = t}}|\xi'_{t}|\, \mathrm{d}\mb{P}'(\beta')   \ll \sum_{m \le \ell^{\eta_0}} \sum_{t} \sum_{s = 0}^{\infty} \mb{P} \big( \min_{0 \le i \le \ell} \boldbeta'(i) = -m, \,\ell'(\boldbeta') = t, \, |\boldxi'_{t}| > s\big). \label{to-ell-dash}\end{equation}
We divide our sum over $t$ dyadically. For the sum over $t \le 2\ell$ we have the upper bound
\[ \ll \sum_{m \le \ell^{\eta_0}} \sum_{s = 0}^{\infty} \mb{P} \big( \min_{0 \le i \le \ell} \boldbeta'(i) = -m,\; \sup_{i \le 2\ell} |\boldxi'_{i}| > s).\] The contribution from $s \le \ell^{\eta_0}$ is $\ll \ell^{\eta_0} \sum_{m \le \ell^{\eta_0}}\mb{P} \big( \min_{0 \le i \le \ell} \boldbeta'(i) = -m) \ll \ell^{3\eta_0 - 1/2}$, as before. The contribution from $s > \ell^{\eta_0}$ is $\ll \ell^{O(1)}\sum_{s > \ell^{\eta_0}} \mb{P}(|1 - \Pois(1)| > s) < \ell^{-10}$.

Turning now to the higher dyadic ranges of $t$ in \cref{to-ell-dash}, we drop the $\min$ condition completely and bound this above by
\[ \sum_{m \le \ell^{\eta_0}} \sum_{j \ge 1} \sum_{2^j \ell \le t < 2^{j+1} \ell} \sum_{s = 0}^{\infty} \mb{P} \big( \ell'(\boldbeta') = t, |\boldxi'_{t}| > s\big). \] The contribution from $s > 2^j\ell$ is bounded by
\[ \ll \ell^{O(1)} \sum_{j \ge 1} 2^j \sum_{s > 2^j \ell} \mb{P} (|1 - \Pois(1)| > s), \] which is negligible. The contribution from $s \le 2^j \ell$ is 
\[ \ll \ell^{O(1)} \sum_{j \ge 1} 2^j \mb{P}( 2^j\ell \le \ell'(\boldbeta') \le 2^{j + 1} \ell),\] which is negligible by \cref{dyadic-ell-dash}.

Putting these estimates together completes the proof of \cref{mu-mu-prime-circ-close} in the case $i = 4$.

\vspace*{10pt}

\emph{Step 5.} Note that to obtain $\mu^{\prime (\ell)}_5 = \mu_*^{\prime (\ell)}$ from $\mu^{\prime (\ell)}_4$ we just need to drop the restriction $m \le \ell^{\eta_0}$ in \cref{mu-dash-ell-4}. Indeed, this then gives an unrestricted integral

\[ (\frac{2}{\pi})^{1/2}\int \mb{E} (\log_2 (\mbf{x} \mid \beta')_{\ell} - \ell)^+ \tau_{\mbf{x} \mid \beta'}(\ell)^c \psi (\log_2 \tau_{\mbf{x} \mid \beta'}(\ell))\, \mathrm{d}\mb{P}'(\beta') ,\] which is equal to $\int_{\R/\Z} \psi \, \mathrm{d} \mu^{\prime (\ell)}_*$ as defined in \cref{mu-prime-ell-star-def} by removing the conditioning on $\beta'$.
To estimate the difference between $\int_{\R/\Z} \psi \, \mathrm{d}\mu_4^{\prime (\ell)}$ and $\int_{\R/\Z} \psi \, \mathrm{d}\mu_5^{\prime (\ell)}$, by bounding the $\psi$ term trivially by $1$ it is sufficient to bound
\begin{equation}\label{step-5-tobound} \sum_{m > \ell^{\eta_0}}\int_{\min_{0 \le i \le \ell} \beta'(i) = -m} \mb{E} (\log_2 (\mbf{x} \mid \beta')_{\ell} - \ell)^+ \tau_{\mbf{x} \mid \beta'}(\ell)^c \, \mathrm{d}\mb{P}'(\beta') . \end{equation}
Clearly, we may restrict the integral to those $\beta'$ for which $\log_2 (\mbf{x} \mid \beta')_{\ell} \ge \ell$ for some instance of $\mbf{x}$. By \cref{lem172} (3) this implies that $\beta'(\ell') \ge -O(1 + |\xi'_{\ell'}|)$. From the definition of $\ell'$ it follows that \begin{equation}\label{ell-dash-and-ell} \ell' \ge \ell + \beta'(\ell') \ge \ell - O(1 + |\xi'_{\ell'}|).\end{equation} We now distinguish two cases: Case 1 (the typical case) is that $\min_{i \le \ell'-1} \beta'(i) \le - m/2$. Case 2 (the exceptional case) is that $\min_{i \le \ell'-1} \beta'(i) > - m/2$ (but still $\min_{i \le \ell} \beta'(i) = -m$).\vspace*{8pt}

\emph{Case 1.}  For $\tau_{\mbf{x} \mid \beta'}(\ell)$ we use the bound $\ll R \cdot (\ell')^{\kappa} 2^{-m/2}$, which follows from \cref{Y-upper-nonstar}. Here $R$ is a (minimal) positive integer parameter for which $|\xi'_i| \le R i^{\kappa}$ for all $i \le \ell' -1$, where (as usual) $\ell'$ is minimal so that $\ell' - \beta'(\ell') \ge \ell$.  Suppose that $\min_{i \le \ell'-1} \xi'_i = -t$. Then, since all $\xi'_i$ are $\le 1$, we have $\beta'(\ell'-1) \le \ell' - 2 - t$. By \cref{second-ell}, it follows that $t \le \ell - 2$, and so we can take $R := \ell$. We thus have the bound
\begin{equation}\label{tau-bd-17} \tau_{\mbf{x} \mid \beta'}(\ell) \ll \ell \ell' 2^{-m/2}. \end{equation}
For the $(\log_2 (\mbf{x} \mid \beta')_{\ell} - \ell)^+$ term we can use crude bounds. By \cref{lem172} (3)  and the fact that the steps of $\beta'$ are bounded above by $1$ we have
\begin{equation}\label{phi-ss} (\log_2 (\mbf{x} \mid \beta')_{\ell} - \ell)^+  \ll \ell' + |\xi'_{\ell'}|  . \end{equation} Combining \cref{tau-bd-17,phi-ss} (crudely) gives
\begin{equation*}
 (\log_2 (\mbf{x} \mid \beta'))_{\ell} - \ell)^+ \tau_{\mbf{x} \mid \beta'}(\ell)^c  \ll \ell 2^{-cm/2} ( (\ell')^2 + \ell'|\xi'_{\ell'}|) \ll \ell 2^{-cm/2} ( (\ell')^2 + |\xi'_{\ell'}|^2).
\end{equation*}
It follows that the contribution of Case 1 to \cref{step-5-tobound} is bounded by 
\begin{equation}\label{step-5-almost} \ll e^{-\Omega(\ell^{\eta_0})} \int \big( \ell'(\beta')^2 + |\xi'_{\ell'(\beta')}|^2 \big)\, \mathrm{d} \mb{P}'(\beta'). \end{equation}
We have $\int \ell'(\beta')^2\, \mathrm{d}\mb{P}'(\beta') \ll \ell^2$; to prove this we partition into dyadic ranges according to the value of $\ell'(\beta')$, using the trivial bound $\ll \ell^2$ for the range $\ell' \le 2\ell$, and \cref{dyadic-ell-dash} to show that the contribution from the higher dyadic ranges $2^j \ell \le \ell' < 2^{j+1} \ell$, $j \ge 1$, is negligible.

For the second term in \cref{step-5-almost}, we have
\[ \int |\xi'_{\ell'(\beta')}|^2\, \mathrm{d}\mb{P}'(\beta') \le  \ell + \sum_{j \ge 1} \sum_{2^j\ell \le t < 2^{j+1} \ell} \sum_s s \mb{P}(\ell'(\boldbeta') = t, |\boldxi'_{t}| > s),\] where the first term is the contribution from $\ell'(\beta') \le 2 \ell$, which is bounded above by $\sum_{i \le 2\ell} \mb{E}|\boldxi'_i|^2  \ll \ell$. Consider the contribution to the sum from some fixed $j \ge 1$. The contribution from $s > 2^j \ell$ is
\[ \ll 2^j \ell \sum_{s > 2^j \ell} s \mb{P}(|1 - \Pois(1)| > s) \ll 2^j \ell e^{-2^j \ell}.\] The contribution from $s \le 2^j \ell$ is
\[ \ll (2^j \ell)^2 \mb{P} (2^j \ell \le \ell'(\boldbeta') < 2^{j+1} \ell) \ll \ell^{3} 2^{3j} e^{-2^{j-4} \ell},\] by \cref{dyadic-ell-dash}. Summed over $j$, we get a tiny contribution of $\ll e^{-\Omega(\ell)}$ in both cases.

It follows from this analysis that \cref{step-5-almost} is bounded by $\ll e^{-\Omega(\ell^{\eta_0})}$, and hence that the contribution of Case 1 to \cref{step-5-tobound} is similarly bounded.\vspace*{8pt}

\emph{Case 2.} We remind the reader that this is the case that $\min_{i \le \ell'-1} \beta'(i) > - m/2$ and $\min_{i \le \ell} \beta'(i) = -m$. This implies that $\ell' \le \ell$ and that $\sum_{i = \ell'}^{\ell} (1 + |\xi'_i|) > m/2$. Therefore either (i) $\ell - \ell' \gg \sqrt{m}$, or (ii) there is some $i$, $\ell' \le i \le \ell$, with $|\xi'_i| \gg \sqrt{m}$. If (i) holds then, by \cref{ell-dash-and-ell}, we have $|\xi'_{\ell'}| \gg \sqrt{m}$, and so in both cases we have $\max_{\ell' \le i \le \ell} |\xi'_i| \gg \sqrt{m}$. Since the sum in \cref{step-5-tobound} is restricted to $m > \ell^{\eta_0}$, this implies $\max_{i \le \ell} |\xi'_i| \gg \ell^{\eta_0/2}$.

To bound the contribution of Case 2 to \cref{step-5-tobound}, we use \cref{phi-ss}, the fact that $\ell' \le \ell$ in Case 2, and the trivial bound $\tau_{\mbf{x} \mid \beta'}(\ell) \le 1$, to bound the contribution by 
\[ \ll \sum_{u\gg \ell^{\eta_0/2}}(\ell + u) \mb{P}\big(\max_{i \le \ell} |\xi'_i| = u\big) \le \ell \sum_{u\gg \ell^{\eta_0/2}}(\ell + u) \mb{P}( \Pois(1)  = u + 1) < \ell^{-10}. \]
Thus, the contribution to \cref{step-5-tobound} from Case 2 is minuscule.

Cases 1 and 2 together establish \cref{mu-mu-prime-circ-close} in the case $i = 5$, and this completes the proof of \cref{sec17-main}.

\vspace*{10pt}

\vspace*{10pt}

\part*{Appendices}

\appendix

\section{Almost positive random walks}\label{poisson-app} 

In this appendix we give the proofs of some results in \cref{random-walk-sec}. The main references for this material are various parts of Feller \cite{feller2}. We also benefitted from consulting \cite[Section 2]{aidekon-shi}.

As in the main part of the paper, we consider a random walk $\boldbeta(N) = \boldxi_1 + \cdots + \boldxi_N$ and a complementary random walk $\boldbeta'(N) = \boldxi'_1 + \cdots + \boldxi'_N$, where $\boldxi_i \samedist \Pois(1) - 1$ and $\boldxi'_i \samedist 1 - \Pois(1)$. Much of the material under discussion here is, however, valid much more generally.

\subsection{Positive random walks} \label{a1-sec} In this subsection $n$ is a dummy variable. The first task is to understand $h(m)$ and $h'(m)$ for $m \in \{-1,0\}$.  Important ingredients in the analysis are the following identities of Sparre Andersen \cite{sparre-andersen} (subsequently generalised by Spitzer \cite{spitzer}). Denote $p_n := \mb{P}(\min_{1 \le i \le n} \boldbeta(i) > 0)$ and $\tilde{p}_n := \mb{P}(\min_{1 \le i \le n} \boldbeta(i) \ge 0)$. Then 
\begin{equation}\label{strict-generating} f(z) := 1 + \sum_{n = 1}^{\infty} p_n z^n = \exp \big( \sum_{n = 1}^{\infty} \frac{z^n}{n} \mb{P}(\boldbeta(n) > 0)\big)\end{equation} and
\begin{equation}\label{non-strict} \tilde{f}(z) := 1 + \sum_{n = 1}^{\infty} \tilde{p}_n z^n = \exp \big( \sum_{n = 1}^{\infty} \frac{z^n}{n} \mb{P}(\boldbeta(n) \ge 0)\big).\end{equation}
A good source for the proof is \cite[XII. 7]{feller2}. Since the $\boldxi_i$ have mean zero it makes sense to work with the following consequences of \cref{strict-generating,non-strict}: 
\[ f(z) (1- z)^{1/2}  = \exp \big( \sum_{n = 1}^{\infty} \alpha_n z^n  \big) \quad \mbox{and} \quad \tilde{f}(z)(1 - z)^{1/2} = \exp \big( \sum_{n = 1}^{\infty} \tilde{\alpha}_n z^n\big),\]
where here
\[ \alpha_n := \frac{1}{n} \big( \mb{P} (\boldbeta(n) > 0) - \frac{1}{2}\big)\quad \mbox{and} \quad \tilde\alpha_n := \frac{1}{n} \big( \mb{P} (\boldbeta(n) \ge 0) - \frac{1}{2}\big) \]

Applying a Tauberian theorem \cite[XII. 8, Theorem 1a]{feller2} we obtain
\begin{equation}\label{asymptotics} p_n \sim e^{\alpha} (\pi n)^{-1/2} \quad \mbox{and} \quad  \tilde{p}_n \sim e^{\tilde \alpha} (\pi n)^{-1/2}.\end{equation} where $\sim$ means `asymptotic to' and
\begin{equation}\label{alpha-defs} \alpha := \sum_{n = 1}^{\infty} \frac{1}{n} \big( \mb{P}(\boldbeta(n) > 0) - \frac{1}{2} \big) \quad \mbox{and} \quad  \tilde{\alpha} := \sum_{n = 1}^{\infty}\frac{1}{n} \big( \mb{P}(\boldbeta(n) \ge  0) - \frac{1}{2} \big)\end{equation} 

In particular, from \cref{asymptotics} we see that $h(-1)$ and $h(0)$ exist and that 
\begin{equation} \label{h0-form} h(-1) = e^{\alpha} \pi^{-1/2}, \quad h(0) = e^{\tilde\alpha} \pi^{-1/2}.\end{equation}
Similar statements hold for $\boldbeta'$ and the associated probabilities $p'_n, \tilde p'_n$. In particular
\begin{equation} \label{h0dash-form} h'(-1) = e^{\alpha'} \pi^{-1/2}, \quad h'(0) = e^{\tilde\alpha'} \pi^{-1/2},\end{equation} where $\alpha', \tilde\alpha'$ are defined analogously with reference to $\boldbeta'$. Observe that 
\begin{equation} \label{alpha-alpha-prime} \alpha = -\tilde\alpha', \qquad \tilde\alpha = -\alpha'.\end{equation}

We now recall a result of Spitzer \cite[Theorem 3.4]{spitzer-tauberian} (see also \cite[XVIII.5, Theorem 1]{feller2}). This states the following. Let $Z_+$ be the first strict ascending ladder height of $\boldbeta$, that is to say $\boldbeta(\tau_+)$ where $\tau_+ := \inf\{ i \ge 1 : \boldbeta(i) > 0\}$ is the first strict ascending ladder epoch of $\boldbeta$. (The use of the letter $\tau$ here is unrelated to that in \cref{tau-sec}.) Define $Z'_+$ analogously with reference to $\boldbeta'$. Define also $\tilde\tau_+ := \inf\{ i \ge 1: \boldbeta(i) \ge 0\}$ and $\tilde Z_+ := \boldbeta(\tilde\tau_+)$, and similarly for $\tilde Z'_+$.

Then Spitzer's result is that \begin{equation}\label{spitzer-beta} \mb{E} Z_+ = \frac{1}{\sqrt{2}} e^{-\alpha}, \quad \mb{E} \tilde Z'_+ = \frac{1}{\sqrt{2}} e^{\alpha},\quad \mb{E} Z'_+ = \frac{1}{\sqrt{2}} e^{-\alpha'}, \quad \mb{E} \tilde Z_+ = \frac{1}{\sqrt{2}} e^{\alpha'}.\end{equation} 
Since the steps of $\boldbeta'$ are $1 - \Pois(1)$ variables bounded above by $1$, we have $Z'_+ = 1$ deterministically and therefore 
\begin{equation}\label{alpha-dash} \alpha' = -\tfrac{1}{2}\log 2.\end{equation}
One may also observe the identity
\begin{equation}\label{borel-eval} \tilde\alpha - \alpha = \sum_{n = 1}^{\infty} \frac{1}{n} \mb{P}(\boldbeta(n) = 0) = \sum_{n = 1}^{\infty} \frac{n^{n - 1} e^{-n}}{n!} = 1,\end{equation} which goes back to Borel \cite{borel} and is equivalent to the fact that the Borel distribution with parameter $1$ is well-defined. 

Relations \cref{h0-form,h0dash-form,alpha-alpha-prime,spitzer-beta,alpha-dash,borel-eval} allow for a complete evaluation of all the constants above, and one may easily check that 
\begin{equation*} \alpha = \tfrac{1}{2} \log 2 - 1, \quad \tilde \alpha = \tfrac{1}{2} \log 2, \quad \alpha' = -\tfrac{1}{2} \log 2, \quad \tilde \alpha' = 1 - \tfrac{1}{2} \log 2  \end{equation*} and thus that $\mb{E}Z_+ = e/2$ and
\begin{equation}\label{h0h0} h(-1) = (2/\pi)^{1/2}e^{-1}, \quad h(0) = (2/\pi)^{1/2}, \quad h'(-1) = 1/\sqrt{2\pi}, \quad h'(0) = e/\sqrt{2\pi} .\end{equation}

\begin{remarks} Most of the above discussion holds with minor modification for arbitrary random walks with i.i.d. steps with mean zero and variance 1. The identities \cref{strict-generating,non-strict} make critical use of the i.i.d. assumption. From \cref{alpha-dash} onwards we made critical use of the fact that $\boldbeta'$ is `skip-free', that is to say has increments bounded above by $1$. In the general skip-free case the analogue of \cref{borel-eval} is that the RHS is $-\log \mb{P}(\boldxi' = 1)$ (we omit the proof of this fact, which is not needed in the paper).
\end{remarks}

\subsection{Renewal theorem} In the proof of \cref{first-rand-walk-pos} we will also require some basic renewal theory. In particular we will need \cref{prop12} below, which is a quantitative version of the renewal theorem in a particular setting. The result is essentially due to Gel'fond \cite{gelfond} but we give a self-contained proof (closely related to that of Gel'fond) in our setting.

Consider a random variable $X$ with the following properties:

\begin{enumerate}
\item $X$ takes values in $\{1,2,\dots\}$;
\item $\mb{P}(X = 1) > 0$;
\item We have $\mb{E} (1 + \delta_0)^X < \infty$ for some $\delta_0 > 0$.
\end{enumerate}
The key example for us will be when $X$ is the strict ascending ladder height $Z_+$ of the walk $\boldbeta$, as defined above. With this example, (1) above is obvious. Item (2) is also obvious; $\mb{P}(X = 1) \ge \mb{P}(\boldxi_1 = 1)$. Finally, $\mb{P}(X = j) \le  \mb{P}(\boldxi \ge j)$ which decays faster than exponential, so in this case we have (3) for all $\delta_0$.

Write $p_j = \mb{P}(X = j)$ for brevity in what follows. Consider the moment generating function $f(z) := \mb{E} z^X$; this will be holomorphic in $|z| < 1 + \delta_0$.

\begin{lemma}\label{lem11}
There is some $0 \le \delta_1 < \delta_0$ such that the only zero of $f(z) - 1$ in $|z| \le 1 + \delta_1$ is a simple zero at $z = 1$.
\end{lemma}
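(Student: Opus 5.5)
We need to show: $f(z)=\sum_j p_j z^j$, with $p_1>0$, $p_j\ge 0$, $\sum p_j=1$, is holomorphic in $|z|<1+\delta_0$. The claim is that $f(z)-1$ has only a simple zero at $z=1$ in some closed disc $|z|\le 1+\delta_1$.

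The plan has three steps: first show $z=1$ is the only zero on the closed unit disc, then show it is simple, and finally use discreteness of zeros to enlarge the disc slightly.

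\textbf{Step 1 (no other zeros on $|z|\le 1$).} For $|z|\le 1$ we have $|f(z)|\le \sum_j p_j|z|^j\le 1$. If $|z|<1$, then since some $p_j>0$ (indeed $p_1>0$), $|f(z)|\le\sum_j p_j|z|^j<1$, so $f(z)\ne 1$. If $|z|=1$ and $f(z)=1$, then $\sum_j p_j z^j=1=\sum_j p_j$ with all $|z^j|=1$. Equality in the triangle inequality, together with the real part being $1$, forces $z^j=1$ for every $j$ with $p_j>0$. Taking $j=1$ (using $p_1>0$) gives $z=1$. This aperiodicity step is where hypothesis (2) is essential.

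\textbf{Step 2 (simplicity).} Compute $f'(1)=\mathbb{E}X$. This is finite by (3), since $X\le C(1+\delta_0)^X$, and it is positive, indeed $\ge 1$, since $X\ge 1$. Hence $z=1$ is a simple zero of $f(z)-1$.

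\textbf{Step 3 (enlarging the disc).} The function $f-1$ is holomorphic and not identically zero on $|z|<1+\delta_0$, so its zeros are isolated there. Consider the compact annulus $1\le|z|\le 1+\delta_0/2$. It contains only finitely many zeros, and by Step 1 the only one on $|z|=1$ is $z=1$. Choose $\delta_1>0$ smaller than $\delta_0/2$ and smaller than the minimum of $|z|-1$ over the finitely many zeros with $|z|>1$. If there are no such zeros, simply take $\delta_1=\delta_0/2$. Then within $|z|\le 1+\delta_1$ the only zero is $z=1$, and it is simple.

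The one mildly delicate point is the equality case on the unit circle in Step 1. Everything else is routine, and the argument works for any $\delta_1$ in the allowed range, including the degenerate choice $\delta_1=0$, which the statement permits.
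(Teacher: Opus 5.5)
Your proposal is correct and follows essentially the same route as the paper. Both arguments rule out zeros in $|z|<1$, use $p_1>0$ to force $z=1$ on the unit circle (the paper writes this as $\Re f(e^{i\theta}) \le 1 + p_1(\cos\theta - 1)$), get simplicity from $f'(1)=\mathbb{E}X>0$, and then enlarge the disc by compactness; you spell out that last step via isolated zeros more explicitly than the paper, and your construction yields $\delta_1>0$, which is what the subsequent renewal estimate actually needs.
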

\begin{proof}
If $|z| < 1$ we have $|f(z)| < 1$ and so there are no zeros of $f(z) - 1$ with $|z| < 1$. By a compactness argument it suffices to check that the only zero of $f(z) - 1$ with $|z| = 1$ is at $z = 1$, and that it is simple. Suppose that $z = e^{i \theta}$. Then 
\[ \Re f(z) =  \sum_j p_j \cos (j\theta) \le \sum_{j \neq 1} p_j + p_1 \cos \theta =1 + p_1 (\cos \theta - 1).\]
   Since $p_1 > 0$, this is $< 1$ unless $\cos \theta =1$, that is to say $\theta$ is a multiple of $2 \pi$. Thus the only zero of $f(z) - 1$ on $|z| = 1$ is at $z = 1$. We have $f'(1) = \mb{E} X > 0$ and so the zero at $z = 1$ is simple.
\end{proof}

\begin{remark}
The use of a compactness argument means that $\delta_1$ is ineffective. In the particular case corresponding to the walk $\boldbeta$, an effective $\delta_1$ could be obtained without undue difficulty if required.    
\end{remark}

\begin{proposition}\label{prop12}
Suppose that $X$ satisfies \textup{(1)}, \textup{(2)} and \textup{(3)} above for some $\delta_0$. Let $\delta_1$ be as in the conclusion of \cref{lem11}, and set $\lambda := \mb{E} X$. Then uniformly for integers $m > 0$ we have
\[ \sum_{r = 1}^{\infty} \mb{P}(X_1 + \cdots + X_r =  m) = \frac{1}{\lambda} + O\big((1 + \delta_1)^{-m}\big).\] 
\end{proposition}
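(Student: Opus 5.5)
The plan is the standard generating-function approach to the renewal theorem, with \cref{lem11} supplying the analytic input. Write $u_m := \sum_{r \ge 1} \mb{P}(X_1 + \cdots + X_r = m)$ for $m \ge 1$ and set $u_0 := 1$ (the empty sum). Since the $X_i$ are i.i.d. copies of $X$, the generating function of the sum $X_1+\cdots+X_r$ is $f(z)^r$. Summing the geometric series gives, for $|z| < 1$,
\[ U(z) := \sum_{m \ge 0} u_m z^m = \sum_{r \ge 0} f(z)^r = \frac{1}{1 - f(z)}; \]
this converges because $|f(z)| < 1$ in the open unit disc. The series for $U$ has radius of convergence at least $1$, because $u_m \le 1$: the partial sums $X_1 + \cdots + X_r$ are strictly increasing, so at most one of them equals $m$, and hence $u_m$ is the probability that $m$ is hit.

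Next I continue $U$ meromorphically. By hypothesis (3), $f$ is holomorphic in $|z| < 1 + \delta_0$. By \cref{lem11}, $1 - f(z)$ has exactly one zero in the closed disc $|z| \le 1 + \delta_1$, namely a simple zero at $z = 1$ with $f'(1) = \mb{E} X = \lambda$. Expanding near $z=1$ gives $1 - f(z) = -\lambda(z-1) + O((z-1)^2)$, so the residue of $U$ there is $-1/\lambda$. Therefore
\[ G(z) := U(z) - \frac{1}{\lambda(1 - z)} \]
is holomorphic in a neighbourhood of the closed disc $|z| \le 1 + \delta_1$. To be safe I will shrink $\delta_1$ slightly (keeping the same name), which is harmless since the statement only asks for some such $\delta_1$.

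The coefficient of $z^m$ in $1/(\lambda(1-z))$ is exactly $1/\lambda$. The coefficients $g_m$ of $G$ are estimated by Cauchy's integral formula on the circle $|z| = 1 + \delta_1$:
\[ |g_m| \le \max_{|z| = 1 + \delta_1} |G(z)| \cdot (1 + \delta_1)^{-m}. \]
The maximum of $|G|$ on that circle is finite by compactness and continuity, and it depends only on the law of $X$, not on $m$. This gives $u_m = 1/\lambda + O((1 + \delta_1)^{-m})$ for all $m \ge 1$, uniformly, which is the claim.

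The only point needing care is the removability of the singularity at $z=1$, together with the fact that $1 - f$ has no other zeros on the boundary circle. Both are supplied by \cref{lem11}; the rest of the argument is routine, so I do not expect a serious obstacle.
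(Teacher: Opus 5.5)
Your argument is correct and is essentially the paper's proof. The paper writes each $\mathbb{P}(X_1+\cdots+X_r=m)$ as a contour integral of $f(z)^r z^{-m-1}$ over $|z|=1/2$, sums the geometric series to get $f/(1-f)$, and shifts the contour to $|z|=1+\delta_1$, picking up $1/\lambda$ from the simple pole at $z=1$; this is the same as your subtraction of the principal part $1/(\lambda(1-z))$ followed by a Cauchy estimate. One small remark: you do not need to shrink $\delta_1$, because $\delta_1<\delta_0$ and \cref{lem11} already make $G$ holomorphic at every point of the closed disc $|z|\le 1+\delta_1$, so the bound holds for the given $\delta_1$.
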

\begin{proof}
We have 
\[ \mb{P} (X_1 + \cdots + X_r = m) = \frac{1}{2\pi i} \int_{|z| = 1/2} f(z)^r z^{- m - 1} dz.\] 
Note that for uniformly for $|z| = \frac{1}{2}$ we have $|f(z)| \le \mb{P}(X = 0) + \frac{1}{2}\mb{P}(X > 0) < 1$. Summing over $r$ gives
\[ \sum_{r = 1}^{\infty} \mb{P}(X_1 + \cdots + X_r = m) = \frac{1}{2\pi i} \int_{|z| = 1/2} \frac{f(z)}{1 - f(z)} z^{-m - 1} dz. \]
Now move the contour to $|z| = 1 + \delta_1$. In doing this we pick up a contribution of $1/\lambda$ from the pole of the integrand at $z = 1$. There are no other poles by \cref{lem11}. It follows that 
\[ \sum_{r = 1}^{\infty} \mb{P}(X_1 + \cdots + X_r = m) = \frac{1}{\lambda} + \frac{1}{2\pi i} \int_{|z| = 1 + \delta_1} \frac{f(z)}{1 - f(z)} z^{-m - 1} dz . \] The result follows since $\frac{f(z)}{1 - f(z)}$ is uniformly bounded on $|z| = 1 + \delta_1$, and $|z|^{-m} = (1 + \delta_1)^{-m}$.
\end{proof}

\subsection{Proof of \cref{first-rand-walk-pos} and \cref{rand-walk-aux}} \label{a3-sec} We are now ready to prove the main statements needed in the paper concerning the existence of $h(m), h'(m)$ and the asymptotic behaviour of these quantities. We mostly consider the (harder) case of $h'$, indicating how the analysis simplifices in the much simpler case of $h$ as we go along.

We first give a reference for the proof of \cref{first-rand-walk-pos} (3), or more precisely of \cref{prop81-3-i}, since \cref{prop81-3-ii} follows immediately from it. This is the statement that \begin{equation}\label{prop81-2} N^{1/2} \mb{P}\big(\min_{1 \le i \le N} \boldbeta(i) \ge -m\big), \; N^{1/2} \mb{P}\big(\min_{1 \le i  \le N} \boldbeta'(i) \ge -m\big) \asymp m + 1,\end{equation} uniformly for $0 \le m \le \sqrt{N}$.  For $m = 0$ this is immediate from the existence of $h(0)$ and $h'(0)$. For $1 \le m \le \sqrt{N}$, items (i) (upper bound) and (iii) (lower bound) of \cite[Lemma~3.3]{PP95} give the result. 

\begin{remark} As noted in \cite{PP95}, the upper bound is due to Kozlov \cite[Theorem A, equation (13)]{Koz76}; however the proof in \cite{PP95} is different and easier. The lower bound is due to Zhang \cite[Lemma 1]{zhang}. Finally we observe that the upper bound in \cref{prop81-2} (that is, the LHS is $\ll m + 1$) holds without any restriction on $m$ for trivial reasons.
\end{remark}

We now sketch the proof of \cref{rand-walk-aux} following \cite{addario-berry-reed-2}.

\begin{proof}[Proof of \cref{rand-walk-aux} (sketch)]  Recall that this is the statement that $\mb{P}(\boldbeta(N) = x, \; \min_{1 \le i \le N} \boldbeta(i) \ge 0) \ll \min \big(N^{-1}, (1 + x) N^{-3/2}\big)$. The argument in \cite{addario-berry-reed-2} goes essentially as follows: consider the first, last and middle thirds of the walk up to $N$, with the last third of the walk reversed. The first third must be nonnegative to time $\lfloor N/3\rfloor$, an event with probability $\ll N^{-1/2}$ by the existence of $h(0)$. The final third (reversed) must be bounded below by $-x$, an event with probability $\ll \min \big(1, (1 + x) N^{-1/2}\big)$ by \cref{prop81-2}. Conditioned on the first and last thirds, the middle third of the walk sums to some fixed value, and the probability of this is $\ll N^{-1/2}$ by a result of Kesten (which in our particular case is an easy computation, see \cref{p-pointwise} below). 
\end{proof}

Before turning to the proof of items (1) and (2) of \cref{first-rand-walk-pos}, we prove the following (standard) auxiliary estimate on strict ascending ladder times.

\begin{lemma}
\label{first-ascending-ladder} Let $\tau_+ := \inf\{ i \ge 1 : \boldbeta(i) > 0\}$ be the first strict ascending ladder epoch of $\boldbeta$. We have $\mb{P}(\tau_+ = t) \ll t^{-3/2}$ uniformly in $t$. An analgous estimate holds for $\boldbeta'$.
\end{lemma}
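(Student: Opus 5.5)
The plan is to read off the tail of $\tau_+$ from the Sparre Andersen identity \cref{strict-generating}, and to treat the two walks separately, using their skip-free structure where it helps.

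\emph{The walk $\boldbeta'$.} Its steps are bounded above by $1$, so $\tau'_+$ is the first time $\boldbeta'$ reaches the level $1$, and $\boldbeta'(\tau'_+)=1$. For such walks Kemperman's hitting time theorem applies directly:
\[ \mb{P}(\tau'_+ = t) = \tfrac{1}{t}\,\mb{P}(\boldbeta'(t) = 1) = \tfrac{1}{t}\,\mb{P}(\Pois(t) = t-1). \]
Since $\mb{P}(\Pois(t)=t-1) = e^{-t}t^{t-1}/(t-1)! \ll t^{-1/2}$ by Stirling's formula (this is the pointwise bound \cref{p-pointwise}), the required estimate $\mb{P}(\tau'_+=t)\ll t^{-3/2}$ follows at once.

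\emph{The walk $\boldbeta$.} Here I would use the Sparre Andersen identity \cref{strict-generating}. Write $q_t := \mb{P}(\tau_+ = t) = \tilde p'_{t-1} - \tilde p'_t$, where $\tilde p'_t := \mb{P}(\max_{i\le t}\boldbeta(i)\le 0)$. By the duality identity (XII.7 in Feller), $1 - \mb{E} z^{\tau_+} = \exp\big(-\sum_n \frac{z^n}{n}\mb{P}(\boldbeta(n)>0)\big)$. Set
\[ F(z) := (1-z)^{-1/2}\big(1-\mb{E} z^{\tau_+}\big) = \exp\Big(\sum_n \gamma_n z^n\Big), \qquad \gamma_n := \tfrac{1}{n}\big(\tfrac12 - \mb{P}(\boldbeta(n)>0)\big). \]
The Berry--Esseen/local estimates for the $\Pois(n)$ distribution give $\gamma_n = O(n^{-3/2})$, so $G(z) := \sum_n \gamma_n z^n$ has coefficients $O(n^{-3/2})$.

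The standard lemma I would invoke is: if $G$ has coefficients $O(n^{-3/2})$, then $e^{G}$ has coefficients $O(n^{-3/2})$. To prove it, split the convolution $\sum_{a+b=n}$ at $n/2$ and show by induction that the coefficients of $G^k$ are $\le C^k k\, n^{-3/2}$ or similar. Then
\[ 1 - \mb{E} z^{\tau_+} = (1-z)^{1/2}F(z). \]
The coefficients of $(1-z)^{1/2}$ are $O(n^{-3/2})$ and summable, and $F$ has coefficients $O(n^{-3/2})$. Convolving again, the $n$-th coefficient is $\ll \sum_{a+b=n} (1+a)^{-3/2}(1+b)^{-3/2} \ll n^{-3/2}$. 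This gives $q_t \ll t^{-3/2}$.

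The main obstacle is the bound $\gamma_n = O(n^{-3/2})$. It needs $\mb{P}(\boldbeta(n)>0) = \tfrac12 + O(n^{-1/2})$, which comes from an Edgeworth-type expansion of the Poisson median, i.e.\ $\mb{P}(\Pois(n)\ge n+1) = \tfrac12 - c\,n^{-1/2} + O(n^{-1})$. The $O(n^{-1/2})$ term does not cancel, which gives only $\gamma_n = O(n^{-3/2})$. That is still exactly the summability needed, so I would verify this expansion carefully.
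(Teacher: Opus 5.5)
Your proof is correct, but it takes a different route from the paper. The paper treats both walks with one short decomposition. Couple $\boldbeta=-\boldbeta'$; then $\{\tau_+=t\}$ is the event that $\min_{1\le i\le t-1}\boldbeta'(i)\ge 0$ and $\boldxi_t>\boldbeta'(t-1)$. Summing the ballot-type bound $\mb{P}(\boldbeta'(t-1)=x,\ \min_{1\le i\le t-1}\boldbeta'(i)\ge 0)\ll(1+x)t^{-3/2}$ of \cref{rand-walk-aux} against the step tail $\mb{P}(\boldxi_t>x)\ll(1+x)^{-3}$ gives $t^{-3/2}$ at once, and the same computation handles $\boldbeta'$.

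You argue differently for each walk:
\begin{itemize}
\item For the upward skip-free walk $\boldbeta'$ you use Kemperman's hitting time theorem, which gives the exact Borel law $\mb{P}(\tau'_+=t)=e^{-t}t^{t-1}/t!$.
\item For $\boldbeta$ you combine the Baxter--Sparre Andersen identity with the fact that sequences which are $O((1+n)^{-3/2})$ form a convolution algebra (normed by the $\ell^1$ norm plus the weighted supremum). Hence $e^{G}$, and then $(1-z)^{1/2}e^{G}$, inherit the $n^{-3/2}$ coefficient bound. Your split-at-$n/2$ induction, bounding the coefficients of $G^k$ by $C^k k\,n^{-3/2}$, is exactly what is needed.
\end{itemize}

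The paper's route is shorter because \cref{rand-walk-aux} is already available, and it treats the two walks uniformly. Yours needs only CLT-level input plus the fluctuation identities already used in \cref{a1-sec}. It therefore bypasses \cref{rand-walk-aux} and the Pemantle--Peres bounds its proof relies on. For $\boldbeta'$ it also gives an exact formula rather than just an upper bound.

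One correction to your own assessment: the step you call the main obstacle is not one. The increments have a finite third moment, so the Berry--Esseen theorem gives $|\mb{P}(\boldbeta(n)>0)-\tfrac12|\ll n^{-1/2}$ directly, and hence $\gamma_n=O(n^{-3/2})$. No Edgeworth expansion is needed. Whether the $n^{-1/2}$ term cancels is irrelevant, since only an upper bound on $|\gamma_n|$ is used.
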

\begin{proof} We may assume $t \ge 2$. Couple $\boldbeta$ to a copy of $\boldbeta'$ via $\boldbeta(i) = -\boldbeta'(i)$. Then $\tau_+ = t$ is precisely the event that $\min_{1 \le i \le t - 1} \boldbeta'(i) \ge 0$ and $\boldbeta(t) > 0$, the probability of which is
\[ \sum_{x \ge 0} \mb{P}\big( \boldbeta'(t - 1) = x, \min_{1 \le i \le t - 1} \boldbeta'(i) \ge 0\big) \mb{P}(\boldxi_t > x).\] By \cref{rand-walk-aux}, the first probability is $\ll (1 + x) t^{-3/2}$. Since $\mb{P}(\boldxi_t > x) \ll (1 + x)^{-3}$, the result follows.
\end{proof}

\begin{proof}[Proof of \cref{first-rand-walk-pos} (1) and (2)] We have already established the cases $m = -1,0$ in \cref{a1-sec}, so we assume $m \ge 1$ in what follows. We first give the proof for $\boldbeta'$ and $h'$, which is harder. Couple $\boldbeta'$ to a copy of $\boldbeta$ via $\boldbeta = - \boldbeta'$. Let $N$ be a large positive integer ($N > m^{20}$ will do). 

Associated to $\boldbeta$ are the strict ascending ladder epochs $T_1,T_2,\dots$, defined by $T_j = \inf\{ i : \boldbeta(i) > \boldbeta(T_{j-1})\}$, and the strict ascending ladder heights $H_j := \boldbeta(T_j)$. The variable $T_j$ is distributed as $\tau_1 + \cdots + \tau_j$ where the $\tau_i$ are i.i.d.\ copies of the first strict ascending ladder epoch $\tau_+$ and $H_j$ is distributed as $X_1 + \cdots + X_j$ where the $X_i$ are i.i.d.\ copies of the first strict ascending ladder height $Z_+ = \boldbeta(\tau_+)$. Then, foliating according to $r$, the number of ladder epochs up to time $N$, we have
\begin{equation}\label{min-minus-m} \mb{P}\big(\min_{1 \le i \le N} \boldbeta'(i) \ge -m\big) = \mb{P}\big(\max_{1 \le i \le N} \boldbeta(i) \le m\big)  = \sum_{r = 0}^m \sum_{t = 0}^N \mb{P}(T_r = t, H_r \le m) \mb{P}( \tau_+  > N - t).\end{equation} Here we adopt the convention that $T_0 = H_0 = 0$ deterministically. Note also that the number of ladder epochs is bounded above by $m$.

We assemble various facts. First, by the existence of $h'(0)$ we have for any $\ell \in \N$ that  
\begin{equation}\label{pos-prob} \mb{P}(\tau_+ > \ell) = \mb{P}\big(\max_{1 \le i \le \ell} \boldbeta(i) \le 0\big) = \mb{P}\big(\min_{1 \le i \le \ell} \boldbeta'(i) \ge 0\big) = (h'(0) + o(1)) \ell^{-1/2}.\end{equation}
Next note that uniformly for $1 \le r \le m$ and $t \ge 1$ we have
\begin{equation}\label{tk-pointwise} \mb{P}(T_r = t) \le r \sup_{x \ge t/r} \mb{P}(\tau_+ = x) \ll r^{5/2} t^{-3/2} \ll m^{5/2} t^{-3/2},\end{equation} where here we used \cref{first-ascending-ladder}. 

We first estimate the contribution to \cref{min-minus-m} from $t \ge \sqrt{N}$. For this we use $\mb{P}(\tau_+ > N - t) \ll (N+ 1- t)^{-1/2}$ (a consequence of \cref{pos-prob}) and \cref{tk-pointwise}, and we ignore the $H_r \le m$ condition. This gives a bound of
\[ \ll m^{7/2} \sum_{\sqrt{N} < t \le N} t^{-3/2} (N +1 - t)^{-1/2} \ll m^{7/2} N^{-3/4} = o(N^{1/2}).\] 
In the remaining range $t \le \sqrt{N}$ in \cref{min-minus-m}, we have $\mb{P}(\tau_+ > N-t) = (h'(0) + o(1)) N^{-1/2}$ by \cref{pos-prob}. Putting these facts together we obtain 
\[ \mb{P}\big(\min_{1 \le i \le N} \boldbeta'(i) \ge -m\big) = (h'(0) + o(1)) N^{-1/2} \sum_{r = 0}^m \sum_{t \le \sqrt{N}} \mb{P}(T_r = t, H_r \le m) + o(N^{-1/2}).\]
We can add back in the sum over $t > \sqrt{N}$, at a cost of a further error of $o(N^{-1/2})$, by further use of \cref{tk-pointwise}. The sum over $t$ then becomes redundant and we have
\begin{align*} \mb{P}\big(\min_{1 \le i \le N} \boldbeta'(i) \ge -m\big) & = (h'(0) + o(1)) N^{-1/2} \sum_{r = 0}^m \mb{P}(H_r \le m) + o(N^{-1/2}) \\ & = (h'(0) + o(1)) N^{-1/2}\big( 1 +  \sum_{r = 1}^m \mb{P}(X_1 + \cdots + X_r \le m)\big) + o(N^{-1/2}) .\end{align*}
By \cref{prop12} it now follows that $h'(m)$ exists and that 
\begin{equation}\label{hm-first} h'(m) =  h'(0)\big(\frac{m}{\mb{E} Z_+} + O(1)\big).\end{equation}

By \cref{h0dash-form,alpha-alpha-prime,spitzer-beta} we have $h'(0)/\mb{E} Z_+ = (2/\pi)^{1/2}$. Together with \cref{hm-first}, this completes the proof of \cref{first-rand-walk-pos} (1) and (2) in the case of $h'$.

The analysis of $h(m)$ is much easier. By exactly the same argument with the roles of $\boldbeta,\boldbeta'$ switched, we obtain

\[ \mb{P}\big(\min_{1 \le i \le N} \boldbeta(i) \ge -m\big)  = (h(0) + o(1)) N^{-1/2} \big( 1 + \sum_{r = 1}^m \mb{P}(X'_1 + \cdots + X'_r \le m)\big) + o(N^{-1/2}) .\]
Here, $X'_i$ are i.i.d.\ copies of $Z'_+$, the first strict ascending ladder height of $\boldbeta'$. However $Z'_+ = 1$ deterministically, and so we obtain $h(m) = h(0) (m + 1)$. By \cref{h0h0} we have $h(0) = (2/\pi)^{1/2}$ and this completes the proof of \cref{first-rand-walk-pos} (1) and (2) in the case of $h$.\end{proof}

\subsection{Local limit theorems} \label{loc-lim-thm-app} In this subsection we give the proof of \cref{lem13.5}. The main ingredient of the proof will be (a weakening of) a result of Denisov, Tarasov and Wachtel \cite{DTW24b}, which we quote as \cref{dtw-thm} below; we will give a self-contained proof of that in \cref{dtw-appendix}. The proofs of \cref{lem13.5-expr,lem13.5-exprb} are identical (we will not use the fact that $h(m)$ is exactly $(2/\pi)^{1/2}(m + 1)$ for $m \ge 0$, which does not hold for $h'(m)$), so we establish only \cref{lem13.5-expr}. 

Recall that 
\[ h(-1) := \lim_{N \rightarrow \infty} N^{1/2} \mb{P} \big(\min_{1 \le i \le N} \boldbeta(i) \ge 1\big) = \lim_{N \rightarrow \infty} N^{1/2} \mb{P} \big(\min_{1 \le i \le N} \boldbeta(i) > 0\big).\] We showed in \cref{h0h0} that $\tilde h(-1)$ exists and 
\begin{equation}\label{h0-tilde-val} \tilde h(-1) = e^{\alpha} \pi^{-1/2} = (2/\pi)^{1/2} e^{-1},\end{equation} with $\alpha$ defined as in \cref{alpha-defs}. We need the following auxiliary lemma.

\begin{lemma}\label{h-htilde} Let $m \ge 0$ be an integer. We have \[  h(m) - h(m-1) = h(-1) \sum_{q = 1}^{\infty} \mb{P}\big(\min_{1 \le i \le q} \boldbeta(i) = -m,\; \boldbeta(q) = -m\big).\] 
\end{lemma}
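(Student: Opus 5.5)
We prove the identity by a last-visit decomposition at finite $N$ followed by a limit. The difference $h(m)-h(m-1)$ is $\lim_N N^{1/2}\,\mb{P}\big(\min_{1\le i\le N}\boldbeta(i)=-m\big)$, with the convention that for $m=0$ the event is $\min_{1\le i\le N}\boldbeta(i)=0$. Indeed $\{\min\ge -m\}\setminus\{\min\ge -m+1\}$ is exactly this event, and both limits exist by \cref{first-rand-walk-pos}(1) together with the recurrence \cref{vbar-recur} (the latter for the case $m-1=-1$). So it suffices to show
\[ \lim_{N\to\infty} N^{1/2}\,\mb{P}\big(\min_{1\le i\le N}\boldbeta(i)=-m\big) = h(-1)\sum_{q\ge1}\mb{P}\big(\min_{1\le i\le q}\boldbeta(i)=-m,\ \boldbeta(q)=-m\big). \]

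On the event $\min_{1\le i\le N}\boldbeta(i)=-m$, let $q\in[1,N]$ be the \emph{last} index with $\boldbeta(q)=-m$. Then the event is the disjoint union over $q$ of
\[ \{\min_{1\le i\le q}\boldbeta(i)\ge -m,\ \boldbeta(q)=-m\}\cap\{\boldgamma(j)>0 \text{ for } 1\le j\le N-q\}, \]
where $\boldgamma(j):=\boldbeta(q+j)-\boldbeta(q)$. By independence of increments,
\[ \mb{P}\big(\min_{1\le i\le N}\boldbeta(i)=-m\big)=\sum_{q=1}^N a_q\,p_{N-q},\qquad a_q:=\mb{P}\big(\min_{1\le i\le q}\boldbeta(i)=-m,\ \boldbeta(q)=-m\big), \]
with $p_\ell:=\mb{P}(\min_{1\le j\le \ell}\boldbeta(j)>0)$ and $p_0=1$. (Here $\min\ge-m$ together with $\boldbeta(q)=-m$ is the same as $\min=-m$.) By \cref{asymptotics} and \cref{h0-form}, $p_\ell\sim h(-1)\ell^{-1/2}$ and $p_\ell\ll(1+\ell)^{-1/2}$.

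For the limit, \cref{min-pos-lem} with $N=q$ gives $a_q\ll(1+m)(1+q)^{-3/2}$, so $\sum_q a_q<\infty$. Split the sum at $q\le\sqrt N$. In that range $N^{1/2}p_{N-q}\to h(-1)$ uniformly, and dominated convergence gives the main term $h(-1)\sum_q a_q$. For $\sqrt N<q\le N$ the contribution is
\[ \ll N^{1/2}(1+m)\sum_{q>\sqrt N}q^{-3/2}(N+1-q)^{-1/2}, \]
which tends to $0$: the part with $q\le N/2$ is $\ll N^{-1/4}$, and the part with $q>N/2$ is $\ll N^{1/2}N^{-3/2}N^{1/2}=N^{-1/2}$.

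The only delicate points are the boundary case $m=0$, where $h(-1)$ is the limit for the strictly positive event and the same formula applies, and checking that the tail of the sum over $q$ is uniformly negligible; the bound from \cref{min-pos-lem} is exactly what handles the latter. Neither step looks like a serious obstacle.
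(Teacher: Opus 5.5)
Your proposal is correct and follows the paper's proof essentially step for step. The steps you share are the last-visit decomposition at finite $N$, the split at $q\approx\sqrt N$, the main term from the asymptotics of $h(-1)$, and the tail bounded via $a_q\ll(1+m)q^{-3/2}$. The only difference is that you cite \cref{min-pos-lem} with $N=q$ where the paper cites \cref{argim-each-x}, which is the same reflection estimate.
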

\begin{proof} Let $N$ be some very large positive integer. By summing over the position $q$ of the last visit to $-m$ up to time $N$ we have 
\begin{equation}\label{m-bet-gam} \mb{P}\big( \min_{1 \le i \le N} \boldbeta(i) = -m\big) = \sum_{q = 1}^N \mb{P}\big(\min_{1 \le i \le q} \boldbeta(i) = -m,\; \boldbeta(q) = -m\big) \cdot \mb{P}\big(\min_{1 \le i \le N - q} \boldgamma(i) \ge 1\big)\end{equation} where $\boldgamma$ is the shifted walk $\boldgamma(i) := \boldbeta(q + i) + m$, and the second probability term on the right is absent when $q = N$. The contribution to the sum from $q \ge N^{1/2}$ is, using \cref{argim-each-x} and the existence of $h(-1)$,
\[ \ll (1 + m) \sum_{q > N^{1/2}} q^{-3/2} (N +1 - q)^{-1/2} \ll (1 + m) N^{-3/4}.\] By the existence of $h(-1)$ again, uniformly for $q \le N^{1/2}$ we have $\mb{P}\big(\min_{1 \le i \le N - q} \boldgamma(i) \ge 1\big) =  (h(-1) + o(1)) N^{-1/2}$.  Thus \cref{m-bet-gam} gives
\begin{align*} \mb{P}\big( \min_{1 \le i \le N} \boldbeta(i) = -m\big) = (h(-1) + o(1)) N^{-1/2}\sum_{q = 1}^{N^{1/2}} \mb{P}\big(\min_{1 \le i \le q} \boldbeta(i) & = -m,\; \boldbeta(q) = -m\big) \\ & + O \big( (1 + m) N^{-3/4}\big).\end{align*}
Since 
\[ \mb{P}\big( \min_{1 \le i \le N} \boldbeta(i) = -m\big) = \mb{P}\big( \min_{1 \le i \le N} \boldbeta(i) \ge -m\big) - \mb{P}\big( \min_{1 \le i \le N} \boldbeta(i) \ge  -(m-1)\big),\] multiplying by $N^{1/2}$ and sending $N$ to infinity gives the result.
\end{proof}

Now we quote the main external input to the proof of \cref{lem13.5} (which is essentially the case $m = -1$ of the result). As previously mentioned, it is a weak version of a result of Denisov, Tarasov and Wachtel \cite[Theorem 1]{DTW24b}.

\begin{theorem}\label{dtw-thm}
There is a constant $\eps > 0$ such that the following holds. Uniformly for positive integers $N,x$ we have
\begin{equation}\label{loc-limit-crude} \mb{P}\big(\boldbeta(N) = x, \; \min_{1 \le i \le N} \boldbeta(i) > 0\big) = N^{-1}h(-1) W\big(\frac{x}{\sqrt{N}}\big) + O(N^{-1 - \eps}).
\end{equation}
\end{theorem}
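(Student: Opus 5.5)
This is a quantitative local limit theorem for a walk conditioned to stay strictly positive. I would prove it by splitting the walk at a macroscopic intermediate time and using the Markov property. I will write $p_N(x) := \mb{P}(\boldbeta(N)=x,\ \min_{1\le i\le N}\boldbeta(i)>0)$. Fix a small $\gamma>0$ and set $N_1 := N - \lfloor N^{1-\gamma}\rfloor$, so the walk is split into a long first segment of length $N_1$ and a short final segment of length $N_2 := N - N_1 \approx N^{1-\gamma}$. Conditioning on $\boldbeta(N_1)=y$ gives
\[ p_N(x) = \sum_{y\ge 1} p_{N_1}(y)\, \mb{P}_y\big(\boldbeta(N_2)=x,\ \min_{1\le i\le N_2}\boldbeta(i)>0\big). \]

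\textbf{Step 1: control $y$.} Using \cref{rand-walk-aux} together with the Gaussian tail bound \cref{lemmaF.1}, I restrict to $N^{1/2-\gamma'} \le y \le N^{1/2+\gamma'}$. The contribution from small $y$ is $O(N^{-1-\eps})$, since $p_{N_1}(y)\ll (1+y)N^{-3/2}$ and there are few such $y$. The contribution from large $y$ is negligible by the tail bound.

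\textbf{Step 2: drop the barrier on the final segment.} For $y$ in this range, the final segment starts at height $y\gg N_2^{1/2+\delta}$. By the reflection principle or a maximal large-deviation estimate, it hits $0$ only with probability $\ll e^{-N^{\Omega(1)}}$. So I replace the inner probability by the unconditioned $\mb{P}(\boldbeta(N_2)=x-y)$. For this I use the standard local CLT with rate for $\Pois(1)-1$ steps; this is a direct Fourier or Stirling computation, since $\boldbeta(N_2)+N_2\sim\Pois(N_2)$. It gives $\mb{P}(\boldbeta(N_2)=z)=N_2^{-1/2}\phi(z/\sqrt{N_2})+O(N_2^{-1})$, where $\phi$ is the standard Gaussian density.

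\textbf{Step 3: replace $p_{N_1}(y)$ by smoothed values.} The point of the split is that I only need $p_{N_1}$ averaged over windows of width $\sqrt{N_2}=N^{(1-\gamma)/2}$, not pointwise. So it suffices to know the distribution function
\[ \mb{P}\big(\boldbeta(N_1)\le y\sqrt{N_1} \mid \min_{1\le i\le N_1}\boldbeta(i)>0\big) \]
to accuracy $N^{-\eps}$, namely that it equals the Rayleigh distribution function $1-e^{-y^2/2}$ plus $O(N^{-\eps})$, together with $\mb{P}(\min_{1\le i\le N_1}\boldbeta(i)>0)=h(-1)N_1^{-1/2}(1+O(N^{-\eps}))$.

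The second input, the rate of convergence in the definition of $h(-1)$, comes from the Sparre Andersen generating function \cref{strict-generating}. The coefficients $\alpha_n$ decay like $n^{-3/2}$ by a Berry--Esseen type estimate, $\mb{P}(\boldbeta(n)>0)-\tfrac12=O(n^{-1/2})$. An effective Tauberian or singularity analysis at $z=1$ then gives a polynomial error.

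The first input, a conditioned functional CLT with rate, I would get by strong (KMT) coupling of $\boldbeta$ with Brownian motion up to error $O(\log N)$. Fixing a small time $\epsilon N_1$, the conditioned walk at that time sits at height $\gg N^{1/2-\delta}$, and from there the barrier event is compared with the Brownian one. A small perturbation of the barrier changes probabilities by a relative $O(N^{-\delta})$, via \cref{prop81-3-i} applied to shifted barriers, which is monotone in $m$. The Brownian meander computation then gives the Rayleigh law.

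\textbf{Step 4: assemble.} Summing the Gaussian kernel of width $\sqrt{N_2}$ against the smoothed conditioned law gives $N^{-1}h(-1)W(x/\sqrt N)$. The convolution errors are $O(N^{-1}\cdot N^{-\gamma/2})$ from the smoothing, using that $W$ is Lipschitz, plus the errors from Steps 1--3.

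I expect the main obstacle to be the quantitative conditioned functional CLT in Step 3: getting a uniform polynomial rate for the Rayleigh distribution function of a walk conditioned to stay positive. My first attempt would be the KMT coupling argument described above, with barrier perturbation controlled by \cref{prop81-3-i}.
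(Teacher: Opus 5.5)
Your proposal is correct in outline. The local part (Steps 1, 2, 4) matches the paper's argument, but you obtain the global distribution-function estimate in Step 3 by a different and heavier route. Two justifications in Step 3 need repair.

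\textbf{Comparison with the paper.} Your Steps 1, 2 and 4 are essentially the paper's ``local from global'' argument. The paper also discards the barrier on a final stretch (of length $\lfloor N^{3/4}\rfloor$, restricting $x$ rather than $y$) and convolves with the free kernel $p(L,\cdot)$. It uses the conditioned law only through windows, averaging $p(L,\cdot)$ over shifts of length $N^{1/4}$ via \cref{poiss-der-1}, where you sum by parts against the Gaussian kernel. With your parameters this works provided $\gamma/4<\gamma'<\gamma/2$. The lower bound handles the small-$y$ contribution and the upper bound the barrier removal. You also need $\gamma<2\eps_*$, where $\eps_*$ is the rate you get in Step 3, because the distribution-function error is multiplied by the total variation $\asymp N_2^{-1/2}$ of the kernel.

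The genuine difference is Step 3. The paper's \cref{berry-esseen-prop} proves $\mb{P}(\boldbeta(N)\ge x,\ \tau>N)=h(-1)N^{-1/2}e^{-x^2/2N}+O(N^{-1})$ directly and with the optimal error. It decomposes at the first non-positive time $\tau$ and subtracts the same decomposition applied to the empty event $\{\boldbeta(N)\le -x,\ \tau>N\}$. All but one term are then small by free-walk estimates, \cref{rand-walk-aux} and \cref{tau-pointwise}. The surviving term is $(2/\pi)^{1/2}N^{-1/2}e^{-x^2/2N}\mb{E}|\boldbeta(\tau)|$, and its constant is matched with $h(-1)$ via Spitzer's identities. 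This needs no strong approximation and no separate rate for $N^{1/2}\mb{P}(\tau>N)$, and it keeps the appendix self-contained.

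Your route (KMT coupling, Brownian meander, Sparre Andersen plus an effective Tauberian step) imports a deep theorem and gives only an unspecified polynomial rate. That rate is all Step 4 needs. In exchange, $h(-1)$ comes straight from its definition, and the argument works for any walk with exponential moments. For the normalising constant, the clean argument is that $\exp(\sum\alpha_nz^n)$ again has coefficients $O(n^{-3/2})$ (a weighted convolution-algebra fact), which you then convolve with the coefficients of $(1-z)^{-1/2}$. Singularity analysis proper would need $\sum\alpha_nz^n$ to continue beyond the unit disc, which $\alpha_n=O(n^{-3/2})$ does not give you.

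\textbf{Repair 1: the barrier shift.} \cref{prop81-3-i} is only an order-of-magnitude statement. Monotonicity or not, it cannot give a \emph{relative} $O(N^{-\delta})$ change under a barrier shift. Do the shift on the Brownian side instead. On the KMT event, survival of the walk from $z$ is sandwiched between Brownian survival events with barriers at $\pm O(\log N)$, allowing $O(\sqrt{\log N})$ for fluctuations between integer times. The reflection formula $\mb{P}_z(\min_{[0,T]}B>-a)=2\Phi((z+a)/\sqrt{T})-1$, and its analogue with the endpoint constraint, then give relative error $O(\log N/z)$.

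\textbf{Repair 2: the choice of $\epsilon$.} $\epsilon$ cannot be a fixed constant, because for fixed $\epsilon$ the Brownian computation gives the Rayleigh law only up to $O(\epsilon)$. Take $\epsilon=N^{-\delta'}$ with $\delta'<2\delta$. This also makes clear why the argument is not circular. The conditioned law at time $N_1$ is a mixture, over $z=\boldbeta(\epsilon N_1)$, of walks from $z$ conditioned to survive $(1-\epsilon)N_1$ further steps. For Brownian motion started at $z\ll\sqrt T$ and killed at $0$, the conditioned endpoint law is Rayleigh up to $O(z^2/T)$, uniformly in $z$. So you only need the mixing weights to sit on $[N^{1/2-\delta},(\epsilon N)^{1/2}N^{o(1)}]$ up to polynomially small mass. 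The lower end follows from \cref{rand-walk-aux} with \cref{prop81-3-ii}, giving mass $\ll N^{-3\delta}\epsilon^{-3/2}$ below $N^{1/2-\delta}$. The upper end follows from \cref{lemmaF.1}.
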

\begin{proof} See \cref{dtw-appendix}. 
\end{proof}

\begin{proof}[Proof of \cref{lem13.5}] We prove \cref{lem13.5-expr}; the proof of \cref{lem13.5-exprb} is identical. \cref{dtw-thm} is the case $m = -1$, so we assume that $m \ge 0$ in what follows. We may assume $N$ sufficiently large throughout. Set $\eps_0 := \min(\frac{\eps}{2}, \frac{1}{10})$, where $\eps$ is as in \cref{dtw-thm}.  We remind the reader of the desired statement, which, replacing the dummy variable $m$ by $M$, is that uniformly for $M, x\in \Z_{\ge 0}$ we have
\begin{equation*} \mb{P} \big(\boldbeta(N) = x,\; \min_{1 \le i \le N} \boldbeta(i) \ge -M\big) = N^{-1} h(M) W\big(\frac{x}{\sqrt{N}}\big) + O\big((1 + M)^{O(1)} N^{-1 - \eps_0}\big).\end{equation*}
 We may assume throughout that $M \le N^{\eps_0}$, since otherwise the result has no content by choosing a suitably large $O(1)$ exponent.
By a telescoping sum argument, it suffices to show 
\begin{equation}\label{lem13.5-expr-to-tele} \mb{P} \big(\boldbeta(N) = x,\; \min_{1 \le i \le N} \boldbeta(i) = -m\big) = N^{-1} (h(m) - h(m-1)) W\big(\frac{x}{\sqrt{N}}\big) + O \big( (1 + m) N^{-1 - \eps_0}\big)\end{equation} uniformly for $0 \le m \le N^{\eps_0}$; to see this, sum \cref{lem13.5-expr-to-tele} over $m = 0,1,2\dots, M$ together with \cref{loc-limit-crude}. 

Henceforth we focus on \cref{lem13.5-expr-to-tele}. If $x > N^{1/2 + \eps_0}$ then both the LHS and the first term on the RHS of \cref{lem13.5-expr-to-tele} are $\ll N^{-10}$; for the LHS this follows by dropping the $\min_{1 \le i \le N} \boldbeta(i) = -m$ condition and using the large deviation bound \cref{lemmaF.1} to bound $\mb{P}(\boldbeta(N) = x)$, while for the first term on the RHS we use the bound $h(m) - h(m-1) = O(1)$, which follows from \cref{first-rand-walk-pos} (2). 

Suppose from now on that $x \le N^{1/2 + \eps_0}$. We foliate the event on the LHS of \cref{lem13.5-expr-to-tele} according to the largest value of $q$, $1 \le q  \le N$, such that $\boldbeta(q) = -m$. Set $Q := \lfloor N^{2\eps_0}\rfloor$. The contribution from $Q < q \le N$ is, using \cref{argim-each-x}, bounded above by
\begin{align*}  \sum_{Q < q \le N}\mb{P} \big(\boldbeta(N) = x,\; \min_{1 \le i \le N} \boldbeta(i) = -m, \; \boldbeta(q) = -m\big) & \ll (1 + m) \sum_{Q < q \le N} q^{-3/2} (N + 1 - q)^{-1}  \\ & \ll  (1 + m)  \big(\frac{1}{Q^{1/2} N}  + \frac{\log N}{N^{3/2}}\big) . \end{align*} This may be absorbed into the error term in \cref{lem13.5-expr-to-tele} due to the choice of $Q$. 
The remaining contribution to the LHS of \cref{lem13.5-expr-to-tele} may be written as $\sum_{q = 1}^Q p_1(q) p_2(q)$ where
\[ p_1(q) :=  \mb{P}\big( \boldgamma(N - q) = x + m, \min_{1 \le i \le N - q} \boldgamma(i) > 0\big), \;\quad p_2(q) :=  \mb{P} \big(  \min_{1 \le i \le q} \boldbeta(i) = -m, \; \boldbeta(q) = -m\big),\] and where $\boldgamma(i) = \boldbeta(q + i)$ is the shifted walk, which is a $\Pois(1) - 1$ walk independent of $(\boldbeta(i))_{1 \le i \le q}$. Thus to complete the proof of \cref{lem13.5-expr-to-tele} our task is to show, assuming $x \le N^{1/2 + \eps_0}$, that 
\begin{equation}\label{lema5-rem} \sum_{q = 1}^Q p_1(q) p_2(q) = N^{-1} (h(m) - h(m-1)) W\big(\frac{x}{\sqrt{N}}\big) + O \big( (1 + m) N^{-1 - \eps_0}\big).\end{equation}

We first dispense with the case $x < N^{1/2 - \eps_0}$. By \cref{rand-walk-aux}, $p_1(q) \ll (1 + x + m) N^{-3/2} \ll N^{-1 - \eps_0}$, and by \cref{est-72-geq}, independent of any assumption on $x$ we have\begin{equation}\label{p2q-bd} p_2(q) \ll (1 + m) q^{-3/2}.\end{equation} Thus $\sum_{q = 1}^Q p_1(q) p_2(q) \ll (1 + m) N^{-1-\eps_0}$, which is the size of the error term in \cref{lema5-rem}. Using \cref{first-rand-walk-pos} (2), the term $N^{-1} (h(m) - h(m - 1)) W\big(\frac{x}{\sqrt{N}})$ is bounded by $\ll N^{-1 - \eps_0}$, which may also be absorbed into the error term in \cref{lema5-rem}. Thus \cref{lema5-rem} holds in this regime.

For the remainder of the proof we assume $N^{1/2 - \eps_0} \le x \le N^{1/2 + \eps_0}$. We apply \cref{dtw-thm} with $\tilde x := x + m$ and $\tilde N := N - q$. This application of \cref{dtw-thm} implies that if $1 \le q \le Q$, then
\begin{equation*} p_1(q) =  (N - q)^{-1} h(-1)W\big( \frac{x + m}{\sqrt{N - q}}\big) + O(N^{-1 - \eps}).\end{equation*} We have $\frac{x + m}{\sqrt{N - q}} = \frac{x}{\sqrt{N}} + O(N^{\eps_0 - 1/2} )$ and so, since $W'$ is uniformly bounded, $W \big(\frac{x + m}{\sqrt{N - q}}\big) = W\big( \frac{x}{\sqrt{N}}\big) + O(N^{\eps_0 - 1/2})$. Also $(N - q)^{-1} = N^{-1} + O( QN^{-2}) = N^{-1} + O(N^{2\eps_0-2})$. Since $\eps_0 \le \frac{1}{10}$, we see that 
\begin{equation}\label{p1-bd} p_1(q) = N^{-1}h(-1) W\big(\frac{x}{\sqrt{N}}\big) + O(N^{-1  - \eps}).\end{equation}

Using \cref{h-htilde}, it follows that 
\[ \sum_{q = 1}^{\infty} p_1(q) p_2(q) = N^{-1} (h(m) - h(m-1)) W\big(\frac{x}{\sqrt{N}}\big) + O(N^{-1 - \eps}) \sum_{q = 1}^{\infty} p_2(q),\] and hence that 
\[ \sum_{q = 1}^{Q} p_1(q) p_2(q) = N^{-1} (h(m) - h(m-1)) W\big(\frac{x}{\sqrt{N}}\big) + O(N^{-1 - \eps}) \sum_{q = 1}^{\infty} p_2(q) + O\big( \sum_{q > Q} p_1(q) p_2(q)\big).\] 
By \cref{p2q-bd} the second term here is $\ll (1 + m) N^{-1 - \eps}$, which may be absorbed into the error term in \cref{lema5-rem}. Finally, by \cref{rand-walk-aux} (or \cref{p1-bd}) we have $p_1(q) \ll N^{-1}$, and from this and \cref{p2q-bd} we see that $\sum_{q > Q} p_1(q) p_2(q) \ll (1 + m) N^{-1} Q^{-1/2}$, which may again be absorbed into the error term in \cref{lema5-rem} since $Q = \lfloor N^{2\eps_0}\rfloor$.
\end{proof}

\section{On results of Denisov, Tarasov and Wachtel}\label{dtw-appendix}

In this appendix we give a more-or-less self-contained proof of \cref{dtw-thm}. This result (and in fact a stronger statement) follows essentially immediately from \cite[Theorem 1]{DTW24b} in the case $r = 1$, specialised to our situation. However, that reference is rather lengthy and technical (as befits the authors' aims of great precision and generality) and moreover depends on the earlier paper \cite{DTW24}, which has similar properties.

The argument we give here is much shorter and is in part very closely based on \cite{DTW-berry-esseen}. Our need for less precision, as well as the fact that our random walks have specific rather nicely-behaved increments, is what allows for a much more compact treatment.

Throughout this appendix, we work with the walk $\boldbeta$ with $\Pois(1) - 1$ steps, but we write the argument such that it would also work for $\boldbeta'$ with only trivial changes. In particular we never rely on the fact that the steps of $\boldbeta$ are bounded below by $-1$.

\subsection{Berry-Ess\'een Theorem for positive random walks} The main result of this section, \cref{berry-esseen-prop}, as well as the proof, are exactly that of \cite{DTW-berry-esseen} except we can remove a technical smoothing device (as we have a discrete random variable) and we do not need to carefully track explicit constants. Additionally, we remark on how one could, if desired, replace appeals to some of the literature on general random walks with computations using Stirling's formula.

Denote 
\begin{equation}\label{tau-ladder-def} \tau := \inf \{i \ge 1 : \boldbeta(i) \le 0\}.\end{equation}
Observe that, with the notation of the previous appendix, $\tau = \tilde \tau'_+$, and so in particular
\begin{equation} \label{boldbeta-tau} |\boldbeta(\tau)| = \tilde Z'_+.\end{equation}
Here is the main result of this subsection.
\begin{proposition}\label{berry-esseen-prop} 
Uniformly for integers $N \ge 1$ and $x > 0$ we have
\[ \mb{P}\big(\boldbeta(N) \ge x, \;\tau > N\big) = \tilde{h}(0) N^{-1/2} e^{-x^2/2N} + O(N^{-1}).\]
\end{proposition}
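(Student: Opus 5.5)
We follow the martingale approach of Denisov--Tarasov--Wachtel. The natural harmonic function for the walk killed at $\tau$ is $V(y) := \mb{E}\big(y - \boldbeta(\tau)\big)$ for $y>0$, that is $y + \mb{E}\tilde Z'_+$ by \cref{boldbeta-tau}. The key identity is that $\boldbeta(n\wedge\tau)$ is a martingale. Hence, for any bounded test function, the quantity at time $N$ started from $y$ can be compared with the Brownian (Rayleigh) prediction by a Lindeberg-type interpolation.

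Concretely, we will write the target probability as $\mb{E}\, 1(\tau>N)\,\Phi_0(\boldbeta(N))$ with $\Phi_0(z)=1_{z\ge x}$. We then introduce the Brownian-killed profile
\[ u(s,y) := \mb{P}_y\big(B_{N-s}\ge x,\ \text{$B$ stays positive on $[0,N-s]$}\big), \]
which is given explicitly by the reflection principle as a difference of Gaussian tails. The quantity $u(n,\boldbeta(n))1(\tau>n)$ is then a near-martingale. We telescope
\[ \mb{P}(\boldbeta(N)\ge x,\tau>N) - \mb{E}\,u(1,\boldbeta(1)) = \sum_{n=1}^{N-1}\mb{E}\big[1(\tau>n)\big(u(n+1,\boldbeta(n+1))1(\tau>n+1)-u(n,\boldbeta(n))\big)\big]. \]
Each increment splits into a Taylor term and a killing term. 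In the Taylor term, the steps have mean $0$, variance $1$ and third moments bounded, so the term is $O(\mb{E}[1(\tau>n)\,\partial^3_y u])$. In the killing term, we have $\boldbeta(n+1)\le 0$ and $u(\cdot,y)\approx \partial_y u(\cdot,0)\,y$ near $0$, so the overshoot contributes $\partial_y u(n,0)\,\mb{E}[\ldots\boldbeta(\tau)]$.

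Next we will estimate $\partial_y^3 u(n,y)\ll (N-n)^{-3/2}\min(1, y/\sqrt{N-n})$. We weight this by $\mb{P}(\tau>n,\boldbeta(n)=y)$, using \cref{rand-walk-aux} (the bound $(1+y)n^{-3/2}$) together with the existence of $\tilde h(0)$. Summing over $n$, with separate treatment of $n\le N/2$ and $n>N/2$, should give a total error of $O(N^{-1})$. The killing terms should sum to the constant correction that converts the initial value $y$ into $V(y)$. Since we start at $0$ with one step, this produces the prefactor $\tilde h(0) = \lim N^{1/2}\mb{P}(\tau>N)$. Finally we identify $\partial_y u(0,0)$ with $\sqrt{2/\pi}\,N^{-1/2}e^{-x^2/2N}$, matching the normalisation via \cref{h0h0}.

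The main obstacle will be the region where $\boldbeta(n)$ is $O(1)$ and $N-n$ is $O(1)$. There the Taylor expansion of $u$ fails: the derivatives blow up like $(N-n)^{-3/2}$ while the step size is not small. Our first attempt will be to truncate the telescoping at $n = N - N^{1-\delta}$ and bound the final stretch directly, using \cref{rand-walk-aux} and \cref{lemmaF.1} to show that it changes the probability by only $O(N^{-1})$. Because the walk is integer-valued, the indicator $1_{z\ge x}$ needs no smoothing; a lattice-point Gaussian comparison via Stirling suffices. If this crude bound gives only $N^{-1}\log N$, we will instead use the exact cancellation that comes from $u$ solving the heat equation with Dirichlet boundary condition.
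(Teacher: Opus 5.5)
Your overall picture is right --- compare with the Dirichlet heat profile, let the killing terms produce $(2/\pi)^{1/2}\mb{E}|\boldbeta(\tau)|\,N^{-1/2}e^{-x^2/2N}$ up to $O(N^{-1})$, and the constant is identified correctly. The gap is that your control of the accumulated Taylor errors does not close at $O(N^{-1})$.

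Since $\mb{E}\boldxi^3=1\neq 0$, the heat equation removes only the second-order term. Step $n$ therefore leaves $\tfrac16\mb{E}[1(\tau>n)\,\partial_y^3u(n+1,\boldbeta(n))]$ plus higher-order terms.

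Your bound $\partial_y^3u(s,y)\ll t^{-3/2}\min(1,y/\sqrt t)$, with $t=N-s$, is false. The odd extension of $u$ has an even third derivative, and $\partial_y^3u(s,0)=2t^{-3/2}\varphi''(x/\sqrt t)\neq 0$ in general. The bound would not help anyway. For $n>N/2$ the walk on $\{\tau>n\}$ sits at height $\asymp\sqrt N\gg\sqrt{N-n}$, so the $\min$ is $1$; with $\mb{P}(\tau>n)\asymp N^{-1/2}$ your bound only gives $O(N^{-1/2})$ over $n>N/2$. After your truncation it still gives $N^{-1+\delta/2}$ over $N^{1-\delta}\le N-n\le N/2$.

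Doing better, you can use that $\partial_y^3u(n+1,\cdot)$ is concentrated within $O(\sqrt{N-n})$ of $y=x$, together with $\mb{P}(\boldbeta(n)=y,\tau>n)\ll N^{-1}$ from \cref{rand-walk-aux}. This gives $\ll N^{-1}(N-n)^{-1}$ per step, i.e.\ $N^{-1}\log N$ in total. The range $N^{1-\delta}\le N-n\le N/2$ alone still contributes $\delta N^{-1}\log N$. Pushing the truncation to $N-n\asymp N$ does not rescue this: killing during the final stretch is then not negligible, and that stretch is as hard as the original problem.

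Your fallback does not supply the missing cancellation, because the heat equation and the Dirichlet condition are already fully used. The surviving term $\tfrac16\partial_y^3u$ is small only after summing over $y$ (since $\int\varphi''=0$). Exploiting that needs regularity of $y\mapsto\mb{P}(\boldbeta(n)=y,\tau>n)$ at scale $\sqrt N$, which is a local limit theorem with difference bounds for the killed walk. That is stronger than the proposition itself, and in the paper the local statement \cref{dtw-thm} is deduced \emph{from} this proposition. An Edgeworth-corrected profile such as $u+\tfrac16(N-s)\,\partial_y^3u$ might remove the logarithm, but you would then have to redo the boundary and terminal terms.

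The paper sidesteps per-step errors entirely by reflecting with the walk itself rather than with Brownian motion. It decomposes both $\{\boldbeta(N)\ge x\}$ and $\{\boldbeta(N)\le -x\}$ (the latter impossible on $\{\tau>N\}$) at $\tau$, and subtracts to get $E_1-E_2+E_3-E_4$. Estimating these uses only one-time inputs:
$F_N(x)=\mb{P}(\boldbeta(N)\ge x)-\mb{P}(\boldbeta(N)\le -x)\ll N^{-1/2}$;
$F_N-F_{N-k}\ll kN^{-3/2}$;
the local CLT;
and $\mb{P}(\boldbeta(k)=-y,\tau=k)\ll k^{-3/2}(1+y)^{-10}$.
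The main term $E_3$ is the walk analogue of your killing sum.
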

Before giving the proof, we begin by assembling some preliminaries. Define
\begin{equation}\label{pnx-def} p(N, x) := \mb{P}(\boldbeta(N) = x) = \mb{P}(\Pois(N) = N + x).\end{equation}
\begin{lemma}
Uniformly for integers $N \ge 1$ and $x$, we have
\begin{equation}\label{loc-limit} p(N,x) =  \frac{1}{\sqrt{2\pi N}} e^{-x^2/2N} + O(N^{-1}).\end{equation} In particular
\begin{equation}\label{p-pointwise} p(N,x) \ll N^{-1/2}\end{equation} uniformly.
We have
\begin{equation}\label{poiss-der-1} \sum_x \big|p(N , x) - p(N , x + \ell)\big| \ll |\ell| N^{-1/2},\end{equation} uniformly in $\ell \in \N$.
Uniformly for $0 \le k \le N/2$ and all $x$ we have
\begin{equation}\label{pnk-ineq} \big| p(N - k, x) - p(N, x)\big| \ll k N^{-3/2} + N^{-1}.\end{equation}
Finally, for $-N \le x \le N$ we have
\begin{equation}\label{poisson-large-der} p(N, x) \le e^{-x^2/4N}.\end{equation}
\end{lemma}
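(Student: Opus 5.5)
The lemma concerns the explicit distribution $p(N,x)=e^{-N}N^{N+x}/(N+x)!$ for $x\ge -N$ (and $0$ otherwise). I would prove everything by direct computation with this formula, using only Stirling's formula, the ratio $p(N,x+1)/p(N,x)=N/(N+x+1)$, and Chernoff bounds. Throughout, the quantity to compare against is the Gaussian term $G_N(x):=(2\pi N)^{-1/2}e^{-x^2/2N}$.

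\emph{Order of the steps.}

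1. I would first prove the large deviation bound \cref{poisson-large-der}, since it is used to discard tails in the other parts. For $x\ge0$ the Chernoff bound gives $\mb{P}(\Pois(N)\ge N+x)\le \exp(-N h(x/N))$ with $h(u)=(1+u)\log(1+u)-u$. For $x<0$ it gives the analogous bound with $h(u)$, $u\in[-1,0]$. On $|u|\le1$ one has $h(u)\ge u^2/4$: for $u\ge0$ this follows from $h(u)\ge u^2/(2(1+u/3))$, and for $u\le 0$ from $h(u)\ge u^2/2$. Since the point probability is at most the tail probability, \cref{poisson-large-der} follows.

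2. Next I would prove the local limit statement \cref{loc-limit}. In the range $|x|\le N^{2/3}$, apply Stirling's formula $m!=\sqrt{2\pi m}(m/e)^m(1+O(1/m))$ with $m=N+x$. This gives
\[
\log p(N,x)=-\tfrac12\log(2\pi(N+x)) - (N+x)\log(1+x/N)+x+O(1/N).
\]
Taylor expanding yields $p(N,x)=G_N(x)\bigl(1+O(|x|/N+|x|^3/N^2+1/N)\bigr)$. Multiplying by $G_N(x)\ll N^{-1/2}e^{-x^2/2N}$ and using $\sup_t (t+t^3)e^{-t^2/2}<\infty$ with $t=|x|/\sqrt N$, the error is $O(N^{-1})$ uniformly. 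For $|x|>N^{2/3}$, both $p(N,x)$ (by step 1, or trivially if $x<-N$) and $G_N(x)$ are $\le e^{-N^{1/3}/4}\ll N^{-1}$. The pointwise bound \cref{p-pointwise} is an immediate consequence.

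3. For \cref{poiss-der-1} I would avoid any local-limit error summation. By the triangle inequality over unit steps it suffices to treat $\ell=1$. The map $x\mapsto p(N,x)$ is unimodal, with its maximum at $x=0$ or $x=-1$, since the ratio $N/(N+x+1)$ crosses $1$ there. Hence $\sum_x|p(N,x)-p(N,x+1)| = 2\max_x p(N,x)$, which is $\ll N^{-1/2}$ by \cref{p-pointwise}.

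4. For \cref{pnk-ineq}, apply \cref{loc-limit} to both $p(N-k,x)$ and $p(N,x)$; since $N-k\ge N/2$ this costs $O(N^{-1})$. It then remains to bound $|G_{N-k}(x)-G_N(x)|$. The derivative of $s\mapsto (2\pi s)^{-1/2}e^{-x^2/2s}$ equals $s^{-3/2}\cdot O\bigl((1+x^2/s)e^{-x^2/2s}\bigr)=O(s^{-3/2})$ uniformly in $x$. Integrating over $s\in[N-k,N]$ gives $O(kN^{-3/2})$.

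\emph{Expected main obstacle.} The only real care needed is in step 2: the Stirling expansion must be uniform over the whole moderate range $|x|\le N^{2/3}$. One has to expand $(N+x)\log(1+x/N)=x+x^2/2N-x^3/6N^2+O(x^4/N^3)$ and check that after multiplication by $G_N(x)$ each error term is $O(N^{-1})$. This holds because $x^4/N^3\le N^{-1/3}$ in this range and every power $|x|^j/N^{j/2}$ is absorbed by the Gaussian factor. The remaining steps are routine once steps 1 and 2 are in place.
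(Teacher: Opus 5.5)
Your proposal is correct and follows the paper's proof essentially step by step. The shared ingredients are Stirling's formula for $|x|\le N^{2/3}$ with Chernoff bounds for the tails, the ratio $p(N,x+1)/p(N,x)=N/(N+x+1)$ and unimodality for \cref{poiss-der-1}, and comparison of the Gaussian main terms for \cref{pnk-ineq}. Your integration of the $s$-derivative is just the continuous form of the paper's telescoping. One small point to tidy: for $x>N$ in step 2, cite the Chernoff bound directly, since monotonicity of your $h$ on $[0,\infty)$ gives $p(N,x)\le e^{-Nh(1)}$, whereas \cref{poisson-large-der} as stated only covers $|x|\le N$.
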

\begin{proof}
\cref{loc-limit} is an instance of the local limit theorem for which the standard general reference is the book of Petrov \cite{petrov}. In our specific case we can compute explicitly. In the range $|x| \le N^{2/3}$ this can be handled by Stirling's formula in the form $m! = (m/e)^m(2\pi m)^{1/2}(1 + O(1/m))$ and a computation. The tails $|x| \ge N^{2/3}$ follow from the large deviation estimate \cref{lemmaF.1}.  \cref{p-pointwise} follows immediately from \cref{loc-limit}.

For \cref{poiss-der-1}, by telescoping it suffices to handle the case $\ell = 1$. We compute that $p(N,x + 1) = p(N, x) \frac{N}{N + x + 1}$, so in particular $p(N,x)$ is monotonic decreasing for $x = 0,1,2,\dots$ and also for $x = -1,-2,\dots$, with $p(N,0) = p(N, -1)$. By telescoping, the LHS in \cref{poiss-der-1} (with $\ell = 1$) is simply $2 p(N,0) - p(N, -N)$, and the stated bound follows from \cref{loc-limit}.

The inequality \cref{pnk-ineq} follows from \cref{loc-limit}, the inequality $\sup_x \big| e^{-x^2/2n} - e^{-x^2/2(n - 1)}\big| \ll n^{-1}$ (for $n \ge 2$) and a telescoping sum; for details see \cite[Equation (32)]{DTW-berry-esseen}.

Finally, the large deviation estimate \cref{poisson-large-der} follows from \cref{lemmaF.1}, where a reference is given.
\end{proof}
\begin{corollary}\label{probs-cor}
Let $N \ge 1$ and let $x, y$ be integers with $y \ge 0$. Let $k$, $0 \le k \le N/2$, be an integer. Uniformly in these parameters we have
    \[ \mb{P}\big(\boldbeta(N - k) \in (x, x + y] \cup (-x, -x + y]\big) = \big(\frac{2}{\pi}\big)^{1/2} \frac{y}{\sqrt{N}} e^{-x^2/2N} + O\big( \frac{y^2}{N} + \frac{ky}{N^{3/2}}\big).\]
\end{corollary}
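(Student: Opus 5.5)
We will reduce everything to the pointwise local limit theorem \cref{loc-limit} together with the comparison estimate \cref{pnk-ineq}. The statement is invariant under $x \mapsto -x$ (this swaps the two intervals), so assume $x \ge 0$. Write $I_+ := (x, x+y]$ and $I_- := (-x,-x+y]$, each containing exactly $y$ integers. The first step is to split according to whether these intervals are disjoint. Since $-x + y \le x$ iff $y \le 2x$, the intervals are disjoint precisely when $y \le 2x$. When $y > 2x$ they overlap, and their union is the single interval $(-x, x+y]$ containing $2x + y$ integers.

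\emph{Disjoint case $y \le 2x$.} Here the probability equals $\sum_{z \in I_+ \cup I_-} p(N-k, z)$. By \cref{pnk-ineq} each term is $p(N,z) + O(kN^{-3/2} + N^{-1})$, and summing over the $2y$ points gives an error $O(ky N^{-3/2} + y N^{-1})$. Next we apply \cref{loc-limit}, which replaces $p(N,z)$ by $(2\pi N)^{-1/2} e^{-z^2/2N}$ with error $O(yN^{-1})$. We then compare $e^{-z^2/2N}$ with $e^{-x^2/2N}$. For $z = \pm x + j$ with $0 < j \le y$, the function $u \mapsto e^{-u^2/2N}$ has derivative bounded by $\ll N^{-1/2}$, so
\[ |e^{-z^2/2N} - e^{-x^2/2N}| \ll jN^{-1/2}. \]
Summing over $j$ and multiplying by $N^{-1/2}$ gives an error $O(y^2/N)$. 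The main term is $2y(2\pi N)^{-1/2} e^{-x^2/2N} = (2/\pi)^{1/2} y N^{-1/2} e^{-x^2/2N}$, and the errors $yN^{-1} \le y^2/N$ are absorbed (recall $y \ge 1$ is the only nontrivial case). This settles the disjoint case.

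\emph{Overlapping case $y > 2x$.} This is the step I expect to be the real obstacle, because the statement as worded is delicate here. Running the same argument over the $2x + y$ points of $(-x, x+y]$ yields
\[ (2x+y)(2\pi N)^{-1/2} + O\bigl(y^2/N + kyN^{-3/2}\bigr). \]
Here we also use $x < y$ to bound $|e^{-x^2/2N} - 1| \ll x^2/N \le y^2/N$. The claimed main term is $2y(2\pi N)^{-1/2} + O(y^2/N)$ (by the same Taylor bound). So the discrepancy is exactly
\[ (y - 2x)(2\pi N)^{-1/2} \in \bigl(0,\ y(2\pi N)^{-1/2}\bigr]. \]
This is $O(y^2/N)$ only when $y \gg \sqrt N$. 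In that range the corollary holds; indeed, for $y \ge \sqrt N$ the bound is essentially trivial, since both sides are $O(1) \le y^2/N$. For small $y$ and $x = 0$ it genuinely fails: with $x = 0$, $y = 1$, the left side is $p(N-k,1) \approx (2\pi N)^{-1/2}$, whereas the right side is $\approx 2(2\pi N)^{-1/2}$.

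So my plan is as follows. Prove the disjoint case as above, and prove the overlapping case when $y \ge \sqrt N$. For $2x < y < \sqrt N$, record that the formula holds if the union is counted with multiplicity; this is what the later Berry--Ess\'een argument needs, where the event arises from a reflection identity pairing $z$ with $-z$. I would check the application to confirm that only the disjoint or multiplicity reading is used there.
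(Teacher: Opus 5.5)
Your disjoint-case argument is exactly the paper's proof: reduce to $k=0$ with \cref{pnk-ineq}, then apply \cref{loc-limit} together with the mean value bound $\ll jN^{-1/2}$. Your objection to the overlapping case is also correct: for a genuine set union the claim fails at $x=0$, $y=1$, $k=0$, and the paper's proof tacitly counts the two intervals with multiplicity. (Do fix $k=0$, or at least $k=o(N)$, in that counterexample, since for $k\asymp N$ the term $ky/N^{3/2}$ is of order $N^{-1/2}$ and absorbs the discrepancy.) The multiplicity reading you propose is precisely what is needed downstream. In the proof of \cref{berry-esseen-prop}, subtracting the first-passage decompositions of $\{\boldbeta(N)\ge x\}$ and $\{\boldbeta(N)\le -x\}$ puts $\mb{P}(\boldbeta(N-k)\in[x,x+y))+\mb{P}(\boldbeta(N-k)\in(-x,-x+y])$ into $E_3$, so the corollary should be read as that sum and nothing else in the paper is affected.
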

\begin{proof} The result is trivial when $y = 0$, so assume $y \ge 1$. Using \cref{pnk-ineq} one may reduce to handling the case $k = 0$. This in turn follows from \cref{loc-limit} using the mean value estimate $\big|e^{-x^2/2N} - e^{(x+i)^2/2N}\big| \ll i N^{-1/2}$.
\end{proof}
Recall the definition \cref{tau-ladder-def} of $\tau$.
\begin{lemma}
Uniformly for $N \ge 1$ and $y \ge 0$ we have the bound
\begin{equation}\label{last-step} \mb{P}\big(\boldbeta(N) = - y, \; \tau = N\big) \ll N^{-3/2} (1 + y)^{-10}.\end{equation}   
    In particular
    \begin{equation}\label{tau-pointwise} \mb{P}(\tau = N) \ll N^{-3/2}.\end{equation}
\end{lemma}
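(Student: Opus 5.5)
The plan is to decompose according to the last step before $\tau$. The event $\{\boldbeta(N) = -y, \tau = N\}$ means that $\boldbeta(i) > 0$ for $1 \le i \le N-1$, that $\boldbeta(N-1) = z$ for some $z \ge 1$, and that $\boldxi_N = -y - z$. Independence of the last step gives
\[ \mb{P}\big(\boldbeta(N) = -y,\ \tau = N\big) = \sum_{z \ge 1} \mb{P}\big(\boldbeta(N-1) = z,\ \min_{1 \le i \le N-1}\boldbeta(i) > 0\big)\, \mb{P}(\boldxi_N = -y-z). \]
When $N = 1$ the first factor is absent, and the claim is just $\mb{P}(\boldxi_1 = -y) \ll (1+y)^{-10}$. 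So assume $N \ge 2$.

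For the first factor we apply \cref{rand-walk-aux} to the shifted condition. Since $\boldbeta(i) > 0$ is the same as $\boldbeta(i) - 1 \ge 0$, this is a walk staying nonnegative ending at $z-1$, modulo the first step, which we can handle by conditioning on $\boldxi_1 \ge 1$. Alternatively, we simply bound strict positivity by nonnegativity, which only enlarges the event. Either way the first factor is $\ll (1+z) N^{-3/2}$, and this bound holds uniformly.

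For the second factor we use that the increments have super-exponentially decaying tails in both directions. For $\boldbeta$ with $\Pois(1)-1$ steps the factor $\mb{P}(\boldxi = -y-z)$ is actually zero unless $y + z \le 1$, which is even better. To keep the argument valid for $\boldbeta'$ as the appendix intends, we use only the bound $\mb{P}(|\boldxi| \ge w) \ll (1+w)^{-20}$. This follows from Poisson tails, since $\mb{P}(\Pois(1) \ge w) \le 1/w!$.

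Combining, the sum is
\[ \ll N^{-3/2} \sum_{z\ge 1} (1+z)(1+y+z)^{-20} \ll N^{-3/2} (1+y)^{-18} \le N^{-3/2}(1+y)^{-10}, \]
which is \cref{last-step}. Summing over $y \ge 0$ then gives \cref{tau-pointwise}. The only point needing care is applying \cref{rand-walk-aux} at length $N-1$ with strict positivity, which we handle by the monotone relaxation above; nothing else is delicate.
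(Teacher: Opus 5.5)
Your proof is correct and follows essentially the same route as the paper. The paper likewise conditions on $\boldbeta(N-1)=z$, relaxes $\{\tau=N\}$ to $\{\min_{1\le i\le N-1}\boldbeta(i)>0\}$, applies \cref{rand-walk-aux} to get $\ll zN^{-3/2}$, and sums this against the rapidly decaying tail of the final increment; the conditioning on $\boldxi_1\ge 1$ you mention as an alternative is not needed.
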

\begin{proof}
\cref{tau-pointwise} is an immediate consequence of \cref{last-step}. The result is clear when $N = 1$, so suppose $N \ge 2$. To prove \cref{last-step} we use \cref{rand-walk-aux} (and the fact that $\{ \tau = N\}$ is contained in the event $\{\min_{1 \le i \le N-1} \boldbeta(i) > 0\}$), which gives $\mb{P}(\boldbeta(N - 1) = z, \;\tau = N) \ll z N^{-3/2}$, uniformly for $z > 0$. This gives
\[ \mb{P}\big(\boldbeta(N) = -y,\; \tau = N\big) \ll N^{-3/2} \sum_{z > 0} z \mb{P}(\boldxi' = y + z) \ll N^{-3/2} (1 + y)^{-10}. \qedhere \]
\end{proof}

We record one more preliminary lemma. Let $x \ge 0$ and $N \ge 1$ be integers. Denote
\[ F_N(x) := \mb{P}(\boldbeta(N) \ge x) - \mb{P}(\boldbeta(N) \le - x).\]
\begin{lemma}\label{techlem1}
We have the following bounds:
\begin{enumerate}
    \item $F_N(x) \ll N^{-1/2}$ uniformly;
    \item $|F_N(x) - F_{N - k}(x)| \ll k N^{-3/2}$ uniformly for integer $k$ with $0 \le k < N/2$.
\end{enumerate}
\end{lemma}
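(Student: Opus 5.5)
Both bounds come from writing $F_N(x)$ as a signed sum of point probabilities and then invoking the pointwise estimates for $p(N,\cdot)$ collected just above.

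\emph{Part (1).} We write
\[ F_N(x) = \sum_{y \ge x} \big( p(N,y) - p(N,-y) \big). \]
The key input is \cref{loc-limit}. Its main term $\frac{1}{\sqrt{2\pi N}}e^{-y^2/2N}$ is even in $y$, so it cancels between $p(N,y)$ and $p(N,-y)$, leaving an error of $O(N^{-1})$ for each $y$.

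\begin{itemize}
\item For $y \le N^{1/2}\log N$ this gives a total contribution of only $O(N^{-1/2}\log N)$, which is slightly too large.
\item The losing factor $\log N$ is avoided by summing the bound $O(N^{-1})$ only over $x \le y \le C N^{1/2}$, contributing $O(N^{-1/2})$.
\item For $y > CN^{1/2}$ we do not compare at all. We bound $\sum_{y> CN^{1/2}}p(N,\pm y)$ directly using \cref{poisson-large-der}, which handles $|y| \le N$, together with the large deviation estimate \cref{lemmaF.1} for $|y|>N$. This range gives $\ll \sum_{y > CN^{1/2}} e^{-y^2/4N} \ll N^{1/2}e^{-C^2/4}$.
\end{itemize}

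This is not yet $O(N^{-1/2})$. The cleaner route is to cut at $y = N^{1/2}\sqrt{10\log N}$: the Gaussian tail beyond this point is $\ll N^{-1}$. The comparison range then has length $\ll N^{1/2}(\log N)^{1/2}$, giving $O(N^{-1/2}(\log N)^{1/2})$, which still carries a log.

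I expect this to be the main obstacle: the error term in \cref{loc-limit} is uniform and does not decay in $y$, so a pure pointwise approach always loses a logarithm. Part (1) is actually trivial by another route. Since $0 \le \mb{P}(\boldbeta(N)\ge x),\,\mb{P}(\boldbeta(N)\le -x)$, we have $|F_N(x)| \le \mb{P}(|\boldbeta(N)|\ge x)$, which is useless for small $x$. For small $x$, however, $F_N(x) = F_N(0) - \sum_{0\le y<x}\big(p(N,y)-p(N,-y)\big) + O(p(N,0))$.

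The plan is therefore to use the Edgeworth-type symmetry directly. By a telescoping argument, $F_N(x) - F_N(x+1) = p(N,x) - p(N,-x)$. Differencing with \cref{poiss-der-1} and the explicit ratio $p(N,x+1)/p(N,x) = N/(N+x+1)$ shows that $p(N,x)-p(N,-x)$ is $O(xN^{-3/2}e^{-x^2/4N}) + O(N^{-1}e^{-\dots})$; this is a direct Stirling computation, as in the proof of \cref{loc-limit}. Summing this refined difference over $y\ge x$ gives $\ll N^{-3/2}\cdot N = N^{-1/2}$.

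\emph{Part (2).} We write
\[ F_N(x)-F_{N-k}(x) = \sum_{y\ge x}\big[(p(N,y)-p(N-k,y)) - (p(N,-y)-p(N-k,-y))\big]. \]
Each term is controlled by \cref{pnk-ineq}. Again the symmetric main part cancels, so we need the refined form $kN^{-3/2}\cdot (\text{decaying in }y/\sqrt N)$. Alternatively, we telescope in single steps of $k$: show $|F_n(x)-F_{n-1}(x)|\ll n^{-3/2}$ for each $n$ and sum over $n\in(N-k,N]$. For a single step, condition on the last increment: $F_n(x) = \mb{E}\,G_{n-1}(x-\boldxi_n)$ where $G_{n-1}(z) = \mb{P}(\boldbeta(n-1)\ge z)-\mb{P}(\boldbeta(n-1)\le -x)$. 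Expanding to second order in $\boldxi_n$ and using $\mb{E}\boldxi_n=0$ together with the pointwise derivative bounds $p(n,\cdot)\ll n^{-1/2}$ and \cref{poiss-der-1}-type second differences $\ll n^{-3/2}$, we get $F_n(x)-F_{n-1}(x)\ll n^{-3/2}$. The first-order term cancels by the mean-zero property; the second-order term involves the difference $p(n-1,x)-p(n-1,-x)$ of a point probability, which is $\ll n^{-3/2}$ after symmetric cancellation. Summing over $k$ steps yields (2).
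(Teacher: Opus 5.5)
There is a genuine gap in Part (2). Reducing to single steps and conditioning on $\boldxi_n$ is the right move; the paper also reduces to $k=1$, and then quotes a characteristic-function computation of Denisov--Tarasov--Wachtel. However, the cancellation you rely on is the wrong one.

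First, $G_{n-1}$ must be $G_{n-1}(z)=\mb{P}(\boldbeta(n-1)\ge z)-\mb{P}(\boldbeta(n-1)\le z-2x)$. The lower tail has to move with $\boldxi_n$ too. Otherwise the identity $F_n(x)=\mb{E}\,G_{n-1}(x-\boldxi_n)$ is false, and no cancellation between the two tails can occur.

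With this corrected, write $d(y):=p(n-1,y)-p(n-1,y-1)$. The expansion then has:
\begin{itemize}
\item first-order term $\mb{E}\boldxi_n\,\big(p(n-1,x)+p(n-1,-x)\big)=0$;
\item second-order term $-\tfrac12\big(d(x)+d(-x)\big)$, using $\mb{E}\boldxi_n^2=1$;
\item remainder $\ll \mb{E}|\boldxi_n|^3\,\sup_y|p(n-1,y+1)-2p(n-1,y)+p(n-1,y-1)|$.
\end{itemize}
So the second-order term is not $p(n-1,x)-p(n-1,-x)$, and that quantity is not $\ll n^{-3/2}$ either. The ratio computation behind your Part (1) gives $p(n,x)-p(n,-x)=\frac{(u^3-3u)e^{-u^2/2}}{3\sqrt{2\pi}\,n}+\cdots$ with $u=x/\sqrt n$. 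This has exact order $n^{-1}$ when $x\asymp\sqrt n$, consistent with your own bound $xN^{-3/2}e^{-x^2/4N}$. If that were the second-order term, you would only get $|F_n(x)-F_{n-1}(x)|\ll n^{-1}$, that is, $kN^{-1}$ in (2).

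What is actually required is cancellation one derivative further: $d(x)+d(-x)\ll n^{-3/2}$, which holds because the derivative of the Gaussian is odd. You also need the pointwise second-difference bound $\ll n^{-3/2}$ for the remainder. Neither follows from \cref{loc-limit}, \cref{poiss-der-1} or \cref{pnk-ineq}, which carry $N^{-1}$ errors or only control first differences in $\ell^1$, so both need proofs.

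Both do follow from the ratio formula. From $d(y)=-\frac{y}{n-1+y}\,p(n-1,y-1)$ one gets, for $0\le x\le n/3$,
\[ d(x)+d(-x)=\frac{x}{n-1}\big(p(n-1,-x-1)-p(n-1,x-1)\big)+O\Big(\frac{x^2}{n^2}\big(p(n-1,x-1)+p(n-1,-x-1)\big)\Big). \]
The bracket is $\ll \frac{x+1}{n}\,n^{-1/2}e^{-cx^2/n}$ by your Part (1) estimate plus two first differences, so $d(x)+d(-x)\ll n^{-3/2}$. Larger $x$ are handled by the tail bounds. The second-difference bound comes from the same formula. With these inputs the single-step bound, and hence (2), go through.

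Your final argument for Part (1) is fine, though heavier than necessary. The paper simply applies Berry--Ess\'een to both tails, whose Gaussian main terms $1-\Phi(x/\sqrt N)$ and $\Phi(-x/\sqrt N)$ coincide, leaving an $O(N^{-1/2})$ error.
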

\begin{proof}
(1) follows from the Berry-Ess\'een theorem. (2) By telescoping it suffices to check this for $k = 1$. This is done using characteristic functions in \cite[Lemma 10]{DTW-berry-esseen}. Both parts could also be done directly using Stirling's formula. 
\end{proof}

We are now ready for the proof of the main result.
\begin{proof}[Proof of \cref{berry-esseen-prop}] Let $x > 0$. By considering the time $k \ge 1$ at which the walk is first non-positive, we have
\[ \mb{P}\big(\boldbeta(N) \ge x,\, \tau > N\big) = \mb{P}(\boldbeta(N) \ge x) - \sum_{k = 1}^{N-1} \sum_{y = 0}^{\infty} \mb{P}\big(\boldbeta(k) = -y,\, \tau = k\big) \mb{P}(\boldbeta(N - k) \ge x + y).\] (Note that since $x > 0$, we cannot have $\tau = N$.) 
Similarly we have
\begin{align*} 0 & = \mb{P}\big(\boldbeta(N) \le -x, \,\tau > N)  = \mb{P}(\boldbeta(N) \le -x) \\ & - \sum_{k = 1}^{N-1} \sum_{y = 0}^{\infty} \mb{P}\big(\boldbeta(k) = -y,\, \tau = k\big) \mb{P}(\boldbeta(N - k) \le -x + y) - \mb{P}\big(\boldbeta(N) \le - x, \, \tau = N\big).\end{align*}
Subtracting the above two expressions and rearranging gives the identity
\begin{equation}\label{main-decomposition} \mb{P}\big(\boldbeta(N) \ge x, \, \tau > N\big) = E_1 - E_2 + E_3 - E_4,\end{equation} where
\[ E_1 = 
F_N(x)\mb{P}(\tau \ge N), \qquad E_2 =  \sum_{k = 1}^{N-1} \mb{P}(\tau = k) \big( F_{N - k}(x) - F_N(x) \big),\]  \[ E_3 = \sum_{k = 1}^{N-1} \sum_{y = 0}^{\infty} \mb{P}(\boldbeta(k) = -y, \tau = k) \mb{P} \big( \boldbeta(N- k) \in [x, x + y) \cup (-x, -x + y] \big) ,\] and $E_4 =  \mb{P}\big(\boldbeta(N) \le -x,\, \tau = N\big)$.
We have $E_1 \ll N^{-1}$ by \cref{techlem1} (1) and \cref{tau-pointwise}. To estimate $E_2$, we split into $1 \le k < N/2$ and $N/2 \le k \le N-1$. In the first range we use \cref{techlem1} (2) and \cref{tau-pointwise}, obtaining a bound $\ll N^{-3/2} \sum_{1 \le k < N/2} k^{-1/2} \ll N^{-1}$. In the second range we use \cref{techlem1} (1), \cref{tau-pointwise} and the triangle inequality, obtaining a bound $\ll \sum_{N/2 \le k \le N - 1} k^{-3/2} (N - k)^{-1/2} \ll N^{-1}$. Immediately from \cref{tau-pointwise} we have the bound $E_4 \ll N^{-3/2}$.

From this discussion and \cref{main-decomposition} we have
\begin{equation*} \mb{P}\big(\boldbeta(N) \ge x,\, \tau > N\big) = E_3 + O(N^{-1}).\end{equation*} 
The analysis of $E_3$, which provides the main term in the asymptotic, is the remaining task.
We first bound the contribution from $N/2 < k \le N - 1$. Using \cref{p-pointwise,last-step}, this contribution is 
\[\ll \sum_{N/2 < k \le N - 1} (N - k)^{-1/2} \sum_{y = 0}^{\infty} y \mb{P}\big(\boldbeta(k) = -y,\, \tau = k\big) \ll \sum_{N/2 < k \le N - 1} (N - k)^{-1/2} k^{-3/2} \ll N^{-1}.\]

For the remaining terms (with $k \le N/2$) in $E_3$ we use \cref{probs-cor}. We first bound the contribution of the error terms there. Using \cref{last-step}, this is 
\[ \ll \sum_{k \le N/2} \sum_{y = 0}^{\infty} \mb{P}(\boldbeta(k) = -y, \tau = k) \big( \frac{y^2}{N} + \frac{ky}{N^{3/2}}\big)\ll \sum_{k \le N/2}  k^{-3/2}  \sum_{y = 0}^{\infty} (1 + y)^{-10} \big( \frac{y^2}{N} + \frac{ky}{N^{3/2}}\big) \ll N^{-1} .\] 

Finally we come to the contribution from the main terms in \cref{probs-cor}, which is 
\[ \big(\frac{2}{\pi}\big)^{1/2}\frac{1}{\sqrt{N}} e^{-x^2/2N} \sum_{k \le N/2} \sum_{y = 0}^{\infty} y\mb{P}\big(\boldbeta(k) = -y,\, \tau = k\big) . \]
By \cref{last-step} we can complete the sum over $k$ back out to $k = \infty$ at a cost of $O(N^{-1})$. The remaining expression is then 
\[ \big(\frac{2}{\pi}\big)^{1/2} \frac{1}{\sqrt{N}} e^{-x^2/2N} \mb{E} |\boldbeta(\tau)|.\]
To conclude the proof of \cref{berry-esseen-prop} we need the fact that $(\frac{2}{\pi})^{1/2} \mb{E} |\boldbeta(\tau)| = h(-1)$. This follows from \cref{spitzer-beta,h0-tilde-val,boldbeta-tau}.\end{proof}

\subsection{Local limit from global}

We will now show that \cref{berry-esseen-prop} implies \cref{dtw-thm}, the crucial local limit theorem for random walks conditioned to stay positive. Note that \cref{berry-esseen-prop} and the fact that the derivative of $f(x) = e^{-x^2/2N}$ is $\ll N^{-1/2}$ pointwise already gives the crude bound
\begin{equation}\label{crude-local}  \mb{P}\big(\boldbeta(N) = x,\, \tau > N) = \mb{P}\big(\boldbeta(N) \ge x,\, \tau > N) - \mb{P}\big(\boldbeta(N) \ge x+1,\, \tau > N) \ll N^{-1}\end{equation}
uniformly in $x$; we will use this in the arguments.

\begin{proof}[Proof of \cref{dtw-thm}] Set $\eps := \frac{1}{40}$. 
We may assume $N$ sufficiently large throughout. We first restrict to the case
\begin{equation}\label{x-lower-upper} N^{1/2 - \eps} \le x \le N^{1/2 + \eps}.\end{equation}
If $x > N^{1/2 + \eps}$ then \cref{dtw-thm} is essentially immediate from the large deviation estimate \cref{lemmaF.1}. If $x < N^{1/2 - \eps}$ then \cref{dtw-thm} follows from \cref{rand-walk-aux}.

Henceforth, we assume that \cref{x-lower-upper} holds. Set $L' := \lfloor N^{1/4}\rfloor$ and $L := \lfloor N^{3/4}\rfloor$ (there is scope for flexibility in these parameters).
First observe that 
\[ \mb{P}\big(\boldbeta(N) = x,\, \tau > N\big) = \mb{P}\big(\boldbeta(N) = x, \, \tau > N - L) + O(N^{-10}).\]
Indeed if $\tau \in (N - L, N]$ then $\inf_{i \in (N - L, N]} \boldbeta(i) \le 0$ and so if $\boldbeta(N) = x$ then $|\sum_{i = N - L + 1}^N \boldxi_i| \ge |x| \ge N^{1/2 - \eps} > L^{3/5}$, and the probability of this can be seen to be tiny using \cref{lemmaF.1}.

We have (with $p(\cdot, \cdot)$ defined as in \cref{pnx-def})
\[ \mb{P}\big(\boldbeta(N) = x, \, \tau > N - L) = \sum_h \mb{P}\big(\boldbeta(N - L) = x - h,\, \tau > N - L\big) p(L,h) . \] 
We first replace the RHS here with 
\begin{equation}\label{aver} \sum_h \mb{P}\big(\boldbeta(N - L) = x - h,\, \tau > N - L\big) \frac{1}{L'} \sum_{h' \in [L']} p(L,h + h') .\end{equation} The error involved in doing this is, using \cref{poiss-der-1,crude-local},
\[ \ll \sup_{h' \in [L']}\sum_{h} | p(L, h) - p(L, h + h')| \cdot  \sup_{x'} \mb{P} \big(\boldbeta(n - L) = x',\, \tau > N - L\big) \ll \frac{L'}{NL^{1/2}} \ll N^{-9/8}.\]
This is an acceptable error and so henceforth we may work with \cref{aver}. Substituting $u = h + h'$ and retaining the variables $u,h'$ gives the equivalent expression
\[ \frac{1}{L'} \sum_u p(L,u) \mb{P}\big( \boldbeta(n) \in x - u + [L'], \,\tau > N - L\big).\]
Now we replace the right-hand probability term by the asymptotic from \cref{berry-esseen-prop}. The contribution of the $O(N^{-1})$ error is
\[ \ll N^{-1} \cdot \frac{1}{L'} \sum_u p(L,u) \ll \frac{1}{N L'} \ll N^{-5/4}, \] which is acceptable. It remains to show that the contribution from the main term is given by the claimed asymptotic in \cref{dtw-thm}, that is to say that 
\begin{equation}\label{main-term-2} \frac{h(-1)}{(n - L)^{3/2} L'} \sum_u p(L, u) \int^{x - u + L'}_{x - u} t e^{-t^2/2(n - L)} \,\mathrm{d}t = N^{-1}h(-1) W\big(\frac{x}{\sqrt{N}}\big) + O(N^{-1 - \eps}).\end{equation} 
The contribution from $|u| \le L^{3/5}$ is negligible using \cref{poisson-large-der}. For $|u| \le L^{3/5} \approx N^{9/20}$, since $x \ge N^{1/2 - \eps}$ we have $x - u, x - u + L' \asymp x$. The derivative of $t e^{-t^2/(N - L)}$ with respect to $t$ in the range $[x - u, x - u + L']$ is therefore $\ll 1 + |x|^2/N \ll N^{2\eps}$, and thus
\[  \frac{1}{L'} \int^{x - u + L'}_{x - u} t e^{-t^2/(N - L)} \,\mathrm{d}t =  (x - u) e^{-(x - u)^2/2(N - L)}  + O(N^{2\eps} L').  \]
When substituted into the LHS of \cref{main-term-2}, the contribution of the error term is $\ll N^{-3/2 + 2\eps} L' \ll N^{-9/8}$. Thus we are left with a main contribution to the LHS of \cref{main-term-2} of
\begin{equation}\label{main-term-3} \frac{h(-1)}{(N - L)^{3/2}} \sum_{|u| \le L^{3/5}} p(L, u)  (x - u) e^{-(x - u)^2/2(N - L)}.\end{equation} 
We replace the first $x - u$ with $x$. The error in doing this is
\[ \ll N^{-3/2}\sum_{|u| \le L^{3/5}} p(L,u) u \ll L^{3/5} N^{-3/2} \ll N^{-1 - 1/20}, \] which is acceptable. Thus we may replace \cref{main-term-3} by
\begin{equation}\label{main-term-4} \frac{h(-1) x}{(N - L)^{3/2}} \sum_{|u| \le L^{3/5}} p(L, u)  e^{-(x - u)^2/2(N - L)}.\end{equation} 
Now 
\begin{align*} \big( & \frac{N}{N - L}\big)^{3/2} e^{-(x - u)^2/2(N - L)}  = e^{-x^2/2N} \Big( e^{-\frac{x^2}{2} \big(\frac{1}{N - L} - \frac{1}{N}\big) + \frac{xu}{N - L} - \frac{u^2}{2(N - L)}}\Big) (1 + O(L/N)) \\ & \qquad = e^{-x^2/2N} \Big( 1 + O \big( \frac{|x|^2 L}{N^2} + \frac{|x|L^{3/5}}{N} + \frac{L^{6/5}}{N}\big)\Big) = e^{-x^2/2N} \big(1 + O(N^{-1/40})\big).\end{align*} 
Since $|x e^{-x^2/2N}| \ll N^{1/2}$, the total contribution of the $O(N^{-1/40})$ error term to \cref{main-term-4} is $\ll N^{-1 -1/40}$, which is acceptable. Thus we may replace \cref{main-term-4} in turn by 
\[ \big(\sum_{|u| \le L^{3/5}} p(L,u)\big) \cdot h(-1) N^{-1} W\big( \frac{x}{\sqrt{N}}\big).\]
To complete the proof, all that remains is to extend the sum back over all $u$, which can be done at negligible cost by \cref{poisson-large-der}.
\end{proof}

\section{Fourier estimates} \label{appB}

\begin{lemma}\label{a1-lem}
Let $R, N$ be integer parameters with $R \le N$, and $R$ sufficiently large. Let $X$ be sampled from $(N,  N+R]$, with $\mb{P}(X = j)$ being proportional to $1/j$ for all $j$. Consider the characteristic function $\phi_X(\xi) := \mb{E} e^{i \xi X}$ for $\xi \in [-\pi, \pi]$. Then we have the following estimates:
\begin{enumerate}
\item for $|\xi| \le 16/R$ we have $|\phi_X(\xi)| \le 1 - c\xi^2 R^2$ for some absolute $c > 0$;
\item for $16/R \le |\xi| \le \pi$ we have $|\phi_X(\xi)| \le \frac{1}{2}$.
\end{enumerate}    
\end{lemma}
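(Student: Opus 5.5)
The plan is to compare $X$ with a uniform variable $U$ on $(N,N+R]$. Since $R\le N$, the weights $1/j$ for $j\in(N,N+R]$ vary by at most a factor $2$. Write $\mb{P}(X=j)=w_j$, with $w_j=\kappa/j$ and $\kappa^{-1}=\sum_{N<j\le N+R}1/j$. Then $w_j\asymp 1/R$, $w_j$ is monotone decreasing in $j$, and $\sum_j|w_{j+1}-w_j|\le w_{N+1}-w_{N+R}\ll 1/R$. Both estimates will come from an explicit analysis of $\phi_X(\xi)=\sum_j w_je^{i\xi j}$.

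For part (2) we use summation by parts (Abel summation). Put $S_j:=\sum_{N<i\le j}e^{i\xi i}$; this geometric sum satisfies $|S_j|\le 1/|\sin(\xi/2)|\le \pi/|\xi|$ for $|\xi|\le\pi$. Then
\[ \phi_X(\xi)=w_{N+R}S_{N+R}+\sum_{N<j<N+R}(w_j-w_{j+1})S_j, \]
so $|\phi_X(\xi)|\le (\pi/|\xi|)\big(w_{N+R}+(w_{N+1}-w_{N+R})\big)=\pi w_{N+1}/|\xi|$. Since $w_{N+1}\le 2/R$ (from $\kappa^{-1}\ge R/(N+R)\ge R/(2N)$), we obtain $|\phi_X(\xi)|\le 2\pi/(R|\xi|)\le 2\pi/16<\tfrac12$ whenever $|\xi|\ge16/R$. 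This also fixes the constant $16$.

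For part (1), with $|\xi|\le 16/R$, we use the variance identity $1-|\phi_X(\xi)|^2=\tfrac12\mb{E}\,|e^{i\xi X}-e^{i\xi X'}|^2=2\,\mb{E}\sin^2(\xi(X-X')/2)$, where $X'$ is an independent copy of $X$. Since $|\xi(X-X')|\le 16$, the angle is not small, so a naive $\sin^2 y\gg y^2$ bound fails. Instead we restrict to the event $|X-X'|\le R/100$ together with $|X-X'|\ge R/200$. Because the weights are comparable to those of a uniform distribution, this event has probability $\gg 1$. On it, $|\xi(X-X')/2|\le 16/200<1$, so $\sin^2(\xi(X-X')/2)\gg \xi^2R^2$. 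This gives $1-|\phi_X|^2\gg \xi^2R^2$, and hence $|\phi_X|\le 1-c\xi^2R^2$.

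The main care lies in the constants: the argument needs $R$ large so that $w_j\asymp 1/R$ holds cleanly, and it needs the event of positive probability in part (1) to be chosen in a way that avoids the regime where $\sin$ degenerates near $\pi$. Neither point is a serious obstacle.
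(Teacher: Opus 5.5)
Your proposal is correct and is essentially the paper's argument: part (2) uses the same Abel summation against the geometric-sum bound $\pi/|\xi|$, with $w_{N+1}\le 2/R$. Part (1) uses the same identity $1-|\phi_X(\xi)|^2=2\,\mb{E}\sin^2(\xi(X-X')/2)$ with $w_j\gg 1/R$; the paper sums over all differences $h\le \pi R/16$ using $\sin x\ge \frac{2}{\pi}x$, whereas you restrict to the window $R/200\le |X-X'|\le R/100$, and either choice works.
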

\begin{proof} (1) The following argument, given by ChatGPT Pro 5.4 (but shortened, rewritten and checked by us) is cleaner than the one we originally wrote. For $1\le r \le R$, write $p_r := \mb{P}(X = N + r)$. Thus $p_r = \frac{1}{(N + r)L}$ where $L :=  \sum_{r = 1}^R \frac{1}{N + r}$. We have $L \le \frac{R}{N + 1}$, thus $ p_r \ge \frac{N+1}{R(N + r)} \ge \frac{1}{2R}$, using that $r \le R \le N$ in the second inequality.
We have $|\phi_X(\xi)| = \big| \sum_{r = 1}^R p_r e^{i \xi r}\big|$. Since $|\phi_X(\xi)| \le 1$ we have (after a short calculation)
\[ 1 - |\phi_X(\xi)| \ge \frac{1}{2}\big(1 - |\phi_X(\xi)|^2\big) = 2 \sum_{1 \le r < s \le R} p_r p_s \sin^2 \big(\frac{(s - r)\xi}{2}\big) = 2\sum_{h = 1}^R \sin^2 \big(\frac{h\xi}{2}\big)\sum_{1 \le r \le R - h} p_r p_{r + h} .\]
For $h \le R/2$ we have
\[ \sum_{1 \le r \le R - h} p_r p_{r + h} \ge \frac{R - h}{4R^2} \ge \frac{1}{8R}.\]
Therefore
\[ 1 - |\phi_X(\xi)| \ge \frac{1}{4R} \sum_{h = 1}^{\lfloor R/2\rfloor} \sin^2 \big(\frac{h\xi}{2}\big).\]
To bound this below, we use only the terms with $h \le \pi R/16$. For these, using the fact that $\sin x \ge \frac{2}{\pi} x$ for $0 \le x \le \frac{\pi}{2}$, we have $\sin^2(h\xi/2) \gg h^2|\xi|^2$. The result follows. 

(2) For this we use partial summation, noting that
\begin{equation*} L \phi_X(\xi) = \frac{1}{N+R} \sum_{j = N+1}^{N+R} e^{i \xi j} + \int^{N+R}_{N+1} \frac{\mathrm{d}t}{t^2} \Big( \sum_{j = N+1}^t e^{ i \xi j} \Big).\end{equation*}
From this and the geometric series bound $|\sum_{j \in I} e^{i\xi j}| \le \pi |\xi|^{-1}$ (for any interval $I$) we obtain $|\phi_X(\xi)| \le \pi/(N L|\xi|)$ and so, since $L \ge R/(N+R) \ge R/2N$, $|\phi_X(\xi)| \le 2\pi/R|\xi|$. The result follows.
    \end{proof}

\section{Various real-variable inequalities}\label{appC}

\begin{lemma}\label{lemmaB1}
Let $x_1,\dots, x_n \ge 0$ be real. Let $\ell \in \N$. Then 
\[ \big| \big(\sum_{i = 1}^n x_i \big)^{\ell} - \ell! \sum_{1 \le i_1 < \cdots < i_{\ell} \le n} x_{i_1} \cdots x_{i_{\ell}} \big| \le \ell (\ell-1) (\max_i x_i) \big( \sum_{i = 1}^n x_i \big)^{\ell - 1}. \]
\end{lemma}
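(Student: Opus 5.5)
Expand $(\sum_i x_i)^{\ell}$ as the sum over all ordered tuples $(i_1,\dots,i_\ell)\in[n]^\ell$ of $x_{i_1}\cdots x_{i_\ell}$. The tuples with pairwise distinct entries contribute exactly $\ell!\sum_{i_1<\cdots<i_\ell}x_{i_1}\cdots x_{i_\ell}$, since each $\ell$-element subset corresponds to $\ell!$ orderings. Because all $x_i\ge 0$, the difference in the statement is nonnegative and equals the sum over ordered tuples having at least one repeated entry. So no absolute value issue arises: it suffices to bound this sum of nonnegative terms from above.

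For any tuple with a repetition there is a pair of positions $1\le a<b\le\ell$ with $i_a=i_b$. By a union bound over the $\binom{\ell}{2}$ choices of $(a,b)$, the sum over non-injective tuples is at most $\sum_{a<b}\sum_{\text{tuples with } i_a=i_b}x_{i_1}\cdots x_{i_\ell}$. For fixed $(a,b)$ the inner sum is
\[ \Big(\sum_i x_i^2\Big)\Big(\sum_i x_i\Big)^{\ell-2}\le (\max_i x_i)\Big(\sum_i x_i\Big)^{\ell-1}, \]
using $\sum_i x_i^2\le(\max_i x_i)\sum_i x_i$.

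Summing over pairs gives the bound $\binom{\ell}{2}(\max_i x_i)(\sum_i x_i)^{\ell-1}$, which is at most the claimed $\ell(\ell-1)(\max_i x_i)(\sum_i x_i)^{\ell-1}$; the stated constant has a factor $2$ to spare. The cases $\ell=1$ and $\ell\ge 2$ are handled uniformly, since for $\ell=1$ there are no repeated tuples and both sides vanish. There is no real obstacle; the only point to handle carefully is the overcounting in the union bound, and that only helps since every term is nonnegative.
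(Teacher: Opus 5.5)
Your proof is correct and takes the same route as the paper. Both write the difference as the sum over ordered tuples with a repeated index, union-bound over the positions of the repeat, and bound each position's contribution by $(\max_i x_i)(\sum_i x_i)^{\ell-1}$. Counting only the $\binom{\ell}{2}$ unordered pairs, as you do, just makes explicit the factor-of-$2$ slack in the stated constant $\ell(\ell-1)$.
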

\begin{proof} 
By expanding out, we have
\[ \big(\sum_{i = 1}^n x_i \big)^{\ell} - \ell! \sum_{1 \le i_1 < \cdots < i_{\ell} \le n} x_{i_1} \cdots x_{i_{\ell}} \] is the sum $\sum_{i_1,\dots, i_{\ell}} x_{i_1} \cdots x_{i_{\ell}}$ in which there is at least one repeated index. The contribution from (for example) $i_{\ell-1} = i_{\ell}$ is
\[ \le (\max x_i) \sum_{i_1,\dots, i_{\ell-1}}x_{i_1} \cdots x_{i_{\ell-1}} = (\max x_i) \big( \sum_{i = 1}^n x_i\big)^{\ell - 1}. \]
Summing over all possibilities for the position of the repeated indices gives the result.\end{proof}

\begin{lemma}\label{calc-1}
Let $c \in (0,1]$. Suppose that $x, y \ge 1$ and $M \ge 0$ are real numbers with $y \ge x + x^c - M$. Then $y \ge x + \frac{1}{2} y^c - M$.
\end{lemma}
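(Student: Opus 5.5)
The plan is to reduce everything to comparing $x^c$ with $\frac12 y^c$. Since $y \ge x + x^c - M$, it suffices to show that $x^c \ge \frac{1}{2}y^c$ whenever the conclusion is not already trivially true. I split into two cases according to the size of $y$ relative to $x$.

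\emph{Case 1: $y \le 2^{1/c} x$.} Then $y^c \le 2x^c$, so $\frac12 y^c \le x^c$. The hypothesis gives $y \ge x + x^c - M \ge x + \frac12 y^c - M$ directly.

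\emph{Case 2: $y > 2^{1/c}x$.} I would first try to argue that in this case the conclusion follows from $y$ being large compared with $x$, namely from $y - \frac12 y^c \ge x$. Since $c \le 1$ and $y \ge 1$, we have $y^c \le y$, hence $y - \frac12 y^c \ge \frac12 y$. Moreover $2^{1/c} \ge 2$, so $\frac12 y > \frac12 \cdot 2x = x$. Therefore $y \ge x + \frac12 y^c \ge x + \frac12 y^c - M$, using $M \ge 0$.

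The argument is elementary and there is no real obstacle. The only step needing care is checking that the threshold $2^{1/c}$ works in both cases: in Case 1 it is exactly what makes $y^c \le 2x^c$, and in Case 2 it is what guarantees $\frac12 y \ge x$, via $2^{1/c} \ge 2$ for $c \in (0,1]$. The hypothesis $x \ge 1$ is not really needed; $y \ge 1$ is used only through $y^c \le y$. I would then record that the companion inequality \cref{calc-2}, used in the main text for $n - q_d$, is proved by the same two-case comparison with $x, y$ replaced by $n - i_d, n - q_d$ and the inequalities reversed.
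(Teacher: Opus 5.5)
Your proof is correct, and it takes a somewhat different route from the paper's.

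\textbf{The paper's route.} It splits according to whether $x < M$ or $x \ge M$. When $x < M$ the conclusion is trivial, since $x - M < 0$ and $\frac12 y^c \le y$. When $x \ge M$, it sets $z := x + x^c - M$ and notes that $1 \le z \le 2x \le 2^{1/c}x$. Hence $z^c \le 2x^c$, which gives $z - \frac12 z^c \ge x - M$. It then transfers this to $y$ using the hypothesis $y \ge z$ and the fact that $t \mapsto t - \frac12 t^c$ is increasing on $[1,\infty)$.

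\textbf{Your route.} You split on $y$ itself relative to $2^{1/c}x$. The key comparison $y^c \le 2x^c$ is applied to $y$ directly rather than to the extremal value $z$. The complementary case is handled by the crude bound $y - \frac12 y^c \ge \frac12 y > x$, which does not use the hypothesis at all.

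\textbf{What each buys.} Your version is slightly shorter, with no monotonicity step and no case split on $M$. As you note, it needs only $x \ge 0$; the paper uses $x \ge 1$ both for $x^c \le x$ and to guarantee $z \ge 1$. The paper's split on $x$ versus $M$ has the advantage of running parallel to its proof of \cref{calc-2}.

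\textbf{Your closing remark on \cref{calc-2}.} This is a little too quick. There, the case $y > 2^{1/c}x$ is not a mere reversal of your Case 2. You must use the hypothesis $y \le x + M$ to deduce $x < M$, and then argue
$y + \frac12 y^c \le \frac32 y \le \frac32(x+M) \le x + 2M$.
This is exactly where the $2M$ in that lemma comes from.
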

\begin{proof}  First suppose that $x < M$; in this case $y \ge y^c \ge  x + y^c - M$ trivially. Now suppose that $x \ge M$. We have $2^{1/c} x \ge 2x > x + x^c - M$. The RHS is non-negative, so we may raise to the power $c$ giving $x^c \ge \frac{1}{2}(x + x^c - M)^c$ and hence $(x + x^c - M) - \frac{1}{2}(x + x^c - M)^c \ge x - M$. The result now follows from the assumption and the fact that $y - \frac{1}{2}y^c$ is an increasing function of $y$ for $y \ge 1$.\end{proof}

\begin{lemma}\label{calc-2}
Let $c \in (0,1]$. Suppose that $x, y \ge 1$ and $M \ge 0$ are real numbers with $y \le x - x^c + M$. Then $y \le x - \frac{1}{2} y^c + 2M$.
\end{lemma}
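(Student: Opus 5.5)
The plan is to mirror the proof of \cref{calc-1}, splitting into two cases according to whether $x$ is small or large compared with $M$. The one tool needed is that the function $y \mapsto y + \frac{1}{2}y^c$ is increasing on $[1,\infty)$. This lets us convert the hypothesis, which is an upper bound on $y$, into an upper bound on $y + \frac12 y^c$.

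\emph{Case $x \le 2M$.} Here $x - \frac12 y^c + 2M \ge x - \frac12 y^c + x$, so it suffices to show $y \le 2x - \frac12 y^c$. The hypothesis gives $y \le x + M$, and since $M \ge x/2$ this is not directly enough. I would instead argue as follows. We have $y \le x - x^c + M \le x + M$, and $y^c \le y$. Hence $y + \frac12 y^c \le \frac32 y \le \frac32(x+M)$. We need $\frac32(x+M) \le x + 2M$, which is equivalent to $x \le M$. So this case should be set up with threshold $x \le M$ rather than $x \le 2M$.

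\emph{Case $x > M$.} Put $z := x - x^c + M$. Then $z \ge x - x^c \ge 0$, and I expect $z \ge 1$, or at least that the case $z<1$ can be handled trivially (if $y \le z < 1$ we contradict $y \ge 1$). By monotonicity of $y \mapsto y + \frac12 y^c$ and the hypothesis $y \le z$, we get $y + \frac12 y^c \le z + \frac12 z^c$. So it suffices to prove
\[ z + \tfrac12 z^c \le x + 2M, \qquad\text{that is,}\qquad \tfrac12 z^c \le x^c + M. \]
Now $z \le x + M \le 2x$ because $x > M$. Hence $z^c \le 2^c x^c \le 2x^c$, and so $\frac12 z^c \le x^c \le x^c + M$, as required.

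The only delicate point is the choice of threshold between the cases. With threshold $M$, the first case closes by the crude bound $\frac32 y \le \frac32(x+M) \le x + 2M$, and the second case uses $z \le 2x$. I do not expect any real obstacle beyond checking the edge case $z < 1$, which is vacuous.
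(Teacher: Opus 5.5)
Your proof is correct and is essentially the paper's argument: the same case split at $x \le M$ versus $x \ge M$, the same crude bound $y + \frac12 y^c \le \frac32 y \le \frac32(x+M) \le x+2M$ in the first case, and in the second case the same monotonicity step followed by $(x - x^c + M)^c \le (2x)^c \le 2x^c$. The edge case $z<1$ that you flag cannot arise, since the hypothesis gives $z \ge y \ge 1$; in any case $t \mapsto t + \frac12 t^c$ is increasing on all of $[0,\infty)$.
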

\begin{proof}
Suppose first that $x \le M$. Then $y + \frac{1}{2} y^c \le \tfrac{3}{2}y \le \tfrac{3}{2}(x + M) \le x + 2M$. Alternatively, suppose $x \ge M$. Then 
\begin{align*}
y + \tfrac{1}{2} y^c \le (x - x^c + M) & + \tfrac{1}{2}(x - x^c + M)^c \\ & \le (x - x^c + M) + \tfrac{1}{2}(x + M)^c \le (x - x^c + M) + x^c = x + M, 
\end{align*}
where we used $2^{1/c} \ge 2$ and $x \ge M$ in the penultimate step.
\end{proof}

\section{Bounded Lipschitz distance}\label{bl-app}
If $\psi \in C(\R/\Z)$ is a function, define \[ \Lip(\psi) := \sup_{x \ne y} \frac{|\psi(x) - \psi(y)|}{\Vert x - y \Vert_{\R/\Z}}.\] If this supremum is finite, we say that $\psi$ is Lipschitz. In this case we define the Lipschitz norm of $\psi$ by 
\[ \Vert \psi \Vert_{\Lip} := \Vert \psi \Vert_{\infty} + \Lip(\psi),\] where $\Vert t \Vert_{\R/\Z}$ denotes distance to the nearest integer. This norm is clearly translation-invariant.

Suppose that $\mu$ is a Borel measure on $\R/\Z$. In our paper $\mu$ will always be positive (meaning $\mu(E) \ge 0$ for any measurable set $E$). Given two such measures $\mu,\nu$ we define 
\[ \dist{\mu}{\nu} := \sup_{\Vert \psi \Vert_{\Lip} \le 1} \big| \int \psi \, \mathrm{d}\mu - \int \psi \, \mathrm{d}\nu\big|.\] This is the \emph{bounded Lipschitz} distance on the space $\mathcal{M}^+(\R/\Z)$ of positive Borel measures. That is satisfies symmetry and the triangle inequality is clear. That $\dist{\mu}{\nu} = 0$ only when $\mu = \nu$ follows from the fact that the characteristic function $1_A$ of any closed set $A \subset \R/\Z$ may be approximated monotonically by Lipschitz functions $f_n(x) := \max(1 - n \operatorname{dist}(x, A), 0)$, so by the monotone convergence theorem we have, if $\dist{\mu}{\nu} = 0$, that $\mu(A) = \nu(A)$ for all $A$.

\begin{lemma}\label{completeness-measures}
$\mathcal{P}(\R/\Z)$ is complete with respect to the bounded Lipschitz distance.
\end{lemma}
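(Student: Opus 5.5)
The plan is to show that a Cauchy sequence $(\mu_n)$ in $\mathcal{M}^+(\R/\Z)$ for $\dist{\cdot}{\cdot}$ converges to a positive Borel measure $\mu$. I will argue via weak-$*$ compactness and then upgrade weak-$*$ convergence to convergence in the bounded Lipschitz distance. (The statement is written with $\mathcal{P}(\R/\Z)$, but the paper applies it to the positive measures $\mu^{(\ell)}$, which are not probability measures, so the plan covers $\mathcal{M}^+$; the probability case is the same argument with total mass $1$.)

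\textbf{Step 1: uniform mass bound.} Take $\psi\equiv 1$, which has $\Vert \psi\Vert_{\Lip}=1$. Then $|\mu_n(\R/\Z)-\mu_m(\R/\Z)|\le \dist{\mu_n}{\mu_m}$. So the total masses form a Cauchy sequence of reals, converging to some $M\ge 0$, and in particular $\sup_n \mu_n(\R/\Z)<\infty$.

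\textbf{Step 2: a candidate limit functional.} For each Lipschitz $\psi$, the numbers $\int\psi\,\mathrm{d}\mu_n$ are Cauchy, with
\[ \Big|\int\psi\,\mathrm{d}\mu_n-\int\psi\,\mathrm{d}\mu_m\Big|\le \Vert\psi\Vert_{\Lip}\,\dist{\mu_n}{\mu_m}. \]
So $\Lambda(\psi):=\lim_n\int\psi\,\mathrm{d}\mu_n$ exists. It is linear, positive (it is a limit of positive functionals), and satisfies $|\Lambda(\psi)|\le (\sup_n\mu_n(\R/\Z))\Vert\psi\Vert_\infty$.

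Lipschitz functions are uniformly dense in $C(\R/\Z)$, for instance via Stone--Weierstrass using trigonometric polynomials. Hence $\Lambda$ extends uniquely to a bounded positive linear functional on $C(\R/\Z)$. By the Riesz representation theorem it is given by a positive Borel measure $\mu$, which has finite mass $M$.

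Alternatively, one can bypass the explicit extension. Banach--Alaoglu and the mass bound give a weak-$*$ convergent subsequence; its limit must agree with $\Lambda$ on Lipschitz functions, and this identifies $\mu$.

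\textbf{Step 3: convergence in $\dist{\cdot}{\cdot}$.} Fix $\eps>0$ and choose $N$ with $\dist{\mu_n}{\mu_m}\le\eps$ for all $n,m\ge N$. For any $\psi$ with $\Vert\psi\Vert_{\Lip}\le1$ and any $n\ge N$,
\[ \Big|\int\psi\,\mathrm{d}\mu_n-\int\psi\,\mathrm{d}\mu_m\Big|\le\eps \quad\text{for all } m\ge N. \]
Letting $m\to\infty$ gives $\big|\int\psi\,\mathrm{d}\mu_n-\int\psi\,\mathrm{d}\mu\big|\le\eps$. Since $N$ does not depend on $\psi$, taking the supremum over $\psi$ yields $\dist{\mu_n}{\mu}\le\eps$ for all $n\ge N$.

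The only step needing care is Step 2: checking that the limit functional really is a positive measure of finite mass. This is where the uniform mass bound from Step 1 and the density of Lipschitz functions are used. Step 3 is then the standard uniformity argument.
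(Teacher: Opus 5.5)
Your proof is correct, but it reaches the conclusion by a different and somewhat more elementary route than the paper.

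\emph{The paper's route.} The paper works with probability measures only. It uses Riesz together with sequential Banach--Alaoglu to extract a weak-$*$ convergent subsequence with limit $\mu$. It then uses the Cauchy property to get $\int f\,\mathrm{d}\mu_n\to\int f\,\mathrm{d}\mu$ along the full sequence for Lipschitz $f$. Finally it invokes Arzel\`a--Ascoli (compactness of the unit ball $\Vert f\Vert_{\Lip}\le 1$ in the uniform topology) to make this convergence uniform over that ball. General positive measures are handled afterwards by normalising the masses $\ell_n\to\ell$ and rescaling.

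\emph{Your route.} You construct the limit functional directly on Lipschitz functions, with no subsequence. You extend it to $C(\R/\Z)$ using density and the uniform mass bound. You then get uniformity for free by letting $m\to\infty$ in the uniform Cauchy estimate, exactly as in the proof that bounded functions under the sup norm form a complete space. Since you never normalise, non-probability measures are covered in one pass. When all $\mu_n$ are probability measures, your limit has mass $\lim_n\mu_n(\R/\Z)=1$, which gives the stated lemma.

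\emph{Comparison.} Your approach buys economy: no Alaoglu, no Arzel\`a--Ascoli, and no separate rescaling step. The paper's Arzel\`a--Ascoli step buys a slightly stronger by-product. It shows that weak-$*$ convergence of positive measures with bounded total mass already implies convergence in the bounded Lipschitz distance. Your argument does not show this, since it relies on the Cauchy hypothesis being uniform in $\psi$, but the stronger fact is not needed here.

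\emph{One point to make explicit.} In Step 2, positivity of the extended functional should be justified. For continuous $f\ge 0$, the limit $\lim_n\int f\,\mathrm{d}\mu_n$ itself exists, by density of Lipschitz functions and the mass bound. So $\Lambda(f)$ is literally a limit of nonnegative numbers.
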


\begin{proof}
We use the Riesz representation theorem, namely that $\mathcal{P}(\R/\Z)$ may be identified with the positive linear functionals on $\Lambda : C(\R/\Z) \rightarrow \R$ with $\Lambda(1) = 1$.
Suppose that $(\mu_n)_{n =1}^{\infty}$ is a sequence of probability measures which is Cauchy in the bounded Lipschitz distance. By the Riesz theorem and Alaoglu's theorem that the unit ball of $C(\R/\Z)^*$ is sequentially compact, there is some probability measure $\mu$ such that $\int f \, \mathrm{d}\mu_n \rightarrow \int f \, \mathrm{d}\mu$ along a subsequence of the $\mu_n$, for all $f \in C(\R/\Z)$. By the Cauchy property it then follows that $\int f \, \mathrm{d}\mu_n \rightarrow \int f \, \mathrm{d}\mu$ for all Lipschitz $f$. We need this convergence to hold uniformly for all $f$ in the unit ball $\Vert f \Vert_{\Lip} \le 1$. This follows from the compactness of the latter in the uniform topology, which is an instance of the Arzel\`a-Ascoli theorem.\end{proof}

Now we prove that the space of positive (but not necessarily probability-) measures is complete in the bounded Lipschitz metric. Suppose that $(\mu_n)_{n = 1}^{\infty}$ is a Cauchy sequence. The sequence $\int 1 \, \mathrm{d}\mu_n = \mu_n(\R/\Z)$ is then Cauchy and so tends to some limit $\ell$. If $\ell = 0$, the result is trivial; we have $\mu_n \rightarrow 0$. Set $\ell_n := \mu_n(\R/\Z)$. Consider (after discarding any small values of $n$ for which $\ell_n = 0$) the probability measures $\mu'_n := \frac{1}{\ell_n} \mu_n$.
If $\Vert f \Vert_{\Lip} \le 1$ then 
\begin{align*}
\Big| \int f \, \mathrm{d}\mu'_n - \int f \, \mathrm{d}\mu'_m \Big| & \le \frac{1}{\ell_n}\Big| \int f \, \mathrm{d}\mu_n - \int f \, \mathrm{d}\mu_m\Big| + \Big| \frac{1}{\ell_n} - \frac{1}{\ell_m}\Big| \Big| \int f \, \mathrm{d}\mu_m\Big| \\ & \le \frac{1}{\ell_n} \dist{\mu_n}{\mu_m} + \Big| \frac{1}{\ell_n} - \frac{1}{\ell_m} \Big|\ell_m \rightarrow 0
\end{align*}
as $m,n \rightarrow \infty$. Thus the $\mu'_n$ are a Cauchy sequence and so by \cref{completeness-measures} there is a probability measure $\mu'$ such that $\dist{\mu'_n}{\mu'} \rightarrow 0$. We claim that $\dist{\mu_n}{\ell \mu'} \rightarrow 0$. This is from the fact that if $\Vert f \Vert_{\Lip} \le 1$ then 
\begin{align*}
\Big| \int f \, \mathrm{d}\mu_n - \ell \int f \, \mathrm{d}\mu'\Big| & \le \ell_n \Big| \int f \, \mathrm{d}\mu'_n - \int f \, \mathrm{d}\mu'\Big| + |\ell_n - \ell| \Big| \int f \, \mathrm{d}\mu' \Big| \\ & \le \ell_n \dist{\mu'_n}{\mu'} + |\ell_n - \ell| \rightarrow 0.
\end{align*}
This completes the proof. \vspace*{8pt}

\emph{Convolutions.} Recall that if $f \in C(\R/\Z)$ and $\mu$ is a measure then $f \ast \mu(x) := \int f(x - y) \, \mathrm{d} \mu(y)$.

\begin{lemma}\label{simple-lip-conv}
Let $f$ be a Lipschitz function on $\R/\Z$. Then 
\[ \Vert f \ast (\mu - \nu) \Vert_{\infty} \le \dist{\mu}{\nu} \Vert f \Vert_{\Lip}.\]
\end{lemma}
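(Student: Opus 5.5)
The plan is to fix $x \in \R/\Z$ and read $(f \ast (\mu - \nu))(x)$ as the integral of a single test function against $\mu - \nu$, then apply the definition of $\dist{\mu}{\nu}$ after normalising. Concretely, put $\psi_x(y) := f(x - y)$. By definition of convolution,
\[ f \ast \mu(x) - f \ast \nu(x) = \int \psi_x \, \mathrm{d}\mu - \int \psi_x \, \mathrm{d}\nu .\]
So it suffices to check that $\Vert \psi_x \Vert_{\Lip} = \Vert f \Vert_{\Lip}$, which holds uniformly in $x$.

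For this, note that $\Vert \psi_x \Vert_\infty = \Vert f\Vert_\infty$ trivially. For the Lipschitz constant, $|\psi_x(y) - \psi_x(y')| = |f(x-y) - f(x-y')|$. Since reflection and translation are isometries of $\R/\Z$, we have $\Vert (x - y) - (x - y')\Vert_{\R/\Z} = \Vert y - y'\Vert_{\R/\Z}$, and hence $\Lip(\psi_x) = \Lip(f)$. This is the "translation-invariant" remark made just after the definition of the norm, extended to reflections.

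If $\Vert f \Vert_{\Lip} = 0$ then $f = 0$ and there is nothing to prove. Otherwise apply the definition of $\dist{\cdot}{\cdot}$ to $\psi_x / \Vert f \Vert_{\Lip}$, which has Lipschitz norm $1$, and multiply back through:
\[ |f \ast (\mu-\nu)(x)| \le \Vert f \Vert_{\Lip}\, \dist{\mu}{\nu}. \]
Taking the supremum over $x$ gives the claim.

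There is no real obstacle here. The only point needing care is the invariance of the Lipschitz norm under $y \mapsto x - y$, which follows because this map is an isometry of the circle metric.
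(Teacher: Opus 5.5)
Your proof is correct and follows the paper's argument: write $f \ast (\mu-\nu)(x)$ as $\int f_x\,\mathrm{d}\mu - \int f_x\,\mathrm{d}\nu$ with $f_x(y) := f(x-y)$, observe that $f_x$ has the same Lipschitz norm as $f$, and apply the definition of $\dist{\mu}{\nu}$. Your explicit normalisation and the separate treatment of $\Vert f\Vert_{\Lip}=0$ are simply spelled-out details of the step the paper calls ``essentially immediate.''
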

\begin{proof}
This is essentially immediate from the definition and the fact that $f \ast (\mu - \nu)(x) = \int f_x \, \mathrm{d}\mu - \int f_x \, \mathrm{d}\nu$ where $f_x(y) := f(x - y)$ has the same Lipschitz norm as $f$.
\end{proof}

We conclude with the following lemma. Here, $\Vert \mu \Vert_{\BL} := \dist{\mu}{0}$.

\begin{lemma}\label{smooth-conv-meas}
Suppose that $f \in C^{\infty}(\R/\Z)$ and that $\mu$ is a positive Borel measure with $\Vert \mu \Vert_{\BL} < \infty$. Then $f \ast \mu \in C^{\infty}(\R/\Z)$ and
\begin{equation}\label{f-mu-lip-conv}  \Vert f \ast \mu\Vert_{\Lip} \le \big( \Vert f \Vert_{\infty} +  \Vert f'\Vert_{\infty}\big) \Vert \mu \Vert_{\BL}.\end{equation}
\end{lemma}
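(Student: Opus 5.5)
The plan is to differentiate under the integral sign, using only that $\mu$ is a positive measure of finite total mass. First, $\mu(\R/\Z) = \int 1 \, \mathrm{d}\mu \le \Vert \mu \Vert_{\BL}$, since the constant function $1$ has $\Vert 1 \Vert_{\Lip} = 1$. So $\mu$ is a finite positive measure, and $\Vert \mu\Vert_{\BL} = \mu(\R/\Z)$ (positivity gives the reverse inequality $|\int\psi\,\mathrm{d}\mu|\le \Vert\psi\Vert_\infty \mu(\R/\Z)$). Every derivative $f^{(j)}$ is continuous on a compact space, hence bounded and uniformly continuous.

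For smoothness, I will show by induction on $j$ that $(f\ast\mu)^{(j)} = f^{(j)}\ast\mu$. The difference quotient is
\[ \frac{f\ast\mu(x+h)-f\ast\mu(x)}{h} = \int \frac{f(x+h-y)-f(x-y)}{h}\,\mathrm{d}\mu(y), \]
and by the mean value theorem the integrand converges to $f'(x-y)$ uniformly in $y$ as $h\to 0$. Indeed, the error is at most $\sup_{|s|\le |h|}\Vert f'(\cdot+s)-f'\Vert_\infty$, which tends to zero by uniform continuity of $f'$. Since $\mu$ is finite, the integrals converge, which gives $(f\ast\mu)' = f'\ast\mu$. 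Continuity of $g\ast\mu$ for continuous $g$ follows in the same way from uniform continuity. Iterating yields $f\ast\mu\in C^\infty(\R/\Z)$.

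For \cref{f-mu-lip-conv}, first note $\Vert f\ast\mu\Vert_\infty \le \Vert f\Vert_\infty \mu(\R/\Z)$. Next, $\Lip(f\ast\mu)\le \Vert (f\ast\mu)'\Vert_\infty = \Vert f'\ast\mu\Vert_\infty \le \Vert f'\Vert_\infty\mu(\R/\Z)$. Here we use the mean value theorem along the shorter arc on $\R/\Z$, which gives $\Lip(g)\le\Vert g'\Vert_\infty$ for smooth $g$. Adding the two bounds and using $\mu(\R/\Z)\le\Vert\mu\Vert_{\BL}$ gives \cref{f-mu-lip-conv}.

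There is no real obstacle. The only point needing care is the uniform convergence used to justify differentiating under the integral, which the compactness of $\R/\Z$ supplies.
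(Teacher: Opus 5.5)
Your proof is correct and follows the paper's argument essentially step for step. You bound the total mass by $\Vert\mu\Vert_{\BL}$, differentiate under the integral to get $(f\ast\mu)'=f'\ast\mu$ and induct, and then combine $\Vert f\ast\mu\Vert_\infty\le\Vert f\Vert_\infty\mu(\R/\Z)$ with $\Lip(f\ast\mu)\le\Vert f'\ast\mu\Vert_\infty\le\Vert f'\Vert_\infty\mu(\R/\Z)$. The only cosmetic difference is that you justify differentiating under the integral via uniform continuity of $f'$, whereas the paper uses the second-order Taylor bound $O(|h|^2\Vert f''\Vert_\infty\,\mu(\R/\Z))$.
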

\begin{proof}
We have \begin{equation}\label{total-meas-triv} \mu(\R/\Z) = \int 1 \,\mathrm{d}\mu \le \Vert \mu \Vert_{\BL} < \infty.\end{equation} One may check that $f \ast \mu$ is differentiable with derivative $(f \ast \mu)' = f' \ast \mu$ by using
\[ \int \big| f(x - y + h) - f(x - y) - h f'(x - y) \big| \, \mathrm{d}\mu(y)  \ll |h|^2 \Vert f'' \Vert_{\infty} \mu(\R/\Z).\]
The fact that $f \ast \mu \in C^{\infty}(\R/\Z)$ then follows by induction.

For \cref{f-mu-lip-conv} we first observe that using \cref{total-meas-triv} we have $\Vert f \ast \mu \Vert_{\infty} \le \Vert f \Vert_{\infty} \Vert \mu \Vert_{\BL}$. 
Furthermore, we have
\[ \Lip(f \ast \mu) \le \Vert (f \ast \mu)'\Vert_{\infty} = \Vert f' \ast \mu \Vert_{\infty} \le \Vert f' \Vert_{\infty} \Vert \mu \Vert_{\BL}.\] Putting these bounds together gives the result.
\end{proof}

\section{Large deviation bounds}\label{appF}
In this appendix we collect various large deviation bounds of a standard type.

\begin{lemma}\label{binom-ld}
Let $X \samedist \operatorname{Bi}(n,p)$ be a binomial random variable, and let $\eps \le \frac{3}{2}$. Then 
\[ \mb{P} \big( |X - \mb{E} X| \ge \eps \mb{E} X\big) \le 2 e^{-\frac{1}{3}\eps^2\mb{E} X}. \]
\end{lemma}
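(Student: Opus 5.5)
This is the standard multiplicative Chernoff bound; I will prove the two tails separately and add them. Write $\mu := \mb{E} X = np$ and $X = \sum_{i=1}^n \mbf{Y}_i$ with the $\mbf{Y}_i$ independent Bernoulli$(p)$. If $\mu = 0$ the statement is trivial, so assume $\mu > 0$.

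\emph{Upper tail.} By Markov's inequality applied to $e^{\lambda X}$ with $\lambda > 0$,
\[ \mb{P}(X \ge (1+\eps)\mu) \le e^{-\lambda(1+\eps)\mu}\,\mb{E} e^{\lambda X} = e^{-\lambda(1+\eps)\mu} (1 + p(e^{\lambda}-1))^n \le \exp\big(\mu(e^{\lambda}-1) - \lambda(1+\eps)\mu\big),\]
where the last step uses $1+x \le e^x$. Choosing $\lambda = \log(1+\eps)$ gives $\mb{P}(X \ge (1+\eps)\mu) \le \exp(-\mu\,\phi(\eps))$ with $\phi(\eps) := (1+\eps)\log(1+\eps) - \eps$. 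It remains to check $\phi(\eps) \ge \eps^2/3$ for $0 \le \eps \le \frac{3}{2}$. Put $g(\eps) := \phi(\eps) - \eps^2/3$. Then $g(0) = g'(0) = 0$ and $g''(\eps) = \frac{1}{1+\eps} - \frac{2}{3}$, which is positive for $\eps < \frac12$ and negative afterwards. Hence $g'$ increases and then decreases on $[0, \frac32]$, so $g'$ is nonnegative up to some point and nonpositive after it. It follows that $g$ is increasing and then decreasing, so $g \ge \min(g(0), g(3/2))$ on $[0,\frac32]$. A numerical check gives $g(3/2) = 2.5\log 2.5 - 1.5 - 0.75 \approx 0.04 > 0$, so $g \ge 0$ throughout. (This endpoint computation is the only place where the constraint $\eps \le \frac32$ is used, and it is the one step that needs care.)

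\emph{Lower tail.} If $\eps > 1$ the event $X \le (1-\eps)\mu$ is empty, since $X \ge 0$. For $0 < \eps \le 1$, apply the same argument to $e^{-\lambda X}$ with $\lambda = -\log(1-\eps)$, taking $\eps < 1$ and treating $\eps = 1$ by continuity or directly via $\mb{P}(X=0) = (1-p)^n \le e^{-\mu}$. This gives
\[ \mb{P}(X \le (1-\eps)\mu) \le \exp\big(-\mu[(1-\eps)\log(1-\eps) + \eps]\big) \le e^{-\eps^2\mu/2},\]
where the last inequality follows because $\psi(\eps) := (1-\eps)\log(1-\eps) + \eps - \eps^2/2$ satisfies $\psi(0) = \psi'(0) = 0$ and $\psi''(\eps) = \frac{1}{1-\eps} - 1 \ge 0$. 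Since $e^{-\eps^2\mu/2} \le e^{-\eps^2\mu/3}$, summing the two tails gives the stated bound $2e^{-\frac13\eps^2\mb{E} X}$.
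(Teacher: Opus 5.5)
Your argument is correct: it is the standard Chernoff-bound proof, which is exactly what lies behind the paper's one-line citation of \cite[(2.9)]{JLR}, and you correctly use the implicit (and necessary) restriction $\eps \ge 0$. The only cosmetic difference from the cited source is that there the upper-tail exponent is controlled via $\phi(\eps) \ge \eps^2/(2(1+\eps/3))$, which makes the threshold $\eps \le \tfrac32$ transparent (it is exactly where $2(1+\eps/3) \le 3$), whereas you verify $\phi(\eps) \ge \eps^2/3$ by a shape argument together with a numerical check at $\eps = \tfrac32$.
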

\begin{proof}
This is \cite[(2.9)]{JLR}.
\end{proof}

\begin{lemma}\label{hoeffding-inequality}
Let $X_1,\dots, X_n$ be independent random variables such that $|X_i| \le H$ for all $i$ and some $H \in (0,\infty)$. Set $X := X_1 + \cdots + X_n$. Then for all $\lambda \in (0,\infty)$ we have
\[ \mb{P} \big( | X - \mb{E} X| \ge \lambda \big) \le 2 e^{-\lambda^2/(2H^2n)}.\]
\end{lemma}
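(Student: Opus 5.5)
The plan is the standard exponential-moment (Chernoff) argument. By symmetry, applying the one-sided bound to both $X$ and $-X$ and adding, it suffices to prove
\[ \mb{P}(X - \mb{E}X \ge \lambda) \le e^{-\lambda^2/(2H^2 n)}. \]
We set $Y_i := X_i - \mb{E}X_i$. Then $\mb{E}Y_i = 0$ and $Y_i$ takes values in the interval $[-H - \mb{E}X_i,\, H - \mb{E}X_i]$, which has length $2H$.

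For any $s > 0$, Markov's inequality applied to $e^{s(X - \mb{E}X)}$, together with independence, gives
\[ \mb{P}(X - \mb{E}X \ge \lambda) \le e^{-s\lambda}\prod_{i=1}^n \mb{E} e^{sY_i}. \]
The key input is Hoeffding's lemma: a mean-zero random variable supported in an interval of length $b-a$ satisfies $\mb{E}e^{sY} \le e^{s^2(b-a)^2/8}$. With $b - a = 2H$ this reads $\mb{E}e^{sY_i} \le e^{s^2H^2/2}$.

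To prove Hoeffding's lemma, use convexity of $y \mapsto e^{sy}$ to bound $e^{sY}$ by the chord through the endpoints $a$ and $b$. Taking expectations, and using $\mb{E}Y = 0$, bounds $\mb{E}e^{sY}$ by $e^{L(u)}$, where $u = s(b-a)$ and $L(u) = -\theta u + \log(1 - \theta + \theta e^u)$ with $\theta = -a/(b-a) \in [0,1]$. One checks that $L(0) = L'(0) = 0$ and $L''(u) = q(1-q) \le \tfrac14$ for an appropriate $q\in[0,1]$, so Taylor's theorem gives $L(u) \le u^2/8$. This is the only step requiring any calculation, and it is routine.

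Combining the estimates yields $\mb{P}(X - \mb{E}X \ge \lambda) \le \exp(-s\lambda + nH^2s^2/2)$. Optimising at $s = \lambda/(nH^2)$ gives the bound $e^{-\lambda^2/(2nH^2)}$, and doubling for the two tails completes the proof. Alternatively, the whole statement could simply be cited from a standard reference, for instance \cite{JLR} or Hoeffding's original inequality, since it is a textbook result.
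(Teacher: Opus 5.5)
Your argument is correct: Markov's inequality applied to $e^{s(X-\mb{E}X)}$, Hoeffding's lemma for mean-zero variables in an interval of length $2H$, and the choice $s=\lambda/(nH^2)$ give exactly the stated bound. The paper does not prove this itself but cites it as Hoeffding's inequality, \cite[Theorem 2.8]{BLM}, and the standard proof of that theorem is the same Chernoff/Hoeffding-lemma argument you give, so your approach is essentially the paper's.
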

\begin{proof} This is Hoeffding's inequality; see for instance \cite[Theorem 2.8]{BLM}.\end{proof}

Now let $\boldbeta, \boldbeta'$ be random walks with $\Pois(1) - 1$ and $1 - \Pois(1)$ increments respectively. 

\begin{lemma}\label{lemmaF.1}
For $n$ sufficiently large and $0 \le i \le n$ we have $\mb{P}(\boldbeta(n) \ge i) \ll  e^{-i^2/4n}$ and similarly for $\boldbeta'$.
\end{lemma}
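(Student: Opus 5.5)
We prove this by the standard exponential moment (Chernoff) method, since $\boldbeta(n)+n \samedist \Pois(n)$ and $n-\boldbeta'(n) \samedist \Pois(n)$. For $\boldbeta$, for any $t>0$ Markov's inequality gives
\[ \mb{P}(\boldbeta(n) \ge i) \le e^{-ti}\,\mb{E} e^{t\boldbeta(n)} = \exp\big(n(e^t - 1 - t) - ti\big). \]
We take $t := i/(2n) \in [0,\tfrac12]$ and use the elementary bound $e^t - 1 - t \le \tfrac{t^2}{2} e^{t} \le 0.83\, t^2$ for $0 \le t \le \tfrac12$. The exponent is then at most $0.83\, i^2/(4n) - i^2/(2n) = -(1 - 0.415)\, i^2/(2n) \le -i^2/(4n)$, which is the claimed bound (with implied constant $1$).

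For $\boldbeta'$ the upper tail is the lower tail of a Poisson variable: $\mb{P}(\boldbeta'(n)\ge i) = \mb{P}(\Pois(n) \le n - i)$. Again for $t>0$,
\[ \mb{P}(\Pois(n) \le n-i) \le e^{t(n-i)}\mb{E} e^{-t\Pois(n)} = \exp\big(n(e^{-t} - 1 + t) - ti\big). \]
Here $e^{-t} - 1 + t \le t^2/2$ for every $t \ge 0$. Choosing $t := i/n$ gives an exponent at most $i^2/(2n) - i^2/n = -i^2/(2n) \le -i^2/(4n)$, valid for all $0\le i\le n$. (The lower tail of $\boldbeta$ and the upper tail of $-\boldbeta'$, if needed elsewhere, are handled identically by symmetry of these computations.)

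There is no real obstacle. The only point needing care is that in the $\boldbeta$ case the choice $t=i/(2n)$ must stay bounded so that the quadratic approximation to $e^t-1-t$ has a constant below $1$. The hypothesis $i\le n$ guarantees $t\le \tfrac12$, which is exactly what makes the constant $4$ in the exponent work. The phrase ``$n$ sufficiently large'' is not actually needed; it is harmless.
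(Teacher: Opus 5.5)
Your argument is correct and is essentially the paper's: both are the exponential-moment (Chernoff) bound for Poisson tails. The paper quotes the optimized bound $\mb{P}(\boldbeta(n)\ge i)\le e^{-n((1+x)\log(1+x)-x)}$ with $x=i/n$ from Alon--Spencer and then uses $(1+x)\log(1+x)-x\ge x^2/4$ on $(0,1)$, whereas you choose the explicit, non-optimal $t=i/(2n)$ and bound $e^t-1-t\le \tfrac{t^2}{2}e^t$ directly; your $\boldbeta'$ case gives the same $e^{-i^2/2n}$ lower-tail bound the paper uses.
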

\begin{proof}
This is a very standard type of large deviation bound. The second estimate in \cite[Theorem A.1.15]{alon-spencer} gives that \begin{equation}\label{poisson-tail} \mb{P}(\boldbeta(n) \ge i) \le e^{-n \phi(i/n)}\end{equation} where $\phi(x) := (1 + x) \log (1 + x) - x$. The particular statement given here then follows using $\phi(x) \ge x^2/4$, valid for $x \in (0,1)$.

For $\boldbeta'$ we have a similar statement to \cref{poisson-tail} but with $\tilde\phi(x) := x^2/2$, so in this case the result follows straight away.
\end{proof}

\begin{corollary}\label{cor-F2}
With probability $\ge 1 - N^{-10}$, $|\boldbeta(j_2) - \boldbeta(j_1)| \le N^{1/2 - \eta}$ whenever $j_1, j_2 \in [N/8, N]$ and $|j_2 - j_1| \le N^{1 - 5\eta/2}$, and similarly for $\boldbeta'$.
\end{corollary}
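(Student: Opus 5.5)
The plan is a union bound over the $O(N^2)$ pairs $(j_1,j_2)$, with each pair controlled by \cref{lemmaF.1}. First fix $j_1<j_2$ in $[N/8,N]$ with $d:=j_2-j_1\le N^{1-5\eta/2}$. The increment $\boldbeta(j_2)-\boldbeta(j_1)$ has the law of $\boldbeta(d)$, which is a sum of $d$ independent $\Pois(1)-1$ steps. I would bound the upper tail $\mb{P}(\boldbeta(d)\ge i)$ and the lower tail $\mb{P}(\boldbeta(d)\le -i)=\mb{P}(\boldbeta'(d)\ge i)$ separately, taking $i:=\lceil N^{1/2-\eta}\rceil$.

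The main case is $i\le d$. Then \cref{lemmaF.1} applies to both walks, provided $d$ is large enough. It gives each tail probability as $\ll e^{-i^2/4d}$. Since $i^2/4d\ge N^{1-2\eta}/(4N^{1-5\eta/2})=N^{\eta/2}/4$, each tail is $\ll e^{-N^{\eta/2}/4}$.

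There are two boundary regimes. The first is $i>d$. The lower tail then vanishes, because the steps of $\boldbeta$ are $\ge -1$, so $\boldbeta(d)\ge -d>-i$. For the upper tail I would use the bound $\mb{P}(\Pois(d)\ge d+i)\le \mb{P}(\Pois(i)\ge 2i)\le e^{-\Omega(i)}$, which is super-polynomially small. If one prefers to stay within the paper, the Chernoff estimate \cref{poisson-tail} with $\phi(x)\gg x$ for $x\ge1$ gives the same conclusion. The second regime is $d$ bounded. There $|\boldbeta(d)|\ge i$ requires some step of size $\ge i/d$, which has probability $\le d\,e^{-\Omega(i)}$.

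Summing over the at most $N^2$ pairs gives a total failure probability of $\ll N^2 e^{-N^{\eta/2}/4}\le N^{-10}$ for $N$ large. The same argument applies verbatim to $\boldbeta'$ with the roles of the tails exchanged. I do not expect a real obstacle here. The only point needing care is the regime $i>d$, where \cref{lemmaF.1} as stated does not apply and the Poisson tail has to be handled directly as above.
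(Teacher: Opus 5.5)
Your proposal is correct and is essentially the paper's proof: a union bound over the $O(N^2)$ pairs, with \cref{lemmaF.1} applied with $n=j_2-j_1$ to get a per-pair bound $\ll e^{-N^{\eta/2}/4}$. The only difference is that you explicitly handle the regime $j_2-j_1<N^{1/2-\eta}$, where the hypothesis $i\le n$ of \cref{lemmaF.1} fails and which the paper passes over silently; your treatment there (the lower tail vanishing, and the Chernoff bound \cref{poisson-tail} with $\phi(x)\gg x$ for $x\ge 1$ for the upper tail) is sound.
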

\begin{proof} Without loss of generality, $j_2 > j_1$. 
By \cref{lemmaF.1} applied with $n = j_2 - j_1$, for each fixed $j_1, j_2$ we have 
\[ \mb{P} \big( |\boldbeta(j_2) - \boldbeta(j_1)| \ge N^{1/2 - \eta}\big) \ll e^{-N^{\eta/2}/4} \ll N^{-20}.\]
Summing over the $O(N^2)$ choices of $j_1, j_2$ gives the result.     
\end{proof}

The following slight variant of the large deviation bound was required in the proof of \cref{pos-paths-cor}.
\begin{lemma}\label{large-deviation-2}
We have $\mb{P}(\boldbeta(n) = i) \le e^{-i/10}$ uniformly for $i > n/2$, and similarly for $\boldbeta'$.
\end{lemma}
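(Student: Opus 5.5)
The statement is the pointwise bound $\mb{P}(\boldbeta(n) = i) \le e^{-i/10}$ for $i > n/2$. The plan is a direct computation with the Poisson mass function rather than an appeal to \cref{lemmaF.1}, since we need decay linear in $i$ even when $i$ is much larger than $n$.

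\emph{The walk $\boldbeta$.} Since $\boldbeta(n) + n \samedist \Pois(n)$, we have
\[ \mb{P}(\boldbeta(n) = i) = e^{-n} \frac{n^{n+i}}{(n+i)!}. \]
Using $m! \ge (m/e)^m$, this is at most
\[ e^{i}\Big(\frac{n}{n+i}\Big)^{n+i}. \]
Write $x := i/n > 1/2$. The logarithm of the bound equals $-n\phi(x)$, where $\phi(x) = (1+x)\log(1+x) - x$ as in the proof of \cref{lemmaF.1}. So it suffices to show $\phi(x) \ge x/10$ for $x \ge 1/2$. The function $\phi(x)/x$ is increasing, because $\phi$ is convex with $\phi(0) = 0$. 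At $x = 1/2$ we have $\phi(1/2) = 1.5\log 1.5 - 0.5 \approx 0.108 > 0.05$. This gives the bound with room to spare, uniformly in $n \ge 1$, and in particular with no hidden constant.

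\emph{The walk $\boldbeta'$.} Here $\mb{P}(\boldbeta'(n) = i) = \mb{P}(\Pois(n) = n - i)$. This vanishes for $i > n$, so only $n/2 < i \le n$ remains. Put $m = n - i < n/2$; the probability is
\[ e^{-n} \frac{n^m}{m!} \le e^{-n}\Big(\frac{en}{m}\Big)^m. \]
The function $m \mapsto (en/m)^m$ is increasing on $[0, n]$, so over $m \le n/2$ it is at most $(2e)^{n/2}$. Hence the probability is at most
\[ e^{-n + (n/2)(1 + \log 2)} = e^{-n(1 - \log 2)/2} \le e^{-0.15 n} \le e^{-i/10}, \]
using $i \le n$. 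If $m = 0$ the bound is $e^{-n}$ directly.

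The only genuine care needed is in choosing the elementary factorial bound and checking the numerical constants; no step is a real obstacle.
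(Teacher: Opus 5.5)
Your proof is correct and is essentially the paper's argument. The paper quotes the Chernoff tail bound $\mb{P}(\boldbeta(n)\ge i)\le e^{-n\phi(i/n)}$ and uses $\phi(x)\ge x/6$ for $x\ge 1/2$, whereas you get the same exponent $-n\phi(i/n)$ directly from the mass function via $m!\ge (m/e)^m$, which makes your argument self-contained; for $\boldbeta'$ the paper uses the lower-tail bound $e^{-i^2/2n}\le e^{-i/4}$, and your direct factorial computation works equally well.
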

\begin{proof}
We can again use $\mb{P}(\boldbeta(n) \ge i) \le e^{-n\phi(i/n)}$, but now use the inequality $\phi(x) \ge x/6$, valid for $x \ge 1/2$. The inequality holds by a massive margin. A similar proof works for $\boldbeta'$.
\end{proof}

\bibliographystyle{amsplain0}
\bibliography{main.bib}

@misc{schlitt,
  title={Multiplication Tables for Integers with Restricted Prime Factors},
  author={Schlitt, Jeremy},
  note={\url{https://arxiv.org/abs/2603.19212}}
}

@misc{addario-berry-reed-2,
AUTHOR = {Addario-Berry, Louigi and Reed, Bruce},
     TITLE = {Ballot theorems for random walks with finite variance},
note={\url{https://problab.ca/louigi/papers/ballot.pdf}}
}

@misc{grama-xiao,
AUTHOR = {Grama, Ion and Xiao, Hui},
     TITLE = {Local limit theorems for conditioned random walks by the heat kernel approximation},
note={\url{https://arxiv.org/abs/2509.14009}}
}

@book {alon-spencer,
    AUTHOR = {Alon, Noga and Spencer, Joel},
     TITLE = {The probabilistic method},
    SERIES = {Wiley-Interscience Series in Discrete Mathematics and
              Optimization},
   EDITION = {Third},
      NOTE = {With an appendix on the life and work of Paul Erd\H{o}s},
 PUBLISHER = {John Wiley \& Sons, Inc., Hoboken, NJ},
      YEAR = {2008},
     PAGES = {xviii+352},
      ISBN = {978-0-470-17020-5},
   MRCLASS = {60-02 (05C80 60C05 60F99 60G42)},
  MRNUMBER = {2437651},
       DOI = {10.1002/9780470277331},
       URL = {https://doi.org/10.1002/9780470277331},
}

@article {ford-prob,
    AUTHOR = {Ford, Kevin},
     TITLE = {Sharp probability estimates for random walks with barriers},
   JOURNAL = {Probab. Theory Related Fields},
  FJOURNAL = {Probability Theory and Related Fields},
    VOLUME = {145},
      YEAR = {2009},
    NUMBER = {1-2},
     PAGES = {269--283},
      ISSN = {0178-8051,1432-2064},
   MRCLASS = {60G50 (60F05)},
  MRNUMBER = {2520128},
MRREVIEWER = {Vydas\ \v{C}ekanavi\v{c}ius},
       DOI = {10.1007/s00440-008-0168-4},
       URL = {https://doi.org/10.1007/s00440-008-0168-4},
}

@misc{DTW-berry-esseen,
title={Berry-{E}sse\'en inequality for random walks conditioned to stay positive},
author={Denisov, Denis and Tarasov, Alexander and Wachtel, Vitali},
note={arXiv:2412.08502}
}

@article {caravenna,
    AUTHOR = {Caravenna, Francesco},
     TITLE = {A local limit theorem for random walks conditioned to stay
              positive},
   JOURNAL = {Probab. Theory Related Fields},
  FJOURNAL = {Probability Theory and Related Fields},
    VOLUME = {133},
      YEAR = {2005},
    NUMBER = {4},
     PAGES = {508--530},
      ISSN = {0178-8051,1432-2064},
   MRCLASS = {60G50 (60F05 60K05)},
  MRNUMBER = {2197112},
MRREVIEWER = {No\"el\ Veraverbeke},
       DOI = {10.1007/s00440-005-0444-5},
       URL = {https://doi.org/10.1007/s00440-005-0444-5},
}

@book {feller2,
    AUTHOR = {Feller, William},
     TITLE = {An introduction to probability theory and its applications.
              {V}ol. {II}},
   EDITION = {Second},
 PUBLISHER = {John Wiley \& Sons, Inc., New York-London-Sydney},
      YEAR = {1971},
     PAGES = {xxiv+669},
   MRCLASS = {60.00},
  MRNUMBER = {270403},
}

@article{gelfond,
author = {Gel’fond, Aleksandr Osipovich},
title = {An Estimate for the Remainder Term in a Limit Theorem for Recurrent Events},
journal = {Theory of Probability \& Its Applications},
volume = {9},
number = {2},
pages = {299-303},
year = {1964},
doi = {10.1137/1109042},
}

@book {grafakos,
    AUTHOR = {Grafakos, Loukas},
     TITLE = {Classical {F}ourier analysis},
    SERIES = {Graduate Texts in Mathematics},
    VOLUME = {249},
   EDITION = {Second},
 PUBLISHER = {Springer, New York},
      YEAR = {2008},
     PAGES = {xvi+489},
      ISBN = {978-0-387-09431-1},
   MRCLASS = {42-01 (42Bxx)},
  MRNUMBER = {2445437},
MRREVIEWER = {Andreas\ Seeger},
}

@article {PP95,
    AUTHOR = {Pemantle, Robin and Peres, Yuval},
     TITLE = {Critical random walk in random environment on trees},
   JOURNAL = {Ann. Probab.},
  FJOURNAL = {The Annals of Probability},
    VOLUME = {23},
      YEAR = {1995},
    NUMBER = {1},
     PAGES = {105--140},
      ISSN = {0091-1798,2168-894X},
   MRCLASS = {60J15 (60G50)},
  MRNUMBER = {1330763},
MRREVIEWER = {Vadim\ A.\ Ka\u imanovich},
       URL =
              {http://links.jstor.org/sici?sici=0091-1798(199501)23:1<105:CRWIRE>2.0.CO;2-N&origin=MSN},
}

@article {For08,
    AUTHOR = {Ford, Kevin},
     TITLE = {The distribution of integers with a divisor in a given
              interval},
   JOURNAL = {Ann. of Math. (2)},
  FJOURNAL = {Annals of Mathematics. Second Series},
    VOLUME = {168},
      YEAR = {2008},
    NUMBER = {2},
     PAGES = {367--433},
      ISSN = {0003-486X,1939-8980},
   MRCLASS = {11N25 (11N37)},
  MRNUMBER = {2434882},
MRREVIEWER = {D.\ R.\ Heath-Brown},
       DOI = {10.4007/annals.2008.168.367},
       URL = {https://doi.org/10.4007/annals.2008.168.367},
}

@article{VW2009,
  author  = {Vatutin, Vladimir and Wachtel, Vitaly},
  title   = {Local probabilities for random walks conditioned to stay positive},
  journal = {Probability Theory and Related Fields},
  volume  = {143},
  number  = {1--2},
  pages   = {177--217},
  year    = {2009},
  doi     = {10.1007/s00440-007-0113-7}
}

@incollection {addario-berry-reed-1,
    AUTHOR = {Addario-Berry, Louigi and Reed, Bruce},
     TITLE = {Ballot theorems, old and new},
 BOOKTITLE = {Horizons of combinatorics},
    SERIES = {Bolyai Soc. Math. Stud.},
    VOLUME = {17},
     PAGES = {9--35},
 PUBLISHER = {Springer, Berlin},
      YEAR = {2008},
      ISBN = {978-3-540-77199-9; 3-540-77199-9; 978-963-9453-09-8},
   MRCLASS = {05A15},
  MRNUMBER = {2432525},
MRREVIEWER = {Ira\ Gessel},
       DOI = {10.1007/978-3-540-77200-2\_1},
       URL = {https://doi.org/10.1007/978-3-540-77200-2_1},
}

@article{borel,
  author  = {Borel, {\'E}mile},
  title   = {Sur l'emploi du th{\'e}or{\`e}me de {B}ernoulli pour faciliter le calcul d'un infinit{\'e} de coefficients. Application au probl{\`e}me de l'attente {\`a} un guichet},
  journal = {Comptes Rendus de l'Acad{\'e}mie des Sciences},
  year    = {1942},
  volume  = {214},
  pages   = {452--456},
  address = {Paris}
}

@article {bppw,
    AUTHOR = {Brent, Richard and Pomerance, Carl and Purdum, David and
              Webster, Jonathan},
     TITLE = {Algorithms for the multiplication table problem},
   JOURNAL = {Integers},
  FJOURNAL = {Integers. Electronic Journal of Combinatorial Number Theory},
    VOLUME = {21},
      YEAR = {2021},
     PAGES = {Paper No. A92, 19},
      ISSN = {1553-1732},
   MRCLASS = {11Y16 (11B75)},
  MRNUMBER = {4328198},
MRREVIEWER = {J.\ D.\ Dixon},
}

@article {bnpt,
    AUTHOR = {Balazard, Michel and Nicolas, Jean-Louis and Pomerance, Carl and
              Tenenbaum, G\'erald},
     TITLE = {Grandes d\'eviations pour certaines fonctions arithm\'etiques},
   JOURNAL = {J. Number Theory},
  FJOURNAL = {Journal of Number Theory},
    VOLUME = {40},
      YEAR = {1992},
    NUMBER = {2},
     PAGES = {146--164},
      ISSN = {0022-314X,1096-1658},
   MRCLASS = {11K65},
  MRNUMBER = {1149734},
MRREVIEWER = {Antanas\ Laurin\v cikas},
       DOI = {10.1016/0022-314X(92)90036-O},
       URL = {https://doi.org/10.1016/0022-314X(92)90036-O},
}

@article {AT,
    AUTHOR = {Arratia, Richard and Tavar\'e, Simon},
     TITLE = {The cycle structure of random permutations},
   JOURNAL = {Ann. Probab.},
  FJOURNAL = {The Annals of Probability},
    VOLUME = {20},
      YEAR = {1992},
    NUMBER = {3},
     PAGES = {1567--1591},
      ISSN = {0091-1798,2168-894X},
   MRCLASS = {60C05 (05A05 60B15 60F17)},
  MRNUMBER = {1175278},
MRREVIEWER = {Lars\ Holst},
       URL =
              {http://links.jstor.org/sici?sici=0091-1798(199207)20:3<1567:TCSORP>2.0.CO;2-T&origin=MSN},
}

@book{BLM,
  author    = {Boucheron, St{\'e}phane and Lugosi, G{\'a}bor and Massart, Pascal},
  title     = {Concentration Inequalities: A Nonasymptotic Theory of Independence},
  publisher = {Oxford University Press},
  year      = {2013},
  isbn      = {9780199535255}
}

@book {petrov,
    AUTHOR = {Petrov, Valentin Vladimirovich},
     TITLE = {Sums of independent random variables},
    SERIES = {Ergebnisse der Mathematik und ihrer Grenzgebiete [Results in
              Mathematics and Related Areas]},
    VOLUME = {Band 82},
      NOTE = {Translated from the Russian by A. A. Brown},
 PUBLISHER = {Springer-Verlag, New York-Heidelberg},
      YEAR = {1975},
     PAGES = {x+346},
   MRCLASS = {60FXX (60GXX)},
  MRNUMBER = {388499},
}

@article {Koz76,
    AUTHOR = {Kozlov, M. V.},
     TITLE = {The asymptotic behavior of the probability of non-extinction
              of critical branching processes in a random environment},
   JOURNAL = {Teor. Verojatnost. i Primenen.},
  FJOURNAL = {Akademija Nauk SSSR. Teorija Verojatnoste\u i\ i ee
              Primenenija},
    VOLUME = {21},
      YEAR = {1976},
    NUMBER = {4},
     PAGES = {813--825},
      ISSN = {0040-361x},
   MRCLASS = {60J80},
  MRNUMBER = {428492},
MRREVIEWER = {P.\ Jagers},
note={(English translation in Theory of Probability and Its Applications \textbf{21} (1976), 791--804.)}
}

@misc{DTW24,
  title={Expansions for random walks conditioned to stay positive},
  author={Denisov, Denis and Tarasov, Alexander and Wachtel, Vitali},
  note={arXiv:2401.09929}
}

@misc{DTW24b,
  title={Asymptotic expansions for normal deviations of random walks conditioned to stay positive},
  author={Denisov, Denis and Tarasov, Alexander and Wachtel, Vitali},
  note={arXiv:2412.09145}
}

@article {aidekon-shi,
    AUTHOR = {Aidekon, Elie and Shi, Zhan},
     TITLE = {The {S}eneta-{H}eyde scaling for the branching random walk},
   JOURNAL = {Ann. Probab.},
  FJOURNAL = {The Annals of Probability},
    VOLUME = {42},
      YEAR = {2014},
    NUMBER = {3},
     PAGES = {959--993},
      ISSN = {0091-1798,2168-894X},
   MRCLASS = {60J80 (60F05)},
  MRNUMBER = {3189063},
MRREVIEWER = {Yueyun\ Hu},
       DOI = {10.1214/12-AOP809},
       URL = {https://doi.org/10.1214/12-AOP809},
}

@article {spitzer-tauberian,
    AUTHOR = {Spitzer, Frank},
     TITLE = {A {T}auberian theorem and its probability interpretation},
   JOURNAL = {Trans. Amer. Math. Soc.},
  FJOURNAL = {Transactions of the American Mathematical Society},
    VOLUME = {94},
      YEAR = {1960},
     PAGES = {150--169},
      ISSN = {0002-9947,1088-6850},
   MRCLASS = {60.00},
  MRNUMBER = {111066},
MRREVIEWER = {J.\ L.\ Snell},
       DOI = {10.2307/1993283},
       URL = {https://doi.org/10.2307/1993283},
}

@article {EFG16,
    AUTHOR = {Eberhard, Sean and Ford, Kevin and Green, Ben},
     TITLE = {Permutations fixing a {$k$}-set},
   JOURNAL = {Int. Math. Res. Not. IMRN},
  FJOURNAL = {International Mathematics Research Notices. IMRN},
      YEAR = {2016},
    NUMBER = {21},
     PAGES = {6713--6731},
      ISSN = {1073-7928,1687-0247},
   MRCLASS = {05A05 (20B35 20P05)},
  MRNUMBER = {3579977},
       DOI = {10.1093/imrn/rnv371},
       URL = {https://doi.org/10.1093/imrn/rnv371},
}

@article {Rit81,
    AUTHOR = {Ritter, Grant},
     TITLE = {Growth of random walks conditioned to stay positive},
   JOURNAL = {Ann. Probab.},
  FJOURNAL = {The Annals of Probability},
    VOLUME = {9},
      YEAR = {1981},
    NUMBER = {4},
     PAGES = {699--704},
      ISSN = {0091-1798,2168-894X},
   MRCLASS = {60J15 (60F17)},
  MRNUMBER = {624698},
MRREVIEWER = {G\'erard\ Letac},
       URL =
              {http://links.jstor.org/sici?sici=0091-1798(198108)9:4<699:GORWCT>2.0.CO;2-J&origin=MSN},
}

@article {FGK23,
    AUTHOR = {Ford, Kevin and Green, Ben and Koukoulopoulos, Dimitris},
     TITLE = {Equal sums in random sets and the concentration of divisors},
   JOURNAL = {Invent. Math.},
  FJOURNAL = {Inventiones Mathematicae},
    VOLUME = {232},
      YEAR = {2023},
    NUMBER = {3},
     PAGES = {1027--1160},
      ISSN = {0020-9910,1432-1297},
   MRCLASS = {11K65 (05A05 11B13 11N25 11T55)},
  MRNUMBER = {4588563},
MRREVIEWER = {Peter\ Shiu},
       DOI = {10.1007/s00222-022-01177-y},
       URL = {https://doi.org/10.1007/s00222-022-01177-y},
}

@article {sparre-andersen,
    AUTHOR = {Sparre Andersen, Erik},
     TITLE = {On the fluctuations of sums of random variables. {II}},
   JOURNAL = {Math. Scand.},
  FJOURNAL = {Mathematica Scandinavica},
    VOLUME = {2},
      YEAR = {1954},
     PAGES = {195--223},
      ISSN = {0025-5521,1903-1807},
   MRCLASS = {60.0X},
  MRNUMBER = {68154},
MRREVIEWER = {K.\ L.\ Chung},
}

@article {spitzer,
    AUTHOR = {Spitzer, Frank},
     TITLE = {A combinatorial lemma and its application to probability
              theory},
   JOURNAL = {Trans. Amer. Math. Soc.},
  FJOURNAL = {Transactions of the American Mathematical Society},
    VOLUME = {82},
      YEAR = {1956},
     PAGES = {323--339},
      ISSN = {0002-9947,1088-6850},
   MRCLASS = {60.0X},
  MRNUMBER = {79851},
MRREVIEWER = {J.\ L.\ Snell},
       DOI = {10.2307/1993051},
       URL = {https://doi.org/10.2307/1993051},
}

@article {zhang,
    AUTHOR = {Zhang, Yu},
     TITLE = {A power law for connectedness of some random graphs at the
              critical point},
   JOURNAL = {Random Structures Algorithms},
  FJOURNAL = {Random Structures \& Algorithms},
    VOLUME = {2},
      YEAR = {1991},
    NUMBER = {1},
     PAGES = {101--119},
      ISSN = {1042-9832,1098-2418},
   MRCLASS = {05C80 (05C40 60C05)},
  MRNUMBER = {1099582},
MRREVIEWER = {Mark\ R.\ Jerrum},
       DOI = {10.1002/rsa.3240020108},
       URL = {https://doi.org/10.1002/rsa.3240020108},
}

@book {JLR,
    AUTHOR = {Janson, Svante and {\L}uczak, Tomasz and Rucinski, Andrzej},
     TITLE = {Random graphs},
    SERIES = {Wiley-Interscience Series in Discrete Mathematics and
              Optimization},
 PUBLISHER = {Wiley-Interscience, New York},
      YEAR = {2000},
     PAGES = {xii+333},
      ISBN = {0-471-17541-2},
   MRCLASS = {05C80 (60C05 82B41)},
  MRNUMBER = {1782847},
MRREVIEWER = {Mark\ R.\ Jerrum},
       DOI = {10.1002/9781118032718},
       URL = {https://doi.org/10.1002/9781118032718},
}

@article {erdos-mult-1,
    AUTHOR = {Erd\H{o}s, Paul},
     TITLE = {An asymptotic inequality in the theory of numbers ({R}ussian)},
   JOURNAL = {Vestnik Leningrad. Univ.},
  FJOURNAL = {Vestnik Leningrad. Univ.},
    VOLUME = {15},
      YEAR = {1960},
    NUMBER = {13},
     PAGES = {41--49},
   MRCLASS = {10.43},
  MRNUMBER = {126424},
MRREVIEWER = {H.-E.\ Richert},
}

@misc{erdos-mult-2,
  title={Some remarks on number theory ({H}ebrew with {E}nglish summary)},
  author={Erd\H{o}s, Paul},
  note={\url{https://users.renyi.hu/~p_erdos/1955-13.pdf}}
}

@misc{green-sawhney-forthcoming,
  title={The multiplication table problem},
  author={Green, Ben and Sawhney, Mehtaab},
  note={forthcoming}
}

@misc{sound-pk-remark,
  title={Ramanujan and the anatomy of integers},
  author={Soundararajan, K.},
  note={in Srinivasa Ramanujan:
Going Strong at 125,
Part II, Notices AMS 60 (2013), no. 1, 10--22.}
}

@book {flajolet-sedgewick,
    AUTHOR = {Flajolet, Philippe and Sedgewick, Robert},
     TITLE = {Analytic combinatorics},
 PUBLISHER = {Cambridge University Press, Cambridge},
      YEAR = {2009},
     PAGES = {xiv+810},
      ISBN = {978-0-521-89806-5},
   MRCLASS = {05-02 (05A15 05A16 60C05 60E10 82-01)},
  MRNUMBER = {2483235},
       DOI = {10.1017/CBO9780511801655},
       URL = {https://doi.org/10.1017/CBO9780511801655},
}

@article {granville-sedunova-sabuncu,
    AUTHOR = {Granville, Andrew and Sedunova, Alisa and Sabuncu, Cihan},
     TITLE = {The multiplication table constant and sums of two squares},
   JOURNAL = {Acta Arith.},
  FJOURNAL = {Acta Arithmetica},
    VOLUME = {214},
      YEAR = {2024},
     PAGES = {499--522},
      ISSN = {0065-1036,1730-6264},
   MRCLASS = {11N37 (11N36)},
  MRNUMBER = {4772301},
MRREVIEWER = {Kam\ Hung\ Yau},
       DOI = {10.4064/aa230828-19-4},
       URL = {https://doi.org/10.4064/aa230828-19-4},
}

@book {hardy-ramanujan,
    AUTHOR = {Hardy, G. H.},
     TITLE = {Ramanujan. {T}welve lectures on subjects suggested by his life
              and work},
 PUBLISHER = {Cambridge University Press, Cambridge; The Macmillan Company,
              New York},
      YEAR = {1940},
     PAGES = {vii+236},
   MRCLASS = {10.0X},
  MRNUMBER = {4860},
MRREVIEWER = {H.\ Rademacher},
}

@article {dfg,
    AUTHOR = {Diaconis, Persi and Fulman, Jason and Guralnick, Robert},
     TITLE = {On fixed points of permutations},
   JOURNAL = {J. Algebraic Combin.},
  FJOURNAL = {Journal of Algebraic Combinatorics. An International Journal},
    VOLUME = {28},
      YEAR = {2008},
    NUMBER = {1},
     PAGES = {189--218},
      ISSN = {0925-9899,1572-9192},
   MRCLASS = {20B05 (05A05 05E15 20B30 20P05 60B15)},
  MRNUMBER = {2420785},
MRREVIEWER = {Cheryl\ E.\ Praeger},
       DOI = {10.1007/s10801-008-0135-2},
       URL = {https://doi.org/10.1007/s10801-008-0135-2},
}

@article {luzcak-pyber,
    AUTHOR = {{\L}uczak, Tomasz and Pyber, L\'aszl\'o},
     TITLE = {On random generation of the symmetric group},
   JOURNAL = {Combin. Probab. Comput.},
  FJOURNAL = {Combinatorics, Probability and Computing},
    VOLUME = {2},
      YEAR = {1993},
    NUMBER = {4},
     PAGES = {505--512},
      ISSN = {0963-5483,1469-2163},
   MRCLASS = {20B30 (12Y05)},
  MRNUMBER = {1264722},
MRREVIEWER = {J.\ D.\ Dixon},
       DOI = {10.1017/S0963548300000869},
       URL = {https://doi.org/10.1017/S0963548300000869},
}

\end{document}